\numberwithin{equation}{section}
\def\neweq#1{\begin{equation}\label{#1}}
\def\endeq{\end{equation}}
\newtheorem{theorem}{Theorem}[section]
\newtheorem{defi}{Definition}
\newtheorem{proposition}[theorem]{Proposition}
\newtheorem{lemma}[theorem]{Lemma}
\newtheorem{cor}[theorem]{Corollary}
\newtheorem{rem}[theorem]{Remark}
\begin{document}

\title[nonlinear elliptic equations with mixed reaction terms]{Classification and symmetry of global solutions for nonlinear elliptic equations with mixed reaction terms}

\author{Huyuan Chen}
\address{Huyuan Chen, School of Mathematics and Statistics, The University of Sydney, NSW 2006, Australia and }
\curraddr{Shanghai Institute for Mathematics and Interdisciplinary Sciences, Fudan University, Shanghai 200433, P.R. China} 
\email{chenhuyuan@yeah.net}

\author{Florica C. C\^irstea}

\address{Florica C. C\^irstea, School of Mathematics and Statistics, The University of Sydney, NSW 2006, Australia}
\email{florica.cirstea@sydney.edu.au}

\author{Aleksandar Miladinovic}

\address{Aleksandar Miladinovic, School of Mathematics and Statistics, The University of Sydney, NSW 2006, Australia}
\email{aleks.m678@gmail.com}

\date{}

\begin{abstract} 
In this paper, we describe the set of {\em all} positive distributional $C^1(\mathbb R^N\setminus \{0\})$-solutions 
of elliptic equations with mixed reaction terms of the form
\begin{equation} \label{unu1}\mathbb L_{\rho,\lambda,\tau}[u]:=  \Delta u-(N-2+2\rho) \frac{x\cdot \nabla u}{|x|^2} +\lambda \frac{u^\tau |\nabla u|^{1-\tau}}{|x|^{1+\tau}}=|x|^\theta u^q\quad \mbox{in }
\mathbb R^N\setminus \{0\}, 
\end{equation}
where $\rho,\lambda, \theta\in \mathbb R$ are arbitrary, $N\geq 2$, $q>1$ and $\tau\in [0,1)$.   
Defining  $\beta=(\theta+2)/(q-1)$ and 
$f_{\rho,\lambda,\tau}(t)=t\left(t+2\rho\right) +\lambda |t|^{1-\tau}$ for $t\in \mathbb R$, we show that \eqref{unu1} has positive solutions if and only if $f_{\rho,\lambda,\tau}(\beta)>0$. 
Under this condition, we provide existence and 
 the exact asymptotic behaviour near zero and at infinity for all positive solutions of \eqref{unu1}. We 
obtain that all such solutions are   
{\em radially symmetric}.  When $\theta<-2$ and $\rho,\lambda\in \mathbb R$, we also find the precise {\em local} behaviour near zero for {\em all} positive solutions of \eqref{unu1} in $\Omega\setminus \{0\}$, where $\Omega$ is an open set containing $0$.   
By introducing the second term in $\mathbb L_{\rho,\lambda,\tau}[\cdot]$ with $\rho\in \mathbb R$, we reduce the study of \eqref{unu1} 
to $\theta<-2$ via a modified Kelvin transform. 
We reveal new and surprising phenomena compared with the work of C\^{\i}rstea and F\u arc\u a\c seanu (2021), where $\rho=(2-N)/2$ and $\tau=1$. 
We illustrate this fact 
in 
Case $[N_2]$: $\lambda>0$, $\tau\in (0,1)$ and $\rho\geq \Upsilon :=[(\tau+1)/(2\tau) ]\left( \lambda \tau\right)^{1/(\tau+1)}$ 
when $f_{\rho,\lambda,\tau}(t)=0$ has two negative roots $\varpi_1(\rho)\leq \varpi_2(\rho)$. If 
$\beta\in (\varpi_2(\rho),0)$, then \eqref{unu1} yields two new families of {\em bounded} solutions $\{u_{\infty,c}\}_{c>0}$ and $\{u_{b,\infty,c}\}_{b,c>0}$
that have no analogue for $\tau=1$. 
\end{abstract}

\maketitle

\tableofcontents

\section{Introduction} \label{intro-m}

A fundamental problem in the study of partial differential equations is to understand the local 
behaviour of solutions. 
The classification of local and/or global solutions and their symmetry properties are central themes in the study of nonlinear elliptic and parabolic equations (see, e.g., \cite{Soup} and \cite{bov}). 
Even for deceptively ``simple-looking" semilinear elliptic problems (e.g., the Yamabe problem), the classification of singularities can be very hard to elucidate. Yet, research on isolated singularities has led to the development of new and significant approaches such as the moving plane method. This is
a fundamental tool for establishing symmetry in geometry and partial differential equations (see, for example, \cite{brs}). 

Serrin's celebrated papers \cites{serrin64,serrin65} for quasilinear elliptic problems of the form
$$ {\rm div}\, \mathbf A(x,u,\nabla u)=B(x,u,\nabla u)
$$ have generated a lot of interest. Much research has been devoted to finding suitable classification theorems or discovering  
analogues of Serrin's removability results applicable to broader contexts.

This paper falls within the scope of an extensive research program stimulated by V\'eron's question (see \cite{Veron1}) of addressing the singularity problem outside Serrin's framework in  \cites{serrin64,serrin65}. This means considering nonlinear divergence-form problems with an isolated singularity in which the growth of the lower order term $B$ 
dominates that of $\mathbf A$. Now, the structure of the lower order term $B$ and the sign of its constituents play a fundamental role in the classification of the local behaviour of solutions.  One new difficulty is, for example, the formation of ``strong singularities". These singular solutions dominate near the singularity the ``fundamental solution" of the principal operator, which provides the asymptotic model for the so-called ``weak singularities". Weak and strong singularities arise even in the classification of non-negative solutions for Laplacian type equations with super-linear power nonlinearities (see the pioneering works by Brezis and V\'eron \cite{brv} and V\'eron \cites{ver1,ver2}). Early generalisations of these results to $p$-Laplacian-type equations $\Delta_p u=u^q$ in $\mathbb R^N\setminus \{0\}$  with  $1<p\leq N$ are due to Friedman and V\'eron \cite{FV} (for $p-1<q<N(p-1)/(N-p)$) and V\'azquez and V\'eron
\cite{vv} (for $q\geq N(p-1)/(N-p)$ and $1<p<N$). For related extensions to weighted $p$-Laplacian equations with more general nonlinearities, we refer to  \cites{bra,cc,CC2015,CD2010}. 

The last decades have brought significant progress in the study of isolated singularities, see V\'eron \cite{bov}. A key question is 
how the local and global classification of singularities is influenced by gradient-dependent lower order terms. Answering this question will also shed light on the conditions leading to Liouville type results, another important and related topic studied extensively. For recent contributions in these directions, see e.g., \cites{BiGV,BVGV,BGV0,BGV3,BGV1,BNV1,BVer1,CC2015,CC2,FPS,FPS2,SZ}.

The aim of this paper is to classify the local behaviour near zero (and at infinity if $\Omega=\mathbb R^N$) for the positive  
distributional $C^1(\Omega\setminus \{0\})$-solutions of
elliptic equations of the form
\begin{equation} \label{e1}
 \mathbb L_{\rho,\lambda,\tau}[u(x)]:=\Delta u-\left(N-2+2\rho\right) \frac{x\cdot \nabla u}{|x|^2} +
 \lambda \frac{u^\tau\,|\nabla u|^{1-\tau}}{|x|^{1+\tau}}
 =|x|^\theta \,u^q\quad \mbox{in  }\Omega\setminus \{0\},
\end{equation}
where $\Omega$ is a domain in $\mathbb R^N$ $(N\geq 2)$ such that $0\in \Omega$, while 
$\rho,\lambda,\theta$ are any real parameters. 

Unless otherwise stated, throughout this paper, we assume that 
\begin{equation} \label{e2} 
0\leq \tau<1 \quad \mbox{and} \quad q>1.
\end{equation}

In this setting, we develop a unified approach to fully classify, and prove existence of, all positive solutions of \eqref{e1} in $\mathbb R^N\setminus \{0\}$ under optimal 
assumptions (see Theorem~\ref{globo} and Corollary~\ref{glob22}). As a by-product, we obtain that all such solutions are {\em radially symmetric}. We make no use of the moving plane method. Instead, with delicate  
constructions of many families of barriers (sub-super-solutions) adapted to different ranges of our parameters, we classify 
all the behaviours of positive solutions near zero and at infinity. Then, we prove the existence of positive {\em radial} solutions $u(x)=u(|x|)=u(r)$ of \eqref{e1} in $\mathbb R^N\setminus \{0\}$ with {\em all} these viable behaviours and, in addition, $u'(r)\not=0$ for every $r\in (0,\infty)$.    
By the comparison principle, we conclude that there are no positive solutions of \eqref{e1} in $\mathbb R^N\setminus \{0\}$ that are non-radial.

Our results lead to a complete understanding of the set of all 
$C^1(\mathbb R^N\setminus \{0\})$ solutions for other nonlinear equations with gradient dependent terms via a change of variables (see Subsection~\ref{apz}).  

To our best knowledge, the analysis of \eqref{e1} is completely new for $\tau\in (0,1)$. 
Our findings  reveal surprising new phenomena compared with 
 $\tau=1$ and $\rho=(2-N)/2$ in \eqref{e1}, the latter being completely understood, see Subsection~\ref{conta}. 
Yet, prior to this work, 
nothing was known about the positive solutions of \eqref{e1} with $\tau\in (0,1)$,  
even for $\rho=(2-N)/2$. By introducing the second term in the operator $\mathbb L_{\rho,\lambda,\tau}[\cdot ]$, we generate novel techniques to contend with many additional difficulties. In particular, when $\lambda>0$, we unveil a {\em critical threshold for $\rho$}, namely, $\Upsilon=\frac{\tau+1}{2\tau} \left( \lambda \tau\right)^{\frac{1}{\tau+1}}$, which is the dividing line between Case $[N_0]$ (b) and Case $[N_2]$ in  \eqref{3cas}. 
 
Our techniques are amenable to generalisations to quasilinear equations and other nonlinearities in the right-hand side of \eqref{e1}. This could be done by combining our methods here with those in \cites{Cmem,CD2010,cc}. However, such a generalisation goes beyond the scope of this paper. 
 
In proving the existence of positive solutions in $\mathbb R^N\setminus \{0\}$, we rely on the invariance of \eqref{e1} under 
the scaling transformation $ T_\sigma$ (for $\sigma>0$) given by 
\begin{equation} \label{ssig} u_\sigma(x)=T_\sigma[u](x)= \sigma^{\beta} \, u(\sigma x),\quad \mbox{where }  \beta=\beta(\theta,q):=\frac{\theta+2}{q-1}.
\end{equation} 
Thus, if $u$ is a positive solution of \eqref{e1} in $\mathbb R^N\setminus \{0\}$, so is $u_\sigma$.

We show in Theorem~\ref{globo} that $f_\rho(\beta)>0$ is the {\em optimal}
condition for the existence of positive solutions of \eqref{e1} in $\mathbb R^N\setminus \{0\}$ (see \eqref{frol} for the definition of $f_\rho$). 
Let's assume $f_\rho(\beta)>0$. Remark that 
\eqref{e1} has a positive radial solution in $\mathbb R^N\setminus \{0\}$ given by
\begin{equation} \label{u0} U_{\rho,\beta}(x)=\Lambda\, |x|^{-\beta},\quad \mbox{where } 
\Lambda:=\left[f_{\rho}(\beta)\right]^{\frac{1}{q-1}}.\end{equation}
Moreover, $U_{\rho,\beta}$ is never the only positive solution of \eqref{e1} in $\mathbb R^N\setminus \{0\}$ (unlike the case $\tau=1$ treated in \cite{MF1} and recalled in Theorem~\ref{alun}).  
In \eqref{stor1} when $\theta<-2$, respectively, in \eqref{stor2} when $\theta>-2$, we give the structure of the set of all positive solutions of \eqref{e1} in $\mathbb R^N\setminus \{0\}$.  

The most difficult analysis, leading to totally new findings, arises in Case [$N_2$] of \eqref{3cas} when $f_{\rho}(t)=0$ has {\em two negative roots}, say $\varpi_1(\rho)$ and $\varpi_2(\rho)$ 
(with $\varpi_1(\rho)\leq \varpi_2(\rho)$).
If $\beta\in  (\varpi_2(\rho),0)$ in Case $[N_2]$, then $f_\rho(\beta)>0$ and the set of all positive solutions of \eqref{e1} in $\mathbb R^N\setminus \{0\}$ is 
 $$ \{U_{\rho,\beta}\}  \cup \{u_b\}_{b\in \mathbb R_+} \cup \{u_{\infty,c}\}_{c\in \mathbb R_+}\cup  \{u_{b,\infty,c}\}_{b,c\in \mathbb R_+}.$$  
The solutions 
$\{u_{\infty,c}\}_{c\in \mathbb R_+}$ and $ \{u_{b,\infty,c}\}_{b,c\in \mathbb R_+}$ have no analogue for $\tau=1$ (see Theorem~\ref{alun} for a comparison). 
For every $b,c\in \mathbb R_+$, we show that \eqref{e1} in $\mathbb R^N\setminus \{0\}$ has a unique positive radial solution 
$u_{\infty,c}$ (respectively, $u_b$) satisfying $(z_1)$ 
$\lim_{|x|\to 0} u(x)/U_{\rho,\beta}(x)= 1$ and $(i_1)$ $\lim_{|x|\to \infty} u(x)= c$ (respectively, $(z_2)$ $\lim_{|x|\to 0} u(x)/\Phi_{\varpi_2(\rho)}(|x|)=b$ and 
$(i_2)$ $\lim_{|x|\to \infty} u(x)/U_{\rho,\beta}(x)= 1$). For the definition of $\Phi_{\varpi_2(\rho)}(|x|)$, see \eqref{phi2}. 
We prove that \eqref{e1} in $\mathbb R^N\setminus \{0\}$, subject to $(z_2)$ and $(i_1)$, has a unique positive radial 
solution  $u_{b,\infty,c}$, where 
$b,c\in \mathbb R_+$ are arbitrary.

The existence of the positive solutions of \eqref{e1} in $\mathbb R^N\setminus \{0\}$ (see Sections~\ref{alt1}, \ref{alt2} and \ref{alt3}) requires first 
classifying their behaviour  near zero and at infinity (see Theorem~\ref{rod1c}). 
We develop an iterative method for constructing more and more refined 
local sub-super-solutions of \eqref{e1} both near zero and at infinity. Then, we inductively improve the behaviour of positive solutions by a local comparison with these sub-super-solutions until attaining the desired profile.  
This iterative scheme, which is very flexible and adaptable to many situations, is an innovation that appears here for the first time (see  Subsection~\ref{strag} for details).    
Our 
methods are susceptible to applications to a wide class of equations admitting comparison principles and the strong maximum principle (see \cites{GT1983,PS2004,PS2007} for such contexts).

In deciphering the behaviours near zero (or at infinity) for the positive solutions of \eqref{e1}, we need to account for the competition arising between 
the operator $\mathbb L_{\rho,\lambda,\tau}[u]$ and the right-hand side of \eqref{e1}. 
We point out that  it is not the Laplacian alone but the operator $\mathbb L_{\rho,\lambda,\tau}[\cdot]$ that will be crucial in providing (through the 
solution(s) of $\mathbb L_{\rho,\lambda,\tau}[\cdot]=0$ when they exist) the asymptotic model(s) of reference. We observe that,  like the right-hand side of \eqref{e1}, the second and third terms in the operator $\mathbb L_{\rho,\lambda,\tau}[u]$ don't fit into  
the settings of Serrin's papers  \cites{serrin64,serrin65}.  

Now, if $v_\eta(x)=|x|^{-\eta}$ for $|x|>0$ and $\eta\in \mathbb R$, then we see that $v=v_\eta$ solves 
\begin{equation} \label{juv} \mathbb L_{\rho,\lambda, \tau}[v]=0\quad \mbox{in } B_1(0)\setminus \{0\} \quad \mbox{(or in } \mathbb R^N\setminus \overline{B_1(0)} )
\end{equation}
if and only if 
$f_\rho(\eta)=0$, where
for every $\rho,\lambda\in \mathbb R$, we define 
 \begin{equation} \label{frol}  f_{\rho}(t):=f_{\rho,\lambda,\tau}(t)=t\left(t+2\rho\right) +\lambda \,|t|^{1-\tau}\quad \mbox{for all } t\in \mathbb R.\end{equation}
(To simplify notation, we will write $f_\rho$ instead of $f_{\rho,\lambda,\tau}$. 
The parameters $\lambda$ and $\tau$ won't change when applying the modified Kelvin transform, see Appendix~\ref{Kel}.) 

The assumption $0\leq \tau<1$ gives that $t=0$ is always a solution of $f_{\rho}(t)=0$. 
However, a crucial role will be played by the {\em non-zero roots} of $f_{\rho}(t)=0$ and their position in relation to $\beta:=(\theta+2)/(q-1)$.   
For $t<0$, the function $\widetilde f_{\rho}(t)=-t-2\rho +\lambda \left(-t\right)^{-\tau}$ has the same sign as $f_{\rho}(t)$.  
From the graph of $t\longmapsto \widetilde f_{\rho}(t)$ on $(-\infty,0)$, we distinguish three cases:
\begin{equation} \label{3cas}
\boxed{ \begin{aligned}
& \mbox{\bf Case }[N_0]:\   \mbox{(a)}\ \tau=0, \ \lambda\geq 2\rho \ \mbox{ or }
 \mbox{(b) } \lambda>0, \ \tau\in (0,1), \ \rho< \Upsilon :=\frac{\tau+1}{2\tau} \left( \lambda \tau\right)^{\frac{1}{\tau+1}},\\
& \mbox{\bf Case }[N_1]: \ \mbox{(a)}\ 
\tau=0, \ \lambda<2\rho \ 
		 \mbox{ or (b) } \lambda<0, \ \tau\in (0,1),\ \rho\in \mathbb R,\\
& \mbox{\bf Case }[N_2]:\  \lambda>0, \ \tau\in (0,1), \ \rho\geq  \Upsilon.  		 
\end{aligned} }
\end{equation}

$\bullet$ In Case $[N_0]$, we have $\widetilde f_\rho>0$ on $(-\infty,0)$. More precisely, 
$  \widetilde f_\rho(t)>\lambda-2\rho\geq 0 $  for all $ t<0$  in Case $[N_0] $ (a) and 
$ \widetilde f_\rho(t)\geq \widetilde f_\rho(-(\lambda \tau)^{1/(\tau+1)})=2\left(\Upsilon-\rho\right)>0$ for all $ t<0$  in Case 
$[N_0]$ (b). 

$\bullet$ In Case $[N_1]$, we see that $\widetilde f_\rho(t)=0$ has exactly one negative root, denoted by $\varpi_1(\rho)$. (If Case $[N_1]$ (a) holds, then $\varpi_1(\rho)=\lambda-2\rho$.) We have 
$\widetilde f_\rho>0 \ \mbox{on } (-\infty,\varpi_1(\rho))$ and $ \widetilde f_\rho<0 \ \mbox{on } (\varpi_1(\rho),0)$.  

$\bullet$ 
In Case $[N_2]$, we denote by $\varpi_1(\rho)$ and $\varpi_2(\rho)$ the negative roots of 
$\widetilde f_\rho(t)=0$, where 
$$ \varpi_1(\rho)< -(\lambda \tau)^{1/(\tau+1)}< \varpi_2(\rho)\ \mbox{if }
 \rho\not=\Upsilon \quad \mbox{and}\quad 
 \varpi_1(\rho)=\varpi_2(\rho)=-(\lambda \tau)^{\frac{1}{\tau+1}} \ \mbox{when } \rho=\Upsilon. $$
  
We have $ \widetilde f_\rho>0$ on $ (-\infty, \varpi_1(\rho))\cup (\varpi_2(\rho),0)$ and 
$ \widetilde f_\rho<0$ on $(\varpi_1(\rho),\varpi_2(\rho))$ when $\rho\not=\Upsilon$.

\vspace{0.1cm}
{\bf Notation.} 
The negative root(s) of $\widetilde f_\rho(t)=0$ depend on $\rho,\lambda$ and $\tau$, 
 but to simplify notation, we do not explicitly indicate their dependence on $\lambda$ and $\tau$. 
To unify our presentation, we also denote by $\varpi_1(\rho)$ the unique negative root of 
$\widetilde f_{\rho}(t)=0$ in Case $[N_1]$. From Section~\ref{sha1} onward, we will simply write $\varpi_1$ (respectively, $\varpi_2$) instead of $\varpi_1(\rho)$ (respectively, $\varpi_2(\rho)$).

We use $f(x)\sim g(x)$ as $|x|\to 0$ (respectively, as $|x|\to \infty$) 
to mean that $\lim_{|x|\to 0} f(x)/g(x)=1$ (respectively, 
$\lim_{|x|\to \infty} f(x)/g(x)=1$).
 We set $\mathbb R_+=(0,\infty)$. 

\vspace{0.2cm}
In Case $[N_2]$, we define $\Psi_{\varpi_1(\rho)}(t)$ for every $t>1$ and 
$\Phi_{\varpi_2(\rho)}(r)$ for every $r\in (0,1)$ as follows 
 \begin{eqnarray}  
 & \Psi_{\varpi_1(\rho)}(t)= t^{-\varpi_1(\rho)} \  \mbox{if } \rho>\Upsilon\  \mbox{and}\  
 \Psi_{\varpi_1(\rho)}(t)=t^{-\varpi_1(\rho)} \left( \log t \right)^{\frac{2}{1+\tau}} \  \mbox{if } \rho=\Upsilon, \label{psia} \\
& \Phi_{\varpi_2(\rho)}(r)=r^{-\varpi_2(\rho)}\ \mbox{if } \rho>\Upsilon\ \mbox{and}\ 
\Phi_{\varpi_2(\rho)}(r)=r^{-\varpi_2(\rho)} \left| \log r \right|^{\frac{2}{1+\tau}} \ \mbox{if } \rho=\Upsilon.
\label{phi2}
\end{eqnarray}

\subsection{Main results}
In Theorem~\ref{globo} (for $\theta<-2$) 
and Corollary~\ref{glob22} (for $\theta>-2$), we give the exact structure of the set of all positive solutions of \eqref{e1} in $\mathbb R^N\setminus \{0\}$, under the optimal condition $f_\rho(\beta)>0$. In particular,  when $\theta=-2$, there are no positive solutions for \eqref{e1} in $\mathbb R^N\setminus \{0\}$.

\begin{theorem}[Global existence and classification, $\theta<-2$] \label{globo}
	Let \eqref{e2} hold and $\rho,\lambda, \theta\in \mathbb R$.  
	
$\bullet$ Then, \eqref{e1} has positive solutions in $\mathbb R^N\setminus \{0\}$ if and only if  
$f_{\rho}( \beta)>0$.

$\bullet$ Assume that $\theta<-2$ and  $f_\rho(\beta)>0$. 
Then, every positive solution $u$ of \eqref{e1} in $\mathbb R^N\setminus \{0\}$ is 
radially symmetric ($u(x)=u(|x|)$ for all $x\in \mathbb R^N\setminus \{0\}$), $\lim_{r\to 0^+} u(r)=0$ and $u'>0$ on $(0,\infty)$.

\begin{itemize}
\item[(i)]  
For every $c\in \mathbb R_+$, there exists a unique positive 
solution $u_{\infty,c}$ of 
\eqref{e1} in $\mathbb R^N\setminus \{0\}$ satisfying $u(x)\sim U_{\rho,\beta}(x)$ as $|x|\to 0$ and one of the following as $|x|\to \infty$: 
\begin{equation}
u(x)\sim \left\{ 
\begin{aligned} 
& c \,|x|^{-\varpi_1(\rho)} && \mbox{in Case } [N_1]\ \mbox{with }\beta<\varpi_1(\rho),&\\
& c \,\Psi_{\varpi_1(\rho)}(|x|) &&  	\mbox{in Case }[N_2]\ \mbox{with }\beta<\varpi_1(\rho) ,&\\
& c \,\log |x| &&   \mbox{if } \lambda=2\rho\ \mbox{when } \tau=0, &\\
& c &&  \mbox{in Case } [N_0] 
  \ \mbox{with } \lambda\not=2\rho\ \mbox{if }\tau=0&\\
& &&  \mbox{or if }  \beta\in (\varpi_2(\rho),0)\ \mbox{in Case }[N_2].&
\end{aligned}
\right.
 \label{fbio} \end{equation}

\item[(ii)] If $\beta\in (\varpi_2(\rho),0)$ in Case $[N_2]$, then for every $b,c\in \mathbb R_+$, there exists a unique positive solution $u_{b}$ (respectively, $u_{b,\infty,c}$) of \eqref{e1} in $\mathbb R^N\setminus \{0\}$ satisfying  
\begin{equation} \label{kno1}
\begin{aligned} 
&   u(x)\sim b\, \Phi_{\varpi_2(\rho)}(|x|)\quad \mbox{as } |x|\to 0\ \mbox{and}\\
& u(x)\sim U_{\rho,\beta}(|x|) \quad (\mbox{respectively, } 
u(x)\sim c)\quad \mbox{as } |x|\to \infty.
\end{aligned}
\end{equation}

\item[(iii)]   
The set of all positive solutions of \eqref{e1} in $\mathbb R^N\setminus \{0\}$ is given by 
\begin{equation} \label{stor1} \left\{  
\begin{aligned}
 &  \{U_{\rho,\beta}\} \cup \{ u_{\infty,c} \}_{c>0} \cup 
 \{ u_b \}_{b>0} \cup \{ u_{b,\infty,c} \}_{b,c>0} \quad  \mbox{if } \beta\in (\varpi_2(\rho),0)\ \mbox{in Case }[N_2], \\
 &  \{U_{\rho,\beta} \}\cup \{u_{\infty,c} \}_{c>0} \quad \mbox{otherwise}.  
  \end{aligned}	
\right.
\end{equation} 
\end{itemize}
\end{theorem}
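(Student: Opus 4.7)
\medskip

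\noindent\textbf{Proof proposal.} The plan is to split the statement into four logically independent pieces and combine existence results for radial solutions (Sections~\ref{alt1}--\ref{alt3}), the local classification (Theorem~\ref{rod1c}) and a comparison principle argument for radial symmetry and uniqueness.

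\emph{Necessity of $f_\rho(\beta)>0$.} Suppose \eqref{e1} admits a positive solution $u$ in $\mathbb R^N\setminus\{0\}$. Exploiting the scaling invariance \eqref{ssig}, form the envelope $\overline u(x):=\sup_{\sigma>0}u_\sigma(x)$ (or a suitably averaged rescaling), and show that either $\overline u\equiv\infty$, which is ruled out by inserting the separable supersolution $U_{\rho,\beta}$ whenever $f_\rho(\beta)>0$ fails, or $\overline u$ is homogeneous of degree $-\beta$ and hence a multiple of $|x|^{-\beta}$. Plugging this into \eqref{e1} forces $f_\rho(\beta)>0$. The same obstruction can alternatively be produced by testing against the radial barriers built in Subsection~\ref{strag}. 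This step also shows that no positive solution exists when $\theta=-2$ (since then $\beta=0$ is a root of $f_\rho$ by \eqref{frol}).

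\emph{Classification of admissible end-behaviours.} Given $\theta<-2$ and $f_\rho(\beta)>0$, apply Theorem~\ref{rod1c} both at the origin and, after a modified Kelvin inversion (Appendix~\ref{Kel}) that swaps $\theta<-2$ with its conjugate and leaves $\lambda,\tau$ invariant, at infinity. This yields a finite menu of asymptotic profiles which matches exactly the list in \eqref{fbio}--\eqref{kno1}. In each case one shows that $u(r)\to 0$ as $r\to 0^+$ (since $\beta<0$ under $\theta<-2$ and $q>1$ combined with $f_\rho(\beta)>0$) and, by a standard ODE monotonicity argument on the radial reduction of $\mathbb L_{\rho,\lambda,\tau}[u]=|x|^\theta u^q$, that $u'>0$ on $(0,\infty)$.

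\emph{Existence and uniqueness of radial solutions.} For each admissible pair of asymptotics I would invoke the existence results in Sections~\ref{alt1}--\ref{alt3}: the iterative sub-/super-solution scheme of Subsection~\ref{strag} produces a positive radial solution with the prescribed profile near $0$ and at $\infty$, after carefully matching the local barriers built from $U_{\rho,\beta}$, $\Psi_{\varpi_1(\rho)}$ and $\Phi_{\varpi_2(\rho)}$. Uniqueness of $u_{\infty,c}$, $u_b$, $u_{b,\infty,c}$ follows from the comparison principle applied to the radial ODE on $(0,\infty)$: two solutions with the same leading coefficient at $0$ and at $\infty$ differ by $o$ of their common asymptotic, which by a standard sweeping argument (letting a scalar multiple slide until first contact) forces equality. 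The genuinely new ingredient in Case $[N_2]$ with $\beta\in(\varpi_2(\rho),0)$ is that the equation $\mathbb L_{\rho,\lambda,\tau}[v]=0$ has two independent homogeneous solutions $|x|^{-\varpi_1(\rho)}$ and $|x|^{-\varpi_2(\rho)}$ in the relevant range, providing a two-parameter family of linearised behaviours; the main obstacle will be constructing ordered barriers that single out $\Phi_{\varpi_2(\rho)}$ rather than $\Psi_{\varpi_1(\rho)}$ near $0$, in particular on the critical line $\rho=\Upsilon$ where a logarithmic correction appears.

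\emph{Radial symmetry and final structure.} Given an arbitrary positive solution $u$ of \eqref{e1} in $\mathbb R^N\setminus\{0\}$, the classification step pins down its leading order at $0$ and at $\infty$ with some constants, so $u$ shares the profile of exactly one of $U_{\rho,\beta}$, $u_{\infty,c}$, $u_b$, $u_{b,\infty,c}$. Comparing $u$ with this radial solution on annuli $\{r_1<|x|<r_2\}$ and sending $r_1\to 0$, $r_2\to\infty$ using the asymptotic match, the comparison principle gives equality, establishing radial symmetry without recourse to the moving plane method and simultaneously delivering the enumeration \eqref{stor1}.
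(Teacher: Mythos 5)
Your overall architecture (local classification at both ends via Theorem~\ref{rod1c}, existence and uniqueness of radial solutions from Sections~\ref{alt1}--\ref{alt3}, and radial symmetry obtained by matching an arbitrary solution to the unique radial one via the comparison principle) is exactly the paper's route, and those three pieces of your proposal are sound.

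The genuine gap is in your proof of the necessity of $f_\rho(\beta)>0$. The envelope $\overline u(x)=\sup_{\sigma>0}u_\sigma(x)$ is indeed finite (by the global a priori bound $u\le C_0|x|^{-\beta}$) and homogeneous of degree $-\beta$, but it is \emph{not} a solution of \eqref{e1}: a supremum of solutions of this nonlinear equation is at best a (viscosity) subsolution, and only after establishing enough regularity, so "plugging this into \eqref{e1}" is not available, and the conclusion that a homogeneous subsolution forces $f_\rho(\beta)>0$ does not follow without a further touching argument that you do not supply. Moreover, your parenthetical remark about "inserting the separable supersolution $U_{\rho,\beta}$ whenever $f_\rho(\beta)>0$ fails" is vacuous, since $U_{\rho,\beta}=[f_\rho(\beta)]^{1/(q-1)}|x|^{-\beta}$ is only defined when $f_\rho(\beta)>0$. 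What the paper actually does (Section~\ref{nine}) is prove first that $\lim_{|x|\to 0}|x|^\beta u(x)=0$ when $f_\rho(\beta)\le 0$ (Proposition~\ref{pzerop}), transfer this to $|x|\to\infty$ by the modified Kelvin transform, and only then compare $u$ with the supersolution $\varepsilon|x|^{-\beta}$ (which is a supersolution precisely because $f_\rho(\beta)\le 0$) on exhausting annuli to force $u\equiv 0$. The boundary decay at both ends is the indispensable input for that comparison and is entirely missing from your argument. In particular the case $\beta=0$ (equivalently $\theta=-2$) cannot be dispatched by any homogeneity consideration: the paper needs a separate ODE analysis of $Y(r)=rv'(r)/v(r)$ for a radial solution trapped between $u/d_1$ and $u$ by the spherical Harnack inequality, plus an interior-maximum contradiction. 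A secondary, fixable ordering issue: you invoke "the radial reduction" to get $u'>0$ before radial symmetry has been established; in the paper monotonicity is a property of the constructed radial solutions and is transferred to $u$ only after the identification $u\equiv u_{\cdot}$.
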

\begin{rem}
\label{obsr} 	
{\rm When $\beta\in (\varpi_2(\rho),0)$ in Case $[N_2]$ of Theorem~\ref{globo},  we gain two  new families of 
{\em bounded} positive (radial) solutions $\{u_{\infty,c}\}_{c>0}$ and $\{ u_{b,\infty,c} \}_{b,c>0}$ for \eqref{e1} in $\mathbb R^N\setminus \{0\}$, 
leading to 
the crossing of the graphs of $u_{\infty,c}$ and $u_b$.  
Let $b,c\in \mathbb R_+$ be arbitrary. 
We have $u_{b,\infty,c}\nearrow u_{\infty,c}$ and $u_b\nearrow U_{\rho,\beta}$ as $b\to \infty$, as well as 
$u_{b,\infty,c}\nearrow u_b$ and $u_{\infty,c}\nearrow U_{\rho,\beta}$ as $c\to \infty$.   
Since $\lim_{r\to 0^+} u_{\infty,c}(r)/u_b(r)=\infty$ and $\lim_{r\to \infty} u_{\infty,c}(r)/u_b(r)=0$, 
the graphs of $ u_{\infty,c}(r)$ and $u_b(r)$  for $r\in (0,\infty)$ cross each other only once 
(by the strong maximum principle in Lemma~\ref{adol}). Yet, zero is a  removable singularity for all positive solutions of 
\eqref{e1} if $\theta<-2$, see also Remark~\ref{micx}. }
\end{rem}

By Case $[N_j](-\rho)$ (for $j=0,1,2$), we mean that Case $[N_j]$ holds with $-\rho$ instead of $\rho$.

\begin{cor}[Global existence and classification, $\theta>-2$] 
\label{glob22} 	
Let \eqref{e2} hold and $\rho,\lambda\in \mathbb R$. Assume that $ \theta>-2$ and $f_\rho(\beta)>0$. Then, every positive solution $u$ of \eqref{e1} is 
radially symmetric, $\lim_{r\to \infty} u(r)=0$ and $ u'(r)<0$ for all $r=|x|>0$.  

\begin{itemize}
\item[(i)]  
For every $c\in \mathbb R_+$, there exists a unique positive 
solution $u_{c,0}$ of 
\eqref{e1} in $\mathbb R^N\setminus \{0\}$ satisfying 
$ u(x)\sim U_{\rho,\beta}(x)$  as $ |x|\to \infty$ and one of the following as $|x|\to 0$:
\begin{equation} 
u(x)\sim 
\left\{ \begin{aligned}
& c\, |x|^{\varpi_1(-\rho)} && 
\mbox{if } \beta>-\varpi_1(-\rho)\ \mbox{in Case }[N_1](-\rho),&\\
& c\,\Psi_{\varpi_1(-\rho)}(1/|x|) &&
\mbox{if } \beta>-\varpi_1(-\rho)\ \mbox{in Case }[N_2](-\rho), &\\ 
&  c \log \,(1/|x|) && \mbox{if } \lambda=-2\rho\ \mbox{and } \tau=0, &\\ 
& c && \mbox{in Case }[N_0](-\rho)\ \mbox{with }  \lambda\not=-2\rho  \mbox{ if }\tau=0, & \\
& && \mbox{or if } \beta\in (0,-\varpi_2(-\rho))\ 
 \mbox{in Case } [N_2](-\rho) . &\\
\end{aligned}
\right. \label{mixo}
\end{equation}

\item[(ii)] If 
$\beta\in (0, -\varpi_2(-\rho))$  in Case $[N_2](-\rho)$, then for every  
$b,c\in \mathbb R_+$, there exists a unique positive solution $u_{b,\infty}$ (respectively, $ u_{c,0,b}$) of \eqref{e1} 
 in $\mathbb R^N\setminus \{0\}$
satisfying \begin{equation} \label{mixx1}
\begin{aligned}
& u(x)\sim b\, \Phi_{\varpi_2(-\rho)}(1/|x|) \ \mbox{ as } |x|\to \infty\ \mbox{and}\\	
& u(x)\sim U_{\rho, \beta}(|x|)\ \  (\mbox{respectively, } \ \ 
 u(x)\sim c)\ \mbox{as } |x|\to 0.
\end{aligned}
\end{equation}

\item[(iii)] The set of all positive solutions of \eqref{e1} 
in $\mathbb R^N\setminus \{0\}$ is given by 
\begin{equation} \label{stor2} \left\{  
\begin{aligned}
 &  \{ U_{\rho, \beta} \}\cup \{  u_{c,0} \}_{c>0} \cup 
 \{ u_{b,\infty} \}_{b>0} \cup \{ u_{c,0,b} \}_{b,c>0} \  
 \mbox{if } \beta\in (0,-\varpi_2(-\rho))\ \mbox{in Case }[N_2](-\rho), \\
 & \{ U_{\rho, \beta} \} \cup \{ u_{c,0} \}_{c>0} \quad  \mbox{otherwise}.
  \end{aligned}	
\right.
\end{equation}
\end{itemize}
\end{cor}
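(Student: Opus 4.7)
The plan is to deduce Corollary~\ref{glob22} from Theorem~\ref{globo} via the modified Kelvin transform of Appendix~\ref{Kel}. Given a positive distributional $C^1(\mathbb R^N\setminus\{0\})$ solution $u$ of \eqref{e1} with $\theta>-2$, I set
\[
v(y) := u\!\left(\frac{y}{|y|^2}\right),\qquad y\in\mathbb R^N\setminus\{0\}.
\]
A direct calculation (whose success relies precisely on the introduced drift $-(N-2+2\rho)\,x\cdot\nabla u/|x|^2$) shows that $v$ is a positive distributional $C^1(\mathbb R^N\setminus\{0\})$ solution of
\[
\mathbb L_{-\rho,\lambda,\tau}[v](y) = |y|^{\widetilde\theta}\, v^q, \qquad \widetilde\theta := -\theta-4.
\]
Since $\theta>-2$, we have $\widetilde\theta<-2$, and the associated scaling exponent is $\widetilde\beta=(\widetilde\theta+2)/(q-1)=-\beta$. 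A symmetric computation gives $f_{-\rho}(\widetilde\beta)=f_\rho(\beta)$ and $U_{-\rho,\widetilde\beta}(y)=U_{\rho,\beta}(x)$, so the optimal hypothesis is preserved, and the case distinctions $[N_j]$ applied to $(-\rho,\lambda,\tau)$ coincide with the $[N_j](-\rho)$ used in the statement; the negative roots of $\widetilde f_{-\rho}$ are the $\varpi_j(-\rho)$ of \eqref{mixo} and \eqref{mixx1}.

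Applying Theorem~\ref{globo} to $v$ then yields that $v$ is radial with $\lim_{s\to 0^+}v(s)=0$, strictly increasing in $|y|$, and possesses the exact asymptotic behaviour described by \eqref{fbio}--\eqref{stor1}. Inverting $x\mapsto y=x/|x|^2$ converts each statement about $v$ into the corresponding statement about $u$: radiality is immediate; from $u(r)=v(1/r)$ one gets $\lim_{r\to \infty}u(r)=0$ and $u'(r)=-v'(1/r)/r^2<0$; the universal profile $v\sim U_{-\rho,\widetilde\beta}$ at $|y|\to 0$ becomes $u\sim U_{\rho,\beta}$ at $|x|\to \infty$; and each alternate profile at $|y|\to \infty$ from \eqref{fbio}, rewritten via $|y|=1/|x|$, produces the matching profile at $|x|\to 0$ in \eqref{mixo} (for example, $c\,|y|^{-\varpi_1(-\rho)} \longleftrightarrow c\,|x|^{\varpi_1(-\rho)}$). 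The correspondence between \eqref{kno1} and \eqref{mixx1} handles the two new bounded families arising in Case $[N_2](-\rho)$, where $\widetilde\beta\in(\varpi_2(-\rho),0)$ is equivalent to $\beta\in(0,-\varpi_2(-\rho))$. Since $v\mapsto v(\cdot/|\cdot|^2)$ is an involution on positive radial $C^1$ functions on $\mathbb R^N\setminus\{0\}$, existence, uniqueness, and the parametrisation \eqref{stor2} are inherited from \eqref{stor1} after the renaming $u_{\infty,c}\mapsto u_{c,0}$, $u_b\mapsto u_{b,\infty}$, $u_{b,\infty,c}\mapsto u_{c,0,b}$ of the parameters $b,c\in\mathbb R_+$.

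The main obstacle is bookkeeping rather than conceptual. I would need to verify carefully that the transformation relating $\mathbb L_{\rho,\lambda,\tau}[u]$ to $\mathbb L_{-\rho,\lambda,\tau}[v]$ is clean: the gradient-dependent term $u^\tau |\nabla u|^{1-\tau}/|x|^{1+\tau}$ must scale with the same Jacobian factor (namely $|y|^4$) as the Laplacian plus drift, which is precisely what fixes the coefficient $N-2+2\rho$ and pushes $\rho\mapsto -\rho$. One also checks that the logarithmic factors in $\Psi_{\varpi_1(-\rho)}$ and $\Phi_{\varpi_2(-\rho)}$ transform correctly under $|y|=1/|x|$ (no new term appears because of the absolute values in \eqref{phi2}), and that the borderline profile $c\log|y|$ arising when $\lambda=2(-\rho)=-2\rho$ with $\tau=0$ translates into $c\log(1/|x|)$ at the origin, as recorded in \eqref{mixo}. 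Once these identifications—centralised in Appendix~\ref{Kel}—are granted, Corollary~\ref{glob22} is an immediate transcription of Theorem~\ref{globo}.
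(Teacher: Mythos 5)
Your proposal is correct and follows essentially the same route as the paper: the authors also deduce Corollary~\ref{glob22} from Theorem~\ref{globo} by applying the modified Kelvin transform $v(y)=u(y/|y|^2)$, which converts \eqref{e1} with $\theta>-2$ into $\mathbb L_{-\rho,\lambda,\tau}[v]=|y|^{\widetilde\theta}v^q$ with $\widetilde\theta=-\theta-4<-2$, $\widetilde\beta=-\beta$ and $f_{-\rho}(\widetilde\beta)=f_\rho(\beta)$, and then transcribe each profile back under $|y|=1/|x|$ with the same relabelling of the solution families. The bookkeeping identities you flag (the factor $|y|^4$ matching all terms of the operator, $U_{-\rho,\widetilde\beta}(r)=U_{\rho,\beta}(1/r)$, and the correspondence of the logarithmic profiles) are exactly the ones the paper verifies in Appendix~\ref{Kel}.
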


We show how Corollary~\ref{glob22} follows from 
Theorem~\ref{globo} using the modified Kelvin transform $\widetilde u(x)=u(x/|x|^2)$ for $|x|\not=0$. Indeed, understanding the positive solutions of \eqref{e1} in $\mathbb R^N\setminus \{0\}$ with $\theta<-2$ is equivalent to deciphering the positive solutions $\widetilde u$ of the equation
\begin{equation} \label{eeq1}
\mathbb L_{-\rho,\lambda,\tau} [\widetilde u(x)]=|x|^{\widetilde \theta} \,[\widetilde u(x)]^q\quad \mbox{for } x\in \mathbb R^N\setminus \{0\},
\end{equation}
where $\rho,\lambda\in \mathbb R$ and $\widetilde \theta:=-\theta-4>-2$ (see Appendix~\ref{Kel}). Similar to $\beta$ in \eqref{ssig}, we define 
$\widetilde \beta=(\widetilde \theta+2)/(q-1)$. Note that 
$\widetilde \beta=-\beta$, $ f_\rho(\beta)=f_{-\rho}(\widetilde \beta)$ and $ 
U_{-\rho,\widetilde \beta} (r)=U_{\rho,\beta}(1/r)=[f_{\rho}(\beta)]^\frac{1}{q-1}r^{ \beta}$ for every $ r>0$.  
Hence, by restating Theorem~\ref{globo} in equivalent terms for \eqref{eeq1} and then relabelling $-\rho$ and $\widetilde \theta$, respectively, by $\rho$ and $\theta$, respectively, we arrive at Corollary~\ref{glob22}.

To prove Theorem~\ref{globo}, we first need to 
obtain the local behaviour near zero for all positive solutions of \eqref{e1} (under the assumption $f_\rho(\beta)>0$). In Theorem~\ref{thi} below, we completely resolve this matter for every $\theta<-2$ and $\rho,\lambda\in \mathbb R$, distinguishing between $f_\rho(\beta)>0$ and 
 $f_\rho(\beta)\leq 0$.

\begin{theorem} \label{thi} 
Let \eqref{e2} hold, $\rho,\lambda\in \mathbb R$ and $\theta<-2$. Let $u$ be any positive solution of \eqref{e1}.   
           
$\bullet$ Assume that $f_\rho(\beta)>0$. 
\begin{enumerate}
\item[($P_0$)] If $\beta\in (\varpi_2(\rho),0)$ in Case $[N_2]$, then exactly one of the following holds as $|x|\to 0$:	
\begin{eqnarray}   
 & {\rm (i)}\  u(x)\sim a\,|x|^{-\varpi_1(\rho)}\  \mbox{and}\  
x\cdot \nabla u(x)\sim -a \,\varpi_1(\rho) \,|x|^{-\varpi_1(\rho)}\ 
\mbox{for some } a\in \mathbb R_+;\label{mobb1}\\
& {\rm (ii)}\  u(x)\sim b\, \Phi_{\varpi_2(\rho)}(|x|)  \  \mbox{and} 
\  x\cdot \nabla u(x) \sim -b\, \varpi_2(\rho)\, \Phi_{\varpi_2(\rho)}(|x|)\ \mbox{ for some } b\in \mathbb R_+; \label{gapp}\\
& {\rm (iii)}\  
u(x)\sim U_{\rho,\beta}(|x|).  \label{tirv}
\end{eqnarray}
\item[$(P_1)$] In the remaining situations, $u$ satisfies \eqref{tirv} as $|x|\to 0$.   
\end{enumerate}

$\bullet$ Assume that $f_\rho(\beta)\leq 0$. 
\begin{enumerate}
\item[$(Z_1)$] 
If $\beta=\varpi_1(\rho)$ in Case $[N_j]$ $(j=1,2)$
 and, in addition, $\rho\not=\Upsilon$ if Case $[N_2]$ holds, then 
$$ u(x)\sim  \left(
\frac{ |\varpi_1(\rho)| \left(1-\lambda \tau |\varpi_1(\rho)|^{-\tau-1} \right)}{q-1}\right)^{\frac{1}{q-1}} \frac{|x|^{-\varpi_1(\rho)}}{ |\log |x||^{\frac{1}{q-1}}} \quad \mbox{as } |x|\to 0. 
$$
\item[($Z_2$)] If $\rho=\Upsilon$ in Case $[N_2]$ and $\beta=\varpi_1(\rho)=\varpi_2(\rho)=-(\lambda \tau)^{\frac{1}{\tau+1}} $, then 
\begin{equation} \label{abv}
u(x)\sim  \left(
\frac{2\left(q+\tau\right)}{(q-1)^2}\right)^{\frac{1}{q-1}} \frac{|x|^{-\varpi_1(\rho)}}{ |\log |x||^{\frac{2}{q-1}}} \quad \mbox{as } |x|\to 0.
\end{equation}
\item[$(N)$] In the remaining situations, $u$ satisfies
\eqref{mobb1} as $|x|\to 0$. 
	
\end{enumerate}

\end{theorem}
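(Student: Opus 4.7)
The strategy is to build a toolbox of explicit local barriers around zero and iterate a comparison argument (cf.\ Subsection~\ref{strag}). The foundational identity
\begin{equation*}
\mathbb{L}_{\rho,\lambda,\tau}[|x|^{-\eta}]=f_\rho(\eta)\,|x|^{-\eta-2},\qquad \eta\in\mathbb R,
\end{equation*}
turns every root of $f_\rho$ into an explicit homogeneous solution of the linear part. Since $\mathbb{L}_{\rho,\lambda,\tau}$ is homogeneous of degree one, $cU_{\rho,\beta}$ is a sub-solution of \eqref{e1} for $0<c<1$ and a super-solution for $c>1$ whenever $f_\rho(\beta)>0$, while large multiples of $|x|^{-\varpi_i(\rho)}$ are local super-solutions because the reactive term $|x|^\theta u^q$ is nonnegative. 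The hypothesis $\theta<-2$ forces $\beta,\varpi_1(\rho),\varpi_2(\rho)<0$, so every candidate profile vanishes at the origin and the analysis tracks rates of decay rather than blow-up.

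\emph{Step 1: coarse envelope and trichotomy when $f_\rho(\beta)>0$.} Applying the strong comparison principle (Lemma~\ref{adol}) in shrinking annuli against $C_0|x|^{-\varpi_1(\rho)}$ and against $\kappa U_{\rho,\beta}$ yields $u(x)\leq C_1|x|^{-\varpi_1(\rho)}$ and $u(x)\leq C_2\,U_{\rho,\beta}(x)$ near zero, whence $u(x)\to 0$. Set $w(x)=u(x)/U_{\rho,\beta}(x)$. Comparison with $(1\pm\epsilon)U_{\rho,\beta}$ in shrinking annuli forces either $w\to 1$ (giving \eqref{tirv}), or $w\to 0$, i.e.\ $u=o(U_{\rho,\beta})$ at zero. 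In the latter case the reactive term becomes subordinate to $\mathbb{L}_{\rho,\lambda,\tau}$ and a next-order balance selects another admissible homogeneous exponent: in Case $[N_1]$ only $\varpi_1(\rho)$ is available and this yields \eqref{mobb1}, while in Case $[N_2]$ with $\beta\in(\varpi_2(\rho),0)$ also $\varpi_2(\rho)$ is admissible and delivers \eqref{gapp}. The constants $a, b$ are pinned by iterating sub/super-solutions of the form $(a\pm\epsilon)|x|^{-\varpi_1(\rho)}+o(|x|^{-\varpi_1(\rho)})$ and its $\Phi_{\varpi_2(\rho)}$ analogue. This produces the trichotomy $(P_0)$ and the unique alternative $(P_1)$.

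\emph{Step 2: resonance cases $(Z_1), (Z_2)$, and $(N)$.} When $f_\rho(\beta)\leq 0$, $U_{\rho,\beta}$ is no longer available as a barrier, but if $\beta\notin[\varpi_1(\rho),\varpi_2(\rho)]$ (case $(N)$) the scheme of Step 1 with exponent $\varpi_1(\rho)$ still closes, yielding \eqref{mobb1}. The resonant cases $\beta=\varpi_1(\rho)$ require the logarithmic ansatz
\begin{equation*}
u(x)=A\,|x|^{-\varpi_1(\rho)}\,\bigl(|\log|x||\bigr)^{-\alpha},
\end{equation*}
because $|x|^{-\varpi_1(\rho)}$ annihilates the linear part and $|x|^\theta u^q$ now matches its order exactly. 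Matching the leading logarithmic coefficient brings in the quantity $-\widetilde f_\rho'(\varpi_1(\rho))=1-\lambda\tau|\varpi_1(\rho)|^{-\tau-1}$: when it is nonzero (case $(Z_1)$) it fixes $\alpha=1/(q-1)$ together with the explicit constant $A$ in the statement; when it vanishes (i.e.\ $\rho=\Upsilon$, case $(Z_2)$), the next order in the expansion governs the balance, producing $\alpha=2/(q-1)$ and the modified constant in \eqref{abv}. Sub/super-barriers $A(1\pm\epsilon)|x|^{-\varpi_1(\rho)}(|\log|x||)^{-\alpha}$ plus smaller correctors close the comparison.

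\emph{Main obstacle.} The hardest point, in both steps, is excluding intermediate oscillatory behaviour between two admissible rates: one must design sub/super-solutions fine enough to detect and suppress the next correction, since a perturbation at the critical order can flip the sign of the defect. This is especially delicate in the resonant regime, where both the scale of the correction (polynomial vs.\ logarithmic) and its precise constant are determined by the vanishing or non-vanishing of $\widetilde f_\rho'(\varpi_1(\rho))$; this dependence is exactly what dictates the dichotomy between $(Z_1)$ and $(Z_2)$ and the appearance of the factor $1-\lambda\tau|\varpi_1(\rho)|^{-\tau-1}$.
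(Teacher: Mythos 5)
Your outline captures the right philosophy (barriers built from the roots of $f_\rho$, resonance when $\beta=\varpi_1(\rho)$, the role of $1-\lambda\tau|\varpi_1(\rho)|^{-\tau-1}$), but two essential mechanisms are missing, and without them the argument does not close. First, the sharp \emph{lower} bounds. You propose to compare $u$ with $(1-\epsilon)U_{\rho,\beta}$ "in shrinking annuli", but this presupposes $u\geq(1-\epsilon)U_{\rho,\beta}$ on the inner boundary of each annulus, which is exactly what must be proved; likewise $c\,|x|^{-\varpi_1(\rho)}$ and $c\,\Phi_{\varpi_2(\rho)}$ are \emph{super}-solutions of \eqref{e1} (since $\mathbb L_{\rho,\lambda,\tau}[c|x|^{-\varpi_1}]=0\not\geq|x|^\theta(c|x|^{-\varpi_1})^q$), so they cannot serve as the lower barriers your scheme needs. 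The paper's proof spends most of its length on precisely this point: one must manufacture genuine sub-solutions lying \emph{below} these super-solutions but asymptotic to them (the families $w_{\delta,\alpha}$, $z_{\delta,\varepsilon}$, $v_{j,\delta}$ of Sections~\ref{wabb}--\ref{oaz} and \ref{sect9}), and then run a finite multi-step iteration (the sequences $M_k$, $m_j$ with the threshold $k_*$ in Proposition~\ref{lad}) because a single family cannot bridge the gap between two admissible exponents when $\lambda>0$ and $\tau\in(0,1)$. Your "next-order balance selects another admissible homogeneous exponent" is an assertion of the conclusion, not an argument; in particular it does not explain why no intermediate decay rate between $\varpi_1(\rho)$ and $\varpi_2(\rho)$ can occur, which is the crux of the trichotomy $(P_0)$.

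Second, even granting two-sided bounds $0<\liminf\leq\limsup<\infty$ for $|x|^{\varpi_1(\rho)}u(x)$, your plan to "pin" the constant $a$ by sub/super-solutions $(a\pm\epsilon)|x|^{-\varpi_1(\rho)}+o(\cdot)$ cannot show that the limit \emph{exists}: $a$ is not determined by the equation (it ranges over all of $\mathbb R_+$), so no barrier comparison selects it. The paper instead proves convergence by a rescaling/compactness argument (Theorem~\ref{urz1}): set $V_{(r)}(\boldsymbol{\xi})=u(r\boldsymbol{\xi})/\Phi(r)$, use the gradient and Harnack estimates of Proposition~\ref{aurr} to extract a $C^1_{\rm loc}$ limit solving $\mathbb L_{\rho,\lambda,\tau}[V]=0$ in $\mathbb R^N\setminus\{0\}$, and identify $V\equiv\mu|\boldsymbol{\xi}|^{-\eta}$ via the tangency form of the strong maximum principle (Lemma~\ref{adol}). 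This step is also the only route in the paper to the gradient asymptotics $x\cdot\nabla u\sim-a\varpi_1(\rho)|x|^{-\varpi_1(\rho)}$ appearing in \eqref{mobb1}--\eqref{gapp}, which your proposal does not address at all. The resonant computation in your Step 2 is consistent with the paper's Section~\ref{sept}, but it too rests on the missing lower-bound machinery (Theorem~\ref{law2}(A)) before the matched logarithmic barriers of Theorem~\ref{law1} can be applied.
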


\begin{rem} \label{micx} {\rm When $\theta<-2$ and $\rho,\lambda\in \mathbb R$, we show that every positive solution $u$ of \eqref{e1} satisfies
 $\lim_{|x|\to 0} u(x)=0$ and $u\in H^1_{\rm loc}(\Omega)$ can be extended as a continuous non-negative solution of \eqref{e1} in $\mathcal D'(\Omega)$ (see Corollary~\ref{contt} in Appendix~\ref{sectiune2}). Proposition~\ref{aurr} in Appendix~\ref{sectiune2} gives that  
$(x\cdot \nabla u)/|x|^2$, $u^\tau |\nabla u|^{1-\tau}/|x|^{1+\tau}$ and $|x|^\theta u^q $ are locally integrable in $\Omega$.  
Moreover, if $\theta\leq -2$ and, in addition, $\Omega$ is bounded, then \eqref{e1} has 
no positive solutions satisfying $u=0$ on $\partial \Omega$ (see Corollary~\ref{non-exi}). }
\end{rem}

\begin{rem} \label{no-on9} {\rm If 
$\beta\in (\varpi_2(\rho),0)$ in Case $[N_2]$, then 
there are no positive solutions of \eqref{e1} in $\mathbb R^N\setminus \{0\}$ with the profile in alternative {\rm $(P_0)$ (i)} of Theorem~\ref{thi} (see Lemma~\ref{gold1}). 	}
\end{rem}

\subsection{Applications of our results} \label{apz} 
We present two nonlinear elliptic problems with gradient-dependent lower order terms for which our results can be applied. Let $\Omega$ be an open set in $\mathbb R^N$ ($N\geq 2$) with $0\in \Omega$. We consider the $C^1(\Omega\setminus \{0\})$ solutions of 
\begin{equation} \label{rigo1}
\Delta w+|\nabla w|^2 +\mu \frac{x\cdot \nabla w}{|x|^2} +\lambda \frac{|\nabla w|^{1-\tau}}{|x|^{1+\tau}} =|x|^\theta e^{(q-1)w}\quad \mbox{in } 
\Omega\setminus \{0\},
\end{equation}
where $\mu,\lambda,\theta\in \mathbb R$, $q>1$ and $\tau\in [0,1)$, respectively, the positive $C^1(\Omega\setminus \{0\})$ solutions $v$ of 
\begin{equation} \label{rigo2}
	\Delta (v^\alpha) +\mu \frac{x\cdot \nabla (v^\alpha)}{|x|^2} 
	+\eta \frac{|\nabla v|^{\alpha}}{|x|^{2-\alpha}} =|x|^\theta v^{K}\quad \mbox{in }\Omega\setminus \{0\},
\end{equation}
where $\mu,\eta,\theta\in \mathbb R$, while $\alpha\in (0,1)$ and $q=K/\alpha>1$. For both problems, we let $\rho=(2-N-\mu)/2$. 

When $\Omega=\mathbb R^N$,  under the optimal condition $f_\rho(\beta)>0$, 
the set of all solutions of \eqref{rigo1} (respectively, positive solutions of \eqref{rigo2}) is described 
by Theorem~\ref{globo} and Corollary~\ref{glob22}. These results with $u=e^w$
(respectively, with $u=v^\alpha$, $\lambda=\eta/\alpha^\alpha$ and $\tau=1-\alpha\in (0,1)$) give 
the precise asymptotic behaviour near zero and at infinity for such solutions.  
Furthermore, assuming $\theta<-2$ and no sign restriction on $f_\rho(\beta)$,  
the local behaviour near zero 
for an arbitrary solution $w$ of \eqref{rigo1} (respectively, positive solution $v$ of \eqref{rigo2}) can be derived from Theorem~\ref{thi}.

Problems of the form \eqref{rigo2} are the stationary versions of parabolic equations with fast diffusion ($\alpha\in (0,1)$), featuring absorption ($|x|^\theta v^K$) and a weighted gradient term ($\eta |x|^{\alpha-2} |\nabla v|^\alpha $). The fast diffusion equation, $u_t=\Delta (u^\alpha)$ with $\alpha\in (0,1)$, is an important model for singular nonlinear diffusion phenomena, including gas-kinetics, thin liquid film dynamics and diffusion in plasmas (see \cites{das,Vazq}).  
Degenerate and singular parabolic equations with absorption have been intensively studied by many authors. The main feature is the competition between
the diffusion $\Delta (v^\alpha)$ and the absorption, which leads to different dynamics and  large time behaviour of non-negative solutions for various regimes 
 of the exponents (see, e.g.,  \cites{BIL,IL,XF} and their references).  

The understanding of the structure of all positive solutions for \eqref{rigo2} opens up new avenues for studying a wide class of parabolic equations with fast diffusion and gradient dependent terms.

\subsection{Comparison with previous works for $\tau=1$} \label{conta}
We mention related works for the case 
$\tau=1$ in \eqref{e1}. This corresponds to equations with a Hardy-type potential, which have  been actively studied for $\rho=(2-N)/2$ and various choices of $\lambda$ and $\theta$ (see \cites{bra,Cmem,MF1,GuV,LD2017}). For example, when $\theta>-2$, we refer to 
\cite{Cmem} (for $\lambda\leq (N-2)^2/4$) and \cite{LD2017} (for $\lambda>(N-2)^2/4$).  

C\^{\i}rstea and F\u arc\u a\c seanu \cite{MF1} removed the restrictions on $\lambda$ and $\theta$ imposed in \cites{Cmem,LD2017} and 
fully classified all positive solutions for equations of the form  
\begin{equation} \label{subbb} \Delta u+\lambda \frac{u}{|x|^2}=|x|^\theta u^q\quad \mbox{in } \mathbb R^N\setminus \{0\},\ \ (N\geq 3), 
\end{equation} where $\lambda,\theta\in \mathbb R$ and $q>1$. Let $\beta=(\theta+2)/(q-1)$. If 
\begin{equation} \label{guf}  \lambda>\lambda^*=\beta\left(N-2-\beta\right) \ \mbox{(or, equivalently, } f_{(2-N)/2,\lambda,1}(\beta)>0),
	\end{equation}
	then \eqref{subbb} has a positive radial solution given by
\begin{equation} \label{rubo} U_{0}(x)= (\lambda-\lambda^*)^{1/(q-1)} |x|^{-\beta}\quad \mbox{for } x\in \mathbb R^N\setminus \{0\}.
\end{equation}

It was shown in \cite{MF1} that \eqref{guf} is the sharp  condition 
for the existence of positive solutions of \eqref{subbb}. 
If $\lambda>(N-2)^2/4$, then \eqref{guf} holds for every $\beta\in \mathbb R$. 

If $\lambda\leq (N-2)^2/4$, then \eqref{guf} is equivalent to 
$\beta\in (-\infty,p_1)\cup (p_2,\infty)$, where 
$$ p_1=p_1(\lambda)=\frac{N-2}{2} -\sqrt{\frac{(N-2)^2}{4}-\lambda}\quad \mbox{and} \quad 
p_2=p_2(\lambda)=\frac{N-2}{2} +\sqrt{\frac{(N-2)^2}{4}-\lambda}.
$$
In this case,  an important role in 
Theorem~\ref{alun} will be played by $\Psi_{p_1}$ and $\Phi_{p_2}$, which on their domain of definition satisfy $\mathbb L_{(2-N)/2,\lambda,1}[\cdot]=0$. More precisely, we define 
 $$\begin{aligned} 
 & \Psi_{p_1}(x)=  |x|^{-p_1} \  \mbox{if } \lambda<\frac{(N-2)^2}{4},\quad    \Psi_{p_1}(x)= |x|^{-\frac{N-2}{2}} \log |x| \ \mbox{if } \lambda=
 \frac{(N-2)^2}{4}\ \mbox{for } |x|>1,\\
 &  \Phi_{p_2}(x)=|x|^{-p_2} \  \mbox{if } \lambda<\frac{(N-2)^2}{4},\quad  \Phi_{p_2}(x)= |x|^{-\frac{N-2}{2}} \log \frac{1}{|x|} 
  \mbox{ if } \lambda=\frac{(N-2)^2}{4}\ \mbox{for } |x|\in (0,1).
 \end{aligned} $$

\begin{theorem}[See \cite{MF1}] \label{alun}
	Equation \eqref{subbb} has positive solutions if and only if  \eqref{guf} holds. In this case, 	
	every positive solution of \eqref{subbb} is radially symmetric.

	$\bullet$ If $\lambda>(N-2)^2/4$, then $U_0$ in \eqref{rubo} is the only positive solution of \eqref{subbb}. 
	
	$\bullet$ If $\lambda^*<\lambda\leq (N-2)^2/4$, then for every $\gamma\in \mathbb R_+$,  equation \eqref{subbb} has a unique positive solution 
	$U_\gamma$ (which is radially symmetric) satisfying     
	$$ 
	\begin{aligned} 
	& 	U_\gamma(x)\sim U_0(x) \ \mbox{as } |x|\to 0\ \mbox{and}\quad U_\gamma(x)\sim \gamma \,\Psi_{p_1}(x)\ \mbox{as } |x|\to \infty\ 
	\mbox{if  } \beta<p_1,\\
	&  U_\gamma(x)\sim U_0(x)\ \mbox{as } |x|\to \infty\ \mbox{and}\ 
	U_\gamma(x)\sim \gamma\, \Phi_{p_2}(x)\ \mbox{as } |x|\to 0 \ \mbox{if } \beta>p_2.
	\end{aligned} $$

	Moreover, the set of all positive solutions of \eqref{subbb} is given by 
	$\{U_0\}\cup \{U_{\gamma}\}_{\gamma\in \mathbb R_+}$. 
\end{theorem}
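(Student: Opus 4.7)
My plan is to analyze \eqref{subbb} by reducing it, after establishing radial symmetry, to an autonomous planar ODE via an Emden--Fowler substitution. Concretely, for a radial solution $u(r)$, $r=|x|$, setting $u(r)=r^{-\beta}v(t)$ with $t=\log r$ converts \eqref{subbb} into
\[
v''(t)+(N-2-2\beta)\,v'(t)+(\lambda-\lambda^*)\,v(t)=v(t)^q,
\]
whose equilibria are $v=0$ and, when $\lambda>\lambda^*$, $v\equiv\Lambda=(\lambda-\lambda^*)^{1/(q-1)}$, which corresponds exactly to $U_0$. The necessity of $\lambda>\lambda^*$ I would obtain by observing that for $\lambda\leq\lambda^*$ the coefficient of the linear term has the wrong sign, so a phase-plane energy argument (multiplying by $v'$ and integrating, or equivalently testing \eqref{subbb} against a $|x|^{-N+2\beta}$-weighted cutoff and invoking Hardy's inequality) forces any eventually-positive trajectory to cross zero.

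The classification then comes from linearization. At the equilibrium $\Lambda$, the characteristic polynomial $\mu^2+(N-2-2\beta)\mu-(q-1)(\lambda-\lambda^*)$ has positive discriminant, making $\Lambda$ a saddle. At $v=0$, the characteristic polynomial $\mu^2+(N-2-2\beta)\mu+(\lambda-\lambda^*)$ has discriminant $(N-2)^2-4\lambda$, precisely the dividing quantity of the theorem. When $\lambda>(N-2)^2/4$, the origin is a focus: no positive orbit can approach it, so the only positive solution is the equilibrium $\Lambda$, recovering $U_0$ as the unique one. When $\lambda^*<\lambda\leq(N-2)^2/4$, the origin is a node whose real eigenvalues are $\beta-p_1$ and $\beta-p_2$; the saddle's one-dimensional stable/unstable manifold intersects the basin of the node and produces a heteroclinic connection from $\Lambda$ to $0$. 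The autonomous translation invariance $t\mapsto t+c$ (which is the scaling $r\mapsto\sigma r$ at the PDE level) parameterizes this into a one-parameter family, with the parameter read off as the leading coefficient $\gamma>0$ of the asymptotic along the eigenvector direction, giving $|x|^{-p_1}$ or $|x|^{-p_2}$ behaviour according to whether $\beta<p_1$ or $\beta>p_2$. The borderline case $\lambda=(N-2)^2/4$ produces a repeated root, and the resolvent expansion yields the logarithmic correction built into the definitions of $\Psi_{p_1}$ and $\Phi_{p_2}$; uniqueness of $U_\gamma$ within each asymptotic class follows from the one-dimensionality of the invariant manifold together with the translation-freedom being fixed by $\gamma$.

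The main obstacle is the radial symmetry of an arbitrary positive $C^1(\mathbb R^N\setminus\{0\})$-solution, without which the ODE reduction is incomplete. My plan is first to derive coarse two-sided a priori bounds of the form $c_1|x|^{-\beta}\leq u(x)\leq c_2|x|^{-\beta}$ by constructing sub- and supersolutions as multiples of $U_0$ and invoking the comparison principle in large annuli (the Hardy term $\lambda u/|x|^2$ respects this comparison on domains away from $0$). With such control at $0$ and at $\infty$, I would run a Gidas--Ni--Nirenberg moving plane argument on hyperplanes not containing the origin, where reflections leave the singular potential invariant; sliding from infinity on both sides yields symmetry about every such hyperplane and hence radial symmetry about $0$. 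Making the moving plane machinery work uniformly up to the puncture and at the slow decay/growth rate $|x|^{-\beta}$ is where the bulk of the technical effort lies, and is the step most sensitive to the precise range of $\beta$ relative to $p_1,p_2$.
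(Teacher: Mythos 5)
Note first that the paper does not reprove Theorem~\ref{alun}: it is quoted from \cite{MF1}, and the method there (generalized in the present paper to $\tau\in[0,1)$) is to classify all admissible asymptotic profiles of an arbitrary positive solution near $0$ and at $\infty$ via barriers and a priori estimates, to construct a positive \emph{radial} solution realizing each admissible pair of profiles, and then to conclude by the comparison principle that every solution coincides with one of these radial representatives -- radial symmetry drops out as a corollary rather than being an input. Your Emden--Fowler reduction and phase-plane analysis (saddle at $\Lambda$, node/focus at $0$ with eigenvalues $\beta-p_1$, $\beta-p_2$, damping $N-2-2\beta$) is a genuinely different and viable way to organize the \emph{radial} part of the classification for $\tau=1$; indeed the paper remarks that precisely such ODE reductions were used in \cite{Cmem} and are what fails for $\tau\in[0,1)$. (You would still need to justify that the saddle's invariant manifold actually connects to the node, that it approaches along the slow eigendirection $\beta-p_1$ rather than the exceptional fast one, and that no other global positive orbits exist; these are standard but not automatic.)

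The genuine gap is the symmetry step. Your claim that for hyperplanes not containing the origin ``reflections leave the singular potential invariant'' is false: reflecting $x$ across $\{x_1=\mu\}$ with $\mu\neq 0$ does not preserve $|x|$, so neither $\lambda u/|x|^2$ nor the weight $|x|^\theta$ in \eqref{subbb} is reflection-invariant, and the difference $u(x^\mu)-u(x)$ satisfies no one-signed differential inequality in the cap. Nor can this be repaired by monotonicity of the coefficients in $|x|$: under \eqref{guf} the signs of $\lambda$ and $\theta$ are arbitrary (e.g.\ $\lambda^*<0$, hence possibly $\lambda<0$, whenever $\beta<0$ or $\beta>N-2$), so the two weights cannot be arranged to vary favourably along the reflection. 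Hyperplanes through the origin do preserve the equation, but then there is nothing to slide from infinity and the method cannot start. This structural obstruction -- not uniformity up to the puncture -- is why \cite{MF1} and the present paper avoid moving planes entirely and derive symmetry from classification plus existence of radial solutions plus comparison (see the discussion preceding Theorem~\ref{globo}). To close your argument you would either have to import that scheme wholesale, making the moving plane superfluous, or develop a moving-spheres/Kelvin-transform variant adapted to the Hardy weight, which you have not supplied.
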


Clearly, the structure of the set of positive solutions of \eqref{e1} in $\mathbb R^N\setminus \{0\}$ with $\tau\in [0,1)$ revealed in Theorem~\ref{globo} and Corollary~\ref{glob22} is much richer than that for 
\eqref{subbb} in Theorem~\ref{alun}. 

Our approach is to completely understand the case  
$\theta<-2$ (based on which we also elucidate $\theta>-2$). Indeed, when $\theta<-2$ and $f_\rho(\beta)>0$,   
in passing from the local behaviour near zero for the positive solutions of \eqref{e1} (see Theorem~\ref{thi})
to the analysis of \eqref{e1} in $\mathbb R^N\setminus \{0\}$, all asymptotic profiles but one 
will be appearing near zero, see Theorem~\ref{globo} and Remark~\ref{no-on9}. 
 
We will further develop the ideas put forward in \cite{MF1} for the special case 
$\tau=1$, $\rho=(2-N)/2$. The method of sub-super-solutions will continue to play a crucial role in our local analysis (near zero and at infinity). 
However, due to our general framework, we need many additional and non-trivial constructions of local upper and lower barriers.     
We mention a few significant challenges that appear due to  the introduction in \eqref{e1} 
of the term $\lambda u^\tau |\nabla u|^{1-\tau}/|x|^{1+\tau}$ with $\tau\in [0,1)$:  

$\bullet$ We lose the linearity of the operator $\mathbb L_{\rho,\lambda,\tau}[\cdot]$. 
Unlike the case $\tau=1$, we cannot conclude anymore that the sum of two super-solutions of \eqref{e1} is necessarily 
a super-solution.   

$\bullet$ Changes of variables used for $\tau=1$ to reduce the study of positive radial solutions to previously understood ODEs (see, e.g., \cite{Cmem}) are not suitable when $\tau\in [0,1)$. Unlike the case $\tau=1$, we could not trace any known results for ODEs that are applicable to our setting.   

$\bullet$ The standard Kelvin transform used in \cite{MF1} for $\tau=1$ and $\rho=(2-N)/2$ to reduce the study of the asymptotic behaviour at infinity 
(e.g., for $\beta>p_2$) to that near zero (for $\beta<p_1$) is not effective anymore if $\tau\in [0,1)$. Instead, we use the modified Kelvin transform $\widetilde u(x)=u(x/|x|^2)$ for $|x|\not=0$, which brings the need to introduce the second term in the operator $\mathbb L_{\rho,\lambda,\tau}[\cdot]$. 

$\bullet$ The critical situation $\rho=\Upsilon$ n Case $[N_2]$ (when $\varpi_1(\rho)=\varpi_2(\rho)=-(\lambda \tau)^{1/(\tau+1)}$) adds a new layer of complexity specific to $\tau\in (0,1)$ and $\lambda>0$: for $\mathbb L_{\rho,\lambda,\tau}[\cdot]=0$ with $|x|>0$ small (or $|x|>1$ large), it seems difficult to find an {\em explicit} positive solution  that is linearly independent from the solution $|x|^{-\varpi_1(\rho)}$. We work around this issue by providing in \eqref{phi2} an explicit positive radial super-solution 
$\Phi_{\varpi_2(\rho)}(|x|)$ for $|x|>0$ small (see Lemma~\ref{fiqa}) respectively, an explicit radial super-solution 
$\widetilde \Psi(|x|)$ for $|x|>1$ large that is asymptotically equivalent at infinity to $\Psi_{\varpi_1(\rho)}(|x|)$ in \eqref{psia} (see Lemmas \ref{siww} and \ref{feed}). We refer to Section~\ref{ditto} for further details.  
 
 $\bullet$ 
When $\tau\in (0,1)$, by introducing the second term in the operator $\mathbb L_{\rho,\lambda,\tau}[\cdot]$ with $\rho\in \mathbb R$, we face additional obstacles in our construction of local 
sub/super-solutions compared with \cite{MF1}. For the latter, where $\rho=(2-N)/2$ and $\tau=1$, there was no need to construct more than two families of sub/super-solutions for a specific situation. 
However, in our general setting, new thresholds appear for $\rho$, which require designing several delicate iterative schemes. These will help us incrementally improve the upper and lower bounds of the solutions of \eqref{e1} near zero and at infinity, starting at a certain level and then proceeding by induction until reaching very close to the  desired asymptotic profile. Another family of sub/super-solutions will be needed to bridge the gap. We illustrate this point in Subsection~\ref{strag} when outlining the ideas in the proof of Theorem~\ref{pro2} in Section~\ref{wabb}.        

\vspace{0.1cm}
{\bf Notation and definitions.} For every $x\in \mathbb R^N\setminus \{0\}$, $z\in [0,\infty)$ and 
$\boldsymbol{\xi}\in \mathbb R^N$, we define 
$$  G_\tau(x,z,\boldsymbol{\xi}):=\frac{z^{\tau} \, |\boldsymbol{\xi}|^{1-\tau}}{|x|^{\tau+1}}\ \mbox{if } 0<\tau<1\quad \mbox{and}\quad 
G_\tau(x,z,\boldsymbol{\xi}):=\frac{|\boldsymbol{\xi}|}{|x|}\ \mbox{if } \tau=0.
$$ 
We denote by $C_c^1(\Omega\setminus\{0\})$ the space of $C^1(\Omega\setminus \{0\})$-functions with compact support in $\Omega\setminus \{0\}$. 
Let  $ \mathbb L_{\rho}[u]$ be the following linear operator
\begin{equation} \label{adel} \mathbb L_{\rho}[u]:=\Delta u-\left(N-2+2\rho\right) \frac{x\cdot \nabla u}{|x|^2}.
\end{equation}

\begin{defi} {\rm 
By a  {\em solution} of \eqref{e1} we mean any {\em non-negative} function $u\in C^1(\Omega\setminus\{0\})$ that satisfies \eqref{e1} in $\Omega\setminus \{0\}$ in the sense of distributions, that is, 
\begin{equation*}\label{def1}
\int_\Omega \nabla u\cdot \nabla \varphi\,dx +(N-2+2\rho) \int_\Omega \frac{x\cdot \nabla u}{|x|^2}\varphi\,dx
-\lambda\int_\Omega G_\tau(x,u,\nabla u)\,\varphi\,dx+\int_\Omega |x|^{\theta}u^q\varphi\,dx=0
	\end{equation*}
for all  $ \varphi \in C_c^1(\Omega\setminus \{0\})$. 
Similarly, we define a  sub-solution (respectively, super-solution) of \eqref{e1} by replacing the equal sign in the above identity by ``$\leq$" (respectively, ``$\geq$"), where the resulting inequality is to be applied to all  non-negative functions $ \varphi \in C_c^1(\Omega\setminus \{0\})$.}
\end{defi}

\subsection{Strategy for proving our main results} \label{strag}
We summarise the main ideas in the proofs of Theorems~\ref{thi} and \ref{globo}.

In the first part of this paper, we classify all the behaviours near zero for the positive solutions of \eqref{e1}, assuming $\theta<-2$ and $\rho,\lambda\in \mathbb R$. In Section~\ref{sha1}, we only assume that $f_\rho(\beta)>0$ and using suitable sub/super-solutions (Lemma~\ref{seco}) and the {\em a priori} estimates in Lemma~\ref{api} (see Appendix~\ref{sectiune2}), we prove in Theorem~\ref{pro1} that  every positive sub-solution $u$ of \eqref{e1} satisfies
\begin{equation} \label{su1}
\limsup_{|x|\to 0} |x|^\beta u(x)\leq \Lambda,\quad \mbox{where } \Lambda=\left[f_{\rho}(\beta)\right]^{\frac{1}{q-1}}.
\end{equation} 
In turn, given any positive super-solution $u$ of \eqref{e1} such that 
\begin{equation} \label{nopi}
\mbox{for every } \eta>0, \ \mbox{we have } \lim_{|x|\to 0}  |x|^{\beta-\eta} u(x)=\infty, \end{equation}
we establish the optimal lower bound 
\begin{equation} \label{sunn2}
 \liminf_{|x|\to 0} |x|^\beta u(x)\geq \Lambda. \end{equation}

In Section~\ref{wabb}, we show that when $\theta<-2$ and $f_\rho(\beta)>0$, excluding $\beta\in (\varpi_2(\rho),0)$ in Case $[N_2]$, we always have \eqref{nopi} for every positive super-solution $u$ of \eqref{e1} (see Theorem~\ref{pro2}). The assertion of $(P_1)$ in Theorem~\ref{thi} follows from 
Theorems~\ref{pro1} and \ref{pro2}. We split the proof of Theorem~\ref{pro2} into two parts. In the first part, we impose an additional assumption, $\beta\leq -\rho$,
when $\lambda>0$ and $\tau\in (0,1)$. For any $\delta>0$ small and $\alpha>0$, we construct a family $\{w_{\delta,\alpha}\}_{\delta>0}$ of positive radial sub-solutions of \eqref{e1} on annuli 
$\{\delta<|x|<r_1(\alpha)\}$ such that $\lim_{|x|\to 0} |x|^{\beta-\alpha} (\lim_{\delta\searrow 0} w_{\delta,\alpha}(|x|))=c>0$, 
 $w_{\delta,\alpha}(|x|)$ is zero for $|x|=\delta$ and is dominated by $u(x)$ for $|x|=r_1(\alpha)$ (see Lemma~\ref{lem2}). By the comparison principle, $u\geq w_{\delta,\alpha}$ in $\{\delta<|x|<r_1(\alpha) \}$. Letting $\delta\to 0$ and $|x|\to 0$, we get \eqref{nopi}. 
The additional restriction imposed in the first part will be removed in the second part, see Section~\ref{kvz}. But unlike the first part of the proof of Theorem~\ref{pro2}, it seems very difficult to obtain \eqref{nopi} with just one family of sub-solutions. This difficulty appears specifically to $\lambda>0$ and $\tau\in (0,1)$. We propose a new approach, which we adapt also in later sections to deal with similar challenges for other ranges of our parameters. 
We carefully design an iterative scheme, the starting point of which is to establish that (see Lemma~\ref{lema44})
$$ \mbox{for every } \eta>0\ \mbox{ small } \lim_{|x|\to 0} |x|^{-\rho+\eta} u(x)=\infty. $$  This first step requires a modification of the sub-solutions 
$w_{\delta,\alpha}$.  The purpose of the iterative scheme is to incrementally improve the lower bounds for the super-solution $u$, ensuring also that the process leads to \eqref{nopi} after a finite number of steps, say $k_*$. To determine $k_*$, we let $M_1=\rho$ and 
construct a suitable decreasing sequence $\{M_k\}_{k\geq 1}$ that converges to  $0$ if $\rho<\Upsilon$ and to $-\varpi_1(\rho)$ (which is less than $-\beta$) when $\rho\geq \Upsilon$ (in Case $[N_2]$). 
So, there exists $k_*\geq 1$ large such that 
$M_{k_{*}+1}\leq -\beta<M_{k_*}$.  
At each stage $j=1,\ldots, k_*$ of the iteration, we devise a new family of positive radial sub-solutions $\{v_{j,\delta}\}_{\delta>0}$ of \eqref{e1} 
for $0<|x|<r_*^{1/\alpha}$ such that $v_{j,\delta}(|x|)$ is dominated by $u(x)$ for $|x|=r_*^{1/\alpha}$, $\lim_{r\to 0^+} r^{-m_j} v_{j,\delta}(r)\in (0,\infty)$ and
$\lim_{r\to 0} r^{m_{j+1}+\nu} (\lim_{\delta\to 0} v_{j,\delta}(r)) =c>0$ for sufficiently small $\nu>0$.  
The definition of $v_{j,\delta}$ involves 
$m_j$ and $m_{j+1}$, which help us measure the jump from one step of the iteration to the next. We define $m_j$ to be very close to $M_j$ (for $1\leq j\leq k_*$) with $m_1=\rho-\varepsilon$ and $m_{k_{*}+1}=-\beta \left(1+\varepsilon^{1/k_*}\right)$ for small enough $\varepsilon>0$. 
By the choice of $m_1<\rho$ and the starting point (Lemma~\ref{lema44}), we have $\lim_{|x|\to 0} u(x)/v_{1,\delta}(|x|)=\infty$. By the comparison principle, 
$u(x)\geq v_{1,\delta}(x)$ for all $0<|x|\leq r_*^{1/\alpha}$. Letting $\delta\to 0$ and $|x|\to 0$, we improve the estimate to 
$\liminf_{|x|\to 0} |x|^{-m_2+\nu} u(x)\geq c>0$. This ensures that $u(x)$ dominates $v_{2,\delta}(|x|)$ for $|x|>0$ small and, by the comparison principle, for all 
$0\leq |x|\leq  r_*^{1/\alpha}$. By induction on $j=1,\ldots, k_*$ and the definition of $m_{k_*+1}$, we arrive at \eqref{nopi}. 

In Section~\ref{oaz}, we show that whenever $\beta\in (\varpi_1(\rho),0)$ in Case $[N_1]$ or Case $[N_2]$, then every positive super-solution of \eqref{e1} satisfies
\begin{equation} \label{nilo}  \liminf_{|x|\to 0} |x|^{\varpi_1(\rho)} u(x)>0.
\end{equation} 
We first prove a weaker version of \eqref{nilo}: for every $\eta>0$, we have $\lim_{|x|\to 0} |x|^{\varpi_1(\rho)-\eta} u(x)=\infty$. 
When $\Upsilon\leq \rho<\lambda^{1/(\tau+1)} $ in Case $[N_2]$, then we conclude the latter property by modifying the proof of Lemma~\ref{lema44} (see Proposition~\ref{gain}). 
In turn, in Case~$[N_1]$ or if $\rho\geq \lambda^{1/(\tau+1)}$ in Case~$[N_2]$, then we fix $\mu \in (0,\eta)$ and in \eqref{eqo1} we construct  
positive radial sub-solutions  $\{u_{\delta}\}_{\delta>0}$ of \eqref{e1} in 
$\{\delta<|x|<1/e\}$, which vanish on $|x|=\delta$ and are dominated by $u(x)$ on $|x|=r_0$ for $r_0\in (0,1/e)$, where $B_{r_0}(0)\subset \subset \Omega$. 
Since $\lim_{r\to 0^+} r^{\varpi_1(\rho)} |\log r|^{\mu}(\lim_{\delta\to 0} u_\delta (r))=c>0$, we obtain the desired weaker property via the comparison principle (see Proposition~\ref{nwp1}).  Finally, we fix $\varepsilon>0$ small and reach \eqref{nilo} by comparing $u$ with positive radial sub-solutions  $\{z_{\delta,\varepsilon}\}_{\delta\in (0,1)}$ of \eqref{e1} in $B_{r_\varepsilon}(0)\setminus \{0\}$ satisfying the following: 
$\lim_{r\to 0} r^{\varpi_1(\rho)-\varepsilon \delta} z_{\delta,\varepsilon}(r)=c>0$, $u(x)\geq z_{\delta,\varepsilon}(|x|)$ on $|x|=r_\varepsilon$ (for every $\delta\in (0,1)$) 
and $\lim_{r\to 0^+} r^{\varpi_1(\rho)} (\lim_{\delta\to 0} z_{\delta,\varepsilon}(r))=c>0$ (see Section~\ref{sect62}).  
  
In Section~\ref{ditto}, we give sufficient conditions for a positive radial super-solution $\Phi$ of $\mathbb L_{\rho,\lambda,\tau}[\Phi]=0$ for $|x|>0$ small to be an asymptotic model near zero for a given positive solution $u$ of \eqref{e1}. 

In Section~\ref{errv}, we prove the assertion (N) in Theorem~\ref{thi}: if $\beta\in (\varpi_1(\rho),0)$ in Case $[N_1]$ or
$\beta\in (\varpi_1(\rho),\varpi_2(\rho)]$ in Case $[N_2]$ with $\rho>\Upsilon$, then for every positive solution $u$ of \eqref{e1}, there exists 
$a\in \mathbb R_+$ so that  
$ \lim_{|x|\to 0} |x|^{\varpi_1(\rho)} u(x)=a$ (see  Theorem~\ref{axiv1}).  Since \eqref{nilo} holds, by Theorem~\ref{urz1}, it remains to prove 
$ \limsup_{|x|\to 0} |x|^{\varpi_1(\rho)} u(x)=a<\infty$.  We use $\lim_{|x|\to 0} |x|^\beta u(x)=0$ (Corollary~\ref{zerop}) and a comparison with suitable super-solutions (see Proposition~\ref{axv1}).   

In Section~\ref{sept}, we establish the assertions of ($Z_1$) and $(Z_2)$ in Theorem~\ref{thi}. 
We adapt the ideas in the proof of Theorem~\ref{zic1} to show that every positive super-solution $u$ of \eqref{e1} satisfies 
$\liminf_{|x|\to 0} |x|^{\varpi_1(\rho)} |\log |x||^p u(x)>0$, where $p=1/(q-1)$ (respectively, $p=2/(q-1)$) in the settings of $(Z_1)$ (respectively, $(Z_2)$) in Theorem~\ref{thi}.  
By comparison with adequate super-solutions, we derive that $\limsup_{|x|\to 0} |x|^{\varpi_1(\rho)} |\log |x||^p u(x)<\infty$ for every positive sub-solution $u$ of \eqref{e1}. We obtain the optimal constants by following the same strategy as in Theorem~\ref{pro1}, adapting the construction of sub-super-solutions to our case.   

Section~\ref{sect9} is devoted to the proof of the trichotomy stated in $(P_0)$ of Theorem~\ref{thi}. 
We compare a positive solution $u$ of \eqref{e1} with $\Phi_{\varpi_2(\rho)}$ via $b=\limsup_{|x|\to 0} u(x)/\Phi_{\varpi_2(\rho)}(|x|)$. 
If $b\in \mathbb R_+$, then the alternative (ii) in ($P_0$) follows from Theorem~\ref{urz1}. We show that 
the alternative (i) and (iii) in ($P_0$) corresponds to $b=0$ and $b=\infty$, respectively, see Theorem~\ref{ama1}.  
We need different arguments when $\rho=\Upsilon$ (see Sections~\ref{secla2} and \ref{secla4}) compared with $\rho>\Upsilon$ (see Sections~\ref{secla1} and \ref{secla3}) 
given the different expression of $\Phi_{\varpi_2(\rho)}$.
We illustrate the ideas for $\rho>\Upsilon$. If $b=0$, then we show that for every $\varepsilon>0$, we have $\lim_{|x|\to 0} |x|^{\varpi_2(\rho)-\varepsilon} u(x)=0$
by a comparison with suitable super-solutions of \eqref{e1}. This is the starting point for an iterative scheme leading to 
$\lim_{|x|\to 0} |x|^{\varpi_1(\rho)+\eta} u(x)=0$ for $\eta>0$ arbitrary. Finally, by another construction of radial super-solutions, we 
get $\limsup_{|x|\to 0} |x|^{\varpi_1(\rho)} u(x)=a<\infty$. We conclude the alternative (i) in ($P_0$) of Theorem~\ref{thi} from Theorem~\ref{urz1}.  
If $b=\infty$ (for $\rho>\Upsilon$), then by a comparison with suitable radial sub-solutions of \eqref{e1}, we show that for every $\varepsilon_0>0$ small, $\lim_{|x|\to 0} |x|^{\varpi_2(\rho) +\varepsilon_0} u(x)=\infty$. This is the first step in an iterative scheme to incrementally improve the lower bound to attain 
\eqref{nopi}. We set up the iterative scheme in the spirit of Proposition~\ref{lad}, although we need to alter the definition of $\{m_j\}$. 
From \eqref{su1} and \eqref{sunn2}, we obtain the alternative (iii) in ($P_0$) of Theorem~\ref{thi}. 

In the second part of the paper, we prove the optimal condition, $f_{\rho}(\beta)> 0$, for the existence of positive solutions of \eqref{e1} in $\mathbb R^N\setminus \{0\}$ and fully describe the set of all such solutions for $\theta<-2$.  

In Section~\ref{nine}, we show that if $f_\rho(\beta)\leq 0$, then there are no positive solutions of \eqref{e1} in $\mathbb R^N\setminus \{0\}$. 
Indeed, assuming $u$ to be such a solution, we prove that $\lim_{|x|\to 0} |x|^\beta u(x)=0$, which by the modified Kelvin transform, yields $\lim_{|x|\to \infty} |x|^\beta u(x)=0$. This leads to a contradiction.  

To complete the proof of Theorem~\ref{globo}, we need two major ingredients, apart from Theorem~\ref{thi}. First, we reveal all the  asymptotic profiles {\em at infinity} that can coexist with those at zero for the positive solutions of \eqref{e1} in $\mathbb R^N\setminus \{0\}$ other than $U_{\rho,\beta}$. 
This is done in Theorem~\ref{rod1c} in 
Section~\ref{amod}. 
The behaviours at infinity are many and split according to  various regimes of the parameters. The strategy for proving Theorem~\ref{rod1c} is summarised in Section~\ref{trap1}.  
Second, for each of the possibilities presented in Theorem~\ref{rod1c}, we show that there exists a positive (increasing) radial solution of \eqref{e1} in $\mathbb R^N\setminus \{0\}$ which, by the comparison principle, turns out to be unique. These existence and uniqueness results, also stated in (i) and (ii) of Theorem~\ref{globo}, are proved in  Theorems~\ref{exol1}, \ref{exol2} and \ref{exol3} in Sections~\ref{alt1}, \ref{alt2} and \ref{alt3}, respectively. 

In Appendix \ref{Kel}, we show the effect that the modified Kelvin transform has on a positive solution $u$  
of \eqref{e1} in $B_1(0)\setminus \{0\}$, leading to Corollary~\ref{glob22} being a consequence of Theorem~\ref{globo}. For the reader's convenience, we give 
two strong maximum principles (Lemmas~\ref{add} and \ref{adol}) and two comparison principles (Lemmas~\ref{co1} and \ref{co2}) often used in the paper. 
{\em By the comparison principle, we mean either Lemma~\ref{co1} or Lemma~\ref{co2} as explained in Remark~\ref{reka6} in Appendix~\ref{Kel}.} 

In Appendix~\ref{sectiune2} we collect a few results such as the {\em a priori} estimates in Lemma~\ref{api}, gradient estimates and a spherical Harnack-type inequality in Proposition~\ref{aurr}, the proofs of which are obtained by combining well-known techniques.  
Such results have previously appeared in related contexts (see, for example, \cite{CC2015,Cmem,CD2010,FV}), 
although we need to 
adjust their proofs because of the introduction of the term $\lambda u^\tau |\nabla u|^{1-\tau}/|x|^{1+\tau}$. We also include other auxiliary results invoked in the paper 
 (Corollaries~\ref{peree} amd \ref{pere}, Lemma~\ref{mar11}), which rely on ideas similar to those in \cite{Cmem}. 
 
 In Appendix~\ref{sec-sum}, we capture the classification in Theorem~\ref{thi} for each Case $[N_j]$ with $j=0,1,2$ (see Table~\ref{tabel}).
 The findings of our Theorem~\ref{globo} and Corollary~\ref{glob22} are summed up in Table~\ref{table11}. 

\vspace{0.1cm}
{\em Acknowledgements.} The research of the first two authors was supported by the Australian Research Council (ARC) under grant DP220101816. The second author was also partially supported by the ARC grant DP190102948. 

\part{Proof of Theorem~\ref{thi}}


\section{Sharp upper bounds when $f_{\rho}(\beta)>0$} \label{sha1}

Our main result in this section is as follows. 

\begin{theorem} \label{pro1} Let \eqref{e2} hold and 
$\rho,\lambda,\theta\in \mathbb R$ be such that
 $f_{\rho}(\beta)>0$. We obtain the following. 
\begin{itemize}
\item[(a)] Every positive sub-solution $u$ of \eqref{e1} satisfies \eqref{su1}. 
\item[(b)] If $u$ is a positive super-solution of \eqref{e1} such that \eqref{nopi} holds, 
then $u$ satisfies the optimal (lower) inequality \eqref{sunn2}.  
\item[(c)] For any positive solution $u$ of \eqref{e1}, we have the following dichotomy: 
\begin{equation} \label{dual}  \mbox{either } \lim_{|x|\to 0} |x|^\beta u(x)=0\quad \mbox{or } \lim_{|x|\to 0} \frac{u(x)}{U_{\rho,\beta}(x)}=1.
\end{equation}
\end{itemize}
\end{theorem}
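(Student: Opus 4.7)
My plan is to first establish the direct computation that for every $A>0$, the radial profile $V_A(x):=A|x|^{-\beta}$ satisfies
$\mathbb L_{\rho,\lambda,\tau}[V_A]-|x|^\theta V_A^q=A\bigl[f_\rho(\beta)-A^{q-1}\bigr]|x|^{-\beta-2}$,
so that $V_A$ is a super-solution of \eqref{e1} in $\mathbb R^N\setminus\{0\}$ precisely when $A\ge\Lambda$, and a sub-solution precisely when $0<A\le\Lambda$; the two families degenerate at $A=\Lambda$ to the explicit profile $U_{\rho,\beta}$ from \eqref{u0}. Parts (a) and (b) will then amount to squeezing $A$ down to (resp.\ up to) $\Lambda$ via comparison, and (c) will glue them via a Harnack-type propagation.

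For part (a), I will first invoke Lemma~\ref{api} to secure a crude preliminary bound $u(x)\le C_0|x|^{-\beta}$ on some punctured ball $B_{R_0}(0)\setminus\{0\}$, with $C_0$ possibly much larger than $\Lambda$. To sharpen $C_0$ down to $\Lambda+\varepsilon$ for an arbitrary $\varepsilon>0$, I will couple the bare super-solution $V_{\Lambda+\varepsilon}$ with an inner-boundary corrector produced by Lemma~\ref{seco}, yielding corrected super-solutions $\bar v_{\varepsilon,\delta}$ on annuli $\{\delta<|x|<R_\varepsilon\}$ whose corrector is large enough at $|x|=\delta$ to absorb $C_0\delta^{-\beta}$ and is $o(1)$ as $\delta\to 0^+$ for each fixed $|x|>0$. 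The comparison principle (Lemma~\ref{co1} or~\ref{co2}) in the annulus then gives $u\le\bar v_{\varepsilon,\delta}$, and sending $\delta\to 0^+$ followed by $\varepsilon\to 0^+$ yields \eqref{su1}. Part (b) is the dual construction: for each $\varepsilon\in(0,\Lambda)$ I will use Lemma~\ref{seco} to build corrected sub-solutions $\underline w_{\varepsilon,\delta}$ perturbing $V_{\Lambda-\varepsilon}$ that vanish on $|x|=\delta$, so the inner-sphere comparison against $u>0$ is automatic; the outer-sphere bound $u\ge\underline w_{\varepsilon,\delta}$ at $|x|=R_\varepsilon$ will be read off from hypothesis \eqref{nopi} together with the gradient bounds of Proposition~\ref{aurr}. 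Equivalently, one argues by rescaling: the a priori bound from (a) makes the dilations $u_\sigma(x):=\sigma^\beta u(\sigma x)$ locally uniformly bounded on $\mathbb R^N\setminus\{0\}$, and the scale-invariance of \eqref{e1} under $T_\sigma$ forces any subsequential limit to take the form $C|x|^{-\beta}$ with $C\ge\Lambda$, since it must remain a positive super-solution. Comparison in the annulus and $\delta\to 0^+$ then produce \eqref{sunn2}.

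For part (c), (a) immediately gives $L^*:=\limsup_{|x|\to 0}|x|^\beta u(x)\le\Lambda$. If $L^*=0$, then $\lim_{|x|\to 0}|x|^\beta u(x)=0$, the first alternative of \eqref{dual}. If instead $L^*>0$, I will pick $x_n\to 0$ with $|x_n|^\beta u(x_n)\to L^*$ and use the spherical Harnack inequality of Proposition~\ref{aurr} to obtain $\inf_{|x|=|x_n|}|x|^\beta u(x)\ge L^*/(2C_H)$ for all large $n$; then, comparing $u$ against the constant sub-solution $V_{L^*/(4C_H)}$ on each annular gap $\{|x_{n+1}|\le|x|\le|x_n|\}$ propagates this to the uniform lower bound $\liminf_{|x|\to 0}|x|^\beta u(x)\ge L^*/(4C_H)>0$. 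This verifies \eqref{nopi}, so (b) applies and gives $\liminf\ge\Lambda$; combined with (a), $\lim_{|x|\to 0}u(x)/U_{\rho,\beta}(x)=1$, the second alternative.

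The main obstacle will be the construction of the corrected barriers claimed in Lemma~\ref{seco}: since $\mathbb L_{\rho,\lambda,\tau}$ is nonlinear when $\tau\in(0,1)$, additive perturbations of $V_A$ are not automatically super/sub-solutions, and the error produced by the gradient term $\lambda u^\tau|\nabla u|^{1-\tau}/|x|^{1+\tau}$ must be sign-controlled. The strict slack $|A^{q-1}-\Lambda^{q-1}|>0$ available for $A\ne\Lambda$ (guaranteed by $f_\rho(\beta)>0$) is precisely what leaves room to absorb this nonlinear error and preserve the strict differential inequality required by the comparison principle.
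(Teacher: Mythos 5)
Your overall architecture coincides with the paper's: perturb the exact profile $\Lambda|x|^{-\beta}$ into one-parameter families of radial super/sub-solutions, compare on shrinking annuli, and for (c) combine the upper bound from (a) with the spherical Harnack inequality and comparison against the sub-solutions $c|x|^{-\beta}$, $c\in(0,\Lambda]$, to verify \eqref{nopi} and invoke (b). Part (c) of your proposal is essentially identical to the paper's argument (via Corollary~\ref{peree}).

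The genuine gap is in the barrier construction for (a) and (b), which you correctly identify as the crux but then leave unresolved in a way that matters. Your proposed shape --- an \emph{additive} inner-boundary corrector glued onto $V_{\Lambda\pm\varepsilon}$, sized to absorb $C_0\delta^{-\beta}$ at $|x|=\delta$ and tending to $0$ pointwise --- runs directly into the non-additivity of $\mathbb L_{\rho,\lambda,\tau}$: when $\lambda>0$ and $\tau\in(0,1)$ the map $(z,\boldsymbol{\xi})\mapsto z^\tau|\boldsymbol{\xi}|^{1-\tau}$ is superadditive along aligned gradients, so the sum of two super-solutions need not be a super-solution (the paper flags exactly this obstruction in Subsection~\ref{conta}), and the ``strict slack'' $A^{q-1}-\Lambda^{q-1}>0$ does not by itself control an error that scales like the corrector itself near $|x|=\delta$. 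The paper's Lemma~\ref{seco} avoids this entirely with a \emph{multiplicative} perturbation $P^\pm_{\eta,\alpha,\nu}(r)=Cr^{-\beta\mp\eta}\bigl(1+r^\alpha/(\sqrt{\alpha}\,\nu)\bigr)^{\pm1}$, defined on the whole punctured ball; the exponent shift $\mp\eta$ makes the inner boundary condition automatic by comparison of growth rates ($u/P^+_\eta\to0$ from Lemma~\ref{api}, $P^-_\eta/u\to0$ from \eqref{nopi}) rather than by matching values on $|x|=\delta$, and the differential inequality is verified directly by computing $\mathbb L_{\rho,\lambda,\tau}[P^\pm_{\eta,\alpha}]$ in closed form. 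You would need to either adopt such a multiplicative ansatz or carry out a careful sign analysis of the gradient term for your additive one. Separately, your ``equivalently, by rescaling'' alternative for (b) is not available at this stage: asserting that every subsequential limit of $\sigma^\beta u(\sigma x)$ is a pure power $C|x|^{-\beta}$ is a Liouville-type classification that is essentially the content of Theorem~\ref{globo}, which this theorem is used to prove.
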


\begin{proof} We prove the assertions of (a) and (b) simultaneously. 
Suppose that $u$ is a positive sub-solution of \eqref{e1}  in relation to (a) 
(respectively, a positive super-solution of \eqref{e1} satisfying \eqref{nopi} in relation to (b)). Let $r_0\in (0,1)$ be small such that $B_{r_0}(0)\subset\subset \Omega$. 

For (a), to obtain the optimal inequality \eqref{su1} (respectively, \eqref{sunn2} for (b)), we fix $C>\Lambda$ (respectively, $C\in (0,\Lambda)$) arbitrary and for $\eta_C>0$ chosen small (depending on $C$), we construct a one-parameter family $\{P^+_\eta\}_{\eta\in (0,\eta_C)}$ (respectively, 
$\{P^-_\eta\}_{\eta\in (0,\eta_C)}$) of positive radial super-solutions (respectively, sub-solutions) of \eqref{e1} in $B_1(0)\setminus \{0\}$
satisfying: 

\begin{enumerate}
\item[(i)] $(P^\pm_\eta)'(r)\not=0$ for every $r\in (0,1)$;

\item[(ii)] $u(x)\leq P^+_\eta(|x|)$ (respectively, $u(x)\geq P_\eta^-(|x|)$) for all $x\in \partial B_{r_0}(0)$;

\item[(iii)] $u(x)\leq P^+_\eta(|x|)$ (respectively, $u(x)\geq P^-_\eta(|x|)$) for every $|x|>0$ small enough; 

\item[(iv)] $\lim_{|x|\to 0} |x|^\beta (\lim_{\eta\searrow 0} P^\pm_\eta(|x|))=C$. 
\end{enumerate}

The properties (i)--(iii) will be satisfied for each $\eta\in (0,\eta_C)$. 
Then, the comparison principle (see Remark~\ref{reka6} in Appendix~\ref{Kel}) gives that
$ u(x) \leq P^+_{\eta}(|x|)$ (respectively, $ u(x) \geq P^-_{\eta}(|x|)$) for all $ x\in B_{r_0}(0)\setminus \{0\}$. Since $\eta\in (0,\eta_C)$ was arbitrary, by letting $\eta\to 0$, we arrive at 
$$ u(x)\leq \lim_{\eta\searrow 0} P^+_\eta(|x|)\quad \mbox{(respectively, } u(x)\geq \lim_{\eta\searrow 0} P^-_\eta(|x|))
\quad\mbox{for every } 0<|x|\leq r_0.$$ In view of (iv), we have
$\limsup_{|x|\to 0} |x|^\beta u(x)\leq C$ (respectively, $ \liminf_{|x|\to 0} |x|^\beta u(x)\geq C)$.   
By letting $C\searrow \Lambda$ (respectively, $C\nearrow \Lambda$), we conclude \eqref{su1} (respectively, \eqref{sunn2}). 
  
The crucial ingredient in the proof is the construction of $\{P^\pm_\eta\}_{\eta\in (0,\eta_C)}$ with the properties (i)--(iv). 
 For the motivation behind 
such a construction, see
 \cite{MF1}*{Section 5.1}, where $\tau=1$ and $\rho=(2-N)/2$. Here, we introduce a simpler family of sub-super-solutions than in \cite{MF1}. 

\vspace{0.2cm}
{\bf Construction of a family $\{P^\pm_\eta\}_{\eta\in (0,\eta_C)}$ with the properties (i)--(iv).}

We fix $\nu>0$ small (depending on $u$ and $r_0$) such that 
$$   \max_{|x|=r_0} u(x) \leq \Lambda r_0^{-\beta}(1+r_0/\nu)\quad 
\mbox{(respectively,}  \min_{|x|=r_0} u(x) \geq \Lambda r_0^{-\beta}(1+r_0/\nu)^{-1}).
$$ 

 Let $\eta$ and $\alpha$ be positive constants. We define 
\begin{equation} \label{pf} P^\pm_{\eta,\alpha,\nu}(r):=  C \,r^{-\beta\mp\eta} \left(1+\frac{r^\alpha}{\sqrt{\alpha}\,\nu}\right)^{\pm 1}\quad \mbox{for every } r\in (0, 1],
\end{equation} 
where $C>\Lambda$ for $P^+_{\eta,\alpha,\nu}(r)$ and $C\in (0,\Lambda)$ for $P^-_{\eta,\alpha,\nu}(r)$. 
Then, by Lemma~\ref{seco}, there exist $\eta_C,\alpha_C>0$ such that by fixing $\alpha\in (0,\min\{1,\alpha_C\})$ and letting $\eta\in (0,\eta_C)$ arbitrary, 
we have  
\begin{equation} \label{sumad}  \begin{aligned} 
&  \mathbb L_{\rho,\lambda,\tau}[P^+ _{\eta,\alpha,\nu}(|x|)] \leq |x|^\theta [P^+ _{\eta,\alpha,\nu}(|x|)]^{q} \ \  \mbox{in } B_1(0)\setminus \{0\},\\
&  \mathbb L_{\rho,\lambda,\tau}[P^- _{\eta,\alpha,\nu}(|x|)] \geq |x|^\theta [P^- _{\eta,\alpha,\nu}(|x|)]^{q} \ \  \mbox{in } B_1(0)\setminus \{0\}. 
 \end{aligned}
\end{equation}

Since $\alpha$ and $\nu$ are fixed, we simply write $P^\pm_\eta$ instead of $P^\pm_{\eta,\alpha,\nu}$ in \eqref{pf}. 
Then, $P^+_{\eta}$ (respectively, $P^-_{\eta}$) is a positive radial super-solution (respectively, sub-solution) of \eqref{e1} in $B_1(0)\setminus \{0\}$ satisfying properties (i), (ii) and (iv) (at the beginning of the proof of Theorem~\ref{pro1}). Finally, we derive (iii) from $ \lim_{|x|\to 0} u(x)/P^+_{\eta}(|x|)=0$, which follows from 
Lemma~\ref{api} in Appendix~\ref{sectiune2}. On the other hand, $P_\eta^-$ satisfies (iii) because of the assumption \eqref{nopi}, which yields 
$ \lim_{|x|\to 0} P^-_{\eta}(|x|)/u(x)=0$. 

\begin{lemma}[Sub-super-solutions] \label{seco} Let \eqref{e2} hold and 
$f_{\rho}(\beta)>0$. Set $\Lambda=[f_\rho(\beta)]^{1/(q-1)}$. 
Then, for  every $C>\Lambda$ for $P^+_{\eta,\alpha,\nu}(r)$ and every $C\in (0,\Lambda)$ for $P^-_{\eta,\alpha,\nu}(r)$,  there exist small $\eta_C,\alpha_C\in (0,|\beta|/3)$ such that for every $\eta\in (0,\eta_C)$,  $\alpha\in (0,\alpha_C)$ and $\nu>0$ arbitrary, we have 
$(P^\pm_{\eta,\alpha,\nu})'(r)\not= 0$ for all $r\in (0,1)$ and \eqref{sumad} holds. 
\end{lemma}

\begin{proof}
The assumption $f_{\rho}(\beta)>0$ implies that $\beta\not=0$. 
	We fix $C>\Lambda$ for $P^+_{\eta,\alpha,\nu}$ and $C\in (0,\Lambda)$ for $P^-_{\eta,\alpha,\nu}$, where  $\nu>0$ is arbitrary.	
	In what follows, we let $r\in (0,1)$, while $\alpha,\eta\in (0,|\beta|/3)$ are small enough (depending on $C$). 
	We define 
	$\psi_{\eta,\alpha,\nu}(r)$, $V_{\eta,\alpha,\nu}(r)$ and $Z_{\eta,\alpha,\nu}^\pm(r)$ by  
	$$\begin{aligned}
	& \psi_{\eta,\alpha,\nu}(r)=\left(1+\frac{r^\alpha}{\sqrt{\alpha} \,\nu} \right)^{-1}\in (0,1) ,\quad V_{\eta,\alpha,\nu}(r)=-\left[ \eta-\alpha+ \alpha\,
	\psi_{\eta,\alpha,\nu}(r)\right]/\beta\\
	&  Z_{\eta,\alpha,\nu}^\pm(r)=(\eta-\alpha) \left[\eta-\alpha \pm 2\left(\beta+\rho\right) \right] \pm 2\alpha
	\left[\beta+\rho\pm\left(\eta-\alpha\right) \right] \psi_{\eta,\alpha}(r)
	+\alpha^2  \psi^2_{\eta,\alpha}(r) 
	 \left( 1\pm \frac{r^\alpha}{\sqrt{\alpha} 
	\,\nu}\right). 
	\end{aligned} $$ 	
	For simplicity of notation, we drop the subscript $\nu$ from $P^\pm_{\eta,\alpha,\nu}$, $\psi_{\eta,\alpha,\nu}$, $V_{\eta,\alpha,\nu}$ and $Z_{\eta,\alpha,\nu}^\pm$. 
	Note that
	$  \lim_{(\eta,\alpha)\to(0,0)}Z_{\eta,\alpha}^\pm(r)=\lim_{(\eta,\alpha)\to(0,0)}V_{\eta,\alpha}(r)=0$ uniformly in $r\in (0,1)$. 	
	Using that 
	$$ \beta^2+2\rho\beta+\lambda |\beta|^{1-\tau}
= f_{\rho} (\beta),$$
we choose $\eta_C\in (0,|\beta|/3)$ and 
$\alpha_C\in (0,|\beta|/3)$ small to ensure that  $ |V_{\eta,\alpha}(r)|<1/2$ and 
\begin{equation} \label{laus} \begin{aligned}
&  \beta^2+2\rho\beta +Z^+_{\eta,\alpha}(r)+\lambda|\beta|^{1-\tau} |1+ V_{\eta,\alpha}(r)|^{1-\tau} \leq  C^{q-1} \ \mbox{in the case of } P^{+}_{\eta,\alpha}\\
&  \beta^2+2\rho\beta +Z^-_{\eta,\alpha}(r)+\lambda|\beta|^{1-\tau} |1- V_{\eta,\alpha}(r)|^{1-\tau}\geq C^{q-1} \ \mbox{in the case of } P^{-}_{\eta,\alpha}
\end{aligned}\end{equation}
for every $r\in (0,1)$, $\eta\in (0,\eta_C)$ and $\alpha\in (0,\alpha_C)$. 
	Using $\mathbb L_\rho[\cdot]$ introduced in \eqref{adel}, we arrive at 
	\begin{equation} \label{la}
	\begin{aligned} 
	& (P_{\eta,\alpha}^\pm)'(r)
	=-\beta \,\frac{P^\pm_{\eta,\alpha}(r)}{r} \left( 1\pm V_{\eta,\alpha}(r) \right)  \not=0,
	\\
	& \mathbb L_\rho[ P_{\eta,\alpha}^\pm(r)]= (P^\pm_{\eta,\alpha})''(r)+\left(1-2\rho\right) \frac{(P^\pm_{\eta,\alpha})'(r)}{r}=
	\frac{P_{\eta,\alpha}^\pm(r)}{r^2}\left[ \beta^2+2\rho\beta+Z^\pm_{\eta,\alpha}(r) \right]. 
	\end{aligned}
	\end{equation}  
	It follows that 
\begin{equation} \label{ford} \begin{aligned} 
 \mathbb L_{\rho,\lambda,\tau}[P^\pm_{\eta,\alpha}(r)] & =
  \mathbb L_\rho[ P_{\eta,\alpha}^\pm(r)]+\lambda \frac{(P^\pm_{\eta,\alpha}(r))^\tau |(P^\pm_{\eta,\alpha})'(r)|^{1-\tau} }
	{r^{1+\tau}}
  \\
 &=
\frac{P^\pm_{\eta,\alpha}(r)}{r^{2}} \left[ \beta^2+2\rho\beta +Z^\pm_{\eta,\alpha}(r)+\lambda|\beta|^{1-\tau} |1\pm V_{\eta,\alpha}(r)|^{1-\tau}
\right]. 
\end{aligned} \end{equation}
We conclude \eqref{sumad} from \eqref{ford} using 
\eqref{pf} and \eqref{laus}. The proof of Lemma~\ref{seco} is complete. 		
\end{proof}

(c) For any positive solution  $u$ of \eqref{e1}, using Lemma~\ref{api} in Appendix~\ref{sectiune2}, we have 
\begin{equation} \label{nuiz} \mbox{either } \lim_{|x|\to 0} |x|^\beta u(x)=0\quad \mbox{or} \quad  
\limsup_{|x|\to 0}|x|^\beta u(x)\in (0,\infty) .\end{equation}
Since $f_\rho(\beta)>0$, we get that, for every $c\in (0,\Lambda]$, the function $c |x|^{-\beta}$ is a positive radial sub-solution of \eqref{e1} in $\mathbb R^N\setminus \{0\}$. 
Hence, in the latter case of \eqref{nuiz}, by Corollary~\ref{peree} in Appendix~\ref{sectiune2}, 
we have $\liminf_{|x|\to 0} |x|^\beta u(x)>0$ so that 
 \eqref{nopi} holds. 
By (a) and (b) above, we find that $\lim_{|x|\to 0} |x|^\beta u(x)=\Lambda$. 
This ends the proof of Theorem~\ref{pro1}. 
\end{proof}

\begin{cor} \label{zerop} Let \eqref{e2} hold and $f_{\rho}(\beta)\leq 0$ with $\beta\not=0$. Then, every  
positive sub-solution $u$ of \eqref{e1} satisfies $\lim_{|x|\to 0} |x|^\beta u(x)=0$. 
\end{cor}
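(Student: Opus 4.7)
The plan is to adapt the family $\{P^+_{\eta,\alpha,\nu}\}$ of super-solutions constructed in Lemma~\ref{seco}, exploiting the fact that the constraint ``$C>\Lambda$'' imposed there was tight only because $\Lambda>0$ under $f_\rho(\beta)>0$. The crucial observation is that the super-solution inequality~\eqref{laus} reduces, in the limit $\eta,\alpha\to 0$, to $f_\rho(\beta)<C^{q-1}$; when $f_\rho(\beta)\leq 0$, this is satisfied by \emph{every} $C>0$, not merely by those exceeding $\Lambda$. Thus the hypothesis $f_\rho(\beta)\leq 0$ is precisely what frees the parameter $C$ to be chosen arbitrarily small.

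Concretely, I would fix an arbitrary $C>0$ and proceed in parallel to the proof of Theorem~\ref{pro1}(a): choose $\eta_C,\alpha_C\in(0,|\beta|/3)$ small (depending on $C$) so that for all $\eta\in(0,\eta_C)$, $\alpha\in(0,\alpha_C)$ and $\nu>0$, the function $P^+_{\eta,\alpha,\nu}$ defined in~\eqref{pf} is a positive radial super-solution of~\eqref{e1} in $B_1(0)\setminus\{0\}$; the verification mimics Lemma~\ref{seco} verbatim, with $\Lambda$ replaced by $C$ throughout. Next, fix $r_0\in(0,1)$ with $\overline{B_{r_0}(0)}\subset\Omega$ and $\alpha\in(0,\alpha_C)$, and choose $\nu=\nu(u,C,r_0)>0$ small enough that
\begin{equation*}
\max_{|x|=r_0} u(x)\leq C r_0^{-\beta-\eta}\bigl(1+r_0^\alpha/(\sqrt{\alpha}\,\nu)\bigr)\quad\text{for every }\eta\in(0,\eta_C),
\end{equation*}
which is possible since the right-hand side tends to $\infty$ as $\nu\searrow 0^+$. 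The a priori estimate Lemma~\ref{api} gives $u(x)\leq c|x|^{-\beta}$ near the origin, so $u(x)/P^+_{\eta,\alpha,\nu}(|x|)\leq(c/C)|x|^{\eta}\to 0$ as $|x|\to 0$. The comparison principle (see Remark~\ref{reka6}) then yields $u(x)\leq P^+_{\eta,\alpha,\nu}(|x|)$ throughout $B_{r_0}(0)\setminus\{0\}$.

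Letting $\eta\searrow 0$ in this inequality produces $u(x)\leq C|x|^{-\beta}\bigl(1+|x|^\alpha/(\sqrt{\alpha}\,\nu)\bigr)$ on $B_{r_0}(0)\setminus\{0\}$, whence $\limsup_{|x|\to 0}|x|^\beta u(x)\leq C$. Since $C>0$ was arbitrary and $u$ is positive, one concludes $\lim_{|x|\to 0}|x|^\beta u(x)=0$. There is no substantial obstacle; the argument is Theorem~\ref{pro1}(a) with the single modification that, under $f_\rho(\beta)\leq 0$, one may shrink $C$ all the way to $0$ at the end rather than stopping at $C=\Lambda$. The hypothesis $\beta\neq 0$ enters only to ensure that the construction in~\eqref{pf} and Lemma~\ref{seco} remain meaningful (the auxiliary function $V_{\eta,\alpha}$ in the proof of Lemma~\ref{seco} is defined by dividing by $\beta$).
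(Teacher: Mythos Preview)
Your proposal is correct and is essentially identical to the paper's own proof: both fix an arbitrary $C>0$, observe that the super-solution condition~\eqref{laus} in Lemma~\ref{seco} is satisfied for all small $\eta,\alpha$ when $f_\rho(\beta)\leq 0$ (since the limiting inequality becomes $f_\rho(\beta)\leq C^{q-1}$), rerun the argument of Theorem~\ref{pro1}(a) to obtain $\limsup_{|x|\to 0}|x|^\beta u(x)\leq C$, and then let $C\searrow 0$. Your write-up is somewhat more explicit about the boundary/near-origin comparisons, but the strategy and ingredients coincide.
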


\begin{proof} Let $C>0$ be arbitrary. For $\eta,\alpha,\nu>0$, we define $P^+_{\eta,\alpha,\nu}$ as in \eqref{pf}. Then, since $f_{\rho}(\beta)\leq 0$ and $\beta\not=0$, the proof of Lemma~\ref{seco} gives the existence of 
small constants $\eta_C,\alpha_C>0$ such that \eqref{laus} holds for every $r\in (0,1)$, $\eta\in (0,\eta_C)$, $\alpha\in (0,\alpha_C)$ and $\nu>0$.
Hence, the function
$P^+_{\eta,\alpha,\nu}$ is a positive radial super-solution of \eqref{e1} in $B_1(0)\setminus \{0\}$. 
The proof of Theorem~\ref{pro1} (a) yields that $\limsup_{|x|\to 0} |x|^\beta u(x)\leq C$. By letting $C\searrow 0$, we conclude 
that $\lim_{|x|\to 0} |x|^\beta u(x)=0$. 
\end{proof}

\begin{rem} {\rm If $\beta=0$ in Corollary~\ref{zerop}, then we later prove that $\lim_{|x|\to 0} u(x)=0$ for every positive solution $u$ of \eqref{e1}, see Proposition~\ref{pzerop}.}
\end{rem}


\section{Proof of Assertion $(P_1)$ in Theorem~\ref{thi}}  \label{wabb}

From  Theorem~\ref{pro1} and Theorem~\ref{pro2} below, we conclude the claim of $(P_1)$ in Theorem~\ref{thi}. 

\begin{theorem} \label{pro2}
Let \eqref{e2} hold, $\beta<0$ and $f_{\rho}(\beta)>0$ in Case $[N_j]$ with $j\in \{0,1,2\}$. 
In addition, in Case $[N_2]$, we assume 
$\beta<\varpi_1$.  
Let $u$ be any positive super-solution of \eqref{e1}. Then, 
 \begin{equation} \label{lowe}
 \mbox{for every } \eta>0, \ \mbox{we have }
\lim_{|x|\to 0} |x|^{\beta-\eta} u(x)=\infty. 
\end{equation}
\end{theorem}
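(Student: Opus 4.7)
The plan is to prove \eqref{lowe} by a comparison argument: for every $\alpha>0$, construct positive radial sub-solutions $w_{\delta,\alpha}$ of \eqref{e1} that vanish on an inner sphere $|x|=\delta$, are dominated by $u$ on a fixed outer sphere, and behave like $c\,|x|^{-\beta+\alpha}$ as $\delta\to 0$ and $|x|\to 0$. The comparison principle (Remark~\ref{reka6}) then forces $u\ge w_{\delta,\alpha}$ on the annulus, and the limits $\delta\to 0$, $|x|\to 0$, $\alpha\to 0^{+}$ yield \eqref{lowe}. I would split the argument into an easy regime in which a single family of sub-solutions suffices, and a hard regime $\lambda>0$, $\tau\in(0,1)$, $\rho>-\beta$ in which an iterative scheme is needed.

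In the easy regime (which includes $\tau=0$, $\lambda\le 0$, and the hard regime under the extra restriction $\beta\le -\rho$), I would use the ansatz $w_{\delta,\alpha}(r)=c\,r^{-\beta+\alpha}\bigl(1-(\delta/r)^\gamma\bigr)_{+}$ with $\gamma>0$ tuned so that $\mathbb L_{\rho,\lambda,\tau}[w_{\delta,\alpha}]\ge |x|^\theta w_{\delta,\alpha}^{q}$ on $\{\delta<|x|<r_1(\alpha)\}$. The hypothesis $f_\rho(\beta)>0$ is precisely what makes $\mathbb L_{\rho,\lambda,\tau}[r^{-\beta+\alpha}]$ strictly dominant over $|x|^\theta(r^{-\beta+\alpha})^q$ for small $\alpha$, while the cut-off $\bigl(1-(\delta/r)^\gamma\bigr)_{+}$ produces only perturbations that can be absorbed. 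After checking the boundary inequalities (using the trivial positivity of $u$ away from the origin together with the size of $u$ on $|x|=r_1(\alpha)$) and invoking comparison, \eqref{lowe} follows.

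The hard regime is more subtle, and it is here that the main obstacle lies. A single ansatz $r^{-\beta+\alpha}$ is no longer a sub-solution because the drift term $(N-2+2\rho)x\cdot\nabla u/|x|^{2}$ coupled with the sub-linear gradient term is too large on that scale once $\rho$ is big. I would build a finite iterative scheme. A preliminary sub-solution at the natural scale $r^{-\rho}$ of the drift operator yields $\lim_{|x|\to 0}|x|^{-\rho+\eta}u(x)=\infty$ for every $\eta>0$ (the content of the auxiliary Lemma~\ref{lema44} in the strategy outline). Setting $M_1=\rho$, I would define inductively a strictly decreasing sequence $\{M_k\}$ converging to $0$ when $\rho<\Upsilon$ and to $-\varpi_1$ when $\rho\ge\Upsilon$ in Case $[N_2]$, arranged so that $r^{-M_{k+1}}$ is a critical scale for a sub-solution ansatz whenever $u$ is already known to dominate $r^{-M_k+\eta}$. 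Under our assumptions on $\beta$, the target $-\beta$ strictly exceeds $\lim M_k$, so there is a first index $k_*$ with $M_{k_*+1}\le -\beta<M_{k_*}$. Fixing $\varepsilon>0$ small and exponents $m_1=\rho-\varepsilon$, $m_{k_*+1}=-\beta(1+\varepsilon^{1/k_*})$, and $m_j$ close to $M_j$ in between, I would construct at each stage $j$ a family $v_{j,\delta}(r)\sim r^{-m_{j+1}-\nu}\bigl(1-(\delta/r)^{m_j-m_{j+1}-\nu}\bigr)_{+}$ of sub-solutions on $\{0<|x|<r_*^{1/\alpha}\}$ and upgrade the lower bound on $u$ from level $m_j$ to level $m_{j+1}+\nu$ by comparison with $v_{j,\delta}$, matching boundary values on $|x|=r_*^{1/\alpha}$ using the previous-stage estimate. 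After $k_*$ iterations this reads $\liminf_{|x|\to 0}|x|^{\beta(1+\varepsilon^{1/k_*})-\nu}u(x)>0$, and sending $\varepsilon,\nu\to 0^{+}$ concludes \eqref{lowe}. The delicate part is engineering the recursion $M_k\mapsto M_{k+1}$ so that it decreases monotonically to the correct limit \emph{and} produces a genuine sub-solution inequality at each stage against the Laplacian, the drift, and the sub-linear gradient contributions; the sign pattern of $f_\rho$ on $(-\infty,0)$ described in \eqref{3cas} is exactly what drives this recursion.
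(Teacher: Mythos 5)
Your proposal follows essentially the same route as the paper's proof: a first part with a single family of sub-solutions whose limiting profile is $c\,r^{-\beta+\alpha}$, valid in general except that for $\lambda>0$, $\tau\in(0,1)$ one needs the extra restriction $\beta\le-\rho$, and a second part that starts from the scale-$r^{\rho}$ estimate of Lemma~\ref{lema44} and runs a finite iteration governed by a decreasing sequence $M_1=\rho$, $M_{k+1}=\bigl(M_k(2\rho-M_k)/\lambda\bigr)^{1/(1-\tau)}$ converging to $0$ (if $\rho<\Upsilon$) or to $-\varpi_1$ (if $\rho\ge\Upsilon$), with the cutoff index $k_*$ and the perturbed exponents $m_1=\rho-\varepsilon$, $m_{k_*+1}=-\beta(1+\varepsilon^{1/k_*})$ exactly as in Proposition~\ref{lad}. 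The only differences are cosmetic — you use an algebraic cut-off $(1-(\delta/r)^{\gamma})_{+}$ where the paper uses $e^{r^{\alpha}}-e^{\delta^{\alpha}}$, and you leave the explicit recursion for $M_k$ implicit while correctly identifying its required monotonicity and limits.
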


\subsection{Proof of Theorem~\ref{pro2}: Part I} \label{sect401}
Let $u$ be a positive super-solution of  \eqref{e1} in the framework of Theorem~\ref{pro2}. 
If Case $[N_0](b)$ or Case $[N_2]$ holds, then we 
assume, 
in addition, that 
\begin{equation} \label{extar} \beta\leq -\rho. \end{equation}

Let $\alpha>0$ be fixed and 
$r_*\in (0,\log 2)$ be small such that $B_{r_*^{1/\alpha}}(0)\subset \subset \Omega$.
To obtain \eqref{lowe}, we compare $u$ with a suitable family of sub-solutions $\{w_{\delta,\alpha}\}_{\delta\in (0,r_*^{1/\alpha})}$ of \eqref{e1} in $\delta<|x|<r_*^{1/\alpha}$. 

For $c\in (0,\Lambda]$ fixed and every $\delta\in (0,r_*^{1/\alpha})$, 
we define 
\begin{equation} \label{e8} w_{\delta,\alpha}(r)= c\, r^{-\beta} \left( e^{r^\alpha}-e^{\delta^\alpha}\right) \quad \mbox{for all } r\in [\delta, r_*^{1/\alpha}]. 
\end{equation}

By Lemma~\ref{lem2}, $w_{\delta,\alpha}$ in \eqref{e8} is a positive sub-solution of \eqref{e1} in $\delta<|x|<r_*^{1/\alpha}$, that is, 
\begin{equation} \label{e7}
 \mathbb L_{\rho,\lambda,\tau} [w_{\delta,\alpha}(r)] \geq r^\theta (w_{\delta,\alpha}(r))^q   \quad \mbox{for every } \delta<r<r_*^{1/\alpha}.
\end{equation}
Moreover, using that $\beta<0$, we find that $w_{\delta,\alpha}'(r)>0$ for every $r\in (\delta,r_*^{1/\alpha})$ since
\begin{equation} \label{gao}  w_{\delta,\alpha}'(r)=-\beta r^{-1} \,w_{\delta,\alpha}(r) +c\,\alpha\, r^{-\beta+\alpha-1} e^{r^\alpha}.
\end{equation}


We diminish $c=c(\alpha,u)>0$ in \eqref{e8}, for example, $c=\min\, \{\Lambda, \min_{|x|=r_*^{1/\alpha}} u(x)\}$ to ensure that $u\geq w_{\delta,\alpha}$ on  $\partial B_{r_*^{1/\alpha}}(0)$. Then, since $w_{\delta,\alpha}=0$ on $\partial B_{\delta}(0)$, by the comparison principle, we get 
$u(x)\geq w_{\delta,\alpha}(|x|)$ for every $ \delta<|x|<r_*^{1/\alpha}$. By letting $\delta\to 0$ and then $|x|\to 0$, we get 
\begin{equation} \label{inop} \liminf_{|x|\to 0} |x|^{\beta-\alpha} u(x)\geq c>0.\end{equation}
Since $\alpha>0$ is arbitrary, from \eqref{inop} we conclude  the proof of Part I of Theorem~\ref{pro2}.

\begin{lemma}
\label{lem2} 
For any $c\in (0,\Lambda]$, $\alpha>0$ and $\delta\in (0, r_*^{1/\alpha})$, 
letting $w_{\delta,\alpha}$ as in \eqref{e8}, we have  \eqref{e7}.  
\end{lemma}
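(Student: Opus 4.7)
The strategy is to verify the pointwise inequality \eqref{e7} by direct computation, followed by a case-by-case sign analysis. First I would differentiate \eqref{e8} to obtain $w'_{\delta,\alpha}$ (as recorded in \eqref{gao}) and $w''_{\delta,\alpha}$; note that $w'_{\delta,\alpha}>0$ on $(\delta,r_{*}^{1/\alpha})$ because $\beta<0$. Assembling $\mathbb{L}_{\rho}[w_{\delta,\alpha}]=w''_{\delta,\alpha}+(1-2\rho)w'_{\delta,\alpha}/r$ and simplifying should yield
\[
\mathbb{L}_{\rho}[w_{\delta,\alpha}(r)]=\beta(\beta+2\rho)\,\frac{w_{\delta,\alpha}(r)}{r^{2}}+c\alpha r^{\alpha-\beta-2}e^{r^{\alpha}}\bigl[\alpha(1+r^{\alpha})-2(\beta+\rho)\bigr].
\]
For the nonlinear term, I would use $rw'_{\delta,\alpha}=cr^{-\beta}\bigl[|\beta|(e^{r^{\alpha}}-e^{\delta^{\alpha}})+\alpha r^{\alpha}e^{r^{\alpha}}\bigr]$ and factor $|\beta|^{1-\tau}$ out of $|w'_{\delta,\alpha}|^{1-\tau}$, obtaining a leading part $\lambda|\beta|^{1-\tau}(w_{\delta,\alpha}/r^{2})$ plus a correction.

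Next, using the defining identity $\beta(q-1)=\theta+2$, the right-hand side of \eqref{e7} becomes $r^{\theta}w_{\delta,\alpha}^{q}=c^{q}r^{-\beta-2}(e^{r^{\alpha}}-e^{\delta^{\alpha}})^{q}$, so the difference $\mathbb{L}_{\rho,\lambda,\tau}[w_{\delta,\alpha}]-r^{\theta}w_{\delta,\alpha}^{q}$ factors as $cr^{-\beta-2}\mathcal{G}(r,\delta)$ with leading piece $f_{\rho}(\beta)(e^{r^{\alpha}}-e^{\delta^{\alpha}})$. The constraint $c\leq\Lambda$ yields $c^{q-1}\leq f_{\rho}(\beta)$, and together with $r^{\alpha}\leq r_{*}<\log 2$ (which forces $e^{r^{\alpha}}-e^{\delta^{\alpha}}<e^{r_{*}}-1<1$) this produces a strictly positive cushion $f_{\rho}(\beta)(e^{r^{\alpha}}-e^{\delta^{\alpha}})\bigl[1-(e^{r_{*}}-1)^{q-1}\bigr]$ against which the remaining corrections must be absorbed.

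The delicate step is the sign analysis of the corrections at the endpoint $r=\delta$, where the cushion vanishes and so the correction terms themselves must be nonnegative. For $\tau\in(0,1)$ (Cases $[N_{0}](b)$, $[N_{1}](b)$, $[N_{2}]$), the $\lambda$-correction vanishes at $r=\delta$ and only the modulation correction $\alpha\delta^{\alpha}e^{\delta^{\alpha}}[\alpha(1+\delta^{\alpha})-2(\beta+\rho)]$ remains; its nonnegativity requires $\beta+\rho\leq 0$, which holds by the standing hypothesis $\beta\leq-\rho$ in Cases $[N_{0}](b)$ and $[N_{2}]$, and in Case $[N_{1}](b)$ follows from $\beta<\varpi_{1}(\rho)$ together with the fact that the defining relation for $\varpi_{1}$ forces $\varpi_{1}\leq-2\rho$ whenever $\rho>0$ (the case $\rho\leq 0$ being trivial since $\beta<0$). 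For $\tau=0$ (Cases $[N_{0}](a)$, $[N_{1}](a)$), the modulation and $\lambda$-correction merge at $r=\delta$ into $\alpha\delta^{\alpha}e^{\delta^{\alpha}}[\alpha(1+\delta^{\alpha})+\lambda-2(\beta+\rho)]$, which is nonnegative because $\lambda\geq 2\rho$ (Case $[N_{0}](a)$) gives $\lambda-2(\beta+\rho)\geq-2\beta>0$, while $\beta<\varpi_{1}=\lambda-2\rho$ (Case $[N_{1}](a)$) gives $-2\beta+(\lambda-2\rho)>-2\varpi_{1}+\varpi_{1}=-\varpi_{1}>0$.

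The main obstacle I anticipate is Case $[N_{1}](b)$ with $\lambda<0$ and $\tau\in(0,1)$, where the $\lambda$-correction in $\mathcal{G}$ has the wrong sign and the factor $\xi=\alpha r^{\alpha}e^{r^{\alpha}}/(|\beta|(e^{r^{\alpha}}-e^{\delta^{\alpha}}))$ blows up as $r\to\delta^{+}$. The saving observation is that $\xi\cdot(e^{r^{\alpha}}-e^{\delta^{\alpha}})=\alpha r^{\alpha}e^{r^{\alpha}}/|\beta|$ stays bounded, so the singular contribution has a finite limit; the endpoint check above, together with $\beta+\rho\leq 0$, handles $r$ close to $\delta$ by continuity, and for $r$ bounded away from $\delta$ the cushion dominates the (now uniformly bounded) corrections, closing the argument.
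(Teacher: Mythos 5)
Your computation and overall reduction are the same as the paper's: you use $\beta(q-1)=\theta+2$, $c\le\Lambda$ and $r_*<\log 2$ to absorb the right-hand side of \eqref{e7} into $f_\rho(\beta)\,w_{\delta,\alpha}(r)/r^2$ up to a positive ``cushion'', and then analyse the sign of the remaining correction terms. Your treatment of Cases $[N_0](a)$ and $[N_1](a)$ (where $\tau=0$, the gradient term is exactly linear in $w'_{\delta,\alpha}$, and the combined coefficient $\alpha(1+r^\alpha)+\lambda-2\beta-2\rho$ is positive at \emph{every} $r$, not only at the endpoint) and of Cases $[N_0](b)$ and $[N_2]$ (where $\lambda>0$ makes the nonlinear correction nonnegative and \eqref{extar} gives $\beta+\rho\le 0$) is correct and matches the paper.

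The gap is in Case $[N_1](b)$ ($\lambda<0$, $\tau\in(0,1)$). There the $\lambda$-correction $L=\lambda|\beta|^{1-\tau}E\bigl[(1+\xi)^{1-\tau}-1\bigr]$, with $E=e^{r^\alpha}-e^{\delta^\alpha}$ and $\xi=\alpha r^\alpha e^{r^\alpha}/(|\beta|E)$, is \emph{negative}, and your plan is to beat it near $r=\delta$ by continuity (fine: $L\to 0$ while the modulation term tends to a positive limit) and away from $r=\delta$ by the cushion. The second half does not close: $|L|\le(1-\tau)|\lambda|\,|\beta|^{-\tau}\alpha r^\alpha e^{r^\alpha}$ is of a fixed size (already comparable to $\alpha\delta^\alpha$ just past the endpoint), whereas the cushion $f_\rho(\beta)E\bigl[1-(e^{r_*}-1)^{q-1}\bigr]$ is as small as you like when $E$ is small and, more importantly, scales with $f_\rho(\beta)$ while $|L|$ scales with $|\lambda|\alpha$; there is no inequality between them for arbitrary $\alpha>0$, and the neighbourhood produced by your continuity argument is not quantified, so the two regimes need not overlap. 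What actually saves this case is that the \emph{modulation} correction dominates $|L|$ pointwise: using $(1+\xi)^{1-\tau}\le 1+(1-\tau)\xi$ together with $\lambda\le 0$, the total coefficient of $\alpha c\,r^{\alpha-\beta-2}e^{r^\alpha}$ in the correction is $-2\rho-2\beta+(1-\tau)\lambda|\beta|^{-\tau}+\alpha=f_\rho(\beta)/|\beta|-\beta-\lambda\tau|\beta|^{-\tau}+\alpha>0$, where positivity uses $f_\rho(\beta)>0$, $\beta<0$ and $\lambda\tau\le 0$. This identity is the one missing step; with it the corrections are nonnegative for every $r\in(\delta,r_*^{1/\alpha})$ and no gluing between regimes is needed.
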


\begin{proof}
Let $r\in (\delta,r_*^{1/\alpha})$ be arbitrary. We will write $w_\delta$ instead of $w_{\delta,\alpha}$. 
Since $r_*\in (0,\log 2)$ and $c\in (0,\Lambda]$, using that $q>1$ and $\beta=(\theta+2)/(q-1)$, we have
\begin{equation} \label{haoo} r^{\theta+2} w_\delta^{q-1}(r)=c^{q-1} \left(e^{r^\alpha}-e^{\delta^\alpha}\right)^{q-1}\leq \Lambda^{q-1}=f_{\rho}(\beta)=\beta\left(\beta+2\rho\right)+\lambda |\beta|^{1-\tau}.
\end{equation}
Hence, we conclude \eqref{e7} by showing that 
\begin{equation} \label{asa1} \mathbb L_{\rho,\lambda,\tau}[w_\delta(r)]\geq 
\Lambda^{q-1} \, w_\delta(r)/r^2. 
\end{equation}

Using \eqref{gao}, we obtain that 
\begin{equation} \label{cv} \begin{aligned}
& \mathbb L_\rho[w_\delta(r)]= \beta\left(\beta+2\rho\right) r^{-2} w_\delta(r) +\left(-2\rho-2\beta+\alpha\right) \alpha \,c r^{-\beta+\alpha-2} e^{r^\alpha}
+ c\,\alpha^2 r^{2\alpha-\beta-2} e^{r^\alpha},\\
& \lambda \frac{ w^\tau_\delta(r) |w_\delta'(r)|^{1-\tau}}{r^{1+\tau}}=\lambda \, |\beta|^{1-\tau}
\frac{w_\delta(r)}{r^2} 
\left( 1+ \frac{\alpha\,c\,r^{-\beta+\alpha}}{|\beta|\, w_\delta(r)} e^{r^\alpha}
\right) ^{1-\tau}.
\end{aligned}
\end{equation}

{\bf Proof of \eqref{asa1} in Case $[N_0](a)$ or Case $[N_1]$.}

We use that $(1+a)^{1-\tau}\leq 1+\left(1-\tau\right) a $ for every $ a>0$ with equality when $\tau=0$.  
Hence, since either $\tau=0$ or $\lambda\leq 0$, from \eqref{cv}, we infer that
\begin{equation} \label{ma} 
\lambda  \frac{ w^\tau_\delta(r) |w_\delta'(r)|^{1-\tau}}{r^{1+\tau}}
 \geq 
\lambda |\beta|^{1-\tau} r^{-2} \,w_\delta(r) +\lambda  |\beta|^{-\tau} \left(1-\tau\right) \alpha \,c \,r^{-\beta+\alpha-2} e^{r^\alpha}. 
 \end{equation}
 From the first identity in \eqref{cv} and \eqref{ma}, we have 
\begin{equation} \label{ci} 
\mathbb L_{\rho,\lambda,\tau}[w_\delta(r)]\geq \Lambda^{q-1} \,r^{-2} w_\delta(r)+\alpha\,c \,B_\alpha r^{-\beta+\alpha-2} e^{r^\alpha},
\end{equation}
where $B_\alpha:=-2\rho -2\beta +\lambda |\beta|^{-\tau}(1-\tau)+\alpha $. Remark that $\beta<0$ and $f_\rho(\beta)>0$ imply that 
$B_\alpha=f_\rho(\beta)/|\beta|
-\beta-\lambda \tau |\beta|^{-\tau}+\alpha>0$ (as $\tau=0$ or $\lambda\leq 0$).
 Thus, \eqref{ci} implies \eqref{asa1}. 

\vspace{0.2cm}
{\bf Proof of \eqref{asa1} in Case $[N_0](b)$ or Case $[N_2]$.} 
Here, we also assume \eqref{extar}. 
Note that \eqref{ma} does not apply since $\lambda>0$ and $\tau\in (0,1)$.  
From \eqref{cv} and the assumption $\beta\leq -\rho$, we find that 
$$ 
 \mathbb L_\rho[w_\delta(r)]\geq \beta\left(\beta+2\rho\right)  \,w_\delta(r)/r^2\quad \mbox{and}\quad 
\lambda \frac{ w^\tau_\delta(r) |w_\delta'(r)|^{1-\tau}}{r^{1+\tau}}
 \geq \lambda |\beta|^{1-\tau} \, w_\delta(r)/r^2. $$
By adding these inequalities and using \eqref{haoo}, 
we conclude \eqref{asa1}. 

The proof of Lemma~\ref{lem2} is now complete. \end{proof}

\begin{rem} {\rm 
Let \eqref{e2} hold, $\beta<0$ and $f_\rho(\beta)>0$.   If $\rho\leq 0$ in Case $[N_0](b)$, then \eqref{extar} always holds.  
In Case $[N_2]$, we have $\lambda^{1/(\tau+1)}>\Upsilon$. 
Moreover, when \begin{equation} \label{gres} \rho\geq \lambda^{1/(\tau+1)}\ \mbox{ in Case } [N_2],\end{equation} then $\widetilde f_\rho(-\rho)=f_\rho(-\rho)/\rho=\lambda \rho^{-\tau}-\rho\leq 0$ so that 
$\varpi_1\leq -\rho<-(\lambda \tau) ^{1/(\tau+1)}< \varpi_2$ (and $\varpi_1=-\rho$ when $\rho=\lambda^{1/(\tau+1)}$). This means that when 
\eqref{gres} holds, then 
\eqref{extar} is equivalent to $\beta< \varpi_1$. }
\end{rem}


\subsection{Proof of Theorem~\ref{pro2}: Part II} \label{kvz} 
Let $u$ be a positive super-solution of  \eqref{e1} in the framework of Theorem~\ref{pro2}.
In this part, we address the remaining ranges of $\beta$ in Case $[N_0](b)$ and Case $[N_2]$, namely, 
\begin{equation} \label{appi} 0<-\beta<\rho<\Upsilon\ \mbox{(in Case } [N_0](b)) \ \mbox{and } \beta\in (-\rho,\varpi_1)\  \mbox{for }\Upsilon\leq 
\rho<\lambda^{\frac{1}{\tau+1}} 
\mbox{ (in Case } [N_2]) \end{equation}
and show that $u$ satisfies \eqref{lowe}. We reach this aim in Proposition~\ref{lad} by a 
new iterative scheme, which after finitely many steps leads to \eqref{lowe}. The starting point of this scheme is the following. 

\begin{lemma} \label{lema44} Let $q>1$, $\lambda>0$ and $\tau\in (0,1)$. If $0<\rho<\lambda^{1/(\tau+1)}$ and $\beta>-\rho$, then 
every positive super-solution $u$ of  \eqref{e1} satisfies 
\begin{equation} \label{musi} \mbox{for every } \eta>0\ \mbox{small}, \ \lim_{|x|\to 0} |x|^{-\rho+\eta} u(x)=\infty.  
\end{equation}
\end{lemma}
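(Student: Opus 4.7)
The plan is to adapt the sub-solution construction of Lemma~\ref{lem2} to the regime at hand, where the Part~I assumption $\beta\le -\rho$ is no longer available. The natural analogue is to replace the leading power $r^{-\beta}$ in \eqref{e8} by $r^{\rho-\eta'}$ for small $\eta'>0$; that is, to consider a family of radial barriers of the form
\[
w_{\delta}(r)=c\,r^{\rho-\eta'}\bigl(e^{r^{\alpha}}-e^{\delta^{\alpha}}\bigr)^{2},\qquad r\in\bigl[\delta,\,r_{*}^{1/\alpha}\bigr],
\]
with $c,\eta',\alpha>0$ small. As in Part~I, $w_{\delta}(\delta)=0$, and we fix $c$ small enough that $w_{\delta}(r_{*}^{1/\alpha})\le u(x)$ on $\partial B_{r_{*}^{1/\alpha}}(0)$; this is possible by continuity and positivity of $u$ there.

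The identity $\mathbb L_{\rho,\lambda,\tau}[c\,r^{\rho-\eta'}]=c\,f_{\rho}(-(\rho-\eta'))\,r^{\rho-\eta'-2}$ explains why the hypotheses of the lemma are exactly what is needed. Indeed,
\[
f_{\rho}(-\rho)=-\rho^{2}+\lambda\rho^{1-\tau}=\rho\bigl(\lambda\rho^{-\tau}-\rho\bigr)
\]
is strictly positive precisely because $\rho<\lambda^{1/(\tau+1)}$, hence $f_{\rho}(-(\rho-\eta'))>0$ for all $\eta'$ sufficiently small by continuity. Meanwhile the hypothesis $\beta>-\rho$ keeps $\beta+(\rho-\eta')>0$, which controls the power of $r$ arising when matching the right-hand side $r^{\theta}w_{\delta}^{q}$ against the leading positive contribution $f_{\rho}(-(\rho-\eta'))\,w_{\delta}/r^{2}$, so the sub-solution inequality $\mathbb L_{\rho,\lambda,\tau}[w_{\delta}]\ge r^{\theta}w_{\delta}^{q}$ will reduce to a smallness condition on $c$.

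The delicate point -- and the reason for squaring the cut-off factor -- is the sign of the cross term $2(m-\rho)\,\alpha r^{\alpha}e^{r^{\alpha}}$ appearing in $\mathbb L_{\rho}[w_{\delta}]$ (with $m=\rho-\eta'$): under Part~I's condition $m=-\beta\ge\rho$ this was non-negative, whereas now it equals $-2\eta'\,\alpha r^{\alpha}e^{r^{\alpha}}$ and is negative. If one kept the first power of the cut-off as in \eqref{e8}, the sub-solution inequality would fail near $r=\delta$ (where the main positive piece and the gradient term $w_{\delta}^{\tau}|w_{\delta}'|^{1-\tau}$ both vanish) unless $\alpha>2\eta'$, an obstruction that turns out to be incompatible with the slack argument below. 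Squaring the cut-off produces the additional positive quantity $2\alpha^{2}r^{2\alpha}e^{2r^{\alpha}}$ coming from $r^{2}(g^{2})''$, and an elementary completion of squares then gives the bound
\[
\mathbb L_{\rho,\lambda,\tau}[w_{\delta}]\ge c\,r^{m-2}\Bigl\{\bigl(f_{\rho}(-m)-2(\eta')^{2}\bigr)\bigl(e^{r^{\alpha}}-e^{\delta^{\alpha}}\bigr)^{2}+2\bigl(\eta'(e^{r^{\alpha}}-e^{\delta^{\alpha}})-\alpha r^{\alpha}e^{r^{\alpha}}\bigr)^{2}\Bigr\},
\]
which is manifestly non-negative throughout the whole annulus, including the inner boundary $r=\delta$ (where the first square vanishes but the second is strictly positive) and without imposing any lower bound on $\alpha$ in terms of $\eta'$.

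Given the sub-solution, the comparison principle yields $u(x)\ge w_{\delta}(|x|)$ on the annulus; letting $\delta\to 0$ gives $u(x)\ge c\,|x|^{\rho-\eta'}(e^{|x|^{\alpha}}-1)^{2}\ge c'\,|x|^{\rho-\eta'+2\alpha}$ for $|x|$ small. The lemma then follows by a shift: given any target $\eta>0$ small, choose $\alpha,\eta'$ with $\eta'>\eta+2\alpha$ but $\eta'$ still inside the admissibility window for the sub-solution (e.g.\ $\alpha=\eta/4$, $\eta'=3\eta/2$), so that
\[
|x|^{-\rho+\eta}u(x)\ge c'\,|x|^{\eta-\eta'+2\alpha}\longrightarrow\infty\quad\text{as }|x|\to 0,
\]
since the exponent is strictly negative. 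I expect the main obstacle to lie in the third paragraph: reconciling the negative cross term inherent to $m-\rho<0$ with the need for a pointwise sub-solution inequality down to $r=\delta$, where both the leading mass term and the gradient term degenerate. Squaring the cut-off, together with the sum-of-squares rearrangement, is the key device that removes the conflict between the sub-solution constraint and the shift argument, thus providing the starting estimate \eqref{musi} for the iterative scheme of Part~II.
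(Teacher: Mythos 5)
Your construction is correct, but it takes a genuinely different route from the paper's. The paper keeps a barrier of the same general shape as in Part~I but with a \emph{negative} power of the cut-off, $z_\delta(r)=c\,r^{(1+\varepsilon)\rho}\left(e^{r^\alpha}-e^{-\delta}\right)^{-1}$, defined and strictly positive on the whole punctured ball; the negative exponent flips the sign of the troublesome cross term, but since $z_\delta$ vanishes nowhere the comparison on $B_{r_0}(0)\setminus\{0\}$ requires the a priori information that $u$ dominates $r^{(1+\varepsilon)\rho}$ near the origin, which the paper first obtains by rerunning the Part~I argument with $\beta$ replaced by $-\rho$ (Remark~\ref{sirr}); its proof is thus a two-stage bootstrap. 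Your annulus barrier $c\,r^{\rho-\eta'}(e^{r^\alpha}-e^{\delta^\alpha})^2$ vanishes on the inner sphere, so no behaviour of $u$ at the origin is needed and the preliminary estimate becomes superfluous. The squared cut-off together with the completion of squares
$$ f_\rho(-m)\,g^2-4\eta'gh+2h^2=\left(f_\rho(-m)-2(\eta')^2\right)g^2+2\left(\eta'g-h\right)^2,\qquad g=e^{r^\alpha}-e^{\delta^\alpha},\quad h=\alpha r^\alpha e^{r^\alpha}, $$
is exactly what absorbs the negative cross term without any lower bound on $\alpha$; I checked that your displayed lower bound for $\mathbb L_{\rho,\lambda,\tau}[w_\delta]$ is what the computation gives (one even has a spare non-negative term $2\alpha^2r^\alpha e^{r^\alpha}(1+r^\alpha)g$ left over, after estimating $\lambda\,G_\tau(w_\delta)\ge\lambda(\rho-\eta')^{1-\tau}w_\delta/r^2$, which uses $\lambda>0$ and $\rho-\eta'>0$). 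Your diagnosis of why the first-power cut-off fails, and why the resulting constraint on $\alpha$ conflicts with the shift, is also accurate.

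Two minor points. The numerical choice $\alpha=\eta/4$, $\eta'=3\eta/2$ gives $\eta-\eta'+2\alpha=0$, so it only yields $\liminf_{|x|\to 0}|x|^{-\rho+\eta}u(x)>0$ rather than the divergence claimed; take instead, say, $\eta'=2\eta$ with $\alpha=\eta/4$ (still admissible for $\eta$ small) so the exponent is strictly negative. Also, since $\lambda>0$ and $\tau\in(0,1)$, the comparison principle is applied via the logarithmic substitution of Remark~\ref{reka6}, which wants both functions positive on the closed annulus, whereas $w_\delta$ vanishes at $|x|=\delta$; this is the same routine technicality already present in Part~I of the paper's proof of Theorem~\ref{pro2} and is handled by comparing on $\{\delta_1<|x|<r_*^{1/\alpha}\}$ and letting $\delta_1\searrow\delta$, so it is not a gap specific to your argument.
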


\begin{rem} \label{sirr} {\rm In Lemma~\ref{lema44}, we don't need $\beta<0$. Also in Case $[N_2]$, we don't require 
$\beta<\varpi_1$ since we don't use  
$f_\rho(\beta)>0$. Reasoning as in Part I, by replacing $\beta$ by $-\rho$ in the definition of $w_{\delta,\alpha}$ in \eqref{e8} and using that
$f_\rho(-\rho)=-\rho^2+\lambda \rho^{1-\tau}>0$,  we get only that for every
$\eta>0$, 
$$  \lim_{|x|\to 0}  |x|^{-\rho-\eta} u(x)=\infty. 
$$ Hence, we need to construct another family of sub-solutions to reach \eqref{musi}.}
\end{rem}

\begin{proof} 
Since $0<\rho<\lambda^{1/(1+\tau)}$ and $\beta>-\rho$, we can choose $0<\alpha<\min\{\rho, \rho+\beta\}$ small such that 
\begin{equation} \label{liop}  c_*:= \rho^2 \left(\lambda \rho^{-1-\tau}-1\right)-2\alpha \left[2\alpha +\lambda \left(1-\tau\right) \rho^{-\tau} \right] e>0.  
\end{equation}
Let $t_*\in (0,1)$ be small such that 
$ (1-t)^{1-\tau}\geq 1-2 \left(1-\tau\right) t $ for every $t\in (0,t_*)$. We diminish $\alpha>0$ such that $ \alpha e<\rho\, t_*$. 
Observe that as $\varepsilon\to 0^+$, 
$$ f_\rho(-\left(1+\varepsilon\right)\rho) =\left(\varepsilon^2-1\right)\rho^2+ \lambda \left(1+\varepsilon\right)^{1-\tau} \rho^{1-\tau}
=\rho^2 \left( \lambda \rho^{-\tau-1} -1\right)+\lambda \left(1-\tau\right) \rho^{1-\tau} \varepsilon (1+o(1)).
$$
Hence, we fix $\varepsilon\in (0, \alpha/\rho)$ small such that 
\begin{equation} \label{framp}  f_\rho(-\left(1+\varepsilon\right)\rho) >\rho^2 \left(\lambda \rho^{-\tau-1} -1\right).
\end{equation}


\vspace{0.2cm}
{\bf Claim:} {\em For every $\eta\in (0, \alpha-\rho\varepsilon)$, we have 
\eqref{musi}.} 

\vspace{0.2cm}
{\bf Proof of the Claim.} 
We show that for every $c\in (0,c_*^{1/(q-1)})$ and $\delta>0$, the function $z_\delta$ is a positive radial sub-solution of \eqref{e1} in $B_1(0)\setminus \{0\}$, where we define  
\begin{equation} \label{zoi} z_\delta(r)= c\, r^{\left(1+\varepsilon\right)\rho} \left( e^{r^\alpha}-e^{-\delta}\right)^{-1} \quad \mbox{for all } r\in (0,1).
\end{equation}
Let $\delta>0$ be arbitrary. We need to prove that $z_\delta$ satisfies
\begin{equation} \label {gimm}  \mathbb L_{\rho,\lambda,\tau} [z_\delta(r)] \geq  r^\theta z^{q}_\delta(r)\quad \mbox{for all } r\in (0,1).
\end{equation}

{\bf Proof of \eqref{gimm}.}
Fix $r\in (0,1)$ arbitrary. Using \eqref{zoi}, $\lambda>0$ and $0<\alpha e<\rho t_*<\rho$, we get
\begin{eqnarray}  
&\displaystyle  z_\delta'(r)=\left(1+\varepsilon\right) \rho \frac{z_\delta(r)}{r} \left(1-\frac{\alpha r^\alpha e^{r^\alpha}}{ \left(1+\varepsilon\right)
\rho \left(e^{r^\alpha}-e^{-\delta}\right) } \right) >0, \label{hxo1}
\\
&\displaystyle  \lambda \frac{z_\delta^\tau(r) |z_\delta'(r)|^{1-\tau}}{r^{1+\tau}}\geq \lambda \left[\left(1+\varepsilon\right)\rho \right]^{1-\tau} \frac{z_\delta(r)}{r^2} 
\left[1- \frac{2\left(1-\tau\right)\alpha \,r^\alpha e^{r^\alpha}}{(1+\varepsilon) \rho \left(e^{r^\alpha}-e^{-\delta}\right)}  \right]. \label{hxo2}
\end{eqnarray} 

On the other hand, since $\varepsilon \rho<\alpha$ and $r\in (0,1)$, we find that 
\begin{equation} \label{hxo3} \begin{aligned}  \mathbb L_{\rho} [z_\delta(r)]& >  \frac{z_\delta(r)}{r^2} 
\left\{ 
\rho^2 (\varepsilon^2-1) -\frac{ \alpha\left[
2\varepsilon \rho +\alpha \left(1+r^\alpha\right) \right] r^\alpha e^{r^\alpha} }{e^{r^\alpha}-e^{-\delta}}
\right\} \\
&>\frac{z_\delta(r)}{r^2} 
\left[
\rho^2 (\varepsilon^2-1) -\frac{ 4\alpha^2 r^\alpha e^{r^\alpha} }{e^{r^\alpha}-e^{-\delta}}
\right]
\end{aligned} \end{equation}
By adding the inequalities in \eqref{hxo2} and \eqref{hxo3}, then using \eqref{liop} and \eqref{framp}, we arrive at 
$$  \mathbb L_{\rho,\lambda,\tau} [z_\delta(r)] >\frac{z_\delta(r)}{r^2} \left[ f_{\rho} (-\rho\left(1-\varepsilon\right))
-\frac{ 2\alpha\left[2\alpha+\lambda \left(1-\tau\right) \rho^{-\tau} \right] r^\alpha e^{r^\alpha}}{e^{r^\alpha}-e^{-\delta}}
 \right]>c_* \frac{z_\delta(r)}{r^2}.
$$ 
On the other hand, since $0<\alpha<\rho+\beta$, $q>1$ and $c\in (0,c_*^{1/(q-1)})$, we have 
$$ r^{\theta+2} z_\delta^{q-1}(r)=c^{q-1} \left(e^{r^\alpha} -e^{-\delta}\right)^{1-q} r^{(q-1) \left(\beta+\rho+\varepsilon \rho\right)}\leq c^{q-1} r^
{(q-1)(\beta+\rho+\varepsilon\rho-\alpha) }\leq c_*.
$$ 
This completes the proof of  \eqref{gimm}. \qed

\vspace{0.2cm}
Let $r_0\in (0,1)$ be small such that $B_{r_0}(0)\subset\subset \Omega$. We 
diminish $c>0$ in \eqref{zoi} such that $c< \min_{|x|=r_0} u(x)$. Thus, since $\alpha<\rho$, we have 
$ z_{\delta}(r_0)\leq c r_0^{\varepsilon \rho+\rho-\alpha}<c\leq u(x)$ for all $|x|=r_0$.  
By Remark~\ref{sirr}, we have $\lim_{|x|\to 0} u(x)/z_\delta(|x|)=\infty$ so that by the comparison principle, we get 
$u(x)\geq z_{\delta}(|x|)$ for all $0<|x|\leq r_0$ and $\delta>0$. By letting $\delta\to 0$ and then $|x|\to 0$, we obtain  
\begin{equation} \label{uop} \liminf_{|x|\to 0} |x|^{\alpha-\rho- \rho\varepsilon} u(x)\geq c>0.\end{equation}
Since  $\eta\in (0, \alpha-\rho\varepsilon)$, from \eqref{uop} we infer \eqref{musi}. This proves the Claim and Lemma~\ref{lema44}. 
\end{proof}

We are now ready to complete the proof of Theorem~\ref{pro2} by establishing the following result.  

\begin{proposition} \label{lad}
Let $q>1$,  
$\lambda>0$ and $\tau\in (0,1)$. If \eqref{appi} holds, then $u$ satisfies \eqref{lowe}. 
\end{proposition}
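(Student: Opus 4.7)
The plan is to implement the iterative scheme sketched in Subsection~\ref{strag}. Lemma~\ref{lema44} provides the initial lower estimate $\lim_{|x|\to 0}|x|^{-\rho+\eta} u(x) = \infty$ for every small $\eta>0$, meaning $u$ does not vanish at the origin faster than $|x|^{\rho-\eta}$. The target \eqref{lowe} is equivalent (since $\beta<0$) to the stronger statement that $u$ does not vanish faster than $|x|^{-\beta+\eta}$ for any $\eta>0$; as $-\beta<\rho$ by \eqref{appi}, this is a genuine improvement to be achieved by finitely many successive sub-solution comparisons. To organise the induction, I would set $M_1=\rho$ and construct a strictly decreasing sequence $\{M_k\}_{k\geq 1}$ converging to $0$ in Case $[N_0](b)$ and to $-\varpi_1$ in Case $[N_2]$, with $f_\rho(-t)>0$ on the interval swept by the sequence. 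Because $-\beta$ lies strictly between this limit and $M_1=\rho$---which is precisely what \eqref{appi} encodes, using $\beta<\varpi_1$ in Case $[N_2]$---there exists $k_*\geq 1$ with $M_{k_*+1}\leq -\beta<M_{k_*}$. Fixing a small $\varepsilon>0$, I set $m_1=\rho-\varepsilon$, $m_{k_*+1}=-\beta(1+\varepsilon^{1/k_*})$, and intermediate $m_2,\ldots,m_{k_*}$ perturbed slightly from $M_2,\ldots,M_{k_*}$ so that $m_1>m_2>\cdots>m_{k_*+1}$ and $f_\rho(-m_j)>0$ at every stage.

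At stage $j\in\{1,\ldots,k_*\}$, I would build positive radial sub-solutions $\{v_{j,\delta}\}_{\delta>0}$ of \eqref{e1} on $\{0<|x|<r_*^{1/\alpha}\}$ (for a small $\alpha>0$), modelled on the $z_\delta$ of Lemma~\ref{lema44}:
\[ v_{j,\delta}(r) = c_j\, r^{m_j}\bigl(e^{r^\alpha}-e^{-\delta}\bigr)^{-\kappa_j}, \]
with $\kappa_j>0$ and an auxiliary small $\nu>0$ satisfying $m_j-\kappa_j\alpha=m_{j+1}+\nu$. Then $\lim_{r\to 0^+}r^{-m_j}v_{j,\delta}(r)\in(0,\infty)$ for each fixed $\delta$, while $\lim_{\delta\to 0^+}v_{j,\delta}(r)\sim c_j\,r^{m_{j+1}+\nu}$ as $r\to 0^+$. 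Verifying that $v_{j,\delta}$ is a sub-solution of \eqref{e1} reduces, as in the derivation of \eqref{gimm}, to a pointwise inequality dominated by $f_\rho(-m_j)\geq c_j^{q-1}+O(\alpha)$, which holds after shrinking $c_j$ and $\alpha$. The induction then proceeds: Lemma~\ref{lema44} forces $\lim_{|x|\to 0}u(x)/v_{1,\delta}(|x|)=\infty$, and the boundary control $u\geq v_{1,\delta}$ on $\{|x|=r_*^{1/\alpha}\}$ is arranged by diminishing $c_1$; the comparison principle (Remark~\ref{reka6}) gives $u\geq v_{1,\delta}$ on the annulus, and letting $\delta\to 0$ and then $|x|\to 0$ produces $\liminf_{|x|\to 0}|x|^{-m_2-\nu}u(x)>0$. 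This new bound supplies the hypothesis for stage $2$, and so on. After $k_*$ iterations one obtains $\liminf_{|x|\to 0}|x|^{-m_{k_*+1}-\nu}u(x)>0$, which, for any prescribed $\eta>0$ and with $\varepsilon,\nu$ small enough that $m_{k_*+1}+\nu<-\beta+\eta$, yields \eqref{lowe}.

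The main obstacle is the coherent specification of $\{M_k\}$, $\{m_j\}$, $\{\kappa_j\}$ and $\alpha$: one needs simultaneously (i) $f_\rho(-m_j)>0$ (the analogue of \eqref{framp}) so that the sub-solution inequality closes at every stage; (ii) the leading exponent $m_j$ of $v_{j,\delta}$ near $r=0$ to match the lower bound delivered at stage $j-1$, so that the comparison argument activates at small $|x|$; and (iii) the sequence to cross $-\beta$ in finitely many steps. The role of the hypothesis $\rho<\lambda^{1/(\tau+1)}$ is to guarantee in Case $[N_2]$ that $-\varpi_1<\rho$, making the interval $(-\varpi_1,\rho]$ in which the iteration takes place nonempty, while $\beta<\varpi_1$ places $-\beta$ strictly inside it so that only finitely many steps are required.
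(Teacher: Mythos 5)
Your overall architecture coincides with the paper's: the same starting point (Lemma~\ref{lema44}), the same sequence $M_1=\rho>M_2>\cdots$ crossing $-\beta$ at a finite index $k_*$, the same perturbed exponents $m_1=\rho-\varepsilon$, $m_{k_*+1}=-\beta(1+\varepsilon^{1/k_*})$, the same family of sub-solutions $c\,r^{m_j}(e^{r^\alpha}-e^{-\delta})^{-\kappa_j}$, and the same comparison-principle induction. However, there is a genuine gap in your item (i). The pointwise verification of the sub-solution inequality for $v_{j,\delta}$ does \emph{not} reduce to $f_\rho(-m_j)>0$. Since $\lambda>0$ and $\tau\in(0,1)$, the term $\lambda v^{\tau}|v'|^{1-\tau}/r^{1+\tau}$ must be bounded \emph{below}, and the only available lower bound on the logarithmic derivative is $r\,v_{j,\delta}'/v_{j,\delta}\geq m_{j+1}-2\nu$ (the \emph{smaller} of the two exponents governing $v_{j,\delta}$), while the second-order part contributes $m_j^2-2\rho m_j$. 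The inequality therefore closes only under the mixed condition $m_j^2-2\rho m_j+\lambda\,(m_{j+1}-2\nu)^{1-\tau}>0$, which is strictly stronger than $f_\rho(-m_j)=m_j^2-2\rho m_j+\lambda m_j^{1-\tau}>0$ because $m_{j+1}<m_j$. This mixed condition is exactly what constrains the admissible step size $m_j\to m_{j+1}$ and what dictates the paper's explicit recursion $M_{k+1}=\bigl(M_k(2\rho-M_k)/\lambda\bigr)^{1/(1-\tau)}$ (for which the mixed expression vanishes identically, so that the $\varepsilon$-perturbation renders it slightly positive). You leave $\{M_k\}$ unspecified beyond its monotone limit, so with your condition (i) alone nothing guarantees that the sub-solution property holds at each stage, nor that the sequence you posit reaches $-\beta$ in finitely many steps while keeping the mixed quantity positive. (One \emph{could} repair this with uniformly small steps, using that $f_\rho$ is bounded below by a positive constant on $[-\rho,\beta]$ thanks to $\beta<\varpi_1$, but that is a different argument from the one you state and still requires verifying the mixed inequality, not $f_\rho(-m_j)>0$.)

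A second, smaller but induction-breaking issue is the sign of $\nu$. You normalise $m_j-\kappa_j\alpha=m_{j+1}+\nu$, so that $\lim_{\delta\to0}v_{j,\delta}(r)\sim c_j r^{m_{j+1}+\nu}$ and stage $j$ delivers only $\liminf_{|x|\to0}|x|^{-m_{j+1}-\nu}u>0$. This is \emph{weaker} than the bound $u\gtrsim |x|^{m_{j+1}}$ needed to activate stage $j+1$: since $v_{j+1,\delta}(|x|)\sim C_\delta\,|x|^{m_{j+1}}$ as $|x|\to0$ for fixed $\delta$, your bound gives $u/v_{j+1,\delta}\gtrsim |x|^{\nu}\to0$, not $\infty$, so the hypothesis for the next comparison is not met for any choice of constants. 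The exponent must be $m_{j+1}-\nu$ (as in \eqref{vij}, where the exponent of the bracket is $(m_{j+1}-m_j-\nu)/\alpha$), so that each stage yields $\liminf_{|x|\to0}|x|^{-m_{j+1}+\nu}u\geq c$, which does dominate $r^{m_{j+1}}$ near zero and allows the induction to proceed.
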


\begin{proof} 
We assume that either $0<-\beta<\rho<\Upsilon$ or $\beta\in (-\rho,\varpi_1)$ for $\Upsilon\leq \rho<\lambda^{1/(\tau+1)}$.  

Define $\{M_k\}_{k\geq 1}$ as follows: 
\begin{equation} \label{sem} M_1=\rho\quad \mbox{and}\quad  M_{k+1}=\left( \frac{M_k\left(2\rho-M_k\right)}{\lambda}\right)^{\frac{1}{1-\tau}}\quad \mbox{for every } k\geq 1.  
\end{equation}
Since $\rho<\lambda^{1/(\tau+1)}$, by induction, $M_k\in (0,\rho)$ for every $k\geq 2$ and $\{M_k\}_{k\geq 1}$ is {\em decreasing}.  
Indeed, $M_2=(\rho^2/\lambda)^{1/(1-\tau)}<\rho=M_1$. Assuming that $M_k<\rho$ for $k\geq 2$, then 
$$\rho^{1+\tau} M_{k+1}^{1-\tau} <\lambda M_{k+1}^{1-\tau}=M_k(2\rho-M_k)\leq \rho^2$$ so that $M_{k+1}<\rho$. Hence, $M_k\in (0,\rho)$ for every $k\geq 2$. 

Assuming that $M_k<M_{k-1}$ for $k\geq 2$, we find 
$$  \lambda \left(M_{k+1}^{1-\tau}-M_k^{1-\tau}\right)=\left(M_k-M_{k-1}\right) (2\rho-M_k-M_{k-1})<0,
$$
which implies that $M_{k+1}<M_k$. Hence, $\{M_k\}_{k\geq 1}$ is decreasing and $\lim_{k\to \infty} M_k=M_\infty\in [0,\rho)$.  
Letting $k\to \infty$ in \eqref{sem}, we get $f_\rho(-M_\infty)=0$. In fact,  
\begin{eqnarray} 
& \lim_{k\to \infty} M_k=0\quad \mbox{if }  \rho<\Upsilon\ (\mbox{in Case } [N_0](b)), \label{amzq1}\\
&  \lim_{k\to \infty} M_k=-\varpi_1<-\beta \quad \mbox{if }  \Upsilon\leq \rho<\lambda^{1/(\tau+1)}\  (\mbox{in Case } [N_2]). \label{amzq2}
\end{eqnarray} 
For \eqref{amzq1}, we use that $ f_\rho(t)=t^2+2\rho t+\lambda |t|^{1-\tau}>0$ for every $t<0$ so that $M_\infty=0$. 
For \eqref{amzq2}, using that $y\longmapsto y\left(2\rho-y\right)$ is increasing on $(0,\rho)$, $M_k\in (0,\rho)$ for $k\geq 2$ and 
$f_\rho(\varpi_1)=0$, by \eqref{sem}, $M_1=\rho>-\varpi_1$ and induction, we find that 
$M_k>-\varpi_1$ for all $k\geq 1$. As the negative roots of $f_\rho$ are $\varpi_1$ and $\varpi_2$ with 
$\varpi_1\leq \varpi_2$, we conclude \eqref{amzq2}. 

Hence, there exists $k_*\geq 1$ large such that 
\begin{equation} \label{fomm} M_{k_*+1}\leq   -\beta< M_{k_*}. \end{equation}

Fix $\eta>0$ arbitrary. In what follows, let $\varepsilon\in (0,1)$ be small as needed. In particular, 
\begin{equation} \label{figg} |\beta| \,\varepsilon^{\frac{1}{k_*}}<\eta. 
\end{equation}
We define $m_k$ for every $1\leq k\leq k_{*}+1$ as follows
\begin{equation} \label{fow} m_1=\rho-\varepsilon,\quad m_k=M_k\left(1+\varepsilon^{1/(k-1)}\right) \ \  \mbox{for  } 2\leq k\leq k_*\ \ 
\mbox{and } m_{k_*+1}=-\beta \left(1+\varepsilon^{1/k_*}\right). 
\end{equation}
We let $\varepsilon>0$ small enough to ensure that $m_{k+1}<m_{k}$ for every $1\leq k\leq  k_*$ and
\begin{equation} \label{posi} m_j^2-2\rho m_j+\lambda m_{j+1}^{1-\tau} >0\quad \mbox{for all } 1\leq j\leq k_*.  
\end{equation}
Indeed, we have $m_1^2-2\rho m_1+\lambda m_{2}^{1-\tau} >\varepsilon^2$.  
By the definition of $\{M_k\}_{k\geq 1}$ in \eqref{sem}, we find that 
$$ \begin{aligned} 
m_j^2-2\rho m_j+\lambda m_{j+1}^{1-\tau}& =M_j^2-2\rho M_j+\lambda M_{j+1}^{1-\tau} +\lambda M_{j+1}^{1-\tau}
\left(1-\tau\right) 
\varepsilon^{\frac{1}{j}} (1+o(1))\\
&=\lambda M_{j+1}^{1-\tau}
\left(1-\tau\right) 
\varepsilon^{\frac{1}{j}} (1+o(1))>0\quad \mbox{as } \varepsilon\to 0^+
\end{aligned} 
$$
for every  $2\leq j\leq k_*-1$. 
Similarly, when $j=k_*$ in \eqref{posi}, using \eqref{fomm}, we get
\begin{equation} \label{faq} \begin{aligned} 
m_{k_*}^2-2\rho m_{k_*}+\lambda m_{k_*+1}^{1-\tau}& =
M_{k_*}^2-2\rho M_{k_*} +\lambda |\beta|^{1-\tau} +\lambda |\beta|^{1-\tau} \left(1-\tau\right)\varepsilon^{\frac{1}{k_*}} (1+o(1))\\
& \geq  \lambda |\beta|^{1-\tau} \left(1-\tau\right)\varepsilon^{\frac{1}{k_*}} (1+o(1))\quad \mbox{as } \varepsilon\to 0^+. 
\end{aligned}
\end{equation}
Hence, for $\varepsilon>0$ fixed small enough, we have \eqref{posi}. 

From \eqref{fow} and the choice of $\varepsilon>0$, we have $|\beta| /2<m_{k_{*}+1} /2<m_k/2$ for all $1\leq k\leq k_*$. 
Let $\nu>0$ be sufficiently small such that 
\begin{equation} \label{fanm} 
\begin{aligned}
& 0<\nu<|\beta| \min\{ \varepsilon^{1/k_*},1/2\},\\
& m_{j}^2 -2\rho m_j +\lambda \left( m_{j+1} -2\nu\right)^{1-\tau}  \geq \frac{1}{2} (m_j^2-2\rho m_j +\lambda m_{j+1}^{1-\tau})
\quad \mbox{for all } 1\leq j\leq k_{*}. 
\end{aligned} \end{equation}

{\bf Construction of a family of sub-solutions.} 
Fix $\alpha\in (0,\varepsilon)$ and $c>0$ such that 
\begin{equation} 
\label{exp} c^{q-1}\leq \frac{1}{2}\min_{1\leq j\leq k_*} (m_j^2 -2\rho m_j+\lambda m_{j+1}^{1-\tau}).\end{equation}
For every $1\leq j\leq k_*$ and $\delta>0$, we define $v_{j,\delta}(r)$ by 
\begin{equation} \label{vij} v_{j,\delta}(r)=  c \,r^{m_{j}} \left( e^{r^\alpha} -e^{-\delta}\right)^\frac{m_{j+1}-m_j-\nu}{\alpha}\quad \mbox{for all } r\in (0,1].
\end{equation}

We show that there exists $r_*\in (0,1)$ small such that 
for every $1\leq j\leq k_*$, the function $v_{j,\delta}$ in \eqref{vij} is a 
positive radial sub-solution of \eqref{e1} on $0<|x|<r_*^{1/\alpha}$, namely, 
\begin{equation} \label{fass}
 \mathbb L_{\rho,\lambda,\tau} [v_{j,\delta}(r)] \geq  r^\theta v^{q}_{j,\delta}(r)\quad \mbox{for all } r\in (0,r_*^{1/\alpha}).
\end{equation}

{\bf Proof of \eqref{fass}.}
We fix $r_*\in (0,1)$ small such that $B_{r_*^{1/\alpha}}(0)\subset\subset \Omega$ and for all $t\in (0,r_*)$, 
\begin{equation} \label{coppi} \frac{t e^t}{e^t-1} <1+\frac{\nu}{m_{j} -m_{j+1}+\nu} \quad \mbox{ for every } 1\leq j\leq k_*. 
\end{equation}

Let $1\leq j\leq k_*$ be arbitrary. In what follows, we understand that $r\in (0,r_*^{1/\alpha})$ is arbitrary. 
Using \eqref{coppi} and $m_{j+1}<m_j$, we find that 
\begin{equation} \label{ere} \begin{aligned} 
 v_{j,\delta}'(r)& = \frac{ v_{j,\delta}(r)}{r}\left( m_j +\left(m_{j+1}-m_j -\nu\right) \frac{r^\alpha e^{r^\alpha}}{e^{r^\alpha}-e^{-\delta}} \right) \\
 &>
 \frac{ v_{j,\delta}(r)}{r}\left( m_j +\left(m_{j+1}-m_j-\nu\right) \frac{r^\alpha e^{r^\alpha}}{e^{r^\alpha}-1} \right)\\
 &>\left(m_{j+1}-2\nu\right) \frac{v_{j,\delta}(r)}{r}>0.
\end{aligned} \end{equation}
 
Moreover, we obtain that 
\begin{equation} \label{sos} \begin{aligned} 
\mathbb L_\rho[v_{j,\delta}(r) ]=&  \frac{ v_{j,\delta}(r)}{r^2} \left\{ 
m_{j}^2-2\rho m_j +\left[ 2\left(m_j-\rho\right)+\alpha+\alpha r^\alpha \right]\left(m_{j+1}-m_j-\nu\right) \frac{r^\alpha e^{r^\alpha}}{e^{r^\alpha}-e^{-\delta}}
\right. \\
& \left. 
 \ \quad \qquad + (m_j-m_{j+1}+\nu) \left(m_j-m_{j+1}+\alpha+\nu\right) \frac{r^{2\alpha} e^{2r^\alpha}}{(e^{r^\alpha}-e^{-\delta})^2}
\right\} .
\end{aligned} \end{equation}
Our choice of $\alpha\in (0,\varepsilon)$ yields $\alpha<\rho-m_1\leq \rho-m_j$. 
Since $m_{j+1}<m_j$, we see that in the expression of $\mathbb L_\rho[v_{j,\delta}(r) ]$, the coefficients of
$ r^\alpha e^{r^\alpha}/(e^{r^\alpha}-e^{-\delta})$ and $ r^{2\alpha} e^{2r^\alpha}/(e^{r^\alpha}-e^{-\delta})^2$  are positive. Thus, we obtain that 
$$ 
\mathbb L_\rho[v_{j,\delta}(r) ]
 \geq \frac{ v_{j,\delta}(r)}{r^2} \left( m_{j}^2-2\rho m_j\right). 
$$
This, jointly with \eqref{ere}, implies that 
\begin{equation} \label{ror1} \mathbb L_{\rho,\lambda,\tau} [v_{j,\delta}(r)] \geq  \frac{ v_{j,\delta}(r)}{r^2}\left[ m_{j}^2-2\rho m_j+\lambda \left(m_{j+1}-2\nu\right)^{1-\tau}
 \right].
\end{equation}
Hence, in view of \eqref{fanm} and \eqref{exp}, we get that 
\begin{equation} \label{sig}  \mathbb L_{\rho,\lambda,\tau} [v_{j,\delta}(r)] \geq   \frac{ v_{j,\delta}(r)}{r^2}  
\frac{1}{2}\min_{1\leq j\leq k_*} (m_j^2 -2\rho m_j+\lambda m_{j+1}^{1-\tau}) 
\geq c^{q-1}  \frac{ v_{j,\delta}(r)}{r^2}.  
\end{equation}
By our choice of $\nu>0$ and \eqref{fow}, we have 
\begin{equation} \label{mott}
\begin{aligned} 
& \nu<-\beta \,\varepsilon^{1/k_*}=m_{k_*+1} +\beta\leq m_{j+1}+\beta,\\
& \theta+2+\left(m_{j+1}-\nu\right) \left(q-1\right)> \theta+2-\beta\left(q-1\right)=0.
\end{aligned}
\end{equation}
Hence, we see that 
$$ \begin{aligned}
r^{\theta+2} v^{q-1}_{j,\delta}(r) &=c^{q-1} r^{m_j\left(q-1\right)+\theta+2} \left(e^{r^\alpha} -e^{-\delta} \right)^{\frac{\left(m_{j+1}-m_j-\nu\right)(q-1)}{\alpha}}\\
& \leq c^{q-1} r^{m_j \left(q-1\right)+\theta+2} \left(e^{r^\alpha} -1 \right)^{\frac{\left(m_{j+1}-m_j-\nu\right)(q-1)}{\alpha}}\\
&\leq c^{q-1} r^{\theta+2+\left(m_{j+1}-\nu\right)\left(q-1\right)}\leq c^{q-1}
 \quad \mbox{for all } r\in (0,r_*^{1/\alpha}).
\end{aligned}$$
This and \eqref{sig} prove that for every $1\leq j\leq k_*$, the function $v_{j,\delta}$ in \eqref{vij} 
satisfies \eqref{fass}. 

\vspace{0.2cm} 
{\bf Proof of Proposition~\ref{lad} concluded.} We let $c>0$ in \eqref{vij} small enough such that $c<\min_{|x|=r_*^{1/\alpha}} u(x)$. Hence, 
\begin{equation} \label{fvb} v_{j,\delta}(|x|)\leq u(x)\quad \mbox{ for every } |x|=r_*^{1/\alpha}\ \mbox{ and all } 1\leq j\leq k_*.\end{equation}

From Lemma~\ref{lema44}, we have $\lim_{|x|\to 0} |x|^{-\rho+\varepsilon} u(x)=\infty$ if $\varepsilon>0$ is small enough. 
Hence, recalling that $m_1=\rho-\varepsilon$, we find that 
\begin{equation} \label{emb} u(x)\geq v_{1,\delta}(|x|)\quad \mbox{ for } |x|>0\ \mbox{ close to zero}. \end{equation}
By \eqref{fvb} with $j=1$ and the comparison principle (see  \eqref{ere}), we obtain that 
$$ u(x)\geq v_{1,\delta}(|x|)\quad \mbox{for all } 0<|x|\leq r_*^{1/\alpha}.$$
By letting $\delta\to 0$ and then $|x|\to 0$, we arrive at 
$ \liminf_{|x|\to 0} |x|^{-m_2+\nu} u(x)\geq c$. This shows that 
$ u(x)\geq v_{2,\delta}(|x|)$  for $|x|>0$ close to zero. In view of \eqref{fass} with $j=2$ and \eqref{fvb}, 
we again can use the comparison principle to infer that 
$ u(x)\geq v_{2,\delta}(|x|)$  for all $0<|x|\leq r_*^{1/\alpha}$. 
Then, by induction on $j=1,\ldots, k_*$, we find that 
$$ u(x)\geq v_{k_*,\delta}(|x|)\quad \mbox{for all } 0<|x|\leq r_*^{1/\alpha}.$$
Passing to the limit $\delta\to 0$, we arrive at 
\begin{equation} \label{topp} \liminf_{|x|\to 0} |x|^{-m_{k_*+1}+\nu} u(x)\geq c,\end{equation}where 
$m_{k_*+1}=-\beta \left(1+\varepsilon^{1/k_*}\right)<\eta-\beta$ (see \eqref{figg} and  \eqref{fow}). Consequently, from 
\eqref{topp}, we get $\lim_{|x|\to 0} |x|^{\beta-\eta} u(x)=\infty$. Since $\eta>0$ was arbitrary, we conclude the proof of Proposition~\ref{lad}. 
\end{proof}
In view of Subsections~\ref{sect401} and \ref{kvz}, the proof of Theorem~\ref{pro2} is complete. \qed


\section{Sharp lower bounds when $\beta> \varpi_1$ in Case $[N_j]$ with $j\in \{1,2\}$}
\label{oaz}

\begin{theorem} \label{zic1} Let \eqref{e2} hold. Assume that $\beta> \varpi_1$ in either Case $[N_1]$ or Case $[N_2]$. Then, every positive super-solution $u$ of 
\eqref{e1} satisfies 
\begin{equation} \label{mb2}
\liminf_{|x|\to 0} |x|^{\varpi_1} u(x)>0. 
\end{equation} 
\end{theorem}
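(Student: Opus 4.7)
The plan is to establish \eqref{mb2} in two stages. First I prove the weaker statement
\begin{equation*}
\lim_{|x|\to 0} |x|^{\varpi_1 - \eta}\, u(x) = \infty \quad \text{for every } \eta > 0,
\end{equation*}
and then bootstrap it to the sharp exponent $\varpi_1$ through a second, more refined family of sub-solutions. The only tool is the comparison principle applied to explicit radial sub-solutions, and the crucial algebraic input is $f_\rho(\varpi_1) = 0$, so, by \eqref{juv}, the pure power $|x|^{-\varpi_1}$ solves $\mathbb L_{\rho,\lambda,\tau}[\,\cdot\,] = 0$ on $B_1(0)\setminus\{0\}$.

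\textbf{Stage 1: the weak lower bound.} Fix $r_0 \in (0, 1/e)$ with $\overline{B_{r_0}(0)} \subset \Omega$. The sub-solution construction splits by regime. If $\Upsilon \leq \rho < \lambda^{1/(\tau+1)}$ in Case $[N_2]$ then $-\varpi_1 \leq \rho$, and I would adapt the proof of Lemma~\ref{lema44} by replacing $\rho$ in the ansatz \eqref{zoi} with $-\varpi_1$; the analogue of \eqref{framp} is $f_\rho(\varpi_1 + \varepsilon) > 0$ for small $\varepsilon > 0$, which holds because $\widetilde f_\rho > 0$ on a left neighbourhood of $\varpi_1$ when $\rho > \Upsilon$ (with an additional $(\log(1/r))^{2/(\tau+1)}$ correction handling the borderline $\rho = \Upsilon$). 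In the remaining regimes (Case $[N_1]$, or $\rho \geq \lambda^{1/(\tau+1)}$ in Case $[N_2]$), I would fix $\mu \in (0, \eta)$ and take, for each $\delta \in (0, 1/e)$, an ansatz of the form
\begin{equation*}
u_\delta(r) = c\, r^{-\varpi_1}\bigl(\log(1/r)\bigr)^{-\mu}\bigl(1 - \Phi(r,\delta)\bigr)
\end{equation*}
on the annulus $\delta < |x| < 1/e$, with $\Phi(r,\delta)$ a smooth cut-off equal to $1$ at $r = \delta$ and tending to $0$ as $\delta \to 0^+$ for fixed $r$. Taking $c$ small enough gives $u_\delta(|x|) \leq u(x)$ on $|x| = r_0$. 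Dividing the sub-solution inequality $\mathbb L_{\rho,\lambda,\tau}[u_\delta] \geq r^\theta u_\delta^q$ by $u_\delta/r^2$ reduces matters to showing that a surviving leading term of size $\mu\,\widetilde f_\rho'(\varpi_1)\bigl(\log(1/r)\bigr)^{-1}$, which is positive, dominates $r^{\theta+2} u_\delta^{q-1} = O\bigl(r^{(q-1)(\beta - \varpi_1)}\bigr) \to 0$ as $r \to 0^+$ (here $f_\rho(\varpi_1) = 0$ cancels the $O(1)$ terms, and $\beta > \varpi_1$ produces the needed decay). The comparison principle on $\{\delta < |x| < r_0\}$ and the limit $\delta \to 0^+$ then yield the weak bound.

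\textbf{Stage 2: sharpening to the exponent $\varpi_1$.} Fix $\varepsilon > 0$ small and choose $r_\varepsilon \in (0, r_0)$. For every $\delta \in (0,1)$ I would construct a positive radial sub-solution $z_{\delta,\varepsilon}$ of \eqref{e1} on $B_{r_\varepsilon}(0)\setminus\{0\}$ satisfying
\begin{equation*}
\lim_{r \to 0^+} r^{\varpi_1 - \varepsilon\delta}\, z_{\delta,\varepsilon}(r) = c > 0 \qquad \text{and} \qquad z_{\delta,\varepsilon}(|x|) \leq u(x) \text{ for } |x| = r_\varepsilon.
\end{equation*}
Existence of such $z_{\delta,\varepsilon}$ is again driven by $f_\rho(\varpi_1) = 0$: a Taylor expansion gives $f_\rho(\varpi_1 - \varepsilon\delta) = \varepsilon\delta\,\widetilde f_\rho'(\varpi_1) + O((\varepsilon\delta)^2)$, which leaves enough room for $r^\theta z_{\delta,\varepsilon}^q$ to be absorbed near zero provided $c$ is sufficiently small. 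Since $\varpi_1 - \varepsilon\delta < \varpi_1 - \eta$ for any $\eta \in (0, \varepsilon\delta)$, Stage~1 forces $u \geq z_{\delta,\varepsilon}$ on a punctured neighbourhood of the origin; the comparison principle then propagates this inequality throughout $B_{r_\varepsilon}(0)\setminus\{0\}$. Sending $\delta \to 0^+$ yields $u(x) \geq c|x|^{-\varpi_1}$ for small $|x|$, which is \eqref{mb2}.

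\textbf{Main obstacle.} The delicate step is the sub-solution inequality of Stage~1, most of all in the double-root borderline $\rho = \Upsilon$ of Case~$[N_2]$, where $\varpi_1 = \varpi_2 = -(\lambda\tau)^{1/(\tau+1)}$ so $\widetilde f_\rho'(\varpi_1) = 0$ and the first-order logarithmic correction produces no margin; the ansatz then has to be tuned with the $(\log(1/r))^{2/(\tau+1)}$ correction suggested by \eqref{psia}. A secondary but persistent difficulty is that the gradient nonlinearity $\lambda u^\tau |\nabla u|^{1-\tau}/|x|^{\tau+1}$ is non-additive, so each check requires an exact Taylor expansion of $(1+s)^{1-\tau}$ and careful sign tracking to guarantee the error is absorbed rather than amplified---this is precisely where the split between the two regimes of Stage~1 originates.
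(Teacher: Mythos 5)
Your two-stage architecture is the same as the paper's: a weak bound $\lim_{|x|\to 0}|x|^{\varpi_1-\eta}u(x)=\infty$ for all $\eta>0$, followed by a sharpening via the family $z_{\delta,\varepsilon}(r)=c\,r^{\varepsilon-\varpi_1}(1-e^{-r^\varepsilon})^{\delta-1}$; your Stage~2 and the logarithmic sub-solutions $c\,r^{-\varpi_1}[|\log r|^{-\mu}-|\log\delta|^{-\mu}]$ for Case~$[N_1]$ and for Case~$[N_2]$ with $\varpi_1\leq-\rho$ coincide with the paper's Proposition~\ref{nwp1} and Lemma~\ref{l53}. However, there is a genuine gap in Stage~1 in the remaining regime $\varpi_1>-\rho$ of Case~$[N_2]$ (equivalently $\Upsilon\leq\rho<\lambda^{1/(\tau+1)}$).

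The problem is that a \emph{single} adaptation of Lemma~\ref{lema44} with $\rho$ replaced by $-\varpi_1$ in the ansatz \eqref{zoi} is circular. The resulting sub-solution behaves like $c\,r^{(1+\varepsilon)|\varpi_1|}$ as $r\to 0$ (for fixed $\delta$), so the comparison principle can only be launched if one already knows $u(x)\gg |x|^{(1+\varepsilon)|\varpi_1|}$ near the origin. The only non-circular a priori information available in this regime is of the form $u(x)\gg|x|^{\rho-\eta}$ (Remark~\ref{sirr} followed by Lemma~\ref{lema44} itself, both of which rely on $f_\rho(-\rho)>0$, valid precisely because $\rho<\lambda^{1/(\tau+1)}$); one cannot run the analogous argument at the exponent $\varpi_1$ because $f_\rho(\varpi_1)=0$ gives no margin. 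Since $|\varpi_1|<\rho$ here, $|x|^{\rho-\eta}$ is far \emph{below} $|x|^{(1+\varepsilon)|\varpi_1|}$ near zero, so the boundary condition at the inner boundary fails and the comparison cannot start. (There is also a sign slip: you invoke $f_\rho(\varpi_1+\varepsilon)>0$, but $\widetilde f_\rho<0$ on $(\varpi_1,\varpi_2)$; what is positive is $f_\rho((1+\varepsilon)\varpi_1)$, to the left of $\varpi_1$.) The paper bridges the gap between the exponents $\rho$ and $|\varpi_1|$ with the iterative scheme of Propositions~\ref{lad} and \ref{gain}: a decreasing sequence $M_1=\rho$, $M_{k+1}=(M_k(2\rho-M_k)/\lambda)^{1/(1-\tau)}$ converging to $-\varpi_1$, together with a fresh family of sub-solutions $v_{j,\delta}$ at each step, each of which improves the lower bound just enough to launch the next comparison. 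Some such multi-step bootstrapping is unavoidable: each sub-solution inequality only tolerates a bounded drop in the exponent, governed by that recursion. Your proposal as written omits this mechanism entirely in the one regime where it is needed.
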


To establish \eqref{mb2} 
under the assumptions of Theorem~\ref{zic1}, we first show that
\begin{equation} \label{simo} \mbox{for every } \eta>0,\ \mbox{we have } \lim_{|x|\to 0} |x|^{\varpi_1-\eta} u(x)=\infty .\end{equation}

$\bullet$ For Case~$[N_1]$, as well as Case $[N_2]$ with $\varpi_1\leq -\rho$, we obtain \eqref{simo} from Proposition~\ref{nwp1}. 

$\bullet$ For Case $[N_2]$ with $\varpi_1>-\rho$, we conclude \eqref{simo} from Proposition~\ref{gain}. 

\vspace{0.2cm}
In Section~\ref{sect61}, we state and prove Propositions~\ref{nwp1} and \ref{gain}. Using these ingredients, we give the proof in Theorem~\ref{zic1} in Section~\ref{sect62}.

\subsection{Critical ingredients for proving Theorem~\ref{zic1}} \label{sect61}

\begin{proposition} \label{nwp1} Let \eqref{e2} hold and $\beta> \varpi_1$ in Case $[N_j]$ with $j\in \{1,2\}$. In Case $[N_2]$, we also assume
$\varpi_1\leq -\rho$. Let $u$ be any positive super-solution of \eqref{e1}. Then,  
\begin{equation} \label{fio}  \mbox{for every } \eta>0, \mbox{we have }  \lim_{|x|\to 0} |x|^{\varpi_1} \left| \log |x|   \right|^{\eta} u(x)=\infty.
\end{equation} 
\end{proposition}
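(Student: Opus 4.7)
The plan is to compare the super-solution $u$ against a one-parameter family of explicit positive radial sub-solutions $\{u_\delta\}_{\delta>0}$ of \eqref{e1}, supported on the annulus $\{\delta<|x|<1/e\}$, whose limiting profile as $\delta\to 0^+$ is proportional to $|x|^{-\varpi_1}(\log(1/|x|))^{-\mu}$ for a fixed $\mu\in(0,\eta)$. Since $\eta-\mu>0$, the comparison principle combined with $\delta\to 0^+$ and then $|x|\to 0^+$ will immediately yield \eqref{fio}.

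For the ansatz, with $L(r):=\log(1/r)$ and $c>0$ small (to be chosen later), I would take
\[
u_\delta(r)=c\,r^{-\varpi_1}\bigl[L(r)^{-\mu}-L(\delta)^{-\mu}\bigr],\qquad r\in[\delta,1/e],
\]
so that $u_\delta(\delta)=0$, $u_\delta>0$ on $(\delta,1/e)$, and $u_\delta(r)\nearrow c\,r^{-\varpi_1}L(r)^{-\mu}$ as $\delta\to 0^+$. The first task is to verify the sub-solution property. Since $f_\rho(\varpi_1)=0$ cancels the leading $r^{-\varpi_1-2}L^{-\mu}$ terms, a careful Taylor expansion of the gradient factor (using $\mu L^{-\mu-1}\ll -\varpi_1 A$ as $\delta\to 0^+$, where $A=L^{-\mu}-L(\delta)^{-\mu}$) shows that, as $r\to 0^+$,
\[
\mathbb L_{\rho,\lambda,\tau}[u_\delta(r)]=c\,r^{-\varpi_1-2}L(r)^{-\mu-1}\Bigl\{-\mu\,f_\rho'(\varpi_1)+o(1)\Bigr\}.
\]
Under the hypotheses $\varpi_1$ is a simple root of $f_\rho$ (automatic in Case $[N_1]$; in Case $[N_2]$ the assumption $\varpi_1\le-\rho$, equivalently $\rho\ge\lambda^{1/(\tau+1)}>\Upsilon$, forces $\varpi_1<\varpi_2$), so $-f_\rho'(\varpi_1)>0$. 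Meanwhile,
\[
r^\theta u_\delta(r)^q\le c^{q-1}r^{(q-1)(\beta-\varpi_1)}L(r)^{-\mu(q-1)}\cdot\frac{u_\delta(r)}{r^{2}},
\]
and the prefactor $r^{(q-1)(\beta-\varpi_1)}$ vanishes polynomially as $r\to 0^+$ because $\beta>\varpi_1$. Hence one finds $r_*\in(0,1/e)$ and $c_0>0$, independent of $\delta$, such that for every $c\in(0,c_0]$ the function $u_\delta$ is a sub-solution of \eqref{e1} on $(\delta,r_*)$ for all sufficiently small $\delta$. Fixing $r_0\in(0,r_*)$ with $\overline{B_{r_0}(0)}\subset\Omega$ and shrinking $c$ further so that $u_\delta(r_0)\le c\,r_0^{-\varpi_1}L(r_0)^{-\mu}\le\min_{|x|=r_0}u(x)$ uniformly in $\delta$, the comparison principle on $\{\delta<|x|<r_0\}$ gives $u(x)\ge u_\delta(|x|)$ there; sending $\delta\to 0^+$ yields $u(x)\ge c\,|x|^{-\varpi_1}L(|x|)^{-\mu}$ for $0<|x|\le r_0$, and multiplying by $|x|^{\varpi_1}|\log|x||^{\eta}$ delivers \eqref{fio} via $L(|x|)^{\eta-\mu}\to\infty$.

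The main obstacle is the calculation sketched above: one must track the $r^{-\varpi_1-2}L^{-\mu-1}$ correction both from $\mathbb L_\rho[u_\delta]$ (producing $-2\mu(\varpi_1+\rho)$) and from expanding the nonlinear factor $\lambda u_\delta^\tau|u_\delta'|^{1-\tau}/r^{1+\tau}$ to first order in $1/L$ (producing $\lambda(1-\tau)\mu|\varpi_1|^{-\tau}$), and recognise their sum as $-\mu f_\rho'(\varpi_1)$. The constraint $\varpi_1\le-\rho$ imposed in Case $[N_2]$ is precisely the sharp condition that excludes the borderline value $\rho=\Upsilon$ where $\varpi_1=\varpi_2$ coalesce into a double root; in that borderline regime $f_\rho'(\varpi_1)=0$ and this sub-solution becomes insufficient, which is why the remaining range $\Upsilon\le\rho<\lambda^{1/(\tau+1)}$ within Case $[N_2]$ has to be treated separately by Proposition~\ref{gain}.
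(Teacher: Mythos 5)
Your overall scheme is exactly the paper's: the same family of sub-solutions $u_\delta(r)=c\,r^{-\varpi_1}\bigl[|\log r|^{-\mu}-|\log\delta|^{-\mu}\bigr]$ on $\{\delta<|x|<1/e\}$ with $\mu\in(0,\eta)$, the same comparison on $\{\delta<|x|<r_0\}$, the same passage $\delta\to0^+$, and the same final observation that $|\log|x||^{\eta-\mu}\to\infty$. Your side remarks are also accurate: in Case $[N_2]$ the hypothesis $\varpi_1\le-\rho$ is indeed equivalent to $\rho\ge\lambda^{1/(\tau+1)}$, which excludes the double root $\rho=\Upsilon$, and the complementary range is handled by Proposition~\ref{gain}.

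The gap is in your verification of the sub-solution property. You derive it from the expansion $\mathbb L_{\rho,\lambda,\tau}[u_\delta(r)]=c\,r^{-\varpi_1-2}L(r)^{-\mu-1}\{-\mu f_\rho'(\varpi_1)+o(1)\}$, justified by ``$\mu L^{-\mu-1}\ll-\varpi_1 A$'', where $A=L(r)^{-\mu}-L(\delta)^{-\mu}$. But this smallness fails near the inner boundary: as $r\to\delta^+$ one has $A\to0$ while $\mu L(r)^{-\mu-1}$ stays bounded away from zero, so the ratio $\mu L^{-\mu-1}/(|\varpi_1|A)$ blows up \emph{for every fixed} $\delta$, no matter how small. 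There the structure $u_\delta'\approx|\varpi_1|u_\delta/r$ breaks down ($u_\delta$ vanishes while $u_\delta'$ does not), the Taylor expansion of the factor $u_\delta^\tau|u_\delta'|^{1-\tau}$ around $|\varpi_1|^{1-\tau}u_\delta/r$ is not valid, and the coefficient $-\mu f_\rho'(\varpi_1)$ is not the leading term. So the inequality $\mathbb L_{\rho,\lambda,\tau}[u_\delta]\ge r^\theta u_\delta^q$ on the \emph{whole} annulus $(\delta,r_*)$ — which is what the comparison principle needs — is not established by your argument. The paper avoids this entirely by using one-sided bounds valid pointwise on all of $(\delta,1/e)$: in Case $[N_1]$ (where $\tau=0$ or $\lambda\le0$) it uses $\lambda\,G_\tau(u_\delta)\ge\lambda|\varpi_1|^{-\tau}u_\delta'/r$, and in Case $[N_2]$ (where $\lambda>0$) it simply discards the gradient correction via $\lambda\,G_\tau(u_\delta)\ge\lambda|\varpi_1|^{1-\tau}u_\delta/r^2$ and, using $\rho+\varpi_1\le0$, falls back on the strictly positive term $c\mu(\mu+1)r^{-\varpi_1-2}|\log r|^{-\mu-2}$ coming from $\mathbb L_\rho[u_\delta]$ (note the paper's lower bound in Case $[N_2]$ carries the extra power $|\log r|^{-\mu-2}$, not $|\log r|^{-\mu-1}$, which is still harmless against the polynomially decaying factor $r^{(q-1)(\beta-\varpi_1)}$). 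Your proof can be repaired along these lines, but as written the key step does not go through uniformly on the annulus.
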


\begin{proof} Let $\mu>0$ be arbitrary. 
We define $c_0=c_0(\mu)>0$ by 
	\begin{equation} \label{cmu} c_0(\mu):=
	\left\{ \begin{aligned}
	& \left[\mu |\varpi_1| \right]^{\frac{1}{q-1}} \inf_{r\in (0,1/e]} \left(r^{\varpi_1-\beta} |\log r|^{\mu-\frac{1}{1-q}} \right) && \mbox{in 
	Case $[N_1]$,}&\\
	& \left[\mu(\mu+1)\right]^{\frac{1}{q-1}}  \inf_{r\in (0,1/e]} \left(r^{\varpi_1-\beta} |\log r|^{\mu-\frac{2}{1-q}} \right)
	&& \mbox{in Case [$N_2$]}.&
	\end{aligned}\right.
	\end{equation}
	
Let $c\in (0,c_0)$ and $\delta\in (0,1/e)$ be arbitrary. For every $\delta\leq r\leq 1/e$, we define $u_\delta(r)$ by 
\begin{equation} \label{eqo1} u_\delta(r):= c\,r^{-\varpi_1} \left[|\log r|^{-\mu}-|\log \delta|^{-\mu}\right].
\end{equation}

\begin{lemma} \label{lema64} In the framework of Proposition~\ref{nwp1}, the function 
$u_\delta$ in \eqref{eqo1} is a positive sub-solution of \eqref{e1} on $\{ \delta<|x|<1/e\}$. 
\end{lemma}

\begin{proof}
	In what follows, we show that 
	\begin{equation}\label{eqo2}
\mathbb L_{\rho,\lambda,\tau}[u_{\delta}(r)]\geq r^\theta\,(u_{\delta}(r))^q \quad\mbox{for every } \delta< r<1/e.
\end{equation}
	Let $\delta<r<1/e$ be arbitrary.	We remark that 
	\begin{equation} \label{gr} u_\delta'(r)=|\varpi_1| \,r^{-1} u_\delta(r) +c\mu r^{-1-\varpi_1} | \log r|^{-\mu-1}>|\varpi_1| \, r^{-1} u_\delta(r)>0. 
	\end{equation}
	After a routine calculation, we see that 
	\begin{equation} \label{eqo5}  
	\mathbb L_\rho [u_\delta(r)]=\varpi_1 \left(\varpi_1+2\rho\right) \frac{u_\delta(r)}{r^2} +c\mu r^{-2-\varpi_1} | \log r|^{-\mu-1}\left[ -2\left(\rho+\varpi_1\right) +
	\frac{\mu+1}{\log  \left(1/r\right)} \right].
	\end{equation}

$\bullet$ If Case $[N_1]$ holds, then since $\lambda<0$ or $\tau=0$, by using \eqref{gr}, we find that 
\begin{equation*} \label{mb3} \lambda \,  \frac{u^\tau_\delta(r) |u'_\delta(r)|^{1-\tau}}{r^{1+\tau}} \geq \lambda \,|\varpi_1|^{-\tau} r^{-1} u_\delta'(r)= 
\lambda |\varpi_1|^{1-\tau} r^{-2} u_\delta(r) +\lambda c\mu |\varpi_1|^{-\tau} r^{-2-\varpi_1}| \log r|^{-\mu-1}.
\end{equation*}

$\bullet$ If Case $[N_2]$ holds, then using \eqref{gr}, we get
\begin{equation*} \label{mab3} \lambda \,  \frac{u^\tau_\delta(r) |u'_\delta(r)|^{1-\tau}}{r^{1+\tau}} \geq \lambda |\varpi_1|^{1-\tau} r^{-2} u_\delta(r).
\end{equation*}

Since  
$\varpi_1<0$ satisfies $f_{\rho}(\varpi_1)=0$ (and, in addition, $\rho+\varpi_1 \leq 0$ in Case $[N_2]$), we arrive at 
	\begin{equation} \label{rhh}   \mathbb L_{\rho,\lambda,\tau}[u_{\delta}(r)]\geq 
	\left\{ \begin{aligned} 
	&c\mu |\varpi_1| r^{-2-\varpi_1} |\log r|^{-\mu-1} && \mbox{if Case $[N_1]$ holds},&\\
	& c\mu \left(\mu+1\right) r^{-2-\varpi_1} |\log r|^{-\mu-2} && \mbox{if Case $[N_2]$ holds}.&
	\end{aligned} \right.
	\end{equation}
	
	On the other hand, the right-hand side of \eqref{eqo2} is bounded above by 
	\begin{equation} \label{eqo4}  c^{q} r^{\theta-q\varpi_1}|\log r|^{-\mu q}. 
	\end{equation} 
	Since $c\in (0,c_0(\mu))$ with $c_0(\mu)$ given in \eqref{cmu}, 
	we deduce that for each $r\in (\delta,1/e)$, the quantity in \eqref{eqo4} is less than or equal to the right-hand side of \eqref{rhh}.  
	This proves \eqref{eqo2}. 
\end{proof}

{\bf Proof of Proposition~\ref{nwp1} completed.} 
Let $u$ be any positive super-solution of \eqref{e1}.  
Fix $r_0\in (0,1/e)$ such that $B_{r_0}(0)\subset\subset \Omega$. 
We let $c=c_\mu \in (0,c_0(\mu))$ small such that 
$$c_\mu\leq r_0^{\varpi_1}\,\min_{\partial B_{r_0}(0)}u.$$ This implies that 
$ u\geq u_\delta $ on $ \partial B_{r_0}(0)$. Since $u_\delta=0$ on $\partial B_\delta(0)$ and \eqref{eqo2} holds, using \eqref{gr} and the comparison principle, 
 we get
$ u(x)\geq u_{\delta}(|x|)$ for all $\delta\leq |x|\leq r_0$. By letting 
$\delta\to 0$, we have
\begin{equation} \label{omina}
u(x)\geq c_\mu \,|x|^{-\varpi_1} [\log \,(1/|x|)]^{-\mu} \quad \mbox{for every } x\in B_{r_0}(0)\setminus \{0\}.
\end{equation}
Let $\eta>0$ be arbitrary. By choosing $\mu\in (0,\eta)$, from \eqref{omina}, we conclude the proof of \eqref{fio}.  
\end{proof}

\begin{rem}
{\rm In the framework of Proposition~\ref{nwp1}, we cannot conclude \eqref{mb2} from \eqref{omina} by letting $\mu\to 0$ 
since $c_\mu\in (0,c_0(\mu))$ and 
$c_0(\mu)\to 0$ as $\mu\to 0^+$, see \eqref{cmu}.}
\end{rem}

\begin{proposition} \label{gain} Let \eqref{e2} hold and $\beta\geq \varpi_1>-\rho$ in Case $[N_2]$.  Then, every positive super-solution $u$ of \eqref{e1} satisfies \eqref{simo}. 
\end{proposition}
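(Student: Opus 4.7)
The plan is to adapt the iterative bootstrap from Proposition~\ref{lad}, terminating it at a stage $k_*$ depending on $\eta$ so that the accumulated lower bound on $u$ pushes its exponent arbitrarily close to $\varpi_1$ from above. First, I would observe that $\varpi_1>-\rho$ in Case~$[N_2]$ is equivalent to $\Upsilon\leq\rho<\lambda^{1/(\tau+1)}$, so Lemma~\ref{lema44} is available and supplies the starting estimate $\lim_{|x|\to 0}|x|^{-\rho+\eta_0}u(x)=\infty$ for every small $\eta_0>0$. The termination criterion $M_{k_*+1}\leq -\beta<M_{k_*}$ used in Proposition~\ref{lad} is now unavailable because $\beta\geq\varpi_1$ forces $-\beta\leq M_\infty=-\varpi_1$, but the desired conclusion is weaker (only an $\eta$-loss from $\varpi_1$), which we can achieve by stopping when $M_{k_*+1}$ is close enough to $-\varpi_1$.

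Given $\eta>0$, I would recycle the recursion $M_1=\rho$, $M_{k+1}=\bigl(M_k(2\rho-M_k)/\lambda\bigr)^{1/(1-\tau)}$, which in the present regime decreases strictly to $|\varpi_1|$, and choose $k_*\geq 1$ with $M_{k_*+1}\leq |\varpi_1|+\eta/4$. Then I would fix $\varepsilon\in(0,\eta_0)$ small enough (depending on $k_*$) so that $m_1:=\rho-\varepsilon$ and $m_j:=M_j(1+\varepsilon^{1/(j-1)})$ for $2\leq j\leq k_*+1$ are strictly decreasing and satisfy $m_j^2-2\rho m_j+\lambda m_{j+1}^{1-\tau}>0$ for every $1\leq j\leq k_*$; these positivity conditions follow from the defining identities $M_j^2-2\rho M_j+\lambda M_{j+1}^{1-\tau}=0$ by the same perturbation argument as in Proposition~\ref{lad} (the inequality becomes strict because $\varepsilon^{1/j}$ controls a strictly positive leading correction). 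With $\alpha\in\bigl(0,\min(\varepsilon,\rho+\varpi_1)\bigr)$ and $\nu>0$ small, I would then introduce the same sub-solutions $v_{j,\delta}$ as in \eqref{vij}, choosing $c>0$ so that \eqref{fass} holds and $c\leq \min_{|x|=r_*^{1/\alpha}}u(x)$.

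The bootstrap then runs essentially verbatim as in the proof of Proposition~\ref{lad}: the base estimate from Lemma~\ref{lema44} with $\eta_0=\varepsilon$ gives $u\gg v_{1,\delta}$ near $0$, and the comparison principle (Lemma~\ref{co1} or Lemma~\ref{co2}) propagates $u\geq v_{1,\delta}$ to the full punctured ball $0<|x|\leq r_*^{1/\alpha}$; letting $\delta\to 0$ yields $\liminf_{|x|\to 0}|x|^{-m_2+\nu}u(x)\geq c>0$, which in turn feeds the comparison against $v_{2,\delta}$, and so on inductively for $j=1,\ldots,k_*$. After $k_*$ stages, $u(x)\geq c\,|x|^{m_{k_*+1}-\nu}$ for small $|x|$, so taking $\nu<\eta/2$ gives
\[
|x|^{\varpi_1-\eta}u(x)\geq c\,|x|^{m_{k_*+1}-\nu+\varpi_1-\eta}\longrightarrow\infty,
\]
since $m_{k_*+1}\leq |\varpi_1|+\eta/2$ makes the exponent at most $-\eta/2-\nu<0$; as $\eta>0$ was arbitrary, \eqref{simo} follows. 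The main obstacle is uniformity of the parameters across a chain whose length $k_*$ depends on $\eta$: the critical positivity margin $m_{k_*}^2-2\rho m_{k_*}+\lambda m_{k_*+1}^{1-\tau}$ degenerates as $k_*\to\infty$ because the defining identity becomes degenerate in the limit $M_k\to -\varpi_1$, which forces $\varepsilon$ (and then $\nu,\alpha,c$) to be selected only after $k_*(\eta)$ has been fixed. Once this bookkeeping is handled, every sub-solution verification and comparison step reduces to those already carried out in Proposition~\ref{lad}.
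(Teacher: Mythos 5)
Your proposal is correct and follows essentially the same route as the paper: the paper also reduces Proposition~\ref{gain} to the iterative scheme of Proposition~\ref{lad}, started from Lemma~\ref{lema44} (available because $\varpi_1>-\rho$ forces $\Upsilon\leq\rho<\lambda^{1/(\tau+1)}$), with the termination criterion adjusted to exploit that the target exponent lies strictly above $\lim_k M_k=-\varpi_1$. The only cosmetic difference is that the paper substitutes $\varpi_1-\eta_0$ for $\beta$ throughout (so $m_{k_*+1}=(-\varpi_1+\eta_0)(1+\varepsilon^{1/k_*})$ and the last step reuses \eqref{faq} verbatim), whereas you keep $m_{k_*+1}=M_{k_*+1}(1+\varepsilon^{1/k_*})$ and stop once $M_{k_*+1}\leq-\varpi_1+\eta/4$; both variants satisfy the positivity condition \eqref{posi} and the constraint $\theta+2+(m_{k_*+1}-\nu)(q-1)>0$, and you correctly identify that $\varepsilon,\nu,\alpha,c$ must be chosen only after $k_*(\eta)$ is fixed.
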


\begin{proof} The assumption $\varpi_1>-\rho$ in Case $[N_2]$ implies that 
$\Upsilon\leq  \rho<\lambda^{1/(\tau+1)}$. Indeed, if we assume $\rho\geq \lambda^{1/(\tau+1)}$, then $\widetilde f_\rho(-\rho)=\lambda \rho^{-\tau}-\rho\leq 0$, leading to $\varpi_1\leq -\rho<\varpi_2$, a contradiction. 
Therefore, the assumptions in Lemma~\ref{lema44} are satisfied so that \eqref{musi} is valid (see also Remark~\ref{sirr}).  

Let $\eta>0$ be arbitrary.  We fix $\eta_0>0$  small such that 
\begin{equation} \label{muo} 0<\eta_0<\min\, \{\eta,\rho+\varpi_1\}. \end{equation}
We adapt the ideas in the proof of Proposition~\ref{lad}, replacing $\beta$ there by 
$\varpi_1-\eta_0$. We define $\{M_k\}_{k\geq 1}$ as in \eqref{sem}. As before $\{M_k\}_{k\geq 1}$ is decreasing and since we are in Case $[N_2]$, we have 
\begin{equation} \label{rioo} \lim_{k\to \infty} M_k=-\varpi_1<-\varpi_1+\eta_0. 
\end{equation}
Recall that $M_1=\rho>-\varpi_1+\eta_0$.  
In view of  \eqref{muo} and \eqref{rioo}, there exists $k_*\geq 1$ such that 
$$ M_{k_*+1}\leq -\varpi_1+\eta_0<M_{k_*}.  
$$
Let $\varepsilon\in (0,1)$ be small as needed. Instead of \eqref{figg}, we let $\varepsilon>0$ satisfy 
\begin{equation} \label{fiuu}  \left(-\varpi_1+\eta_0 \right) \varepsilon^{\frac{1}{k_*}} <\eta-\eta_0. 
\end{equation}
In \eqref{fow}, \eqref{faq}, \eqref{fanm} and \eqref{mott} we replace $\beta$ by  $\varpi_1-\eta_0$. 
Hence, using that 
$-\varpi_1+\eta_0>-\varpi_1\geq -\beta$,  
we recover that $\theta+2+\left(m_{j+1}-\nu\right) \left(q-1\right)>0$ for every $1\leq j\leq k_*$. 
All the arguments in the proof of Proposition~\ref{lad} hold, leading to \eqref{topp}, namely, 
\begin{equation} \label{omm} \liminf_{|x|\to 0} |x|^{-m_{k_*+1}+\nu} u(x)\geq c,\end{equation}where 
here $m_{k_*+1}=(-\varpi_1+\eta_0) \left(1+\varepsilon^{1/k_*}\right)$.  
From \eqref{fiuu} and \eqref{omm}, we conclude \eqref{simo}. 
\end{proof}

\subsection{Proof of Theorem~\ref{zic1}} \label{sect62}

We assume that $\beta>\varpi_1$ in either Case $[N_1]$ or Case $[N_2]$. 
Let $u$ be a  positive super-solution of \eqref{e1}. We show that $u$ satisfies \eqref{mb2}. 

From Propositions~\ref{nwp1} and \ref{gain}, we know that 
\begin{equation} \label{ino} 
\mbox{for every } \eta>0,\ \mbox{it holds }
\lim_{|x|\to 0} |x|^{\varpi_1-\eta} u(x) =\infty.
\end{equation}
We fix $\varepsilon>0$ small. Then, we let $r_0\in (0,1)$ be small as needed such that $B_{r_0}(0)\subset \subset \Omega$. 
To finish the proof of Theorem~\ref{zic1}, we need to construct a family of positive radial sub-solutions $\{z_{\delta,\varepsilon}\}_{\delta\in (0,1)}$ of \eqref{e1} in $B_{r_\varepsilon}(0)\setminus \{0\}$, where $r_\varepsilon\in (0,r_0)$ is independent of $\delta$, such that
\begin{itemize}
\item[(A)] For each $\delta\in (0,1)$, we have $\lim_{|x|\to 0^+} u(x)/ z_{\delta,\varepsilon}(|x|)=\infty$;
\item[(B)] For some constant $c>0$, we have $\lim_{r\to 0^+} r^{\varpi_1} (\lim_{\delta \to 0^+} z_{\delta,\varepsilon}(r))=c $.
\item[(C)] For every $r\in (0,r_\varepsilon)$ and each $\delta\in (0,1)$, we have $z_{\delta,\varepsilon}'(r)>0$. 
\end{itemize}

\vspace{0.2cm}
The construction of the sub-solutions $\{z_{\delta,\varepsilon}\}_{\delta\in (0,1)}$ is non-trivial and challenging because we need the property (B) to hold, whereas $|x|^{-\varpi_1}$ is a super-solution of \eqref{e1} since $\mathbb L_{\rho,\lambda,\tau} [|x|^{-\varpi_1}]=0$. 

\vspace{0.2cm}
{\em Construction of $\{z_{\delta,\varepsilon}\}_{\delta>0}$ satisfying the properties} (A)--(C).

Let $\varepsilon>0$ and $c>0$ be fixed. For every $\delta\in (0,1)$ and $r,t>0$, we define 
\begin{equation} \label{zis} 
\begin{aligned}
& z_{\delta,\varepsilon}(r):= c \, r^{\varepsilon-\varpi_1} \left(1 -e^{-r^\varepsilon}\right)^{\delta-1},\\
& h(t)= \frac{ t \,e^{-t} }{1-e^{-t}}\in (0,1) 
\quad \mbox{and}\quad 
 \varphi_{\delta}(t):=  1-\left(1-\delta\right) h(t) \in (0,1). 
\end{aligned}
\end{equation}
In view of \eqref{ino}, we readily see that $z_{\delta,\varepsilon}$ in \eqref{zis} satisfies the property (A). 
The property (B) is easily verified.  
By differentiating $z_{\delta,\varepsilon}(r)$ 
with respect to $r$, we get
 \begin{equation} \label{nam0} 
z_{\delta,\varepsilon}'(r)= |\varpi_1|\frac{z_{\delta,\varepsilon}(r)}{r} \left( 1+\frac{\varepsilon\, \varphi_\delta(r^\varepsilon)}{|\varpi_1|} 
 \right)>0. 
\end{equation}
Hence, the property (C) also holds. Remark that $1-e^{-t}> te^{-t}$ for every $t> 0$, which implies that $\varphi_\delta(t)>0$ for every $t>0$. Moreover, for every
$\delta\in (0,1)$, we have 
\begin{equation} \label{rumm} \varphi_\delta'(t)=\frac{\left(1-\delta\right) e^{-t}}{(1-e^{-t})^2} (t-1+e^{-t})>0\quad \mbox{for all } t>0.  
\end{equation}

In Lemma~\ref{l53} below, we show that there exist $r_\varepsilon\in (0,1)$ and $c_\varepsilon>0$ such that for all $c\in (0,c_\varepsilon)$ and $\delta\in (0,1)$, the function $z_{\delta,\varepsilon} $ in \eqref{zis} a sub-solution of \eqref{e1} in $B_{r_\varepsilon}(0)\setminus \{0\}$, namely, 
\begin{equation} \label{sun}    \mathbb L_{\rho,\lambda,\tau} [z_{\delta,\varepsilon}(r)]\geq r^{\theta} (z_{\delta,\varepsilon}(r))^q\quad \mbox{for every } 0<r< r_\varepsilon.
\end{equation}

Assuming \eqref{sun}, we complete the proof of Theorem~\ref{zic1} as follows. 
If necessary, we diminish $c\in (0,c_\varepsilon)$ such that $\min_{|x|=r_\varepsilon} u(x)\geq c\, (r_\varepsilon)^{\varepsilon-\varpi_1}(1-e^{-(r_\varepsilon)^{\varepsilon}})^{-1}$. This ensures that
$$ u(x)\geq z_{\delta,\varepsilon}(|x|)\quad \mbox{for all } x\in \partial B_{r_\varepsilon}(0)\ \mbox{ and every }\delta\in (0,1).$$ 
From \eqref{ino} and the property (A), we have $u(x)\geq z_{\delta,\varepsilon}(|x|)$ for every $|x|>0$ small, where $\delta\in (0,1)$ is arbitrary. 
Using (C) and the comparison principle (see Remark~\ref{reka6}), we find that  
$$  u(x) \geq z_{\delta,\varepsilon} (|x|) \quad \mbox{for every } 0<|x|\leq r_\varepsilon\ \mbox{and all } \delta\in (0,1). 
$$
Now, using the property (B), we get $\liminf_{|x|\to 0} |x|^{\varpi_1} u(x) \geq c>0$, which proves \eqref{mb2}. 

It remains to establish Lemma~\ref{l53}, which is stated next. 

\begin{lemma} \label{l53} Under the assumptions of Theorem~\ref{zic1}, for every $\varepsilon>0$ small, there exist 
constants $c_\varepsilon>0$ and $r_\varepsilon\in (0,r_0)$ such that for all 
$c\in (0,c_\varepsilon)$ and $\delta\in (0,1)$, 
 the function 
$z_{\delta,\varepsilon}$ in \eqref{zis} is a positive radial sub-solution of \eqref{e1} in $B_{r_\varepsilon}(0)\setminus \{0\}$. 
\end{lemma}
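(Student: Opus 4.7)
The plan is to reduce \eqref{sun} to an algebraic inequality in the effective logarithmic exponent $A(t):=-\varpi_1+\varepsilon\,\varphi_\delta(t)$ that already appears in \eqref{nam0}. Using $z'_{\delta,\varepsilon}(r)/z_{\delta,\varepsilon}(r)=A(r^\varepsilon)/r$ together with $r\,\tfrac{d}{dr}A(r^\varepsilon)=\varepsilon^2 r^\varepsilon\varphi'_\delta(r^\varepsilon)$, a direct computation of $z''_{\delta,\varepsilon}$ and of $\lambda\,z_{\delta,\varepsilon}^\tau|z'_{\delta,\varepsilon}|^{1-\tau}/r^{1+\tau}=(z_{\delta,\varepsilon}/r^2)\lambda\,A(r^\varepsilon)^{1-\tau}$ gives
\begin{equation*}
\mathbb{L}_{\rho,\lambda,\tau}[z_{\delta,\varepsilon}(r)]=\frac{z_{\delta,\varepsilon}(r)}{r^2}\,B(r^\varepsilon,\delta,\varepsilon),\qquad B(t,\delta,\varepsilon):=A(t)\bigl[A(t)-2\rho\bigr]+\lambda\,A(t)^{1-\tau}+\varepsilon^2 t\,\varphi'_\delta(t),
\end{equation*}
so that \eqref{sun} becomes $B(r^\varepsilon,\delta,\varepsilon)\geq r^{\theta+2}\,z_{\delta,\varepsilon}(r)^{q-1}$ for every $r\in(0,r_\varepsilon)$.

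Next, I would Taylor-expand $B$ in $\varepsilon$ around $0$. At $\varepsilon=0$ one has $A(t)=|\varpi_1|$, so $B|_{\varepsilon=0}=\varpi_1(\varpi_1+2\rho)+\lambda|\varpi_1|^{1-\tau}=f_\rho(\varpi_1)=0$. Differentiating in $\varepsilon$ and using the identity $\lambda|\varpi_1|^{-\tau}=\varpi_1+2\rho$ (a direct consequence of $f_\rho(\varpi_1)=0$), the first-order coefficient simplifies to
\begin{equation*}
M:=(1+\tau)|\varpi_1|-2\tau\rho=|\varpi_1|^{-\tau}\bigl(|\varpi_1|^{1+\tau}-\tau\lambda\bigr).
\end{equation*}
Invoking $\widetilde f_\rho(\varpi_1)=0$ and the definition of $\Upsilon$, I would verify $M>0$ in Case~$[N_1]$ (both (a) and (b)) and in Case~$[N_2]$ whenever $\rho>\Upsilon$; the borderline $\rho=\Upsilon$ forces $M=0$ and requires retaining the nonnegative curvature contribution $\varepsilon^2 t\,\varphi'_\delta(t)$ as a substitute. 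The second-order remainder in $A(A-2\rho)+\lambda A^{1-\tau}$ is $O(\varepsilon^2\varphi_\delta(t)^2)$ uniformly in $r,\delta$, and $\varepsilon^2 t\,\varphi'_\delta(t)\geq 0$ by \eqref{rumm}. Consequently, fixing $\varepsilon$ small (depending only on $\rho,\lambda,\tau,\varpi_1$) yields
\begin{equation*}
B(r^\varepsilon,\delta,\varepsilon)\;\geq\;\tfrac{1}{2}\,\varepsilon M\,\varphi_\delta(r^\varepsilon)\qquad\text{for all }r\in(0,1),\ \delta\in(0,1).
\end{equation*}

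The hard part is that $\varphi_\delta\geq\delta$ is \emph{not} uniform as $\delta\to 0^+$, while the right-hand side carries the factor $\omega^{\delta-1}$ (with $\omega:=1-e^{-r^\varepsilon}$) which blows up in the same regime. The resolution is a tight cancellation built into the ansatz \eqref{zis}. On the one hand, the representation $\varphi_\delta(t)=\delta+(1-\delta)(1-h(t))$ together with the expansion $1-h(t)=t/2+O(t^2)$ near $0$ produces a uniform lower bound $\varphi_\delta(t)\geq c_0\,t$ on $(0,1]$ for an absolute constant $c_0>0$, independent of $\delta\in(0,1)$. On the other hand, $\omega\geq(1-e^{-1})r^\varepsilon$ gives
\begin{equation*}
r^{\theta+2}\,z_{\delta,\varepsilon}(r)^{q-1}=c^{q-1}\,r^{(q-1)(\beta+\varepsilon-\varpi_1)}\,\omega^{(\delta-1)(q-1)}\leq c^{q-1}K_1\,r^{(q-1)(\beta-\varpi_1+\varepsilon\delta)}
\end{equation*}
uniformly in $r\in(0,1)$, $\delta\in(0,1)$, with $K_1:=(1-e^{-1})^{-(q-1)}$. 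The extra $r^\varepsilon$ arising from $\varphi_\delta(r^\varepsilon)\geq c_0 r^\varepsilon$ in the lower bound on $B$ exactly absorbs the $r^{-\varepsilon(1-\delta)}$ produced by $\omega^{\delta-1}$, reducing the required inequality to $c^{q-1}\leq\tfrac{\varepsilon M c_0}{2K_1}\,r^{\,\varepsilon-(q-1)(\beta-\varpi_1+\varepsilon\delta)}$. Since $\beta>\varpi_1$, for $\varepsilon$ small the exponent of $r$ is negative and bounded away from $0$ uniformly in $\delta\in(0,1)$, so choosing $r_\varepsilon\in(0,1)$ first and then $c_\varepsilon>0$ small enough secures \eqref{sun} for every $c\in(0,c_\varepsilon)$, every $\delta\in(0,1)$, and every $r\in(0,r_\varepsilon)$.
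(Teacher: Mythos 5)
Your reduction to the algebraic inequality $B(r^\varepsilon,\delta,\varepsilon)\geq r^{\theta+2}z_{\delta,\varepsilon}^{q-1}$ is correct, and for the cases where the first-order coefficient $M=|\varpi_1|^{-\tau}(|\varpi_1|^{1+\tau}-\lambda\tau)$ is strictly positive (Case $[N_1]$, and Case $[N_2]$ with $\rho>\Upsilon$) your argument goes through; it is essentially the paper's computation, with a cleaner and more uniform treatment of the right-hand side (the bounds $\varphi_\delta(t)\geq c_0 t$ and $1-e^{-t}\geq(1-e^{-1})t$ replace the paper's sharper asymptotic comparison of $t^q(1-e^{-t})^{-(q-1)}$ with $1-h(t)$, at the cost of nothing since $\beta>\varpi_1$ leaves room in the exponent). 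Note that $M$ coincides with the paper's quantity $|\varpi_1|(1-\lambda\tau|\varpi_1|^{-\tau-1})=(q-1)\gamma^{q-1}$.

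The genuine gap is the borderline case $\rho=\Upsilon$ in Case $[N_2]$, which is included in the hypotheses of Theorem~\ref{zic1} and hence must be covered by the lemma. There $M=0$, so your final displayed bound $B\geq\tfrac12\varepsilon M\varphi_\delta(r^\varepsilon)$ is vacuous and the concluding inequality $c^{q-1}\leq\tfrac{\varepsilon Mc_0}{2K_1}r^{\cdots}$ cannot hold for any $c>0$. Your suggested substitute, the curvature term $\varepsilon^2 t\varphi_\delta'(t)$, is not the term that rescues the argument: since $\varphi_\delta'(t)$ carries the factor $1-\delta$, this term degenerates as $\delta\to1^-$ while the right-hand side does not, so it cannot serve as a uniform lower bound over $\delta\in(0,1)$. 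The positivity at order $\varepsilon^2$ actually comes from the convexity of $A\mapsto A(A-2\rho)+\lambda A^{1-\tau}$ at $A=|\varpi_1|$: using $\lambda|\varpi_1|^{-1-\tau}=1/\tau$ at $\rho=\Upsilon$, the second-order Taylor coefficient is $1-\tfrac{\tau(1-\tau)}{2}\lambda|\varpi_1|^{-1-\tau}=\tfrac{1+\tau}{2}>0$, giving $B\gtrsim\varepsilon^2\varphi_\delta^2(r^\varepsilon)$ (this is the paper's route via \eqref{hart1}--\eqref{cxx2}). You then need to match the right-hand side against $\varphi_\delta^2\geq c_0^2r^{2\varepsilon}$ rather than $\varphi_\delta\geq c_0r^\varepsilon$, which still closes because your smallness condition can be strengthened to $2\varepsilon<(q-1)(\beta-\varpi_1)$, exactly the paper's \eqref{ch}. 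Without identifying this second-order mechanism and redoing the exponent comparison accordingly, the proof is incomplete.
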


\begin{proof}
We fix $\varepsilon>0$ small as needed (see \eqref{ch}). To simplify the writing, we drop the subscript $\varepsilon$ when referring to  
$z_{\delta,\varepsilon}$.  Let $r\in (0,1)$ be arbitrary. Using \eqref{nam0} and \eqref{rumm}, we obtain that
\begin{equation} \label{revv} \begin{aligned}
\mathbb L_\rho[ z_\delta(r)]&= \frac{z_\delta(r)}{r^2}\left[ \varpi_1^2+2\rho \varpi_1-2\varepsilon\left( \varpi_1+\rho\right)\varphi_\delta(r^\varepsilon)+
\varepsilon^2 \varphi_\delta^2(r^\varepsilon) 
+\varepsilon^2 r^\varepsilon\varphi_\delta'(r^\varepsilon)\right] \\
& \geq  \frac{z_\delta(r)}{r^2} \left[ \varpi_1^2+2\rho \varpi_1-2\varepsilon\left( \varpi_1+\rho\right)\varphi_\delta(r^\varepsilon)+
\varepsilon^2 \varphi_\delta^2(r^\varepsilon) 
\right].
\end{aligned}
\end{equation}

Moreover, we have 
\begin{equation}  \label{riww}  \lambda \,G_\tau (z_\delta(r))=\lambda\, |\varpi_1|^{1-\tau} \frac{z_\delta(r)}{r^2}  \left( 1+\frac{\varepsilon \,\varphi_{\delta}(r^\varepsilon)}{|\varpi_1|}  \right)^{1-\tau}. 
\end{equation}

Since $f_{\rho}(\varpi_1)=\varpi_1^2+2\rho \varpi_1+\lambda |\varpi_1|^{1-\tau}=
0$ and either Case $[N_1]$ or Case $[N_2]$ holds, we have 
\begin{equation} \label{num} 
\lambda |\varpi_1|^{-\tau} (1-\tau)-2\varpi_1-2\rho =|\varpi_1| (1-\lambda \tau |\varpi_1|^{-\tau-1}). 
\end{equation}
In Case $[N_1]$, as well as in Case $[N_2]$ with $\rho>\Upsilon$, we have 
\begin{equation} \label{xio}  |\varpi_1| (1-\lambda \tau |\varpi_1|^{-\tau-1})=(q-1)\gamma^{q-1}>0.
\end{equation}

$\bullet$ Let Case $[N_1]$ hold. Then, we obtain that 
$$ \lambda \,G_\tau (z_\delta(r))\geq 
\lambda\,|\varpi_1|^{1-\tau}\, \frac{z_\delta(r)}{r^2}\left(1+ \frac{\varepsilon \left(1-\tau\right) \,\varphi_{\delta}(r^\varepsilon)}{|\varpi_1|} \right).
$$
Using 
\eqref{revv} and \eqref{num}, we see that
\begin{equation} \label{cx} 
\begin{aligned} 
 \mathbb L_{\rho,\lambda,\tau} [z_\delta(r)]
& \geq \frac{z_\delta(r)}{r^2} \varepsilon \varphi_{\delta}(r^\varepsilon)
 \left[  \lambda\, |\varpi_1|^{-\tau} \left(1-\tau\right) -2\varpi_1-2\rho\right]  
\\ 
&= \left(q-1\right)\gamma^{q-1}  \frac{z_\delta(r)}{r^2} \varepsilon \varphi_{\delta}(r^\varepsilon).
\end{aligned}
\end{equation}

\vspace{0.2cm}
$\bullet$ Let Case $[N_2]$ hold. 
We point that 
$\varpi_1=-(\lambda\tau)^{1/(\tau+1)}$ if $\rho=\Upsilon$. Thus,  we have
\begin{equation} \label{nimm} 
\lambda |\varpi_1|^{-\tau} (1-\tau)-2\varpi_1-2\rho =|\varpi_1| \left(1-\lambda \tau |\varpi_1|^{-\tau-1}\right)=0\ \mbox{if } 
\rho=\Upsilon.
\end{equation}

Fix $\nu\in (0,1)$. 
Let $t_\nu>0$ be small such that 
\begin{equation} \label{hart1} 
\left( 1+\frac{t}{|\varpi_1|}\right)^{1-\tau}\geq 
1+ \frac{\left(1-\tau\right)}{|\varpi_1|}t- \frac{\tau \left(1-\tau\right)\left(1+\nu\right) }{2 \varpi_1^2} \,t^2
\quad \mbox{for every } t\in (0,t_\nu).
\end{equation}
We let $\varepsilon \in (0,t_\nu)$. Since $\varphi_\delta(r^\varepsilon)\in (0,1)$, using \eqref{riww}, \eqref{hart1} and $\lambda>0$, 
we obtain that
\begin{equation} \label{wov2} \lambda \,G_\tau (z_\delta(r))
\geq \lambda\, |\varpi_1|^{1-\tau}\frac{z_\delta(r)}{r^2} \left(1+\frac{\left(1-\tau\right) 
}{|\varpi_1|}\,\varepsilon\varphi_\delta(r^\varepsilon)
-\frac{\tau \left(1-\tau\right)\left(1+\nu\right) }{2 \varpi_1^2} \,\varepsilon^2 \varphi^2_\delta(r^\varepsilon)
\right).
\end{equation}
From \eqref{revv}, \eqref{hart1}, \eqref{xio} (when $\rho>\Upsilon$) and \eqref{nimm} (when $\rho=\Upsilon$), it follows that
\begin{equation} \label{cxx2}
\begin{aligned}
& \mathbb L_{\rho,\lambda,\tau} [z_\delta(r)]\geq  \left(q-1\right)\gamma^{q-1}  \frac{z_\delta(r)}{r^2} \varepsilon \varphi_{\delta}(r^\varepsilon)
&& \mbox{if } \rho>\Upsilon,&\\
& \mathbb L_{\rho,\lambda,\tau} [z_\delta(r)]
 \geq \frac{z_\delta(r)}{r^2} \varepsilon^2 \varphi^2_{\delta}(r^\varepsilon)
  \left[ 1-\frac{(1-\tau)(1+\nu)}{2} \right]
&& \mbox{if } \rho=\Upsilon . &
\end{aligned}
 \end{equation}

{\bf Proof of Lemma~\ref{l53} concluded.} We define $a>0$ as follows
\begin{equation} \label{defa} 
\begin{aligned}
& a=\left(q-1\right)\gamma^{q-1}
\ \mbox{ in Case } [N_1],  \mbox{ as well as in Case }[N_2]\ \mbox{with } \rho>\Upsilon,\\
& a=1-\frac{\left(1-\tau\right)\left(1+\nu\right)}{2} \ \mbox{ in Case } [N_2]\ \mbox{when } \rho=\Upsilon.
\end{aligned}
\end{equation}
Since $q>1$, $\varpi_1<0$ and $\beta\in (\varpi_1,0)$, we can fix 
$\varepsilon>0$ small such that 
\begin{equation} \label{ch} 0<\varepsilon<\frac{\left(q-1\right) \,(\beta-\varpi_1)}{2}.
\end{equation}

Let $c\in (0,c_\varepsilon)$ be arbitrary, where $c_\varepsilon>0$ is defined by
\begin{equation} \label{cer} 
\begin{aligned}
& c_\varepsilon= \left( \frac{\varepsilon a}{3}\right)^{\frac{1}{q-1}}\ \mbox{in Case } [N_1]\ \mbox{as well as Case }[N_2]\ \mbox{with } \rho>\Upsilon,\\
& c_\varepsilon=  \left( \frac{\varepsilon^2 a}{5}\right)^{\frac{1}{q-1}}\ \mbox{in Case } [N_2]\ \mbox{with } \rho=\Upsilon.
\end{aligned}
\end{equation}

Observe that $\varphi_{\delta}(t)\geq  \lim_{\delta\to 0} \varphi_{\delta}(t):=\varphi_{0}(t) $ for each $t>0$ and every $\delta\in (0,1)$. 
In Case $[N_1]$ and also in Case $[N_2]$ with $\rho>\Upsilon$, we have proved that for every $r\in (0,1)$ and $\delta\in (0,1)$, 
\begin{equation} \label{coll1}
 \mathbb L_{\rho,\lambda,\tau} [z_\delta(r)]\geq 
 \varepsilon  a \varphi_{\delta}(r^\varepsilon) 
  \frac{z_\delta(r)}{r^2} 
  \geq \varepsilon a  \varphi_0(r^\varepsilon)  \, \frac{z_\delta(r)}{r^2},
\end{equation}
whereas in Case $[N_2]$ with $\rho=\Upsilon$, we have 
\begin{equation} \label{coll11}
 \mathbb L_{\rho,\lambda,\tau} [z_\delta(r)]\geq 
 \varepsilon^2  a \varphi^2_{\delta}(r^\varepsilon) 
  \frac{z_\delta(r)}{r^2} 
  \geq \varepsilon^2 a  \varphi^2_0(r^\varepsilon)  \, \frac{z_\delta(r)}{r^2}.
\end{equation}
On the other hand, the right-hand side of \eqref{sun} is given by 
$$ \begin{aligned} 
r^\theta (z_\delta(r))^q& =c^{q-1}  \,r^{\theta+2+(\varepsilon-\varpi_1 ) \left(q-1\right)} 
\left(1-e^{-r^\varepsilon}\right)^{(\delta-1)(q-1)} \frac{z_\delta(r)}{r^2} \\
& \leq c^{q-1}  \,r^{\theta+2+(\varepsilon-\varpi_1 ) \left(q-1\right)} 
\left(1-e^{-r^\varepsilon}\right)^{-(q-1)} \frac{z_\delta(r)}{r^2}.
\end{aligned} $$
In view of \eqref{ch} and $\beta=(\theta+2)/(q-1)$, we get
\begin{equation} \label{coll2} r^\theta (z_\delta(r))^q\leq c^{q-1} r^{\varepsilon (q+1)} \left(1-e^{-r^\varepsilon}\right)^{-(q-1)}\,\frac{z_\delta(r)}{r^2} .
\end{equation}

To compare the right-hand sides of \eqref{coll1} (respectively, \eqref{coll11}) and \eqref{coll2}, we remark that 
$$ \lim_{t\to 0^+} \frac{t^q (1-e^{-t})^{-(q-1)}}{1-h(t)}=2\quad \mbox{and} \quad
 \lim_{t\to 0^+} \frac{t^{q+1} (1-e^{-t})^{-(q-1)}}{\left(1-h(t) \right)^2 }=4,
$$ where $h(t)$ is defined in \eqref{zis}. 
Thus, there exists $t_*\in (0,1)$ small such that for all $ t\in (0,t_*)$, 
\begin{equation} \label{coll3}  t^q (1-e^{-t})^{-(q-1)}\leq 3 \left(1-h(t) \right)\  \mbox{and }\ 
t^{q+1} (1-e^{-t})^{-(q-1)}\leq 5 \left(1-h(t)\right)^2 . 
\end{equation}
Hence, from \eqref{cer}, \eqref{coll2} and \eqref{coll3}, we see that  for every $ r\in (0, t_*^{1/\varepsilon})$, 
\begin{equation} \label{coll4} 
r^\theta (z_\delta(r))^q\leq 3 \,c^{q-1} \varphi^2_0(r^\varepsilon) \,\frac{z_\delta(r)}{r^2} \leq 
\varepsilon a \varphi_0(r^\varepsilon) \frac{z_\delta(r)}{r^2}
\end{equation}
in Case $[N_1]$, as well as  in Case $ [N_2]$ with  $\rho>\Upsilon$. 

In turn, if Case $[N_2]$ holds with $\rho=\Upsilon$, then for every $ r\in (0, t_*^{1/\varepsilon})$, we have
\begin{equation} \label{coll41} 
r^\theta (z_\delta(r))^q\leq 5 \,c^{q-1} \varphi_0(r^\varepsilon) \,\frac{z_\delta(r)}{r^2} \leq 
\varepsilon^2 a \varphi^2_0(r^\varepsilon) \frac{z_\delta(r)}{r^2}.
\end{equation}

By combining \eqref{coll1} and \eqref{coll4} in Case $[N_1]$, as well as  in Case $ [N_2]$ with  $\rho>\Upsilon$,  
respectively, using \eqref{coll11} and \eqref{coll41} in Case $[N_2]$ with $\rho=\Upsilon$,  
we arrive at \eqref{sun}, where $r_\varepsilon=t_*^{1/\varepsilon}$. 

The proof of Lemma~\ref{l53} (and, hence, of Theorem~\ref{zic1}) is now complete. \end{proof}


\section{Asymptotic profiles via solutions (super-solutions) of $\mathbb L_{\rho,\lambda,\tau}[\cdot]=0$} \label{ditto}

\subsection{On the operator $\mathbb L_{\rho,\lambda,\tau}[\cdot]$} \label{appab}
We give useful properties of  $\Phi_{\varpi_2(\rho)}$ and $\Psi_{\varpi_1(\rho)}$ in Case $[N_2]$. 

$\bullet$ 
If $\rho>\Upsilon$ in Case $[N_2]$, the functions $\Psi_{\varpi_1(\rho)}(r)$ and $\Phi_{\varpi_2(\rho)}(r)$ defined for every $r>0$ by 
$$ \Psi_{\varpi_1(\rho)}(r)=r^{-\varpi_1}\quad   \mbox{and}\quad   \Phi_{\varpi_2(\rho)}(r)=r^{-\varpi_2},$$ are positive radial solutions of 
$\mathbb L_{\rho,\lambda,\tau}[\cdot] =0$
 in $\mathbb R^N\setminus \{0\}$. 

\vspace{0.2cm}
In the rest of this subsection, we assume Case $[N_2]$ with $\rho=\Upsilon$, that is,  
\begin{equation} \label{case2} \lambda>0,\  \ \tau\in (0,1)\ \mbox{ and  }  \rho=\Upsilon=\frac{\tau+1}{2\tau} (\lambda \tau)^{\frac{1}{1+\tau}}.
\end{equation} 
Then, $\Phi_{\varpi_2(\rho)}:(0,1)\to (0,\infty)$
and $\Psi_{\varpi_1(\rho)}:(1,\infty)\to (0,\infty)$ are given by (see \eqref{phi2} and \eqref{psia})
\begin{eqnarray}  
& \Phi_{\varpi_2(\rho)}(r)=r^{-\varpi_2} [\log\, (1/r)]^{\frac{2}{1+\tau}}
\label{kis} \quad \mbox{for every } r\in (0,1), \label{siga} \\
& \Psi_{\varpi_1(\rho)}(t) =t^{-\varpi_1} (\log t)^{\frac{2}{1+\tau}}\quad \mbox{for every } t>1. \label{fasd}
\end{eqnarray}

Our arguments  rely frequently on explicit positive radial {\em super-solutions} $\Phi$ of 
 \begin{equation} \label{fncx}
    \mathbb L_{\rho,\lambda,\tau}[ \Phi(x) ]= 0\quad \mbox{for } |x|>0\ \mbox{small} 
\end{equation}
or positive radial {\em super-solutions} $\Psi$ of 
\begin{equation} \label{modal} \mathbb L_{\rho,\lambda,\tau}[ \Psi(x) ]= 0\quad \mbox{for } |x|>1\ \mbox{large}. 
\end{equation}

 $\bullet$ 
The function $\Phi=\Phi_{\varpi_2(\rho)}$ in \eqref{siga} is a positive radial {\em super-solution} of \eqref{fncx} (see Lemma~\ref{fiqa}). 

$\bullet$ 
For every $\alpha\in (0, 2/(1+\tau))$, it can be checked that 
$\Psi=\Psi_\alpha(r)=r^{-\varpi_1} \left(\log r\right)^\alpha $ (defined for $r>1$) 
is a positive radial {\em super-solution} of \eqref{modal}. This result does {\em not} extend to $\alpha=2/(1+\tau)$. 

$\bullet$ 
If $\beta<\varpi_1$, then  $\Psi_{\varpi_1(\rho)}$ in \eqref{fasd} is a 
{\em sub-solution} of \eqref{e1} for $|x|>1$ large (see Lemma~\ref{siww}). 

$\bullet$ For \eqref{modal}, we provide a positive radial {\em super-solution} that is asymptotically equivalent to 
$\Psi_{\varpi_1(\rho)}(|x|)$ as $|x|\to \infty$, see Lemma~\ref{feed}. This fact plays a critical role in Corollary~\ref{blabla}. 
When $\beta<\varpi_1$, we will use Lemma~\ref{feed}  to rule out the existence of positive solutions $u$ of \eqref{e1} in $\mathbb R^N\setminus \{0\}$ satisfying $\lim_{|x|\to \infty} u(x)/\Psi_{\varpi_1(\rho)}(|x|)=0$ (see Proposition~\ref{gild}).
 
 \vspace{0.2cm}
 Note that under the assumptions in \eqref{case2}, for $j=1,2$, we have 
\begin{equation} \label{varo}
\begin{aligned}
& \varpi_1=\varpi_2=-(\lambda \tau)^{1/(\tau+1)},\quad 
f_{\rho}(\varpi_j)=\varpi_j^2+2\rho \varpi_j+\lambda |\varpi_j|^{1-\tau}=0 ,\\
& 
2\left(\varpi_j+\rho\right) =\lambda \left(1-\tau\right) |\varpi_j|^{-\tau}.
\end{aligned}
\end{equation}

\begin{lemma} \label{fiqa}
Let $\rho=\Upsilon$ in Case $[N_2]$. Then, there exists $r_*\in (0,1)$ such that $\Phi_{\varpi_2(\rho)}$ in \eqref{siga} is a positive radial super-solution of \eqref{fncx} for $0<|x|<r_*$ and $\Phi'_{\varpi_2(\rho)}(r)>0$ for all $r\in (0,r_*)$. 
	
\end{lemma}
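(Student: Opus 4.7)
The plan is to reduce the super-solution condition $\mathbb{L}_{\rho,\lambda,\tau}[\Phi_{\varpi_2(\rho)}] \le 0$ to a pointwise ODE inequality for the radial profile. Set $\varpi := \varpi_2(\rho) = -(\lambda\tau)^{1/(\tau+1)}$, $\alpha := 2/(\tau+1)$ and $L := \log(1/r)$, so that $\Phi_{\varpi_2(\rho)}(r) = r^{-\varpi} L^\alpha$. Logarithmic differentiation gives $\Phi_{\varpi_2(\rho)}'(r)/\Phi_{\varpi_2(\rho)}(r) = (|\varpi|-\alpha/L)/r$, which settles the monotonicity: $\Phi_{\varpi_2(\rho)}'>0$ on $(0,r_*)$ as soon as $r_*\le e^{-\alpha/|\varpi|}$. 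The same identity reduces the nonlinear gradient term to $\lambda \Phi_{\varpi_2(\rho)}^\tau|\Phi_{\varpi_2(\rho)}'|^{1-\tau}/r^{\tau+1} = \lambda(|\varpi|-\alpha/L)^{1-\tau}\Phi_{\varpi_2(\rho)}/r^2$, and combined with the exact radial formula for the linear part $\mathbb{L}_\rho[\Phi_{\varpi_2(\rho)}]$ this yields
\[
\mathbb{L}_{\rho,\lambda,\tau}[\Phi_{\varpi_2(\rho)}](r) = \frac{\Phi_{\varpi_2(\rho)}(r)}{r^2}\Bigl[\varpi^2+2\rho\varpi + \tfrac{2\alpha(\varpi+\rho)}{L} + \tfrac{\alpha(\alpha-1)}{L^2} + \lambda\,h(s)\Bigr],
\]
with $s:=\alpha/L$ and $h(s):=(|\varpi|-s)^{1-\tau}$.

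The heart of the argument is a Taylor expansion of $h$ at $s=0$ that cancels the three leading orders inside the bracket. The identity $f_\rho(\varpi)=0$ from \eqref{varo} cancels the $L^0$ term against $\lambda h(0)=\lambda|\varpi|^{1-\tau}$; the companion identity $2(\varpi+\rho) = \lambda(1-\tau)|\varpi|^{-\tau} = -\lambda h'(0)$ cancels the $L^{-1}$ term. For the $L^{-2}$ term the combined coefficient $\alpha(\alpha-1) + \tfrac{\lambda}{2}h''(0)\alpha^2 = \alpha(\alpha-1) - \tfrac{1}{2}\lambda\tau(1-\tau)|\varpi|^{-\tau-1}\alpha^2$ reduces, via $|\varpi|^{\tau+1} = \lambda\tau$, to $\tfrac{\alpha}{2}[\alpha(\tau+1)-2]$, which vanishes precisely because $\alpha = 2/(\tau+1)$. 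This triple cancellation is the entire reason for the specific exponent on the logarithm in \eqref{siga}; had either identity or the value of $\alpha$ been off, a surviving lower-order term would dominate the remainder below and could easily break the sign.

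What remains is the Taylor remainder $R(s) := h(s)-h(0)-h'(0)s-\tfrac{1}{2}h''(0)s^2$. A direct computation gives $h'''(t) = -\tau(1-\tau)(\tau+1)(|\varpi|-t)^{-\tau-2}<0$ on $[0,|\varpi|)$, so the integral form $R(s) = \tfrac{1}{2}\int_0^s(s-t)^2 h'''(t)\,dt$ is strictly negative on $(0,|\varpi|)$ with $R(s) = O(L^{-3})$. Hence the bracket collapses to $\lambda R(\alpha/L)$, and
\[
\mathbb{L}_{\rho,\lambda,\tau}[\Phi_{\varpi_2(\rho)}](r) \;=\; \lambda R(\alpha/L)\,\frac{\Phi_{\varpi_2(\rho)}(r)}{r^2} \;<\; 0
\]
for every $r\in(0,r_*)$, provided $r_*$ is small enough to guarantee both $L(r_*)>\alpha/|\varpi|$ and the Taylor estimate. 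Integration by parts against any non-negative test function in $C_c^1(B_{r_*}(0)\setminus\{0\})$ then upgrades the pointwise inequality to the distributional super-solution property. The main obstacle is not any individual estimate but the simultaneous vanishing of three consecutive Taylor coefficients together with the correct sign of $h'''$; both are guaranteed by the double-root degeneracy $\varpi_1(\rho) = \varpi_2(\rho) = -(\lambda\tau)^{1/(\tau+1)}$ defining $\rho = \Upsilon$ in Case $[N_2]$.
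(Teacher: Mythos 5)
Your proof is correct and follows essentially the same route as the paper's: both reduce to the exact radial expression for $\mathbb L_{\rho,\lambda,\tau}[\Phi_{\varpi_2(\rho)}]$, cancel the constant and $1/\log(1/r)$ terms via the identities in \eqref{varo}, cancel the $1/\log^2(1/r)$ term via $\lambda\tau|\varpi_2|^{-\tau-1}=1$, and conclude from the second-order Taylor inequality for $(1-\varepsilon)^{1-\tau}$ --- your negative integral remainder with $h'''<0$ is precisely the paper's inequality $(1-\varepsilon)^{1-\tau}\le 1-(1-\tau)\varepsilon-\tau(1-\tau)\varepsilon^2/2$. The only cosmetic difference is that you record the exact identity $\mathbb L_{\rho,\lambda,\tau}[\Phi_{\varpi_2(\rho)}]=\lambda R(\alpha/L)\,\Phi_{\varpi_2(\rho)}/r^2$ rather than only the resulting sign.
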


\begin{proof}
Fix $r_*\in (0,1)$ such that 
$ \log\,(1/r_* )>2/[\left(1+\tau\right)|\varpi_2|] 
$. 	
Let $r\in (0,r_*)$ be arbitrary. Then,  
\begin{equation} \label{gol5} 
\Phi_{\varpi_2(\rho)}'(r)=\frac{\Phi_{\varpi_2(\rho)}(r)}{r} \left( -\varpi_2-\frac{2}{(1+\tau)} \frac{1}{\log \,(1/r)}\right)>0.  
\end{equation}
Moreover, it follows that 
$$ \begin{aligned}  \mathbb L_{\rho,\lambda,\tau} [\Phi_{\varpi_2(\rho)}(r)]=\frac{\Phi_{\varpi_2(\rho)}(r)}{r^2}&  \left[ 
\varpi_2^2 +2\rho \varpi_2 +\frac{4 \left(\varpi_2+\rho\right)}{(1+\tau)} \frac{1}{\log (1/r )} 
+\frac{2\left(1-\tau\right)}{(1+\tau)^2} \frac{1}{\log^2 (1/r)}\right.\\
& \left.  +\lambda |\varpi_2|^{1-\tau} \left(1-\frac{2}{(1+\tau) |\varpi_2| } \frac{1}{\log \,(1/r )} \right)^{1-\tau}\right]. 
\end{aligned}
$$

Using \eqref{varo} and $(1-\varepsilon)^{1-\tau}\leq 1-\left(1-\tau\right) \varepsilon-\tau \left(1-\tau\right) \varepsilon^2/2$ for every $\varepsilon\in (0,1)$, we get 
$$  \mathbb L_{\rho,\lambda,\tau} [\Phi_{\varpi_2(\rho)}(|x|)]\leq 0\quad \mbox{for every } |x|\in (0,r_*). $$
This finishes the proof of Lemma~\ref{fiqa}. 
\end{proof}

\begin{lemma} \label{siww}
Let \eqref{e2} hold and $\beta<\varpi_1$ in Case $[N_2]$ with 
$ \rho=\Upsilon$. 
Then,  
there exists $R_*>1$ such that  
the function $\Psi_{\varpi_1(\rho)}$ in \eqref{fasd} 
is a positive radial sub-solution of \eqref{e1} in $ \mathbb R^N\setminus \overline {B_{R_*}(0)}$. 
\end{lemma}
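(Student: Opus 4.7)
My plan is to verify the sub-solution inequality by an asymptotic expansion of $\mathbb L_{\rho,\lambda,\tau}[\Psi_{\varpi_1(\rho)}(r)]$ in powers of $1/\log r$, exploiting the algebraic identities for $\rho=\Upsilon$. Set $\alpha=2/(1+\tau)$ so that $\Psi(r):=\Psi_{\varpi_1(\rho)}(r)=r^{-\varpi_1}(\log r)^{\alpha}$ for $r>1$, and write $g(r)=-\varpi_1+\alpha/\log r$. A direct differentiation gives $\Psi'(r)=(\Psi(r)/r)\,g(r)$, which is strictly positive for all $r$ large enough since $-\varpi_1>0$, yielding in particular $\Psi'>0$ on $(R_*,\infty)$ after choosing $R_*$ big.

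Next I compute the two pieces of $\mathbb L_{\rho,\lambda,\tau}[\Psi]$. A routine calculation gives
\begin{equation*}
\mathbb L_\rho[\Psi(r)] =\frac{\Psi(r)}{r^2}\Bigl[g(r)^2-2\rho\,g(r)-\tfrac{\alpha}{\log^2 r}\Bigr]
=\frac{\Psi(r)}{r^2}\Bigl[\varpi_1^2+2\rho\varpi_1-\tfrac{2\alpha(\varpi_1+\rho)}{\log r}+\tfrac{\alpha^2-\alpha}{\log^2 r}\Bigr],
\end{equation*}
while a Taylor expansion of $(1+t)^{1-\tau}$ applied to $t=\alpha/(|\varpi_1|\log r)$ yields
\begin{equation*}
\lambda\,\frac{\Psi^\tau|\Psi'|^{1-\tau}}{r^{1+\tau}}
=\lambda|\varpi_1|^{1-\tau}\frac{\Psi}{r^2}\Bigl[1+\tfrac{(1-\tau)\alpha}{|\varpi_1|\log r}-\tfrac{\tau(1-\tau)\alpha^2}{2\varpi_1^2\log^2 r}+\tfrac{\tau(1-\tau^2)\alpha^3}{6|\varpi_1|^3\log^3 r}+O(\log^{-4}r)\Bigr].
\end{equation*}
Using the three identities of \eqref{varo}, together with $\lambda\tau|\varpi_1|^{-\tau-1}=1$ (which is the defining equation $|\varpi_1|^{\tau+1}=\lambda\tau$), I verify in turn that the coefficients of $1$, of $1/\log r$, and of $1/\log^2 r$ in $\mathbb L_{\rho,\lambda,\tau}[\Psi]\cdot r^2/\Psi$ all vanish. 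The first cancellation is $f_\rho(\varpi_1)=0$; the second is $2(\varpi_1+\rho)=\lambda(1-\tau)|\varpi_1|^{-\tau}$; the third, which is the delicate one, uses the precise value $\alpha=2/(1+\tau)$ together with $\lambda\tau|\varpi_1|^{-\tau-1}=1$ to produce
$\alpha[\alpha(1+\tau)/2-1]=0$.

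Consequently the leading term comes from the $1/\log^3 r$ coefficient, which equals $\alpha^3(1-\tau^2)/(6|\varpi_1|)>0$, so that
\begin{equation*}
\mathbb L_{\rho,\lambda,\tau}[\Psi(r)] = \frac{\Psi(r)}{r^2}\cdot\frac{C}{\log^3 r}\,(1+o(1)),\qquad r\to\infty,
\end{equation*}
for a positive constant $C$. On the other hand,
$r^\theta\Psi(r)^q/(\Psi(r)/r^2)= r^{(q-1)(\beta-\varpi_1)}(\log r)^{\alpha(q-1)}$,
and since $\beta<\varpi_1$ the exponent $(q-1)(\beta-\varpi_1)<0$ is strictly negative, so this ratio tends to $0$ as $r\to\infty$ and in particular is much smaller than $C/\log^3 r$. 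Therefore there exists $R_*>1$ for which $\mathbb L_{\rho,\lambda,\tau}[\Psi(r)]\geq r^\theta\Psi(r)^q$ for every $r>R_*$, as required.

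The main obstacle is precisely the triple cancellation in the expansion: the constant, $1/\log r$, and $1/\log^2 r$ terms all vanish only because of the fine-tuned choice $\alpha=2/(1+\tau)$ combined with $\rho=\Upsilon$ (equivalently $|\varpi_1|^{\tau+1}=\lambda\tau$). Once this is observed, the remaining step — balancing the resulting $1/\log^3 r$ contribution against the polynomially decaying right-hand side $r^\theta\Psi^q$ guaranteed by $\beta<\varpi_1$ — is straightforward.
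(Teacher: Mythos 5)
Your proposal is correct and follows essentially the same route as the paper: expand the gradient term to third order in $1/\log r$, observe that the constant, $1/\log r$, and $1/\log^2 r$ coefficients all cancel by $f_\rho(\varpi_1)=0$, $2(\varpi_1+\rho)=\lambda(1-\tau)|\varpi_1|^{-\tau}$ and $\lambda\tau|\varpi_1|^{-\tau-1}=1$ together with $\alpha=2/(1+\tau)$, and then dominate the polynomially decaying right-hand side $r^{(q-1)(\beta-\varpi_1)}(\log r)^{\alpha(q-1)}$ (negative exponent since $\beta<\varpi_1$) by the resulting positive $1/\log^3 r$ term. The only cosmetic difference is that the paper uses the one-sided inequality \eqref{dazi} with coefficient $\tau(1-\tau^2)/8$ on the cubic term instead of the exact Taylor coefficient $\tau(1-\tau^2)/6$ plus an $O(\log^{-4}r)$ remainder; both yield the same conclusion.
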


\begin{proof}
For the reader's convenience, we provide the calculations. 
For every $t>0$ small, we have
\begin{equation} \label{dazi} (1+ t)^{1-\tau} \geq 1+\left(1-\tau\right) t -\frac{\tau \left(1-\tau\right) t^2}{2} +\frac{\tau\left(1-\tau^2\right) t^3}{8}.
\end{equation} 

Since $\beta<\varpi_1$ and $q>1$, there exists $R_*>1$ large such that
\begin{equation} \label{enw1}
r^{\theta+2-(q-1) \varpi_1}  (\log r)^{3+\frac{2(q-1)}{1+\tau}}\leq \frac{1-\tau}{(1+\tau)^2 |\varpi_1|}\quad \mbox{for all } r\geq R_*.
\end{equation}
If necessary, we increase $R_*>1$ to ensure that \eqref{dazi} holds for every $0<t< 2/[(1+\tau)|\varpi_1| \log R_*]$. 

Fix $r>R_*$ arbitrary. 
By differentiating \eqref{fasd}, we have
\begin{equation}  \label{shh} 
\begin{aligned}
& \Psi'_{\varpi_1(\rho)}(r)=|\varpi_1|\frac{\Psi_{\varpi_1(\rho)}(r)}{r} \left[1+\frac{2}{(1+\tau) |\varpi_1|\,\log r}
\right]\\
&  \mathbb L_{\rho} [\Psi_{\varpi_1(\rho)}(r)]=\frac{\Psi_{\varpi_1(\rho)}(r)}{r^2} \left[ \varpi_1^2+2\rho\, \varpi_1 -\frac{4\left(\varpi_1+\rho\right)}{(1+\tau)\, \log r}
 +\frac{2\left(1-\tau\right)}{(1+\tau)^2 (\log r)^2}
 \right],\\
 & 
 \lambda \,G_{\tau} [\Psi_{\varpi_1(\rho)}(r)]=\lambda |\varpi_1|^{1-\tau} \frac{\Psi_{\varpi_1(\rho)}(r)}{r^2}  \left[1+\frac{2}{(1+\tau) |\varpi_1|\,\log r}
\right]^{1-\tau}.
 \end{aligned}
\end{equation}
By the choice of $R_*$, we find that
 $$ \left[1+\frac{2}{(1+\tau) |\varpi_1|\,\log r}
\right]^{1-\tau}\geq 1+\frac{2\left(1-\tau\right)}{(1+\tau) |\varpi_1|\,\log r}-\frac{2 \tau \left(1-\tau\right)}{(1+\tau)^2 \varpi_1^2 \,(\log r)^2}+ 
\frac{\tau \left(1-\tau\right)}{(1+\tau)^2 |\varpi_1|^3 (\log r)^3}. 
$$
Using this fact, jointly with \eqref{enw1} and \eqref{shh}, we arrive at
\begin{equation}
 \mathbb L_{\rho,\lambda,\tau} [\Psi_{\varpi_1(\rho)}(r)]\geq 
 \frac{\Psi_{\varpi_1(\rho)}(r)}{r^2} \frac{(1-\tau)}{(1+\tau)^2 |\varpi_1| (\log r)^3}\geq r^{\theta} \Psi^q_{\varpi_1(\rho)}(r)
\end{equation}
for every $r>R_*$. This completes the proof of Lemma~\ref{siww}. 
\end{proof}

\begin{lemma} \label{feed} 
Assume Case $[N_2]$ with $\rho=\Upsilon$. Then, there exists $R_1>1$ large such that  
\begin{equation}
\label{stay}
\widetilde \Psi(r) =\widetilde \Psi_{\varpi_1}(r)=r^{-\varpi_1} \left(\log r\right)^{\frac{2}{1+\tau}}	
+r^{-\varpi_1} \log r\quad \mbox{for every } r>1
\end{equation}
 is a positive radial super-solution	 of 
$ \mathbb L_{\rho,\lambda,\tau} [\widetilde \Psi]=0$ in $ \mathbb R^N\setminus \overline{B_{R_1}(0)}$.  
\end{lemma}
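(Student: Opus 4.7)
My approach is to substitute $L = \log r$, $\alpha = 2/(1+\tau)$, and $h(L) = L^\alpha + L$ so that $\widetilde\Psi(r) = r^{-\varpi_1}h(L)$, and then compute $\mathbb L_{\rho,\lambda,\tau}[\widetilde\Psi](r)$ explicitly, exploiting two successive cancellations coming from the identities in \eqref{varo} to isolate the sign-determining term.

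First, direct differentiation yields $\widetilde\Psi'(r) = r^{-\varpi_1-1}(|\varpi_1|h + h')$, which is manifestly positive for $r>1$, and a routine calculation gives
\begin{equation*}
\mathbb L_\rho[\widetilde\Psi](r) = r^{-\varpi_1-2}\bigl[(\varpi_1^2 + 2\rho\varpi_1)\,h - 2(\varpi_1+\rho)\,h' + h''\bigr].
\end{equation*}
Setting $t := h'/(|\varpi_1|h)$, the gradient-dependent part rewrites as $\lambda\widetilde\Psi^\tau|\widetilde\Psi'|^{1-\tau}/r^{1+\tau} = \lambda|\varpi_1|^{1-\tau}r^{-\varpi_1-2}h(1+t)^{1-\tau}$. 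Adding the two and invoking $\varpi_1^2 + 2\rho\varpi_1 + \lambda|\varpi_1|^{1-\tau} = 0$ from \eqref{varo} cancels the $h$-contribution. Taylor-expanding $(1+t)^{1-\tau} = 1 + (1-\tau)t - \tfrac{\tau(1-\tau)}{2}t^2 + O(t^3)$, the linear part contributes $\lambda(1-\tau)|\varpi_1|^{-\tau}h'$, which equals $2(\varpi_1+\rho)h'$ by the second identity in \eqref{varo}, and so cancels the $-2(\varpi_1+\rho)h'$ term. Using $\lambda\tau|\varpi_1|^{-\tau-1} = 1$, the quadratic part becomes $-\tfrac{1-\tau}{2}(h')^2/h$, and I arrive at
\begin{equation*}
\mathbb L_{\rho,\lambda,\tau}[\widetilde\Psi](r) = r^{-\varpi_1-2}\left[h''(L) - \frac{1-\tau}{2}\,\frac{h'(L)^2}{h(L)} + E(L)\right],
\end{equation*}
where the remainder collects the cubic and higher Taylor terms; since $t = O(L^{-1})$ and $\lambda|\varpi_1|^{1-\tau}h = O(L^\alpha)$, one has $E(L) = O(L^{\alpha-3})$.

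Next I asymptotically expand the principal bracket. Since $h'' = \alpha(\alpha-1)L^{\alpha-2}$ and, expanding $(h')^2 = \alpha^2 L^{2\alpha-2} + 2\alpha L^{\alpha-1} + 1$ together with $h^{-1} = L^{-\alpha}(1 + L^{1-\alpha})^{-1}$, one obtains
\begin{equation*}
\frac{h'(L)^2}{h(L)} = \alpha^2 L^{\alpha-2} + \alpha(2-\alpha)\,L^{-1} + O(L^{-\alpha}),
\end{equation*}
the $L^{\alpha-2}$ coefficient in the bracket equals $\alpha(\alpha-1) - \tfrac{(1-\tau)\alpha^2}{2} = \alpha\bigl[(\alpha-1) - (1-\tau)/(1+\tau)\bigr] = 0$, thanks to the precise choice $\alpha = 2/(1+\tau)$. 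The surviving leading term then lives at order $L^{-1}$ with coefficient
\begin{equation*}
-\frac{1-\tau}{2}\,\alpha(2-\alpha) = -\frac{2\tau(1-\tau)}{(1+\tau)^2} < 0,
\end{equation*}
which dominates $E(L) = O(L^{\alpha-3})$ because $\alpha < 2$ forces $\alpha-3 < -1$. Consequently there exists $R_1 > 1$ such that $\mathbb L_{\rho,\lambda,\tau}[\widetilde\Psi](r) < 0$ for all $r > R_1$, as required.

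The main difficulty is orchestrating the two successive cancellations while tracking the correct subleading asymptotics: \eqref{varo} kills the leading $O(L^\alpha)$ and $O(L^{\alpha-1})$ contributions, and the critical-exponent identity $\alpha - 1 = (1-\tau)/(1+\tau)$ kills the next, so the genuine sign emerges only at order $L^{-1}$. This is precisely the role of the perturbation $r^{-\varpi_1}\log r$ added in \eqref{stay}: without it, the pure $L^\alpha$ profile produces no $L^{-1}$ contribution in $(h')^2/h$, and the remainder at order $L^{\alpha-3}$ has the wrong (positive) sign, which is exactly why $\Psi_{\varpi_1(\rho)}$ itself fails to be a super-solution at the critical $\alpha = 2/(1+\tau)$.
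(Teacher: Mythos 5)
Your proof is correct, and I verified the key computations: the identity $\mathbb L_\rho[\widetilde\Psi]=r^{-\varpi_1-2}[(\varpi_1^2+2\rho\varpi_1)h-2(\varpi_1+\rho)h'+h'']$, the two cancellations via \eqref{varo}, the normalization $\lambda|\varpi_1|^{-1-\tau}=1/\tau$ giving the coefficient $-\tfrac{1-\tau}{2}$ of $(h')^2/h$, the vanishing of the $L^{\alpha-2}$ coefficient at $\alpha=2/(1+\tau)$, and the dominance of the negative $L^{-1}$ term over both $O(L^{-\alpha})$ and $O(L^{\alpha-3})$ (using $1<\alpha<2$). The underlying strategy is the same as the paper's — direct computation of $\mathbb L_{\rho,\lambda,\tau}[\widetilde\Psi]$ followed by Taylor expansion exploiting $f_\rho(\varpi_1)=0$ and $2(\varpi_1+\rho)=\lambda(1-\tau)|\varpi_1|^{-\tau}$ — but your bookkeeping is genuinely different and, in my view, sharper. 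The paper factors the gradient term as $\lambda|\varpi_1|^{1-\tau}r^{-\varpi_1-2}L^{2/(1+\tau)}\mathfrak P(r)$ with $\mathfrak P$ a product of two binomials in the parameter $b=(\log r)^{-(1-\tau)/(1+\tau)}$, and must then track a cascade of cancellations in powers of $b$; its final display locates the surviving negative term at order $(\log r)^{(3\tau-1)/(1+\tau)}$ relative to $r^{-\varpi_1-2}$. Your expansion in $t=h'/(|\varpi_1|h)=O(1/\log r)$ collapses all of that into the single expression $h''-\tfrac{1-\tau}{2}(h')^2/h$ plus controlled remainders, and shows the true dominant behaviour is $-\tfrac{2\tau(1-\tau)}{(1+\tau)^2}r^{-\varpi_1-2}(\log r)^{-1}$; since $(3\tau-1)/(1+\tau)>-1$, your accounting and the paper's cannot both give the correct leading order, and a spot-check (e.g.\ $\tau=1/2$) confirms yours. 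Your closing remark is also consistent with the paper: for $h=L^\alpha$ alone the bracket reduces to the positive $O(L^{\alpha-3})$ remainder, which is exactly the lower bound $\frac{(1-\tau)}{(1+\tau)^2|\varpi_1|(\log r)^3}\Psi_{\varpi_1(\rho)}(r)/r^2$ obtained in Lemma~\ref{siww}, so the extra term $r^{-\varpi_1}\log r$ is indeed what flips the sign. The only presentational nitpick is that you could state explicitly that $L^{-\alpha}=o(L^{-1})$ because $\alpha>1$, i.e.\ $\tau<1$; this is implicit in your argument but worth a half-sentence.
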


\begin{rem}
{\rm (a) In \eqref{stay}, we could take the coefficient of $r^{-\varpi_1}\log r$ to be any positive constant $C$, in which case the constant $R_1>1$ will also depend on $C$.  

(b) If in each term of $\widetilde \Psi$, we replace $\log r$ by $\log \,(rR_1)$, then the new $\widetilde \Psi$ is a positive super-solution of $\mathbb L_{\rho,\lambda,\tau}[\cdot]=0$ in $\mathbb R^N\setminus \overline{B_1(0)}$.}	
\end{rem}

\begin{proof}
Fix $r>1$ arbitrary. We define $\mathfrak P(r)$ as follows 
$$ \mathfrak P (r)= \left(1+\frac{1}{(\log r)^{\frac{1-\tau}{1+\tau}}}\right)^\tau
\left[  1+\frac{1}{(\log r)^{\frac{1-\tau}{1+\tau}}}
+\frac{2}{(1+\tau)\,|\varpi_1| \log r} +
\frac{1}{(\log r)^{\frac{2}{1+\tau}}}
\right]^{1-\tau}.
$$

By a routine calculation, we arrive at 
$$
\begin{aligned} 
&  \widetilde \Psi'(r)=
-\varpi_1\, r^{-\varpi_1-1} \left(\log r\right)^{\frac{2}{1+\tau}}  \left[
1+ 
\frac{1}{(\log r)^{\frac{1-\tau}{1+\tau}}}+
\frac{2}{\left(1+\tau\right) |\varpi_1| \log r} 
+ 
\frac{1}{(\log r)^{\frac{2}{1+\tau}}}
\right],\\
 & 	
\mathbb L_\rho[\widetilde \Psi(r)]=
  r^{-\varpi_1-2} \left(\log r\right)^{\frac{2}{1+\tau}}  \left[
  \varpi_1^2+2 \rho \,\varpi_1+\frac{\varpi^2_1+2\rho\varpi_1}{\left(\log r\right)^{\frac{1-\tau}{1+\tau}}} -\frac{4\left( \varpi_1+\rho\right)}{\left(1+\tau
  \right)\log r}-\frac{2\left( \varpi_1+\rho\right)}
  {\left(\log r\right)^{\frac{2}{1+\tau}}}
   \right],\\
   &  
   \lambda \,G_\tau[\widetilde \Psi(r)] =
\lambda \frac{\widetilde \Psi^\tau(r) \,|\widetilde \Psi'(r)|^{1-\tau}}{r^{1+\tau}}=
\lambda \,|\varpi_1|^{1-\tau} r^{-\varpi_1-2} \left( \log r\right)^{\frac{2}{1+\tau}}
 \mathfrak P(r).
   \end{aligned}
$$

We next use that $(1+b)^\tau\leq 1+\tau b-\tau \left(1-\tau\right) b^2/2$ and $(1+b)^{1-\tau}\leq 1+\left(1-\tau\right) b-\tau\left(1-\tau\right) b^2/2$ for every $b>0$. 
Hence, by taking $R_1>1$ large enough, we see that for every $r>R_1$, we have 
$$ \begin{aligned}
 \mathfrak P(r)& \leq  \left(
1+\frac{\tau}{(\log r)^{\frac{1-\tau}{1+\tau}}}
-\frac{\tau \left(1-\tau\right)}{2\left
(\log r\right)^{\frac{2\left(1-\tau\right)}{1+\tau}}}
\right)\\
& \quad \times 
\left(
1+\frac{1-\tau}{(\log r)^{\frac{1-\tau}{1+\tau}}}
+\frac{2\left(1-\tau\right)}{(1+\tau)\,|\varpi_1| \log r}-\frac{\tau\left(1-\tau\right)}{2 \left(\log r\right)^{\frac{2\left(1-\tau\right)}{1+\tau}}}
 +\frac{ \left(1-\tau\right) 
 \left( 1-\frac{2\tau}{(1+\tau) |\varpi_1| }\right) }{(\log r)^{\frac{2}{1+\tau}}}\right)\\
 &\leq \left( 1+\frac{1}{(\log r)^{\frac{1-\tau}{1+\tau}}}
+\frac{2\left(1-\tau\right)}{\left(1+\tau\right) |\varpi_1| \log r} 
+\frac{1-\tau}{(\log r)^{\frac{2}{1+\tau}}} 
-\frac{\tau\left(1-\tau\right)}{2 \left( \log r\right)^{
\frac{ 3\left(1-\tau\right)}{1+\tau} }}
\right). 
\end{aligned} $$

In light of \eqref{varo}, we arrive at 
$$
\mathbb L_{\rho,\lambda,\tau} [\widetilde \Psi(r)]\leq 
 -\lambda \,|\varpi_1|^{1-\tau} \frac{\tau\left(1-\tau\right)}{2} 
 r^{-\varpi_1-2} \left(\log r\right)^{\frac{3\tau-1}{1+\tau}}<0\quad \mbox{for every } r>R_1 .
$$
This completes the proof of Lemma~\ref{feed}.  \end{proof}

\subsection{Asymptotic models provided by $\mathbb L_{\rho,\lambda,\tau}[\cdot]= 0$}

\begin{theorem} \label{urz1}
Let \eqref{e2} hold and  $\rho,\lambda,\theta\in \mathbb R$. 
Let $u$ be any positive solution of \eqref{e1}. 
 
Let $\Phi:(0,2r_0)\to (0,\infty)$ be a 
positive $C^1$-function for some small $r_0>0$ such that  
\begin{eqnarray}
& \mathbb L_{\rho,\lambda,\tau}[\Phi(r)]\leq 0\quad \mbox{for all } r\in (0,r_0), \label{xinb} \\
& \displaystyle \lim_{r\to 0^+} r^{\theta+2} \Phi^{q-1}(r)=0,\quad \lim_{r\to 0^+} \frac{r\,\Phi'(r)}{\Phi(r)}=-\eta\in \mathbb R\setminus \{0\}, \label{wiop}  \\	
& \displaystyle \limsup_{|x|\to 0} \frac{u(x)}{\Phi(|x|)}=\mu \in (0,\infty). \label{pomii}
\end{eqnarray}
If $f_{\rho}(\eta)\leq 0$, then we have 
\begin{equation}
\label{pomi2i}
\lim_{|x|\to 0} \frac{u(x)}{\Phi(|x|)}=\mu \quad \mbox{and}\quad \lim_{|x|\to 0} \frac{ x\cdot \nabla u(x)}{\Phi(|x|)}=-\mu\, \eta. 	
\end{equation}
\end{theorem}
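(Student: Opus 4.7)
The bound $\limsup_{|x|\to 0^+}u(x)/\Phi(|x|)\le\mu$ is immediate from \eqref{pomii}; the content of the theorem is promoting this to an actual limit, i.e.\ establishing a matching liminf. Once the scalar limit is secured, the gradient statement in \eqref{pomi2i} follows by the $C^1_{\rm loc}$-compactness coming from the gradient and spherical Harnack estimates of Proposition~\ref{aurr}. I would proceed by a blow-up/compactness argument, exploiting two structural features: the positive $1$-homogeneity of $\mathbb L_{\rho,\lambda,\tau}[\cdot]$ in $u$ (the term $u^\tau|\nabla u|^{1-\tau}$ is $1$-homogeneous), and the $|x|^{-2}$-scaling of all its coefficients, so that $\mathbb L_{\rho,\lambda,\tau}$ intertwines cleanly with the rescaling $v(y):=u(ry)/\Phi(r)$.

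\textbf{Rescaling and compactness.} For an arbitrary sequence $r_n\to 0^+$ set $v_n(y):=u(r_n y)/\Phi(r_n)$ and $\Phi_n(s):=\Phi(r_n s)/\Phi(r_n)$. The structural features give
\begin{equation*}
\mathbb L_{\rho,\lambda,\tau}[v_n(y)]=r_n^{\theta+2}\Phi^{q-1}(r_n)\,|y|^\theta v_n^q(y)
\end{equation*}
on expanding annuli, and the right-hand side tends to $0$ locally uniformly on $\mathbb R^N\setminus\{0\}$ by the first part of \eqref{wiop}. Integrating $(\log\Phi)'=\Phi'/\Phi$ with the second part of \eqref{wiop} yields $\Phi_n(s)\to s^{-\eta}$ locally uniformly in $(0,\infty)$. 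From \eqref{pomii}, $u(x)\le(\mu+1)\Phi(|x|)$ near $0$, so $v_n\le(\mu+1)\Phi_n$ on compacta; the spherical Harnack inequality of Proposition~\ref{aurr}, passed through the blow-up, provides a matching positive lower bound, so $v_n$ is pinched between $c_1|y|^{-\eta}$ and $c_2|y|^{-\eta}$ on every compact subset of $\mathbb R^N\setminus\{0\}$. Combined with the gradient bounds in the same proposition, $\{v_n\}$ is $C^1_{\rm loc}$-relatively compact and every subsequential limit $v_\infty$ is a positive solution of $\mathbb L_{\rho,\lambda,\tau}[v_\infty]=0$ in $\mathbb R^N\setminus\{0\}$ with $c_1|y|^{-\eta}\le v_\infty(y)\le c_2|y|^{-\eta}$.

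\textbf{Rigidity and conclusion.} A direct computation gives $\mathbb L_{\rho,\lambda,\tau}[|y|^{-\eta}]=f_\rho(\eta)|y|^{-\eta-2}\le 0$, so $c|y|^{-\eta}$ is a super-solution of $\mathbb L_{\rho,\lambda,\tau}[\cdot]=0$ for every $c>0$; passing spherical Harnack through the blow-up also forces $v_\infty$ to be radial. Setting $L_\infty:=\sup_{y\ne 0}v_\infty(y)|y|^\eta\in[c_1,c_2]$ and applying the comparison principle (Lemmas~\ref{co1}, \ref{co2}) together with the strong maximum principle (Lemmas~\ref{add}, \ref{adol}) to the nonnegative function $L_\infty|y|^{-\eta}-v_\infty$ forces equality, i.e.\ $v_\infty(y)\equiv L_\infty|y|^{-\eta}$. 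By \eqref{pomii}, picking $r_n=|z_n|$ along a maximising sequence with $u(z_n)/\Phi(|z_n|)\to\mu$ yields $L_\infty=\mu$ for that subsequence, while $L_\infty\le\mu$ always; since every convergent subsequence of $\{v_n\}$ has a limit of this rigid form, a standard subsequence argument pins $L_\infty=\mu$ for \emph{every} sequence $r_n\to 0^+$, so $v_n\to\mu|y|^{-\eta}$ in $C^1_{\rm loc}$. Evaluating at $y=x/r_n$ with $|x|=r_n$ (equivalently $|y|=1$) then produces both limits in \eqref{pomi2i}.

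\textbf{Main obstacle.} The delicate step is the rigidity of $v_\infty$. When $f_\rho(\eta)=0$, $|y|^{-\eta}$ is an exact solution, and linearising the $u^\tau|\nabla u|^{1-\tau}$ term along the segment between $v_\infty$ and $L_\infty|y|^{-\eta}$ reduces the problem to a linear elliptic inequality to which the strong maximum principle applies cleanly. When $f_\rho(\eta)<0$, $|y|^{-\eta}$ is only a strict super-solution and comparison alone does not pin $v_\infty$ to a single power; one must analyse the radial ODE for $v_\infty$ directly and exploit the $L^\infty$-sandwich to force $v_\infty(|y|)|y|^\eta$ to be constant, which is the technical heart of the theorem.
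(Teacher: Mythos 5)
Your blow-up scheme coincides with the paper's: rescale $V_{(r)}(\boldsymbol{\xi})=u(r\boldsymbol{\xi})/\Phi(r)$, use the a priori, gradient and spherical-Harnack estimates of Proposition~\ref{aurr} together with $\lim_{r\to 0^+}r^{\theta+2}\Phi^{q-1}(r)=0$ to extract a $C^1_{\rm loc}$ subsequential limit $V$ solving $\mathbb L_{\rho,\lambda,\tau}[V]=0$ in $\mathbb R^N\setminus\{0\}$, pin $V\le\mu|\boldsymbol{\xi}|^{-\eta}$ with equality at one point, and force $V\equiv\mu|\boldsymbol{\xi}|^{-\eta}$. However, two steps are not adequately closed. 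First, upgrading $\limsup_{|x|\to 0}u/\Phi=\mu$ to ``$L_\infty=\mu$ for \emph{every} sequence $r_n\to 0$'' is not a standard subsequence argument: a priori the spherical suprema $v(r)=\sup_{|x|=r}u(x)/\Phi(|x|)$ could oscillate, so along some sequences the limit function could satisfy $\sup_{|\boldsymbol{\xi}|=1}V<\mu$ and no touching point would exist. The paper needs Lemma~\ref{mar11} here — itself a comparison argument trapping $u\le(\mu_0+\varepsilon)\Phi$ on annuli between radii where $v$ dips — to show $\lim_{r\to0^+}v(r)=\mu$, which is what guarantees that every subsequential limit attains the value $\mu$ somewhere on the compact unit sphere.

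Second, the rigidity step, which you yourself flag as the ``technical heart,'' is left open in exactly the case $f_\rho(\eta)<0$. Your proposed route there does not work as stated: spherical Harnack does \emph{not} force $v_\infty$ to be radial (it only bounds the oscillation on each sphere by a fixed factor $d_1$), so there is no ``radial ODE for $v_\infty$'' to analyse. The missing ingredient is the tangency-type strong maximum principle of Lemma~\ref{adol} (based on Serrin's comparison theorem for a sub-solution/super-solution pair): with $u_1=V$ satisfying $\mathbb L_{\rho,\lambda,\tau}[u_1]\ge 0$, $u_2=\mu|\boldsymbol{\xi}|^{-\eta}$ satisfying $\mathbb L_{\rho,\lambda,\tau}[u_2]=\mu f_\rho(\eta)|\boldsymbol{\xi}|^{-\eta-2}\le 0$, $u_1\le u_2$, $|\nabla u_2|\ne 0$, and $u_1(\boldsymbol{\xi}_0)=u_2(\boldsymbol{\xi}_0)$, one concludes $u_1\equiv u_2$ in one stroke, with no case distinction between $f_\rho(\eta)=0$ and $f_\rho(\eta)<0$ and no radial reduction. (One also needs $V>0$ before invoking it, which the paper gets from the strong maximum principle of Lemma~\ref{add}.) Your linearisation remark for $f_\rho(\eta)=0$ is essentially this tangency principle; applying it uniformly for $f_\rho(\eta)\le 0$ is what closes the proof.
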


\begin{proof}
Without loss of generality, we can assume that $r_0>0$ is small such that $B_{4r_0}(0)\subset \subset \Omega$ and $\Phi'(r)\not=0$ for all $r\in (0,r_0)$ in view of \eqref{wiop}. 
	For $r\in (0,r_0)$ fixed, we define 
	\begin{equation} \label{niom} V_{(r)}(\boldsymbol{\xi})=\frac{u(r\boldsymbol{\xi})}{\Phi(r)}\quad \mbox{for every } \boldsymbol{\xi}\in \mathbb R^N\ \mbox{with } 0<|\boldsymbol{\xi}|<r_0/r,
	\end{equation}

	We show that for every $ \boldsymbol{\xi}\in \mathbb R^N\setminus\{0\}$, we have 
	\begin{equation} \label{rioti} 
	 \lim_{r\to 0^+} V_{(r)}(\boldsymbol{\xi})=\mu |\boldsymbol{\xi}|^{-\eta},\quad \lim_{r\to 0^+} \nabla V_{(r)} (\boldsymbol{\xi})=
	-\mu\, \eta \,|\boldsymbol{\xi}|^{-\eta-2} \boldsymbol{\xi}.
	\end{equation}
	
	{\em Proof of \eqref{rioti}.} 
By the comparison principle, using \eqref{xinb} 
 and $\Phi'(r)\not=0$ for all $r\in (0,r_0)$, 
we can find  a constant $d>0$, depending only on $r_0, \mu,N,q,\theta,\rho, \lambda, \tau$ and $\Phi(r_0)$, such that 
\begin{equation} \label{niou} u(x)\leq d \,\Phi(|x|) \quad \mbox{ for all } 0<|x|\leq r_0.\end{equation}
(We can take $d=\max\,\{2\mu,C_0\, r_0^{-\beta}/\Phi(r_0)\}$, where $C_0$ is given by \eqref{c0}.) 

We write $\Phi(r)=r^{-\eta} L(r)$ for $r\in (0,r_0)$, where $L\in C^1((0,r_0))$ is a normalised slowly varying function, that is, $\lim_{r\to 0^+} r L'(r)/L(r)=0$. It is known that 
	$$ \lim_{r\to 0^+} L(r s)/L(r)=1 \quad \mbox{ uniformly on each compact }s \mbox{-set in }(0,\infty).$$ 
Thus, from \eqref{niom} and \eqref{niou}, we get 
\begin{equation} \label{mot1i} 0< V_{(r)}(\boldsymbol{\xi})\leq d \,|\boldsymbol{\xi}|^{- \eta} \frac{L(r|\boldsymbol{\xi}|)}{L(r)}\quad \mbox{for all } 0<|\boldsymbol{\xi}|<r_0/r.
\end{equation}
In light of Proposition~\ref{aurr}, there exist positive constants $d_2$, $d_3$ and $\alpha\in (0,1)$, depending only on $N,q,\theta,\rho, \lambda$ and 
$\tau$ such that \eqref{noi2} and \eqref{noi3} hold, which imply that 
\begin{equation} \label{mot2i} |\nabla V_{(r)}(\boldsymbol{\xi})|\leq d_2 \frac{V_{(r)}(\boldsymbol{\xi})}{|\boldsymbol{\xi}|} \quad
\mbox{and}\quad | \nabla V_{(r)}(\boldsymbol{\xi})- \nabla V_{(r)}(\boldsymbol{\xi}') | \leq d_3 \frac{\left( 
V_{(r)}(\boldsymbol{\xi}) +V_{(r)}(\boldsymbol{\xi}') \right)  
|\boldsymbol{\xi} -\boldsymbol{\xi}'|^\alpha}{|\boldsymbol{\xi}|^{1+\alpha}}
\end{equation} for every $\boldsymbol{\xi},\boldsymbol{\xi}'$ in $\mathbb R^N$ such that $0<|\boldsymbol{\xi}|\leq |\boldsymbol{\xi}'| <r_0/r$. 
Observe that $V_{(r)}$ is a positive solution of 
	\begin{equation} \label{sof}
	\mathbb L_{\rho,\lambda,\tau} [V_{(r)}(\boldsymbol{\xi})]=r^{\theta+2} \Phi^{q-1}(r) |\boldsymbol{\xi}|^\theta V_{(r)}^q(\boldsymbol{\xi})\quad
	\mbox{for }  0<|\boldsymbol{\xi}|<r_0/r.
	\end{equation}
Since $\lim_{r\to 0^+} r^{\theta+2} \Phi^{q-1}(r)=0$ (from \eqref{wiop}), 
using \eqref{mot1i} and \eqref{mot2i}, for every sequence $\{\overline r_n\}_{n\geq 1}$ decreasing to $0$, there exists a subsequence $\{r_n\}_{n\geq 1}$ such that
\begin{equation} \label{fanci} V_{(r_n)}\to V\ \mbox{ in } C^1_{\rm loc}(\mathbb R^N\setminus \{0\}),\end{equation} where $V$ is a non-negative $C^1(\mathbb R^N\setminus \{0\})$ solution of 
\begin{equation} \label{soppi} \mathbb L_{\rho,\lambda,\tau} [V(\boldsymbol{\xi})]=0 \quad \mbox{in } \mathcal D'(\mathbb R^N\setminus \{0\}). 
\end{equation}
We next show that $V(\boldsymbol{\xi})=\mu \,|\boldsymbol{\xi}|^{- \eta}$ for all $\boldsymbol{\xi}\in \mathbb R^N\setminus \{0\}$. 
To this end, we define 
	\begin{equation} \label{viro}  v(r)=\sup_{|x|=r} \frac{u(x)}{\Phi(|x|)}\quad \mbox{for every } r\in (0,r_0).
	\end{equation}		
	By \eqref{pomii} and 
	Lemma~\ref{mar11} in Appendix~\ref{sectiune2}, we have $ \lim_{r\to 0^+} v(r)=\mu\in (0,\infty)$.
	For each $n\geq 1$, let $\boldsymbol{\xi}_{r_n}\in \mathbb S^{N-1}$ (the unit sphere in $\mathbb R^N$) be such that 
	$ v(r_n)= u(r_n \boldsymbol{\xi}_{r_n})/\Phi(r_n)$.  
Without loss of generality, by passing to a subsequence, relabelled $\{\boldsymbol{\xi}_{r_n}\}_{n\geq 1}$, we can assume that 
$\boldsymbol{\xi}_{r_n}\to \boldsymbol{\xi}_0\in \mathbb S^{N-1}$ as $n\to \infty$. 
Using the definitions of $V_{(r)}$ and $v(r)$, we see that for all $n\geq 1$, 
$$ \begin{aligned} 
&  |\boldsymbol{\xi}|^{\eta} V_{(r_n)}(\boldsymbol{\xi}) \leq v(r_n|\boldsymbol{\xi|}) \frac{L(r_n|\boldsymbol{\xi}|)}{L(r_n)} \quad \mbox{for all } 0<|\boldsymbol{\xi}| <r_0/r_n,\\
& |\boldsymbol{\xi}_{r_n}|^{ \eta} V_{(r_n)}(\boldsymbol{\xi}_{r_n}) =v(r_n).
\end{aligned} $$
By letting $n\to \infty$, we obtain that 
\begin{equation} \label{nusi} |\boldsymbol{\xi}|^{ \eta} V(\boldsymbol{\xi})\leq \mu\ 
\mbox{ for every } \boldsymbol{\xi}\in \mathbb R^N\setminus \{0\}\ \mbox{ and } V(\boldsymbol{\xi}_0)=\mu \in (0,\infty).
\end{equation}  
Then, by the strong maximum principle in Lemma~\ref{add} applied to the non-negative $C^1(\mathbb R^N\setminus \{0\})$ solution $V$ of \eqref{soppi}, we infer that $V>0$ in $\mathbb R^N\setminus \{0\}$. 
The assumption $f_{\rho}(\eta)\leq 0$ yields that
$$ \mathbb L_{\rho,\lambda,\tau}[\mu |\boldsymbol{\xi}|^{-\eta}]=\mu f_{\rho}(\eta) \,|\boldsymbol{\xi}|^{-\eta-2}\leq 0\quad 
\mbox{for every } |x|>0.$$ 

By Lemma~\ref{adol} (see Appendix~\ref{Kel})  applied to the positive $C^1(\mathbb  R^N\setminus\{0\})$-solutions $u_1(\boldsymbol{\xi})=V(\boldsymbol{\xi})$ 
and $u_2(\boldsymbol{\xi})=\mu \,|\boldsymbol{\xi}|^{-\eta}$ of \eqref{soppi} with $|\nabla u_2(\boldsymbol{\xi})|\not=0$ for all 
$\boldsymbol{\xi}\in \mathbb R^N\setminus \{0\}$, we conclude that 
$$V(\boldsymbol{\xi})=\mu \,|\boldsymbol{\xi}|^{- \eta}\ \mbox{ for all } \boldsymbol{\xi}\in \mathbb R^N\setminus \{0\}.$$ 
Thus, using \eqref{fanci}, for every $\boldsymbol{\xi}\in \mathbb R^N\setminus \{0\}$, we obtain that 
$$ \lim_{n\to \infty} V_{(r_n)} (\boldsymbol{\xi}) =\mu\, |\boldsymbol{\xi}|^{- \eta}\quad \mbox{and}
 \lim_{n\to \infty} \nabla V_{(r_n)} (\boldsymbol{\xi}) =-\mu\,  \eta |\boldsymbol{\xi}|^{- \eta-2}\boldsymbol{\xi}. 
$$
Since $\{\overline r_n\}_{n\geq 1}$ was an arbitrary sequence decreasing to zero, we conclude the proof of \eqref{rioti}. 
Letting $|\boldsymbol{\xi}|=1$ and $x=r\boldsymbol{\xi}$ in \eqref{rioti}, we reach \eqref{pomi2i}.
\end{proof}

\begin{rem} \label{rek47} {\rm 
If $\tau=0$ in Theorem~\ref{urz1}, then in \eqref{wiop}, we can take any $\eta\in \mathbb R$. Indeed, when $\eta=0$, then we use the comparison principle in Lemma~\ref{co2} and conclude that $V=\mu$ in $\mathbb R^N\setminus \{0\}$ as in the proof of Theorem~\ref{rrzi} under the assumptions of $(a_1)$. 
}	
\end{rem}

We next extend the conclusions
of Theorem~\ref{urz1} to cover the case
$\Phi=1$. 

\begin{theorem} \label{rrzi}
Let \eqref{e2} hold, $\rho,\lambda\in \mathbb R$ and $\theta>-2$. Assume 
any one of the following situations: 
\begin{itemize} 
\item[$(a_1)$] Let $\tau=0$.  
\item[$(a_2)$] Let $\lambda>0$ and $\tau\in (0,1)$.   
\end{itemize}

Let $u$ be a positive solution of \eqref{e1} satisfying 
$ \liminf_{|x|\to 0} u(x)=\ell\in (0,\infty)$  or, equivalently, $ \limsup_{|x|\to 0} u(x)=\mu\in (0,\infty)$. 
 Then, there exists $\lim_{|x|\to 0} u(x)=\ell=\mu$. 
	\end{theorem}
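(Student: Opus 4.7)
The argument follows the blueprint of Theorem~\ref{urz1} with $\Phi\equiv 1$ and the exceptional value $\eta=0$. The equivalence $\ell\in(0,\infty)\Longleftrightarrow\mu\in(0,\infty)$ is immediate from the spherical Harnack-type inequality in Proposition~\ref{aurr}. Assuming one (hence both) of these, introduce the rescaling
\[ V_{(r)}(\boldsymbol\xi):=u(r\boldsymbol\xi),\qquad 0<|\boldsymbol\xi|<r_0/r, \]
with $r_0>0$ chosen so that $\overline{B_{r_0}(0)}\subset\Omega$. A direct computation, exactly as in the proof of Theorem~\ref{urz1}, gives
\[ \mathbb L_{\rho,\lambda,\tau}[V_{(r)}(\boldsymbol\xi)]=r^{\theta+2}|\boldsymbol\xi|^\theta V_{(r)}^q(\boldsymbol\xi), \]
and Proposition~\ref{aurr} supplies uniform upper and lower bounds on $V_{(r)}$ together with H\"older gradient estimates on each compact subset of $\mathbb R^N\setminus\{0\}$.

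For any sequence $r_n\downarrow 0$, extract a subsequence along which $V_{(r_n)}\to V$ in $C^1_{\rm loc}(\mathbb R^N\setminus\{0\})$. Since $\theta+2>0$, the right-hand side of the rescaled equation vanishes in the limit, so $V$ is a positive bounded $C^1$ solution of the homogeneous equation $\mathbb L_{\rho,\lambda,\tau}[V]=0$ in $\mathbb R^N\setminus\{0\}$. Lemma~\ref{mar11} applied with $\Phi\equiv 1$ yields $\sup_{|x|=r}u(x)\to\mu$ as $r\to 0^+$; picking $\boldsymbol\xi_{r_n}\in\mathbb S^{N-1}$ realizing this supremum on $\{|x|=r_n\}$ and passing to a further subsequence $\boldsymbol\xi_{r_n}\to\boldsymbol\xi_0\in\mathbb S^{N-1}$, we obtain $V(\boldsymbol\xi_0)=\mu$ while $V\leq\mu$ throughout $\mathbb R^N\setminus\{0\}$.

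The rigidity step is to show $V\equiv\mu$. The constant function $\mu$ is itself a solution of $\mathbb L_{\rho,\lambda,\tau}[\cdot]=0$, because the gradient term $\lambda u^\tau|\nabla u|^{1-\tau}/|x|^{1+\tau}$ vanishes at zero gradient under either $(a_1)$ (where $\tau=0$ makes the term linear in $|\nabla u|$) or $(a_2)$ (where $\tau\in(0,1)$ makes $|\nabla u|^{1-\tau}$ vanish). Thus $V$ and $\mu$ are two positive $C^1$ solutions of the same quasilinear equation with $V\leq\mu$ and interior contact at $\boldsymbol\xi_0$. Since $\nabla\mu\equiv 0$, the Hopf-type strong-comparison Lemma~\ref{adol} does \emph{not} apply; instead, as indicated in Remark~\ref{rek47} for case $(a_1)$, the conclusion $V\equiv\mu$ follows from the comparison principle of Lemma~\ref{co2}, applied on bounded annuli enclosing $\boldsymbol\xi_0$. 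The same tool handles case $(a_2)$: the sign $\lambda>0$ with $\tau\in(0,1)$ ensures that $w:=\mu-V\geq 0$ satisfies
\[ \mathbb L_\rho[w]=\lambda(\mu-w)^\tau |\nabla w|^{1-\tau}/|x|^{1+\tau}\geq 0, \]
a structural positivity that enters the comparison argument and drives $w\equiv 0$.

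Since the sequence $r_n\downarrow 0$ was arbitrary, the full family $V_{(r)}$ converges to $\mu$ in $C^1_{\rm loc}(\mathbb R^N\setminus\{0\})$ as $r\to 0^+$. Restricting to $|\boldsymbol\xi|=1$ yields $u(r\boldsymbol\xi)\to\mu$ uniformly in $\boldsymbol\xi\in\mathbb S^{N-1}$, so $\lim_{|x|\to 0^+}u(x)$ exists and equals $\mu=\ell$. The main obstacle is the rigidity step, especially in case $(a_2)$: at the interior contact point $\boldsymbol\xi_0$ both solutions have vanishing gradient, so one must exploit the sign $\lambda>0$ and the specific structure of $\mathbb L_{\rho,\lambda,\tau}[\cdot]$ (through Lemma~\ref{co2}) rather than the usual Hopf-type strong-comparison of Lemma~\ref{adol}, which is blocked by this degeneracy.
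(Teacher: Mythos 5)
Your case $(a_1)$ follows the paper's route (the paper closes the rigidity step there with the strong maximum principle of \cite{PS2007}*{Theorem~2.5.1} applied to $Z=\mu-V$, which works because $\tau=0$ makes the gradient term linear). The genuine gap is in case $(a_2)$, at the rigidity step. You set $w=\mu-V$ and derive $\mathbb L_\rho[w]=\lambda(\mu-w)^\tau|\nabla w|^{1-\tau}/|x|^{1+\tau}\geq 0$; but this says $w$ is a \emph{sub}solution of $\mathbb L_\rho[\cdot]=0$, and a non-negative subsolution attaining an interior \emph{minimum} (namely $w(\boldsymbol{\xi}_0)=0$) yields nothing — the minimum principle needs a supersolution. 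Written as an equation for $w$, the gradient term is $-\lambda(\mu-w)^\tau|\nabla w|^{1-\tau}/|x|^{1+\tau}$ with $\lambda>0$ and sublinear exponent $1-\tau\in(0,1)$: this is precisely the structure for which the strong maximum principle genuinely fails (one cannot bound $-\lambda\mu^\tau|\boldsymbol{\xi}|^{1-\tau}$ from below by $-\kappa|\boldsymbol{\xi}|-f(z)$ as $|\boldsymbol{\xi}|\to 0$, and the model ODE $Z''=c\,(Z')^{1-\tau}$ admits non-negative, non-trivial solutions vanishing on a half-line). The paper states exactly this obstruction in the remark placed right after the proof of Theorem~\ref{rrzi}. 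Nor can Lemma~\ref{co2} on annuli enclosing $\boldsymbol{\xi}_0$ substitute: from $V\leq\mu$ on the boundary it returns $V\leq\mu$ inside, which is already known; no comparison principle extracts $V\equiv\mu$ from an interior contact point. A related problem is your appeal to Lemma~\ref{mar11} with $S\equiv 1$: by Remark~\ref{rek44} this is only licensed when $\tau=0$, since for $\tau\in(0,1)$ the hypothesis $|S'|+|\nabla u|>0$ in \eqref{mar12} fails for constant $S$.

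The paper's fix for $(a_2)$ is to work with the minimum rather than the maximum. It first proves $\lim_{r\to 0^+}\min_{|x|=r}u(x)=\ell$ directly, by comparison with the explicit radial sub/super-solutions $S_{c_1,c}(r)=c_1+c\,r^{(\theta+2)/(1-\tau)}$ of \eqref{sc1c}, and then applies the strong maximum principle to $W=V-\ell\geq 0$, whose equation carries the gradient term with the favourable sign $+\lambda(W+\ell)^\tau|\nabla W|^{1-\tau}/|x|^{1+\tau}\geq 0$; there the interior zero $W(\boldsymbol{\xi}_0)=0$ does force $W\equiv 0$. You would need to supply both of these ingredients to close case $(a_2)$.
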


\begin{rem}
{\rm In Corollary~\ref{peree} (see Appendix~\ref{sectiune2}), we prove that $ \liminf_{|x|\to 0} u(x)\in (0,\infty)$ is equivalent to $ \limsup_{|x|\to 0} u(x)\in (0,\infty)$. If such a solution exists, then necessarily $\theta>-2$. Indeed, if $\theta\leq -2$, then $\lim_{|x|\to 0} u(x)=0$ for every positive solution $u$ of \eqref{e1} (see Proposition~\ref{pzerop} if $\theta=-2$ and Lemma~\ref{api} in Appendix~\ref{sectiune2} if $\theta<-2$).}
\end{rem}

	\begin{proof}
We adapt the proof of Theorem~\ref{urz1},  taking
$\Phi(r)=1$ for every $r>0$. Clearly, from the assumption $\theta>-2$, we see that  \eqref{wiop} holds with $\eta=0$. However, we need to modify some arguments since here  $\Phi'(r)=0$ for every $r>0$.

Let $r_0>0$ be small such that $B_{4r_0}(0)\subset \subset \Omega$ and 
$$  \ell/2\leq u(x)\leq 2\mu  \quad \mbox{for every } 0<|x|\leq r_0.$$
Let $r\in (0,r_0) $ be fixed. For every 
$0<|\boldsymbol{\xi}|<r_0/r$, we define 
$V_{(r)}(\boldsymbol{\xi})$ as in \eqref{niom}, that is,  
\begin{equation} \label{nimf} V_{(r)}(\boldsymbol{\xi})=u(r\boldsymbol{\xi})\in [\ell/2,2\mu]. 
	\end{equation}
As before,   
we find that for every sequence $\{\overline r_n\}_{n\geq 1}$ decreasing to $0$, there exists a subsequence $\{r_n\}_{n\geq 1}$ such that $V_{(r_n)}\to V$ in $C^1_{\rm loc}(\mathbb R^N\setminus \{0\})$, where 
$V$ is a positive $C^1(\mathbb R^N\setminus \{0\})$-solution of \eqref{soppi}. 
We treat each of the situations $(a_1)$ and $(a_2)$ separately. 
	
	\vspace{0.2cm}
	($a_1$) Let $\tau=0$. 
	Let $v$ be as in  \eqref{viro} with $\Phi=1$. As in the proof of Theorem~\ref{urz1}, 
	we recover that $\lim_{r\to 0^+} v(r)=\mu$ (see Remark~\ref{rek44}). From \eqref{nusi} with $\eta=0$, we deduce that $$ V(\boldsymbol{\xi})=\mu \quad \mbox{for every } \boldsymbol{\xi}\in \mathbb R^N\setminus \{0\}$$ by applying the strong maximum principle in \cite{PS2007}*{Theorem~2.5.1, p.~34} to 
	$Z=\mu-V$, which is a non-negative $C^1(\mathbb R^N\setminus \{0\})$-solution of 
	$$ \Delta Z(x)-(N-2+2\rho)\frac{x\cdot \nabla Z(x)}{|x|^2} -\lambda \frac{|\nabla Z(x)|}{|x|}=0\quad \mbox{for every }x\in \mathbb R^N\setminus \{0\}. 
	$$  
	The rest of the argument in the proof of Theorem~\ref{urz1} carries over.  
	
	\vspace{0.2cm} 	
	($a_2$) Let $\lambda>0$ and $\tau\in (0,1)$.   
	We define $z(r)=\min_{|x|=r} u(x)$ for every $r\in (0,r_0)$.  
Since $\liminf_{r\to 0^+} z(r)=\ell=\liminf_{|x|\to 0} u(x)$, we prove that $\lim_{r\to 0^+} z(r)=\ell$ 
by establishing that
		\begin{equation} \label{ahop} \limsup_{r\to 0^+} z(r)=\ell. 
	\end{equation}  
	
{\bf Proof of \eqref{ahop}.}	
We use our assumptions to construct suitable positive radial sub-solutions (respectively, super-solutions) of \eqref{e1} in $B_{r_*}(0)\setminus \{0\}$ for small $r_*\in (0,r_0)$. 

For every $c_1>0$, we define $c_2=c_2(c_1,\theta,\lambda, q,\tau)>0$ as follows
	\begin{equation}  c_2=\frac{1-\tau}{\theta+2} \left(\frac{c_1^{q-\tau}}{\lambda}\right)^{\frac{1}{1-\tau}} . 
	\label{zarr}
	\end{equation} By a simple calculation, we obtain that for every $c\in \mathbb R$ satisfying $|c|>c_2$ (respectively, 
	$|c|<c_2$),  
	there exists $r_{*}=r_*(c_1,c)\in (0,r_0)$ small such that  
	\begin{equation} \label{sc1c} S_{c_1,c}(r)=c_1+ c \,r^{\frac{\theta+2}{1-\tau}} >0\quad  \mbox{for every } r\in (0,r_{*}) \end{equation} and $S_{c_1,c}$ is a positive radial sub-solution (respectively, super-solution) of \eqref{e1} in $B_{r_{*}}(0)\setminus \{0\}$.

Let $\ell_0:= \limsup_{r\to 0^+} z(r)$. 
	Suppose by contradiction that $\ell_0>\ell$. 
	Let
	$\{r_n\}_{n\geq 1}$ be a sequence of positive numbers decreasing to $0$ as $n\to \infty$ with
	$ \lim_{n\to \infty} z(r_n)=\ell_0$. 
	
	Fix $\varepsilon>0$ small such that 
	$\ell+\varepsilon<\ell_0$.  
	In what follows, we need a positive radial sub-solution of \eqref{e1} in $B_{r_*}(0)\setminus \{0\}$; thus in the definition of $S_{c_1,c}(r)$, we take $c_1=\ell+\varepsilon$ and fix $c\in \mathbb R$ with $|c|>c_2$. 
	Since $\theta>-2$ and $\tau\in (0,1)$, we have 
	$ \lim_{r\to 0^+} S_{c_1,c}(r)=c_1=\ell+\varepsilon<\ell_0$.  

Let $n_0\geq 1$ be large so that $r_{n_0}<r_*$ and 
 $ z(r_n)\geq S_{c_1,c}(r_n)$ for all $n\geq n_0$.  
	Thus, for every $n\geq n_0$, we have
	$u(x)\geq z(r_n)\geq S_{c_1,c}(|x|)$  for every 
		$x\in \partial B_{r_n}(0)$. 
	Since $ S_{c_1,c}'(r)\not=0$ for every $r\in (0,r_*)$, by the comparison principle (see Remark~\ref{reka6}), for all $n>n_0$, we get
	$$ 
  u(x)\geq S_{c_1,c} (|x|) \quad \mbox{for every } r_n\leq |x|\leq r_{n_0}. 
 $$
	By letting $n\to \infty$, we get $\liminf_{|x|\to 0} u(x)\geq \ell+\varepsilon$, which is a contradiction. \qed

	\vspace{0.2cm} 
	To conclude that $\lim_{|x|\to 0} u(x)=\ell$,
	it is enough to show that 
	 \begin{equation} \label{ceva} V(\boldsymbol{\xi})=\ell \quad \mbox{for all }\  \boldsymbol{\xi}\in \mathbb R^N\setminus \{0\}.\end{equation}

	{\bf Proof of \eqref{ceva}.}
	For each $n\geq 1$, let $\boldsymbol{\xi}_{r_n}\in \mathbb S^{N-1}$ be such that $z(r_n)=u(r_n \boldsymbol{\xi}_{r_n})$. Hence, up to a subsequence, relabelled $\{\boldsymbol{\xi}_{r_n}\}_{n\geq 1}$, we have $\boldsymbol{\xi}_{r_n}\to \boldsymbol{\xi}_0\in \mathbb S^{N-1}$ as $n\to \infty$. Then, for each $n\geq 1$,  
	$$  
  V_{(r_n)}(\boldsymbol{\xi}) \geq z(r_n|\boldsymbol{\xi|}) \quad \mbox{for all } 0<|\boldsymbol{\xi}| <r_0/r_n
\quad \mbox{and}\quad 
  V_{(r_n)}(\boldsymbol{\xi}_{r_n}) =z(r_n).
 $$
	By letting $n\to \infty$, we arrive at $  V(\boldsymbol{\xi})\geq \ell $  
for every $\boldsymbol{\xi}\in \mathbb R^N\setminus \{0\}$  and $V(\boldsymbol{\xi}_0)=\ell\in (0,\infty)$. 
	
	We define $W(x)=V(x)-\ell\geq 0$ for every $x\in \mathbb R^N\setminus \{0\}$. 
	Then, 
	we get that $W\in C^1(\mathbb R^N\setminus \{0\})$ is a non-negative distribution solution of 
	$$ \Delta W-(N-2+2\rho) \frac{x\cdot \nabla W(x)}{|x|^2}+\lambda \frac{\left(W(x)+\ell\right)^\tau |\nabla W(x)|^{1-\tau}}{|x|^{1+\tau}}=0
	\quad \mbox{in } \mathbb R^N\setminus \{0\}. 
	$$
	Since $\lambda>0$, the strong maximum principle holds (see \cite{PS2007}*{Theorem~2.5.1, p.~34}). 
	Thus, since $W(\boldsymbol{\xi}_0)=0$ for $\boldsymbol{\xi}_0\in \mathbb S^{N-1}$, 
we obtain that $W(x)=0$ for every $x\in \mathbb R^N\setminus \{0\}$. Hence, for every $\boldsymbol{\xi}\in \mathbb R^N\setminus \{0\}$, we attain \eqref{ceva}, which leads to $\lim_{r\to 0^+} V_{(r)}(\boldsymbol{\xi})=\ell$. This proves that $\lim_{|x|\to 0} u(x)=\ell$.
The proof of Theorem~\ref{rrzi} is complete. 
	\end{proof}

\begin{rem} {\rm 
Under the assumptions of $(a_2)$ in Theorem~\ref{rrzi}, we could not extend the argument in part ($a_1$). Indeed, we see that $Z=\mu-V$ is a non-negative $C^1(\mathbb R^N\setminus \{0\})$-solution of 
 $$ \Delta Z(x)-(N-2+2\rho)\frac{x\cdot \nabla Z(x)}{|x|^2} -\lambda \frac{[Z(x)]^\tau |\nabla Z(x)|^{1-\tau}}{|x|^{1+\tau}}=0\quad \mbox{for every }x\in \mathbb R^N\setminus \{0\}. 
	$$ 
When $\lambda>0$ and $\tau\in (0,1)$, then 
the conditions for applying the strong maximum principle (\cite{PS2007}*{Theorem~2.5.1, p.~34}) would not have been satisfied.} 
\end{rem}

\begin{cor} \label{urz}
Let \eqref{e2} hold and $\rho,\lambda,\theta\in \mathbb R$.  
Let $u$ be an arbitrary positive solution of \eqref{e1}.
\begin{itemize} 
\item[(a)] If $\beta>\varpi_1$ in Case $[N_j]$ with $j=1,2$ and 
$\limsup_{|x|\to 0} |x|^{\varpi_1} u(x)=a\in (0,\infty)$, 
then 
\begin{equation}
\label{pomi2}
\lim_{|x|\to 0} |x|^{\varpi_1} u(x)=a\quad \mbox{and}\quad \lim_{|x|\to 0} |x|^{\varpi_1} x\cdot \nabla u(x)=-a\,\varpi_1. 	
\end{equation}
\item[(b)] If $\beta>\varpi_2$ in Case $[N_2]$ and 
$\limsup_{|x|\to 0} u(x)/\Phi_{\varpi_2(\rho)}(|x|)=b\in (0,\infty)$, 	
then 
$$ \lim_{|x|\to 0} \frac{u(x)}{ \Phi_{\varpi_2(\rho)}(|x|)}=b \quad \mbox{and}\quad
\lim_{|x|\to 0} \frac{x\cdot \nabla u(x)}{ \Phi_{\varpi_2(\rho)}(|x|)}=-b\, \varpi_2. 
 $$ (For the definition of $\Phi_{\varpi_2(\rho)}$, see \eqref{phi2}.) 
 \end{itemize}
	\end{cor}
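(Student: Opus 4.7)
The plan is to obtain Corollary~\ref{urz} as an immediate application of Theorem~\ref{urz1}, with the function $\Phi$ chosen to match the expected asymptotic profile in each case. Since Theorem~\ref{urz1} already converts the $\limsup$ statement into a limit for both $u$ and $x\cdot\nabla u$, the only task is to verify the four hypotheses \eqref{xinb}--\eqref{pomii} and $f_{\rho}(\eta)\leq 0$.

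For part (a), I would take $\Phi(r)=r^{-\varpi_1}$ and $\mu=a$, so that $\eta=\varpi_1$. The identity $\mathbb L_{\rho,\lambda,\tau}[r^{-\varpi_1}]=f_{\rho}(\varpi_1)\,r^{-\varpi_1-2}=0$ gives \eqref{xinb}, and since $\varpi_1<0$ in both Case $[N_1]$ and Case $[N_2]$ we have $\eta\neq 0$; the computation $r\Phi'/\Phi\equiv -\varpi_1$ gives the limit in \eqref{wiop}. The remaining condition $\lim_{r\to 0^+}r^{\theta+2}\Phi^{q-1}(r)=0$ becomes $\lim_{r\to 0^+}r^{(q-1)(\beta-\varpi_1)}=0$, which holds because $\beta>\varpi_1$ and $q>1$. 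The hypothesis $\limsup_{|x|\to 0}|x|^{\varpi_1}u(x)=a\in(0,\infty)$ is \eqref{pomii}, and $f_{\rho}(\varpi_1)=0\leq 0$. Theorem~\ref{urz1} then yields \eqref{pomi2}.

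For part (b), I would take $\Phi(r)=\Phi_{\varpi_2(\rho)}(r)$ from \eqref{phi2} and $\mu=b$, aiming for $\eta=\varpi_2$. When $\rho>\Upsilon$, we have $\Phi(r)=r^{-\varpi_2}$ and the verification is strictly parallel to part (a), using $f_{\rho}(\varpi_2)=0$ and $\beta>\varpi_2$. The only slightly more delicate point is the critical regime $\rho=\Upsilon$ in Case $[N_2]$, where $\Phi(r)=r^{-\varpi_2}(\log(1/r))^{2/(1+\tau)}$. Here \eqref{xinb} is supplied by Lemma~\ref{fiqa}, differentiation gives
\[
\frac{r\Phi'(r)}{\Phi(r)}=-\varpi_2-\frac{2}{(1+\tau)\log(1/r)}\longrightarrow -\varpi_2\quad\text{as }r\to 0^+,
\]
so $\eta=\varpi_2\neq 0$ (as $\varpi_2<0$ in Case $[N_2]$), and $r^{\theta+2}\Phi^{q-1}(r)=r^{(q-1)(\beta-\varpi_2)}(\log(1/r))^{2(q-1)/(1+\tau)}\to 0$ because $\beta>\varpi_2$ dominates the logarithmic factor. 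With $f_{\rho}(\varpi_2)=0\leq 0$, Theorem~\ref{urz1} applies and gives the desired limits for $u/\Phi_{\varpi_2(\rho)}$ and $(x\cdot\nabla u)/\Phi_{\varpi_2(\rho)}$.

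There is no real obstacle here; the proof is essentially an exercise in matching the hypotheses of Theorem~\ref{urz1} to the two scenarios, the mildest subtlety being the logarithmic corrections in the critical case $\rho=\Upsilon$, which are precisely what Lemma~\ref{fiqa} and the elementary asymptotic computation above are designed to handle.
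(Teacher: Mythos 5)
Your proposal is correct and coincides with the paper's own (one-line) proof: the paper also obtains Corollary~\ref{urz} by applying Theorem~\ref{urz1} with $\Phi(r)=r^{-\varpi_1}$ in part (a) and $\Phi(r)=\Phi_{\varpi_2(\rho)}(r)$ in part (b), invoking Lemma~\ref{fiqa} for the critical case $\rho=\Upsilon$. Your verification of the hypotheses \eqref{xinb}--\eqref{pomii} and of $f_\rho(\eta)\leq 0$ is accurate and simply makes explicit what the paper leaves to the reader.
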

	
\begin{proof}
We apply Theorem~\ref{urz1} with $\Phi(r)=r^{-\varpi_1}$ for (a) and $\Phi(r)=\Phi_{\varpi_2(\rho)}(r)$ for (b) (see also Lemma~\ref{fiqa} when $\rho=\Upsilon$ in Case $[N_2]$). 
\end{proof}

\begin{rem} {\rm By applying the modified Kelvin transform, we can restate Corollary~\ref{urz} in terms of the behaviour at infinity for the positive solutions of \eqref{e1} in a domain of $\mathbb R^N$ containing 
$\mathbb R^N\setminus \overline{B_R(0)}$ for some $R>0$. }
\end{rem}

\begin{cor} \label{blabla} Let \eqref{e2} hold. Assume that $\beta<\varpi_1$ in Case $[N_2]$. Let $u$ be a positive solution of \eqref{e1} in a domain of $\mathbb R^N$ containing 
$\mathbb R^N\setminus \overline{B_R(0)}$ for some $R>0$. If $u$ satisfies
\begin{equation} \label{aoll}
\limsup_{|x|\to \infty} \frac{u(x)}{\Psi_{\varpi_1(\rho)}(|x|)}:=c\in (0,\infty),
\end{equation} 
where $\Psi_{\varpi_1(\rho)}$ is given by \eqref{psia}, then we have 
\begin{equation} \label{gydd}
\lim_{|x|\to \infty} \frac{u(x)}{\Psi_{\varpi_1(\rho)}(|x|)}=c\quad 
\mbox{and}\quad \lim_{|x|\to \infty} \frac{x\cdot \nabla u(x)}{\Psi_{\varpi_1(\rho)}(|x|)}=-c\, \varpi_1.
\end{equation}
\end{cor}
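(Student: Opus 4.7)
The strategy is to reduce to Theorem~\ref{urz1} by means of the modified Kelvin transform from Appendix~\ref{Kel}. Set $\widetilde u(y) := u(y/|y|^2)$ for $y$ in a punctured neighbourhood of the origin; then $\widetilde u$ is a positive $C^1$ solution of
\[
\mathbb L_{-\rho,\lambda,\tau}[\widetilde u] = |y|^{\widetilde\theta}\,\widetilde u^{\,q}, \qquad \widetilde\theta := -\theta-4.
\]
Since $\beta<\varpi_1<0$ forces $\theta<-2$, we have $\widetilde\theta>-2$, and the behaviour of $u$ at infinity becomes the behaviour of $\widetilde u$ at the origin. Thus the conclusion \eqref{gydd} will follow once Theorem~\ref{urz1} is applied to $\widetilde u$ with an appropriate test function $\Phi$ and exponent $-\rho$ in place of $\rho$.

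As the test function I would take $\Phi(r) := \widetilde\Psi(1/r)$ for $r>0$ small, where $\widetilde\Psi(t):=t^{-\varpi_1}$ when $\rho>\Upsilon$ (so $\widetilde\Psi = \Psi_{\varpi_1(\rho)}$) and $\widetilde\Psi$ is the perturbed super-solution from Lemma~\ref{feed} when $\rho=\Upsilon$. In both cases $\widetilde\Psi(t)\sim\Psi_{\varpi_1(\rho)}(t)$ as $t\to\infty$, so \eqref{aoll} immediately gives $\limsup_{|y|\to 0}\widetilde u(y)/\Phi(|y|)=c$, i.e.\ hypothesis \eqref{pomii}. The super-solution inequality \eqref{xinb}, namely $\mathbb L_{-\rho,\lambda,\tau}[\Phi(|y|)]\leq 0$ near $y=0$, follows from the Kelvin-invariance identity (a direct computation for radial $W$)
\[
\mathbb L_{-\rho,\lambda,\tau}[\widetilde W(s)] = s^{-4}\,\mathbb L_{\rho,\lambda,\tau}[W(1/s)],\qquad \widetilde W(s)=W(1/s),
\]
applied to $W=\widetilde\Psi$: for $\rho>\Upsilon$ both sides vanish identically, while for $\rho=\Upsilon$ the right-hand side is non-positive by Lemma~\ref{feed}. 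A direct check shows $r\Phi'(r)/\Phi(r)\to\varpi_1$ as $r\to 0^+$, so putting $\eta:=-\varpi_1\neq 0$ one has
\[
f_{-\rho}(\eta) = \varpi_1^2 + 2\rho\,\varpi_1 + \lambda|\varpi_1|^{1-\tau} = f_\rho(\varpi_1) = 0.
\]
Finally, $r^{\widetilde\theta+2}\Phi^{q-1}(r)$ carries the leading polynomial factor $r^{(q-1)(\varpi_1-\beta)}$, whose exponent is positive by the hypothesis $\beta<\varpi_1$, so this quantity tends to $0$ as $r\to 0^+$ (any logarithmic factor present when $\rho=\Upsilon$ is absorbed by the polynomial).

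Theorem~\ref{urz1} then delivers
\[
\lim_{|y|\to 0}\frac{\widetilde u(y)}{\Phi(|y|)} = c, \qquad \lim_{|y|\to 0}\frac{y\cdot\nabla\widetilde u(y)}{\Phi(|y|)} = -c\,\eta = c\,\varpi_1.
\]
Undoing the Kelvin transform via the elementary identity $y\cdot\nabla\widetilde u(y) = -x\cdot\nabla u(x)$ for $x=y/|y|^2$ (obtained by differentiating $\widetilde u(y)=u(y/|y|^2)$), together with $\Phi(|y|)/\Psi_{\varpi_1(\rho)}(|x|)\to 1$ as $|x|\to\infty$, produces precisely the two limits in \eqref{gydd}.

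The main obstacle is the critical case $\rho=\Upsilon$: the natural candidate $\Psi_{\varpi_1(\rho)}(r)=r^{-\varpi_1}(\log r)^{2/(1+\tau)}$ is no longer itself a solution of $\mathbb L_{\rho,\lambda,\tau}[\cdot]=0$ at infinity, so its Kelvin pull-back cannot serve as the super-solution $\Phi$ demanded by Theorem~\ref{urz1}. Lemma~\ref{feed} is tailored precisely for this obstruction: the perturbed function $\widetilde\Psi(r)=r^{-\varpi_1}(\log r)^{2/(1+\tau)}+r^{-\varpi_1}\log r$ is simultaneously a super-solution of the homogeneous equation at infinity and asymptotically equivalent to $\Psi_{\varpi_1(\rho)}$, which is exactly what is needed to make the reduction to Theorem~\ref{urz1} go through.
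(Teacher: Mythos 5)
Your proof is correct and follows essentially the same route as the paper: apply the modified Kelvin transform, take $\Phi$ to be the pull-back of $\Psi_{\varpi_1(\rho)}$ (for $\rho>\Upsilon$) or of the perturbed super-solution $\widetilde\Psi$ from Lemma~\ref{feed} (for $\rho=\Upsilon$), and invoke Theorem~\ref{urz1} with $\eta=-\varpi_1$ and $f_{-\rho}(-\varpi_1)=f_\rho(\varpi_1)=0$. Your explicit verification of the Kelvin-invariance identity for $\mathbb L_{\rho,\lambda,\tau}$ and of the hypotheses \eqref{wiop}--\eqref{pomii} only makes transparent steps the paper leaves implicit.
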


\begin{proof} 
We apply the modified Kelvin transform (see Appendix~\ref{Kel}). We let 
$ v(x)=u(x/|x|^2)$ for $ 0<|x|<1/R$. 
Then, defining $\widetilde \theta=-\theta-4$ and $\widetilde \rho=-\rho$, we find that $v$ is a positive solution of 
\begin{equation} \label{muss} \mathbb L_{\widetilde \rho,\lambda,\tau} [v]=|x|^{\widetilde \theta} v^q(x)\quad \mbox{for every } 0<|x|<1/R. 
\end{equation}
For every $r\in (0,1)$, we define  
$$ \Phi(r)=
r^{\varpi_1} (\log \,(1/r))^{\frac{2}{1+\tau}} +r^{\varpi_1} \log \,(1/r)\quad \mbox{if } \rho=\Upsilon,\quad 
\Phi(r)=r^{\varpi_1}\ \mbox{if } \rho>\Upsilon. $$

$\bullet$  If $\rho=\Upsilon$, then Lemma~\ref{feed} gives $r_0\in (0,1)$ small such that 
$$ \mathbb L_{\widetilde \rho,\lambda,\tau} [\Phi(r)]\leq 0 \quad \mbox{for every } r\in (0,r_0).$$

$\bullet$ If $\rho>\Upsilon$, then $ \mathbb L_{\widetilde \rho,\lambda,\tau} [\Phi(r)]=0$ for every $r\in (0,1)$. 

In either case, the assumption \eqref{aoll} means that 
$ \limsup_{|x|\to 0} v(x)/\Phi(|x|)=c\in (0,\infty)$.  
Hence, we conclude \eqref{gydd} from Theorem~\ref{urz1} applied to $v$ satisfying \eqref{muss}.  
\end{proof}


\begin{lemma} \label{serio}
Let \eqref{e2} hold. Assume that $\rho\in \mathbb R$, $\theta>-2$, $\tau=0$ and $\lambda=-2\rho$. 
Let $u$ be any positive solution of \eqref{e1} and $c:=\limsup_{|x|\to 0} u(x)/\log \,(1/|x|)$.  
\begin{itemize}
\item[(i)] If $c=\infty$, then with $\beta=(\theta+2)/(q-1)>0$ and 
$f_\rho(\beta)$ as in \eqref{frol}, 
we have 
\begin{equation} \label{pao1} \lim_{|x|\to 0} |x|^\beta u(x)=\beta^{\frac{2}{q-1}}=[f_\rho(\beta)]^{\frac{1}{q-1}}=\Lambda . \end{equation}
\item[(ii)] 
If $c\in (0,\infty)$, then 
 \begin{equation} \label{pao0} \lim_{|x|\to 0} \frac{u(x)}{\log \,(1/|x|)}=c. 
 \end{equation}

\item[(iii)] If $c=0$, then 
there exists $\lim_{|x|\to 0} u(x)\in [0,\infty)$. 
\end{itemize}	
\end{lemma}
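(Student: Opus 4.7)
The plan is to use that $\tau=0$ and $\lambda=-2\rho$ make $\Phi(r)=\log(1/r)$ a positive radial solution of $\mathbb L_{\rho,\lambda,\tau}[\Phi(|x|)]=0$ on $B_1(0)\setminus\{0\}$ satisfying $\lim_{r\to 0^+}r\Phi'(r)/\Phi(r)=0$ and (using $\theta>-2$) $\lim_{r\to 0^+}r^{\theta+2}\Phi^{q-1}(r)=0$. Thus $\Phi$ is the natural asymptotic model, corresponding to $\eta=0$ in \eqref{wiop}.

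For part (ii), I would apply Theorem~\ref{urz1} directly with this $\Phi$. Since $\tau=0$, Remark~\ref{rek47} permits $\eta=0$; conditions \eqref{xinb} and \eqref{wiop} follow from the observations above, and \eqref{pomii} is exactly the assumption $c\in(0,\infty)$. The conclusion \eqref{pomi2i} reads as \eqref{pao0}.

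For parts (i) and (iii), I invoke the dichotomy from Theorem~\ref{pro1}(c), available because $f_\rho(\beta)=\beta^2>0$: either (A) $\lim_{|x|\to 0}|x|^\beta u=0$, or (B) $\lim u/U_{\rho,\beta}=1$. Under (B), $u(x)\sim\Lambda|x|^{-\beta}$ with $\Lambda=\beta^{2/(q-1)}$, and since $|x|^{-\beta}/\log(1/|x|)\to\infty$ we must have $c=\infty$. This proves \eqref{pao1} for case (i). For case (iii), the hypothesis $c=0$ rules out (B), so (A) holds.

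To finish case (iii), I aim to show $\limsup_{|x|\to 0}u(x)<\infty$; then either $\limsup u=0$ (so $\lim u=0$), or $\limsup u\in(0,\infty)$ and Theorem~\ref{rrzi}(a$_1$) (applicable since $\tau=0$) yields $\lim u\in(0,\infty)$. Suppose for contradiction that $\limsup u=\infty$. Using the spherical Harnack inequality (Proposition~\ref{aurr}), $m(r)=\min_{|x|=r}u(x)\to\infty$ along some sequence $r_n\to 0$. I construct a bounded positive radial sub-solution $w$ of \eqref{e1} near $0$ with $w(0^+)=A>0$ for some small $A$; for instance, when $4\rho<\theta+2$, the Ansatz $w(r)=A+Br^\alpha$ with $\alpha\in(4\rho,\theta+2)$ and $A,B>0$ small gives $\mathbb L_{\rho,\lambda,\tau}[w]\geq|x|^\theta w^q$ on $B_{r_0}(0)\setminus\{0\}$. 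Taking $A$ small enough, $u\geq w$ holds on $\partial B_{r_0}(0)$, and by $m(r_n)\to\infty$ also on $\partial B_{r_n}(0)$ for $n$ large; the comparison principle then gives $u\geq w$ on $B_{r_0}(0)\setminus B_{r_n}(0)$, and letting $n\to\infty$ yields $\liminf_{|x|\to 0}u(x)\geq A>0$. Since $\liminf u\in(0,\infty]$, Corollary~\ref{peree} forces $\limsup u<\infty$, contradicting the standing assumption.

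I expect the main obstacle to be constructing the bounded positive radial sub-solution $w$ for all $\rho\in\mathbb R$. The polynomial Ansatz above works only when $4\rho<\theta+2$; in the complementary range one must design a different bounded sub-solution (e.g., with a logarithmic correction or a non-power radial profile tuned to $\mathbb L_{\rho,\lambda,\tau}$) before the comparison argument and the appeal to Corollary~\ref{peree} can close the contradiction.
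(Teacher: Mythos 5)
Part (ii) of your proposal is correct and is exactly the paper's argument (Theorem~\ref{urz1} with $\Phi(r)=\log(1/r)$ together with Remark~\ref{rek47}). The problems are in (i) and (iii).

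For (i) your logic runs in the wrong direction. The dichotomy of Theorem~\ref{pro1}(c) gives: either (A) $\lim_{|x|\to 0}|x|^\beta u=0$ or (B) $u\sim U_{\rho,\beta}$. You verify that (B) implies $c=\infty$ and then declare (i) proved; but what (i) requires is the converse, namely that $c=\infty$ forces (B). Alternative (A) is perfectly compatible with $c=\infty$ (e.g.\ nothing a priori excludes a growth rate like $|x|^{-\beta/2}$, which satisfies both $|x|^\beta u\to 0$ and $u/\log(1/|x|)\to\infty$), and neither Theorem~\ref{pro1} nor Theorem~\ref{thi} (which assumes $\theta<-2$) rules out such intermediate behaviours near zero when $\theta>-2$. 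The paper closes this gap by a genuinely different mechanism: it sandwiches $u$ between $d_1^{-1}u$ and $u$ by a positive \emph{radial} solution $V$ built as a limit of maximal solutions of the annular problems \eqref{nun1}, observes that because $\tau=0$, $\lambda=-2\rho$ and $V'<0$ near the origin the radial operator collapses to the two-dimensional Laplacian ($V''+V'/r=r^\theta V^q$), and then invokes the known classification \cite{CD2010}*{Theorem 1.1} for that 2-D equation, which says the only behaviours are $\Lambda r^{-\beta}$, $\mu\log(1/r)$, or a finite limit. That external classification is the ingredient your proposal is missing.

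For (iii) the contradiction argument does not close. Assuming $\limsup_{|x|\to 0}u=\infty$, Corollary~\ref{peree}(b) already gives $\lim_{|x|\to 0}u=\infty$, so your sub-solution construction only re-derives $\liminf u\geq A>0$, which is weaker than what you already know. Your final step, ``since $\liminf u\in(0,\infty]$, Corollary~\ref{peree} forces $\limsup u<\infty$,'' is not what that corollary (or the equivalence noted after Theorem~\ref{rrzi}) says: the equivalence is between $\liminf u\in(0,\infty)$ and $\limsup u\in(0,\infty)$, and it gives no information when $\liminf u=\infty$. So no contradiction is reached, and you have not excluded the scenario $u\to\infty$ with $u=o(\log(1/|x|))$. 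The paper excludes it by the same 2-D reduction as in (i): the sandwiching radial solution $V$ would satisfy $\lim_{r\to 0^+}V(r)=\infty$, and \cite{CD2010}*{Theorem 1.1} then forces $V\sim\Lambda r^{-\beta}$ or $V\sim\mu\log(1/r)$, both contradicting $c=0$; only then does Theorem~\ref{rrzi} apply to conclude.
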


\begin{rem}
{\rm When $\Omega=\mathbb R^N$ in Lemma~\ref{serio}, we show later in Proposition~\ref{ze-ta} that the alternative (iii) cannot occur. In addition, when $c=\infty$, then $u\equiv U_{\rho,\beta}$ in $\mathbb R^N\setminus \{0\}$. }
\end{rem}

\begin{proof} 
Note that $\mathbb L_{\rho,\lambda,\tau}[\log \,(1/|x|)]=0$ for every $0<|x|<1$ since $\tau=0$ and $\lambda=-2\rho$.  

\vspace{0.2cm}
(i) Let $c=\infty$. By Corollary~\ref{pere} (see also Remark~\ref{rek44}), we get  \begin{equation} \label{fasj} \lim_{|x|\to 0} \frac{u(x)}{\log \,(1/|x|)}=\infty.
 \end{equation}	

Fix $r_0\in (0,1)$ such that $B_{r_0}(0)\subset \subset \Omega$. 
Let $d_1>1$ be a constant as in the spherical Harnack-type inequality of Proposition~\ref{aurr}. Using a similar argument to \cite{Cmem}*{Lemma 6.4}, (for every positive solution $u$ of \eqref{e1}), we construct a 
positive radial solution $V$ of \eqref{e1} in $B_{r_0}(0)\setminus \{0\}$ such that 
\begin{equation} \label{nozu} u(x)/d_1\leq V(|x|)\leq u(x) \quad \mbox{for every } 0<|x|\leq r_0. \end{equation}
For the reader's convenience, we provide the details. 
Fix $n_0>1$ large such that $1/n_0<r_0$. 
For every integer $n\geq n_0$, we consider the following boundary value problem
\begin{equation}
\left\{ \begin{aligned}
& \mathbb L_{\rho,\lambda,\tau}[v]=|x|^\theta v^q && \mbox{for every } 1/n<|x|<r_0,& \\
& v(x)=\min_{\partial B_{1/n}(0)} u && \mbox{for } |x|=1/n,&\\
& v(x)=\min_{\partial B_{r_0}(0)} u && \mbox{for } |x|=r_0.&   
 \end{aligned}
\right.	\label{nun1}
\end{equation}

Remark that $u$ (respectively, $u/d_1$) is a positive super-solution (respectively, sub-solution) of \eqref{nun1}.  
Let $V_n$ be the {\em maximal} positive solution of \eqref{nun1} with respect to the pair ($u/d_1, u$) of sub-super-solutions of \eqref{nun1}. By the radial symmetry of the domain and the data in \eqref{nun1}, as well as the invariance of the problem under rotations, we obtain that $V_n$ is radially symmetric. Moreover, by the sub-super-solutions method and the maximality property of $V_n$, we have
$$  u(x)/d_1\leq V_{n+1}(|x|)\leq V_n(|x|)\leq u(x)\quad \mbox{for every } 1/n\leq |x|\leq r_0. 
$$
For every $0<r\leq r_0$, we define 
\begin{equation} \label{geas} V(r)=\lim_{n\to \infty} V_n(r).	
\end{equation}
 
By virtue of Proposition~\ref{aurr}, we infer that $V_n\to V$ in $C^1_{\rm loc}(B_{r_0}(0)\setminus \{0\})$ and $V$ is a positive radial solution of \eqref{e1} in $B_{r_0}(0)\setminus \{0\}$ satisfying $V(r_0)=\min_{\partial B_{r_0}(0)} u$ and \eqref{nozu}. 

From \eqref{fasj} and \eqref{nozu}, we see that 
\begin{equation} \label{bion} \lim_{r\to 0^+} \frac{V(r)}{\log \,(1/r)}=\infty.\end{equation}
Moreover, there exists a small constant $r_1\in (0,r_0)$ such that $V'(r)<0$ for every $r\in (0,r_1)$. Indeed, if $V'(r_*)=0$ for some $r_*\in (0,r_0)$, then $\Delta V(r_*)=r_*^\theta [v(r_*)]^q>0$. 

We have obtained that $V$ is a positive radial solution of 
\begin{equation} \label{mira1} \Delta V=|x|^\theta V^q\quad \mbox{in } D_{r_1}(0)
\setminus \{0\}, 
\end{equation}  where by $D_{r_1}(0)$ we denote the open ball in $\mathbb R^2$ centered at $0$ and radius $r_1$. Since $V$ satisfies \eqref{bion}, from \cite{CD2010}*{Theorem 1.1}, it follows that 
\begin{equation} \label{mira2} \lim_{r\to 0^+} r^\beta V(r)= \beta^{\frac{2}{q-1}}=[f_\rho(\beta)]^{\frac{1}{q-1}}=\Lambda. 
\end{equation} 
Using \eqref{nozu}, we obtain that 
$$ \liminf_{|x|\to 0} |x|^\beta u(x)\geq \Lambda. $$

In \eqref{nun1}, we replace $\min_{\partial B_{1/n}(0)} u$ by $\max_{\partial B_{1/n}(0)}u$ and $\min_{\partial B_{r_0}(0)} u$
by $\max_{\partial B_{r_0}(0)}u $. 
With this change in \eqref{nun1}, we let $v_n$ be the {\em minimal} positive solution with respect to the pair $(u,d_1 u)$ of sub-super-solutions. Then, as before, $v_n$ is radially symmetric and $$ u(x)\leq v_{n}(x)\leq v_{n+1}(x)\leq d_1 u(x) \quad 
\mbox{for every } 1/n\leq |x|\leq r_0.$$ 
We define $v(r)=\lim_{n\to \infty} v_n(r)$ for $0<r\leq r_0$. For $r_1>0$ small enough, we obtain that $v$ is a positive radial solution of \eqref{mira1} satisfying \eqref{bion} and $u(x)\leq v(|x|)\leq d_1 u(x)$ for all $0<|x|\leq r_0$. Therefore, \eqref{mira2} holds for $v$ instead of $V$ so that 
$$\limsup_{|x|\to 0} |x|^\beta u(x)\leq \Lambda.$$ 
This concludes the proof of \eqref{pao1}. 

\vspace{0.2cm}
(ii) 
Let $c\in (0,\infty)$. 
 By Theorem~\ref{urz1} with $\Phi(r)=\log \,(1/r)$ and Remark~\ref{rek47}, we obtain \eqref{pao0}.

\vspace{0.2cm}
(iii) Let $c=0$. We first show that $\limsup_{|x|\to 0} u(x)<\infty$.  Assume the contrary. Then, by Corollary~\ref{peree}, we have
$\lim_{|x|\to 0} u(x)=\infty$. Following exactly the same ideas as in part (i), we obtain that $V$ in \eqref{geas} is a positive radial solution of \eqref{mira1} satisfying 
$\lim_{r\to 0^+} V(r)=\infty$ and \eqref{nozu}. By \cite{CD2010}*{Theorem 1.1}, we have either \eqref{mira2} or there exists $\mu\in (0,\infty)$ such that $\lim_{r\to 0^+} V(r)/\log \,(1/r)=\mu$. Either of these situations leads to a contradiction with $c=0$. 

Hence, $\limsup_{|x|\to 0} u(x)<\infty$. This, jointly with 
Theorem~\ref{rrzi} leads to $\lim_{|x|\to 0} u(x)\in [0,\infty)$. 
The proof of Lemma~\ref{serio} is now complete.     
\end{proof}


\section{Proof of Assertion ($N$) in Theorem~\ref{thi}} \label{errv}
The main result in this section is the following.

\begin{theorem} \label{axiv1} 
Let \eqref{e2} hold. Assume that either 
$\beta\in (\varpi_1,0)$ in Case $[N_1]$ or $\beta\in (\varpi_1,\varpi_2]$ in Case $[N_2]$ with $\rho>\Upsilon$. 
Then, for every positive solution $u$ of \eqref{e1}, there exists $a\in \mathbb R_+$ such that 
\begin{equation} \label{cine}
\lim_{|x|\to 0} |x|^{\varpi_1} u(x)=a\quad \mbox{and}\quad \lim_{|x|\to 0} |x|^{\varpi_1} x\cdot \nabla u(x)=-a\,\varpi_1. 	
\end{equation} 
\end{theorem}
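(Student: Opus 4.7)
}
The strategy is to sandwich $|x|^{\varpi_1}u(x)$ between a strictly positive liminf and a finite limsup near zero, and then invoke the rigidity result of Corollary~\ref{urz}(a) to upgrade these two-sided bounds to the full limit \eqref{cine}, gradient component included. The liminf is already available: since $\beta>\varpi_1$ in either sub-case, Theorem~\ref{zic1} gives $\liminf_{|x|\to 0}|x|^{\varpi_1}u(x)>0$. The work lies entirely in proving $\limsup_{|x|\to 0}|x|^{\varpi_1}u(x)<\infty$.

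The launching pad for the upper bound is the observation that $f_\rho(\beta)\leq 0$ in both sub-cases: in Case $[N_1]$ we have $\widetilde f_\rho<0$ on $(\varpi_1,0)$, while in Case $[N_2]$ with $\rho>\Upsilon$ we have $\widetilde f_\rho<0$ on $(\varpi_1,\varpi_2)$ and $\widetilde f_\rho(\varpi_2)=0$. Consequently, Corollary~\ref{zerop} yields $\lim_{|x|\to 0}|x|^\beta u(x)=0$, so $u$ grows strictly slower than the scale $|x|^{-\beta}$. The heart of the argument, to be packaged as Proposition~\ref{axv1}, is then the construction of a one-parameter family $\{W_{C,\eta}\}$ of positive radial super-solutions of \eqref{e1} on a punctured ball $B_{r_0}(0)\setminus\{0\}$, modelled on those in \eqref{pf}, of the form
\begin{equation*}
W_{C,\eta}(r)=C\,r^{-\varpi_1}\Bigl(1+\tfrac{r^{\alpha}}{\sqrt{\alpha}\,\nu}\Bigr)^{\eta},\qquad C>0,\ \eta,\alpha>0\text{ small},\ \nu>0\text{ large}.
\end{equation*}
The algebraic balance is governed by the identity $f_\rho(\varpi_1)=0$: expanding $\mathbb L_{\rho,\lambda,\tau}[W_{C,\eta}(r)]$ the leading term vanishes and the subleading contribution of the corrector $(1+r^\alpha/(\sqrt{\alpha}\nu))^\eta$ produces a strictly positive surplus. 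Because $\beta>\varpi_1$, the exponent $\theta+2-q\varpi_1$ is strictly positive, so for $\alpha>0$ small enough, this surplus dominates $r^\theta W_{C,\eta}^q(r)$ near zero. Choosing $\nu$ large enough to enforce $W_{C,\eta}\geq u$ on $\partial B_{r_0}(0)$ and using $\lim_{|x|\to 0}|x|^\beta u(x)=0$ to secure $u\leq W_{C,\eta}$ for $|x|>0$ small (since $W_{C,\eta}(r)\sim Cr^{-\varpi_1}\gg r^{-\beta+o(1)}\gg u(x)$ at that end), the comparison principle (Remark~\ref{reka6}) yields $u(x)\leq W_{C,\eta}(|x|)$ throughout $B_{r_0}(0)\setminus\{0\}$. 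Letting $\eta\to 0$ and then optimising in $C$ identifies
\begin{equation*}
a:=\limsup_{|x|\to 0}|x|^{\varpi_1}u(x)<\infty,
\end{equation*}
and the liminf bound gives $a\in\mathbb R_+$.

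With $a\in(0,\infty)$ in hand, Corollary~\ref{urz}(a) immediately delivers both identities in \eqref{cine}: the finite and positive limsup forces the full limit $\lim_{|x|\to 0}|x|^{\varpi_1}u(x)=a$, and the accompanying $C^1_{\rm loc}$-convergence of the rescaled profiles from Theorem~\ref{urz1} yields the gradient statement $\lim_{|x|\to 0}|x|^{\varpi_1}x\cdot\nabla u(x)=-a\varpi_1$.

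The main obstacle is the super-solution construction in Proposition~\ref{axv1}. Because $|x|^{-\varpi_1}$ itself satisfies $\mathbb L_{\rho,\lambda,\tau}[\,\cdot\,]=0$, any viable $W_{C,\eta}$ must be a very delicate perturbation whose positive surplus is of the correct sub-leading order to defeat the nonlinearity $|x|^\theta u^q$ while preserving the target rate $r^{-\varpi_1}$ in the limit $\eta\to 0$. The case $\beta=\varpi_2$ in Case $[N_2]$ with $\rho>\Upsilon$ is particularly subtle, since $\Phi_{\varpi_2(\rho)}(|x|)=|x|^{-\varpi_2}$ is a second null solution of the linear operator and the corrector must be arranged so as not to be absorbed into that competing scale; this forces the algebraic verification to track both roots of $f_\rho$, in the spirit of Lemma~\ref{seco}.
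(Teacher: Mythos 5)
Your overall skeleton is the same as the paper's: get $\liminf_{|x|\to 0}|x|^{\varpi_1}u(x)>0$ from Theorem~\ref{zic1}, prove the limsup is finite using $f_\rho(\beta)\leq 0$ and Corollary~\ref{zerop}, then conclude via Corollary~\ref{urz}(a). However, the super-solution construction at the heart of your Proposition~\ref{axv1} does not work, for two concrete reasons. First, the sign is backwards. Writing $T(r)=\eta\alpha\bigl(1-1/\psi(r)\bigr)>0$ with $\psi(r)=1+r^\alpha/(\sqrt{\alpha}\,\nu)$, a direct computation gives
\[
\mathbb L_{\rho,\lambda,\tau}[W_{C,\eta}(r)]=\frac{W_{C,\eta}(r)}{r^2}\Bigl[f_\rho\bigl(\varpi_1-T(r)\bigr)+rT'(r)\Bigr],
\]
and since $\varpi_1-T(r)<\varpi_1$ and $\widetilde f_\rho>0$ on $(-\infty,\varpi_1)$ in both sub-cases, both bracketed terms are positive. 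If this positive quantity \emph{dominates} $r^\theta W_{C,\eta}^q$ near zero (which it does, being of order $r^\alpha$ versus $r^{(q-1)(\beta-\varpi_1)}$), you obtain $\mathbb L_{\rho,\lambda,\tau}[W_{C,\eta}]\geq r^\theta W_{C,\eta}^q$: that is the \emph{sub}-solution inequality. Functions of exactly this shape are what the paper uses to prove \emph{lower} bounds of order $r^{-\varpi_1}$ (compare $z_{\delta,\varepsilon}$ in \eqref{zis} and Lemma~\ref{l53}); they cannot sit above $u$ in a comparison argument.

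Second, and independently, the inner matching fails because you have misordered the scales. Since $\varpi_1<\beta<0$, we have $|\varpi_1|>|\beta|$ and hence $r^{-\varpi_1}=r^{|\varpi_1|}\ll r^{|\beta|}=r^{-\beta}$ as $r\to 0$; your claim $W_{C,\eta}(r)\sim Cr^{-\varpi_1}\gg r^{-\beta+o(1)}$ is the reverse of the truth. Consequently $\lim_{|x|\to 0}|x|^\beta u(x)=0$ gives no information of the form $u\leq Cr^{-\varpi_1}$ near the origin --- that inequality is precisely the conclusion being sought --- so the comparison principle cannot be initialised at the inner end with any single-scale barrier $\sim Cr^{-\varpi_1}$. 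The paper resolves both issues at once with the two-scale super-solution $V_\varepsilon(r)=\varepsilon r^{-\beta}+Mr^{-\varpi_1}$ (respectively $\varepsilon+Mr^{-\varpi_1}$ in Case $[N_1]$) of \eqref{vio}: near zero $V_\varepsilon$ is dominated by $\varepsilon r^{-\beta}$, so $u=o(r^{-\beta})$ secures $u\leq V_\varepsilon$ there, while Lemma~\ref{lema53} verifies $\mathbb L_{\rho,\lambda,\tau}[V_\varepsilon]\leq 0$ using $f_\rho(\varpi_1)=0$, $f_\rho(\beta)\leq 0$ and the concavity estimate $(1+a)^{1-\tau}\leq 1+(1-\tau)a$ to control the nonlinear gradient term of the sum; letting $\varepsilon\to 0$ then yields $u\leq Mr^{-\varpi_1}$. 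The genuinely delicate point is showing that the \emph{sum} of two power functions is a super-solution despite the nonlinearity of $\mathbb L_{\rho,\lambda,\tau}$, not the case $\beta=\varpi_2$, which is handled uniformly since $f_\rho(\varpi_2)=0$ still gives $f_\rho(\beta)\leq 0$.
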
 

\begin{proof} 
Let $u$ be any positive solution of \eqref{e1}.  
By Corollary~\ref{urz}, we get \eqref{cine} by showing that 
\begin{equation} a:=\limsup_{|x|\to 0} |x|^{\varpi_1} u(x)\in (0,\infty). 
\end{equation}

In light of Theorem~\ref{zic1}, we have $a>0$. It remains to prove that $a<\infty$. This is done next. 

\begin{proposition} \label{axv1} 
In the settings of Theorem~\ref{axiv1}, 
every positive sub-solution $u$ of \eqref{e1} satisfies 
\begin{equation}  \label{hho}
\limsup_{|x|\to 0}|x|^{\varpi_1} u(x)<\infty . 
\end{equation}
\end{proposition}

Let $\varepsilon>0$ be arbitrary. 
For every $r>0$, we define $V_\varepsilon(r)$ by
\begin{equation} \label{vio} V_\varepsilon(r)=\left\{
\begin{aligned}
& \varepsilon+M r^{-\varpi_1}  && \mbox{in Case } [N_1], & \\
& \varepsilon r^{-\beta} + M r^{-\varpi_1}
&&  
 \mbox{in Case } [N_2] \ \mbox{with } \rho>\Upsilon. & 
\end{aligned} \right.
\end{equation}

\begin{lemma} \label{lema53} For arbitrary $M>0$ and $R>0$, the function $V_\varepsilon$ in \eqref{vio} is a positive radial super-solution of \eqref{e1} in $B_{R}(0)\setminus \{0\}$.
\end{lemma}

\begin{proof} We separate the proof of Case $[N_1]$ from that in Case $[N_2]$. 

\vspace{0.2cm}
$\bullet$ If Case~$[N_1](b)$ holds (that is, $\lambda<0$ and $\tau\in (0,1)$), we find that 
\begin{equation} \label{nm1}  \mathbb L_{\rho,\lambda,\tau}[V_\varepsilon(r)]\leq 
\mathbb L_{\rho,\lambda,\tau}[M\,r^{-\varpi_1}]=0<r^\theta (V_\varepsilon(r))^q\quad \mbox{for each } r>0.
\end{equation}

$\bullet$ If Case~$[N_1](a)$ holds (that is, $\tau=0$ and $\lambda<2\rho$), then
we recover \eqref{nm1} in which the first inequality becomes equality. Thus, in Case $[N_1]$, from \eqref{nm1}, we see that, for every $M>0$,  the function $V_\varepsilon$
is a positive radial super-solution of \eqref{e1} in $B_{R}(0)\setminus \{0\}$ with $R>0$ arbitrary. 

\vspace{0.2cm}
$\bullet$ In Case $[N_2]$ (when $\rho\not=\Upsilon$), by a standard computation, for every $r>0$, we arrive at 
$$ \begin{aligned} 
\mathbb L_{\rho,\lambda,\tau}[V_\varepsilon(r)]=& M \varpi_1\left( \varpi_1+2\rho\right) r^{-\varpi_1-2}
+\varepsilon \beta\left(\beta+2\rho \right) r^{-\beta-2} \\
&+
\lambda M |\varpi_1|^{1-\tau} r^{-\varpi_1-2} \left( 1+\frac{\varepsilon r^{\varpi_1-\beta}}{M}
\right)^\tau \left( 1+\frac{\varepsilon r^{\varpi_1-\beta}}{M} \,\frac{\beta}{\varpi_1}\right)^{1-\tau}.
\end{aligned} $$
Using that $\lambda>0$, $\varpi_1<\beta<0$ and $f_{\rho}(\beta)\leq 0=f_{\rho}(\varpi_1)$, we obtain that 
$$ \begin{aligned}  \mathbb L_{\rho,\lambda,\tau}[V_\varepsilon(r)] &< 
M \varpi_1( \varpi_1+2\rho) r^{-\varpi_1-2}
+\varepsilon \beta(\beta+2\rho) r^{-\beta-2}+ \lambda M |\varpi_1|^{1-\tau} r^{-\varpi_1-2} \left( 1+\frac{\varepsilon r^{\varpi_1-\beta}}{M}
\right)\\
& =M r^{-\varpi_1-2} f_{\rho}(\varpi_1) +\varepsilon r^{-\beta-2} f_{\rho}( \beta) \\
&=
\varepsilon r^{-\beta-2} f_{\rho}( \beta)\leq 0<r^\theta [V_\varepsilon(r)]^q
\end{aligned}
$$
for every $r>0$. This concludes the proof of Lemma~\ref{lema53} in Case $[N_j]$ with $j\in \{1,2\}$. 
\end{proof}

\vspace{0.2cm}
{\bf Proof of Proposition~\ref{axv1} concluded.} Let $r_0\in (0,1/e)$ be small such that 
$B_{r_0}(0)\subset\subset \Omega$. 
In the definition of $V_\varepsilon$, we fix $M>M_0$ large as needed, depending on $u$ and $r_0$, such that 
\begin{equation} \label{fim2} 
 \max_{x\in  \partial B_{r_0}(0)}u(x) \leq V_0(r_0):=Mr_0^{-\varpi_1}. 
 \end{equation}
Thus, for every $\varepsilon>0$, we have $u\leq V_\varepsilon$ on $ \partial B_{r_0}(0)$. 

From Corollary~\ref{zerop}, we have 
\begin{equation} \label{eraq} \lim_{|x|\to 0} |x|^{\beta} u(x)=0.
\end{equation} 
In view of \eqref{eraq}, we infer that for every $\varepsilon>0$, there exists $r_\varepsilon\in (0,r_0)$ small such that 
$$ u(x)\leq \varepsilon |x|^{- \beta} < V_\varepsilon(|x|)\quad \mbox{ for all } 0<|x|\leq r_\varepsilon.$$ Using that $V_\varepsilon$ is a positive radial super-solution in $B_{r_0}(0)\setminus \{0\}$ and $V'_\varepsilon(r)\not=0$ for all $r\in (0,r_0)$, by the comparison principle, we arrive at 
$u(x)\leq V_\varepsilon(x)$ for all $ 0<|x|\leq r_0$. By letting $\varepsilon\to 0$, we conclude the proof of Proposition~\ref{axv1}. 
\end{proof}



\section{Proof of Assertions~$(Z_1)$ and $(Z_2)$ in Theorem~\ref{thi}} \label{sept} 

Throughout this section, we let $\beta=\varpi_1<0$ in Case $[N_j]$ 
with $j\in \{1,2\}$. 
We define
\begin{equation} \label{pgam} \begin{aligned} 
& p=\frac{1}{q-1},\  \gamma=\left(
\frac{ |\varpi_1| (1-\lambda \tau |\varpi_1|^{-\tau-1} )}{q-1}\right)^{\frac{1}{q-1}} &&
\mbox{in Case } [N_1]\ \mbox{or if } \rho>\Upsilon\ \mbox{ in Case } [N_2], &\\
& p=\frac{2}{q-1},\  \gamma=\left(
\frac{2\left(q+\tau\right)}{(q-1)^2}\right)^{\frac{1}{q-1}} && \mbox{if } \rho=\Upsilon\ \mbox{ in Case } [N_2].& \\
\end{aligned} 
\end{equation}

\begin{theorem} \label{law2} Let \eqref{e2}  hold and $\beta=\varpi_1$ in Case $[N_j]$ with $j\in \{1,2\}$. 
Let $p$ be given by \eqref{pgam}. 
\begin{itemize}
\item[(A)] Every  positive super-solution $u$ of \eqref{e1} satisfies 
\begin{equation} \label{nodd}
\liminf_{|x|\to 0} |x|^{\varpi_1}  |\log |x||^p \, u(x)>0.
 \end{equation}
 \item[(B)] 
Every positive sub-solution $u$ of \eqref{e1} satisfies 
 \begin{equation} \label{babm} 
  \limsup_{|x|\to 0} |x|^{\varpi_1} \left| \log |x| \right| ^p u(x)<\infty .
\end{equation}
 \end{itemize} 
\end{theorem}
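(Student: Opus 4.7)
The plan for both parts is to follow the sub-super-solution plus comparison-principle template of Theorem~\ref{pro1} and Theorem~\ref{zic1}, refined to detect the logarithmic correction $|\log|x||^{-p}$ that appears because $\beta = \varpi_1$ places us at the resonance $f_\rho(\beta) = 0$. Throughout I fix $r_0 \in (0, 1/e)$ small enough that $B_{r_0}(0) \subset\subset \Omega$, and I will exploit repeatedly the identities $f_\rho(\varpi_1) = 0$ and $\lambda |\varpi_1|^{-\tau} = \varpi_1 + 2\rho$ (the latter being a consequence of the former).

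For part (A), I will use the sub-solution
$$
u_\delta(r) := c\,r^{-\varpi_1}\bigl[|\log r|^{-p} - |\log \delta|^{-p}\bigr], \qquad \delta < r \leq r_0,
$$
which extends the family from Lemma~\ref{lema64} to the critical exponent $\mu = p$ now admissible because $\beta = \varpi_1$ removes the blow-up factor $r^{\varpi_1-\beta}$ that forced $c_0(\mu) \to 0$ there. A short computation gives $u_\delta' > 0$ and $u_\delta(\delta) = 0$. To check $\mathbb{L}_{\rho,\lambda,\tau}[u_\delta] \geq r^\theta u_\delta^q$, two successive cancellations are needed: the $r^{-\varpi_1-2}|\log r|^{-p}$-term vanishes by $f_\rho(\varpi_1) = 0$, and in Case $[N_1]$ or Case $[N_2]$ with $\rho > \Upsilon$ the $r^{-\varpi_1-2}|\log r|^{-p-1}$-coefficient (after the first-order Taylor expansion of $(1+t)^{1-\tau}$ inside $\lambda G_\tau[u_\delta]$) collapses to $c\,p\,|\varpi_1|\bigl(1 - \lambda\tau|\varpi_1|^{-\tau-1}\bigr) = c\,\gamma^{q-1}$ since $p(q-1) = 1$. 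When $\rho = \Upsilon$ this first-order coefficient itself vanishes (because $\lambda\tau|\varpi_1|^{-\tau-1} = 1$), and the second-order expansion $(1+t)^{1-\tau} = 1 + (1-\tau)t - \tau(1-\tau)t^2/2 + O(t^3)$ produces the coefficient $c\,p\,[p(\tau+1)/2 + 1] = 2c(q+\tau)/(q-1)^2 = c\,\gamma^{q-1}$ with $p = 2/(q-1)$. Choosing $c > 0$ with $c^{q-1} < \gamma^{q-1}$ and further shrinking so that $u_\delta(r_0) \leq \min_{|x|=r_0} u$, the comparison principle yields $u \geq u_\delta$ on the annulus; sending $\delta \to 0^+$ gives \eqref{nodd}.

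For part (B), the starting point is Corollary~\ref{zerop}: since $f_\rho(\beta) = 0$ and $\beta = \varpi_1 \neq 0$, we already have $\lim_{|x|\to 0} |x|^{\varpi_1} u(x) = 0$, which provides the slack for comparison near the origin. My super-solution candidate is
$$
V_\varepsilon(r) := \varepsilon\,r^{-\varpi_1} + C\,r^{-\varpi_1}|\log r|^{-p}, \qquad 0 < \varepsilon < 1,
$$
with $C > \gamma$ chosen large and independent of $\varepsilon$ so that $V_\varepsilon(r_0) \geq \max_{|x|=r_0} u$. Writing $V_\varepsilon = r^{-\varpi_1}\phi(r)$ with $\phi(r) = \varepsilon + C|\log r|^{-p}$ and expanding $\lambda G_\tau[V_\varepsilon]$ directly (the nonlinearity prevents additive splitting), the same $f_\rho(\varpi_1) = 0$ identity kills the $r^{-\varpi_1-2}\phi(r)$ contribution, leaving $\mathbb{L}_{\rho,\lambda,\tau}[V_\varepsilon]$ dominated by $C\gamma^{q-1} r^{-\varpi_1-2}|\log r|^{-p-1}$ (respectively $|\log r|^{-p-2}$ when $\rho = \Upsilon$). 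Comparing against $r^\theta V_\varepsilon^q = r^{-\varpi_1-2}\phi(r)^q$ in the three regimes $\varepsilon \gg C|\log r|^{-p}$, $\varepsilon \sim C|\log r|^{-p}$, $\varepsilon \ll C|\log r|^{-p}$, and using $pq - p \in \{1, 2\}$, the super-solution inequality is established for $C^{q-1} > \gamma^{q-1}$ uniformly in $\varepsilon$. Near the origin, $V_\varepsilon(|x|)|x|^{\varpi_1} \geq \varepsilon > 0$ while $|x|^{\varpi_1}u(x) \to 0$, so $V_\varepsilon \geq u$ there; comparison yields $u \leq V_\varepsilon$ on $B_{r_0}(0)\setminus\{0\}$, and letting $\varepsilon \to 0^+$ produces $u(x) \leq C|x|^{-\varpi_1}|\log|x||^{-p}$, which is \eqref{babm}.

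The principal obstacle will be the degenerate case $\rho = \Upsilon$ in Case $[N_2]$, where $\varpi_1 = \varpi_2 = -(\lambda\tau)^{1/(\tau+1)}$ is a double root of $f_\rho$, so that both $f_\rho(\varpi_1) = 0$ and $f_\rho'(\varpi_1) = 0$. This forces the expansions in both (A) and (B) all the way to second order in $1/|\log r|$ — explaining why $p$ doubles from $1/(q-1)$ to $2/(q-1)$ — and relies crucially on the identity $\lambda\tau|\varpi_1|^{-\tau-1} = 1$, available only at $\rho = \Upsilon$, to collect the surviving coefficient $2(q+\tau)/(q-1)^2 = \gamma^{q-1}$. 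A secondary technical point is the genuine nonlinearity of $\mathbb{L}_{\rho,\lambda,\tau}$ when $\tau \in (0,1)$, which prevents an additive splitting of $V_\varepsilon$ inside $\lambda G_\tau[V_\varepsilon]$; however, expanding directly on $V_\varepsilon = r^{-\varpi_1}\phi(r)$ produces only cross-terms of order $|\log r|^{-p-2}$ or smaller, so the leading comparison is not disturbed and the argument closes.
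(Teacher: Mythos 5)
Your part (B) is essentially the paper's own argument: the super-solution $V_\varepsilon$ you propose coincides with \eqref{nuv2} in Case $[N_2]$ and is a harmless variant of it in Case $[N_1]$, and since $V_\varepsilon=r^{-\varpi_1}\phi(r)$ has logarithmic derivative uniformly close to $|\varpi_1|$ (because $r\phi'/\phi\le p/|\log r|$ independently of $\varepsilon$), the Taylor expansion of the gradient term is legitimate on all of $B_{r_0}(0)\setminus\{0\}$ and the comparison closes as you describe. Part (A), however, has a genuine gap in Case $[N_2]$. Your sub-solution $u_\delta(r)=c\,r^{-\varpi_1}\bigl[|\log r|^{-p}-|\log\delta|^{-p}\bigr]$ vanishes at the inner boundary $r=\delta$, so $r u_\delta'(r)/u_\delta(r)=|\varpi_1|(1+s)$ with $s=\mu|\log r|^{-\mu-1}/\bigl(|\varpi_1|(|\log r|^{-\mu}-|\log\delta|^{-\mu})\bigr)\to\infty$ as $r\to\delta^+$. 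The expansion $(1+s)^{1-\tau}\ge 1+(1-\tau)s-\tau(1-\tau)s^2/2$ is then useless: with $\lambda>0$ the remainder contributes $-C\,u_\delta\,s^2/r^2$, and $u_\delta s^2\sim\bigl(c\mu r^{-\varpi_1}|\log r|^{-\mu-1}\bigr)^2/(\varpi_1^2u_\delta)\to\infty$, so the ``surviving coefficient'' $\gamma^{q-1}$ you extract is not a valid lower bound near $r=\delta$. One can see the failure without any expansion: at $r=\delta$ one has $u_\delta=0$, $\lambda G_\tau[u_\delta]=0$, and by \eqref{eqo5} the entire left-hand side reduces to $c\mu\,\delta^{-2-\varpi_1}|\log\delta|^{-\mu-1}\bigl[-2(\rho+\varpi_1)+(\mu+1)/\log(1/\delta)\bigr]$, which is \emph{negative} whenever $\rho+\varpi_1>0$ — exactly the range $\Upsilon\le\rho<\lambda^{1/(\tau+1)}$, which includes the degenerate case $\rho=\Upsilon$ you single out. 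So $u_\delta$ is simply not a sub-solution near $r=\delta$ there, and even when $\rho\ge\lambda^{1/(\tau+1)}$ the only coefficient available without the (invalid) expansion is $-2(\rho+\varpi_1)\ge 0$, which vanishes at $\rho=\lambda^{1/(\tau+1)}$ and in any case cannot produce the required $|\log r|^{-\mu-1}$ balance with $\mu=p=1/(q-1)$ when $\rho>\Upsilon$.

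This is precisely why the paper does not use $u_\delta$ with $\mu=p$ in Case $[N_2]$. Its proof of (A) is a two-stage argument: first a rough lower bound $\lim_{|x|\to 0}|x|^{\varpi_1-\eta}u(x)=\infty$ for every $\eta>0$ (Proposition~\ref{nwp2}, which uses $u_\delta$ but only under the restriction $\varpi_1\le-\rho$ and with the non-sharp exponent $\mu=2/(q-1)$, supplemented by Proposition~\ref{gain} when $\Upsilon\le\rho<\lambda^{1/(\tau+1)}$); then comparison with the family $z_{\delta,\varepsilon}(r)=c\,r^{\varepsilon-\varpi_1}\bigl(1-e^{-r^\varepsilon\log^p(1/r)}\bigr)^{\delta-1}$ of Lemma~\ref{lema83}, in which the parameter $\delta$ enters as an exponent rather than as a subtracted constant. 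These functions are strictly positive on the whole punctured ball and satisfy $rz_{\delta,\varepsilon}'/z_{\delta,\varepsilon}=|\varpi_1|(1+T_{\delta,\varepsilon}(r))$ with $T_{\delta,\varepsilon}$ uniformly small, so the second-order expansion needed at $\rho=\Upsilon$ is valid everywhere; the preliminary power-law estimate is then what guarantees $u\ge z_{\delta,\varepsilon}$ near the origin so that the comparison principle applies. Your Case $[N_1]$ computation for (A) is fine (there $\lambda\le 0$ or $\tau=0$, so $\lambda(1+s)^{1-\tau}\ge\lambda(1+(1-\tau)s)$ holds globally and coincides with Proposition~\ref{nwp2}), but to repair Case $[N_2]$ you need either this two-stage scheme or some other family of sub-solutions that does not degenerate at an interior radius.
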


\begin{theorem} \label{law1} Let \eqref{e2} hold and $\beta=\varpi_1$ in Case $[N_j]$ with $j\in \{1,2\}$. 
\begin{itemize}
\item[(a)] Then, 
every positive sub-solution $u$ of \eqref{e1} satisfies 
\begin{equation} \label{bbb}
 \limsup_{|x|\to 0} |x|^{\varpi_1}  |\log |x||^p \, u(x) \leq \gamma. 
\end{equation} 

\item[(b)] Every 
positive super-solution $u$ of \eqref{e1} satisfies 
\begin{equation} \label{wow}
\liminf_{|x|\to 0} |x|^{\varpi_1}  |\log |x||^p \, u(x)\geq \gamma. 
\end{equation}
\end{itemize} 
\end{theorem}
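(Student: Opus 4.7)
The plan is to mirror the proof of Theorem~\ref{pro1}, replacing the asymptotic model $\Lambda|x|^{-\beta}$ by $\gamma|x|^{-\varpi_1}|\log|x||^{-p}$. For part~(a), fix any $C>\gamma$ and construct a one-parameter family $\{P^+_\eta\}_{\eta\in(0,\eta_C)}$ of positive radial super-solutions of \eqref{e1} in $B_{r_0}(0)\setminus\{0\}$ (for some small $r_0>0$ with $B_{r_0}(0)\subset\subset\Omega$), enjoying the analogues of the four properties (i)--(iv) from the proof of Theorem~\ref{pro1}; the target asymptotic is
\begin{equation*}
\lim_{|x|\to 0}|x|^{\varpi_1}|\log|x||^p\,\lim_{\eta\to 0^+}P^+_\eta(|x|)=C.
\end{equation*}
The novelty is that property~(iii), $u\le P^+_\eta(|x|)$ for $|x|$ small, will be guaranteed by Theorem~\ref{law2}(B): since $|x|^{\varpi_1}|\log|x||^p u(x)$ stays bounded above, while the log-shift built into $P^+_\eta$ forces $|x|^{\varpi_1}|\log|x||^p P^+_\eta(|x|)\to\infty$ as $|x|\to 0$, the ratio $u/P^+_\eta$ tends to $0$. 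The comparison principle then yields $u\le P^+_\eta$ throughout $B_{r_0}(0)\setminus\{0\}$; passing first $\eta\to 0^+$ at fixed $x$ and then $|x|\to 0$ gives $\limsup_{|x|\to 0}|x|^{\varpi_1}|\log|x||^p u(x)\le C$, and letting $C\searrow\gamma$ finishes (a). Part~(b) is entirely symmetric, building sub-solutions with $C<\gamma$ and using Theorem~\ref{law2}(A) for the analogue of (iii).

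\textbf{Ansatz and main computation.} Generalising \eqref{pf}, I would take
\begin{equation*}
P^\pm_{\eta,\alpha,\nu}(r)=C\,r^{-\varpi_1}\,|\log r|^{-p\pm\eta}\left(1+\frac{r^\alpha}{\sqrt{\alpha}\,\nu}\right)^{\pm 1},\qquad r\in(0,1),
\end{equation*}
with $\alpha,\nu$ fixed and $\eta\to 0^+$. Writing $L=|\log r|$ and expanding $\mathbb L_{\rho,\lambda,\tau}[P^\pm_{\eta,\alpha,\nu}(r)]/(P^\pm_{\eta,\alpha,\nu}(r)/r^2)$ in powers of $1/L$ and $r^\alpha$, the constant term equals $f_\rho(\varpi_1)=0$, while the $1/L$ coefficient equals $-(p\mp\eta)f_\rho'(\varpi_1)$. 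In Case~$[N_1]$ and in Case~$[N_2]$ with $\rho>\Upsilon$, identities \eqref{num} and \eqref{xio} give $-f_\rho'(\varpi_1)=(q-1)\gamma^{q-1}>0$ and $p(q-1)=1$, so the sign of $\mathbb L_{\rho,\lambda,\tau}[P^\pm_{\eta,\alpha,\nu}]-r^\theta(P^\pm_{\eta,\alpha,\nu})^q$ is determined, up to $\OO(1/L^2)$ and $\OO(r^\alpha)$ corrections, by the quantity $(1\mp\eta(q-1))\gamma^{q-1}-C^{q-1}L^{\pm\eta(q-1)}$. When $C>\gamma$, this is strictly negative on some interval $(0,r_0)$ for $\eta\in(0,\eta_C)$ small enough, yielding the super-solution inequality; when $C<\gamma$, the symmetric choice gives a sub-solution. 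The parameter $\nu>0$ is then diminished (depending on $u$) to secure $u\le P^+_{\eta,\alpha,\nu}$ on $\partial B_{r_0}(0)$, while the monotonicity $(P^\pm_{\eta,\alpha,\nu})'(r)\ne 0$ for $r\in(0,r_0)$ follows by direct differentiation.

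\textbf{Main obstacle.} The principal difficulty arises in Case~$[N_2]$ with $\rho=\Upsilon$, where $\varpi_1=\varpi_2=-(\lambda\tau)^{1/(\tau+1)}$ is a double root of $f_\rho$, so $f_\rho'(\varpi_1)=0$ and the $1/L$ coefficient above vanishes identically. The expansion must be pushed to order $1/L^2$, tracking three contributions: the term $(p\mp\eta)(p\mp\eta+1)/L^2$ from $\Delta$ applied to $|\log r|^{-p\pm\eta}$; the second-order Taylor coefficient $-\tau(1-\tau)\lambda|\varpi_1|^{-1-\tau}(p\mp\eta)^2/(2L^2)$ arising from the expansion of $(1+p/(|\varpi_1|L))^{1-\tau}$ in the gradient term; and cross corrections coming from the multiplicative factor $(1+r^\alpha/(\sqrt{\alpha}\,\nu))^{\pm 1}$. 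The identity $\lambda|\varpi_1|^{-1-\tau}=1/\tau$, a consequence of $|\varpi_1|^{\tau+1}=\lambda\tau$ in \eqref{varo}, lets these contributions combine at $\eta=0$ into $p\bigl[1+p(1+\tau)/2\bigr]$, which matches $\gamma^{q-1}=2(q+\tau)/(q-1)^2$ precisely when $p=2/(q-1)$. Since $p(q-1)=2$ now, the right-hand side becomes $r^{\theta+2}[P^\pm_{\eta,\alpha,\nu}]^{q-1}=C^{q-1}L^{-2\pm\eta(q-1)}(1+r^\alpha/(\sqrt{\alpha}\,\nu))^{\pm(q-1)}$, and the sign of $\mathbb L_{\rho,\lambda,\tau}[P^\pm]-r^\theta(P^\pm)^q$ at order $1/L^2$ is once more governed by $\gamma^{q-1}-C^{q-1}$. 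The careful bookkeeping of these second-order terms, and the verification that the log-shift $\eta$ together with the multiplicative correction preserve the leading sign, is the main technical step; once this is done, the rest of the argument proceeds exactly as in the generic case.
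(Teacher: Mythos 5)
Your overall strategy is exactly the paper's: mimic the proof of Theorem~\ref{pro1} with $\Lambda|x|^{-\beta}$ replaced by $\gamma |x|^{-\varpi_1}|\log|x||^{-p}$, use Theorem~\ref{law2} to secure the comparison near the origin (property (iii)), apply the comparison principle, and let $\eta\to 0^+$ and then $C\to\gamma$. Your expansion analysis is also essentially right, including the correct identification of the degenerate case $\rho=\Upsilon$ (where $f_\rho'(\varpi_1)=0$) and the identity $p\bigl[1+p(1+\tau)/2\bigr]=2(q+\tau)/(q-1)^2=\gamma^{q-1}$ for $p=2/(q-1)$.

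However, there is a genuine gap in your ansatz: you carry over the multiplicative correction $\bigl(1+r^\alpha/(\sqrt{\alpha}\,\nu)\bigr)^{\pm1}$ verbatim from \eqref{pf}, and this does not produce super/sub-solutions in the logarithmic regime. The correction's contribution to $rP'/P$ is $\pm\alpha\widetilde\psi(r)$ with $\widetilde\psi(r)=\frac{r^\alpha/(\sqrt{\alpha}\nu)}{1+r^\alpha/(\sqrt{\alpha}\nu)}$, so it feeds a term of size $\approx(q-1)\gamma^{q-1}\alpha\,\widetilde\psi(r)$ (of the unfavourable sign) into $r^2\,\mathbb L_{\rho,\lambda,\tau}[P^\pm]/P^\pm$. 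In Theorem~\ref{pro1} this is harmless because the margin $C^{q-1}-\Lambda^{q-1}$ is a fixed positive constant and one just takes $\alpha$ small. Here, after the constant term $f_\rho(\varpi_1)=0$ cancels, the available margin is only $\bigl(C^{q-1}-\gamma^{q-1}\bigr)/\log(1/r)$ (or $/\log^2(1/r)$ when $\rho=\Upsilon$), which tends to $0$, while $\alpha\widetilde\psi(r)$ is only $O(r^\alpha/\nu)$ for $r^\alpha\ll\nu$ but is of constant order $\approx\alpha$ on the annulus where $r^\alpha\gtrsim\sqrt{\alpha}\,\nu$. A short computation shows the required inequality on that annulus forces $\log\bigl(1/(\sqrt{\alpha}\nu)\bigr)\lesssim (C^{q-1}-\gamma^{q-1})$, i.e.\ a lower bound on $\nu$ that is independent of $u$ — incompatible with your need to take $\nu$ as small as dictated by $\max_{\partial B_{r_0}(0)}u$. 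The paper avoids this by using the logarithmic correction $\bigl(1+|\log r|^{-\alpha}/\nu\bigr)^{\pm1}$ (see \eqref{zara} and Lemma~\ref{secoo}): its logarithmic derivative is $\alpha\psi\bigl([\log(1/r)]^{-\alpha}/\nu\bigr)/(r\log(1/r))$ with $\psi\in(0,1)$, so the perturbation is $O(\alpha/\log(1/r))$ uniformly in $\nu>0$ and $r$, i.e.\ of the same order as the margin with a coefficient that can be made small by shrinking $\alpha$. With this one change your argument goes through; without it, the comparison step fails for solutions with large boundary data.
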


\subsection{Proof of Theorem~\ref{law2} (A)}
\label{secc5}

We adapt the ideas in the proof of Theorem~\ref{zic1}. We divide the proof into three Steps.

\vspace{0.2cm}
{\bf Step 1.} 
{\em Let $u$ be any  
positive super-solution of \eqref{e1}. Then, }
\begin{equation} \label{simo2} \mbox{\it for every } \eta>0,\ \mbox{\it we have } \lim_{|x|\to 0} |x|^{\varpi_1-\eta} u(x)=\infty .\end{equation}
\begin{proof}[Proof of Step~1.] We distinguish two situations: 

\vspace{0.2cm}
$\bullet$ For Case~$[N_1]$, as well as Case $[N_2]$ with $\rho\geq \lambda^{1/(\tau+1)}$, we gain \eqref{simo2} 
via Proposition~\ref{nwp2}. 

$\bullet$ For Case $[N_2]$ with $\Upsilon\leq \rho<\lambda^{1/(\tau+1)}$, we have already proved \eqref{simo2} in Proposition~\ref{gain}. 

\begin{proposition}
\label{nwp2} 
Let \eqref{e2} hold and $\beta=\varpi_1$ 
in either Case $[N_1]$ or Case $[N_2]$ with $\rho\geq \lambda^{1/(\tau+1)}$.  
Then, every positive super-solution $u$ of \eqref{e1} satisfies 
\begin{eqnarray}
& \liminf_{|x|\to 0} |x|^{\varpi_1} \left|\log |x| \right|^{1/(q-1)} u(x)>0 &  \mbox{in  Case $[N_1]$,}  \label{fas1}  \\
& \liminf_{|x|\to 0} |x|^{\varpi_1} \left | \log |x|\right |^{2/(q-1)} u(x)>0 &  \mbox{in  Case $[N_2]$.}  \label{fas2}
\end{eqnarray} 
\end{proposition}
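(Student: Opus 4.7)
The plan is to adapt the sub-solution construction from Lemma~\ref{lema64} to the \emph{critical} value $\mu=p$, where $p=1/(q-1)$ in Case~$[N_1]$ and $p=2/(q-1)$ in Case~$[N_2]$. I note first that, in Case~$[N_2]$, the hypothesis $\rho\geq\lambda^{1/(\tau+1)}$ is equivalent to $\widetilde f_\rho(-\rho)\leq 0$, i.e.\ $\varpi_1\leq -\rho$; this is exactly the structural assumption underlying the Case~$[N_2]$ part of Lemma~\ref{lema64}. No further hypothesis is needed in Case~$[N_1]$.

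For $c>0$ and $\delta\in(0,1/e)$, I would define
$$ w_\delta(r):= c\,r^{-\varpi_1}\bigl(|\log r|^{-p}-|\log \delta|^{-p}\bigr),\qquad \delta\leq r\leq 1/e,$$
and prove that for an explicit constant $c_*>0$ (independent of $\delta$), every $c\in(0,c_*)$ makes $w_\delta$ a positive radial sub-solution of \eqref{e1} on $\{\delta<|x|<1/e\}$, with $w_\delta'>0$ and $w_\delta(\delta)=0$. Repeating the calculation in the proof of Lemma~\ref{lema64} at $\mu=p$, the coefficient of $w_\delta(r)/r^2$ in $\mathbb L_{\rho,\lambda,\tau}[w_\delta(r)]$ equals $f_\rho(\varpi_1)=0$. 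In Case~$[N_1]$, using $w_\delta^\tau|w_\delta'|^{1-\tau}\leq |\varpi_1|^{-\tau}r^\tau w_\delta'$ (combined with $\tau=0$ or $\lambda<0$) together with the identity $\varpi_1+2\rho=\lambda|\varpi_1|^{-\tau}$ that follows from $f_\rho(\varpi_1)=0$, one obtains
$$ \mathbb L_{\rho,\lambda,\tau}[w_\delta(r)]\geq cp\,|\varpi_1|\,r^{-\varpi_1-2}\,|\log r|^{-p-1};$$
while in Case~$[N_2]$, the crude bound $|w_\delta'|\geq|\varpi_1|w_\delta/r$ (with $\lambda>0$) together with $-2(\rho+\varpi_1)\geq 0$ yields
$$ \mathbb L_{\rho,\lambda,\tau}[w_\delta(r)]\geq cp(p+1)\,r^{-\varpi_1-2}\,|\log r|^{-p-2}.$$
On the other hand, because $\beta=\varpi_1$, we have $\theta-q\varpi_1=-\varpi_1-2$ and $pq=p+1$ in Case~$[N_1]$, $pq=p+2$ in Case~$[N_2]$; thus $r^\theta w_\delta(r)^q\leq c^q\,r^{-\varpi_1-2}|\log r|^{-pq}$ has exactly the same powers of $r$ and of $|\log r|$ as the lower bounds above. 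The sub-solution condition therefore reduces to the algebraic inequality $c^{q-1}\leq p|\varpi_1|$ in Case~$[N_1]$ and $c^{q-1}\leq p(p+1)$ in Case~$[N_2]$, yielding a uniform $c_*>0$.

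Once $w_\delta$ is known to be a positive radial sub-solution with $w_\delta'>0$, I would fix $r_0\in(0,1/e)$ with $B_{r_0}(0)\subset\subset\Omega$ and further diminish $c$ (depending on $u$ and $r_0$) so that $c\,r_0^{-\varpi_1}|\log r_0|^{-p}\leq\min_{\partial B_{r_0}(0)}u$; then $w_\delta\leq u$ on $\partial B_{r_0}(0)$ for every $\delta\in(0,r_0)$. Since $w_\delta=0$ on $\partial B_\delta(0)$, the comparison principle (Remark~\ref{reka6}) gives $u(x)\geq w_\delta(|x|)$ for all $\delta\leq|x|\leq r_0$. Letting $\delta\to 0^+$, one obtains
$$ u(x)\geq c\,|x|^{-\varpi_1}\,|\log|x||^{-p}\quad\text{for every }x\in B_{r_0}(0)\setminus\{0\},$$
which is exactly \eqref{fas1} in Case~$[N_1]$ and \eqref{fas2} in Case~$[N_2]$.

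The main obstacle is the sub-solution verification at the exact critical exponent $\mu=p$: this is the unique endpoint where the logarithmic powers on the two sides of \eqref{e1} balance. Any strictly smaller $\mu$ forces $c_*\to 0$ (as in Proposition~\ref{nwp1}), and only at $\mu=p$ does the leading cancellation $f_\rho(\varpi_1)=0$ combined with the sign of the next-order coefficient (supplied by the identity $\varpi_1+2\rho=\lambda|\varpi_1|^{-\tau}$ in Case~$[N_1]$, and by the hypothesis $\rho\geq\lambda^{1/(\tau+1)}$ in Case~$[N_2]$) produce a $\delta$-independent lower bound of the required form.
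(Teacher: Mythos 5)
Your proposal is correct and follows essentially the same route as the paper: the paper proves Proposition~\ref{nwp2} by reusing the family $u_\delta$ from \eqref{eqo1} with the critical exponent $\mu=p$ and the constant $c_0$ from \eqref{furo} (which is exactly your $c_*$, since $\beta=\varpi_1$ makes the infimum in \eqref{cmu} equal to $1$), observing that the computations of Lemma~\ref{lema64} and the comparison argument of Proposition~\ref{nwp1} go through verbatim. Your identification of where $f_\rho(\varpi_1)=0$, the identity $\varpi_1+2\rho=\lambda|\varpi_1|^{-\tau}$, and the hypothesis $\rho\geq\lambda^{1/(\tau+1)}$ (i.e.\ $\varpi_1\leq-\rho$) enter matches the paper exactly.
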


\begin{proof} We imitate the proof of Proposition~\ref{nwp1} in which we assign the following meaning to $\mu$: 
\begin{equation} \label{muk} \mu=\left\{ 
\begin{aligned}
& 1/(q-1) && \mbox{if } \beta=\varpi_1\ \mbox{in Case } [N_1],&\\
& 2/(q-1) && \mbox{if } \beta=\varpi_1\ \mbox{in Case } [N_2]\ \mbox{with } \rho\geq \lambda^{1/(\tau+1)}. &
\end{aligned}
\right.
\end{equation} 

We define $c_0=c_0(\mu)>0$ as follows
\begin{equation} \label{furo} c_0=\left[\mu\, |\varpi_1| \right]^{\frac{1}{q-1}} 
 \ \  \mbox{in Case } [N_1],\quad 
c_0= \left[\mu \left(\mu+1\right)\right]^{\frac{1}{q-1}} 
\ \  \mbox{in Case } [N_2]. \end{equation} 

Let $c\in (0,c_0)$ and $\delta\in (0,1/e)$ be arbitrary. For every $\delta\leq r\leq 1/e$, we let $u_\delta(r)$ be as in \eqref{eqo1}. 
The proof of Lemma~\ref{lema64} and of \eqref{omina} in Proposition~\ref{nwp1} remain valid if we understand that $\mu$ and $c_0$ have here the meaning in \eqref{muk} and \eqref{furo}, respectively. From \eqref{omina}, we readily conclude the proof of Proposition~\ref{nwp2}. 
\end{proof}

This completes the proof of Step~1. \end{proof}

{\bf Step 2.} {\em Construction of a suitable family of sub-solutions $\{z_{\delta,\varepsilon}\}_{\delta\in (0,1/2)}$ of \eqref{e1}.}

\begin{proof}[Proof of Step~2.]
Let $\varepsilon>0$ and $c>0$ be fixed. For every $\delta\in (0,1/2)$ and $r\in (0,1)$, we define 
\begin{equation} \label{zis0} z_{\delta,\varepsilon}(r):= c \, r^{\varepsilon-\varpi_1} 
\left(1 -e^{-r^\varepsilon \log^p \left(1/r\right)}\right)^{\delta-1}.  
\end{equation}

\begin{lemma} \label{lema83} 
Under the assumptions of Theorem~\ref{law2}, there exists $c_*>0$ such that for every $\varepsilon>0$ small, 
there exists $r_\varepsilon\in (0,1)$ such that for all $c\in (0,c_*)$ and $\delta\in (0,1/2)$, the function $z_{\delta,\varepsilon} $ in \eqref{zis0} a sub-solution of \eqref{e1} in $B_{r_\varepsilon}(0)\setminus \{0\}$ and, moreover,  
$z'_{\delta,\varepsilon}(r)>0$ for every $r\in (0,r_\varepsilon)$. 
\end{lemma}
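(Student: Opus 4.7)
The plan is to mirror the proof of Lemma~\ref{l53}, but with the extra logarithmic factor $\log^p(1/r)$ in the inner exponential to handle the critical case $\beta=\varpi_1$. Set $L(r)=\log(1/r)$, $\tilde t(r)=r^\varepsilon L(r)^p$, and $A(r)=r z'_{\delta,\varepsilon}(r)/z_{\delta,\varepsilon}(r)$. A direct differentiation (using $r\tilde t'(r)=\tilde t(r)(\varepsilon-p/L)$) yields
\[ A(r)=|\varpi_1|+\varepsilon\,\varphi_\delta(\tilde t(r))+(1-\delta)\,h(\tilde t(r))\,\frac{p}{L(r)}, \]
with $h,\varphi_\delta$ as in Section~\ref{sect62}. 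The three summands are strictly positive once $\varepsilon L(r)>p$, so $A(r)>|\varpi_1|$ and $z'_{\delta,\varepsilon}(r)>0$; this handles the monotonicity.

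Next, I would carry out the same algebraic manipulation as in Lemma~\ref{l53} to obtain the identity
\[ \frac{r^2\,\mathbb L_{\rho,\lambda,\tau}[z_{\delta,\varepsilon}(r)]}{z_{\delta,\varepsilon}(r)}=F(A(r))+rA'(r),\qquad F(A):=A^2-2\rho A+\lambda A^{1-\tau}, \]
and use $f_\rho(\varpi_1)=0$ to write $F(|\varpi_1|)=0$. Expanding $F$ around $|\varpi_1|$ gives two regimes matching~\eqref{pgam}: if $\rho>\Upsilon$ in Case $[N_2]$ or we are in Case $[N_1]$, then $F'(|\varpi_1|)=|\varpi_1|(1-\lambda\tau|\varpi_1|^{-\tau-1})=(q-1)\gamma^{q-1}>0$ by \eqref{num}, hence $F(A)\geq\tfrac12(q-1)\gamma^{q-1}(A-|\varpi_1|)$ for $A-|\varpi_1|$ small; if $\rho=\Upsilon$ in Case $[N_2]$, then $F'(|\varpi_1|)=0$ and $F''(|\varpi_1|)=1+\tau>0$, so $F(A)\geq\tfrac13(1+\tau)(A-|\varpi_1|)^2$. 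Since for $\delta\in(0,1/2)$ and $\tilde t$ small we have $A-|\varpi_1|\geq(1-\delta)h(\tilde t)\,p/L\geq p/(4L)$, this produces a lower bound $F(A)\geq K L^{-P}$ with $P:=p(q-1)\in\{1,2\}$ and $K=K(\gamma,p,\tau)>0$, \emph{uniform in} $\delta\in(0,1/2)$.

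For the error term, a routine differentiation yields $rA'(r)=-(1-\delta)\bigl[h'(\tilde t)\tilde t(\varepsilon-p/L)^2+h(\tilde t)p/L^2\bigr]$, and since $\tilde t\cdot L=r^\varepsilon L^{p+1}\to 0$ as $r\to 0$, we have $|rA'(r)|\leq C(\tilde t\,\varepsilon^2+L^{-2})=o(L^{-P})$; thus for $r$ below some $r_\varepsilon$, $|rA'(r)|\leq\tfrac12 F(A)$. On the right-hand side, using $\theta+2=\varpi_1(q-1)$ and $(1-e^{-\tilde t})/\tilde t\geq 1/2$ for small $\tilde t$,
\[ r^{\theta+2} z_{\delta,\varepsilon}^{q-1}(r)\leq 2^{q-1}c^{q-1}\,[r^{\varepsilon(q-1)}L^P]^{\delta}\,L^{-P}\leq 2^{q-1}c^{q-1}\,L^{-P}, \]
where the last inequality uses $r^{\varepsilon(q-1)}L^P\leq 1$ (valid for $r<r_\varepsilon$ after possibly shrinking $r_\varepsilon$). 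Thus choosing $c_*$ so that $2^q c_*^{q-1}\leq K$ yields \eqref{sun} uniformly in $\delta\in(0,1/2)$.

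The main obstacle, and the reason the proof is not a routine transcription of Lemma~\ref{l53}, is the need to preserve \emph{uniformity in $\delta$} despite the fact that $\varphi_\delta(\tilde t)\to\delta$ as $\tilde t\to 0$: the $\varepsilon\varphi_\delta$ contribution degenerates as $\delta\to 0^+$. The resolution is that the logarithmic correction in $\tilde t$ produces the new summand $(1-\delta)h(\tilde t)p/L$ in $A-|\varpi_1|$, which remains bounded below by $p/(4L)$ uniformly in $\delta\in(0,1/2)$ and supplies the dominant positive term in $F(A)$. The companion uniform bound $[r^{\varepsilon(q-1)}L^P]^\delta\leq 1$ for the right-hand side then closes the argument.
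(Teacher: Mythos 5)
Your reorganization of the computation through the logarithmic derivative $A(r)=rz_{\delta,\varepsilon}'(r)/z_{\delta,\varepsilon}(r)$ and the identity $r^2\,\mathbb L_{\rho,\lambda,\tau}[z]/z=F(A)+rA'$ with $F(A)=A^2-2\rho A+\lambda A^{1-\tau}=f_\rho(-A)$ is in substance the same argument as the paper's (which expands $\mathbb L_\rho$ and $G_\tau$ term by term in \eqref{wuv}--\eqref{xx2} and closes via \eqref{hagg}--\eqref{barr}); your Taylor expansion of $F$ about its root $|\varpi_1|$, with $F'(|\varpi_1|)=(q-1)\gamma^{q-1}$ in the non-critical cases and $F'(|\varpi_1|)=0$, $F''(|\varpi_1|)=1+\tau$ when $\rho=\Upsilon$, is a clean repackaging of the paper's inequalities \eqref{num}, \eqref{xio}, \eqref{nimm} and \eqref{ono}. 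The identification of the uniform-in-$\delta$ lower bound $A-|\varpi_1|\geq(1-\delta)h(\tilde t)p/L\geq p/(4L)$ as the heart of the matter, and the bound $[r^{\varepsilon(q-1)}L^P]^\delta\leq 1$ on the right-hand side, are both correct and match the paper's \eqref{barr} and \eqref{hagg}.

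There is, however, one concrete flaw. Your formula for $rA'(r)$ has a sign error: since $\frac{d}{dr}(-p/L)=-p/(rL^2)$, the correct expression is
\begin{equation*}
rA'(r)=-(1-\delta)\,h'(\tilde t)\,\tilde t\,(\varepsilon-p/L)^2+(1-\delta)\,h(\tilde t)\,\frac{p}{L^2},
\end{equation*}
in which \emph{both} terms are positive (recall $h'<0$). With your sign, the second term is $-(1-\delta)h(\tilde t)p/L^2\sim -p/L^2$, and the subsequent claim $|rA'(r)|\leq C(\tilde t\,\varepsilon^2+L^{-2})=o(L^{-P})$ is false precisely when $P=2$, i.e.\ in the critical case $\rho=\Upsilon$ of Case $[N_2]$: there $F(A)\asymp L^{-2}$ is of exactly the same order as the $L^{-2}$ contribution to $rA'$, so the inequality $|rA'(r)|\leq\tfrac12F(A)$ cannot be forced by shrinking $r_\varepsilon$, and as written the argument does not close in that case (which is the delicate one the lemma is designed for). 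The repair is immediate once the sign is corrected: $rA'(r)>0$, so it can simply be discarded from the lower bound $F(A)+rA'\geq F(A)$ — this is exactly what the paper does by dropping the positive $h'$-term and the positive $ph/L^2$-term in passing from \eqref{wuv} to \eqref{hunn} and \eqref{wop}. With that correction your proof is complete.
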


\begin{proof} The calculations are in the spirit of Lemma~\ref{l53}. Let $a>0$ be given by \eqref{defa}.  
We set 
$$ c_*=  \left(\frac{a}{2}\right)^{\frac{1}{q-1}} \left(\frac{p}{2}\right)^{p}.$$ 

We fix $c\in (0,c_*)$ and 
$\varepsilon>0$ small as needed in both Case $[N_1]$ and Case $[N_2]$. 
Let $\delta\in (0,1/2)$ be arbitrary. 
We define $h$ and $\varphi_\delta$ as in \eqref{zis}. For every $r\in (0,1)$, we define 
$$
T_{\delta,\varepsilon}(r)=
\varepsilon+\left(\delta-1\right) h\left(r^\varepsilon \log^p(1/r) \right) 
\left(\varepsilon-\frac{p}{\log \,(1/r)}\right)>0.
$$ 

Remark that using $\varphi_\delta$ in \eqref{zis}, we have
\begin{equation} \label{nutt} 
 T_{\delta,\varepsilon}(r)=\varepsilon \varphi_\delta(r^\varepsilon \log^p (1/r)) 
+\frac{p\left(1-\delta\right) h\left(r^\varepsilon \log^p (1/r)\right)}{\log \,(1/r)}\\
<\varepsilon+
\frac{p}{\log \,(1/r)}.
\end{equation}
 
In Case $[N_2]$, we let $\nu\in (0,1)$ and $t_\nu>0$ small enough such that  \eqref{hart1} holds. Moreover, we fix $\varepsilon\in (0,t_\nu/2)$ and $r_\nu\in (0,1/e)$ small 
such that 
$$ \frac{p}{\log \,(1/r_\nu)}<t_\nu/2.  
$$ 
Hence, in Case $[N_2]$, we have $T_{\delta,\varepsilon}(r)<t_\nu$ for all $r\in (0,r_\nu)$ so that from \eqref{hart1}, we find that
\begin{equation} \label{ono} 
\left(1+\frac{T_{\delta,\varepsilon}(r)}{|\varpi_1|} \right)^{1-\tau}\geq 1+\frac{1-\tau}{|\varpi_1|} T_{\delta,\varepsilon}(r) 
-\frac{\tau \left(1-\tau\right) \left(1+\nu\right)}{2\varpi_1^2}  T^2_{\delta,\varepsilon}(r) .
\end{equation}

 Since $\lim_{t\to 0^+} h(t)=1$ (see \eqref{zis}), there exists 
$t_*>0$ small such that 
$$ \frac{t}{\left(1-e^{-t}\right) h^p(t)}<2^{1/(q-1)}\quad \mbox{for all } t\in (0,t_*).$$ 

Let 
$r_\varepsilon\in (0,1)$ be small 
enough such that $r^\varepsilon \log^p (1/r)< t_*$ for every $r\in (0,r_\varepsilon)$. In addition, in case $[N_2]$, we let 
$r_\varepsilon<r_\nu$. 
So, for every $c\in (0,c_*)$ and $r\in (0,r_\varepsilon)$, we get 
$$ c^{q-1}  \left( \frac{r^{\varepsilon} \log^p(1/r) } {1-e^{- r^\varepsilon \,\log^p(1/r)}  } \right)^{q-1}
 \leq a \left( p/2\right)^{p(q-1)} \left[h \left(r^\varepsilon \log^p(1/r) \right)\right]^{p(q-1)}. 
$$

In both Case $[N_1]$ and Case $[N_2]$, using that $\varpi_1=\beta$, for all $r\in (0,r_\varepsilon)$, we get that 
\begin{equation} \label{hagg} \begin{aligned}
r^\theta z_{\delta,\varepsilon}^{q}(r)& =c^{q-1} 
\left[r^{\varepsilon}\log^p (1/r)\right]^{q-1} (1-e^{-r^\varepsilon \log^p(1/r)})^{(\delta-1)(q-1)} \frac{z_{\delta,\varepsilon}(r)}{r^2\log^{p(q-1)} \,(1/r)} \\
& \leq c^{q-1} \left(\frac{
r^{\varepsilon} \log^p (1/r) } {1-e^{-r^\varepsilon \log^p (1/r)}} \right)^{q-1}
 \frac{z_{\delta,\varepsilon}(r)}{r^2\log^{p\left(q-1\right)} \,(1/r)}\\
 & \leq a \left(  \frac{p\, h\left(r^\varepsilon \log^p (1/r) \right)}{2\,\log \,(1/r)}\right)^{p\left(q-1\right)} \frac{z_{\delta,\varepsilon}(r)}{r^2}. 
\end{aligned}
\end{equation}
 
From \eqref{nutt}, using that $\delta\in (0,1/2)$, we have 
\begin{equation} \label{barr} T_{\delta,\varepsilon}(r) \, \log \,(1/r) \geq   p\left(1-\delta\right) h\left(r^\varepsilon \log^p (1/r)\right) >
\frac{p}{2}\, h\left(r^\varepsilon \log^p (1/r)\right).
\end{equation}

To conclude the proof of Lemma~\ref{lema83}, it suffices to show that 
\begin{equation} \label{har} \mathbb L_{\rho,\lambda,\tau}[z_{\delta,\varepsilon}(r)] \geq 
\frac{a \, z_{\delta,\varepsilon}(r) }{r^2} [T_{\delta,\varepsilon}(r)]^{p\,(q-1)}
\quad \mbox{for all } r\in (0,r_\varepsilon).
\end{equation}

Indeed, by combining
\eqref{hagg}, \eqref{barr} and \eqref{har}, we arrive at
$$ \mathbb L_{\rho,\lambda,\tau}[z_{\delta,\varepsilon}(r)] \geq r^\theta [z_{\delta,\varepsilon}(r)]^q\quad \mbox{for all } r\in (0,r_\varepsilon)\ 
\mbox{and } \delta\in (0,1/2). $$
Hence, $z_{\delta,\varepsilon} $ in \eqref{zis0} a sub-solution of \eqref{e1} in $B_{r_\varepsilon}(0)\setminus \{0\}$. 

\vspace{0.2cm}
{\bf Proof of \eqref{har}.} 
Fix $r\in (0,r_\varepsilon)$. Defining $\varphi_{\delta}$ as in \eqref{zis} and differentiating \eqref{zis0}, we get
 \begin{equation} \label{name} 
z_{\delta,\varepsilon}'(r)= |\varpi_1|\frac{z_{\delta,\varepsilon}(r)}{r} \left( 1+ T_{\delta,\varepsilon}(r) \right)>0.
\end{equation}
 
Using \eqref{name} and \eqref{rumm}, in a similar way to \eqref{revv}, we obtain that 
\begin{equation} \label{wuv} \begin{aligned}
 \mathbb L_\rho[z_{\delta,\varepsilon}(r)]& = \frac{z_{\delta,\varepsilon}(r)}{r^2} \left[
\varpi_1^2+2\rho \varpi_1
- 2\left( \varpi_1+\rho\right) T_{\delta,\varepsilon}(r) 
+T_{\delta,\varepsilon}^2(r) +\frac{\left(1-\delta\right)p \,h\left(r^\varepsilon \,\log^p(1/r) \right)}{\log^2(1/r)}
\right. \\
& \qquad \qquad \left. 
\ \ +\left(\delta-1\right) h'(r^\varepsilon \log^p (1/r) ) \,r^\varepsilon \log^p (1/r)  \left(\varepsilon-\frac{p}{\log\,(1/r)}\right)^2
\right].
\end{aligned}
\end{equation}
From the definition of $h$ in \eqref{zis}, we find that 
$$h'(t)=\frac{e^{-t} \left(1-e^{-t} -t\right)}{(1-e^{-t})^2}<0\quad \mbox{for all } t>0. $$
Hence, using that $\delta\in (0,1/2)$ and $h(t)>0$ for all $t>0$, we get
\begin{equation} \label{hunn}  \mathbb L_\rho[z_{\delta,\varepsilon}(r)]\geq  \frac{z_{\delta,\varepsilon}(r)}{r^2} \left[
\varpi_1^2+2\rho \varpi_1
- 2\,( \varpi_1+\rho)\, T_{\delta,\varepsilon}(r) 
\right].
\end{equation}
Moreover, in Case $[N_2]$, from \eqref{wuv}, we need a more precise inequality, namely 
\begin{equation} \label{wop} 
 \mathbb L_\rho[z_{\delta,\varepsilon}(r)] \geq \frac{z_{\delta,\varepsilon}(r)}{r^2} \left[
\varpi_1^2+2\rho \varpi_1
- 2\,( \varpi_1+\rho)\, T_{\delta,\varepsilon}(r) 
+T_{\delta,\varepsilon}^2(r) 
\right].
\end{equation}

$\bullet$ Let Case $[N_1]$ hold. In this case, we see that 
$$ \lambda \,G_\tau (z_{\delta,\varepsilon}(r))\geq 
\lambda\,|\varpi_1|^{1-\tau}\, \frac{z_{\delta,\varepsilon}(r)}{r^2}\left[ 1+ \frac{ \left(1-\tau\right)}{|\varpi_1|}
T_{\delta,\varepsilon}(r)
\right].
$$
This, jointly with \eqref{hunn}, \eqref{xio} and \eqref{nutt}, leads to
\begin{equation} \label{nuq} 
 \mathbb L_{\rho,\lambda,\tau} [z_{\delta,\varepsilon}(r)]
 \geq \left(q-1\right) \gamma^{q-1}  \frac{z_{\delta,\varepsilon}(r)}{r^2} 
T_{\delta,\varepsilon}(r). 
  \end{equation}
Since $a=(q-1)\, \gamma^{p-1}$ and $p\,(q-1)=1$, from \eqref{nuq}, we conclude  \eqref{har} in Case $[N_1]$. 

\vspace{0.2cm}
$\bullet$ Let Case $[N_2]$ hold. From our choice of $\nu$, $\varepsilon$ and $r_\nu$, using \eqref{ono} and \eqref{name}, we have 
 \begin{equation} \label{wov1} \lambda \,G_\tau (z_{\delta,\varepsilon}(r))
\geq \lambda\, |\varpi_1|^{1-\tau}\frac{z_{\delta,\varepsilon}(r)}{r^2} \left[
1+\frac{ \left(1-\tau\right)} {|\varpi_1|} T_{\delta,\varepsilon}(r)-\frac{\tau\left(1-\tau\right)\left(1+\nu\right)}{2\varpi_1^2} \,T^2_{\delta,\varepsilon}(r)
\right].
\end{equation}
Similar to  
Lemma~\ref{l53}, from \eqref{wop} and \eqref{wov1}, we infer that
\begin{equation} \label{xx2}
\begin{aligned}
& \mathbb L_{\rho,\lambda,\tau} [z_{\delta,\varepsilon}(r)]\geq  \left(q-1\right)\gamma^{q-1}  \frac{z_{\delta,\varepsilon}(r)}{r^2} T_{\delta,\varepsilon}(r)
&& \mbox{if } \rho>\Upsilon,&\\
& \mathbb L_{\rho,\lambda,\tau} [z_{\delta,\varepsilon}(r)]
 \geq  \left[ 1-\frac{\left(1-\tau\right)\left(1+\nu\right)}{2} \right] \frac{z_{\delta,\varepsilon}(r)}{r^2} 
  T^2_{\delta,\varepsilon}(r)
&& \mbox{if } \rho=\Upsilon . &
\end{aligned}
 \end{equation}
In view of the definition of $a$ in \eqref{defa}, from \eqref{xx2} we conclude   
the proof of \eqref{har}. \end{proof}

This finishes the proof of Step~2. \end{proof}

{\bf Step 3.} {\em Every  
positive super-solution $u$ of \eqref{e1} satisfies \eqref{nodd}. } 

\begin{proof}[Proof of Step~3.] 
Let $\varepsilon>0$ be small and $z_{\delta,\varepsilon}(r)$ for $r\in (0,r_\varepsilon)$ be given as in Lemma~\ref{lema83}. 
If necessary, we diminish $c\in (0,c_*)$ such that 
$$\min_{|x|=r_\varepsilon} u(x)\geq c\, (r_\varepsilon)^{\varepsilon-\varpi_1} 
\left(1-e^{-(r_\varepsilon)^{\varepsilon} \log^p \left(1/r_\varepsilon\right) }\right)^{-1}.$$ This ensures that
$ u(x)\geq z_{\delta,\varepsilon}(|x|)$ for all $ x\in \partial B_{r_\varepsilon}(0)$  and every $\delta\in (0,1/2)$.  
From Step~1, we have $u(x)\geq z_{\delta,\varepsilon}(|x|)$ for every $|x|>0$ small, where $\delta\in (0,1/2)$ is arbitrary. 
In view of Lemma~\ref{lema83}, we can apply the comparison principle (Remark~\ref{reka6}) to find that  
$$  u(x) \geq z_{\delta,\varepsilon} (|x|) \quad \mbox{for every } 0<|x|\leq r_\varepsilon\ \mbox{and all } \delta\in (0,1/2). 
$$
By letting $\delta\to 0$ and then $|x|\to 0$, we get 
$\liminf_{|x|\to 0} |x|^{\varpi_1} \left[ \log \left(1/|x|\right)\right]^p u(x) \geq c>0$, which proves \eqref{nodd}. This finishes the proof of Step~3. 
\end{proof}
 
 From Steps 1--3, we conclude the proof of Theorem~\ref{law2} (A). \qed

\subsection{Proof of Theorem~\ref{law2} (B)}
Let $u$ be any positive sub-solution of \eqref{e1}. 
Corollary~\ref{zerop} yields \eqref{eraq}.  
We construct a suitable family of super-solutions  $\{V_\varepsilon\}_{\varepsilon>0}$ of \eqref{e1} that dominate $u$ near zero and 
on $\partial B_{r_0}(0)$ for $r_0>0$ small (and independent of $\varepsilon$).

\vspace{0.2cm}
We fix $M>0$ a large constant. 
Let $\varepsilon>0$ be arbitrary. 
For every $r\in (0,1/e]$, we define 
\begin{equation} \label{nuv2}
V_{\varepsilon} (r):=\left\{ \begin{aligned} 
& 
\varepsilon+M r^{-\varpi_1} 
 \left| \log r \right|^{-p} && \mbox{in Case } [N_1],&\\
 &  r^{-\varpi_1} \left[
\varepsilon+ M \left| \log  r \right|^{-p} \right]  && \mbox{in Case } [N_2].&
\end{aligned} \right. 
\end{equation}

\begin{lemma} \label{lema63}
There exists $M_0>0$ large, depending only on $\lambda,\tau,\rho$ and $q$, such that  
for every $M\geq M_0$, the function $V_\varepsilon$ in \eqref{nuv2} is a positive radial super-solution of \eqref{e1} in $B_{1/e}(0)\setminus \{0\}$.
\end{lemma}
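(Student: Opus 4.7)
\textbf{Proposal for Lemma~\ref{lema63}.}
The plan is to compute $\mathbb L_{\rho,\lambda,\tau}[V_\varepsilon]$ explicitly, use the identity $f_{\rho}(\varpi_1)=0$ together with \eqref{num} to cancel the principal $r^{-\varpi_1-2}$ contribution, and then verify that the next-order correction is dominated by the right-hand side $r^\theta V_\varepsilon^q$ once $M$ is chosen large enough. Writing $L=|\log r|$, the target inequality will reduce to
\begin{equation*}
\mathbb L_{\rho,\lambda,\tau}[V_\varepsilon(r)]\leq C\, M\, r^{-\varpi_1-2} L^{-p-1}
\quad\text{respectively}\quad
\mathbb L_{\rho,\lambda,\tau}[V_\varepsilon(r)]\leq C\, M\, r^{-\varpi_1-2} L^{-p-2},
\end{equation*}
the two alternatives corresponding to $p=1/(q-1)$ (Case $[N_1]$ or Case $[N_2]$ with $\rho>\Upsilon$) and $p=2/(q-1)$ (Case $[N_2]$ with $\rho=\Upsilon$), while $r^\theta V_\varepsilon^q\geq M^q r^{-\varpi_1-2} L^{-pq}$ with $pq-p\in\{1,2\}$ accordingly. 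Thus both sides have the same power of $L$ and the desired inequality becomes $M^{q-1}\geq C$, with $C$ depending only on $\lambda,\tau,\rho,q$.

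First I would check that $V_\varepsilon'(r)>0$ on $(0,1/e)$ by direct differentiation. In Case $[N_2]$, writing $V_\varepsilon=r^{-\varpi_1}h(L)$ with $h(L)=\varepsilon+ML^{-p}$, one gets $V_\varepsilon'=r^{-\varpi_1-1}(|\varpi_1|h-h')>0$, and a routine computation combined with $\varpi_1(\varpi_1+2\rho)=-\lambda|\varpi_1|^{1-\tau}$ yields
\begin{equation*}
\mathbb L_\rho[V_\varepsilon]=r^{-\varpi_1-2}\bigl\{-\lambda|\varpi_1|^{1-\tau}h+(2\varpi_1+2\rho)h'+h''\bigr\}.
\end{equation*}
For the gradient term I factor the leading powers and invoke the concavity inequality $(1+x)^{1-\tau}\leq 1+(1-\tau)x$, valid for $x\geq 0$ since $\tau\in(0,1)$; since $\lambda>0$ in Case $[N_2]$, this gives
\begin{equation*}
\lambda G_\tau(V_\varepsilon)\leq \lambda|\varpi_1|^{1-\tau}h\, r^{-\varpi_1-2}-\lambda(1-\tau)|\varpi_1|^{-\tau}h'\, r^{-\varpi_1-2}.
\end{equation*}
Adding, the $-\lambda|\varpi_1|^{1-\tau}h$ terms cancel and the coefficient of $h'$ collapses, via \eqref{num}, to $-|\varpi_1|(1-\lambda\tau|\varpi_1|^{-\tau-1})$. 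Substituting $h'=-pML^{-p-1}$, $h''=p(p+1)ML^{-p-2}$ and invoking \eqref{xio} when $\rho>\Upsilon$ (respectively \eqref{nimm} when $\rho=\Upsilon$, which kills the $L^{-p-1}$ coefficient) produces the claimed asymptotic bound.

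In Case $[N_1]$, $V_\varepsilon=\varepsilon+W$ with $W=Mr^{-\varpi_1}L^{-p}$. If $\tau=0$ the operator is linear and $\mathbb L_{\rho,\lambda,0}[V_\varepsilon]=\mathbb L_{\rho,\lambda,0}[W]$, so the same expansion applies verbatim. If $\lambda<0$ and $\tau\in(0,1)$, I use that $V_\varepsilon^\tau\geq W^\tau$ and $V_\varepsilon'=W'>0$ together with $\lambda<0$ to deduce $\lambda V_\varepsilon^\tau(V_\varepsilon')^{1-\tau}\leq \lambda W^\tau(W')^{1-\tau}$, so that $\mathbb L_{\rho,\lambda,\tau}[V_\varepsilon]\leq \mathbb L_{\rho,\lambda,\tau}[W]$. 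Then, bounding the gradient term in $\mathbb L_{\rho,\lambda,\tau}[W]$ requires a \emph{lower} Taylor expansion of $(1+x)^{1-\tau}$ (to account for the negative $\lambda$ reversing the inequality), namely $(1+x)^{1-\tau}\geq 1+(1-\tau)x-\tfrac{(1-\tau)\tau}{2}x^2$ for $x\geq 0$. After multiplying by $\lambda<0$ and combining with $\mathbb L_\rho[W]$, the same algebraic cancellation using \eqref{num} produces the coefficient $p(q-1)\gamma^{q-1}$ in front of the leading $Mr^{-\varpi_1-2}L^{-p-1}$, up to a term of order $Mr^{-\varpi_1-2}L^{-p-2}$. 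Choosing $M_0$ to be the $(q-1)$-th root of the resulting constant concludes the argument.

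The main obstacle will be this sign-handling in Case $[N_1](b)$: because $\lambda<0$ reverses the direction of the concavity inequality, I will need the second-order Taylor bound rather than the convex tangent bound. The compensating quadratic correction is an $O(L^{-p-2})$ term absorbed into the sub-leading contribution, but keeping track of the signs of every coefficient (in particular $2\varpi_1+2\rho=\varpi_1+\lambda|\varpi_1|^{-\tau}$, obtained from $f_\rho(\varpi_1)=0$) is essential for the effective coefficient to come out as $+(q-1)\gamma^{q-1}$ rather than a wrong sign. All constants produced are independent of $\varepsilon$ and $r\in(0,1/e)$, so a single $M_0=M_0(\lambda,\tau,\rho,q)$ works uniformly.
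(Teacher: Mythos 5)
Your proposal is correct and follows essentially the same route as the paper: compute $\mathbb L_{\rho,\lambda,\tau}[V_\varepsilon]$ directly, cancel the leading $r^{-\varpi_1-2}$ block via $f_\rho(\varpi_1)=0$, control the gradient term with $(1+x)^{1-\tau}\leq 1+(1-\tau)x$ when $\lambda>0$, use \eqref{num} (and \eqref{xio}, \eqref{nimm}) to identify the surviving coefficient of $h'$, and match powers of $|\log r|$ against $r^\theta V_\varepsilon^q\geq M^q r^{-\varpi_1-2}|\log r|^{-pq}$ to fix $M_0$. The only divergence is in Case $[N_1](b)$, where your second-order Taylor lower bound works but is unnecessary: since $\lambda<0$, the paper simply discards the nonpositive quantity $\lambda\left[(-\varpi_1+\phi)^{1-\tau}-|\varpi_1|^{1-\tau}\right]$ by monotonicity of $t\mapsto t^{1-\tau}$, yielding a cruder but sufficient constant.
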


\begin{proof}
In the definition of $V_\varepsilon$ in \eqref{nuv2}, we let $M\geq M_0$, where 
\begin{equation} \label{choi}
\begin{aligned}
& M_0 := \left[\frac{1}{q-1} \left(\frac{q}{q-1}-\varpi_1 +|\lambda| |\varpi_1|^{-\tau} \right)\right]^{\frac{1}{q-1}}
\  \mbox{in Case } [N_1] \ \mbox{or Case }[N_2] \ \mbox{with }  \rho>\Upsilon,\\
& M_0=\left[ \frac{2\left(q+1\right)}{\left(q-1\right)^2}\right]^{\frac{1}{q-1}} \ \mbox{in Case } [N_2]\ \mbox{with } \rho=\Upsilon.
\end{aligned}
\end{equation}

We show that $V_\varepsilon$ satisfies
\begin{equation} \label{ceck} \mathbb L_{\rho,\lambda,\tau}[  V_\varepsilon(r)]\leq r^{\theta}  [V_\varepsilon(r)]^q
\quad  \mbox{(and,\ moreover, } V'_\varepsilon(r)\not=0)  \  \mbox{for  every }  r\in (0,1/e). 
\end{equation}

{\bf Proof of \eqref{ceck} in Case $[N_1]$.} Let $V_\varepsilon$ be as in \eqref{nuv2} corresponding to Case $[N_1]$.

Let $r\in (0,1/e)$ be arbitrary. For simplicity, we write $V_0$ to mean $V_\varepsilon$ with $\varepsilon=0$, namely, 
\begin{equation} \label{vzz} 
V_0(r)=M r^{-\varpi_1} \left[\log \left(1/r\right)\right]^{-1/(q-1)} .
\end{equation}
Then,  $V_\varepsilon(r)=\varepsilon+V_0(r)$. 
The choice of $M_0$ in \eqref{choi}, jointly with $\varpi_1=\beta=(2+\theta)/(q-1)$, yields
\begin{equation} \label{giv} 
r^\theta V_\varepsilon^q(r)\geq r^\theta V_0^q(r)\geq \frac{V_0(r)}{r^2}
\phi(r) \left( \frac{q}{q-1}-\varpi_1 +|\lambda| |\varpi_1|^{-\tau}  
\right) ,
\end{equation}
where $\phi(r)$ is defined by 
\begin{equation} \label{safi} \phi(r):=\frac{1}{\left(q-1\right) \log \left(1/r\right)}.
\end{equation}
In view of \eqref{giv} and $\phi(r)\in (0,1/(q-1))$ for all $r\in (0,1/e)$, we attain \eqref{ceck} 
by showing that 
\begin{equation} \label{nomi}
 \mathbb L_{\rho,\lambda,\tau} [V_{\varepsilon}(r)]\leq \frac{V_0(r)}{r^2}\phi(r)\left(
-\varpi_1 +|\lambda| |\varpi_1|^{-\tau} 
 +q \,\phi(r)  \right). 
\end{equation}

{\em Proof of \eqref{nomi}}.  
It is clear that
\begin{equation} \label{exa}
    V_\varepsilon'(r) =V_0'(r)= \frac{V_0(r)}{r} \left(-\varpi_1\varpi_1+\phi(r) \right)>0. 
\end{equation}
Since $r \phi'(r)=\phi(r)/[\log \left(1/r\right)]=(q-1)\, \phi^2(r)$, by differentiating $V_\varepsilon'(r)$ with respect to $r$, we have
$$ V_\varepsilon''(r)=\frac{V_0(r)}{r^2} \left[ (-\varpi_1+\phi(r))^2+\varpi_1
-\phi(r) +(q-1) \,\phi^2(r) \right]. 
$$
Consequently, using the definition of $\mathbb L_\rho[\cdot]$ in \eqref{adel}, we arrive at
\begin{equation}\label{gg}
    \mathbb L_\rho [V_{\varepsilon}(r)] = \frac{V_0(r)}{r^2}\left[ 
    \varpi_1\left(\varpi_1+2\rho\right) -2\left(\varpi_1+\rho \right)\phi(r)+q\,\phi^2(r)
    \right].    
\end{equation}
Using \eqref{exa} and the positivity of $\phi(r)$, we see that 
\begin{equation}\label{gg_1}
    G_\tau (V_{\varepsilon}(r)) \geq G_\tau (V_0(r)) = \frac{(V_0(r))^{\tau}\left[\frac{V_0(r)}{r}(-\varpi_1 + \phi(r) \right]^{1-\tau}}{r^{\tau+1}} \geq 
     \frac{V_0(r)}{r^2}(-\varpi_1 +\phi(r) )^{1-\tau}.
\end{equation}

Recall that $\varpi_1<0$ satisfies $f_{\rho}(\varpi_1) = 0$, that is, $\varpi_1(\varpi_1+2\rho)+\lambda|\varpi_1|^{1-\tau}=0$.  

\vspace{0.2cm}
$\blacklozenge$ Let Case $[N_1](b)$ hold, that is, $\lambda<0$ and $\tau\in (0,1)$. 
From \eqref{gg} and \eqref{gg_1}, it follows that
\begin{equation} \label{pott} 
    \mathbb L_{\rho,\lambda,\tau} [V_{\varepsilon}(r)] \leq \frac{V_0(r)}{r^2}\left[ 
     \varpi_1\left(\varpi_1+2\rho\right) 
     + \lambda  \left(-\varpi_1 +\phi(r) \right)^{1-\tau}
     -2\left(\varpi_1+\rho \right)\phi(r)+q\,\phi^2(r)
   \right] . \end{equation}

Then, we have $\varpi_1+2\rho=\lambda|\varpi_1|^{-\tau}<0$. Thus, \eqref{pott} yields that
    $$  \mathbb L_{\rho,\lambda,\tau} [V_{\varepsilon}(r)]\leq  \frac{V_0(r)}{r^2}\phi(r)\left[-  2\left(\varpi_1 + \rho\right)+q\,\phi(r) \right]
    = \frac{V_0(r)}{r^2}\phi(r)\left(-\varpi_1+|\lambda| |\varpi_1|^{-\tau} +q \,\phi(r)  \right).
$$

This proves \eqref{nomi} in Case $[N_1](b)$. 

\vspace{0.2cm}
$\blacklozenge$  Let Case $[N_1](a)$ hold, that is, $\tau=0$ and $\lambda <2\rho$. 
In this case, $\varpi_1=\lambda-2\rho$. Then, all inequalities in \eqref{gg_1} and \eqref{pott} become equalities, implying that 
$$  \mathbb L_{\rho,\lambda,\tau} [V_{\varepsilon}(r)]=\frac{V_0(r)}{r^2} \phi(r) \left[ -2\varpi_1 +\lambda-2\rho+q\,\phi(r)\right]
= \frac{V_0(r)}{r^2}\phi(r)\left(-\varpi_1+q \,\phi(r)  \right). 
$$

So, we recover \eqref{nomi}, completing the proof of \eqref{ceck} in Case $[N_1]$.

\vspace{0.2cm}
{\bf Proof of \eqref{ceck} in Case $[N_2]$ with $\rho>\Upsilon$.}
Let $V_\varepsilon$ be as in \eqref{nuv2} corresponding to Case~$[N_2]$ with $\rho>\Upsilon$. 
Let $r\in (0,1/e)$ be arbitrary. We define $V_0(r)=\lim_{\varepsilon\to 0} V_\varepsilon(r)$, which has the same expression 
as in \eqref{vzz}.  Let $\phi$ be given by  \eqref{safi}. 
Then, by a simple calculation, we get
$$ V_\varepsilon'(r)=
\frac{V_\varepsilon(r)}{r} \left( -\varpi_1+\frac{V_0(r)}{V_\varepsilon(r)}\phi(r)\right)>0.
$$ Moreover, we have 
\begin{equation} \label{gioo} \begin{aligned}
 \mathbb L_{\rho,\lambda,\tau}[  V_\varepsilon(r)]&= V''_\varepsilon(r)+(1-2\rho)\,\frac{V_\varepsilon'(r)}{r}
 +\lambda \frac{[V_\varepsilon(r)]^\tau |V_\varepsilon'(r)|^{1-\tau}}{r^{1+\tau}}
 \\
& =
\frac{V_\varepsilon(r)}{r^2} 
\left( T_\varepsilon(r)
 + 
\frac{V_0(r)}{V_\varepsilon(r)}
\phi(r)
\left[ 
-2 \left(\varpi_1+\rho\right)  + q\,\phi(r) \right] \right), 
\end{aligned}
\end{equation}
where $T_\varepsilon(r)$ is defined by 
$$  T_\varepsilon(r):=\varpi_1 \left( \varpi_1+2\rho\right) 
+\lambda |\varpi_1|^{1-\tau}  
 \left( 
 1+\frac{\phi(r)}{|\varpi_1| }
\frac{V_0(r)}{V_\varepsilon(r)}
\right)^{1-\tau} .
$$
Since $\tau\in (0,1)$, we have $(1+a)^{1-\tau}\leq 1+a\left(1-\tau\right)$ for every $a>0$.  
Hence, using that 
$\varpi_1<0$ and $  \varpi_1^2+2\rho \varpi_1 +\lambda |\varpi_1|^{1-\tau}  =0$, along with the assumption
$\lambda>0$ in Case $[N_2]$, 
we find that 
\begin{equation} \label{gioo2} T_\varepsilon(r) \leq 
 \lambda |\varpi_1|^{-\tau}\left(1-\tau\right)  \phi(r)\,
\frac{V_0(r)}{V_\varepsilon(r)}.
\end{equation}
From \eqref{gioo} and \eqref{gioo2}, jointly with $r\in (0,1/e)$, we infer that 
$$ \begin{aligned} 
 \mathbb L_{\rho,\lambda,\tau}[  V_\varepsilon(r)] &\leq 
\frac{V_0(r)}{r^2} \phi(r)
\left[  \lambda |\varpi_1|^{-\tau}\left(1-\tau\right) -2\left(\varpi_1+\rho\right) + q \,\phi(r)
\right] \\
& \leq 
\frac{V_0(r)}{r^2} \phi(r)
\left[  \lambda |\varpi_1|^{-\tau}\left(1-\tau\right) -2\left(\varpi_1+\rho\right) +q/(q-1)
\right] .
\end{aligned} $$
Since $\lambda>0$ and $\lambda|\varpi_1|^{-\tau}=\varpi_1+2\rho$, we find that
$$  \mathbb L_{\rho,\lambda,\tau}[  V_\varepsilon(r)]\leq 
\frac{V_0(r)}{r^2} \phi(r) \left[-\varpi_1+q/(q-1)\right]. 
$$
On the other hand, using that $\varpi_1=\beta=(\theta+2)/(q-1)$, we see that 
$$ r^\theta [V_\varepsilon(r)]^q\geq r^\theta [V_0(r)]^q=
 M^q r^{-\varpi_1-2}  \left[ \log \left(1/ r\right)\right]^{-\frac{q}{q-1}}=(q-1)\,M^{q-1}\frac{V_0(r)}{r^2} \phi(r).
$$
Clearly, \eqref{ceck} holds 
by choosing $M\geq M_0$, where $M_0$ is given by \eqref{choi}. 
Hence, $V_\varepsilon$ (defined in \eqref{nuv2} in Case [$N_2$] with $\rho>\Upsilon$) is a positive radial super-solution of \eqref{e1} in $B_{1/e}(0)\setminus \{0\}$. 

\vspace{0.2cm}
{\bf Proof of \eqref{ceck} in Case $[N_2]$ with $\rho=\Upsilon$.}
Let $V_\varepsilon$ be as in \eqref{nuv2} corresponding to Case~$[N_2]$ with $\rho=\Upsilon$. 
Let $r\in (0,1/e)$ be arbitrary. We define $V_0(r)$ and $\varphi(r)$ by 
$$ V_0(r)=\lim_{\varepsilon\to 0} V_\varepsilon(r)=M r
^{-\varpi_1}  \left[ \log \left(1/ r\right) \right]^{-2/(q-1)}\quad \mbox{and}\quad 
\varphi(r)=\frac{2}{\left(q-1\right) \log (1/r)}.$$  

With similar calculations to those in Case $[N_2]$ with $\rho>\Upsilon$, we find that 
$$ \begin{aligned}
  \mathbb L_{\rho,\lambda,\tau}[  V_\varepsilon(r)]
=
\frac{V_\varepsilon(r)}{r^2} 
\left[ T_\varepsilon(r)
 + 
\frac{V_0(r)}{V_\varepsilon(r)}
\varphi(r)
\left(
-2 \varpi_1-2\rho  + \frac{q+1}{2}\,\varphi(r) \right) \right], 
\end{aligned}
$$
where $T_\varepsilon(r)$ is defined by 
$$ \begin{aligned} 
 T_\varepsilon(r)&= \varpi_1^2+2\rho \varpi_1
+\lambda |\varpi_1|^{1-\tau}  
 \left( 
 1+\frac{\varphi(r)}{|\varpi_1| }
\frac{V_0(r)}{V_\varepsilon(r)}
\right)^{1-\tau}\\
& \leq  \varpi_1^2+2\rho \varpi_1 +\lambda |\varpi_1|^{1-\tau} 
+\lambda |\varpi_1|^{-\tau} \left(1-\tau\right) \varphi(r) \frac{V_0(r)}{V_\varepsilon(r)}
\\
& =\lambda |\varpi_1|^{-\tau}  \left(1-\tau\right) \varphi(r) \frac{V_0(r)}{V_\varepsilon(r)}. 
\end{aligned}
$$
Since $\rho=\Upsilon$, we have $ \lambda |\varpi_1|^{-\tau}\left(1-\tau\right) -2\varpi_1-2\rho=0$. 
Consequently, we arrive at 
$$ \begin{aligned} 
 \mathbb L_{\rho,\lambda,\tau}[  V_\varepsilon(r)] &\leq 
\frac{V_0(r)}{r^2} \varphi(r)
\left[  \lambda |\varpi_1|^{-\tau}\left(1-\tau\right) -2\varpi_1-2\rho + \frac{q+1}{2} \,\varphi(r)
\right] \\
& =\frac{\left(q+1\right)}{2} 
\frac{V_0(r)}{r^2} \varphi^2(r)=\frac{2\left(q+1\right)}{(q-1)^2} M r^{-\varpi_1-2} \left( \log \frac{1}{r}\right)^{-\frac{2q}{q-1}} .
\end{aligned} $$
In light of $\varpi_1=(\theta+2)/(q-1)$, we infer that 
$$ r^{\theta}  [V_\varepsilon(r)]^q \geq r^\theta [V_0(r)]^q=M^q r^{-\varpi_1-2} \left | \log r \right |^{-\frac{2q}{q-1}} .
$$
Therefore, by defining $M_0$ as in \eqref{choi} and taking $M\geq M_0$, we conclude \eqref{ceck}. 

\vspace{0.2cm}
This ends the proof of Lemma~\ref{lema63} in Case [$N_j$] with $j\in \{1,2\}$. 
\end{proof}

To complete the proof of Theorem~\ref{law2} (B), we proceed exactly as in the proof of Proposition~\ref{axv1} (after Lemma~\ref{lema53}). We skip these details. \qed

\subsection{Proof of Theorem~\ref{law1}} 
\label{secc4}
Let $u$ be a positive super-solution of \eqref{e1} (respectively, sub-solution of \eqref{e1} satisfying \eqref{nodd}). Let $r_0\in (0,1/e)$ be small enough such that $B_{r_0}(0)\subset \subset \Omega$. 
We follow the same strategy as for proving (a) and (b) in Theorem~\ref{pro1}, where $\Lambda$ and $\beta$ need to be replaced by $\gamma$ in \eqref{pgam} and $\varpi_1$, respectively. We show only the modifications regarding the construction of the family $\{P_\eta^+\}_{\eta\in (0,\eta_C)}$ 
(respectively, $\{P_\eta^-\}_{\eta\in (0,\eta_C)}$) of super-solutions (respectively, sub-solutions) of \eqref{e1} in $B_{r_C}(0)\setminus \{0\}$. Here,
$r_C\in (0,r_0)$ is a small constant depending on the fixed constant $C$  with $C>\gamma$ in the case of $P_\eta^+$ (respectively, $C\in (0,\gamma)$ for $P_\eta^-$).  The functions $\{P_\eta^\pm \}_{\eta\in (0,\eta_C)}$  will satisfy the following properties 
(for every $\eta\in (0,\eta_C) $ in (i)--(iii)):
\begin{enumerate}
\item[(i)] $(P^\pm_\eta)'(r)\not=0$ for every $r\in (0,r_C)$;

\item[(ii)] $u(x)\leq P^+_\eta(|x|)$ (respectively, $u(x)\geq P_\eta^-(|x|)$) for all $x\in \partial B_{r_C}(0)$;

\item[(iii)] $u(x)\leq P^+_\eta(|x|)$ (respectively, $u(x)\geq P^-_\eta(|x|)$) for every $|x|>0$ small enough; 

\item[(iv)] $\lim_{|x|\to 0} |x|^{\varpi_1} \left[ \log (1/|x|)\right]^{p} (\lim_{\eta\searrow 0} P^\pm_\eta(|x|))=C$. 
\end{enumerate}

Let $C,\eta,\alpha$ and $\nu$ be positive constants. 
For every $r\in (0,1/e)$, we define $P_{\eta,\nu}^\pm(r)$ as follows
\begin{equation} \label{zara}
P_{\eta,\alpha,\nu}^\pm(r)=C r^{-\varpi_1} \left| \log r \right|^{-p\pm  \eta} \left( 1+\frac{ |\log r|^{-\alpha}}{\nu} \right)^{\pm 1}.
\end{equation}

Here, instead of Lemma~\ref{seco}, we show the following. 

\begin{lemma}[Sub-super-solutions] 
\label{secoo} 
In the framework of  Theorem~\ref{law1}, 
for every $C>\gamma$ (respectively, $C\in (0,\gamma)$), 
there exist $\eta_C,r_C,\alpha_C>0$ such that for every $\eta\in (0,\eta_C)$,  $\alpha\in (0,\alpha_C)$ and arbitrary $\nu>0$, the function $P^+_{\eta,\alpha,\nu}$ (respectively, $P^-_{\eta,\alpha,\nu}$) in \eqref{zara} is a positive radial super-solution (sub-solution) of \eqref{e1} in $B_{r_C}(0)\setminus \{0\}$ and $(P^\pm_{\eta,\alpha,\nu})'(r)> 0$ for all $r\in (0,r_C)$.
\end{lemma}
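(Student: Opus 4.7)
\medskip

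\noindent\textbf{Proof proposal for Lemma~\ref{secoo}.} The plan is to adapt the scheme of Lemma~\ref{seco} to the critical regime $\beta=\varpi_1$ (where $f_\rho(\varpi_1)=0$, so the leading-order cancellations force us to track logarithmic subleading terms). Writing $s(r)=|\log r|$, $\psi(r)=1/s(r)$, $g(r)=s(r)^{-\alpha}/\nu$, so that $P_{\eta,\alpha,\nu}^\pm(r)=Cr^{-\varpi_1}s^{-p\pm\eta}(1+g)^{\pm 1}$ with $\psi,g\to 0^+$ as $r\to 0^+$, a direct logarithmic differentiation gives
\begin{equation*}
\frac{r(P^\pm)'(r)}{P^\pm(r)}=-\varpi_1+F^\pm(r),\qquad F^\pm(r):=(p\mp\eta)\psi\pm\frac{\alpha g\psi}{1+g}.
\end{equation*}
Since $-\varpi_1>0$ and $F^\pm\to 0$, we may fix $r_C\in(0,1/e)$ so that $|F^\pm|<|\varpi_1|/2$ on $(0,r_C)$; this guarantees $(P^\pm)'(r)>0$ there. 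With $A=rP'/P=-\varpi_1+F^\pm$, the identity $r^2\mathbb L_\rho[P]/P=rA'+A^2-2\rho A$ together with $|\nabla P|=AP/r$ and $f_\rho(\varpi_1)=\varpi_1^2+2\rho\varpi_1+\lambda|\varpi_1|^{1-\tau}=0$ yields the key \emph{renormalised} identity
\begin{equation*}
\frac{r^2\mathbb L_{\rho,\lambda,\tau}[P^\pm]}{P^\pm}=r(F^\pm)'-2(\varpi_1+\rho)F^\pm+(F^\pm)^2+\lambda|\varpi_1|^{1-\tau}\Bigl[\bigl(1+F^\pm/|\varpi_1|\bigr)^{1-\tau}-1\Bigr].
\end{equation*}

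In Case $[N_1]$ and in Case $[N_2]$ with $\rho>\Upsilon$, one has $p=1/(q-1)$ and expanding $(1+X)^{1-\tau}$ to first order (exactly when $\tau=0$; using concavity/convexity for $\tau\in(0,1)$ with sign according to that of $\lambda$) the linear-in-$F$ coefficient is $\lambda(1-\tau)|\varpi_1|^{-\tau}-2(\varpi_1+\rho)=|\varpi_1|(1-\lambda\tau|\varpi_1|^{-\tau-1})=(q-1)\gamma^{q-1}=:a>0$ by \eqref{num}--\eqref{xio}. Hence $r^2\mathbb L_{\rho,\lambda,\tau}[P^\pm]/P^\pm = a(p\mp\eta)\psi+O(\psi^{1+\min\{1,\alpha\}})$. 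The right-hand side, using $\theta+2=(q-1)\varpi_1$ and $p(q-1)=1$, equals $r^{\theta+2}(P^\pm)^{q-1}P^\pm/P^\pm=C^{q-1}\psi\cdot s^{\pm(q-1)\eta}(1+g)^{\pm(q-1)}\cdot P^\pm/r^2$. Thus the super-solution inequality for $P^+$ reduces to
\begin{equation*}
a(p-\eta)+O(\psi^{\min\{1,\alpha\}})\;\leq\;C^{q-1}\,s^{(q-1)\eta}(1+g)^{q-1},
\end{equation*}
and since $ap=\gamma^{q-1}<C^{q-1}$ while $s\geq 1$ and $g\geq 0$ on $(0,r_C)$, the positive gap $C^{q-1}-\gamma^{q-1}$ absorbs both the $-a\eta$ term and the remainder once $\eta_C,\alpha_C,r_C$ are small enough. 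The sub-solution estimate for $P^-$ is symmetric: one needs $a(p+\eta)\geq C^{q-1}s^{-(q-1)\eta}(1+g)^{-(q-1)}+\text{err}$, which holds because the right-hand side is bounded above by $C^{q-1}<\gamma^{q-1}=ap$.

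The main obstacle is the borderline Case $[N_2]$ with $\rho=\Upsilon$, where the linear-in-$F$ coefficient $a$ vanishes (by $2(\varpi_1+\rho)=\lambda(1-\tau)|\varpi_1|^{-\tau}$ from \eqref{varo}) and one must track $\psi^2$ contributions. Pushing the expansion of $(1+X)^{1-\tau}$ to second order and using the identity $\lambda\tau|\varpi_1|^{-\tau-1}=1$ that characterises $\rho=\Upsilon$ yields
\begin{equation*}
\frac{r^2\mathbb L_{\rho,\lambda,\tau}[P^\pm]}{P^\pm}=r(F^\pm)'+\tfrac{1+\tau}{2}(F^\pm)^2+O((F^\pm)^3).
\end{equation*}
Here $p=2/(q-1)$, so the leading contribution $[(p\mp\eta)+\tfrac{1+\tau}{2}(p\mp\eta)^2]\psi^2$ equals, at $\eta=0$, $\frac{2(q+\tau)}{(q-1)^2}\psi^2=\gamma^{q-1}\psi^2$, precisely matching the leading term $C^{q-1}\psi^2(1+g)^{\pm(q-1)}s^{\pm(q-1)\eta}$ of the right-hand side (note now $(q-1)p=2$). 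The $\pm\eta$-perturbation introduces a sign-definite correction $\mp c_1\eta\psi^2$ with $c_1=1+2(1+\tau)/(q-1)>0$, and comparing with the $s^{\pm(q-1)\eta}$ factor on the right shows again that the gap $|C^{q-1}-\gamma^{q-1}|>0$ dominates, provided $\eta_C$ is chosen so that $c_1\eta_C<\tfrac12|C^{q-1}-\gamma^{q-1}|$, $\alpha_C$ so that the cubic and $g$-dependent errors are negligible, and $r_C$ small enough that all Taylor expansions are valid uniformly in $\nu>0$. This delivers the full conclusion of Lemma~\ref{secoo} in every case.
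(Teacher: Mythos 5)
Your proposal is correct and follows essentially the same route as the paper: the same expansion of $\mathbb L_{\rho,\lambda,\tau}[P^\pm]$ in powers of $1/\log(1/r)$ (your $F^\pm$ is exactly the paper's $S^\pm_{\eta,\alpha}(r)/\log(1/r)$), the same cancellation via $f_\rho(\varpi_1)=0$ with linear coefficient $(q-1)\gamma^{q-1}$ in Case $[N_1]$ and Case $[N_2]$ with $\rho>\Upsilon$, the same second-order expansion using $\lambda\tau|\varpi_1|^{-\tau-1}=1$ when $\rho=\Upsilon$, and the same closing gap $|C^{q-1}-\gamma^{q-1}|>0$ against $s^{\pm(q-1)\eta}(1+g)^{\pm(q-1)}\gtrless 1$. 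Your logarithmic-derivative "renormalised identity" is only a tidier packaging of the paper's explicit formulas \eqref{girr1}--\eqref{girr2}.
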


\begin{proof}
Let $\eta,\alpha\in (0,p/3)$ and $\nu>0$ be arbitrary. 
For every $t>0$, we define $\psi(t)=t/(1+t)\in (0,1)$. For every $r\in (0,1/e)$, we define 
\begin{equation} \label{sdf}  S^\pm_{\eta,\alpha,\nu}(r)=p\mp 
 \eta \pm \alpha \,\psi\left( \frac{[\log\, (1/r)]^{-\alpha}}{\nu }\right)\in (p-\eta-\alpha, p+\eta+\alpha). \end{equation}
To simplify the notation, we drop the subscript $\nu$ from $P^\pm_{\eta,\alpha,\nu}$ and $S^\pm_{\eta,\alpha,\nu}$. 

Let $r\in (0,1/e)$ be arbitrary. 
Since $\beta=\varpi_1$, from \eqref{zara}, we see that 
\begin{equation} \label{comm} \begin{aligned}
\pm r^\theta (P_{\eta,\alpha}^\pm (r))^q& =\pm C^{q-1} \frac{P_{\eta,\alpha}^\pm (r)}{r^2} [\log \,(1/r)]^{(-p\pm \eta)(q-1)} 
\left( 1+\frac{ [\log\,(1/r)]^{-\alpha}}{\nu}\right)^{\pm (q-1)}\\
& \geq \pm C^{q-1} \frac{P_{\eta,\alpha}^\pm (r)}{r^2} [\log \,(1/r)]^{-p\left(q-1\right)} . 
\end{aligned}
\end{equation}

By differentiating \eqref{zara} with respect to $r$, we get
$$ 
 \left(P_{\eta,\alpha}^\pm\right)'(r)=|\varpi_1|\frac{P_{\eta,\alpha}^\pm(r)}{r} \left[
1 +\frac{S^\pm_{\eta,\alpha}(r)}{|\varpi_1|\,\log \left(1/r\right)} 
\right],
$$
which implies that
\begin{equation} \label{girr1} \begin{aligned} 
\lambda\, G_\tau(P_{\eta,\alpha}^\pm(r))& =\lambda \,|\varpi_1|^{1-\tau} \frac{P_{\eta,\alpha}^\pm(r)}{r^2} \left(1+\frac{S^\pm_{\eta,\alpha}(r)}{|\varpi_1| \log\, (1/r)}\right)^{1-\tau}\\
& =\lambda \,|\varpi_1|^{1-\tau} \frac{P_{\eta,\alpha}^\pm(r)}{r^2} 
\left[1+\frac{\left(1-\tau\right) S^\pm_{\eta,\alpha}(r) }{|\varpi_1|\,\log\,(1/r)}
-\frac{\left(1-\tau\right)\tau \,[S^\pm_{\eta,\alpha}(r)]^2 }{2\varpi_1^2 \log^2(1/r)} (1+o(1))
\right]
\end{aligned} \end{equation}  as $r\to 0^+$. Moreover, we find that 
 \begin{equation} \label{girr2} \begin{aligned}  \mathbb L_\rho(P_{\eta,\alpha}^\pm(r))= \frac{P_{\eta,\alpha}^\pm(r)}{r^2}& \left[
 \varpi_1^2+2\rho \varpi_1 -\frac{2\,(\varpi_1+\rho)}{\log \,(1/r)} S^\pm_{\eta,\alpha}(r) \right. 
\\
& \ \  \left.  +\frac{[S^\pm _{\eta,\alpha}(r)]^2+S^\pm_{\eta,\alpha}(r)+\alpha^2 \psi'\left( \frac{[\log \,(1/r)]^{-\alpha}}{\nu} \right) \frac{[\log \,(1/r)]^{-\alpha}}{\nu}}{\log^2 (1/r)}
 \right].
 \end{aligned}
 \end{equation}
 Recall that $f_{\rho}(\varpi_1)= \varpi_1^2+2\rho \varpi_1+\lambda |\varpi_1|^{1-\tau}=0$. 
 From \eqref{sdf}, we observe that 
 \begin{equation} \label{cho} 
 S_{\eta,\alpha}^\pm(r)\to p\quad \mbox{ as } (\eta,\alpha)\to (0,0)
 \end{equation} uniformly with respect to $r\in (0,1/e)$ and $\nu>0$. 
 
 \vspace{0.2cm}
 $\bullet$ Assume that 
either Case $[N_1]$ holds or $\rho>\Upsilon$ in Case $[N_2]$. Then, using \eqref{pgam}, we have 
 $$p=1/(q-1),\quad 
   -2\,(\varpi_1+\rho)+ \lambda |\varpi_1|^{-\tau}(1-\tau)=|\varpi_1|-\lambda \tau |\varpi_1|^{-\tau}=\left(q-1\right)\gamma^{q-1}.
 $$
  It follows that 
 \begin{equation} \label{chou} \mathbb L_{\rho,\lambda,\tau}[P_{\eta,\alpha}^\pm(r)]= (q-1)\,\gamma^{q-1}\,\frac{P_{\eta,\alpha}^\pm(r)}{r^2}
  \frac{S^\pm_{\eta,\alpha}(r)}{\log\,(1/r)} (1+o(1)) \quad \mbox{as } r\to 0^+. 
 \end{equation}
Since $C>\gamma$ in the definition of $P_{\eta,\alpha}^+(r)$ (respectively, $C\in (0,\gamma)$ for $P_{\eta,\alpha}^-(r)$), from \eqref{cho} and \eqref{chou}, we 
infer that there exist
$\eta_C,\alpha_C, r_C>0$ small enough such that  
\begin{equation} \label{sao2} \pm  \mathbb L_{\rho,\lambda,\tau}[P_{\eta,\alpha}^\pm(r)] \leq \pm C^{q-1} \frac{P_{\eta,\alpha}^\pm (r)}{r^2\, \log \,(1/r)} .\end{equation} 
for every $r\in (0,r_C)$, $\eta\in (0,\eta_C)$ and $\alpha\in (0,\alpha_C)$. 

Using that $p\,(q-1)=1$, from \eqref{comm} and \eqref{sao2}, we conclude the proof of Lemma~\ref{secoo}. 
 
  \vspace{0.2cm}
 $\bullet$ Assume that Case $[N_2]$ holds with $\rho=\Upsilon$. Then, 
  $$p=2/(q-1),\quad 
   -2\,(\varpi_1+\rho)+ \lambda |\varpi_1|^{-\tau}(1-\tau)=|\varpi_1|-\lambda \tau |\varpi_1|^{-\tau}=0.
 $$
 In light of \eqref{girr1} and \eqref{girr2}, we get
 \begin{equation} \label{gomm}   \mathbb L_{\rho,\lambda,\tau}[P_{\eta,\alpha}^\pm(r)]=
 \frac{P_{\eta,\alpha}^\pm(r)}{r^2\,\log^2\,(1/r)}
T_{\eta,\alpha,\nu}(r)\,  (1+o(1)) \quad \mbox{as } r\to 0^+,
 \end{equation} where $T_{\eta,\alpha,\nu}(r)$ is defined by 
 $$ T_{\eta,\alpha,\nu}(r)= \frac{(1+\tau)}{2}   [S^\pm_{\eta,\alpha}(r)]^2+ 
   S^\pm_{\eta,\alpha}(r) 
  +\alpha^2 \psi'\left( \frac{[\log \,(1/r)]^{-\alpha}}{\nu} \right) \frac{[\log \,(1/r)]^{-\alpha}}{\nu}.
 $$
 Since $t\psi'(t)=t/(1+t)^2\in (0,1)$ for every $t>0$, 
 using \eqref{cho} and \eqref{pgam}, we remark that 
 \begin{equation} \label{goma}  T_{\eta,\alpha,\nu}(r)\to \frac{(1+\tau)\,p^2}{2}+ p=\frac{2(q+\tau)}{(q-1)^2}=\gamma^{q-1}
 \quad \mbox{ as } (\eta,\alpha)\to (0,0)
 \end{equation} uniformly with respect to $r\in (0,1/e)$ and $\nu>0$. 

From \eqref{gomm} and \eqref{goma},  we 
infer that there exist
$\eta_C,\alpha_C, r_C>0$ small enough such that  
\begin{equation} \label{sar2} \pm  \mathbb L_{\rho,\lambda,\tau}[P_{\eta,\alpha}^\pm(r)] \leq \pm C^{q-1} \frac{P_{\eta,\alpha}^\pm (r)}{r^2\, \log^2 \,(1/r)} \end{equation} 
for every $r\in (0,r_C)$, $\eta\in (0,\eta_C)$ and $\alpha\in (0,\alpha_C)$. 

Using that $p\,(q-1)=2$, from \eqref{comm} and \eqref{sar2}, we end the proof of Lemma~\ref{secoo}. 
\end{proof}

{\bf Proof of Theorem~\ref{law1} concluded.} 
Let $\alpha_C$, $r_C$ and $\eta_C$ be given by Lemma~\ref{secoo}. Fix $\alpha\in (0,\min\{1,\alpha_C\})$ and let $\eta\in (0,\eta_C)$ be arbitrary. 
We can assume that $r_C>0$ is small enough such that $B_{r_C}(0)\subset \subset \Omega$. 
Let $\nu>0$ be small (depending on $u$ and $r_C$) such that 
$$  \begin{aligned}
& \max_{|x|=r_C} u(x)  \leq \gamma\, r_C^{-\varpi_1} \left[\log \left(1/r_C\right)\right]^{-p}
\left(1+\frac{[ \log\, (1/r_C)]^{-1}}{\nu} \right) \\
& \mbox{(respectively,}  \min_{|x|=r_C} u(x) \geq \gamma \,
r_C^{-\varpi_1} \left[\log \left(1/r_C\right)\right]^{-p}\left(1+\frac{[\log\,(1/r_C)]^{-1}}{\nu} \right)
^{-1}).
\end{aligned} $$

Since $\alpha$ and $\nu$ are fixed, we write $P^\pm_\eta$ instead of $P^\pm_{\eta,\alpha,\nu}$. 
Then, $P^+_{\eta}$ (respectively, $P^-_{\eta}$) in \eqref{zara} is a positive radial super-solution (respectively, sub-solution) of \eqref{e1} in $B_{r_C}(0)\setminus \{0\}$ satisfying 
the above properties (i), (ii) and (iv). 
Finally, from Theorem~\ref{law2}, 
we infer that
$$ \lim_{|x|\to 0} \frac{u(x)}{P^+_{\eta}(|x|)}=0\ \mbox{ (respectively, } \lim_{|x|\to 0} \frac{P^-_{\eta}(|x|)}{u(x)}=0). $$ 
Hence, $P_\eta^+$ (respectively $P_\eta^-$) satisfies the property (iii) as well.  

As in the proof of Theorem~\ref{pro1}, we use the comparison principle to conclude that $ u(x) \leq P^+_{\eta}(|x|)$ (respectively, $ u(x) \geq P^-_{\eta}(|x|)$) for every $ x\in B_{r_C}(0)\setminus \{0\}$. 
By letting $\eta\to 0$ and then $|x|\to 0$, we arrive at 
$$ 
 \limsup_{|x|\to 0} 
|x|^{\varpi_1} \left[ \log \,(1/|x|)\right]^{p} 
u(x)\leq C,\quad 
 \mbox{(respectively, } \liminf_{|x|\to 0} |x|^{\varpi_1} \left[ \log\, (1/|x|)\right]^{p}
u(x)\geq C). $$  
By letting $C\searrow \gamma$ (respectively, $C\nearrow \gamma$), we achieve \eqref{bbb} (respectively, \eqref{wow}). 
 \qed

\section{Proof of Assertion $(P_0)$ in Theorem~\ref{thi}} \label{sect9}

Throughout this section, we assume that 
 \eqref{e2} holds and $\beta\in (\varpi_2,0)$ in Case $[N_2]$. 

We recall that $\Phi_{\varpi_2(\rho)}(r)$ is defined  for $r\in (0,1)$ by 
\begin{equation} \label{phi22}
\Phi_{\varpi_2(\rho)}(r)=\left\{ 
\begin{aligned} 
& r^{-\varpi_2} && \mbox{if } \rho>\Upsilon,& \\
&  r^{-\varpi_2} \left| \log r  \right|^{\frac{2}{1+\tau}} 
&& \mbox{if } \rho=\Upsilon.&
\end{aligned} \right.
\end{equation}

Our main result in this section is the following.

\begin{theorem}  \label{ama1}
Let \eqref{e2} hold and $\beta\in (\varpi_2,0)$ in Case $[N_2]$. 
\begin{itemize} 
\item[(A)] Let $u$ be any positive sub-solution of \eqref{e1} such that $ \lim_{|x|\to 0} u(x)/ \Phi_{\varpi_2(\rho)}(|x|)=0$. Then,  
\begin{equation} 
\label{west1}
 \limsup_{|x|\to 0}|x|^{\varpi_1} u(x):=a<\infty. 
\end{equation}
\item[(B)] Let $u$ be any positive super-solution of \eqref{e1} with $ \lim_{|x|\to 0} u(x)/ \Phi_{\varpi_2(\rho)}(|x|)=\infty$. Then, 
\begin{equation} 
\label{nopi4}
\mbox{for every } \eta>0, \ \mbox{we have } \lim_{|x|\to 0}  |x|^{\beta-\eta} u(x)=\infty.
\end{equation}
\end{itemize}
\end{theorem}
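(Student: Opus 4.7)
The plan is to handle Parts~(A) and (B) separately, each by comparison with carefully chosen families of radial super- or sub-solutions, iterated as needed. I describe the argument for the principal regime $\rho>\Upsilon$ (where $\varpi_1<\varpi_2<0$ and $\Phi_{\varpi_2(\rho)}(r)=r^{-\varpi_2}$); the degenerate case $\rho=\Upsilon$ (with $\varpi_1=\varpi_2$ and the logarithmic correction in $\Phi_{\varpi_2(\rho)}$) is treated in parallel, using the explicit functions $\Phi_{\varpi_2(\rho)}$ of Lemma~\ref{fiqa} and $\widetilde\Psi$ of Lemma~\ref{feed} (the latter transferred to small $|x|$ via the modified Kelvin transform) in place of the pure powers $r^{-\varpi_2}$ and $r^{-\varpi_1}$.

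For Part~(A), starting from $u(x)=o(r^{-\varpi_2})$, I proceed in three stages. First, I would sharpen the decay to $\lim_{|x|\to 0}|x|^{\varpi_2-\varepsilon}u(x)=0$ for every small $\varepsilon>0$, by comparing $u$ on a fixed punctured ball $B_{r_0}(0)\setminus\{0\}$ with the positive radial super-solutions $V_{\delta,M}(r)=\delta r^{\varepsilon-\varpi_2}+Mr^{-\varpi_1}$: the super-solution property relies on $f_\rho(\varpi_2-\varepsilon)<0$ (since $\varpi_2-\varepsilon\in(\varpi_1,\varpi_2)$ where $\widetilde f_\rho<0$) together with $f_\rho(\varpi_1)=0$, and the nonlinear term $\lambda G_\tau(V_{\delta,M})$ is controlled by the subadditivity $(a+b)^s\leq a^s+b^s$ for $s\in(0,1)$, in the style of Lemma~\ref{lema53}; $M$ is fixed large so that $V_{\delta,M}\geq u$ on $\partial B_{r_0}$ uniformly in $\delta$, while the hypothesis yields $u\leq V_{\delta,M}$ near zero for each $\delta>0$, and $\delta\to 0$ after the comparison principle delivers the claim. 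Second, I iterate via the recursion $n_{k+1}=(n_k(2\rho-n_k)/\lambda)^{1/(1-\tau)}$ from \eqref{sem}, initialised at $n_1=-\varpi_2+\varepsilon$: a direct computation of $g'$ at the fixed points shows that, in Case $[N_2]$ with $\rho>\Upsilon$, $-\varpi_2$ is repelling and $-\varpi_1$ is attracting, so $\{n_k\}$ increases monotonically to $-\varpi_1$, and finitely many steps bring $n_k$ within $\eta/2$ of $-\varpi_1$ for any prescribed $\eta>0$; at each step, comparison against a super-solution of the same shape as $V_{\delta,M}$ but with $\varepsilon-\varpi_2$ replaced by the current $n_k$ sharpens the estimate, yielding $\lim_{|x|\to 0}|x|^{\varpi_1+\eta}u(x)=0$. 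Third, I conclude \eqref{west1} via one further comparison with $W_\varepsilon(r)=\varepsilon r^{-\varpi_1-\eta_0}+Mr^{-\varpi_1}$ for a fixed small $\eta_0\in(0,\varpi_2-\varpi_1)$: the super-solution property is verified exactly as in Lemma~\ref{lema53} with $\beta$ replaced by $\varpi_1+\eta_0$ (using $f_\rho(\varpi_1+\eta_0)<0$), the decay $u=o(r^{-\varpi_1-\eta_0})$ from the iteration gives $u\leq W_\varepsilon$ near zero, and a large $M$ handles $\partial B_{r_0}$, so the comparison principle and $\varepsilon\to 0$ deliver $u(x)\leq M|x|^{-\varpi_1}$ on $B_{r_0}(0)\setminus\{0\}$.

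For Part~(B), the hypothesis $u\gg r^{-\varpi_2}$ supplies the initial lower bound. I first show $\lim_{|x|\to 0}|x|^{\varpi_2+\varepsilon_0}u(x)=\infty$ for every small $\varepsilon_0>0$, by comparison in annuli $\{\delta<|x|<r_*^{1/\alpha}\}$ with the sub-solutions
\begin{equation*}
w_\delta(r)=c\,r^{\varpi_2+\varepsilon_0}\bigl(e^{r^\alpha}-e^{\delta^\alpha}\bigr),
\end{equation*}
built exactly as in \eqref{e8} with $\beta$ replaced by $-\varpi_2-\varepsilon_0$: the sub-solution inequality \eqref{e7} is verified using $f_\rho(\varpi_2+\varepsilon_0)>0$ (since $\varpi_2+\varepsilon_0\in(\varpi_2,0)$ where $\widetilde f_\rho>0$), with $c$ chosen small so that $w_\delta\leq u$ on $\partial B_{r_*^{1/\alpha}}$, $w_\delta=0$ on $\partial B_\delta$, and $\delta\to 0$ after comparison gives the first improvement. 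Next, I launch the iteration of Proposition~\ref{lad} with $M_1=-\varpi_2-\varepsilon_0$ in place of $\rho$: the same recursion of \eqref{sem} applied just below the repelling fixed point $-\varpi_2$ produces a decreasing sequence converging to the super-attracting fixed point $0$ (since $g'(0)=0$ for $\tau\in(0,1)$), so $\{M_k\}$ crosses $-\beta\in(0,-\varpi_2)$ after finitely many steps $k_*$; at each step $j$, the sub-solution $v_{j,\delta}$ of \eqref{vij} with $m_j$ a suitable perturbation of $M_j$ ensuring $m_j^2-2\rho m_j+\lambda m_{j+1}^{1-\tau}>0$ (as in \eqref{faq}) carries the lower bound $u\gtrsim r^{-m_j}$ to $u\gtrsim r^{-m_{j+1}+\nu}$, exactly as in the proof of Proposition~\ref{lad}; the iteration terminates in \eqref{nopi4}.

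The main obstacles are: (i) the stability analysis of the recursion when initialised away from the standard starting value $\rho$ of Proposition~\ref{lad}, in particular the direct computation of $g'(-\varpi_i)$ establishing that $-\varpi_2$ is repelling and $-\varpi_1$ is attracting in Case $[N_2]$ with $\rho>\Upsilon$, which dictates the monotonicity direction in both parts; (ii) the super-solution verification for $V_{\delta,M}$ and $W_\varepsilon$, which requires careful handling of the nonlinear $\lambda G_\tau$ term analogous to Lemma~\ref{lema53} (the linear sum of a super-solution of $\mathbb L_{\rho,\lambda,\tau}[\cdot]=0$ is generally not a super-solution, since $G_\tau$ is nonlinear for $\tau\in(0,1)$); and (iii) the adaptation to the degenerate case $\rho=\Upsilon$, where $\varpi_1=\varpi_2$ collapses the two fixed points into a single double root of $f_\rho$ and the recursion stagnates, forcing the replacement of pure-power barriers by the logarithmic $\Phi_{\varpi_2(\rho)}$ from \eqref{phi2} and $\widetilde\Psi$ from Lemma~\ref{feed}, which is the most delicate part of the argument.
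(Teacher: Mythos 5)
Your overall strategy (iterated comparison with families of radial barriers, separating $\rho>\Upsilon$ from $\rho=\Upsilon$) matches the paper's, and your Part~(B) Step~2 is essentially the paper's iteration in Section~\ref{secla3}. However, several of your individual constructions would fail.

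The most serious gap is in Part~(A), Stage~1. Your barrier $V_{\delta,M}(r)=\delta r^{\varepsilon-\varpi_2}+Mr^{-\varpi_1}$ is a sum of two powers, and you claim the term $\lambda G_\tau(V_{\delta,M})$ is controlled by subadditivity of $t\mapsto t^s$. This goes the wrong way: in Case $[N_2]$ one has $\lambda>0$, so a super-solution requires an \emph{upper} bound on $G_\tau(V_1+V_2)$, whereas $(z,\boldsymbol{\xi})\mapsto z^\tau|\boldsymbol{\xi}|^{1-\tau}$ is concave and $1$-homogeneous, hence superadditive when the gradients are aligned; the subadditive splitting leaves positive cross terms of the form $V_1^\tau|V_2'|^{1-\tau}$. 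Concretely, if one writes out $\mathbb L_{\rho,\lambda,\tau}[V_{\delta,M}]$ as in Lemma~\ref{lema53}, the coefficient of $\delta r^{-\gamma-2}$ (with $\gamma=\varpi_2-\varepsilon$) is not $f_\rho(\gamma)$ but $\gamma(\gamma+2\rho)+\lambda|\varpi_1|^{1-\tau}=(|\gamma|-|\varpi_1|)(|\gamma|+|\varpi_1|-2\rho)$, whose sign is not controlled: e.g.\ for $\tau=1/2$, $\lambda=1$, $\rho=3/2$ one finds $|\varpi_1|\approx 2.35$, $|\varpi_2|\approx 0.12$ and this coefficient is strictly positive near $\gamma=\varpi_2$, so $V_{\delta,M}$ is \emph{not} a super-solution. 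This is exactly the loss-of-linearity obstacle the paper flags in Subsection~\ref{conta}. A further warning sign: if $V_{\delta,M}$ were a super-solution, then your own comparison followed by $\delta\to 0$ would yield $u\leq Mr^{-\varpi_1}$ outright, i.e.\ \eqref{west1} in one stroke, making your Stages~2--3 redundant; the paper's Step~1 (super-solutions $cr^{-\varpi_2}(e^{r^\eta}-e^{-\delta})$, whose validity hinges on the strict inequality \eqref{raoo}) only gains a \emph{small} $\varepsilon$ precisely because such a jump is impossible.

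Two further gaps. First, in Stage~2 you iterate the recursion of \eqref{sem}, $n_{k+1}=(n_k(2\rho-n_k)/\lambda)^{1/(1-\tau)}$, from $n_1=-\varpi_2+\varepsilon$ and assert monotone increase to $-\varpi_1$. Local stability of the fixed points does not give this: in the numerical example above, $g(n)=n^2(3-n)^2$ sends the orbit from $\approx 0.13$ up past $n^*\approx 0.65$ and then to $g(0.73)\approx 2.73>|\varpi_1|\approx 2.35$, after which it oscillates wildly; once $N_j$ leaves $(-\varpi_2,-\varpi_1)$ the required sign $f_\rho(-N_j)<0$ fails and the barrier construction collapses. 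The paper uses a \emph{different} recursion in Step~2 of Section~\ref{secla1}, $N_{j+1}=\sqrt{2\rho N_j-\lambda N_j^{1-\tau}}$, precisely because that map is increasing and maps $(-\varpi_2,-\varpi_1)$ into itself. Second, your Part~(B) Step~1 sub-solution $cr^{\varpi_2+\varepsilon_0}(e^{r^\alpha}-e^{\delta^\alpha})$ corresponds to \eqref{e8} with a \emph{negative} power exponent $\varpi_2+\varepsilon_0$, which destroys the sign structure of Lemma~\ref{lem2} (there $-\beta>0$ is essential for $w'_{\delta,\alpha}>0$ and for the $G_\tau$ estimate); the paper instead uses $cr^{-\varpi_2}(e^\delta-e^{-r^\varepsilon})^{-1}$, whose verification again relies on \eqref{raoo}. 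Finally, the case $\rho=\Upsilon$ cannot be "treated in parallel": for Part~(B) the inequality \eqref{raoo} degenerates to the identity \eqref{adi}, the power-based Step~1 produces nothing, and the paper needs two genuinely new logarithmic families (Lemmas~\ref{ger0} and \ref{gera}) before the power iteration can even begin; neither $\Phi_{\varpi_2(\rho)}$ nor $\widetilde\Psi$ is the relevant tool there.
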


In view of Theorem~\ref{ama1}, we conclude the assertion in $(P_0)$ of Theorem~\ref{thi}. 
Indeed, let $u$ be any positive solution of \eqref{e1} and $b:=\limsup_{|x|\to 0} u(x)/\Phi_{\varpi_2(\rho)}(|x|)$. By 
Corollary \ref{urz} (if $b\in \mathbb R_+$) and Corollary~\ref{pere} in Appendix~\ref{sectiune2} (if $b=\infty$), there exists 
$$ \lim_{|x|\to 0} \frac{u(x)}{ \Phi_{\varpi_2(\rho)}(|x|)}=b\in [0,\infty].$$ 

$\bullet$ If $b=0$, then by Theorems~\ref{ama1} (A) and \ref{zic1}, we get that 
$\limsup_{|x|\to 0} |x|^{\varpi_1} u(x):=a\in (0,\infty)$. Hence, by Corollary~\ref{urz}, we conclude 
 \eqref{mobb1}. 

$\bullet$ If $b\in (0,\infty)$, then by Corollary~\ref{urz}, we obtain \eqref{gapp}. 

$\bullet$ If $b=\infty$, then by combining Theorem~\ref{ama1} (B) with Theorem~\ref{pro1},  
we attain \eqref{tirv}.

\vspace{0.2cm}
The proof of Theorem~\ref{ama1} depends on whether 
$\rho>\Upsilon$ or $\rho=\Upsilon$ (see the definition of $ \Phi_{\varpi_2(\rho)}$ in \eqref{phi22}). Each of these situations 
is treated separately (see Sections~\ref{secla1}--\ref{secla4}).  

\subsection{Proof of Theorem~\ref{ama1} (A) when $\rho>\Upsilon$} \label{secla1}
Let \eqref{e2} hold and $\beta\in (\varpi_2,0)$ in Case $[N_2]$. 
We first assume that $\rho>\Upsilon$.  
This implies that $\varpi_1<-(\lambda \tau)^{1/(\tau+1)}<\varpi_2<0$ and 
\begin{eqnarray} 
&& -2\left(\varpi_2+\rho\right) +\lambda |\varpi_2|^{-\tau} \left(1-\tau\right)=|\varpi_2|^{-\tau} \left( |\varpi_2|^{1+\tau}-\lambda\tau\right)<0,  \label{raoo}\\
&& 2\,(\varpi_1+\rho) -\lambda |\varpi_1|^{-\tau} \left(1-\tau\right)=-|\varpi_1|^{-\tau} ( |\varpi_1|^{1+\tau}-\lambda\tau)<0.  \label{rabi} 
\end{eqnarray}

Assume that $u$ is a positive sub-solution of \eqref{e1} satisfying  
\begin{equation} \label{eda1}
\lim_{|x|\to 0} |x|^{\varpi_2} u(x)=0.
\end{equation}

We split the proof of \eqref{west1} into three Steps. 

\vspace{0,2cm}
{\bf Step 1.}  {\em For every $\varepsilon>0$ small enough, we have} 
\begin{equation} \label{eda2} \lim_{|x|\to 0} |x|^{\varpi_2-\varepsilon} u(x)=0.
\end{equation}

\begin{proof}[Proof of Step~1]
Let $c>0$ be fixed arbitrary. For every $\eta,\delta>0$ and $r>0$, we define
\begin{equation} \label{dvet} V_{\eta,\delta}(r)=c \,r^{-\varpi_2} \left(e^{r^\eta} -e^{-\delta} \right).
\end{equation}  

{\bf Claim 1:}  {\em There exists $\eta_*>0$ small such that for every $\eta\in (0,\eta_*)$ and $\delta,c>0$ arbitrary, 
$V_{\eta,\delta}$ is a positive radial super-solution of \eqref{e1} in $B_{1}(0)\setminus \{0\}$ such that $V_{\eta,\delta}'(r)>0$ for all $r\in (0,1)$.} 

\vspace{0.2cm}
{\bf Proof of Claim 1.} Using \eqref{raoo}, we can define $\eta_*>0$ small such that 
\begin{equation} \label{goo} \left(e+1\right) \eta_*< 2\left(\varpi_2+\rho\right) -\lambda |\varpi_2|^{-\tau} \left(1-\tau\right).
\end{equation}
We fix $\eta\in (0,\eta_*)$ and let $\delta,c>0$ be arbitrary.  
Let $E_\delta(t)$ be defined for every $t>0$ as follows
$$ E_\delta(t)=\frac{t e^{t}}{e^t-e^{-\delta}}\in (0,e^t) .
$$
Remark that $t E_\delta'(t)=E_\delta(t)- e^{-\delta-t} E_\delta^2(t)<E_\delta(t)$ for every $t>0$. 

Let $r\in (0,1)$ be arbitrary. By differentiating $V_{\eta,\delta}(r)$ with respect to $r$, we find that 
\begin{equation} \label{tou}  V'_{\eta,\delta}(r)=\frac{V_{\eta,\delta}(r)}{r} \left[-\varpi_2+\eta \,E_\delta(r^\eta)\right]>0. 
\end{equation}
Moreover, we have 
$$ \begin{aligned} 
\mathbb L_\rho[V_{\eta,\delta}(r)]& = 
\frac{V_{\eta,\delta}(r)}{r^2} \left[
\varpi_2^2+2\rho \varpi_2-2\eta\left(\varpi_2+\rho\right) E_\delta(r^\eta)+ \eta^2 E_\delta^2(r^\eta)+\eta^2 r^\eta E_\delta'(r^\eta)
 \right] \\
 & < 
 \frac{V_{\eta,\delta}(r)}{r^2} \left[
\varpi_2^2+2\rho \varpi_2-2\eta \left(\varpi_2+\rho\right) E_\delta(r^\eta)+ \eta^2 E_\delta^2(r^\eta)+\eta^2  E_\delta(r^\eta)
 \right] \\
 &= \frac{V_{\eta,\delta}(r)}{r^2} \left\{
\varpi_2^2+2\rho \varpi_2+\eta \, E_\delta(r^\eta) \left[-2 \left(\varpi_2+\rho\right) +  \left(e+1\right)\eta \right]
 \right\} .
 \end{aligned}
$$
Using that $\lambda>0$, $\tau\in (0,1)$ and \eqref{tou}, we see that 
$$ \lambda G_\tau (V_{\eta,\delta}(r))\leq  \lambda |\varpi_2|^{1-\tau} 
\frac{V_{\eta,\delta}(r)}{r^2} \left[ 1+\frac{\left(1-\tau\right) \eta}{|\varpi_2|} E_\delta(r^\eta)\right].
$$
Since $f_{\rho}(\varpi_2)=\varpi_2^2+2\rho \,\varpi_2+\lambda \,|\varpi_2|^{1-\tau}=0$, we arrive at 
$$ \mathbb L_{\rho,\lambda,\tau}[V_{\eta,\delta}(r)] \leq  
\frac{V_{\eta,\delta}(r)}{r^2} \eta \,E_\delta (r^\eta) \left[ -2 \left(\varpi_2+\rho\right) + \lambda |\varpi_2|^{-\tau} (1-\tau)+ \left(e+1\right)\eta 
\right]<0
$$ since $\eta\in (0,\eta_*)$ and \eqref{goo} holds. Hence, for every $r\in (0,1)$, we have proved that 
$$  \mathbb L_{\rho,\lambda,\tau}[V_{\eta,\delta}(r)] <0<r^\theta V^q_{\eta,\delta}(r). 
$$
This completes the proof of Claim~1. \qed

\vspace{0.2cm}
{\bf Proof of Step~1 concluded.} 
We show that for every $\varepsilon\in (0,\eta_*)$, we have \eqref{eda2}. 
Let $\eta\in (\varepsilon,\eta_*)$. It is enough to show that 
\begin{equation} \label{hopa} \limsup_{|x|\to 0} |x|^{\varpi_2-\eta} u(x)<\infty. 
\end{equation}
Let $r_0\in (0,1)$ be small such that
$B_{r_0}(0)\subset \subset \Omega$.  We let $c>0$ large such that 
$$ \max_{\partial B_{r_0}(0)} u\leq c \,r_0^{-\varpi_2} \left(e^{r_0^\eta}-1\right). $$

We now define $V_{\eta,\delta}(r)$ as in \eqref{dvet} for every $\delta>0$ and $r\in (0,1)$. 
The choice of $c>0$ gives that $u\leq V_{\eta,\delta}$ on $\partial B_{r_0}(0)$. 
 The assumption \eqref{eda1} implies that 
$$ u(x)\leq V_{\eta,\delta}(|x|) \quad \mbox{for }|x|\to 0.$$
In view of the Claim, we can apply the comparison principle to find that 
$$ u(x)\leq V_{\eta,\delta}(|x|) \quad \mbox{for every } 0<|x|\leq r_0.
$$
Since $\delta>0$ is arbitrary, by letting $\delta\to 0^+$, we conclude that 
$$u(x)\leq c\, |x|^{-\varpi_2} \left(e^{|x|^\eta}-1\right)\quad \mbox{for every } |x|\in (0,r_0].$$
Hence, $ \limsup_{|x|\to 0} |x|^{\varpi_2-\eta} u(x)\leq c$, which proves 
\eqref{hopa}. This finishes the proof of Step~1. 
\end{proof}

\begin{rem} In the proof of the Claim, it is essential that $\eta>0$ is small enough to ensure that $ \mathbb L_{\rho,\lambda,\tau}[V_{\eta,\delta}(r)] \leq 0$. Otherwise, we 
cannot prove that $ \mathbb L_{\rho,\lambda,\tau}[V_{\eta,\delta}(r)] \leq r^\theta V^q_{\eta,\delta}(r)$ for all $r\in (0,1)$. 
\end{rem}

{\bf Step 2.} 
{\em For every $\eta>0$, we have}
\begin{equation} \label{ef1}
\lim_{|x|\to 0} |x|^{\varpi_1+\eta} u(x)=0. 
\end{equation}

\begin{proof}[Proof of Step~2] To achieve \eqref{ef1}, we use an iterative process based on the comparison with a family of super-solutions of \eqref{e1}. The first step in this iterative scheme is based on Step~1. 

Let $\varepsilon>0$ be small as in Step~1.  
Moreover, since $\varpi_1<\varpi_2$, we can diminish $\varepsilon>0$ such that $\varepsilon<\varpi_2-\varpi_1$. 
We define a sequence $\{N_j\}_{j\geq 1}$ of positive numbers as follows
\begin{equation} \label{seqq1} N_1= -\varpi_2+\varepsilon\in (-\varpi_2, -\varpi_1) \quad \mbox{and}\quad N_{j+1}=\sqrt{2\rho \,N_j-\lambda \,N_j^{1-\tau}}\quad \mbox{for all } j\geq 1. 
\end{equation}
Remark that $x\longmapsto 2\rho x-\lambda x^{1-\tau}$ is increasing for $x>-\varpi_2$ since from \eqref{raoo}, we have
$$ 2\rho-\lambda\left(1-\tau\right) x^{-\tau}>2\rho-\lambda \left(1-\tau\right) |\varpi_2|^{-\tau}>-2\varpi_2>0.
$$
Thus, we find that $N_2\in (-\varpi_2,-\varpi_1)$ using that
$$ \varpi_2^2=-2\rho \varpi_2-\lambda |\varpi_2|^{1-\tau}<N_2^2=2\rho N_1-\lambda N_1^{1-\tau}<-2\rho \varpi_1-\lambda |\varpi_1|^{1-\tau}=\varpi_1^2.$$
Moreover, we see that
$$N_2^2=2\rho N_1-\lambda N_1^{1-\tau}=\varpi_2^2 +\left[2\rho-\lambda |\varpi_2|^{-\tau} (1-\tau) \right]\varepsilon\left(1+o(1)\right)\quad \mbox{as } \varepsilon\to 0^+.$$
Hence, for $\varepsilon>0$ small enough, using \eqref{raoo}, we get that 
$ N_2^2>(-\varpi_2+\varepsilon)^2=N_1^2$, which yields that $N_2>N_1$. 
By induction, one can check that $\{N_j\}_{j\geq 1}$ is an increasing sequence satisfying 
\begin{equation} \label{seqq2} -\varpi_2<N_j <N_{j+1} < -\varpi_1\quad\mbox{ for all } j\geq 1.
\end{equation}
Thus, $\{N_j\}_{j\geq 1}$ converges as $j\to \infty$ and $\lim_{j\to \infty} N_j=-\varpi_1$. 
 
 \vspace{0.2cm}
We only need to prove \eqref{ef1} for every $\eta>0$ small. So, we assume that $0<\eta<\varpi_2-\varpi_1-\varepsilon$. 
Then, there exists $k=k(\eta)\geq 1$ such that 
 \begin{equation} \label{kal} N_{k}< -\varpi_1-\eta\leq N_{k+1}<-\varpi_1. \end{equation}

We fix $\alpha,\nu>0$ small enough such that for every $j=1,\ldots,k$, we have
$$ \nu \left(2N_{j+1}-2\rho+\lambda N_j^{-\tau}(1-\tau)+\nu\right) +\alpha \left(N_{j+1}-N_j+\nu\right)<\left(N_{j+1}-N_j\right) 
\left(2\rho-\lambda N_j^{-\tau}(1-\tau)\right).
$$

Observe that the right-hand side of the above inequality is {\em positive} in light of \eqref{seqq1} and \eqref{seqq2}. 
For every $j\geq 1$, we define $T_1(j)$ and $T_2(j)$ as follows
\begin{equation} \label{t12} \begin{aligned}
& T_1(j)=N_j^2-2\rho N_j+\lambda N_j^{1-\tau},\\
& T_2(j)=\left(N_{j+1}-N_j+\nu\right) 
  \left( N_{j+1}+N_j-2\rho +\lambda N_j^{-\tau}(1-\tau)+\nu+\alpha\right).
\end{aligned} \end{equation} 

In view of \eqref{seqq1} and \eqref{seqq2}, we have
\begin{equation} \label{uraa}  T_1(j)< N_{j+1}^2-2\rho N_j+\lambda N_j^{1-\tau}=0\quad \mbox{for all } j\geq 1. 
\end{equation}
 
From the choice of $\nu$ and $\alpha$, jointly with \eqref{seqq1}, we find that
\begin{equation} \label{sumt} T_1(j)+T_2(j)<0 \quad \mbox{for all } j=1,\ldots, k.\end{equation}

 {\em Construction of a suitable family of super-solutions of \eqref{e1}.} 
Let $c>0$ and $\delta>0$ be arbitrary. For every $j=1,\ldots, k$, we define 
$V_{j,\delta}(r)$ for all $r>0$ by 
\begin{equation} \label{hauu} V_{j,\delta}(r)=c \, r^{N_j} \left( e^\delta-e^{-r^\alpha}\right)^{\frac{N_{j+1}-N_j+\nu}{\alpha}}.
\end{equation}

{\bf Claim 2:} {\em For every $\eta\in (0,\varpi_2-\varpi_1-\varepsilon)$, letting $k=k(\eta)\geq 1$ as in \eqref{kal}, there exist 
$\nu,\alpha>0$ small such that for every $j=1,\ldots, k$ and $c,\delta>0$ arbitrary, 
the function $V_{j,\delta}(r)$ in \eqref{hauu} is a positive radial super-solution of \eqref{e1} in $B_1(0)\setminus \{0\}$ satisfying
$V_{j,\delta}'(r)>0$ for all $r\in (0,1)$.}

\begin{proof}[Proof of Claim 2] For every $t>0$, we define $\Psi_\delta(t)$ by 
\begin{equation} \label{psio} \Psi_\delta(t):=\frac{t \,e^{-t}}{e^\delta-e^{-t}}\in (0,1).
\end{equation}

Let $j=1,\ldots, k$ and $r>0$ be arbitrary. By differentiating \eqref{hauu} with respect to $r$, we get  
$$ V_{j,\delta}'(r)=\frac{V_{j,\delta}(r)}{r} \left[ N_j+\left(N_{j+1}-N_j+\nu\right) \Psi_\delta(r^\alpha)\right] >0,
$$
which implies that 
$$ \lambda G_\tau(V_{j,\delta}(r))\leq \lambda N_j^{1-\tau} 
\frac{V_{j,\delta}(r)}{r^2} \left[1+\frac{\left(1-\tau\right) \left(N_{j+1}-N_j+\nu\right)}{N_j}\,\Psi_\delta(r^\alpha)
\right].
$$
Using that $t\Psi_\delta'(t)\leq \Psi_\delta(t)$ and $\Psi_\delta^2(t)\leq \Psi_\delta(t)<1$ for all $t>0$, we have
$$ F_j(r):=2N_j-2\rho+\left(N_{j+1}-N_j+\nu\right)\Psi_\delta(r^\alpha) +\alpha \frac{r^\alpha \Psi_\delta'(r^\alpha)}{\Psi_\delta(r^\alpha)} \leq
N_{j+1}+N_j-2\rho+\nu+\alpha .
$$
Therefore, by a simple calculation, we arrive at
$$ \begin{aligned}
\mathbb L_\rho[V_{j,\delta}(r)] &=
 \frac{V_{j,\delta}(r)}{r^2} \left[ N_j^2-2\rho N_j +\left(N_{j+1}-N_j+\nu\right) 
F_j(r)\, \Psi_{\delta} (r^\alpha) 
 \right]
\\
& \leq  \frac{V_{j,\delta}(r)}{r^2}
\left[ N_j^2-2\rho N_j +\left(N_{j+1}-N_j+\nu\right) \left( N_{j+1}+N_j-2\rho+\nu+\alpha \right) \Psi_\delta(r^\alpha) 
\right].
\end{aligned} $$
Consequently, using $T_1(j)$ and $T_2(j)$ in \eqref{t12}, we obtain that 
$$ 
\mathbb L_{\rho,\lambda,\tau}[V_{j,\delta}(r)] \leq 
 \frac{V_{j,\delta}(r)}{r^2}  \left[
T_1(j)  + T_2(j)\, \Psi_\delta(r^\alpha) 
 \right].
 $$
Since \eqref{uraa} holds and $\Psi_\delta(t)\in (0,1)$ for every $t>0$, using \eqref{sumt}, we get
$$ \mathbb L_{\rho,\lambda,\tau}[V_{j,\delta}(r)] \leq 
 \frac{V_{j,\delta}(r)}{r^2} \Psi_{\delta} (r^\alpha) \left[ 
 T_1(j)+T_2(j)
 \right]<0<r^\theta [V_{j,\delta}(r)]^q.
$$
This completes the proof of Claim~2. 
\end{proof}

{\bf Proof of Step~2 completed.} Let $r_0\in (0,1)$ be small enough such that $B_{r_0}(0)\subset\subset \Omega$. 
For every $\eta \in (0,\varpi_2-\varpi_1-\varepsilon)$, we let $k=k(\eta)$ and $\nu,\alpha>0$ be as in Claim~2. 
Let $\delta>0$ be arbitrary. 
Choose $c>0$ large enough such that for every $j=1,\ldots, k$, we have
$$ \max_{\partial B_{r_0}(0)} u\leq c \,r_0^{N_j} \left(1-e^{-r_0^\alpha}\right)^{\frac{N_{j+1}-N_j+\nu}{\alpha}}.
$$
 This ensures that $u\leq V_{j,\delta}$ on $\partial B_{r_0}(0)$ for all $\delta>0$ and $j=1,\ldots,k$. 
 
 From Step~1 and the definition of $N_1$ in \eqref{seqq1}, we have 
 $$ \lim_{|x|\to 0} \frac{u(x)}{V_{1,\delta}(|x|)}=0.
 $$
 In light of Claim~2, we can apply the comparison principle and infer that 
 $$ u(x)\leq V_{1,\delta}(|x|)\quad \mbox{for all } 0<|x|\leq r_0.
 $$ Since $\delta>0$ is arbitrary, by letting $\delta\to 0$, we obtain that 
 $$ u(x)\leq c\, r^{N_1} \left(1-e^{-r^\alpha}\right)^{\frac{N_2-N_1+\nu}{\alpha}}\quad \mbox{for all } 0<|x|\leq r_0,
 $$ which shows that 
 \begin{equation} \label{hajj} \limsup_{|x|\to 0} |x|^{-N_2-\nu} u(x)\leq c. \end{equation}
Hence, if $k=1$, using \eqref{kal} and \eqref{hajj}, we get that 
 $\lim_{|x|\to 0} |x|^{\varpi_1+\eta} u(x)=0 $. 
 
If $k>1$, then from \eqref{hajj}, we have
$$ \lim_{|x|\to 0} \frac{u(x)}{V_{2,\delta}(|x|)}=0.
$$
We proceed inductively over $j=1,\ldots, k-1$  to get that 
$ \lim_{|x|\to 0} u(x)/V_{k,\delta}(|x|)=0$ and, hence, by the comparison principle, we have
 $$ u(x)\leq V_{k,\delta}(|x|)\quad \mbox{for all } 0<|x|\leq r_0.
 $$
By letting $\delta\to 0$ in the above inequality, we  arrive at 
$$ \limsup_{|x|\to 0} |x|^{-N_{k+1}-\nu} u(x)\leq c, $$
which together with \eqref{kal}, leads to  $\lim_{|x|\to 0} |x|^{\varpi_1+\eta} u(x)=0 $. 
This ends the proof of Step~2. 
\end{proof}

{\bf Step 3.} {\em The assertion of \eqref{west1} holds.}

\begin{proof}[Proof of Step~3] We use Step~2 to conclude \eqref{west1} via the comparison principle with a suitable family of super-solutions of \eqref{e1}.  
Let $c>0$ be fixed. For every $\varepsilon,\delta>0$ and $r>0$, we define
\begin{equation} \label{dwet} W_{\varepsilon,\delta}(r)=c \,r^{-\varpi_1-\varepsilon} \left(e^{\delta} -e^{-r^\varepsilon} \right).
\end{equation}  

{\bf Claim 3:} {\em If $\varepsilon\in (0,|\varpi_1 |)$ is small, then 
for all $c>0$ and $\delta\in (0,1)$, the function 
$W_{\varepsilon,\delta}$ in \eqref{dwet} is a positive radial super-solution of \eqref{e1} in $B_{1}(0)\setminus \{0\}$ and
$W_{\varepsilon,\delta}'(r)>0$ for all $r\in (0,1)$.} 

\begin{proof}[Proof of Claim~3]  
In view of \eqref{rabi}, we can choose $\nu\in (0,1)$ small such that 
$$ 2\,(\varpi_1+\rho) -\lambda|\varpi_1|^{-\tau}\left(1-\tau\right) \left(1-\nu\right)<0. 
$$
Let $t_*=t_*(\nu)>0$ be small such that 
\begin{equation} \label{robo1} \left( 1-\frac{t}{|\varpi_1|} \right)^{1-\tau}\leq 1-\frac{\left(1-\tau\right) \left(1-\nu\right) t}{|\varpi_1|} \quad \mbox{for all } t\in (0,t_*). 
\end{equation}
We fix $\varepsilon\in (0,\min\,\{|\varpi_1|,t_*\})$ small enough to ensure that 
\begin{equation} \label{noma0}  2\,(\varpi_1+\rho) -\lambda|\varpi_1|^{-\tau}\left(1-\tau\right) \left(1-\nu\right)+\varepsilon \,(1+e^2)<0.
\end{equation}
Let $\delta\in (0,1)$ be arbitrary. 
For every $t>0$, we define $H_\delta(t)$ by 
$$ 1>H_\delta(t):=1-\frac{t \,e^{-t}}{e^\delta-e^{-t}}> 1-\frac{t \,e^{-t}}{1-e^{-t}}>0.
$$
Fix $r\in (0,1)$ arbitrary. 
Then, we see that 
\begin{equation} \label{fort1}  W_{\varepsilon,\delta}'(r)=\frac{W_{\varepsilon,\delta}(r)}{r} \left( -\varpi_1-\varepsilon H_\delta\left(r^\varepsilon\right)\right)>
\frac{W_{\varepsilon,\delta}(r)}{r} \left( -\varpi_1-\varepsilon\right)>0.
\end{equation}
Observe that for every $t>0$, we have 
$$ -t H_\delta'(t)=\left(1-H_\delta(t)\right) \left[ 1-e^{\delta+t} +e^{\delta+t} H_\delta(t)\right]
< e^{\delta+t} H_\delta(t). $$ 
Thus, $H^2_\delta(r^\varepsilon)-r^\varepsilon H_\delta'(r^\varepsilon)<(1+e^{\delta+r^\varepsilon}) \,H_\delta(r^\varepsilon) <(1+e^2)\,H_\delta(r^\varepsilon) $. 
Hence, it follows that 
\begin{equation} \label{gino1} \begin{aligned} 
\mathbb L_\rho[W_{\varepsilon,\delta}(r)]&= 
\frac{W_{\varepsilon,\delta}(r)}{r^2} \left[
\varpi_1^2 +2\rho \varpi_1 +2\,\varepsilon \,(\varpi_1+\rho)\, H_\delta(r^\varepsilon) +
\varepsilon^2  \left( H^2_\delta(r^\varepsilon)-r^\varepsilon H_\delta'(r^\varepsilon)\right)
\right]\\
&
< \frac{W_{\varepsilon,\delta}(r)}{r^2} \left\{
\varpi_1^2+ 2\rho \varpi_1
 +\varepsilon \, H_\delta(r^\varepsilon)  \left[ 
 2 \,(\varpi_1+\rho)+
\varepsilon (1+ e^{2}) \right]  
\right\}.
\end{aligned} \end{equation}
Since $0<\varepsilon H_\delta(r^\varepsilon)<\varepsilon<t_*$, 
using \eqref{robo1} and \eqref{fort1}, we arrive at 
\begin{equation} \label{gino2} \begin{aligned}
 \lambda \,G_\tau(W_{\varepsilon,\delta}(r)) & =\lambda |\varpi_1|^{1-\tau}\, \frac{W_{\varepsilon,\delta}(r)}{r^2} 
 \left( 1-\frac{\varepsilon}{|\varpi_1|} H_\delta(r^\varepsilon)\right) ^{1-\tau}\\
 & \leq 
 \lambda |\varpi_1|^{1-\tau}\, \frac{W_{\varepsilon,\delta}(r)}{r^2} \left( 1- \frac{ \left(1-\tau\right)
 \left(1-\nu\right) }{|\varpi_1|} \,\varepsilon H_\delta(r^\varepsilon)\right).
\end{aligned} \end{equation}
Using that $f_{\rho}(\varpi_1)=0$, 
from \eqref{noma0}, \eqref{gino1} and \eqref{gino2}, we infer that 
$$  \mathbb L_{\rho,\lambda,\tau}[W_{\varepsilon,\delta}(r)]\leq 
\frac{W_{\varepsilon,\delta}(r)}{r^2} 
\varepsilon \, H_\delta(r^\varepsilon) \left[  2\,(\varpi_1+\rho) -\lambda|\varpi_1|^{-\tau}\left(1-\tau\right) \left(1-\nu\right)+\varepsilon (1+e^2)
\right]<0.
$$
Thus,  $r^\theta W^q_{\varepsilon,\delta}(r)>0\geq  \mathbb L_{\rho,\lambda,\tau}[W_{\varepsilon,\delta}(r)]$ for all 
$r\in (0,1)$. This proves Claim 3. 
\end{proof}

{\bf Proof of Step~3 concluded.}
Let $r_0\in (0,1)$ be small such that
$B_{r_0}(0)\subset\subset \Omega$. 
Let $\varepsilon>0$ be small enough as in Claim 2. 
Fix $c>0$ large such that 
\begin{equation} \label{ripa} \max_{\partial B_{r_0}(0)} u\leq c \,r_0^{-\varpi_1-\varepsilon} \left(1-e^{-r_0^\varepsilon}\right). \end{equation}

For every $\delta\in (0,1)$ and all $r\in (0,1)$, we define $W_{\varepsilon,\delta}(r)$ as in \eqref{dwet}. 
By Step~2, we have $$ \lim_{|x|\to 0} \frac{u(x)}{W_{\varepsilon,\delta}(|x|)}=0.$$ 
From our choice of $c>0$ in \eqref{ripa}, we have $u(x)\leq W_{\varepsilon,\delta}(|x|)$ for all $|x|=r_0$. 
In light of our Claim 2, we can use the comparison principle to find that 
$$ u(x)\leq W_{\varepsilon,\delta}(|x|) \quad \mbox{for all } 0<|x|\leq r_0.$$
Since $\delta\in (0,1) $ is arbitrary, by letting $\delta\to 0$ in the above inequality, we get 
$$ u(x)\leq c |x|^{-\varpi_1-\varepsilon} (1-e^{-|x|^\varepsilon})\quad  \mbox{for every }   0<|x|\leq r_0.$$ 
This implies that $ \limsup_{|x|\to 0}|x|^{\varpi_1} u(x)\leq c   $, which proves \eqref{west1}.  Step~3 is now complete. 
\end{proof}

From Steps~1--3, we conclude the proof of Theorem~\ref{ama1} (A)  when $\rho>\Upsilon$. \qed

\subsection{Proof of Theorem~\ref{ama1} (A) when $\rho=\Upsilon$} \label{secla2}
Let \eqref{e2} hold and $\beta\in (\varpi_2,0)$ in Case $[N_2]$ with $\rho=\Upsilon$. This means that 
$\varpi_1=\varpi_2=-(\lambda \tau)^{1/(\tau+1)}$ and, hence, 
\begin{equation} 
\label{adi} 2\left(\varpi_2+\rho\right) -\lambda |\varpi_2|^{-\tau} \left(1-\tau\right)=|\varpi_2|^{-\tau} \left(\lambda\tau- |\varpi_2|^{1+\tau}\right)=0. 
\end{equation}

Assume that $u$ is a positive sub-solution of \eqref{e1} satisfying 
\begin{equation} \label{egg3}
\lim_{|x|\to 0} \frac{u(x)}{|x|^{-\varpi_2}  \left| \log |x| \right|^{2/(1+\tau)}} =0.
\end{equation}

We show that 
\begin{equation} \label{liopa} \limsup_{|x|\to 0} |x|^{\varpi_2} u(x)<\infty. \end{equation}

Let $C>0$ be arbitrary. For every $\varepsilon>0$, we define 
\begin{equation} \label{zaoo} Z_\varepsilon (r)=C r^{-\varpi_2} \left[ \varepsilon \, |\log r| +1 \right]^{\frac{2}{1+\tau}}  \quad \mbox{for all } r\in (0,1). 
\end{equation}

\noindent {\bf Claim:} {\em There exists $r_*\in (0,1)$ small such that
for all $\varepsilon>0$ and $C>0$, the function $Z_\varepsilon$ in \eqref{zaoo} is a positive radial super-solution of \eqref{e1} in $B_{r_*}(0)\setminus \{0\}$ and 
$Z_\varepsilon'(r)>0$ for all $r\in (0,r_*)$.}

\begin{proof}[Proof of the Claim] 
For arbitrary $\varepsilon>0$ and $t>0$, we define 
\begin{equation} \label{gal} \psi_\varepsilon(t)=\frac{2\,\varepsilon}{ \left(1+\tau\right) \left(\varepsilon t+1\right)}. 
\end{equation}
Let $r_*\in (0,1)$ be small such that $(1+\tau)\log \,(1/r_*)>2/|\varpi_2|$. 
For $r\in (0,r_*)$ arbitrary, we have
$$ Z_\varepsilon'(r)=\frac{Z_\varepsilon(r)}{r} \left[ 
-\varpi_2- \psi_\varepsilon(\log\, (1/r)) 
\right]>0,
$$
which implies that 
\begin{equation} \label{adi0}
\begin{aligned} \lambda \,G_\tau(Z_\varepsilon(r))&=
\lambda \,|\varpi_2|^{1-\tau} \,\frac{Z_\varepsilon(r)}{r^2}  \left[ 
1- \frac{1}{|\varpi_2|}\,\psi_\varepsilon(\log\, (1/r)) 
\right]^{1-\tau}\\
&\leq \lambda \,|\varpi_2|^{1-\tau} \,\frac{Z_\varepsilon(r)}{r^2} \left[
1-\frac{(1-\tau)}{|\varpi_2|}  \psi_\varepsilon( \log\,(1/r))-\frac{\left(1-\tau\right)\tau}{2 \varpi_2^2} \, \psi^2_\varepsilon( \log\,(1/r))
\right].
\end{aligned} \end{equation}
Moreover, using \eqref{gal}, we see that 
$\psi'_\varepsilon(t)=-(1+\tau)\, \psi^2_\varepsilon(t)/2$ for all $t>0$ so that 
$$ \begin{aligned} 
\mathbb L_\rho[ Z_\varepsilon(r) ] &=
\frac{Z_\varepsilon(r)}{r^2} \left[  \varpi_2^2+2\rho \varpi_2+
 2\left(\varpi_2+\rho\right) \psi_\varepsilon ( \log\, (1/r)) +\psi_\varepsilon^2 (\log\, (1/r))
+\psi'_\varepsilon(\log\,(1/r))
\right]\\
&= \frac{Z_\varepsilon(r)}{r^2} \left[ 
 \varpi_2^2+2\rho \varpi_2+
 2\left(\varpi_2+\rho\right) \psi_\varepsilon ( \log\, (1/r)) 
+\frac{1-\tau}{2}\,  \psi^2_\varepsilon(\log\,(1/r))
\right].
\end{aligned}
$$
Hence, using \eqref{adi} and \eqref{adi0}, we infer that 
$$ \mathbb L_{\rho,\lambda,\tau} [Z_\varepsilon(r)]\leq 
0\leq r^{\theta} Z_\varepsilon^q(r)\quad \mbox{for all } r\in (0,r_*). 
$$
This completes the proof of the Claim. 
\end{proof}

{\bf Proof of \eqref{liopa} concluded.} Let $r_*\in (0,1)$ be as in the above Claim and small so that $B_{r_*}(0)\subset \subset \Omega$. 
For $\varepsilon>0$ arbitrary, we define $Z_\varepsilon$ as in \eqref{zaoo}, where
$C>0$ is large such that 
$u(x) \leq C r_*^{-\varpi_2} $ for all $ |x|=r_*$.  
Hence, $u(x)\leq Z_\varepsilon (|x|)$ for all $|x|=r_*$. 
From \eqref{egg3}, we have 
$$ \lim_{|x|\to 0} \frac{u(x)}{Z_\varepsilon(|x|)}=0.$$ 
In view of the Claim, we can apply the comparison principle to conclude that 
$$ u(x)\leq Z_\varepsilon(|x|)\quad \mbox{for all } 0<|x|\leq r_*.
$$ By letting $\varepsilon\to 0$, we attain \eqref{liopa}. 
This ends the proof of Theorem~\ref{ama1} (A) when $\rho=\Upsilon$. \qed

\subsection{Proof of Theorem~\ref{ama1} (B) when $\rho>\Upsilon$} \label{secla3}

Let \eqref{e2} hold and $\beta\in (\varpi_2,0)$ in Case $[N_2]$ with $\rho>\Upsilon$.  
Assume that $u$ is a positive super-solution of \eqref{e1} satisfying 
\begin{equation} \label{egh2}
\lim_{|x|\to 0} |x|^{\varpi_2} u(x)=\infty.
\end{equation}

We divide the proof of \eqref{nopi4} into two Steps.

\vspace{0.2cm}
{\bf Step 1:} {\em For every $\varepsilon_0>0$ small, we have $\lim_{|x|\to 0} |x|^{\varpi_2+\varepsilon_0} u(x)=\infty.$}

\begin{proof}[Proof of Step~1]
Fix $c>0$ arbitrary. 
For every $\delta,\varepsilon>0$, we define $w_{\delta,\varepsilon}(r)$ for $r>0$ as follows
\begin{equation} \label{zeps}
w_{\delta,\varepsilon}(r)=c\,r^{-\varpi_2} \left(e^\delta-e^{-r^\varepsilon}\right)^{-1}.
\end{equation}

\begin{lemma} \label{lem93} Let \eqref{e2} hold and $\beta\in (\varpi_2,0)$ in Case $[N_2]$ with $\rho>\Upsilon$.  
Then, for every $\varepsilon>0$ small, there exists $c_\varepsilon>0$ such that for all $c\in (0,c_\varepsilon)$ and $\delta\in (0,1)$, the function $w_{\delta,\varepsilon}$ in \eqref{zeps} is a positive radial sub-solution of \eqref{e1} in $B_{1}(0) \setminus \{0\}$ satisfying $w_{\delta,\varepsilon}'(r)>0$ for all $r\in (0,1)$.  
\end{lemma}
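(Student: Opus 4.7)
The construction will mirror Claim~1 in Subsection~\ref{secla1} (which produced the super-solutions $V_{\eta,\delta}(r)=cr^{-\varpi_2}(e^{r^\eta}-e^{-\delta})$): by inverting the bracket to $(e^\delta-e^{-r^\varepsilon})^{-1}$, I will obtain a family of sub-solutions at the same scaling. Throughout, I will exploit two facts: $f_\rho(\varpi_2)=0$, which cancels the leading constant term of $\mathbb L_{\rho,\lambda,\tau}[w_{\delta,\varepsilon}]$, and the strict inequality
\begin{equation*}
K:=2(\varpi_2+\rho)-\lambda(1-\tau)|\varpi_2|^{-\tau}=|\varpi_2|^{-\tau}(\lambda\tau-|\varpi_2|^{1+\tau})>0,
\end{equation*}
which is just \eqref{raoo} rewritten (and uses $\rho>\Upsilon$, equivalently $|\varpi_2|<(\lambda\tau)^{1/(\tau+1)}$). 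The proof splits into verifying monotonicity, bounding $\mathbb L_{\rho,\lambda,\tau}[w_{\delta,\varepsilon}]$ from below by a positive multiple of $w_{\delta,\varepsilon}/r^2$, and then matching this against $r^\theta w_{\delta,\varepsilon}^q$ by taking $c>0$ sufficiently small.

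Setting $H_\delta(t):=te^{-t}/(e^\delta-e^{-t})\in(0,1)$ for $t,\delta>0$, a direct computation gives
\begin{equation*}
w_{\delta,\varepsilon}'(r)=\frac{w_{\delta,\varepsilon}(r)}{r}\bigl[|\varpi_2|-\varepsilon H_\delta(r^\varepsilon)\bigr],
\end{equation*}
which is positive for $\varepsilon<|\varpi_2|$. With $A(r):=|\varpi_2|-\varepsilon H_\delta(r^\varepsilon)$, I will find $\mathbb L_\rho[w_{\delta,\varepsilon}]=(w_{\delta,\varepsilon}/r^2)[A^2-2\rho A-\varepsilon^2 r^\varepsilon H_\delta'(r^\varepsilon)]$, whereas $\lambda G_\tau[w_{\delta,\varepsilon}]=\lambda|\varpi_2|^{1-\tau}(w_{\delta,\varepsilon}/r^2)(1-\varepsilon H_\delta(r^\varepsilon)/|\varpi_2|)^{1-\tau}$. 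Applying the second-order Taylor lower bound $(1-s)^{1-\tau}\ge 1-(1-\tau)s-C(\tau)s^2$ with $C(\tau):=\tau(1-\tau)2^\tau$ on $s\in[0,1/2]$, and cancelling the $f_\rho(\varpi_2)$ contribution, the leading behaviour is
\begin{equation*}
\mathbb L_{\rho,\lambda,\tau}[w_{\delta,\varepsilon}(r)]\ge \frac{w_{\delta,\varepsilon}(r)}{r^2}\bigl\{K\,\varepsilon H_\delta(r^\varepsilon)+\varepsilon^2 R_\delta(r^\varepsilon)\bigr\},
\end{equation*}
where $R_\delta(t):=H_\delta^2(t)-tH_\delta'(t)-\lambda C(\tau)|\varpi_2|^{-1-\tau}H_\delta^2(t)$. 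The main obstacle will be controlling $tH_\delta'(t)$, which, unlike $H_\delta^2$, is \emph{not} quadratically small as $t\to 0^+$. To bypass this, I will use the identity $tH_\delta'(t)=\psi_\delta(t)\,H_\delta(t)$ with $\psi_\delta(t):=1-te^\delta/(e^\delta-e^{-t})$, and observe that $\psi_\delta$ is uniformly bounded on $(0,1]\times(0,1]$ thanks to the elementary estimate $1-e^{-t}\ge t/2$ on $[0,1]$ (giving $|\psi_\delta(t)|\le 1+2e$); consequently $|R_\delta(t)|\le \widetilde C\,H_\delta(t)$ for a constant $\widetilde C$ depending only on $\lambda,\tau,\varpi_2$.

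Choosing $\varepsilon<\min\{|\varpi_2|/2,\,K/(2\widetilde C)\}$ then yields the cleaner estimate $\mathbb L_{\rho,\lambda,\tau}[w_{\delta,\varepsilon}(r)]\ge \tfrac{K\varepsilon}{2}H_\delta(r^\varepsilon)\,w_{\delta,\varepsilon}(r)/r^2$. For the right-hand side, the bound $(e^\delta-e^{-r^\varepsilon})^{-1}\le 2/r^\varepsilon$ (again from $1-e^{-t}\ge t/2$), together with $\theta+2=(q-1)\beta$ and $\beta>\varpi_2$, gives
\begin{equation*}
r^\theta w_{\delta,\varepsilon}^q(r)\le c^{q-1}\,2^{q-1}\,\frac{w_{\delta,\varepsilon}(r)}{r^2}\,r^{(q-1)(\beta-\varpi_2-\varepsilon)}.
\end{equation*}
Combining this with the pointwise lower bound $H_\delta(r^\varepsilon)\ge r^\varepsilon/e^2$ (valid uniformly for $(r,\delta)\in(0,1)^2$ since $e^{-r^\varepsilon}\ge e^{-1}$ and $e^\delta-e^{-r^\varepsilon}\le e$), and further restricting $\varepsilon\le(q-1)(\beta-\varpi_2)/q$ so that the residual exponent $(q-1)(\beta-\varpi_2)-q\varepsilon\ge 0$ makes the corresponding power of $r$ bounded by $1$ on $(0,1)$, the sub-solution inequality reduces to $c^{q-1}\le K\varepsilon/(2^q e^2)$. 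Setting $c_\varepsilon:=(K\varepsilon/(2^q e^2))^{1/(q-1)}$ therefore delivers $\mathbb L_{\rho,\lambda,\tau}[w_{\delta,\varepsilon}(r)]\ge r^\theta w_{\delta,\varepsilon}^q(r)$ for every $r\in(0,1)$, $\delta\in(0,1)$ and $c\in(0,c_\varepsilon)$, completing the proof.
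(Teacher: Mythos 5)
Your proof is correct and follows essentially the same route as the paper: the same positivity $2(\varpi_2+\rho)-\lambda(1-\tau)|\varpi_2|^{-\tau}>0$ from \eqref{raoo} drives the linear-in-$\varepsilon$ term after $f_\rho(\varpi_2)=0$ cancels the leading part, the same smallness conditions $\varepsilon\lesssim\min\{(q-1)(\beta-\varpi_2)/q,\dots\}$ appear, and the right-hand side is matched via $H_\delta(t)\ge t/e^2$ exactly as in the paper. The only (harmless) difference is bookkeeping of the second-order errors: the paper absorbs them into a $(1+\nu)$-weakened first-order Taylor bound together with $t\Psi_\delta'(t)\le\Psi_\delta(t)$, whereas you keep an explicit quadratic remainder and control it through the identity $tH_\delta'(t)=(1-t-H_\delta(t))H_\delta(t)$ and a uniform bound on that factor.
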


\begin{proof} In view of \eqref{raoo}, we can fix $\nu\in (0,1)$ small such that 
\begin{equation} \label{chio}  
\chi_\nu:= 2\left(\varpi_2+\rho\right) -\lambda|\varpi_2|^{-\tau}\left(1-\tau\right) \left(1+\nu\right) >0. 
\end{equation}
Let $s_\nu>0$ be small such that 
\begin{equation} \label{cao1} \left( 1-\frac{s}{|\varpi_2|} \right)^{1-\tau}\geq 1-\frac{\left(1-\tau\right) \left(1+\nu\right)}{|\varpi_2|} \,s\quad \mbox{for all } s\in (0,s_\nu). 
\end{equation}
Using that $\varpi_2<\beta<0$, we let $\varepsilon>0$ be small enough such that 
\begin{equation} \label{varn} 0<\varepsilon<\varepsilon_\nu:= \min\left\{ \frac{(\beta-\varpi_2)(q-1)}{q},\frac{\chi_\nu}{2},s_\nu \right\}.  
\end{equation}
We let $c\in (0,c_\varepsilon)$ be arbitrary, where we define $c_\varepsilon$ by 
\begin{equation} \label{ceps} c_\varepsilon=\left(\frac{ \varepsilon \chi_\nu}{2e^{q+1}}
\right)^{1/(q-1)}.
\end{equation}

Let $r\in (0,1)$ be arbitrary. By differentiating \eqref{zeps} with respect to $r$, we get
\begin{equation} \label{cao2} w_{\delta,\varepsilon}'(r)=\frac{w_{\delta,\varepsilon}(r)}{r} \left( -\varpi_2-\varepsilon\,\Psi_\delta(r^\varepsilon) \right) >0,
\end{equation}
where for every $t>0$, we define $\Psi_\delta(t)\in (0,1)$ as in \eqref{psio}, that is, $\Psi_\delta(t)=te^{-t}/(e^\delta-e^{-t})$.

We conclude the proof of Lemma~\ref{lem93} by showing that $w_{\delta,\varepsilon}$ in \eqref{zeps} satisfies 
\begin{equation} \label{owo}
\mathbb L_{\rho,\lambda,\tau} [w_{\delta,\varepsilon}(r)] \geq r^\theta w^q_{\delta,\varepsilon}(r)\quad \mbox{for all } r\in (0,1). 
\end{equation}

{\em Proof of \eqref{owo}.} 
In view of \eqref{cao1} and \eqref{cao2}, we obtain that 
\begin{equation} \label{fumi1} \begin{aligned} 
\lambda\, G_\tau(w_{\delta,\varepsilon}(r))&= \lambda \, |\varpi_2|^{1-\tau} \frac{w_{\delta,\varepsilon}(r)}{r^2}
\left(1-\frac{\varepsilon\,\Psi_\delta(r^\varepsilon) }{|\varpi_2|} \right)^{1-\tau}\\
& \geq \lambda \, |\varpi_2|^{1-\tau} \frac{w_{\delta,\varepsilon}(r)}{r^2} 
\left( 1-\frac{\left(1-\tau\right) \left(1+\nu\right)}{|\varpi_2|}\, \varepsilon \,\Psi_\delta(r^\varepsilon)
\right).
\end{aligned} \end{equation}
By differentiating \eqref{cao2} and using that $t\Psi_\delta'(t)\leq \Psi_\delta(t)$ for every $t>0$, we arrive at
\begin{equation} \label{fumi2} \begin{aligned}
\mathbb L_\rho[w_{\delta,\varepsilon} (r)] & = 
\frac{w_{\delta,\varepsilon}(r)}{r^2} \left[
\varpi_2^2+2\rho \varpi_2 +2\left(\varpi_2+\rho\right) \varepsilon\, \Psi_\delta(r^\varepsilon)-\varepsilon^2 r^\varepsilon \Psi_\delta'(r^\varepsilon) 
\right]\\
& \geq 
\frac{w_{\delta,\varepsilon}(r)}{r^2} \left\{
\varpi_2^2+2\rho \varpi_2 + \varepsilon\, \Psi_\delta(r^\varepsilon) \left[2\left(\varpi_2+\rho\right) -\varepsilon \right]
\right\}.
\end{aligned}
\end{equation}
Since $f_{\rho}(\varpi_2)=\varpi_2^2+2\rho\varpi_2+\lambda |\varpi_2|^{1-\tau}=0$, 
from \eqref{fumi1} and \eqref{fumi2}, it follows that 
\begin{equation} \label{pux1} \mathbb L_{\rho,\lambda,\tau}[w_{\delta,\varepsilon} (r)] \geq 
\varepsilon \,\Psi_\delta(r^\varepsilon) (\chi_\nu-\varepsilon) \,\frac{w_{\delta,\varepsilon}(r)}{r^2} \geq 
\frac{\varepsilon \chi_\nu }{2} \, \,\Psi_\delta(r^\varepsilon)  \,\frac{w_{\delta,\varepsilon}(r)}{r^2} .
\end{equation}
In the last inequality, we have used that $\varepsilon<\chi_\nu/2$, where $\chi_\nu$ is given in \eqref{chio}. 

On the other hand, the right-hand side of \eqref{owo} satisfies
\begin{equation} \label{pux2} r^\theta w^q_{\delta,\varepsilon}(r)\leq  \frac{\varepsilon \chi_\nu }{2} \,\Psi_\delta(r^\varepsilon)  \,\frac{w_{\delta,\varepsilon}(r)}{r^2}\quad \mbox{for every } r\in (0,1). 
\end{equation}
Indeed, since $\delta,r\in (0,1)$ and $\psi_\delta(r^\varepsilon)\geq r^\varepsilon/e^2 $, using  \eqref{varn}, \eqref{ceps}, and $c\in (0,c_\varepsilon)$, we have 
$$ \begin{aligned} \frac{r^{\theta+2}\, w^{q-1}_{\delta,\varepsilon}(r)}{\Psi_\delta(r^\varepsilon)} 
& \leq e^2 c^{q-1} r^{(\beta-\varpi_2)(q-1)-\varepsilon} \left(1-e^{-r^\varepsilon}\right)^{-(q-1)} \\
&\leq e^2 c^{q-1} r^{(\beta-\varpi_2)(q-1)-\varepsilon q} e^{\left(q-1\right)r^\varepsilon}\\
& \leq e^{q+1}  c^{q-1} \leq e^{q+1} c_\varepsilon^{q-1}=\frac{\varepsilon \chi_\nu}{2} .
\end{aligned}
$$

From \eqref{pux1} and \eqref{pux2}, we conclude the proof of \eqref{owo}. This ends the proof of Lemma~\ref{lem93}. 
\end{proof}

{\bf Proof of Step~1 concluded.} Let $\nu>0$ and $\varepsilon_\nu$ be given as in the proof of Lemma~\ref{lem93}. 

We show that for every $\varepsilon_0\in (0,\varepsilon_\nu)$, we have 
\begin{equation} \label{pob} \lim_{|x|\to 0} |x|^{\varpi_2+\varepsilon_0} u(x)=\infty .
\end{equation}
Indeed, we fix $\varepsilon\in (\varepsilon_0,\varepsilon_\nu)$ and let $c_\varepsilon$ be prescribed by Lemma~\ref{lem93} (see \eqref{ceps}). Choose $r_0\in (0,1)$ small enough such that $B_{r_0}(0)\subset\subset \Omega$. Fix $c\in (0,c_\varepsilon)$ small enough such that 
$$ \min_{|x|=r_0} u(x)\geq c \,r_0^{-\varpi_2} (1-e^{-r_0^\varepsilon})^{-1}. 
$$
For every $\delta\in (0,1)$, we define $w_{\delta,\varepsilon}$ as in \eqref{zeps}.  The above choice of $c$ ensures that 
$  u(x)\geq w_{\delta,\varepsilon}(x) $ for all $ |x|=r_0$.   
From the assumption \eqref{egh2}, we obtain that 
$ \lim_{|x|\to 0} u(x)/w_{\delta,\varepsilon}(|x|)=\infty$.  
In light of Lemma~\ref{lem93}, we can apply the comparison principle to conclude that 
$$ u(x)\geq w_{\delta,\varepsilon}(|x|) \quad \mbox{for all } 0<|x|\leq r_0. $$
By letting $\delta\to 0$ and then $|x|\to 0$, we get that 
$$\liminf_{|x|\to 0} |x|^{\varpi_2+\varepsilon} u(x)\geq c>0,$$  
which implies \eqref{pob} since $\varepsilon_0<\varepsilon$. This completes the proof of Step~1.
\end{proof}

{\bf Step 2:} {\em The assertion of \eqref{nopi4} holds: For every $\eta>0$, we have $\lim_{|x|\to 0} |x|^{\beta-\eta} u(x)=\infty$. }

\begin{proof}[Proof of Step~2]
We need only consider the case of $\eta>0$ small. Without loss of generality, we assume that 
$\eta\in (0,\beta-\varpi_2)$. We fix $\varepsilon\in (0,\varepsilon_\nu)$ small as needed so that Step~1 applies, that is, 
\begin{equation} \label{emb2} \lim_{|x|\to 0} |x|^{\varpi_2+\varepsilon} u(x)=\infty. \end{equation}
We can assume that $\varepsilon<\beta-\varpi_2-\eta$. Otherwise, there  remains nothing to prove. 

To conclude Step~2, we use an iterative process similar to the one in Proposition~\ref{lad} except that the sequence $\{m_j\}_{j}$ in \eqref{vij} is constructed here differently. We let 
\begin{equation} \label{ma1} m_1=-\varpi_2-\varepsilon>0 \end{equation}
and for every $j\geq 1$, we define $m_{j+1}$ as the unique positive solution of $f_j(t)=0$, where
\begin{equation} \label{not} f_j(t)=\lambda \,t^{1-\tau} +t^2-2\rho \,m_j\quad \mbox{for } t>0.
 \end{equation}
 Now, $f_1$ is increasing on $(0,\infty)$ with $\lim_{t\to 0^+}f_1(t)=-2\rho \,m_{1}<0$ and $\lim_{t\to \infty} f_1(t)=\infty$.
Hence, $m_2>0$ is well-defined and, by induction, we obtain the sequence of positive numbers $\{m_j\}_{j\geq 1}$, which is decreasing.  
We only remark that $m_2<m_1$ since $f_1(m_1)>0$ using that
$$ \begin{aligned} 
f_1(m_1)& =\lambda|\varpi_2|^{1-\tau} \left( 1-\frac{\varepsilon}{|\varpi_2|}\right)^{1-\tau} +\varpi_2^2
+2\rho\varpi_2+2  \left(\varpi_2+\rho\right)\varepsilon +\varepsilon^2\\
& = \left[2\left(\varpi_2+\rho\right) -\lambda |\varpi_2|^{-\tau} (1-\tau)\right] \varepsilon+
 \left( 1-\frac{\lambda \tau\, |\varpi_2|^{-1-\tau}(1-\tau) }{2}\right) \varepsilon^2+o(\varepsilon^2)    \quad \mbox{as } \varepsilon\to 0.
\end{aligned} $$
By \eqref{raoo}, we have $2\left(\varpi_2+\rho\right) -\lambda |\varpi_2|^{-\tau} (1-\tau)>0$ so that $f_1(m_1)>0$ for small $\varepsilon>0$. 
(Even when $\rho=\Upsilon$, for small $\varepsilon>0$, we have $f_1(m_1)>0$ since 
$f_1(m_1)= \varepsilon^2 [(1+\tau)/2 +o(1)]$ as $\varepsilon\to 0$.) 
Thus, $m_2<m_1$ and by induction, one can show that $\{m_j\}_{j\geq 1}$ is decreasing and there exists 
$$\lim_{j\to \infty} m_j=\ell_0\geq 0, $$
where $\ell_0^2-2\rho \ell_0+\lambda \ell_0^{1-\tau}=0$. Such equation in $\ell_0$ has three non-negative roots: $0$, $-\varpi_2$ and $-\varpi_1$. 
Since $0<m_j<m_1=-\varpi_2-\varepsilon<-\varpi_2<-\varpi_1$ for all $j\geq 2$, we obtain that $\ell_0=0$, namely, 
$$\lim_{j\to \infty} m_j=0.$$ 
Hence, there exists $k_*\geq 1$ large such that for $j=1,\ldots, k_*$, 
\begin{equation} \label{lih1} -\beta<m_{k_{*}+1} \leq -\beta+\eta<m_{k_*}. 
\end{equation}
For $\nu\in (0,m_{k_*+1}/2)$ and $1\leq j\leq k_*$, we define 
\begin{equation} \label{ximm} \xi_j(\nu):=\lambda \left(m_{j+1}-2\nu\right)^{1-\tau} +m_j^2-2\rho \,m_j. \end{equation}
Since 
$f_j(m_{j+1})=0$, where $f_j$ is given by \eqref{not} and $0<m_{j+1}<m_j$ for all $1\leq j\leq k_*$, we get
$$
 \lim_{\nu\to 0} \xi_j(\nu) =
\lambda \,m_{j+1}^{1-\tau} +m_j^2-2\rho \,m_{j}
 =m_j^2-m^2_{j+1} >0.
$$ 
We can thus fix $0<\nu<\min\,\{ m_{k_*+1}+\beta,m_{k_*+1}/2\}$ 
small such that  
\begin{equation} \label{chop} \xi_j(\nu)>0\quad \mbox{for all } 1\leq j\leq k_* . 
\end{equation} 

We now choose $\alpha \in (0,\varpi_2+\rho)$ arbitrary. This implies that 
\begin{equation} \label{haaa} 0<\alpha<\rho-m_j\quad \mbox{for all } 1\leq j\leq k_*.
\end{equation}
Indeed, remark that $\rho>-\varpi_2$ since $-\varpi_2<(\lambda \tau)^{1/(\tau+1)}<\Upsilon<\rho$ by virtue of the assumptions in Case~$[N_2]$ with $\rho>\Upsilon$. (In fact, we have $\rho>-\varpi_2$ also in Case $[N_2]$ with $\rho=\Upsilon$ since $(\lambda \tau)^{1/(\tau+1)}<\Upsilon$.)
Using that $\rho+\varpi_2<\rho-m_1\leq \rho-m_j$ for all $j\geq 1$, by the choice of $\alpha\in (0,\varpi_2+\rho)$, we obtain \eqref{haaa}

\vspace{0.2cm}
{\bf Construction of a family of sub-solutions.} 

From \eqref{chop}, we have $\xi_*=\min_{1\leq j\leq k_*} \xi_j(\nu)>0$. Let $c\in (0,\xi_*^{1/(q-1)})$ be arbitrary.   

For every $1\leq j\leq k_*$ and $\delta>0$, we define 
$v_{j,\delta}$ in the same form as in \eqref{vij}, that is, 
\begin{equation} \label{vijn}
 v_{j,\delta}(r)=  c \,r^{m_{j}} \left( e^{r^\alpha} -e^{-\delta}\right)^\frac{m_{j+1}-m_j-\nu}{\alpha}\quad \mbox{for all } r\in (0,1].
\end{equation}

We show that there exists $r_*\in (0,1)$ small such that 
for every $1\leq j\leq k_*$, the function $v_{j,\delta}$ in \eqref{vijn} is a 
positive radial sub-solution of \eqref{e1} on $0<|x|<r_*^{1/\alpha}$, namely, 
\begin{equation} \label{fgg}
 \mathbb L_{\rho,\lambda,\tau} [v_{j,\delta}(r)] \geq  r^\theta v^{q}_{j,\delta}(r)\quad \mbox{for all } r\in (0,r_*^{1/\alpha}).
\end{equation}

{\bf Proof of \eqref{fgg}.} This is similar to the proof of \eqref{fass}. 
Let $r_*\in (0,1)$ be small such that $B_{r_*^{1/\alpha}}(0)\subset \subset \Omega$ and for all $t\in (0,r_*)$, we have \eqref{coppi}.  
Let $1\leq j\leq k_*$ and $r\in (0,r_*^{1/\alpha})$ be arbitrary. 
Note that 
in the expression of $\mathbb L_\rho[v_{j,\delta}(r) ]$ in \eqref{sos}, 
we again have that the coefficients of $ r^\alpha e^{r^\alpha}/(e^{r^\alpha}-e^{-\delta})$ and  
$ r^{2\alpha} e^{2r^\alpha}/(e^{r^\alpha}-e^{-\delta})^2$ are positive (see \eqref{haaa}).
Using \eqref{coppi}, we recover \eqref{ere} and \eqref{ror1}. So, using \eqref{ximm} and
$c\in (0,\xi_*^{1/(q-1)})$ with 
 $\xi_*=\min_{1\leq j\leq k_*} \xi_j(\nu)>0$, we get
$$ \mathbb L_{\rho,\lambda,\tau}[v_{j,\delta}(r)]]\geq \xi_j(\nu) \,\frac{v_{j,\delta}(r)}{r^2}  \geq \xi_* \,\frac{v_{j,\delta}(r)}{r^2}
\geq c^{q-1} \, \frac{v_{j,\delta}(r)}{r^2}.
$$
As in the proof of \eqref{fass}, 
by the choice of $\nu <m_{k_*+1}+\beta$, we obtain that 
$$ r^{\theta+2} v^{q-1}_{j,\delta}(r) \leq c^{q-1}.
$$
This completes the proof of \eqref{fgg}. 

\vspace{0.2cm}
{\bf Proof of Step~2 concluded.} We diminish $c>0$ such that 
$c<\min_{|x|=r_*^{1/\alpha}} u(x)$.  We point out that from \eqref{ma1}, we have $m_1=-\varpi_2-\varepsilon$ and \eqref{emb} holds because of \eqref{emb2}. 
As in the proof of Proposition~\ref{lad}, 
by an inductive argument on $j=1,\ldots, k_*$, we obtain that 
$$ u(x)\geq v_{k_*,\delta}(|x|) \quad \mbox{for all } 0<|x|\leq r_*^{1/\alpha}. 
$$
By letting $\delta\to 0$ and then $|x|\to 0$, we regain 
$$ \liminf_{|x|\to 0} |x|^{-m_{k_*+1}+\nu} u(x)\geq c,$$
which by virtue of $m_{k_{*}+1} \leq -\beta+\eta$
(see \eqref{lih1}), implies that $\lim_{|x|\to 0} |x|^{\beta-\eta} u(x)=\infty$. 
\end{proof}

This completes the proof of Theorem~\ref{ama1} (B) when $\rho>\Upsilon$. \qed

\subsection{Proof of Theorem~\ref{ama1} (B) when $\rho=\Upsilon$} \label{secla4}
Let \eqref{e2} hold and $\beta\in (\varpi_2,0)$ in Case $[N_2]$ with $\rho=\Upsilon$.  
Let $u$ be a positive super-solution of \eqref{e1} satisfying  
\begin{equation} \label{agh1}
\lim_{|x|\to 0} \frac{u(x)}{|x|^{-\varpi_2}  \left|\log |x| \right|^{2/(1+\tau)} }=\infty.
\end{equation}

The same reasoning as in Step~2 in Section~\ref{secla3} leads to \eqref{nopi4} provided that 
$u$ satisfies 
\begin{equation} \label{qax} \mbox{for every } \varepsilon_0>0\ \mbox{small, we have }  \lim_{|x|\to 0} |x|^{\varpi_2+\varepsilon_0} u(x)=\infty.   
\end{equation}

We point out that \eqref{qax} corresponds to Step~1 in Section~\ref{secla3} only in statement. However, the proof given in Section~\ref{secla3} 
uses essentially that $\rho>\Upsilon$ (in the form of \eqref{raoo}) and cannot be applied in our context of $\rho=\Upsilon$ when \eqref{adi} is valid.  
Moreover, instead of the assumption \eqref{egh2}, here we start with the property \eqref{agh1} (due to the change in the expression of $\Phi_{\varpi_2(\rho)}$ in \eqref{phi22}). 
 
 Compared with Section~\ref{secla3}, the main novelty here is the derivation of \eqref{qax}.  
 We would need two family of sub-solutions of \eqref{e1} to attain \eqref{qax} (see \eqref{waq} and \eqref{vaz}).  
 
 First, we improve \eqref{agh1} by showing in Lemma~\ref{ger0} that for every $s>2/(1+\tau)$, we have 
 \begin{equation} \label{qop} 
\lim_{|x|\to 0} \frac{u(x)}{ |x|^{-\varpi_2} \left|\log |x| \right|^s}=\infty.
\end{equation}
 
 Using \eqref{qop} and another family of sub-solutions, we conclude \eqref{qax} in Lemma~\ref{gera}.

\begin{lemma} \label{ger0} For every $s>2/(1+\tau)$, we have \eqref{qop}. 
\end{lemma}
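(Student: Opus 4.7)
The plan is to prove the stronger quantitative statement that for each $s > 2/(1+\tau)$ there exist $c = c(s) > 0$ and $r_0 = r_0(s) > 0$ with
\[
u(x) \geq c\,|x|^{-\varpi_2} |\log |x||^s \quad \text{for all } 0 < |x| \leq r_0.
\]
Once this is established, \eqref{qop} follows by a bootstrap: applying the bound with any $s' > s$ gives
\[
\frac{u(x)}{|x|^{-\varpi_2}|\log|x||^s} \geq c(s')\,|\log|x||^{s'-s} \to \infty \quad \text{as } |x| \to 0^+.
\]

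To obtain the displayed lower bound I would construct, for each $\delta \in (0, r_0)$, a positive radial sub-solution $W_\delta$ of \eqref{e1} on the annulus $\delta < |x| < r_0$ such that $W_\delta(\delta) = 0$, $W_\delta'(r) > 0$ on $(\delta, r_0)$, $W_\delta(|x|) \leq u(x)$ on $|x| = r_0$, and $W_\delta(r) \to c\,r^{-\varpi_2}|\log r|^s$ pointwise as $\delta \to 0^+$. A natural candidate, by analogy with Lemma~\ref{lema64}, is
\[
W_\delta(r) = c\, r^{-\varpi_2} |\log r|^s \bigl(1 - (\delta/r)^a\bigr)
\]
for a small exponent $a > 0$ and a small constant $c > 0$. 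This candidate vanishes at $r = \delta$, is positive on $(\delta, r_0)$, converges to $c\,r^{-\varpi_2}|\log r|^s$ as $\delta \to 0^+$, and a direct computation shows $W_\delta'(r) > 0$ for $r_0$ small.

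The decisive technical step is the sub-solution estimate. Using the identities $f_\rho(\varpi_2) = 0$, $\lambda(1-\tau)|\varpi_2|^{-\tau} = 2(\varpi_2 + \rho)$, and $|\varpi_2|^{1+\tau} = \lambda\tau$, all valid at $\rho = \Upsilon$ (cf.~\eqref{varo}), a Taylor expansion of the nonlinear term $\lambda G_\tau(\cdot)$ yields
\[
\mathbb{L}_{\rho,\lambda,\tau}\bigl[c\,r^{-\varpi_2}|\log r|^s\bigr] \geq c\, r^{-\varpi_2 - 2} \Bigl\{ s\Bigl[\tfrac{(1+\tau)s}{2} - 1\Bigr]|\log r|^{s-2} + O(|\log r|^{s-3}) \Bigr\}
\]
as $r \to 0^+$, since the two preceding orders $|\log r|^s$ and $|\log r|^{s-1}$ cancel identically by the above identities. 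The leading coefficient $\kappa_s := s[(1+\tau)s/2 - 1]$ is strictly positive precisely because $s > 2/(1+\tau)$. Meanwhile the right-hand side $r^\theta W_\delta^q$ is of size $c^q r^{(q-1)(\beta - \varpi_2) - \varpi_2 - 2} |\log r|^{qs}$, which is negligible against the lower bound above since $\beta > \varpi_2$ forces $(q-1)(\beta - \varpi_2) > 0$. The multiplicative cutoff $1 - (\delta/r)^a$ contributes perturbations controlled by $\delta/r \in (0,1)$ and by the small parameter $a$, which, as $r > \delta$ and $a$ is chosen sufficiently small, do not destroy the positive sign of $\kappa_s$. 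Consequently $W_\delta$ is a sub-solution on $(\delta, r_0)$ for some $r_0 > 0$ independent of $\delta$.

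Choosing $c$ small so that $u \geq W_\delta$ on $|x| = r_0$ for every $\delta$, and invoking the comparison principle of Remark~\ref{reka6} (applicable because $W_\delta'(r) > 0$), I would conclude $u(x) \geq W_\delta(|x|)$ on $\delta \leq |x| \leq r_0$. Sending $\delta \to 0^+$ then yields the lower bound, and the bootstrap completes the proof. The main obstacle is the delicate third-order analysis forced by the degenerate structure at $\rho = \Upsilon$: both top-order coefficients in $|\log r|$ vanish, so the estimate must reach the $|\log r|^{s-2}$ term, and one must then verify that introducing the multiplicative truncation $1 - (\delta/r)^a$ leaves the positive coefficient $\kappa_s$ intact on the annulus.
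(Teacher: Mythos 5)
Your expansion of $\mathbb L_{\rho,\lambda,\tau}$ on the untruncated profile $\Phi(r)=r^{-\varpi_2}|\log r|^{s}$ is correct: the orders $|\log r|^{s}$ and $|\log r|^{s-1}$ cancel via $f_\rho(\varpi_2)=0$ and \eqref{adi}, and the surviving coefficient $\kappa_s=s[(1+\tau)s/2-1]$ is positive exactly when $s>2/(1+\tau)$; this is the same quantity the paper exploits. The gap is the truncation. Writing $\psi(r)=1-(\delta/r)^a$ and $g(r)=(\delta/r)^a$, a direct computation gives
\[
\mathbb L_\rho[\Phi\psi]=\mathbb L_\rho[\Phi]\,\psi+\frac{\Phi}{r^2}\,a\,g\left[-2\left(\varpi_2+\rho\right)-a-\frac{2s}{\log\,(1/r)}\right],
\]
and at $\rho=\Upsilon$ one has $2(\varpi_2+\rho)=\lambda(1-\tau)|\varpi_2|^{-\tau}>0$, so the cutoff injects a \emph{strictly negative} term of order $a\,g\,\Phi/r^2$ --- the same order as the zeroth-order terms, not a harmless perturbation of the $\kappa_s|\log r|^{-2}$ term. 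The gradient term compensates it only to first order in $ag/\psi$ (again via \eqref{adi}), i.e.\ where $ag\ll\psi$. Near the inner boundary this fails outright: as $r\to\delta^+$ one has $W_\delta^\tau\to 0$ while $W_\delta'$ stays bounded, so $\lambda G_\tau[W_\delta]\to 0$, whereas $\mathbb L_\rho[W_\delta]$ tends to the strictly negative value $-c\,a\,\Phi(\delta)\,\delta^{-2}\,[\,2(\varpi_2+\rho)+a+2s/|\log\delta|\,]$. Hence $\mathbb L_{\rho,\lambda,\tau}[W_\delta]<0\le r^\theta W_\delta^q$ on a neighbourhood of $r=\delta$, $W_\delta$ is not a sub-solution on the annulus, and the comparison step collapses; no choice of small $a$ repairs this.

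There is a structural red flag confirming that this route cannot work as written: your argument never uses the standing hypothesis \eqref{agh1} (the inner boundary is handled by $W_\delta(\delta)=0$). If it were valid, it would prove \eqref{qop} for \emph{every} positive super-solution, contradicting the existence (Theorem~\ref{thi} $(P_0)$(ii), Theorem~\ref{exol1}) of positive solutions with $u\sim b\,\Phi_{\varpi_2(\rho)}(|x|)=b\,|x|^{-\varpi_2}|\log|x||^{2/(1+\tau)}$ near zero, for which the ratio in \eqref{qop} tends to $0$. The paper's construction avoids both issues simultaneously: its sub-solutions $w_\delta(r)=c\,r^{-\varpi_2}|\log r|^{2/(1+\tau)}(\delta+|\log r|^{-\alpha})^{-\eta}$ never vanish, so the comparison is run on the punctured ball with the inner ``boundary condition'' supplied precisely by \eqref{agh1} (for fixed $\delta$ each $w_\delta$ is $O(r^{-\varpi_2}|\log r|^{2/(1+\tau)})$), and the resulting perturbation of $r\,w_\delta'/w_\delta$ is only of order $|\log r|^{-1-\alpha}$, which preserves the two cancellations and leaves a positive contribution of order $|\log r|^{-2-\alpha}$ dominating the $O(|\log r|^{-3})$ error.
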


\begin{proof} Let $\alpha,\eta,c>0$ be fixed arbitrarily. For every $\delta>0$, we define 
\begin{equation} \label{waq} w_{\delta}(r)=c\, r^{-\varpi_2} |\log r|^{\frac{2}{1+\tau} } \left( \delta+ |\log r|^{-\alpha}\right)^{-\eta}\quad \mbox{for all } r\in (0,1/e).
\end{equation}

{\bf Claim:} {\em For any $\alpha\in (0,1)$ and $\eta>0$, there exist $r_*\in (0,1/e)$ and $c_*>0$ such that for every $c\in (0,c_*)$ and $\delta\in (0,1)$, the function $w_\delta$ in \eqref{waq} is
a positive radial sub-solution of \eqref{e1} in $B_{r_*}(0)\setminus \{0\}$ and $w_\delta'(r)>0$ for every $r\in (0,r_*)$. }

\begin{proof}[Proof of the Claim]
Let $\alpha\in (0,1)$ and $\eta>0$ be fixed. We define $C_*>0$ as follows
\begin{equation} \label{csa} C_*=\frac{\left(1-\tau\right)\tau \left(1+\tau\right) 2^{\tau+2}}{3! \,|\varpi_2|^3}\, \left(\frac{2}{1+\tau} +\alpha \eta\right)^3.
\end{equation}
Let $r_*\in (0,1/e)$ be small such that for all $r\in (0,r_*)$, we have 
\begin{eqnarray}
& \displaystyle \frac{1}{|\log r|}\left(\frac{2}{1+\tau} +\alpha\,\eta \right)<\frac{|\varpi_2|}{2},  \label{edo1}\\
& \alpha  \left(1-\alpha\right)\eta\, |\log r|^{1-\alpha} >2 \,C_* \left(1+ |\log r|^{-\alpha} \right). \label{edo2} 
\end{eqnarray}
Let $c\in (0,c_*)$ be arbitrary, where $c_*>0$ is defined by 
\begin{equation} \label{ger5} c_*= \left( \frac{C_*}{M_*} \right)^{\frac{1}{q-1}}\ \mbox{and  } M_* = \sup_{r\in (0,r_*) } r^{(\beta-\varpi_2)(q-1) } |\log r|^{3+ (q-1) \left( \frac{2}{1+\tau} +\alpha \eta\right)} .
\end{equation}
We point out that $M_*\in (0,\infty)$ since $\beta\in (\varpi_2,0)$. 

Let $r\in (0,r_*)$ and $\delta\in (0,1)$ be arbitrary. For $t>0$, we define $F_{\delta}(t)$ by 
\begin{equation} \label{ger1} F_\delta(t)=\frac{2}{1+\tau} +\frac{ \alpha \eta\, t}{\delta+t}\in \left(0, \frac{2}{1+\tau}+\alpha \eta\right).
\end{equation}
Our choice of $r_*>0$ such that \eqref{edo1} holds gives that 
\begin{equation} \label{ger2}  \frac{F_\delta (|\log r|^{-\alpha})}{ |\varpi_2| |\log r|}\in (0,1/2)\quad \mbox{ for all } r\in (0,r_*).
\end{equation}
Thus, by differentiating \eqref{waq} with respect to $r$, we get 
\begin{equation} \label{aqs} w_\delta'(r)=\frac{w_\delta(r)}{r} \left(-\varpi_2 -\frac{F_\delta(|\log r|^{-\alpha})}{|\log r|}
\right)>\frac{|\varpi_2|}{2}\, \frac{w_\delta(r)}{r}>0.
\end{equation}
Hence, in view of \eqref{ger1} and \eqref{ger2}, 
we obtain that
$$ \begin{aligned}
 \lambda \,G_\tau(w_\delta(r))& =\lambda\,|\varpi_2|^{1-\tau} \frac{w_\delta(r)}{r^2} 
\left(1 -\frac{F_\delta(|\log r|^{-\alpha})}{|\varpi_2|\,|\log r|}
\right)^{1-\tau}\\
&\geq  \lambda\,|\varpi_2|^{1-\tau} \frac{w_\delta(r)}{r^2}  \left(1- \frac{(1-\tau) F_\delta(|\log r|^{-\alpha})}{|\varpi_2|\,|\log r|}
-\frac{(1-\tau)\tau\,F_\delta^2 (|\log r|^{-\alpha})}{2\, \varpi_2^2\,\log^2 r} - \frac{C_*}{|\log r|^3}
\right).
\end{aligned}
$$
where $C_*>0$ is defined in \eqref{csa}. 
Using \eqref{aqs}, we arrive at 
$$ \begin{aligned} 
\mathbb L_\rho(w_\delta(r)) =\frac{w_\delta(r)}{r^2} 
& \left[ \varpi_2^2+2\rho \varpi_2 +\frac{2\left(\varpi_2+\rho\right) F_\delta(|\log r|^{-\alpha})}{|\log r|} \right.
\\
& \left.+\frac{F^2_\delta(|\log r|^{-\alpha}) -F_\delta(|\log r|^{-\alpha}) -\alpha |\log r|^{-\alpha} F'_\delta(|\log r|^{-\alpha})  }{\log^2 r}
\right]. 
\end{aligned}
$$
For $t>0$, we define $J_\delta(t)$ as follows
\begin{equation} \label{ger3} J_\delta(t)=  \frac{1+\tau}{2}\,F^2_\delta(t) -F_\delta(t) -\alpha\, t F'_\delta(t).
\end{equation}
Since Case $[N_2]$ holds with $\rho=\Upsilon$, we have $\beta_1=\varpi_2=-(\lambda \tau)^{1/(1+\tau)}$ and \eqref{adi} holds. Hence, 
\begin{equation} \label{ger4} \mathbb L_{\rho,\lambda,\tau}(w_\delta(r)) \geq 
\frac{w_\delta(r)}{r^2\, |\log r|^3}  \left[ |\log r| \,J_\delta(|\log r|^{-\alpha})- C_*
\right].
\end{equation}
Recall that $\alpha,\delta\in (0,1)$. In view of \eqref{ger1} and \eqref{ger3}, we find that 
$$ J_\delta(t)=\frac{\alpha \eta \,t}{\delta+t} \left(1-\frac{\alpha \delta}{\delta+t}\right)+\frac{(1+\tau)}{2} \frac{\alpha^2\eta^2 t^2}{(\delta+t)^2}>
\frac{\alpha \left(1-\alpha\right) \eta \, t}{\delta+t}\geq \frac{\alpha \left(1-\alpha\right) \eta \, t}{1+t}.
$$
This, jointly with \eqref{edo2}, implies that 
$$  |\log r| \,J_\delta(|\log r|^{-\alpha})\geq 2\,C_*\quad \mbox{for all } r\in (0,r_*).
$$
Therefore, using \eqref{ger4}, we infer that 
$$  \mathbb L_{\rho,\lambda,\tau}(w_\delta(r)) \geq 
C_* \frac{w_\delta(r)}{r^2\, |\log r|^3} \quad \mbox{for all } r\in (0,r_*).
$$
On the other hand, from the definition of $w_\delta$ in \eqref{waq} and \eqref{ger5}, we have 
$$ r^\theta w_\delta^q(r)\leq \frac{w_\delta(r)}{r^2} \,c^{q-1} r^{\left(\beta-\varpi_2\right)\left(q-1\right)} |\log r|^{\left(q-1\right) \left(\frac{2}{1+\tau} +\alpha \eta\right)}\leq C_* \frac{w_\delta(r)}{r^2\, |\log r|^3} \quad \mbox{for all } r\in (0,r_*).
$$
This completes the proof of the Claim. 
\end{proof}

{\bf Proof of Lemma~\ref{ger0} concluded.} 
Let $s>2/(1+\tau)$ be arbitrary. Fix $\alpha\in (0,1)$ and $\eta>0$ such that $\alpha\, \eta>s-2/(1+\tau)$. 
Let $r_*\in (0,1/e) $ and $c_*>0$ be given by the Claim. 

We choose $c\in (0,c_*)$ small enough such that 
$$ \min_{|x|=r_*} u(x)\geq c r_*^{-\varpi_2} |\log r_*|^{\alpha \eta +2/(1+\tau)}. 
$$

Let $\delta\in (0,1)$ be arbitrary. 
This choice of $c$ yields that $u(x)\geq w_\delta(|x|)$ for all $|x|=r_*$. Moreover, from \eqref{agh1}, we have 
$$ \lim_{|x|\to 0} \frac{u(x)}{w_\delta(|x|)}=\infty. 
$$
Hence, by the comparison principle, we infer that $u(x)\geq w_\delta(|x|)$ for all $0<|x|\leq r_*$. By letting $\delta\to 0$ and then $|x|\to 0$, we arrive at 
$$ \liminf_{|x|\to 0} |x|^{\varpi_2} |\log |x||^{-\frac{2}{1+\tau} -\alpha \eta} u(x) \geq c>0. 
$$
Since $\alpha\, \eta>s-2/(1+\tau)$, we readily conclude \eqref{qop}. 
This ends the proof of Lemma~\ref{ger0}. 
\end{proof}

Based on Lemma~\ref{ger0}, we are ready to conclude \eqref{qax} in our next result. 

\begin{lemma} \label{gera} For every $\varepsilon_0>0$ small enough, we have $\lim_{|x|\to 0} |x|^{\varpi_2+\varepsilon_0} u(x)=\infty$. 
\end{lemma}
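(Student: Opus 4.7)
The plan is to parallel the two-step argument of Step~1 in Section~\ref{secla3}, but with a new family of sub-solutions of \eqref{e1} tailored to the critical regime $\rho=\Upsilon$. In the argument of Lemma~\ref{lem93}, the coefficient $\chi_\nu=2(\varpi_2+\rho)-\lambda|\varpi_2|^{-\tau}(1-\tau)$ was strictly positive thanks to \eqref{raoo}; here it vanishes identically by \eqref{adi}, so the ansatz \eqref{zeps} can no longer produce a sub-solution. Motivated by Lemma~\ref{fiqa}, which identifies $|\log r|^{2/(1+\tau)}$ as the critical logarithmic exponent for super-solutions of $\mathbb L_{\rho,\lambda,\tau}[\,\cdot\,]=0$ in this regime, I propose the ansatz
$$
v_{\delta,\varepsilon}(r)=c\,r^{-\varpi_2}\,|\log r|^{s_0}\,\bigl(e^\delta-e^{-r^\varepsilon}\bigr)^{-1}\quad\text{for }r\in(0,1/e),
$$
with a fixed exponent $s_0>2/(1+\tau)$, a small parameter $\varepsilon\in(0,\beta-\varpi_2)$ and a small $c>0$ to be chosen.

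The first step is to show that there exist $r_*\in(0,1/e)$ and $c_*>0$ such that, for every $c\in(0,c_*)$ and every $\delta\in(0,1)$, $v_{\delta,\varepsilon}$ is a positive radial sub-solution of \eqref{e1} on $B_{r_*}(0)\setminus\{0\}$ with $v_{\delta,\varepsilon}'>0$. Setting $F(r)=s_0/|\log r|+\varepsilon\,\Psi_\delta(r^\varepsilon)$ with $\Psi_\delta$ as in \eqref{psio}, one checks directly that $r\,v_{\delta,\varepsilon}'/v_{\delta,\varepsilon}=-\varpi_2-F(r)>0$ for $r$ near zero. Expanding $\mathbb L_{\rho,\lambda,\tau}[v_{\delta,\varepsilon}]/v_{\delta,\varepsilon}$ in the spirit of \eqref{fumi1}--\eqref{fumi2}, the zeroth-order term vanishes by $f_\rho(\varpi_2)=0$ and the linear-in-$F$ contribution vanishes by the critical identity \eqref{adi}. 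The surviving quadratic-in-$F$ contribution, extracted from the second-order Taylor bound \eqref{cao1} together with $\lambda\tau=|\varpi_2|^{1+\tau}$, has leading behaviour
$$
\frac{s_0\bigl[(1+\tau)s_0/2-1\bigr]}{r^2|\log r|^2}\,(1+o(1))\quad\text{as }r\to 0^+,
$$
which is \emph{strictly positive} precisely because $s_0>2/(1+\tau)$. Since $\varepsilon<\beta-\varpi_2$, one has $r^\theta v_{\delta,\varepsilon}^{q-1}=O\bigl(r^{(\beta-\varpi_2-\varepsilon)(q-1)-2}|\log r|^{s_0(q-1)}\bigr)$, which is of strictly lower order than $1/(r^2|\log r|^2)$. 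Hence, after fixing $r_*$ small and $c\in(0,c_*)$ small uniformly in $\delta\in(0,1)$, the sub-solution inequality holds on $B_{r_*}(0)\setminus\{0\}$.

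The second step is a comparison argument. Pick any $s>s_0$. By Lemma~\ref{ger0}, $u(x)/(|x|^{-\varpi_2}|\log|x||^s)\to\infty$ as $|x|\to 0^+$, while $v_{\delta,\varepsilon}(r)\leq v_{0,\varepsilon}(r)\sim c\,r^{-\varpi_2-\varepsilon}|\log r|^{s_0}$ uniformly in $\delta\in(0,1)$, so $u(x)/v_{\delta,\varepsilon}(|x|)\to\infty$ near the origin. After shrinking $c$ so that $u\geq v_{\delta,\varepsilon}$ on $\partial B_{r_*}(0)$ for every $\delta\in(0,1)$, the comparison principle (cf.~Remark~\ref{reka6}) yields $u(x)\geq v_{\delta,\varepsilon}(|x|)$ throughout $B_{r_*}(0)\setminus\{0\}$. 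Letting $\delta\to 0^+$ and using $1-e^{-|x|^\varepsilon}\sim|x|^\varepsilon$ gives $u(x)\geq c\,|x|^{-\varpi_2-\varepsilon}|\log|x||^{s_0}(1+o(1))$ near zero, so that for any $\varepsilon_0\in(0,\varepsilon)$,
$$
|x|^{\varpi_2+\varepsilon_0}u(x)\geq c\,|x|^{-(\varepsilon-\varepsilon_0)}|\log|x||^{s_0}(1+o(1))\to\infty\quad\text{as }|x|\to 0^+,
$$
which is the desired conclusion.

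The main obstacle is the first step: in the critical regime $\rho=\Upsilon$, both the zeroth-order and linear-in-$F$ terms of the operator cancel, so the sub-solution estimate is driven entirely by a second-order $|\log r|^{-2}$ contribution. Tracking this contribution cleanly requires the sharper Taylor bound \eqref{cao1} for $(1-\cdot)^{1-\tau}$ rather than the first-order bound that suffices when $\rho>\Upsilon$, and the sharp constraint $s_0>2/(1+\tau)$ emerges naturally from the positivity of $s_0[(1+\tau)s_0/2-1]$, matching the critical logarithmic exponent of $\Phi_{\varpi_2(\rho)}$ in \eqref{phi22}. Once \eqref{qax} has been established via Lemma~\ref{gera}, Theorem~\ref{ama1}~(B) for $\rho=\Upsilon$ follows by repeating the iterative construction of Step~2 in Section~\ref{secla3} verbatim, as that scheme already accommodates the critical case (the remark following \eqref{faq}).
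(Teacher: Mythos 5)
Your two-step architecture is the paper's: first upgrade \eqref{agh1} to \eqref{qop} via Lemma~\ref{ger0}, then compare $u$ with a family of sub-solutions whose $\delta\to 0$ limit behaves like $r^{-\varpi_2-\varepsilon}$, and feed the result into Step~2 of Section~\ref{secla3}. You also correctly identify $2/(1+\tau)$ as the critical exponent and the need for the second-order Taylor bound (that is \eqref{oga}, not \eqref{cao1}, which is only first order). The gap is in your first step: the family $v_{\delta,\varepsilon}(r)=c\,r^{-\varpi_2}|\log r|^{s_0}(e^\delta-e^{-r^\varepsilon})^{-1}$ is \emph{not} a family of sub-solutions on a punctured ball of radius independent of $\delta$. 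Writing $g=rv_{\delta,\varepsilon}'/v_{\delta,\varepsilon}=|\varpi_2|-F$ with $F=s_0/|\log r|+\varepsilon\Psi_\delta(r^\varepsilon)$ and $\Psi_\delta$ as in \eqref{psio}, one has $\mathbb L_\rho[v_{\delta,\varepsilon}]=\frac{v_{\delta,\varepsilon}}{r^2}(g^2-2\rho g+rg')$, and after the cancellations from $f_\rho(\varpi_2)=0$ and \eqref{adi} the surviving lower bound is
\begin{equation*}
\mathbb L_{\rho,\lambda,\tau}[v_{\delta,\varepsilon}]\,\frac{r^2}{v_{\delta,\varepsilon}}\;\geq\;\Bigl(1-\tfrac{(1-\tau)(1+\nu)}{2}\Bigr)F^2-\frac{s_0}{\log^2(1/r)}-\varepsilon^2\,\Psi_\delta(r^\varepsilon)\bigl(1-r^\varepsilon-\Psi_\delta(r^\varepsilon)\bigr),
\end{equation*}
where the last term is $-\varepsilon^2 r^\varepsilon\Psi_\delta'(r^\varepsilon)$, the contribution of $rg'$ from the regularizing factor, which your expansion omits. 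When $\rho>\Upsilon$ (Lemma~\ref{lem93}) this term is dominated by the positive \emph{linear} term $\chi_\nu\,\varepsilon\Psi_\delta(r^\varepsilon)$ from \eqref{chio}; here that linear term vanishes by \eqref{adi}, and $-\varepsilon^2\Psi_\delta(1-r^\varepsilon-\Psi_\delta)$ must be absorbed by quadratic terms of size $\bigl(2-\frac{(1-\tau)(1+\nu)}{2}\bigr)\varepsilon^2\Psi_\delta^2+O(\varepsilon/|\log r|)+O(1/|\log r|^2)$. Take $r$ with $|\log r|\gg\varepsilon^{-2}$ and then $\delta\in(0,1)$ with $\Psi_\delta(r^\varepsilon)=1/2$ (possible for every small $r$, since $\Psi_\delta(t)$ decreases continuously from $h(t)\approx 1$ to $te^{-t}/(e-e^{-t})\approx 0$ as $\delta$ runs over $(0,1)$, with $h$ as in \eqref{zis}): the right-hand side becomes $-\frac{(1-\tau)(1+\nu)}{8}\varepsilon^2+o(\varepsilon^2)<0$, so $\mathbb L_{\rho,\lambda,\tau}[v_{\delta,\varepsilon}]<0<r^\theta v_{\delta,\varepsilon}^q$ there, and the comparison cannot be run down to $\delta\to 0$. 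Your claimed leading behaviour $s_0[(1+\tau)s_0/2-1]/(r^2|\log r|^2)$ is therefore not correct uniformly in $\delta$.

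This is exactly why the paper switches to the regularization $z_\delta(r)=\frac{c}{\delta^\mu}r^{-\varpi_2}\log^\mu(1+\delta/r^\alpha)$ in \eqref{vaz}: with $J(t)=t/((1+t)\log(1+t))$ one has $tJ'(t)>-J^2(t)$, so the derivative of the cutoff contributes a term bounded below by $-\alpha^2\mu J^2$, i.e.\ genuinely \emph{quadratic} in the logarithmic derivative, and it is absorbed into the coefficient $\alpha^2\mu^2\bigl(\frac{1+\tau-(1-\tau)\nu}{2}-\frac{1}{\mu}\bigr)$ of $J^2$, which is positive precisely when $\mu>2/(1+\tau)$ after shrinking $\nu$. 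If you want to keep your ansatz you would need a cutoff with the same property; $e^\delta-e^{-r^\varepsilon}$ does not have it. A secondary, fixable point: in your comparison step, the uniform bound $v_{\delta,\varepsilon}\leq v_{0,\varepsilon}\sim c\,r^{-\varpi_2-\varepsilon}|\log r|^{s_0}$ does not yield $u/v_{\delta,\varepsilon}\to\infty$ near $0$ via Lemma~\ref{ger0}, since $r^{-\varpi_2-\varepsilon}$ is not dominated by $r^{-\varpi_2}|\log r|^{s}$; what you need is the fixed-$\delta$ asymptotics $v_{\delta,\varepsilon}(r)\sim\frac{c}{e^\delta-1}\,r^{-\varpi_2}|\log r|^{s_0}$.
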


\begin{proof} Fix $\mu>2/(1+\tau)$ and choose $\nu>0$ small such that 
$$ \nu \in \left(0, \frac{1+\tau -2/\mu}{1-\tau}\right).   
$$
Let $s_\nu>0$ be small enough such that for all $s\in (0,s_\nu)$, 
\begin{equation} \label{oga} \left(1-\frac{s}{|\varpi_2|}\right)^{1-\tau} \geq 1-\frac{(1-\tau)}{|\varpi_2|} s-\frac{\left(1-\tau\right)\tau \left(1+\nu\right)}{2\varpi_2^2} s^2. 
\end{equation}

We show that for every $\varepsilon_0\in (0, \min\, \{s_\nu,\beta-\varpi_2\} )$, we have 
\begin{equation} \label{omn} \lim_{|x|\to 0} |x|^{\varpi_2+\varepsilon_0} u(x)=\infty.
\end{equation}

We fix $\alpha=\alpha(\mu,\nu)>0$ small such that $\varepsilon_0<\alpha\mu <\min\, \{s_\nu,\beta-\varpi_2\} $. 
Let $c>0$ be arbitrary. 

For every $\delta>0$ and $r\in (0,1)$, we define 
\begin{equation} \label{vaz}
z_\delta(r)=\frac{c}{\delta^\mu} r^{-\varpi_2} \log^\mu (1+\delta/r^\alpha).  
\end{equation}

{\bf Claim:} {\em 
There exists $c_\mu>0$ such that for every $c\in (0,c_\mu)$ and $\delta>0$, the function $z_\delta$ in \eqref{vaz} 
is a positive radial sub-solution of \eqref{e1} in $B_1(0)\setminus \{0\}$ and $z_\delta'(r)>0$ for every $r\in (0,1)$.}

\begin{proof}[Proof of the Claim] 
For every $t>0$, we define $J(t)$ as follows
\begin{equation} \label{jef} J(t)=\frac{t}{\left( t+1\right) \log\,(t+1)}\in (0,1). 
\end{equation}

We have $M_\mu\in (0,\infty)$, where we set
\begin{equation} \label{juw}  M_\mu=
\sup_{t\in (0,\infty) } (1+t)^2\left( \frac{\log \,(1+t) }{t}\right)^{\mu \left(q-1\right)+2}.
\end{equation}
Let $c\in (0,c_\mu)$ be arbitrary, where $c_\mu>0$ is defined by 
\begin{equation} \label{cmu1} c_\mu=\left[ \frac{\alpha^2\mu^2}{M_\mu} \left(\frac{1+\tau-\left(1-\tau\right)\nu}{2}-\frac{1}{\mu} \right)
\right]^{\frac{1}{q-1}}. \end{equation}
In what follows, we let $r\in (0,1)$. 
From \eqref{vaz} and using that $0<\alpha\mu <\beta-\varpi_2<|\varpi_2|$, we get
\begin{equation} \label{oga1} z'_\delta(r)=\frac{z_\delta(r)}{r} \left(-\varpi_2-\alpha \mu J(\delta/r^\alpha) \right)>0. 
\end{equation}
Since $\alpha\mu\in (0,s_\nu)$ and $J(t)\in (0,1)$, from \eqref{oga} and \eqref{oga1}, we infer that 
$$  \begin{aligned}
\lambda \,G_\tau[z_\delta(r)]& =\lambda |\varpi_2|^{1-\tau} \frac{z_\delta(r)}{r^2}  \left(1-\frac{\alpha \mu}{|\varpi_2|} J(\delta/r^\alpha) \right)^{1-\tau}\\
&\geq \lambda |\varpi_2|^{1-\tau} \frac{z_\delta(r)}{r^2}   
\left( 1-\frac{\left(1-\tau\right)\alpha\mu }{|\varpi_2|} J(\delta/r^\alpha) -\frac{\left(1-\tau\right)\tau \left(1+\nu\right) (\alpha  \mu)^2 }{2\varpi_2^2} J^2(\delta/r^\alpha)
\right).
\end{aligned}
$$
By a standard calculation, we obtain that 
$$ \mathbb L_\rho[z_\delta(r)] =\frac{z_\delta(r)}{r^2} \left[ \varpi_2^2+2\rho\varpi_2+2\,(\varpi_2+\rho) \alpha\mu J(\delta/r^\alpha) +
(\alpha\mu)^2 J^2(\delta/r^\alpha) + \alpha^2 \mu \,J'(\delta/r^\alpha) \,\delta/r^\alpha
\right].  
$$
Using that $f_{\rho}(\varpi_2)=0$ and \eqref{adi}, 
we see that
\begin{equation} \label{jer} 
 \mathbb L_{\rho,\lambda,\tau}[z_\delta(r)] \geq 
\frac{z_\delta(r)}{r^2} \left[ \alpha^2\mu^2 \left(1-\frac{\left(1-\tau\right)\left(1+\nu\right)}{2} \right) J^2(\delta/r^\alpha) +\alpha^2 \mu J'(\delta/r^\alpha)\,\delta/r^\alpha
\right].
\end{equation}
From the definition of $J$ in \eqref{jef}, we get that 
$$ t J'(t)=\frac{t}{(1+t)^2 \,\log\, (1+t)} -J^2(t)>-J^2(t)\quad \mbox{for all } t>0. 
$$ 
Using this fact in \eqref{jer}, we arrive at 
$$   \mathbb L_{\rho,\lambda,\tau}[z_\delta(r)] \geq 
\frac{z_\delta(r)}{r^2} \alpha^2\mu^2 \left(\frac{1+\tau-\left(1-\tau\right)\nu}{2}-\frac{1}{\mu} \right) J^2(\delta/r^\alpha). 
$$
On the other hand, using \eqref{juw}, $\alpha\mu <\beta-\varpi_2$ and $q>1$,  we get
$$ \begin{aligned} 
r^\theta z_\delta^q(r)& =\frac{z_\delta(r)}{r^2} c^{q-1} r^{(\beta-\varpi_2-\alpha \mu)\left(q-1\right)} \left( \frac{\log\,(1+\delta/r^\alpha)}{\delta/r^\alpha}\right)^{\mu(q-1)}\\
&
\leq  \frac{z_\delta(r)}{r^2} c^{q-1} r^{(\beta-\varpi_2-\alpha \mu)(q-1)} M_\mu \,J^2 (\delta/r^\alpha)\\
& \leq 
\frac{z_\delta(r)}{r^2} c^{q-1} M_\mu \,J^2 (\delta/r^\alpha)\quad \mbox{for all } r\in (0,1).
\end{aligned} $$
Since $c\in (0,c_\mu)$ with $c_\mu$ given in \eqref{cmu1}, we readily conclude that 
$$ \mathbb L_{\rho,\lambda,\tau}[z_\delta(r)] \geq r^\theta z_\delta^q(r)\quad \mbox{for all } r\in (0,1).  
$$
This finishes the proof of the Claim.
\end{proof}

{\bf Proof of Lemma~\ref{gera} concluded.}
Let $r_0\in (0,1)$ be  small such that $B_{r_0}(0)\subset \subset\Omega$. Let $c_\mu>0$ be as in the Claim. 
Choose $c\in (0,c_\mu)$ small such that
$ \min_{|x|=r_0}u(x)\geq c\,r_0^{-\varpi_2-\alpha \mu} $. This ensures that $u(x)\geq z_\delta(|x|)$ for all $|x|=r_0$ and $\delta>0$. 
In view of Lemma~\ref{ger0}, we find that 
$$ \lim_{|x|\to 0} \frac{u(x)}{z_\delta(|x|)}=0\quad \mbox{for every } \delta>0. 
$$
Hence, by the comparison principle, we infer that 
$$u(x)\geq z_\delta(|x|)\quad \mbox{for all } 0<|x|\leq r_0\ \mbox{and }\delta>0.$$
By letting $\delta\to 0$ and then $|x|\to 0$, we arrive at 
\begin{equation} \label{omma} \liminf_{|x|\to 0} |x|^{\varpi_2+\alpha\mu} u(x)\geq c>0.  
\end{equation}
Since $\varepsilon_0<\alpha\mu<\beta-\varpi_2$, from \eqref{omma}, we attain \eqref{omn}. 
 \end{proof}

In light of Lemma~\ref{gera}, using the same proof as in Step~2 in Section~\ref{secla3}, we conclude \eqref{nopi4}.
This finishes the proof of Theorem~\ref{ama1} (B) when $\rho=\Upsilon$. 
\qed


\part{Proof of Theorem~\ref{globo}}

\section{Optimal condition for the existence of global solutions} \label{nine}

The main result of this section is the following.

\begin{theorem} \label{ipo} Let \eqref{e2} hold. Then, \eqref{e1} has positive solutions in $\mathbb R^N\setminus \{0\}$ if and only if 
$f_\rho(\beta)>0$.
\end{theorem}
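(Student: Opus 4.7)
The plan is to prove the two implications separately. For the sufficiency, when $f_\rho(\beta)>0$, the function $U_{\rho,\beta}(x)=\Lambda|x|^{-\beta}$ defined in \eqref{u0} with $\Lambda=[f_\rho(\beta)]^{1/(q-1)}$ is an explicit positive radial solution in $\mathbb R^N\setminus\{0\}$: a direct computation gives $\mathbb L_{\rho,\lambda,\tau}[U_{\rho,\beta}]=\Lambda f_\rho(\beta)|x|^{-\beta-2}$, while $|x|^\theta U_{\rho,\beta}^q=\Lambda^q|x|^{-\beta-2}$ (using $\theta-q\beta=-\beta-2$), and equality holds by the definition of $\Lambda$.

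For the necessity I argue by contradiction: assume $f_\rho(\beta)\le 0$ and let $u$ be a positive solution of \eqref{e1} on $\mathbb R^N\setminus\{0\}$. I first treat the main case $\theta\ne -2$, so $\beta\ne 0$. Corollary~\ref{zerop} applied to $u$ directly yields $\lim_{|x|\to 0}|x|^\beta u(x)=0$. The modified Kelvin transform $\widetilde u(y):=u(y/|y|^2)$, by Appendix~\ref{Kel}, is a positive solution on $\mathbb R^N\setminus\{0\}$ of \eqref{eeq1}, i.e.\ equation \eqref{e1} with $\rho$ replaced by $-\rho$ and $\theta$ by $\widetilde\theta=-\theta-4$; correspondingly $\widetilde\beta=-\beta\ne 0$ and $f_{-\rho}(\widetilde\beta)=f_\rho(\beta)\le 0$ from \eqref{frol}. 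Applying Corollary~\ref{zerop} to $\widetilde u$ gives $\lim_{|y|\to 0}|y|^{\widetilde\beta}\widetilde u(y)=0$, which via the identity $|y|^{\widetilde\beta}\widetilde u(y)=|x|^\beta u(x)$ for $x=y/|y|^2$ translates to $\lim_{|x|\to\infty}|x|^\beta u(x)=0$.

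Thus the positive function $w(x):=|x|^\beta u(x)$ on $\mathbb R^N\setminus\{0\}$ tends to $0$ at both the origin and infinity, so it attains a finite positive maximum $M_*:=\sup_{\mathbb R^N\setminus\{0\}}|x|^\beta u(x)\in(0,\infty)$ at some point $x_0\ne 0$. Setting $U(x):=M_*|x|^{-\beta}$, we have $u\le U$ on $\mathbb R^N\setminus\{0\}$ with $u(x_0)=U(x_0)$, and repeating the sufficiency computation gives
$$ \mathbb L_{\rho,\lambda,\tau}[U]=M_* f_\rho(\beta)|x|^{-\beta-2}\le 0<M_*^q|x|^{-\beta-2}=|x|^\theta U^q, $$
so $U$ is a strict super-solution of \eqref{e1}. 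Applying the strong maximum principle (Lemma~\ref{adol}) to the ordered pair $u\le U$ which touch at $x_0\ne 0$ (both are positive $C^1$-functions on the connected set $\mathbb R^N\setminus\{0\}$, with $|\nabla U|\ne 0$), we conclude $u\equiv U$. But then $u$ would inherit the strict inequality displayed above, contradicting the fact that $u$ solves \eqref{e1} with equality.

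The remaining critical case $\theta=-2$ (equivalently $\beta=0$), where Corollary~\ref{zerop} does not apply and the would-be barrier $U\equiv M_*$ degenerates to a constant, is handled separately by Proposition~\ref{pzerop}, which rules out positive solutions of \eqref{e1} on $\mathbb R^N\setminus\{0\}$ in this degenerate situation. This final case is the principal obstacle in the scheme: for $\beta=0$ both the scaling invariance \eqref{ssig} and the barrier $M_*|x|^{-\beta}$ degenerate, so a bespoke argument (exploiting the specific structure of the critical equation $\mathbb L_{\rho,\lambda,\tau}[u]=u^q/|x|^2$) is needed, and it is the subsidiary role of Proposition~\ref{pzerop} that absorbs this difficulty.
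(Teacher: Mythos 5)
Your overall architecture matches the paper's: sufficiency via the explicit solution $U_{\rho,\beta}$, and necessity by showing $|x|^\beta u(x)\to 0$ both at the origin (Corollary~\ref{zerop}) and at infinity (via the modified Kelvin transform), then deriving a contradiction. For $\beta\neq 0$ your contradiction step is a genuinely different, and valid, route from the paper's: you observe that $w=|x|^\beta u$ attains its supremum $M_*$ at some $x_0\neq 0$, so $u$ touches the strict super-solution $U=M_*|x|^{-\beta}$ from below, and Lemma~\ref{adol} (applicable since $|\nabla U|\neq 0$) forces $u\equiv U$, whence $f_\rho(\beta)=M_*^{q-1}>0$, a contradiction. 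The paper's Proposition~\ref{haha} instead notes that $\varepsilon|x|^{-\beta}$ is a super-solution for every $\varepsilon>0$, dominates $u$ near $0$ and near $\infty$ by the two limits, and concludes $u\le\varepsilon|x|^{-\beta}$ everywhere by the comparison principle, then lets $\varepsilon\to 0$. Your version trades the one-parameter family of barriers for a single touching-point argument; both are sound, and yours has the mild extra burden of checking that the supremum of $w$ is actually attained (which it is, since $w$ is continuous, positive, and vanishes at both ends).

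The gap is in the case $\beta=0$ (i.e.\ $\theta=-2$). You write that Proposition~\ref{pzerop} ``rules out positive solutions'' in this situation, but that proposition only establishes $\lim_{|x|\to 0}u(x)=0$ for any positive solution; it is a statement about local behaviour, not a non-existence result. To finish, you still need to (i) apply the Kelvin transform together with Proposition~\ref{pzerop} (this is Corollary~\ref{goos}(i)) to get $\lim_{|x|\to\infty}u(x)=0$ as well, and (ii) conclude that $u$ then attains a positive interior maximum at some $x_*\neq 0$, where $\nabla u(x_*)=0$ and $\Delta u(x_*)\le 0$, so that $\mathbb L_{\rho,\lambda,\tau}[u](x_*)\le 0<|x_*|^{-2}u(x_*)^q$, a contradiction. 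Step (ii) is elementary but it is not contained in Proposition~\ref{pzerop}, and it cannot be replaced by your touching-point argument, since (as you correctly note) the barrier degenerates to a constant and Lemma~\ref{adol} then fails for lack of a nonvanishing gradient. This is exactly the second half of the paper's proof of Proposition~\ref{haha}; you should state it rather than attribute the whole conclusion to Proposition~\ref{pzerop}.
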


If $f_\rho(\beta)>0$, then \eqref{e1} has at least 
one positive solution in $\mathbb R^N\setminus \{0\}$,  see $U_{\rho,\beta}$ in \eqref{u0}. 
We conclude Theorem~\ref{ipo} from Proposition~\ref{haha}, which relies essentially on 
Proposition~\ref{pzerop} below.

\begin{proposition} \label{pzerop} Let \eqref{e2} hold 
and $f_{\rho}(\beta)\leq 0$. Then, every  
positive solution $u$ of \eqref{e1} satisfies $\lim_{|x|\to 0} |x|^\beta u(x)=0$. 
\end{proposition}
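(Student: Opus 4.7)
The plan is to split the proof according to whether $\beta \neq 0$ or $\beta = 0$. When $\beta \neq 0$, the conclusion is immediate: a positive solution of \eqref{e1} is in particular a positive sub-solution, $f_\rho(\beta) \leq 0$ holds, and so Corollary~\ref{zerop} already delivers $\lim_{|x|\to 0} |x|^\beta u(x) = 0$. All of the work therefore lies in the complementary case $\beta = 0$ (equivalently $\theta = -2$), where $u$ is a positive global solution on $\mathbb R^N \setminus \{0\}$ of the scale-invariant equation $\mathbb L_{\rho,\lambda,\tau}[u] = |x|^{-2} u^q$, and the goal is to show $\lim_{|x|\to 0} u(x) = 0$.

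Arguing by contradiction, I would suppose $\limsup_{|x|\to 0} u(x) > 0$; by Corollary~\ref{peree} this forces $u$ to be pinched between positive constants $0 < \ell \leq u \leq L < \infty$ in some punctured neighbourhood of $0$. Next I would exploit the scale invariance: the family $u_\sigma(x) := u(\sigma x)$, $\sigma > 0$, consists of positive global solutions of the same equation, and by the two-sided bound near $0$ each $u_{\sigma}$ with $\sigma$ small is uniformly bounded above and below on every fixed compact subset of $\mathbb R^N \setminus \{0\}$. The spherical Harnack inequality and the gradient estimates of Proposition~\ref{aurr} guarantee that, for any sequence $\sigma_n \to 0$, the rescaled family is precompact in $C^1_{\mathrm{loc}}(\mathbb R^N \setminus \{0\})$; extracting a subsequence, I would obtain a positive global solution $V$ of $\mathbb L_{\rho,\lambda,\tau}[V] = |x|^{-2} V^q$ on $\mathbb R^N \setminus \{0\}$ with $\ell \leq V \leq L$ everywhere.

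The final step is to rule out the existence of such a uniformly bounded positive global solution $V$. The tactic I would follow is to apply the modified Kelvin transform from Appendix~\ref{Kel}, under which the scale-invariant equation at $\theta = -2$ is preserved with $\rho \mapsto -\rho$; consequently $\widetilde V(x) := V(x/|x|^2)$ is another positive global solution inheriting the same two-sided bounds, now at both $0$ and infinity. Iterating the rescaling/compactness argument on $V$ (and on $\widetilde V$), I expect to collapse the limiting profile onto a positive constant, which cannot satisfy $\mathbb L_{\rho,\lambda,\tau}[\,\cdot\,] = |x|^{-2} (\,\cdot\,)^q$ since its left-hand side vanishes identically while the right-hand side is strictly positive; this contradicts $\ell > 0$ and closes the case. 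Rigorously forcing the bounded rescaled limit onto a constant profile is the main obstacle and the heart of the critical case $\theta = -2$: beyond the Harnack and gradient control, it is likely to require an integral/energy identity adapted to the scale-invariant equation, or an appeal to a maximum-principle argument exploiting the strict positivity of the right-hand side under any constant lower bound on $V$.
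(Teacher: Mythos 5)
Your reduction to the case $\beta=0$ via Corollary~\ref{zerop} is exactly what the paper does, and your preliminary steps for $\beta=0$ are sound: by Corollary~\ref{peree} together with the a priori bound of Lemma~\ref{api} (which for $\beta=0$ reads $u\leq C_0$), a failure of the conclusion pins $u$ between positive constants near the origin, and Proposition~\ref{aurr} does give $C^1_{\rm loc}$ compactness of the rescalings $u(\sigma_n\cdot)$, producing an entire positive solution $V$ of the scale-invariant equation with $\ell\leq V\leq L$ on $\mathbb R^N\setminus\{0\}$.

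The genuine gap is the final Liouville step: you never actually rule out such a $V$, and the two routes you sketch do not close it. A maximum-principle argument of the type used in Proposition~\ref{haha} needs the supremum of $V$ to be \emph{attained} at an interior point; there the authors can guarantee this because $u\to 0$ both at $0$ and at infinity, whereas your $V$ is bounded below by $\ell>0$, so its supremum may well be approached only along $|x|\to 0$ or $|x|\to\infty$ and the pointwise contradiction $\Delta V(x_*)\leq 0<|x_*|^{-2}V(x_*)^q$ is unavailable. Likewise, iterating the rescaling (or applying the modified Kelvin transform) only reproduces another bounded entire solution of the same equation; nothing in the compactness argument forces the limit to be a \emph{constant}, so the ``constant profile'' contradiction is not reached. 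This missing Liouville theorem is essentially the whole content of the $\beta=0$ case. The paper takes a different and self-contained route: using the spherical Harnack constant $d_1$ of Proposition~\ref{aurr} and minimal solutions of radially symmetric boundary value problems on annuli, it constructs a positive \emph{radial} solution $V$ with $u/d_1\leq V\leq u$ near $0$, and then works with the ODE at $\theta=-2$. There $V'$ has constant sign near $0$ (no interior local maxima), so $\lim_{r\to 0^+}V(r)=:L_0\in[0,\infty)$ exists; if $L_0>0$, the logarithmic derivative $Y(r)=rV'(r)/V(r)$ is shown to be eventually monotone with $Y(r)\to 0$, whence multiplying the ODE by $r^2$ gives $r^2V''(r)\to L_0^q>0$, contradicting $rV'(r)\to 0$. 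If you want to keep your blow-up framework, you would need to supply an argument of comparable strength (e.g.\ a radialization plus this ODE analysis, or a genuine Liouville theorem for bounded entire solutions bounded away from zero), which your proposal leaves open.
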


\begin{proof} 
By Corollary~\ref{zerop}, it remains to study the case $\beta=0$. Let $u$ be any positive solution of \eqref{e1}. 
  Let $r_0>0$ be small such that $B_{4r_0}(0)\subset\subset \Omega$. Let $d_1>1$ be as in 
  Proposition~\ref{aurr}. By the proof of 
  Lemma~\ref{serio}, there exists a positive radial solution $V$ of \eqref{e1} in $B_{r_0}(0)\setminus \{0\}$ such that
  \begin{equation} \label{woss}  u(x)/d_1\leq V(|x|) \leq u(x)\quad \mbox{for every } 0<|x|\leq r_0.
  \end{equation}

  \vspace{0.2cm}
 We show that every positive radial solution $v$ of \eqref{e1} in $B_{r_0}(0)\setminus \{0\}$ satisfies 
  $\lim_{r\to 0^+} v(r)=0$. Indeed, since $\theta=-2$, we see that $v$ is a positive solution of the following ODE
  \begin{equation} \label{froi} v''(r)+(1-2\rho)\frac{v'(r)}{r} +\lambda\, v^\tau(r) \frac{|v'(r)|^{1-\tau}}{r^{1+\tau}}=r^{-2} v^q(r)\quad \mbox{for } 0<r<r_0. 
  \end{equation} 
  Observe that there exists $r_1\in (0,r_0) $ such that  $v'(r)\not=0$ for every $r\in (0,r_1)$. 
  Indeed, if $v'(r_*)=0$ for some $r_*\in (0,r_0)$, then $v''(r_*)>0$ so that $r_*$ is a local minimum. As $v$ has no local maximum on $(0,r_0)$, it follows that $v'$ has at most one critical point on $(0,r_0)$.   
  This shows that $v'$ has constant sign on $(0,r_1)$ for some $r_1\in (0,r_0)$. Hence, in view of Theorem~\ref{thi}, there exists 
  $$ \lim_{r\to 0^+} v(r)\in [0,\infty).$$   
  
  Assume by contradiction that $\lim_{r\to 0^+} v(r)\in (0,\infty)$. We define $Y(r)$ by 
  \begin{equation} \label{yire} Y(r)=\frac{r v'(r)}{v(r)}\quad \mbox{for every }r\in (0,r_1).
  \end{equation}	
  
  By a simple calculation, we find that 
  $$ r Y'(r)=-Y^2(r) +2\rho Y(r) +v^{q-1}(r)-\lambda |
  Y(r)|^{1-\tau}.$$
  
  We show that $Y'(r)\not =0$ for any $r>0$ sufficiently small. We reason by contradiction. Suppose that
  $Y'(r_k)=0$ for all $k\geq 2$, where 
$\{r_k\}_{k\geq 2}$ is a sequence of positive numbers decreasing to zero as $k\to \infty$. We assume that $r_{k+1}<r_k\leq r_1$ for all $k\geq 2$. 
Then, we obtain that 
$$ r_k^2 \,Y''(r_k)=\left(q-1\right) [v(r_k)]^{q-1} Y(r_k) \quad \mbox{for all } k\geq 2. 
$$
Hence, since $q>1$, all the critical points $r_k$ ($k\geq 2$) of $Y$ will be local 
minima points for $Y$ if $v'>0$ on $(0,r_1)$ (respectively, local maxima points if $v'<0$ on $(0,r_1$). This is a contradiction. 

Thus, $Y'(r)\not =0$ for every $r>0$ sufficiently small. Since $\lim_{r\to 0^+} v(r)\in (0,\infty)$, we infer that there exists $\lim_{r\to 0^+} Y(r)=0$.  
We multiply \eqref{froi} by $r^2$ and taking $r\to 0^+$, we arrive at 
$$ \lim_{r\to 0^+} r^2 \,v''(r)=\lim_{r\to 0^+} [v(r)]^{q}\in (0,\infty). 
$$
This is a contradiction with $\lim_{r\to 0^+} rv'(r)=0$. The proof of $\lim_{r\to 0^+} v(r)=0$ is now complete. 
  This fact, jointly with \eqref{woss}, implies that $\lim_{|x|\to 0} u(x)=0$.  
\end{proof}

Consequently, when $\beta\leq 0$, we obtain the following non-existence result for \eqref{e1} on a bounded domain $\Omega$, subject to a homogeneous Dirichlet boundary condition on $\partial \Omega$.  

\begin{cor} \label{non-exi}
Let $\Omega\subset \mathbb R^N$ be a bounded domain such that $0\in \Omega$. Let \eqref{e2} hold, $\rho,\lambda\in \mathbb R$ and $\theta\leq -2$. Then, \eqref{e1}, subject to $u=0$ on $\partial \Omega$, has no positive $C^2(\Omega\setminus \{0\})\cap C(\overline{\Omega}\setminus\{0\})$-solutions. 
\end{cor}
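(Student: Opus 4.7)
The plan is a straightforward interior maximum argument, once the removability of the singularity at the origin is in hand. First, I would show that any positive solution $u$ of \eqref{e1} with $u=0$ on $\partial\Omega$ necessarily satisfies
$$\lim_{|x|\to 0}u(x)=0.$$
For $\theta<-2$ this is Lemma~\ref{api} in Appendix~\ref{sectiune2} (equivalently, the continuous extension recorded in Remark~\ref{micx} via Corollary~\ref{contt}). For $\theta=-2$ one has $\beta=(\theta+2)/(q-1)=0$ and $f_\rho(0)=0\le 0$ (since $0\le\tau<1$), so Proposition~\ref{pzerop} directly yields the claim.

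With this at hand, I would extend $u$ continuously to $\overline{\Omega}$ by setting $u(0):=0$. The extended function belongs to $C(\overline{\Omega})\cap C^2(\Omega\setminus\{0\})$, is non-negative, vanishes on $\partial\Omega\cup\{0\}$, and is strictly positive on $\Omega\setminus\{0\}$. Since $\overline{\Omega}$ is compact and $u\not\equiv 0$, the supremum $M:=\max_{\overline{\Omega}}u$ is positive and attained at some interior point $x_0\in\Omega\setminus\{0\}$.

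Because $x_0\neq 0$, $u$ is $C^2$ in a neighbourhood of $x_0$, and the second-derivative test gives $\nabla u(x_0)=\mathbf 0$ and $\Delta u(x_0)\le 0$. Since $0\le\tau<1$, the exponent $1-\tau$ is strictly positive, so $|\nabla u(x_0)|^{1-\tau}=0$; together with $x_0\cdot\nabla u(x_0)=0$ this yields
$$\mathbb L_{\rho,\lambda,\tau}[u(x_0)]=\Delta u(x_0)\le 0.$$
On the other hand, $u\in C^2$ near $x_0$ solves \eqref{e1} classically at $x_0$, so the right-hand side there equals $|x_0|^{\theta}u(x_0)^q=|x_0|^{\theta}M^{q}>0$, which contradicts the previous inequality. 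Hence no such positive solution can exist.

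The only substantive ingredient is the removability step, i.e.\ $\lim_{|x|\to 0}u(x)=0$; but this has already been settled in the excerpt (Lemma~\ref{api} and Proposition~\ref{pzerop}), so the argument reduces to the elementary touching-the-maximum computation above. It is worth emphasising that the condition $\tau<1$ is used crucially to kill the gradient-dependent term at $x_0$; the argument would break at $\tau=1$, consistent with the fact that the paper's framework excludes that endpoint.
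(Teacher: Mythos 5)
Your proposal is correct and follows essentially the same route as the paper: establish $\lim_{|x|\to 0}u(x)=0$ via Corollary~\ref{contt} (for $\theta<-2$) and Proposition~\ref{pzerop} (for $\theta=-2$, using $\beta=0$ and $f_\rho(0)=0$), extend $u$ by $u(0)=0$, and derive a contradiction at an interior positive maximum where $\nabla u=0$ and $\Delta u\le 0$ force the left-hand side of \eqref{e1} to be non-positive while the right-hand side is positive. Your additional remark on the role of $\tau<1$ in annihilating the gradient term at the maximum is a correct and pertinent observation.
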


\begin{proof}
Assume that $ u$ is a positive $C^2(\Omega\setminus \{0\})\cap C(\overline{\Omega}\setminus\{0\})$-solution
of \eqref{e1}, subject to $u=0$ on $\partial\Omega$. Then,   
$\lim_{|x|\to 0} u(x)=0$ by Corollary~\ref{contt} (for $\theta<-2$) and Proposition~\ref{pzerop} (for $\theta=-2$). Hence, by setting $u(0)=0$, then $\max_{\overline{\Omega}}u=u(x_*)>0$ 
for some $x_\star\in \Omega\setminus \{0\}$. Since  $\Delta u(x_\star)\leq 0$ and $\nabla u(x_\star)=0$, the left-hand side (right-hand side) of \eqref{e1} at $x=x_\star$ would be non-positive (positive). This is a contradiction. \end{proof}

\begin{cor} \label{goos}
Let \eqref{e2} hold 
and $\rho,\lambda,\theta\in \mathbb R$. Let $u$ be any positive solution of \eqref{e1} in $\Omega$, where $\Omega$ is a domain such that
$\mathbb R^N\setminus \overline {B_R(0)}\subseteq \Omega$ for some $R>0$.
\begin{itemize}	
\item[(i)] If $f_{\rho}(\beta)\leq 0$, then $\lim_{|x|\to \infty} |x|^\beta u(x)=0$. 	
\item[(ii)] If $f_\rho(\beta)>0$, then either 
$\lim_{|x|\to \infty} |x|^\beta u(x)=0$ or $\lim_{|x|\to \infty} |x|^\beta u(x)=\Lambda$. 
\end{itemize}
\end{cor}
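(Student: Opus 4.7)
\medskip
\noindent\textbf{Proof plan for Corollary~\ref{goos}.}
The plan is to reduce the behaviour at infinity to the behaviour near zero by invoking the modified Kelvin transform developed in Appendix~\ref{Kel}, and then to apply the local results already established in Part I of the paper (specifically Corollary~\ref{zerop}, Proposition~\ref{pzerop}, and the dichotomy of Theorem~\ref{pro1}(c)).

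First I would set $\widetilde u(x) = u(x/|x|^2)$ for $0 < |x| < 1/R$. By the computation in Appendix~\ref{Kel}, $\widetilde u$ is a positive $C^1$-solution of
\begin{equation*}
\mathbb L_{-\rho,\lambda,\tau}[\widetilde u(x)] = |x|^{\widetilde\theta}\,\widetilde u(x)^q \quad\mbox{in } B_{1/R}(0)\setminus\{0\},
\end{equation*}
with $\widetilde \theta = -\theta - 4$. Defining $\widetilde\beta = (\widetilde\theta + 2)/(q-1)$, one has $\widetilde\beta = -\beta$ and $f_{-\rho}(\widetilde\beta) = f_\rho(\beta)$. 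Moreover, the change of variables $y = x/|x|^2$ turns $\lim_{|x|\to 0}$ statements into $\lim_{|y|\to\infty}$ statements, and $|x|^{\widetilde\beta}\widetilde u(x) = |y|^{\beta}u(y)$.

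Next, for assertion (i), suppose $f_\rho(\beta)\leq 0$, so $f_{-\rho}(\widetilde\beta) \leq 0$. If $\widetilde\beta \neq 0$ (equivalently $\theta \neq -2$), Corollary~\ref{zerop} applied to the positive sub-solution $\widetilde u$ of the Kelvin-transformed equation yields $\lim_{|x|\to 0}|x|^{\widetilde\beta}\widetilde u(x) = 0$. If instead $\widetilde\beta = 0$ (equivalently $\theta = -2$), Proposition~\ref{pzerop} applies and gives the same conclusion with $|x|^{\widetilde\beta} = 1$. Translating back via $y = x/|x|^2$ proves (i). For assertion (ii), when $f_\rho(\beta) > 0$, I would apply Theorem~\ref{pro1}(c) to $\widetilde u$, yielding the dichotomy
\begin{equation*}
\mbox{either }\lim_{|x|\to 0}|x|^{\widetilde\beta}\widetilde u(x) = 0 \quad\mbox{or}\quad \lim_{|x|\to 0}\frac{\widetilde u(x)}{U_{-\rho,\widetilde\beta}(x)} = 1.
\end{equation*}
Since $U_{-\rho,\widetilde\beta}(x) = [f_{-\rho}(\widetilde\beta)]^{1/(q-1)}|x|^{-\widetilde\beta} = \Lambda|x|^{\beta}$, the second alternative reads $\lim_{|x|\to 0}|x|^{-\beta}\widetilde u(x) = \Lambda$. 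Reverting to $y = x/|x|^2$, these become $\lim_{|y|\to\infty}|y|^{\beta}u(y) = 0$ or $\lim_{|y|\to\infty}|y|^{\beta}u(y) = \Lambda$, which is precisely the claim.

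There is essentially no substantial obstacle here: the entire content of the corollary is already in the local theory of Section~\ref{sha1}, and the only technical ingredient beyond that is the Kelvin transform identity from Appendix~\ref{Kel}. The mildly delicate point that needs attention is merely bookkeeping the two separate sub-cases $\beta = 0$ and $\beta \neq 0$ in part (i), since Corollary~\ref{zerop} excludes $\beta = 0$ and Proposition~\ref{pzerop} is the result that covers exactly that borderline case.
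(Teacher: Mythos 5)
Your proposal is correct and follows essentially the same route as the paper, which likewise proves Corollary~\ref{goos} by applying the modified Kelvin transform from Appendix~\ref{Kel} and then invoking Proposition~\ref{pzerop} for (i) and Theorem~\ref{pro1}(c) for (ii). The only cosmetic difference is that you split part (i) into the sub-cases $\beta\neq 0$ (Corollary~\ref{zerop}) and $\beta=0$ (Proposition~\ref{pzerop}), whereas the paper cites Proposition~\ref{pzerop} alone, which already subsumes both.
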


\begin{proof} We apply the modified Kelvin transform as in the Appendix~\ref{Kel}, then use Proposition~\ref{pzerop} and Theorem~\ref{pro1} (c) to conclude the claims of (i) and (ii), respectively.  
\end{proof}

\begin{proposition} \label{haha} Let \eqref{e2} hold. If $f_\rho(\beta)\leq 0$, then \eqref{e1} in $\mathbb R^N\setminus \{0\}$ has no positive solutions. 
\end{proposition}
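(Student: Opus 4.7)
I would argue by contradiction: assume $u$ is a positive solution of \eqref{e1} in $\mathbb R^N\setminus\{0\}$ with $f_\rho(\beta)\leq 0$, and extract a contradiction via a touching-point comparison with the radial super-solution $v(x)=M|x|^{-\beta}$, where $M$ is the positive maximum of $|x|^\beta u(x)$.

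First I would establish that $|x|^\beta u(x)\to 0$ at both endpoints. The behaviour at zero is exactly Proposition~\ref{pzerop}. For the behaviour at infinity, applying the modified Kelvin transform $\widetilde u(x)=u(x/|x|^2)$ from Appendix~\ref{Kel} produces a positive solution $\widetilde u$ of \eqref{eeq1} with parameters $-\rho$, $\widetilde\theta=-\theta-4$ and $\widetilde\beta=-\beta$, and the sign condition is preserved since $f_{-\rho}(\widetilde\beta)=f_\rho(\beta)\leq 0$. Proposition~\ref{pzerop} applied to $\widetilde u$ translates back to $\lim_{|y|\to\infty}|y|^\beta u(y)=0$; this is also the content of Corollary~\ref{goos}(i), which I may simply quote.

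Next set $w(x)=|x|^\beta u(x)$, which is continuous and strictly positive on $\mathbb R^N\setminus\{0\}$ and tends to $0$ at both $0$ and $\infty$. Consequently $w$ attains a positive maximum $M=w(x_*)$ at some $x_*\in\mathbb R^N\setminus\{0\}$. Then $u\leq v:=M|x|^{-\beta}$ with equality at $x_*$, and a direct computation gives
\[
\mathbb L_{\rho,\lambda,\tau}[v]=Mf_\rho(\beta)|x|^{-\beta-2}\leq 0<M^q|x|^{-\beta-2}=|x|^\theta v^q,
\]
so $v$ is a \emph{strict} super-solution of \eqref{e1}. Standard elliptic regularity (already used in Proposition~\ref{aurr}) gives $u\in C^2$ near $x_*\neq 0$, hence at the interior contact point $\nabla u(x_*)=\nabla v(x_*)$ and $D^2 u(x_*)\leq D^2 v(x_*)$. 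Because $u$ and its gradient coincide with those of $v$ at $x_*$, the drift and the gradient nonlinearity $\lambda G_\tau$ in $\mathbb L_{\rho,\lambda,\tau}[\cdot]$ contribute identically, so
\[
\mathbb L_{\rho,\lambda,\tau}[v](x_*)-\mathbb L_{\rho,\lambda,\tau}[u](x_*)=\Delta v(x_*)-\Delta u(x_*)\geq 0.
\]
Combined with the strict inequality above, this forces $|x_*|^\theta v(x_*)^q>\mathbb L_{\rho,\lambda,\tau}[v](x_*)\geq \mathbb L_{\rho,\lambda,\tau}[u](x_*)=|x_*|^\theta u(x_*)^q=|x_*|^\theta v(x_*)^q$, the desired contradiction.

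The main obstacle is to justify the touching-point step cleanly. Two observations make it work: the $C^2$-regularity of $u$ at the interior point $x_*\neq 0$ is classical for quasilinear equations with smooth data away from the singularity, and the nonlinear gradient term $G_\tau(x_*,u,\nabla u)$ automatically matches $G_\tau(x_*,v,\nabla v)$ because the pair $(u,\nabla u)$ agrees with $(v,\nabla v)$ at $x_*$. This allows one to avoid any linearisation; alternatively the argument may be packaged as an application of the strong comparison principle in Lemma~\ref{adol}.
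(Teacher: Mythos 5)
Your argument is correct, and it diverges from the paper's proof in the main step. The paper also begins by establishing $\lim_{|x|\to 0}|x|^{\beta}u(x)=\lim_{|x|\to\infty}|x|^{\beta}u(x)=0$ via Proposition~\ref{pzerop} and Corollary~\ref{goos}, exactly as you do. But it then splits into two cases: for $\beta\neq 0$ it compares $u$ with the super-solution $\varepsilon|x|^{-\beta}$ on large annuli (whose boundary values dominate $u$ thanks to the two limits), concludes $u\leq\varepsilon|x|^{-\beta}$ everywhere by the comparison principle, and lets $\varepsilon\to 0$ to force $u\equiv 0$; only for $\beta=0$ does it use a touching-point argument at the global maximum of $u$, where $\nabla u(x_*)=0$ kills the drift and gradient terms and $\Delta u(x_*)\leq 0$ contradicts $|x_*|^{-2}u(x_*)^q>0$. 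Your proof unifies both cases by maximising $w=|x|^{\beta}u$ and touching $u$ from above with $v=M|x|^{-\beta}$ at the interior maximum point; the key observation that $(u,\nabla u)=(v,\nabla v)$ at $x_*$ makes the drift and the nonsmooth term $\lambda G_\tau$ cancel exactly, so only $\Delta(v-u)(x_*)\geq 0$ is needed, and the strict inequality $\mathbb L_{\rho,\lambda,\tau}[v]\leq 0<|x|^{\theta}v^{q}$ closes the contradiction. What each approach buys: the paper's $\varepsilon$-comparison for $\beta\neq 0$ works entirely at the $C^1$/distributional level and never needs second derivatives of $u$, whereas your route requires interior $C^2$ regularity at $x_*$ — but this is exactly the regularity the paper itself invokes (via Proposition~\ref{aurr} and Schauder theory) in its $\beta=0$ case, so nothing new is being assumed; in exchange you avoid the comparison-principle machinery of Lemmas~\ref{co1}--\ref{co2} and its gradient non-vanishing hypotheses, and you treat all $\beta$ uniformly. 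One small caveat: your parenthetical alternative of invoking Lemma~\ref{adol} directly would not cover $\beta=0$ (there $\nabla v\equiv 0$ and the hypothesis $|\nabla u|+|\nabla v|>0$ may fail at $x_*$), so the direct second-order touching argument you give first should be regarded as the actual proof.
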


\begin{proof}
	Assume by contradiction that $u$ is a positive $C^1(\mathbb R^N\setminus \{0\})$-solution of \eqref{e1}. 
		Then, by Proposition~\ref{pzerop} and Corollary~\ref{goos}, we have
\begin{equation} \label{wola} \lim_{|x|\to 0} |x|^\beta u(x)=0,\quad 
\lim_{|x|\to \infty} |x|^{\beta} u(x)=0. \end{equation}

$\bullet$ Assume that $\beta\not=0$.  
Since $f_\rho(\beta)\leq 0$, we see that 
$\varepsilon |x|^{-\beta}$ is a super-solution of \eqref{e1} in $\mathbb R^N\setminus \{0\}$. 
 Hence, from \eqref{wola} and the comparison principle, it follows that 
 $$ u(x)\leq \varepsilon |x|^{-\beta}\quad \mbox{for every }
 |x|>0.$$
 By letting $\varepsilon\to 0,$ we get $u\equiv 0$ in $\mathbb R^N\setminus \{0\}$, which is a contradiction. 
 
 \vspace{0.2cm}  
$\bullet$ If $\beta=0$, then \eqref{wola} gives that 
$\lim_{|x|\to 0} u(x)=\lim_{|x|\to \infty} u(x)=0$ and hence, $u$ achieves its positive global maximum at some point $x_*\in \mathbb R^N\setminus \{0\}$. 
Let $\varepsilon>0$ be small and $R>0$ be large such that $x_*\in D_{\varepsilon,R}:=\{x\in \mathbb R^N:\ \varepsilon<|x|<R\}$ and $u|_{\partial D_{\varepsilon,R}}<u(x_*)$. 
Then, by Proposition~\ref{aurr} and the elliptic regularity theory, $u\in C^2(D_{\varepsilon,R})$ is a classical solution of \eqref{e1} on $D_{\varepsilon,R}$ and 
$$ 0<|x_*|^{-2} [u(x_*)]^q=\mathbb L_{\rho,\lambda,\tau}[u(x_*)]=\Delta u(x_*)\leq 0, $$  
 which is a contradiction.
 This completes the proof of Proposition~\ref{haha}. 
\end{proof}

\section{Classification of solutions
in $\mathbb R^N\setminus \{0\}$ (Theorem~\ref{rod1c})} \label{amod}

Throughout this section, we assume that \eqref{e2} holds, $\theta<-2$ and $f_{\rho}(\beta)>0$. 

In Theorem~\ref{rod1c} below,  we reveal 
all the behaviours at infinity that can coexist with those near zero 
for the positive solutions of \eqref{e1} in $\mathbb R^N\setminus \{0\}$. 

\begin{theorem}
\label{rod1c} 	
Let \eqref{e2} hold, $\theta<-2$ and $f_\rho(\beta)>0$. 
Let $u$ be any positive solution of \eqref{e1} in $\mathbb R^N\setminus \{0\}$ other than $U_{\rho,\beta}$ in \eqref{u0}.  

$\bullet$ Assume that $\beta\in (\varpi_2,0)$ in Case $ [N_2]$. Then, we have the following three possibilities: 
\begin{itemize}
	\item[(I)] Either $u(x)\sim b \,\Phi_{\varpi_2(\rho)}(|x|)$ as $|x|\to 0$ for some $b\in \mathbb R_+$ and $u(x)\sim U_{\rho,\beta}(|x|)$ as $|x|\to \infty$;
	\item[(II)] Or $u(x)\sim U_{\rho,\beta}(|x|)$ as $|x|\to 0$ and 
	 $\lim_{|x|\to \infty} u(x)=c$ for some $c\in \mathbb R_+$;	
	\item[(III)] Or, there exist $b,c\in \mathbb R_+$ such that $u(x)\sim b \,\Phi_{\varpi_2(\rho)}(|x|)$ 
	as $|x|\to 0$ and $\lim_{|x|\to \infty} u(x)=c$. 	
\end{itemize} 

$\bullet$ In the remaining situations, we have \begin{itemize} 
\item[(II$'$)] $u(x)\sim U_{\rho,\beta}(|x|)$ as $|x|\to 0$ 
and there exists $c\in \mathbb R_+$ such that 
\begin{eqnarray} 
& \displaystyle \lim_{|x|\to \infty} |x|^{\varpi_1} u(x)=c \ \ 
\mbox{in Case }[N_1]\ \mbox{with } \beta<\varpi_1, \label{roz1} \\
& \displaystyle  \lim_{|x|\to \infty} \frac{u(x)}{\Psi_{\varpi_1(\rho)}(|x|)}=c \ \ 
\mbox{in Case }[N_2]\ \mbox{with } \beta<\varpi_1, \label{miau} \\ 
& \displaystyle
\lim_{|x|\to \infty} \frac{u(x)}{\log |x|}=c \ \  \mbox{if } \lambda=2\rho\ \mbox{when } \tau=0, \label{must1}  \\
& \displaystyle \lim_{|x|\to \infty} u(x)=c  \ \  \mbox{in Case } [N_0]\  \mbox{with } 
\lambda\not=2\rho\ \mbox{if } \tau=0. \label{roz2}
\end{eqnarray}	
\end{itemize}	

\end{theorem}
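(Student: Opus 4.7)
The plan is to combine Theorem~\ref{thi} (which classifies the behaviour near zero) with a dual local analysis at infinity obtained via the modified Kelvin transform $\widetilde u(x):=u(x/|x|^2)$. By Appendix~\ref{Kel}, $\widetilde u$ is a positive solution of $\mathbb L_{-\rho,\lambda,\tau}[\widetilde u]=|x|^{\widetilde\theta}\widetilde u^q$ on $\mathbb R^N\setminus\{0\}$ with $\widetilde\theta=-\theta-4>-2$ and $\widetilde\beta=-\beta$, and $f_{-\rho}(\widetilde\beta)=f_\rho(\beta)>0$. Asymptotics of $u$ at infinity thus correspond to asymptotics of $\widetilde u$ at zero for an equation in the complementary regime $\widetilde\theta>-2$. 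The strategy is to assemble the resulting joint classification and then rule out the only combination forbidden for $u\not\equiv U_{\rho,\beta}$, namely $u\sim U_{\rho,\beta}$ simultaneously at zero and at infinity.

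First, Theorem~\ref{pro1}(c) gives the dichotomy at zero: either (A) $u(x)\sim U_{\rho,\beta}(x)$ as $|x|\to 0$, or (B) $\lim_{|x|\to 0}|x|^\beta u(x)=0$. By the trichotomy $(P_0)$ of Theorem~\ref{thi}, (B) can occur only when $\beta\in(\varpi_2,0)$ in Case $[N_2]$, and there Remark~\ref{no-on9} excludes the profile \eqref{mobb1}, while \eqref{tirv} directly contradicts (B). Hence (B) forces $u(x)\sim b\,\Phi_{\varpi_2(\rho)}(|x|)$ as $|x|\to 0$ for some $b\in\mathbb R_+$, giving the zero-side profiles in (I) and (III); in every other parameter regime only (A) survives, yielding the zero-side profile of (II) and (II$'$).

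Second, the parallel classification for $\widetilde u$ at zero comes from re-running the machinery of Sections~\ref{sha1}--\ref{sect9} under the parameter swap $\rho\mapsto-\rho$, $\beta\mapsto\widetilde\beta$, which, as in Corollary~\ref{glob22}, replaces Case $[N_j](\rho)$ by Case $[N_j](-\rho)$. The analogue of Theorem~\ref{pro1}(c) splits into (A$'$) $\widetilde u(x)\sim U_{-\rho,\widetilde\beta}(x)$ as $|x|\to 0$, i.e.\ $u(x)\sim U_{\rho,\beta}(x)$ as $|x|\to\infty$, and (B$'$) $\lim_{|x|\to 0}|x|^{\widetilde\beta}\widetilde u(x)=0$. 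After translating back through $x\mapsto x/|x|^2$, the corresponding trichotomy for $\widetilde u$ at zero produces exactly the profiles listed in \eqref{fbio}--\eqref{kno1}: $c|x|^{-\varpi_1}$ in Case~$[N_1]$ with $\beta<\varpi_1$; $c\,\Psi_{\varpi_1(\rho)}(|x|)$ in Case $[N_2]$ with $\beta<\varpi_1$; $c\log|x|$ in the borderline case $\tau=0$, $\lambda=2\rho$; a positive constant $c$ in Case $[N_0]$ (with $\lambda\ne 2\rho$ when $\tau=0$), as well as when $\beta\in(\varpi_2,0)$ in Case $[N_2]$; and $U_{\rho,\beta}$ corresponding to (A$'$). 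In the ``$c$-limit'' cases, Theorem~\ref{rrzi} applied to $\widetilde u$ at zero upgrades the bound $\liminf>0$ to a genuine positive limit, giving the precise statements in (II), (III), and the $c$-limit alternatives of (II$'$).

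Finally, we exclude the simultaneous occurrence of (A) and (A$'$) when $u\not\equiv U_{\rho,\beta}$. Because $\tau+(1-\tau)=1$, the operator $\mathbb L_{\rho,\lambda,\tau}[\,\cdot\,]$ is positively $1$-homogeneous, so for every $\varepsilon>0$,
\[
\mathbb L_{\rho,\lambda,\tau}[(1+\varepsilon)U_{\rho,\beta}]=(1+\varepsilon)|x|^\theta U_{\rho,\beta}^q<(1+\varepsilon)^q|x|^\theta U_{\rho,\beta}^q,
\]
showing that $(1+\varepsilon)U_{\rho,\beta}$ is a strict super-solution of \eqref{e1}; analogously $(1-\varepsilon)U_{\rho,\beta}$ is a sub-solution. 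Under (A) together with (A$'$), $u/U_{\rho,\beta}\to 1$ both at $0$ and at $\infty$, so the comparison principle (Remark~\ref{reka6}) yields $(1-\varepsilon)U_{\rho,\beta}\le u\le(1+\varepsilon)U_{\rho,\beta}$ on $\mathbb R^N\setminus\{0\}$; letting $\varepsilon\to 0$ forces $u\equiv U_{\rho,\beta}$, contradicting $u\ne U_{\rho,\beta}$. I expect the main obstacle to lie in executing the parallel local analysis at zero for $\widetilde u$ in the $\widetilde\theta>-2$ regime: the iterative barrier schemes of Sections~\ref{wabb},~\ref{oaz},~\ref{sept}, and~\ref{sect9} must be re-run with $\rho$ and $-\rho$ exchanged, and in particular the critical case $-\rho=\Upsilon$ requires a ``near zero'' incarnation of the asymmetric super-solution $\widetilde\Psi$ of Lemma~\ref{feed} obtained by inversion of the radial variable.
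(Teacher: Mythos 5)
Your treatment of the behaviour near zero is correct and matches the paper: Theorem~\ref{pro1}(c) gives the dichotomy, the trichotomy $(P_0)$ of Theorem~\ref{thi} together with Lemma~\ref{gold1} (Remark~\ref{no-on9}) eliminates the profile \eqref{mobb1}, and your homogeneity argument showing $(1\pm\varepsilon)U_{\rho,\beta}$ are strict super/sub-solutions correctly rules out $u\sim U_{\rho,\beta}$ simultaneously at $0$ and at $\infty$. The gap is on the infinity side. You assert that the full classification of $\widetilde u$ near zero in the regime $\widetilde\theta>-2$ ``produces exactly the profiles listed'' by re-running Sections~\ref{sha1}--\ref{sect9} with $\rho\mapsto-\rho$, but this is not a parameter swap: the entire local analysis of those sections is built on $\beta<0$ (solutions vanish at the origin, the a priori bound $C_0|x|^{-\beta}$ forces removability, every barrier family is designed for decaying profiles), whereas for $\widetilde\beta=-\beta>0$ the solutions of the transformed equation near zero may blow up like $|x|^{\varpi_1(-\rho)}$ or converge to a positive constant, so the structure of the argument changes qualitatively. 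The paper does not attempt this; it proves the classification at infinity \emph{directly} in Section~\ref{amod}, and that is where the real work of Theorem~\ref{rod1c} lies.

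Concretely, two steps are missing from your proposal. First, you must exclude $c=0$ in \eqref{confi}, i.e.\ show that $u$ cannot decay at infinity strictly faster than the candidate profile; this is Proposition~\ref{gild}, which for $\rho=\Upsilon$ in Case $[N_2]$ is delicate because $\Psi_{\varpi_1(\rho)}$ is a \emph{sub}-solution at infinity (Lemma~\ref{siww}) and one needs the corrected super-solution $\widetilde\Psi$ of Lemma~\ref{feed} to run the comparison. You flag this as an ``expected obstacle'' but give no argument. Second, and more seriously, you must handle $c=\infty$: the paper's Theorem~\ref{infc1} shows that $\limsup$ equal to $\infty$ forces $u\sim U_{\rho,\beta}$ at infinity, and its proof (Propositions~\ref{ze-ta2} and \ref{ze-ta3}) requires entirely new families of radial sub-solutions at infinity, such as $z_\delta(r)=c_0\,r^{-\varpi_1}(e^\delta-e^{-r^{-\alpha}})^{-(\varpi_1-\beta)/\alpha}$ and the two-stage scheme with $z_{\delta,s}$ when $\rho=\Upsilon$, plus an iterative improvement of lower bounds in Case $[N_0]$ and for $\beta\in(\varpi_2,0)$. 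Your proposal replaces both steps by an appeal to Theorem~\ref{rrzi}, which only applies once one already knows $\liminf_{|x|\to\infty}u>0$ and $\limsup_{|x|\to\infty}u<\infty$ relative to the candidate profile --- precisely the facts that Propositions~\ref{gild} and Theorem~\ref{infc1} establish. Without these, the dichotomy (A$'$)/(B$'$) does not yield the stated limits in (II), (III) and (II$'$).
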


\begin{rem} {\rm  In Theorems~\ref{exol1}, \ref{exol2} and \ref{exol3}, we conclude the non-trivial task of proving that each of the asymptotic profiles in the classification of Theorems~\ref{rod1c} (at zero and at infinity taken together) is actually realised by a unique positive radial solution. 
}
\end{rem}

\subsection{Strategy for proving Theorem~\ref{rod1c}}  \label{trap1}
Let $u$ be any positive solution of \eqref{e1} in $\mathbb R^N\setminus \{0\}$ other than $U_{\rho,\beta}$. 
In Proposition~\ref{ze-ta}, we establish the conclusion of (II)$'$ in Theorem~\ref{rod1c} when $\lambda=2\rho$ and $\tau=0$.  We next discuss the approach  for the remaining situations. 
We define  
 \begin{equation}  c:=\left\{ \begin{aligned} 
&  \limsup_{|x|\to \infty} |x|^{\varpi_1} u(x) \ \ 
\mbox{in Case }[N_1] \ \mbox{with } \beta<\varpi_1 ,\\
& \limsup_{|x|\to \infty} \frac{u(x)}{\Psi_{\varpi_1(\rho)}(|x|)} \ \ 
\mbox{if Case }[N_2]\ \mbox{with } \beta<\varpi_1,\\
&  \limsup_{|x|\to \infty} u(x) \ \   
\mbox{in Case } [N_0]\ \mbox{with } \lambda\not=2\rho\ \mbox{if } \tau=0\\
& \qquad  \qquad \qquad \mbox{or if } \beta\in (\varpi_2,0)\ \mbox{in Case } [N_2]. 
\end{aligned} \right.
\label{confi}
\end{equation}

We show that $c\not =0$ for every positive solution of \eqref{e1} in $\mathbb R^N\setminus \{0\}$ (see Proposition~\ref{gild}). Then, we have a dichotomy: either $c\in (0,\infty)$ or $c=\infty$. 

$\lozenge$ Assume that $c\in (0,\infty)$. Then, by applying Theorem~\ref{urz1} (see also Remark~\ref{rek47}) and Theorem~\ref{rrzi}, in conjunction with the modified Kelvin transform (see the Appendix~\ref{Kel}), we reach the claims in \eqref{roz1} and \eqref{roz2},  
as well as   
$ \lim_{|x|\to \infty} u(x)=c$ if  $\beta\in (\varpi_2,0)$ in Case $[N_2]$. For the proof of \eqref{miau}, see Corollary~\ref{blabla}. 

$\lozenge$ Assume that $c=\infty$. Then, we prove (see 
Theorem~\ref{infc1}) that 
\begin{equation} \label{msio} u(x)\sim U_{\rho,\beta}(x)\quad \mbox{ as } |x|\to \infty . \end{equation}
In view of \eqref{msio} and since $u$ is different from $U_{\rho,\beta}$, we cannot have $\lim_{|x|\to 0} u(x)/U_{\rho,\beta}(x)=1$. Otherwise, 
letting $\varepsilon>0$ arbitrary, by the comparison principle and \eqref{msio}, we would get
  $$ \left(1-\varepsilon\right) U_{\rho,\beta}(x)\leq u(x)\leq 
 \left(1+\varepsilon\right) U_{\rho,\beta}(x)\quad\mbox{for every } |x|>0. 
 $$ By letting $\varepsilon\to 0$, we would arrive at $u\equiv U_{\rho,\beta}$ in $\mathbb R^N\setminus \{0\}$, which is a contradiction.

$\bullet$ Suppose that $\beta\in (\varpi_2,0)$ in case $[N_2]$. Then, 
by Theorem~\ref{thi} and Lemma~\ref{gold1} in Subsection~\ref{no-eo}, we have  a dichotomy near zero: 
\begin{equation} \label{novii}
\mbox{either (a) } \lim_{|x|\to 0} \frac{u(x)}{U_{\rho,\beta}(x)}=1\quad \mbox{or (b) }  
\lim_{|x|\to 0} \frac{u(x)}{ \Phi_{\varpi_2(\rho)}(|x|)}=b\ \mbox{for some } b\in \mathbb R_+. \end{equation}
Hence, we have the alternative (I) in Theorem~\ref{rod1c} when $c=\infty$ (as only \eqref{novii} (b) is possible) and the alternatives (II) and (III) if $c\in (0,\infty)$.

$\bullet$ In the remaining three situations of \eqref{confi},  only \eqref{novii} (a) holds (see Theorem~\ref{thi}). So, we reach the conclusions in (II)$'$  in Theorem~\ref{rod1c} since 
only $c\in (0,\infty)$ is possible.

\subsection{Classification of global solutions when $\lambda=2\rho$ and $\tau=0$}

We prove the following. 

\begin{proposition}
\label{ze-ta} 
Let \eqref{e2} hold, $\theta<-2$, $\lambda=2\rho$ and $\tau=0$. 
Let $u$ be any positive solution of \eqref{e1} in $\mathbb R^N\setminus \{0\}$ other than $U_{\rho,\beta}$. Then, 
 \eqref{novii} {\rm (a)} holds and there exists $c\in \mathbb R_+$ such that  
\begin{equation}
\label{musai1}
\lim_{|x|\to \infty} \frac{u(x)}{\log |x|}=c. 
\end{equation}	 
\end{proposition}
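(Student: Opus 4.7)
\textbf{Proof plan for Proposition~\ref{ze-ta}.}
The argument proceeds by applying the modified Kelvin transform to reduce the behaviour at infinity to behaviour at zero, and then invoking Lemma~\ref{serio} together with the general dichotomy from Section~\ref{trap1}. Set $\widetilde u(x)=u(x/|x|^2)$. By Appendix~\ref{Kel}, $\widetilde u$ is a positive solution of $\mathbb L_{-\rho,\lambda,0}[\widetilde u]=|x|^{\widetilde\theta}\widetilde u^q$ in $\mathbb R^N\setminus\{0\}$ with $\widetilde\theta=-\theta-4>-2$, and since $\lambda=2\rho=-2(-\rho)$ with $\tau=0$, the transformed equation falls exactly within the scope of Lemma~\ref{serio}. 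Moreover, the transform interchanges $|x|\to 0$ with $|x|\to\infty$.

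First, I would establish \eqref{novii}(a) at zero. Because $\tau=0$ and $\lambda=2\rho$, we lie in Case $[N_0](a)$; in particular the exceptional range $\beta\in(\varpi_2,0)$ of Case $[N_2]$ is excluded, so assertion $(P_1)$ of Theorem~\ref{thi} applies and yields $u(x)\sim U_{\rho,\beta}(x)$ as $|x|\to 0$. Next, define
\begin{equation*}
c:=\limsup_{|x|\to\infty}\frac{u(x)}{\log|x|}=\limsup_{|x|\to 0}\frac{\widetilde u(x)}{\log\,(1/|x|)},
\end{equation*}
which is the instance of \eqref{confi} relevant here. By Proposition~\ref{gild}, $c\neq 0$, so Lemma~\ref{serio} leaves two alternatives for $\widetilde u$: either $c\in(0,\infty)$ or $c=\infty$.

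If $c\in(0,\infty)$, then Lemma~\ref{serio}(ii) gives $\lim_{|x|\to 0}\widetilde u(x)/\log\,(1/|x|)=c$, which upon undoing the Kelvin transform is precisely \eqref{musai1}. It remains to rule out $c=\infty$: in that case Lemma~\ref{serio}(i) produces $\lim_{|x|\to 0}|x|^{\widetilde\beta}\widetilde u(x)=[f_{-\rho}(\widetilde\beta)]^{1/(q-1)}=\Lambda$, which via the Kelvin transform (using $\widetilde\beta=-\beta$ and $f_{-\rho}(\widetilde\beta)=f_\rho(\beta)$, cf.~the discussion after Corollary~\ref{glob22}) translates into $u(x)\sim U_{\rho,\beta}(x)$ as $|x|\to\infty$. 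Combining this with Step~1, for any $\varepsilon>0$ the functions $(1\pm\varepsilon)U_{\rho,\beta}$ sandwich $u$ on $\partial B_r(0)$ for $r>0$ sufficiently small and for $r>0$ sufficiently large; since $(1-\varepsilon)U_{\rho,\beta}$ is a sub-solution of \eqref{e1} and $(1+\varepsilon)U_{\rho,\beta}$ a super-solution on every annulus $\{r_1<|x|<r_2\}$, the comparison principle (Remark~\ref{reka6}) gives $(1-\varepsilon)U_{\rho,\beta}\leq u\leq (1+\varepsilon)U_{\rho,\beta}$ throughout $\mathbb R^N\setminus\{0\}$, and letting $\varepsilon\to 0$ forces $u\equiv U_{\rho,\beta}$, contradicting the standing assumption.

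The main obstacle is the $c=\infty$ case: it relies on an explicit comparison-principle sandwich for $U_{\rho,\beta}$ in the spirit of Section~\ref{trap1}, and on carefully checking that $(1\pm\varepsilon)U_{\rho,\beta}$ indeed behave as sub/super-solutions in our current nonlinear setting (this is where the hypothesis $f_\rho(\beta)>0$ is used). The other ingredients are essentially citations: Theorem~\ref{thi}, Lemma~\ref{serio} and Proposition~\ref{gild}.
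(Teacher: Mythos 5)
Your overall strategy (Kelvin transform, then Lemma~\ref{serio} applied to $\widetilde u$, with the three-way split on $c$) is exactly the paper's, and your treatment of the cases $c\in(0,\infty)$ and $c=\infty$ is correct; in particular your comparison-principle sandwich ruling out $c=\infty$ is the intended argument, and $(1\pm\varepsilon)U_{\rho,\beta}$ are indeed sub/super-solutions since for $\tau=0$ the operator is positively homogeneous of degree one on radial monotone functions.

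However, there is a genuine gap in how you dispose of $c=0$. You invoke Proposition~\ref{gild}, but that proposition explicitly carries the hypothesis ``$\lambda\neq 2\rho$ when $\tau=0$'' --- precisely the case you are in is excluded. Moreover, the quantity $c$ in \eqref{confi} is never defined with a $\log|x|$ normalisation; the case $\lambda=2\rho$, $\tau=0$ is deliberately left out of \eqref{confi} and of Proposition~\ref{gild}, and Proposition~\ref{ze-ta} exists exactly to fill that hole. So you cannot cite your way past $c=0$; it must be excluded by hand. The paper does this as follows: if $c=0$, Lemma~\ref{serio}(iii) applied to $\widetilde u$ gives $\lim_{|x|\to\infty}u(x)=\mu\in[0,\infty)$. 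If $\mu=0$, then combined with $\lim_{|x|\to 0}u(x)=0$ (which holds since $\theta<-2$), $u$ attains an interior positive maximum, contradicting the equation as in the proof of Proposition~\ref{haha}. If $\mu\in(0,\infty)$, the comparison principle forces $u$ to be radial with $u'>0$, and since $\lambda=2\rho$, $\tau=0$ the radial equation reduces to $u''+u'/r=r^\theta u^q$, i.e.\ $(ru'(r))'>0$; the boundedness $u\to\mu$ then forces $\lim_{r\to\infty}ru'(r)=0$, hence $ru'(r)<0$ for all $r$, contradicting $u'>0$. This ODE monotonicity argument is an essential ingredient that your proposal omits entirely.
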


\begin{proof} By Theorem~\ref{thi}, we know that \eqref{novii} (a) holds. We need to establish  only \eqref{musai1} for some $c\in \mathbb R_+$. 
To this end, we apply the modified Kelvin transform in the Appendix~\ref{Kel}, that is, 
$v(\widetilde x)= u(x)$ with $\widetilde x=x/|x|^2$. Hence, 
$v$ is a positive solution of 
\begin{equation} \label{mayb} \mathbb L_{\widetilde \rho,\lambda,\tau} [v(\widetilde x)]= |\widetilde x|^{\widetilde \theta} v^q (\widetilde x)\quad \mbox{in } \mathbb R^N\setminus \{0\},
\end{equation}	
where $\widetilde \rho=-\rho$, $\widetilde \theta=-\theta-4>-2$ and $\tau=0$.  
We define 
$$ c:=\limsup_{|x|\to \infty} \frac{u(x)}{\log |x|}=\limsup_{|\widetilde x|\to 0} \frac{v(\widetilde x)}{\log\, (1/|\widetilde x|)}. $$ 

We distinguish three cases: $c\in (0,\infty)$, $c=\infty$ and $c=0$ (only the former is possible).

$\bullet$ Let $c\in (0,\infty)$. Lemma~\ref{serio} applied to $v$ leads to \eqref{musai1}. 

$\bullet$ Let $c=\infty$.   
Since $\lambda=-2\widetilde \rho $ and $\tau=0$,   
by applying Lemma~\ref{serio} to $v$, we obtain that 
$$ v(\widetilde x)\sim U_{-\rho,-\beta}(\widetilde x)=U_{\rho,\beta}(x)\quad \mbox{as } |\widetilde x|\to 0.  
$$
This means that $u(x)\sim U_{\rho,\beta}(x)$ as $|x|\to \infty$. 
Hence, $u\equiv U_{\rho,\beta}$ in $\mathbb R^N\setminus \{0\}$, which is not possible.

$\bullet$ Let $c=0$. By Lemma~\ref{serio}, we conclude that 
there exists 
$$\lim_{|\widetilde x|\to 0} v(\widetilde x)=\lim_{|x|\to \infty}  u(x)=\mu\in [0,\infty). $$ 

If $\mu=0$, then using that 
$\lim_{|x|\to 0} u(x)=0$, we get a contradiction as in the proof of Theorem~\ref{haha} (with $\beta=0$ there).

If $\mu\in (0,\infty)$, then again we get a contradiction. More precisely, by the comparison principle (see Lemma~\ref{co2}), we obtain that $u$ is the only positive solution of \eqref{e1} in $\mathbb R^N\setminus \{0\}$ satisfying $u(x)\sim U_{\rho,\beta}(|x|)$ as $|x|\to 0$ and $\lim_{|x|\to \infty} u(x)=\mu$. Hence, $u$ is radially symmetric and, moreover, 
$u'(r)>0$ for every $r=|x|>0$. Consequently, $u$ 
is a positive solution of 
$$ u''(r)+\frac{u'(r)}{r}=r^\theta u^q(r)\quad \mbox{for every } r>0. 
$$
This implies that $(ru'(r))'>0$ for every $r>0$. As $\mu\in (0,\infty)$, we infer that there exists $\lim_{r\to \infty} ru'(r)=0$ and $ru'(r)<0$ for every $r>0$. This contradicts $u'(r)>0$ for every $r>0$.  

Hence, neither $c=0$ nor $c=\infty$ can occur. This finishes the proof of Proposition~\ref{ze-ta}.  
\end{proof}

\subsection{Non-existence results} \label{no-eo}
In Lemma~\ref{gold1}, we show that if $\beta\in (\varpi_2,0)$ in Case $[N_2]$, then \eqref{e1} in $\mathbb R^N\setminus \{0\}$ has no positive solutions satisfying 
the alternative {\rm $(P_0)$ (i)} of Theorem~\ref{thi}.
 Furthermore, assuming that $\theta<-2$, $f_\rho(\beta)>0$ and $\lambda\not=2\rho$ if $\tau=0$, we prove in Proposition~\ref{gild} that \eqref{e1} in 
 $\mathbb R^N\setminus \{0\}$ has no positive solutions with $c=0$, where $c$ is given by \eqref{confi}.  

\begin{lemma}[Non-existence] \label{gold1} 
 Let \eqref{e2} hold.  
If $\beta\in (\varpi_2,0)$ in Case $[N_2]$, then there are no positive solutions of \eqref{e1} in $\mathbb R^N\setminus \{0\}$ satisfying 
 \begin{equation} \label{gol2} 
 \lim_{|x|\to 0} \frac{u(x)}{\Phi_{\varpi_2(\rho)}(|x|)}=0. 
 \end{equation}
\end{lemma}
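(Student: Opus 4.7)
The plan is to argue by contradiction in three movements, the last of which is the main obstacle. Suppose $u$ is a positive solution of \eqref{e1} in $\mathbb R^N\setminus\{0\}$ satisfying \eqref{gol2}.

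First, I would pin down the precise profile of $u$ at $0$. Since \eqref{gol2} excludes alternatives (ii) and (iii) of $(P_0)$ in Theorem~\ref{thi}, combining Theorem~\ref{ama1}(A) (which yields $\limsup_{|x|\to 0}|x|^{\varpi_1}u(x)<\infty$), Theorem~\ref{zic1} (which supplies the matching $\liminf>0$), and Corollary~\ref{urz}(a) (which promotes these to a genuine limit), there exists $a\in(0,\infty)$ such that $u(x)\sim a|x|^{-\varpi_1}$ and $x\cdot\nabla u(x)\sim -a\varpi_1 |x|^{-\varpi_1}$ as $|x|\to 0$.

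Second, I would upgrade this into a sharp global upper bound. Since $f_\rho(\varpi_1)=0$ and $\mathbb L_{\rho,\lambda,\tau}$ is one-homogeneous, the function $M|x|^{-\varpi_1}$ is a positive radial super-solution of \eqref{e1} on all of $\mathbb R^N\setminus\{0\}$ with strictly positive radial derivative, for every $M>0$. Corollary~\ref{goos} (together with $f_\rho(\beta)>0$) provides the dichotomy at infinity: either $u\sim U_{\rho,\beta}$ or $\lim_{|x|\to\infty}|x|^\beta u=0$, and in either case $|x|^{\varpi_1}u(x)\to 0$ as $|x|\to\infty$ because $\varpi_1<\beta$. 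Fixing $M>a$, the inequality $u<M|x|^{-\varpi_1}$ holds both near $0$ and near $\infty$, so the comparison principle on intermediate annuli delivers $u\le M|x|^{-\varpi_1}$ on $\mathbb R^N\setminus\{0\}$; letting $M\searrow a$ gives the sharp global bound
\begin{equation*}
  u(x)\le a|x|^{-\varpi_1}\quad\text{for every }x\in\mathbb R^N\setminus\{0\}.
\end{equation*}

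The hard part is to turn this sharp envelope into a contradiction, and it is here that I expect the main difficulty. My plan is to invoke the modified Kelvin transform (Appendix~\ref{Kel}): the function $\widetilde u(y)=u(y/|y|^2)$ is a positive solution of \eqref{eeq1} with parameters $-\rho$, $\widetilde\theta=-\theta-4>-2$, $\widetilde\beta=-\beta\in(0,-\varpi_2(\rho))$, and the results of the previous paragraph translate into $\widetilde u(y)\sim a|y|^{\varpi_1(\rho)}$ as $|y|\to\infty$ together with the global envelope $\widetilde u(y)\le a|y|^{\varpi_1(\rho)}$ on $\mathbb R^N\setminus\{0\}$; in particular $\widetilde u\to 0$ at infinity. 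The contradiction will then come from showing that no positive global solution of \eqref{eeq1} can decay at infinity at the rate $|y|^{\varpi_1(\rho)}$; this is the Kelvin-transformed analog of the non-vanishing principle behind Proposition~\ref{gild} (namely, $c\ne 0$ at infinity for global solutions in Case $[N_2]$). Concretely, the plan is to build radial sub-solutions of \eqref{eeq1} in exterior annuli $\{R\le|y|\le R'\}$, designed around the $|y|^{\varpi_1(\rho)}$ profile and whose boundary values at $|y|=R'$ dominate $a|y|^{\varpi_1(\rho)}$ as $R'\to\infty$, so that the comparison principle forces a strict lower bound $\widetilde u\ge c|y|^{\varpi_1(\rho)}$ of the same order as the upper envelope, and then to leverage the strong maximum principle (Lemma~\ref{adol}) to force equality, incompatible with $\widetilde u$ solving a genuinely inhomogeneous equation. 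Making the barrier construction precise in the critical case $\rho=\Upsilon$, where $\varpi_1(\rho)=\varpi_2(\rho)$ and the analogs of $\Psi_{\varpi_1(\rho)}$ and $\widetilde\Psi$ of Section~\ref{ditto} must be adapted to the outer problem for $\widetilde u$, is the heart of the argument.
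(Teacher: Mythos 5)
Your first two movements are sound: the hypothesis \eqref{gol2} together with Theorem~\ref{ama1}(A), Theorem~\ref{zic1} and Corollary~\ref{urz}(a) does pin down $u(x)\sim a|x|^{-\varpi_1}$ near zero for some $a\in\mathbb R_+$, and the global envelope $u\le a|x|^{-\varpi_1}$ then follows by comparison exactly as you describe. The genuine gap is in the third movement, which is the only place a contradiction can come from, and which you yourself defer as ``the heart of the argument.'' The mechanism you sketch cannot close it: from matching upper and lower bounds of order $|y|^{\varpi_1}$ at infinity for $\widetilde u$ you want Lemma~\ref{adol} to ``force equality,'' but the strong maximum principle needs an interior touching point between the solution and the homogeneous super-solution, and asymptotic equality at an end of the domain never produces one — indeed $u<a|x|^{-\varpi_1}$ strictly everywhere is perfectly consistent with $u\sim a|x|^{-\varpi_1}$ at zero. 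Equally, the ``squeeze'' behind Proposition~\ref{gild} is not available here: that argument needs $u$ to be $o(\,\cdot\,)$ of the homogeneous profile at \emph{both} ends so that the comparison constant can be sent to $0$, whereas your $u$ saturates the $\varpi_1$-profile at one end. So the $\varpi_1$-level analysis, while correct, leads to a dead end.

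The missing idea is to run the two-sided squeeze at the $\varpi_2$ level instead, where the hypothesis already gives smallness at both ends. By \eqref{gol2}, $u=o(\Phi_{\varpi_2(\rho)}(|x|))$ as $|x|\to0$; and since $\varpi_2<\beta$ and $|x|^{\beta}u(x)$ is bounded at infinity (Corollary~\ref{goos} via the Kelvin transform), one has $|x|^{\varpi_2}u(x)\to0$ as $|x|\to\infty$. For $\rho>\Upsilon$ the function $\varepsilon|x|^{-\varpi_2}$ solves $\mathbb L_{\rho,\lambda,\tau}[\cdot]=0$, hence is a global super-solution of \eqref{e1}, and the comparison principle on large annuli gives $u\le\varepsilon|x|^{-\varpi_2}$ everywhere; letting $\varepsilon\to0$ forces $u\equiv0$, a contradiction. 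For $\rho=\Upsilon$ the same scheme works with $|x|^{-\varpi_2}$ replaced by the rescaled super-solution $\varepsilon\,[\log R_\varepsilon]^{-2/(1+\tau)}\,r^{-\varpi_2}\bigl[\log(R_\varepsilon^2/r)\bigr]^{2/(1+\tau)}$ on $(0,R_\varepsilon]$, which is legitimate by Lemma~\ref{fiqa}. This route needs neither the exact $\varpi_1$-profile nor the Kelvin transform, and in particular avoids the delicate exterior barrier construction at the critical exponent that your plan leaves open.
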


\begin{proof}
Assume by contradiction that $u$ is a positive solution of \eqref{e1} in $\mathbb R^N\setminus \{0\}$ 
satisfying \eqref{gol2}. 
Since $\beta\in (\varpi_2,0)$, by Theorem~\ref{thi}, we get that 
\begin{equation} \label{gol3} \lim_{|x|\to \infty} |x|^{\varpi_2} u(x)=0.
\end{equation}
We distinguish two cases: 

(I) Let $\rho>\Upsilon$. Then,  $\Phi_{\varpi_2(\rho)}(r)=r^{-\varpi_2}$ for every $r>0$.  Let $\varepsilon>0$ be arbitrary. Hence, from \eqref{gol2} and \eqref{gol3}, 
there exist $r_\varepsilon>0$ small and $R_\varepsilon>0$ large such that 
$ u(x)\leq \varepsilon |x|^{-\varpi_2}$ for every $ |x|\in (0,r_\varepsilon]\cup [R_\varepsilon,\infty)$.  
By the comparison principle, we obtain $ u(x)\leq \varepsilon |x|^{-\varpi_2}$ for all $|x|>0$. Letting $\varepsilon\to 0$, we get $u\equiv 0$ in $\mathbb R^N\setminus \{0\}$, which is a contradiction.

(II) Let $\rho=\Upsilon$. Then, we have $\Phi_{\varpi_2(\rho)}(r)= r^{-\varpi_2} \left[ \log \,(1/r )\right]^{\frac{2}{1+\tau}}$ for $r\in (0,1)$.

Let $\varepsilon>0$ be arbitrary. 
From \eqref{gol3}, there exists $R_\varepsilon>1$ large such that 
\begin{equation} \label{gol6} u(x) \leq \varepsilon |x|^{-\varpi_2} \quad \mbox{for every } |x|\geq R_\varepsilon. 
\end{equation}
Let $r_*\in (0,1)$ be given by Lemma~\ref{fiqa}. 
Without loss of generality, we assume that 
$$ R_\varepsilon>\max\,\{ 1/\varepsilon,1/r_*\}. $$
For every $0<r\leq R_\varepsilon$, we define $\Phi_{\varepsilon}(r)$ as follows
\begin{equation} \label{pji} \Phi_\varepsilon(r)=\frac{\varepsilon}{[\log R_\varepsilon]^{\frac{2}{1+\tau}}} \, r^{-\varpi_2} \left[\log \left(  R^2_\varepsilon/r \right)\right]^\frac{2}{1+\tau}.\end{equation}
In particular, $\Phi_\varepsilon(R_\varepsilon)=\varepsilon R_\varepsilon^{-\varpi_2}$ so that using \eqref{gol6}, we get 
\begin{equation}
\label{yave} 
u(x)\leq \Phi_\varepsilon(|x|)\quad \mbox{for every } |x|=R_\varepsilon.	
\end{equation}
By virtue of Lemma~\ref{fiqa},  we infer that 
$\Phi_\varepsilon'(r)>0$ for every $r\in (0,R_\varepsilon)$ and 
\begin{equation} \label{rulo} \mathbb L_{\rho,\lambda,\tau} [\Phi_\varepsilon(|x|)]\leq 0<|x|^\theta\, [\Phi_\varepsilon(|x|)]^q\quad \mbox{for every } |x|\in (0,R_\varepsilon). 
\end{equation}	
From \eqref{gol2} and the definition of $\Phi_\varepsilon$ in \eqref{pji}, we obtain that  
\begin{equation} \label{rulo2} \lim_{|x|\to 0} \frac{u(x)}{\Phi_\varepsilon(|x|)}=0. \end{equation} 
In light of \eqref{yave}--\eqref{rulo2}, we can use the comparison principle to conclude that
\begin{equation} \label{gol7} u(x)\leq  \Phi_\varepsilon(|x|)\quad \mbox{for every } 0<|x|\leq R_\varepsilon. 
\end{equation}

Fix $x\in \mathbb R^N\setminus \{0\}$ arbitrary. 
By letting $\varepsilon>0$ small enough, we have  
$0<|x|<1/\varepsilon<R_\varepsilon$. Then, letting $\varepsilon\to 0$ in \eqref{gol7}, we obtain that $u(x)=0$, which is a contradiction. 
\end{proof}

\begin{proposition} \label{gild}  
 Let \eqref{e2} hold, $\theta<-2$ and $f_\rho(\beta)>0$. 
 Assume that $\lambda\not =2\rho$ when $\tau=0$. Let $u$ be any positive solution of \eqref{e1} in $\mathbb R^N\setminus \{0\}$. Then, defining $c$ as in \eqref{confi}, we have 
$c\not=0$.  
\end{proposition}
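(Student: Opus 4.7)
The plan is to argue by contradiction: assuming $c=0$ in one of the four alternatives of \eqref{confi}, I will derive $u\equiv 0$ on $\mathbb R^N\setminus\{0\}$, which is impossible. The alternatives split naturally into three groups with distinct strategies: the \emph{bounded} cases, the \emph{power} cases, and the \emph{critical log} case.

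For the bounded alternatives (Case $[N_0]$ with $\lambda\neq 2\rho$ when $\tau=0$, or $\beta\in(\varpi_2,0)$ in Case $[N_2]$), the assumption $c=\limsup_{|x|\to\infty}u(x)=0$ forces $u(x)\to 0$ as $|x|\to\infty$. Combined with $\lim_{|x|\to 0}u(x)=0$, which follows from $\theta<-2$ via Corollary~\ref{contt} (invoked in Remark~\ref{micx}), the function $u$ must attain a strictly positive global maximum at some interior point $x_\star\in\mathbb R^N\setminus\{0\}$. By Proposition~\ref{aurr} and standard elliptic regularity, $u\in C^2$ near $x_\star$, so $\nabla u(x_\star)=0$ and $\Delta u(x_\star)\leq 0$. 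Since $\tau\in[0,1)$, one has $G_\tau(x_\star,u(x_\star),0)=0$, hence $\mathbb L_{\rho,\lambda,\tau}[u(x_\star)]=\Delta u(x_\star)\leq 0$, while the right-hand side of \eqref{e1} at $x_\star$ is strictly positive. This is the same interior-maximum argument as in the final paragraph of the proof of Proposition~\ref{haha}.

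For the power alternatives (Case $[N_1]$ with $\beta<\varpi_1$, or Case $[N_2]$ with $\beta<\varpi_1$ and $\rho>\Upsilon$, in which $\Psi_{\varpi_1(\rho)}(r)=r^{-\varpi_1}$), I will use $\varepsilon|x|^{-\varpi_1}$ as a super-solution: it satisfies $\mathbb L_{\rho,\lambda,\tau}[\varepsilon|x|^{-\varpi_1}]=0\leq|x|^\theta(\varepsilon|x|^{-\varpi_1})^q$ because $f_\rho(\varpi_1)=0$. From Theorem~\ref{thi} together with Corollary~\ref{zerop}, $|x|^\beta u(x)$ has a limit in $\{0,\Lambda\}$ as $|x|\to 0$, so, since $\varpi_1>\beta$, $|x|^{\varpi_1}u(x)\to 0$ as $|x|\to 0$; the assumption $c=0$ gives the analogous limit as $|x|\to\infty$. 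For each $\varepsilon>0$, I can thus find $0<r_\varepsilon<R_\varepsilon$ with $u\leq\varepsilon|x|^{-\varpi_1}$ on $\partial B_{r_\varepsilon}(0)\cup\partial B_{R_\varepsilon}(0)$; the comparison principle then propagates the inequality to the closed annulus, and the boundary estimates extend it to all of $\mathbb R^N\setminus\{0\}$. Letting $\varepsilon\to 0^+$ gives $u\equiv 0$, a contradiction.

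The hard part will be the critical log case: Case $[N_2]$ with $\beta<\varpi_1$ and $\rho=\Upsilon$, where $\Psi_{\varpi_1(\rho)}(r)=r^{-\varpi_1}(\log r)^{2/(1+\tau)}$ is \emph{not} itself a super-solution of $\mathbb L_{\rho,\lambda,\tau}[\,\cdot\,]=0$. The assumption $c=0$ yields only $u=o(\Psi_{\varpi_1(\rho)})$ at infinity, which is strictly weaker than $u=o(|x|^{-\varpi_1})$, because $\Psi_{\varpi_1(\rho)}$ grows faster than the super-solution $|x|^{-\varpi_1}$ by the log factor $(\log r)^{2/(1+\tau)}$, so the previous step does not apply directly. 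To handle this, I will replace $|x|^{-\varpi_1}$ by the barrier $\widetilde\Psi$ of Lemma~\ref{feed}, which is a positive radial super-solution of $\mathbb L_{\rho,\lambda,\tau}[\,\cdot\,]=0$ on $|x|>R_1$ and satisfies $\widetilde\Psi(r)\sim\Psi_{\varpi_1(\rho)}(r)$ as $r\to\infty$ (since $2/(1+\tau)>1$). For any $\varepsilon>0$, choosing $R_0=R_0(\varepsilon)>R_1$ large enough so that $u\leq\varepsilon\widetilde\Psi$ on $|x|=R_0$ (possible because $u/\widetilde\Psi\to 0$), the comparison principle on the exterior annuli $R_0<|x|<R$ with $R\to\infty$ gives $u\leq\varepsilon\widetilde\Psi$ on $|x|>R_0$; iterating this together with a modified Kelvin transform applied to $v(\tilde x)=u(\tilde x/|\tilde x|^2)$ (which recasts the behaviour at infinity of $u$ as behaviour at zero of $v$ solving the equation with $\widetilde\theta>-2$ and parameter $-\rho$) should drive $u$ to zero throughout $\mathbb R^N\setminus\{0\}$. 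The principal obstacle is that the natural asymptotic profile $\Psi_{\varpi_1(\rho)}$ does not solve the homogeneous equation, so the gap between it and the slower super-solutions $r^{-\varpi_1}(\log r)^\alpha$ with $\alpha<2/(1+\tau)$ must be closed either by the interpolating barrier $\widetilde\Psi$ or by a refined iterative scheme in the spirit of Propositions~\ref{lad} and~\ref{gain} and the constructions of Section~\ref{sect9}.
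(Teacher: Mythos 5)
Your first two groups of cases track the paper's proof of Proposition~\ref{gild} essentially verbatim: the interior-maximum argument (borrowed from the end of Proposition~\ref{haha}) for the bounded alternatives, and comparison with the exact solution $\varepsilon|x|^{-\varpi_1}$ of $\mathbb L_{\rho,\lambda,\tau}[\cdot]=0$ for the power alternatives, using $u\sim U_{\rho,\beta}$ near zero (so $|x|^{\varpi_1}u(x)\to 0$ there since $\varpi_1>\beta$) and the hypothesis $c=0$ at infinity. One small citation slip: Corollary~\ref{zerop} assumes $f_\rho(\beta)\le 0$ and does not apply here; the dichotomy you want is Theorem~\ref{pro1}(c) (or just the a priori bound of Lemma~\ref{api}). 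This does not affect the argument.

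The genuine gap is in the critical case $\rho=\Upsilon$ in Case $[N_2]$ with $\beta<\varpi_1$. You correctly identify $\widetilde\Psi$ of Lemma~\ref{feed} as the right barrier, but the comparison you run — anchoring $u\le\varepsilon\widetilde\Psi$ at a \emph{large} inner radius $R_0(\varepsilon)$ and pushing the outer radius to infinity — only yields $u\le\varepsilon\widetilde\Psi$ on $|x|\ge R_0(\varepsilon)$, where $R_0(\varepsilon)$ may blow up as $\varepsilon\to 0$. Since $\widetilde\Psi\sim\Psi_{\varpi_1(\rho)}$ at infinity, this merely restates $u=o(\Psi_{\varpi_1(\rho)})$, which you already assumed; it gives no control on any fixed compact set, and "iterating this together with a modified Kelvin transform should drive $u$ to zero" is not an argument. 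The paper closes the gap the other way around: it anchors the barrier at a \emph{small} radius $r_\varepsilon<\min\{\varepsilon,1/R_1\}$, where the behaviour near zero gives $u(x)\le\varepsilon|x|^{-\varpi_1}$ for $|x|\le r_\varepsilon$, and uses the shifted, rescaled super-solution
\[
\Xi_\varepsilon(r)=\frac{\varepsilon}{[\log(1/r_\varepsilon)]^{\frac{2}{1+\tau}}}\left\{r^{-\varpi_1}\Bigl[\log\bigl(r/r_\varepsilon^2\bigr)\Bigr]^{\frac{2}{1+\tau}}+r^{-\varpi_1}\log\bigl(r/r_\varepsilon^2\bigr)\right\},
\]
which is a super-solution of $\mathbb L_{\rho,\lambda,\tau}[\cdot]=0$ on $|x|>r_\varepsilon$ because $r/r_\varepsilon^2>1/r_\varepsilon>R_1$ there. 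The normalisation by $[\log(1/r_\varepsilon)]^{2/(1+\tau)}$ makes $\Xi_\varepsilon\ge u$ on $|x|=r_\varepsilon$, the hypothesis $c=0$ gives $u/\Xi_\varepsilon\to 0$ at infinity (since $\Xi_\varepsilon$ is comparable to $\varepsilon\,\Psi_{\varpi_1(\rho)}$ there), and — crucially — for each fixed $x$ one has $\Xi_\varepsilon(|x|)\to 0$ as $\varepsilon\to 0$ because the prefactor cancels the growth of $[\log(|x|/r_\varepsilon^2)]^{2/(1+\tau)}$ up to a bounded factor. Comparison on $|x|\ge r_\varepsilon$ and then $\varepsilon\to 0$ gives $u\equiv 0$. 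Without this (or an equivalent) construction, your critical case remains open.
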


\begin{proof}  
Assume by contradiction that 
there exists a positive solution $u$ of \eqref{e1} in $\mathbb R^N\setminus \{0\}$ for which $c=0$, where $c$ is given by \eqref{confi}.  
If Case $[N_0]$ holds or if $\beta\in (\varpi_2,0)$ in Case $[N_2]$, then we reach a contradiction as in the proof of Proposition~\ref{haha} (with $\beta=0$ there). 

It remains to analyse the case when  
$\beta<\varpi_1$ in Case $[N_j]$ with $j=1,2$. We show that we arrive at a contradiction by distinguishing two cases:   

\vspace{0.2cm}
(I) Assume either Case $[N_1]$ or $\rho>\Upsilon$ in Case $[N_2]$.  
In either of these situations, our assumption that $c=0$ 
(see also  the definition of $\Psi_{\varpi_1(\rho)}$ in \eqref{psia}) implies that $u$ satisfies 
\begin{equation} 
\label{prio}
\lim_{|x|\to \infty} |x|^{\varpi_1} u(x)=0.	
\end{equation}
By Theorem~\ref{thi} and $\beta<\varpi_1$, we get
\begin{equation} \label{trin1} \lim_{|x|\to 0}|x|^{\varpi_1} u(x)
=0. \end{equation} 
We reach a contradiction as in the proof of Lemma~\ref{gold1} (Case (I)). 
Let $\varepsilon>0$ be arbitrary. Thus, using \eqref{prio}, 
we can find $r_\varepsilon>0$ small and  
$ R_\varepsilon>1 $ large such that 
\begin{equation} \label{soll1} u(x)\leq \varepsilon  |x|^{-\varpi_1}\quad \mbox{for every } |x|\in (0,r_\varepsilon]\cup  [R_\varepsilon,\infty).
\end{equation}
Recalling that $\mathbb L_{\rho,\lambda,\tau}[|x|^{-\varpi_1}]=0$ for every $|x|>0$, by the comparison principle, we conclude that
\eqref{soll1} is true for every $|x|>0$. By letting $\varepsilon\to 0$, we get $u\equiv 0$, which is a contradiction. 

\vspace{0.2cm}
(II) We now let $\rho=\Upsilon$ in Case $[N_2]$. Then,  $\Psi_{\varpi_1(\rho)}(r)=r^{-\varpi_1} \left(\log r \right)^{2/(1+\tau)}$ for every $r>1$. 

We adapt the proof of Lemma~\ref{gold1} pertaining to $\rho=\Upsilon$. However, we need to pay attention to the fact that here 
$\Psi_{\varpi_1(\rho)}$ is {\em not} a super-solution of 
\begin{equation} \label{fyre} \mathbb L_{\rho,\lambda,\tau}[\Psi]=0\quad  \mbox{for } |x|>1
\ \mbox{ large.}
\end{equation} 
On the contrary, $\Psi_{\varpi_1(\rho)}$ is a {\em sub-solution} of \eqref{e1} for $|x|>1$ large (see Lemma~\ref{siww}). 
Hence, instead of $\Psi_{\varpi_1(\rho)}$, 
we will work with a positive radial {\em super-solution} $\widetilde \Psi(|x|)$ of \eqref{fyre} such that  $\widetilde \Psi (|x|)\sim \Psi_{\varpi_1(\rho)}(|x|)$ as $|x|\to \infty$. For every $r>1$, we define $\widetilde \Psi(r)$ as in \eqref{stay}, that is, 
$$ \widetilde \Psi(r) =\widetilde \Psi_{\varpi_1}(r)=r^{-\varpi_1} \left(\log r\right)^{\frac{2}{1+\tau}}	
+r^{-\varpi_1} \log r .$$
Then, by Lemma~\ref{feed}, $\widetilde \Psi$ is a positive super-solution of \eqref{fyre} for $|x|>R_1$ with $R_1>1$ large. 

Let $\varepsilon>0$ be arbitrary. 
As in Case (I), we have \eqref{trin1}
so that for $r_\varepsilon>0$ small, it holds 
\begin{equation} \label{minb} u(x)\leq \varepsilon\,|x|^{-\varpi_1}\quad \mbox{for every } 0<|x|\leq r_\varepsilon.  
\end{equation} 
If necessary, we diminish $r_\varepsilon$ so that
$ 0< r_\varepsilon <\min\, \{\varepsilon,1/R_1\}$.  
For every $r\geq r_\varepsilon$, we define $\Xi_{\varepsilon}(r)$ by 
$$  \Xi_{\varepsilon}(r)= 
\frac{\varepsilon}{[\log \,(1/r_\varepsilon)]^{\frac{2}{1+\tau}}} \left\{ r^{-\varpi_1} \left[ \log \left(\frac{r}{r_\varepsilon^2}\right)\right]^\frac{2}{1+\tau} +r^{-\varpi_1}\log \left(\frac{r}{r_\varepsilon^2}\right) \right\} .
$$
Hence, from \eqref{minb}, we have 
\begin{equation} \label{myt0}  u(x)\leq \Xi_{\varepsilon}(|x|)\quad \mbox{for every } |x|=r_\varepsilon.
\end{equation} 
Moreover, since $r/r_\varepsilon^2>1/r_\varepsilon>R_1$ for every $r>r_\varepsilon$, by Lemma~\ref{feed}, it follows that 
\begin{equation} \label{myt2}  \mathbb L_{\rho,\lambda,\tau} \,[\Xi_{\varepsilon}(|x|)]
\leq 0 <|x|^\theta [\Xi_{\varepsilon}(|x|)]^q \quad \mbox{for every } |x|>r_\varepsilon .\end{equation} 
In view of our assumption that $c=0$, we have 
\begin{equation} \label{myt1} \lim_{|x|\to \infty} \frac{u(x)}{ \Xi_{\varepsilon}(|x|)}=0. 
\end{equation}
From \eqref{myt0}--\eqref{myt1} and the comparison principle, we conclude that 
\begin{equation} \label{myt3} u(x)\leq   \Xi_{\varepsilon}(|x|)\quad \mbox{for every } |x|\geq r_\varepsilon.
\end{equation}
Fix $x\in \mathbb R^N\setminus \{0\}$ arbitrary. 
Remark that $\lim_{\varepsilon\to 0} \Xi_{\varepsilon}(|x|)=0$ using that  $0<r_\varepsilon<\varepsilon$.   Letting $\varepsilon\to 0$ in \eqref{myt3}, we get $u(x)=0$, which is a contradiction. 
This ends the proof of Proposition~\ref{gild}.  
\end{proof}


\subsection{The case $c=\infty$}

Here, our main result is the following.

\begin{theorem} \label{infc1}
 Let \eqref{e2} hold, $\theta<-2$ and $f_\rho(\beta)>0$. 
 Assume that $\lambda\not =2\rho$ when $\tau=0$. Let $u$ be any 
 positive solution of \eqref{e1} in a domain of $\mathbb R^N$ containing $\mathbb R^N\setminus \overline{B_R(0)}$ for some $R>0$.
  Then, if $c$ in \eqref{confi} satisfies $c=\infty$, then we have 
  $u(x)\sim U_{\rho,\beta}(|x|)$ as $|x|\to \infty$. 
\end{theorem}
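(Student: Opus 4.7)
The plan is to apply the modified Kelvin transform of Appendix~\ref{Kel}: setting $v(\tilde x):=u(\tilde x/|\tilde x|^2)$ produces a positive $C^1$-solution of
\begin{equation*}
\mathbb L_{-\rho,\lambda,\tau}[v(\tilde x)]=|\tilde x|^{\tilde\theta}\,v^q(\tilde x)\quad\mbox{in } B_{1/R}(0)\setminus\{0\},
\end{equation*}
where $\tilde\theta:=-\theta-4>-2$ and $\tilde\beta:=-\beta>0$, and where $f_{-\rho}(\tilde\beta)=f_\rho(\beta)>0$. Moreover $U_{-\rho,\tilde\beta}(|\tilde x|)=\Lambda|\tilde x|^{-\tilde\beta}$ coincides with $U_{\rho,\beta}(|x|)$ under $|x|=1/|\tilde x|$, so the target conclusion $u(x)\sim U_{\rho,\beta}(|x|)$ as $|x|\to\infty$ is equivalent to $\lim_{|\tilde x|\to 0}|\tilde x|^{\tilde\beta}v(\tilde x)=\Lambda$.

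First I would invoke Corollary~\ref{goos}(ii) to obtain the sharp dichotomy $\lim_{|x|\to\infty}|x|^\beta u(x)\in\{0,\Lambda\}$. The value $\Lambda$ is exactly what we want to prove, so the core of the argument is to rule out $\lim_{|x|\to\infty}|x|^\beta u(x)=0$ under the standing hypothesis $c=\infty$; through the Kelvin transform, this reduces to excluding $\lim_{|\tilde x|\to 0}|\tilde x|^{\tilde\beta}v(\tilde x)=0$.

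To produce a contradiction, I would transfer the local upper-bound arguments of Sections~\ref{errv}, \ref{sept} and \ref{sect9} to $v$ near the origin, which is now in the framework of Case $[N_j](-\rho)$. The super-solution constructions of Lemmas~\ref{lema53} and~\ref{lema63}, the families $V_\varepsilon$ in \eqref{vio} and $W_{\varepsilon,\delta}$ in \eqref{dwet}, and the super-solutions of Sections~\ref{secla1}--\ref{secla2} rely solely on the sign identities $f_{-\rho}(-\varpi_j(\rho))=0$, on inequalities of type \eqref{raoo}--\eqref{rabi} with $\rho$ replaced by $-\rho$, and on elementary ODE computations, so they carry over verbatim to $v$. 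Combined with the hypothesis $\lim_{|\tilde x|\to 0}|\tilde x|^{\tilde\beta}v=0$ (which ensures that $v$ is dominated by these families near $0$) and the comparison principle (Remark~\ref{reka6}), they yield: in Case $[N_1]$, as well as in Case $[N_2]$ with $\rho>\Upsilon$, when $\beta<\varpi_1$, the bound $v(\tilde x)\leq C|\tilde x|^{-\varpi_1(-\rho)}$ near $0$; in Case $[N_2]$ with $\rho=\Upsilon$ and $\beta<\varpi_1$, the bound $v(\tilde x)\leq C\,|\tilde x|^{-\varpi_1(-\rho)}\,|\log|\tilde x||^{2/(1+\tau)}$ near $0$; and in Case $[N_0]$ (with $\lambda\neq 2\rho$ when $\tau=0$) as well as in Case $[N_2]$ with $\beta\in(\varpi_2,0)$, that $v$ is bounded near $0$. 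Translating these bounds back via $|x|=1/|\tilde x|$ forces the quantity $c$ in \eqref{confi} to be finite, in direct contradiction with $c=\infty$. The remaining borderline $\lambda=2\rho$ with $\tau=0$ is outside the scope of \eqref{confi} and has already been settled in Proposition~\ref{ze-ta}.

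The hardest step will be the borderline $\rho=\Upsilon$ of Case $[N_2]$: there $|\tilde x|^{-\varpi_1(-\rho)}$ is no longer an explicit solution of $\mathbb L_{-\rho,\lambda,\tau}[\cdot]=0$, so one cannot simply add it to $\varepsilon|\tilde x|^{-\tilde\beta}$ to manufacture a super-solution. To close the comparison at this borderline I would employ the Kelvin avatar of the delicate super-solution $\widetilde\Psi$ of Lemma~\ref{feed} (which near $0$ in the $-\rho$ framework plays the role of the asymptotic model $|\tilde x|^{-\varpi_1(-\rho)}|\log|\tilde x||^{2/(1+\tau)}$), together with $\Phi_{\varpi_2(\rho)}$ from Lemma~\ref{fiqa}, to obtain the required sandwich estimate and conclude the contradiction.
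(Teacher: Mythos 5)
Your opening reduction is sound and coincides with the paper's first step: Corollary~\ref{goos}(ii) gives the dichotomy $\lim_{|x|\to\infty}|x|^{\beta}u(x)\in\{0,\Lambda\}$, so the theorem is equivalent to excluding the value $0$ when $c=\infty$. The gap is in how you propose to exclude it. You assume $\lim_{|x|\to\infty}|x|^{\beta}u(x)=0$ and plan to propagate this upper bound down to $u=O(|x|^{-\varpi_1})$ (or to boundedness) by transplanting the super-solution families of Lemmas~\ref{lema53}, \ref{lema63} and Sections~\ref{secla1}--\ref{secla2}. But those constructions are \emph{not} available here: Lemma~\ref{lema53} is proved under $f_\rho(\beta)\leq 0$ (its Case $[N_2]$ computation explicitly invokes $f_\rho(\beta)\leq 0=f_\rho(\varpi_1)$), and the iteration of Section~\ref{secla1} starts from an upper bound at the level of the \emph{root} $\varpi_2$, where the pure power solves the homogeneous equation. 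In Theorem~\ref{infc1} we have $f_\rho(\beta)>0$, and then every power $|x|^{-s}$ with $-s$ strictly between $\beta$ and $\varpi_1$ (in Case $[N_1]$, say, where $f_\rho>0$ on $(-\infty,\varpi_1)$) is a strict \emph{sub}-solution of $\mathbb L_{\rho,\lambda,\tau}[\cdot]=0$. Concretely, $W_\varepsilon=\varepsilon|x|^{-\beta}+M|x|^{-\varpi_1}$ satisfies $\mathbb L_{\rho,\lambda,\tau}[W_\varepsilon]\leq|x|^{\theta}W_\varepsilon^{q}$ only if $\varepsilon^{q-1}\geq f_\rho(\beta)$, i.e.\ $\varepsilon\geq\Lambda$; and then $\Lambda|x|^{-\beta}$ dominates $M|x|^{-\varpi_1}$ at infinity (since $\beta<\varpi_1$), so the resulting bound $u\leq\Lambda|x|^{-\beta}+M|x|^{-\varpi_1}$ only recovers $\limsup|x|^{\beta}u\leq\Lambda$ and gives no control on $\limsup|x|^{\varpi_1}u$. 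For $\varepsilon<\Lambda$ the comparison function is not a super-solution at all. The same obstruction (no admissible power super-solution between the target exponent and $-\beta$) defeats the downward iteration in Case $[N_0]$ and in Case $[N_2]$ with $\beta\in(\varpi_2,0)$. Note also that your argument never actually uses the hypothesis $c=\infty$ in any comparison; it only appears at the end as the thing to be contradicted, which is a sign that the analytic input is missing.

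The paper goes in the opposite direction: it exploits $c=\infty$ directly as the asymptotic datum for comparison \emph{from below}. In Proposition~\ref{ze-ta2} one builds radial sub-solutions $z_\delta$ (see \eqref{duzi}, and \eqref{gara} for the delicate two-stage iteration when $\rho=\Upsilon$) that vanish appropriately on an inner sphere, are dominated by $u$ at infinity because $\lim|x|^{\varpi_1}u=\infty$ (respectively $\lim u/\Psi_{\varpi_1}=\infty$), and whose limit as $\delta\to 0$ behaves like $c_0|x|^{-\beta}$ at infinity; the comparison principle then yields $\liminf_{|x|\to\infty}|x|^{\beta}u>0$, and Corollary~\ref{goos}(ii) upgrades this to $\Lambda$. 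Proposition~\ref{ze-ta3} does the analogous thing (with an iterative family $v_{j,\delta}$) in the remaining cases. If you want to salvage a proof by contradiction from the zero alternative, you would still have to reconstruct essentially these sub-solution arguments, because the super-solution route is structurally blocked by the sign $f_\rho(\beta)>0$.
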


The claim of Theorem~\ref{infc1} follows by combining Propositions~\ref{ze-ta2} and \ref{ze-ta3}. 

\begin{proposition}
\label{ze-ta2} In the framework of Theorem~\ref{infc1}, we assume that $\beta<\varpi_1$ in Case $[N_j]$ with $j=1,2$. If $c$ in \eqref{confi} satisfies $c=\infty$, then $u(x)\sim U_{\rho,\beta}(|x|)$ as $|x|\to \infty$.
\end{proposition}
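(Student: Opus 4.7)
My plan is to combine the dichotomy of Corollary~\ref{goos}(ii) with an iterative bootstrap at infinity. Since $f_\rho(\beta)>0$, any positive solution of \eqref{e1} on the exterior of a ball satisfies either
\begin{equation*}
\text{(a)}\ \lim_{|x|\to\infty}|x|^\beta u(x)=0 \quad\text{or}\quad \text{(b)}\ \lim_{|x|\to\infty}|x|^\beta u(x)=\Lambda;
\end{equation*}
alternative (b) is exactly the desired conclusion $u(x)\sim U_{\rho,\beta}(|x|)$ as $|x|\to\infty$, so the entire proof reduces to ruling out (a) under the standing hypothesis $c=\infty$. I will argue by contradiction: assuming (a) and $c=\infty$ both hold, I will show that in fact $c<\infty$ can be deduced, which is the sought contradiction.

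A one-shot comparison of $u$ with the super-solution $M|x|^{-\varpi_1}$ of \eqref{e1} (which trivially satisfies $\mathbb L_{\rho,\lambda,\tau}[M|x|^{-\varpi_1}]=0\le|x|^\theta(M|x|^{-\varpi_1})^q$) fails, because the assumption $u(x)=o(|x|^{-\beta})$ does not control $u(x)/|x|^{-\varpi_1}$ at infinity in view of $-\beta>-\varpi_1$. To close this gap I will transport the problem to the origin via the modified Kelvin transform $v(\tilde x)=u(\tilde x/|\tilde x|^2)$ of Appendix~\ref{Kel}: $v$ is a positive solution of \eqref{eeq1} with parameters $-\rho,\lambda,\tau$ and $\tilde\theta=-\theta-4>-2$, and alternative (a) reads $\lim_{|\tilde x|\to 0}|\tilde x|^{\tilde\beta}v(\tilde x)=0$, where $\tilde\beta=-\beta>-\varpi_1>0$. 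Following the template of the second step in the proof of Theorem~\ref{ama1}(A), I will construct a strictly decreasing sequence $\{N_j\}_{j\ge 1}$ starting near $\tilde\beta$ and converging to $-\varpi_1$, together with a two-parameter family of positive radial super-solutions of \eqref{eeq1} of the form
\begin{equation*}
V_{j,\delta}(r)=c\,r^{-N_j}\bigl(e^\delta-e^{-r^\alpha}\bigr)^{(N_{j+1}-N_j+\nu)/\alpha},
\end{equation*}
patterned on \eqref{seqq1}--\eqref{hauu}, with $\alpha,\nu>0$ small auxiliary parameters. Fixing $c$ so that $V_{j,\delta}$ dominates $v$ on $\{|\tilde x|=r_0\}$ and invoking the comparison principle (Lemma~\ref{co1}) on $\{\delta<|\tilde x|<r_0\}$, then letting $\delta\to 0$, sharpens the decay to $\limsup_{|\tilde x|\to 0}|\tilde x|^{N_{j+1}-\nu}v(\tilde x)<\infty$, which becomes the input for step $j+1$. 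After finitely many iterations one reaches $\limsup_{|\tilde x|\to 0}|\tilde x|^{\varpi_1}v(\tilde x)<\infty$; inverting the Kelvin transform then gives $c<\infty$, the desired contradiction.

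The main obstacle is the borderline configuration $\rho=\Upsilon$ in Case~$[N_2]$, where the reference profile at infinity is the log-corrected $\Psi_{\varpi_1(\rho)}(|x|)=|x|^{-\varpi_1}(\log|x|)^{2/(1+\tau)}$ rather than a pure power, so no pure-power step can terminate the bootstrap. To cover this case I will adjoin as a final comparison step the explicit super-solution $\widetilde\Psi$ of Lemma~\ref{feed} (whose asymptotics at infinity match $\Psi_{\varpi_1(\rho)}$), exploiting the critical identity $2(\varpi_1+\rho)=\lambda(1-\tau)|\varpi_1|^{-\tau}$ from \eqref{varo} to verify the super-solution inequality uniformly in $\delta$. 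A further delicate point is that the Kelvin transform sends $\rho\mapsto-\rho$, so the parameters governing $v$ need no longer lie in Case~$[N_j]$ for the original $j$; the sign conditions analogous to \eqref{sumt} must therefore be re-established for $f_{-\rho}$, and the sequence $\{N_j\}$ must be shown to accumulate at $-\varpi_1$ rather than at some other root of $f_{-\rho}$, guaranteeing that the iteration terminates in finitely many steps. I expect this critical-case bookkeeping, combined with the log-correction step, to constitute the technical heart of the proof.
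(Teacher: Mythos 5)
The logical skeleton (reduce to Corollary~\ref{goos}(ii) and rule out the alternative $\lim_{|x|\to\infty}|x|^\beta u(x)=0$) matches the paper, but the technical core of your proposal fails. After the Kelvin transform, the exponents $N_j$ of your iteration must sweep the interval $(-\varpi_1,\widetilde\beta)$, i.e.\ $-N_j\in(\beta,\varpi_1)$. On that interval $\widetilde f_\rho>0$, hence $f_{-\rho}(N_j)=f_\rho(-N_j)>0$, so $\mathbb L_{-\rho,\lambda,\tau}[r^{-N_j}]=f_{-\rho}(N_j)\,r^{-N_j-2}>0$: the pure powers you want to perturb are \emph{sub-solutions} of the homogeneous transformed equation, not super-solutions. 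A multiplicative perturbation of the type $(e^\delta-e^{-r^\alpha})^{(N_{j+1}-N_j+\nu)/\alpha}$ only contributes terms of order $\Psi_\delta(r^\alpha)\in(0,1)$, so $\mathbb L_{-\rho,\lambda,\tau}[V_{j,\delta}]$ stays bounded below by a positive multiple of $V_{j,\delta}/r^2$ near the origin, whereas the reaction term $r^{\widetilde\theta}V_{j,\delta}^q$ is of size $c^{q-1}r^{(q-1)(\widetilde\beta-N_j)}\,V_{j,\delta}/r^2\to 0$ because $N_j<\widetilde\beta$. Thus the analogue of \eqref{uraa}--\eqref{sumt} that you say ``must be re-established for $f_{-\rho}$'' is false throughout the range of your iteration; this is a structural obstruction, not bookkeeping. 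The same sign computation shows there is no chain of power-type super-solutions (of the homogeneous or the full equation) linking the rate $|x|^{-\beta}$ to the rate $|x|^{-\varpi_1}$ at infinity, and the only super-solutions at the target rate ($M|x|^{-\varpi_1}$, or $\widetilde\Psi$ of Lemma~\ref{feed} when $\rho=\Upsilon$) require exactly the outer-boundary control you are trying to establish, so that comparison cannot be closed either.

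The paper argues in the opposite direction: it shows directly that $c=\infty$ forces $\liminf_{|x|\to\infty}|x|^\beta u(x)>0$, and then Corollary~\ref{goos}(ii) gives the conclusion. The tool is a family of radial \emph{sub-solutions} $z_\delta(r)=c_0\,r^{-\varpi_1}\bigl(e^\delta-e^{-r^{-\alpha}}\bigr)^{-(\varpi_1-\beta)/\alpha}$ (with a two-stage logarithmic variant $z_{\delta,s}$ when $\rho=\Upsilon$), which are dominated by $u$ near infinity precisely because $c=\infty$, and whose $\delta\to 0$ limits grow like $r^{-\beta}$; the sub-solution inequality holds because $(\varpi_1-\beta)\bigl(|\varpi_1|-\lambda^+\tau|\varpi_1|^{-\tau}-\alpha\bigr)>0$ for small $\alpha$. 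Even within your contradiction framing you would still need this lower-bound construction; the super-solution bootstrap cannot replace it.
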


\begin{proof} 
By Corollary~\ref{goos}, to conclude that $u(x)\sim U_{\rho,\beta}(|x|)$ as $|x|\to \infty$, it suffices to show that
\begin{equation} \label{lups}
	\liminf_{|x|\to \infty} |x|^\beta u(x)>0. 
\end{equation}

The idea for proving \eqref{lups} is to construct a family 
$\{z_\delta\}_{\delta\in (0,1) }$ of positive radial 
sub-solutions of \eqref{e1} for $|x|>1$ with the following properties (for every $\delta\in (0,1)$ except for (4)): 
\begin{equation} \label{gumaa}
\begin{aligned} 
& \mbox{(1) } z_\delta(R)\leq \min_{|x|=R} u(x); &&
 \mbox{(2) } \lim_{|x|\to \infty} \frac{u(x)}{z_\delta (|x|)}=\infty;&\\
& \mbox{(3) } z_\delta'(r)\not=0\ \mbox{for every }r>1; &&
\mbox{(4) }\lim_{r\to \infty} r^{\beta} \left(\lim_{\delta\to 0^+} z_\delta (r)\right)=c_0\in (0,\infty). 
\end{aligned} \end{equation}

By the first three properties of $\{z_\delta\}_{\delta>0}$,  we can apply the comparison principle to obtain 
$$ z_\delta (|x|)\leq u(x)\quad \mbox{for all } R\leq |x|<\infty \ \mbox{and every } \delta\in (0,1).
$$ Then, letting $\delta\to 0^+$ and using the property (4) of $z_\delta$, we arrive at \eqref{lups}.

\vspace{0.2cm}
In Case $[N_2]$, we need to treat the case $\rho=\Upsilon$ separately due to the different definition of $\Psi_{\varpi_1(\rho)}$ in \eqref{psia}.
Hence, we divide the proof of \eqref{lups} into two Steps. 

\vspace{0.2cm}
{\bf Step 1.}  {\em Proof of \eqref{lups}, assuming either Case $[N_1]$ or $\rho\not=\Upsilon$ in Case $[N_2]$}.  

\vspace{0,2cm}
By our assumptions (see \eqref{confi}), the modified Kelvin transform and
Corollary~\ref{pere}, we get 
\begin{equation} \label{trand} \lim_{|x|\to \infty} |x|^{\varpi_1} u(x)=\infty.  
\end{equation}

{\bf Construction of the family of sub-solutions $\{z_\delta\}_{\delta>0}$.}

Since we assumed $\rho\not= \Upsilon$ when Case $[N_2]$ occurs, we see that 
\begin{equation} \label{huer} \beta<\varpi_1<-(\lambda \tau)^{1/(\tau+1)}<\varpi_2<0.\end{equation}

In Case $[N_1]$, we have $\lambda^+ \tau=0$ as either $\tau=0$ or $\lambda<0$ in Case $[N_1]$. 

We fix $\alpha>0$ small such that 
\begin{equation} \label{miuc} 0<\alpha< \min\,\{  |\varpi_1|
 \left(1-\lambda^+ \tau |\varpi_1|^{-\tau-1} \right),
 (\varpi_1-\beta)(q-1)
 \} .\end{equation}

We define two positive constants $b_\alpha$ and $c_\alpha$ as follows 
\begin{equation} \label{calf1} \begin{aligned}
& b_\alpha:=\sup_{0<t<1} e^t \left( \frac{1-e^{-t}}{t}\right)^{1-\frac{(\varpi_1-\beta)(q-1)}{\alpha}} ,\\
 & c_\alpha=\left( \frac{(\varpi_1-\beta) (|\varpi_1|-\lambda^+\tau |\varpi_1|^{-\tau} -\alpha)}{b_\alpha} \right)^{\frac{1}{q-1}}. 
\end{aligned}
\end{equation} 
Fix $c_0\in (0,c_\alpha)$ arbitrary. 
For every $\delta,r>0$, we define 
\begin{equation} \label{duzi}  
Q_\delta(r)=\frac{ r^{-\alpha}\, e^{-r^{-\alpha}}
}{e^\delta-e^{-r^{-\alpha}}}
\quad \mbox{and}\quad z_\delta(r)=c_0\, r^{-\varpi_1} \left(e^\delta-e^{-r^{-\alpha}}\right)^{- \frac{\varpi_1-\beta}{\alpha}}
. \end{equation}

The properties (2)--(4) in \eqref{gumaa} are easily checked for $z_\delta$ in \eqref{duzi}. Let $R>1$ be large such that $\mathbb R^N\setminus \overline{B_R(0)}\subset \subset  \Omega_1$. By choosing $c_0\in (0,c_\alpha)$ small (depending on $u$ and $R$) such that 
$$  c_0 R^{-\varpi_1} \left(1-e^{-R^{-\alpha}}\right)^{-\frac{\varpi_1-\beta}{\alpha}}\leq \min_{|x|=R} u(x),
$$ we ensure that the first property in \eqref{gumaa} is satisfied by $z_\delta$ for every $\delta>0$. 

We next show that for arbitrary $c_0\in (0,c_\alpha)$ and every $\delta>0$, the function $z_\delta$ in \eqref{duzi} is a positive radial sub-solution of 
\eqref{e1} in $\mathbb R^N\setminus \overline{B_1(0)}$, that is, 
\begin{equation} \label{surf1} \mathbb L_{\rho,\lambda,\tau} [z_\delta(|x|)] \geq |x|^\theta 
[z_\delta (|x|)]^q \quad \mbox{for every } |x|>1. 
\end{equation} 

{\bf Proof of \eqref{surf1}.}
Let $r>1$ be arbitrary. 
By the choice of $\alpha$ in \eqref{miuc}, \eqref{duzi} and the definition of $b_\alpha$ in \eqref{calf1},  we see that  
\begin{equation} \label{geza} b_\alpha\geq e^{r^{-\alpha}} \left(\frac{1-e^{-r^{-\alpha}}}{r^{-\alpha}}\right)^{1-\frac{(\varpi_1-\beta)(q-1)}{\alpha}} \geq \frac{z^{q-1}_\delta(r)}{c_0^{q-1}} \frac{r^{\theta+2}}{Q_\delta (r)}.
\end{equation}
By a simple calculation, we find that 
$$ \begin{aligned}
 & z_\delta'(r)=\frac{z_\delta(r)}{r} \left[-\varpi_1+(\varpi_1-\beta)\,Q_\delta (r)\right]>0,\\
&  Q_\delta'(r)=\alpha \frac{Q_\delta(r)}{r} \left[ r^{-\alpha}-1+Q_\delta(r)\right],\\
& \lambda\, G_\tau[z_\delta(r)]=\lambda |\varpi_1|^{1-\tau} \frac{z_\delta (r)}{r^2} \left(1+\frac{(\varpi_1-\beta)}{|\varpi_1|}\,Q_\delta (r)\right)^{1-\tau}. 
\end{aligned} $$
It follows that 
\begin{equation} \label{ingf}
\begin{aligned}  
\mathbb L_\rho[z_\delta(r)]= \frac{z_\delta(r)}{r^2} & \left
\{ 
\varpi_1^2+2\rho \varpi_1+\left(\varpi_1-\beta\right)\left [-2\left(\varpi_1+\rho\right)-\alpha+\alpha\, r^{-\alpha} \right] Q_\delta(r) \right.\\
& \left. +\left(\varpi_1-\beta\right) \left(\varpi_1-\beta+\alpha\right) Q_\delta^2(r)
\right\}. 
\end{aligned} 
\end{equation}

We recall that $f_\rho(\varpi_1)=0$, that is, $\varpi_1^2+2\rho \varpi_1+\lambda |\varpi_1|^{1-\tau}=0$. 

$\bullet$ If Case $[N_1]$ occurs, 
then either $\lambda\leq 0$ or $\tau=0$, which implies that 
$$   \lambda\, G_\tau[z_\delta(r)]\geq \lambda |\varpi_1|^{1-\tau} \frac{z_\delta (r)}{r^2} \left(1+\frac{(\varpi_1-\beta)}{|\varpi_1|}\,Q_\delta (r)\right).
$$
Therefore, using \eqref{ingf} and $\beta<\varpi_1<0$, we arrive at 
$$ \mathbb L_{\rho,\lambda,\tau} [z_\delta(r)] \geq 
\left( \varpi_1-\beta\right) \left( -\varpi_1-\alpha\right)  
\frac{z_\delta(r)}{r^2} Q_\delta(r) . 
$$

$\bullet$ In Case $[N_2]$, using that $\beta<\varpi_1$ and $\lambda>0$, we obtain that 
$$ 	
 \lambda\, G_\tau[z_\delta(r)] \geq
\lambda |\varpi_1|^{1-\tau} \frac{z_\delta (r)}{r^2} \left[
1+\frac{\left(1-\tau\right)\left(\varpi_1-\beta\right)}{|\varpi_1|} Q_\delta(r)
-\frac{\tau\left(1-\tau\right)\left(\varpi_1-\beta\right)^2}{2\,\varpi_1^2} \,Q_\delta^2(r)
\right].
$$
This, jointly with \eqref{ingf}, implies that 
$$  \mathbb L_{\rho,\lambda,\tau} [z_\delta(r)] \geq 
\left(\varpi_1-\beta\right) \left(-\varpi_1-\lambda \tau |\varpi_1|^{-\tau} -\alpha \right) \frac{z_\delta(r)}{r^2} Q_\delta(r) .
$$ 

In both Case $[N_1]$ and Case $[N_2]$, using \eqref{miuc} and \eqref{calf1}, we find that 
$$  \mathbb L_{\rho,\lambda,\tau} [z_\delta(r)] \geq c_\alpha^{q-1} b_\alpha \frac{z_\delta(r)}{r^2} Q_\delta(r),
$$
which, together with $c_0\in (0,c_\alpha)$ and \eqref{geza}, proves 
the claim of \eqref{surf1}. \qed 

\vspace{0.2cm}
This completes the proof of Step~1.  \qed

\vspace{0.2cm}
{\bf Step~2:} {\em Proof of \eqref{lups}, assuming that $\rho=\Upsilon$ in Case $[N_2]$}. 

\vspace{0.2cm}
In this case, by our assumption that $c=\infty$ and Corollary~\ref{pere}, we obtain that 
 \begin{equation} \label{guma}
 \lim_{|x|\to \infty} |x|^{\varpi_1} \left( \log |x|\right)^{-\frac{2}{1+\tau}}
u(x)=\infty.	
 \end{equation}

To reach \eqref{lups}, we need to update the construction of the family $\{z_\delta\}_{\delta>0}$ of positive radial sub-solutions of 
\eqref{e1} for $|x|>1$ to satisfy the properties 
(1)--(4). The construction is done in two stages (as explained below) because it seems difficult to obtain property (4) directly without first improving  \eqref{guma} as follows 
\begin{equation} \label{furn}
\lim_{|x|\to \infty} |x|^{\varpi_1} (\log |x|)^{-\eta} u(x)=\infty\quad \mbox{for every } \eta>2/(1+\tau). 	
\end{equation}

We fix  $s\geq 2/(1+\tau)$ and let $c_0,\alpha>0$ be as small as needed (depending on our choice of $s$). For every $\delta\in (0,1)$, we define 
$z_{\delta,s}$ of the form
\begin{equation} \label{gara} z_{\delta,s}(r)=\frac{c_0}{\delta^s} r^{-\varpi_1} \log^s (1+\delta \,r^\alpha)\quad \mbox{for every } r>1.  
\end{equation}

We next find the right conditions that ensure 
\eqref{surf1} for   
 $z_\delta=z_{\delta,s}$ and every $\delta\in (0,1)$. 
 
Since $\beta=(\theta+2)/(q-1)$, from \eqref{gara}, we see that
\begin{equation} \label{riu1} r^{\theta+2} z_{\delta,s}^{q-1}(s)=c_0^{q-1} r^{(\alpha s-\varpi_1+\beta)(q-1)} \left(
\frac{\log \,(1+\delta\,r^\alpha)}{\delta\,r^\alpha}\right)^{s\left(q-1\right)}. 
\end{equation}  
 
For every $t>0$, we define $J(t)$ as in \eqref{jef}. Then, for arbitrary $r>1$, we obtain that 
$$  \begin{aligned} & z_{\delta,s}'(r)=\frac{z_{\delta,s}(r)}{r} \left[-\varpi_1+\alpha s J(\delta\,r^\alpha)\right]>0,\\
  & \mathbb L_\rho[z_{\delta,s}(r)]=	
  \frac{z_{\delta,s}(r)}{r^2} \left[
  \varpi_1^2+2\rho \varpi_1-2\alpha s \left(\varpi_1+\rho\right) J(\delta\,r^\alpha)
  +\alpha^2 (s^2-s) J^2(\delta\,r^\alpha) +
  \alpha^2 s \frac{J(\delta\,r^\alpha)}{1+\delta r^\alpha}\right].
  \end{aligned}
$$ 
Moreover, since $\lambda>0$, we find that 
$$ \begin{aligned} \lambda \,G_\tau[z_{\delta,s}(r)]& = \lambda |\varpi_1|^{1-\tau} \frac{z_{\delta,s}(r)}{r^2}
\left(1+\frac{\alpha s}{|\varpi_1|} J(\delta\,r^\alpha) \right)^{1-\tau}\\
& \geq \lambda |\varpi_1|^{1-\tau} \frac{z_{\delta,s}(r)}{r^2} \left[1+(1-\tau) \frac{\alpha s}{|\varpi_1|} J(\delta\,r^\alpha) -\frac{\tau\left(1-\tau\right)\alpha^2 s^2 }{2\, \varpi_1^2} J^2(\delta\,r^\alpha)  \right].
\end{aligned}
$$
Using that $\varpi_1^2+2 \rho\varpi_1 +\lambda |\varpi_1|^{1-\tau}=0$ and $\varpi_1=\varpi_2=-(\lambda \tau)^{1/(\tau+1)}$, we arrive at 
\begin{equation} \label{furi} \mathbb L_{\rho,\lambda,\tau} [z_{\delta,s}(r)]\geq  \alpha^2 s\, \frac{z_{\delta,s}(r)}{r^2} 
 \left[ \left( \frac{s\left(1+\tau\right)}{2}-1\right) J^2(\delta\, r^\alpha)+
\frac{J(\delta\,r^\alpha)}{1+\delta\,r^\alpha}
\right].
\end{equation}

{\bf Construction of the family $\{z_{\delta,s}\}_{\delta\in (0,1)}$ of positive  sub-solutions of \eqref{e1} for $|x|>1$.}

We next prove \eqref{lups} in two stages by a suitable choice of $s$, $\alpha$ and $c_0$ in  \eqref{gara}: 

$\bullet$ First, by taking $s=2/(1+\tau)$, $\alpha$ satisfying \eqref{alfs} and $0<c_0 \leq R^{\varpi_1-\alpha s} \min_{|x|=R} u(x)$ such that \eqref{cosa} holds, we establish \eqref{furn} based on the comparison principle. 

$\bullet$ Second, in \eqref{gara}, we fix $s>2/(1+\tau)$,  $\alpha=(\varpi_1-\beta)/s$ and $0<c_0\leq R^{\beta} \min_{|x|=R} u(x)$ such that \eqref{cooo} holds. Then, for every $\delta>0$, we show that $z_{\delta,s}$ in \eqref{gara} is a positive radial sub-solution of \eqref{e1} for $|x|>1$ and satisfies all the desired properties (1)--(4) in \eqref{gumaa}.

\vspace{0.2cm}
{\bf Proof of \eqref{furn}.} Our starting point is \eqref{guma} so that to guarantee the second property in \eqref{gumaa}, we cannot take $s>2/(1+\tau)$ at this stage (only later after establishing \eqref{furn}). Hence, in the definition of $z_{\delta,s}$ in \eqref{gara}, we must start with $ s=2/(1+\tau)$, which means that the coefficient of $J^2(\delta r^\alpha)$ in the right-hand side of \eqref{furi} becomes zero. 
Consequently, to obtain \eqref{surf1} for $z_{\delta}=z_{\delta,s}$, we shall see that a restriction arises on the choice of $\alpha$ (the second inequality in \eqref{alfs}), which prevents us from taking $\alpha=(\varpi_1-\beta)/s$. Thus, by the comparison principle, we don't get \eqref{lups} in the first place, only \eqref{furn}. We give the details below.

From \eqref{furi} with $s=2/(1+\tau)$, we get    
  $$  \mathbb L_{\rho,\lambda,\tau} [z_{\delta,s}(r)]\geq  \alpha^2 s \, \frac{z_{\delta,s}(r)}{r^2}\frac{J(\delta\,r^\alpha)}{1+\delta\,r^\alpha}=\frac{z_{\delta,s}(r)}{r^2}
  \frac{\alpha^2 s}{(1+\delta\,r^\alpha)^2} \,
  \frac{\delta \,r^\alpha}{\log \,(1+\delta \,r^\alpha)}.  
  $$
Thus, by taking $s=2/(1+\tau)$ in \eqref{gara}, the function $z_{\delta,s}$ becomes a positive radial sub-solution of 
\eqref{e1} in $\mathbb R^N\setminus \overline{B_1(0)}$ whenever we have
\begin{equation} \label{riuu}   \frac{\alpha^2 s}{(1+\delta\,r^\alpha)^2} \,
  \frac{\delta \,r^\alpha}{\log \,(1+\delta \,r^\alpha)}\geq r^{\theta+2} z_{\delta,s}^{q-1}(r)\quad \mbox{for all } r>1.
\end{equation}
For every $t>0$, we define 
$$ {\mathfrak M}_s(t)=t^{(q-1)\left(s-\frac{\varpi_1-\beta}{\alpha}\right)} (1+t)^2 \left( 
\frac{\log\, (1+t)}{t}\right)^{s\left(q-1\right)+1}.
$$
In view of \eqref{riu1}, the inequality in \eqref{riuu} is equivalent to 
\begin{equation} \label{riu2}  \delta^{(q-1) (\frac{\varpi_1-\beta}{\alpha}-s)}
c_0^{q-1} {\mathfrak M}_s(\delta\,r^\alpha)\leq \alpha^2 s\quad \mbox{for every } r>1\ \mbox{and }\delta\in (0,1). 
\end{equation}
We therefore need to choose $\alpha>0$ small to ensure that the power of $\delta$ (in the left-hand of \eqref{riu2}) is non-negative and $\sup_{t>0} {\mathfrak M}_s(t)<\infty$, namely,
\begin{equation} \label{alfs} 0<\alpha\leq \frac{\varpi_1-\beta}{s}  \quad \mbox{and} \quad \alpha< \left(q-1\right)
\left(\varpi_1-\beta\right). 
\end{equation}
Finally, in \eqref{gara}, we take $c_0>0$ satisfying 
\begin{equation} \label{cosa} 0<c_0\leq c_{\alpha,s},\quad \mbox{where }
c_{\alpha,s}:=\left(
\frac{\alpha^2 s}{\sup_{t>0} {\mathfrak M}_s(t)}
\right)^\frac{1}{q-1}. 
\end{equation}
With such choices of $\alpha$ and $c_0$, we regain 
\eqref{surf1} for $z_\delta=z_{\delta,s}$ and arbitrary $\delta\in (0,1)$. 

Since $\varpi_1<0$, we see that $z'_{\delta,s}(r)>0$ for every $r>1$. By diminishing $c_0=c_0(R,u)>0$ so that 
$ c_0 R^{\alpha s-\varpi_1} \leq \min_{|x|=R} u(x) 
$, we also obtain that 
$$ z_{\delta,s}\leq u\quad \mbox{on } \partial B_R(0)\quad \mbox{ for all } \delta\in (0,1).$$ Hence, $z_{\delta,s}$ satisfies the properties (1)--(3) in \eqref{gumaa}. 

Using \eqref{guma} and the comparison principle, we infer that 
$$ u(x)\geq z_{\delta,s}(|x|) \quad \mbox{for all } |x|\geq 1\quad \mbox{and }\delta\in (0,1). 
$$
For fixed $|x|\geq 1$, by passing to the limit $\delta\to 0^+$, we arrive at 
$ u(x)\geq   c_0 |x|^{\alpha s-\varpi_1} $. Thus, we have
\begin{equation} \label{riri} \liminf_{|x|\to \infty} |x|^{\varpi_1-\alpha s} u(x)\geq c_0. 
\end{equation}
However, unless it happens that $1+\tau<2\left(q-1\right)$, we cannot take $\alpha s=\varpi_1-\beta$ because of the second inequality in \eqref{alfs}. In any case, from \eqref{riri}, we conclude \eqref{furn}.  \qed

\vspace{0.2cm}
{\bf Proof of \eqref{lups} completed.} 
In the definition of $z_{\delta,s}$ in \eqref{gara}, we fix $s>2/(1+\tau)$ and take $\alpha=(\varpi_1-\beta)/s$. 
For every $t>0$, we define 
$$  {\mathfrak M}_s(t)=(1+t)^2 \left( \frac{\log\, (1+t)}{t}\right)^{s\left(q-1\right)+2}.
$$
Clearly, we have $\sup_{t>0} {\mathfrak M}_s(t)\in (0,\infty)$. 
We choose $c_0>0$ such that   
\begin{equation} \label{cooo}  c_0\in (0,c_{\alpha,s}), \quad \mbox{where }c_{\alpha,s}:=
\left( \frac{
 \alpha^2 s\left(\frac{s\left(1+\tau\right)}{2}-1\right)} {\sup_{t>0} {\mathfrak M}_s (t)}
 \right)^{\frac{1}{q-1}}.
\end{equation} 
 
We can fix  {\em any} $s>2/(1+\tau)$ and we still guarantee the second property in \eqref{gumaa} with $z_\delta=z_{\delta,s}$ because we have proved \eqref{furn}. The effect of this choice of $s$ is that the coefficient of $J^2(\delta \,r^\alpha)$ in the right-hand side of \eqref{furi} is now {\em positive} and, hence,
\begin{equation} \label{furi2} \mathbb L_{\rho,\lambda,\tau} [z_{\delta,s}(r)]\geq  \alpha^2 s
 \left( \frac{s\left(1+\tau\right)}{2}-1\right)  \frac{z_{\delta,s}(r)}{r^2}\,J^2(\delta\, r^\alpha).
\end{equation}
Since $\alpha s=\varpi_1-\beta$, from \eqref{riu1} and our choice of $c_0$ in \eqref{cooo}, we find that 
$$   \alpha^2 s
 \left( \frac{s\left(1+\tau\right)}{2}-1\right)  J^2(\delta\, r^\alpha)\geq r^{\theta+2} z^{q-1}_{\delta,s}(r)\quad \mbox{for all } r>1\ \mbox{and } \delta>0.
$$
We thus obtain that $z_{\delta,s}$ is a positive radial sub-solution of 
\eqref{e1} for $|x|>1$ and every $\delta>0$.
The properties (1)--(4) in \eqref{gumaa} are easily verified by $z_\delta=z_{\delta,s}$. This ends the proof of Step~2. \qed

\vspace{0.2cm}
From Step~1 and Step~2, we conclude the proof of Proposition~\ref{ze-ta2}. 	
\end{proof}

\begin{proposition} \label{ze-ta3}
In the framework of Theorem~\ref{infc1}, we assume that either Case $[N_0]$ holds with $\lambda\not=2\rho$ if $\tau=0$ or $\beta\in (\varpi_2,0)$ in Case $[N_2]$. 
If $c$ in \eqref{confi} satisfies $c=\infty$, that is, \begin{equation}
\label{zein}
c=\limsup_{|x|\to \infty} u(x)=\infty,
\end{equation}
 then  $u(x)\sim U_{\rho,\beta}(|x|)$ as $|x|\to \infty$.  	
\end{proposition}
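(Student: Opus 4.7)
The plan is to apply the modified Kelvin transform, reducing the claim to an assertion about the behaviour near zero of a transformed solution, then invoke the dichotomy of Theorem~\ref{pro1}(c) and rule out the undesirable alternative via a sub-solution construction modelled on Lemma~\ref{lem2} and Proposition~\ref{lad}. Set $v(\widetilde x):= u(x/|x|^2)$, so (Appendix~\ref{Kel}) $v$ is a positive solution of $\mathbb L_{-\rho,\lambda,\tau}[v] = |\widetilde x|^{\widetilde\theta}\,v^q$ near $\widetilde x = 0$ with $\widetilde\theta := -\theta-4>-2$, $\widetilde\beta = -\beta$, and $f_{-\rho}(\widetilde\beta) = f_\rho(\beta)>0$. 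The hypothesis \eqref{zein} reads $\limsup_{|\widetilde x|\to 0} v = \infty$, upgraded by Corollary~\ref{pere} to $\lim_{|\widetilde x|\to 0} v = \infty$. Theorem~\ref{pro1}(c) applied to $v$ then yields the dichotomy
\begin{equation}\label{alt-vanish}
\lim_{|\widetilde x|\to 0}|\widetilde x|^{\widetilde\beta}\,v(\widetilde x) = 0 \quad \text{or} \quad v(\widetilde x) \sim U_{-\rho,\widetilde\beta}(|\widetilde x|) = \Lambda\,|\widetilde x|^{-\widetilde\beta},
\end{equation}
the second alternative translating via the Kelvin transform to $u(x)\sim U_{\rho,\beta}(|x|)$ as $|x|\to\infty$, which is what is sought. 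Thus it remains to exclude the first alternative in \eqref{alt-vanish}. If $\beta\geq 0$, then $|\widetilde x|^{\widetilde\beta}$ is bounded below as $|\widetilde x|\to 0$, so $|\widetilde x|^{\widetilde\beta}v\to\infty$, an immediate contradiction; hence one may assume $\beta<0$, i.e., $\widetilde\beta>0$.

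The goal is now to derive \eqref{nopi} for $v$, namely $\lim_{|\widetilde x|\to 0}|\widetilde x|^{\widetilde\beta-\eta}v(\widetilde x)=\infty$ for every $\eta>0$; Theorem~\ref{pro1}(b) applied to $v$ would then give $\liminf_{|\widetilde x|\to 0} |\widetilde x|^{\widetilde\beta}v\geq\Lambda>0$, contradicting the first alternative in \eqref{alt-vanish}. For each small $\alpha>0$, I would construct a family $\{w_{\delta,\alpha}\}_{\delta>0}$ of positive radial sub-solutions of the $v$-equation on annuli $\{\delta<|\widetilde x|<r_\ast\}$, with $r_\ast$ independent of $\delta$, adapted from Lemma~\ref{lem2}: each $w_{\delta,\alpha}$ vanishes on $\partial B_\delta(0)$, has $w'_{\delta,\alpha}\neq 0$ on the annulus (enabling the comparison principle), and satisfies $\lim_{\delta\to 0^+}w_{\delta,\alpha}(r) \sim c\,r^{-(\widetilde\beta+\alpha)}$ as $r\to 0^+$ for a prescribed constant $c>0$. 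Taking $c$ small enough (depending on $r_\ast$ and $v$) arranges $w_{\delta,\alpha}\leq v$ on $\partial B_{r_\ast}(0)$ for every $\delta>0$. The comparison principle (Remark~\ref{reka6}) then yields $v\geq w_{\delta,\alpha}$ on the annulus; letting $\delta\to 0^+$ gives $\liminf_{|\widetilde x|\to 0}|\widetilde x|^{\widetilde\beta+\alpha}v(\widetilde x)\geq c>0$, and factoring $|\widetilde x|^{\widetilde\beta-\eta} = |\widetilde x|^{-\alpha-\eta}\cdot|\widetilde x|^{\widetilde\beta+\alpha}$ (with $|\widetilde x|^{-\alpha-\eta}\to\infty$ as $|\widetilde x|\to 0$) produces \eqref{nopi} for $v$.

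The main obstacle is the sub-solution construction, which must be carried out separately in each of the two regimes of Proposition~\ref{ze-ta3}. When $\beta\in(\varpi_2,0)$ in Case~$[N_2]$, the chain $\widetilde\beta < -\varpi_2 < \Upsilon \leq \rho = -\widetilde\rho$ secures the analogue of \eqref{extar} for the $v$-equation, and a single family adapted from Lemma~\ref{lem2} should suffice. In Case~$[N_0]$ (with $\lambda\neq 2\rho$ when $\tau=0$, since equality was handled in Proposition~\ref{ze-ta}), the analogue of \eqref{extar} may fail, forcing an iterative scheme in the spirit of Proposition~\ref{lad} --- initiated by a Lemma~\ref{lema44}-type lower bound adapted to $\widetilde\beta>0$, and successively refining the exponent in finitely many steps up to the target $-(\widetilde\beta+\alpha)$. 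Inverting the sign conventions of the sub-solutions from the $\beta<0$ setting while preserving the monotonicity $w'_{\delta,\alpha}\neq 0$ required by the comparison principle (Lemmas~\ref{co1}, \ref{co2}) is the most delicate technical point.
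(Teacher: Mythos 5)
Your reduction is sound and matches the paper's: via the modified Kelvin transform and Theorem~\ref{pro1} (equivalently Corollary~\ref{goos}(ii)), everything comes down to proving \eqref{univ}, i.e.\ $\lim_{|x|\to\infty}|x|^{\beta+\eta}u(x)=\infty$ for every $\eta>0$, which is exactly \eqref{nopi} for the transformed solution $v$. The gap is that the sub-solution constructions — which are the entire substance of the proof — are not carried out, and the specifics you do commit to would fail. First, a Lemma~\ref{lem2}-type family $c\,s^{-\widetilde\beta}\bigl(e^{s^\alpha}-e^{\delta^\alpha}\bigr)$ with $\widetilde\beta=-\beta>0$ does not survive the sign inversion: its derivative is $c\,s^{-\widetilde\beta-1}\bigl[-\widetilde\beta\,(e^{s^\alpha}-e^{\delta^\alpha})+\alpha s^\alpha e^{s^\alpha}\bigr]$, which changes sign on the annulus, so both the lower bound $|w'|\geq\widetilde\beta\,w/s$ (needed to estimate the term $\lambda w^\tau|w'|^{1-\tau}/s^{1+\tau}$ from below when $\lambda>0$) and the nonvanishing-gradient hypothesis of the comparison principle (Remark~\ref{reka6}) break down. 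A genuinely different functional form is required; the paper uses $c_0(\delta+r^{-\alpha})^{(\beta+\varepsilon)/\alpha}$ at infinity, which is monotone with $|v_\delta'|$ comparable to $v_\delta/r$ uniformly.

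Second, your case split is wrong: the correct dividing line is not $\widetilde\beta\leq-\widetilde\rho$ (i.e.\ $-\beta\leq\rho$) but the sign of $2\rho\beta+\lambda|\beta|^{1-\tau}$ (conditions \eqref{giob} versus \eqref{neby} in the paper). In Case~$[N_2]$ with $\beta\in(\varpi_2,0)$ one always has $-\beta<\rho$, yet \eqref{neby} can still hold (e.g.\ for $\rho=\Upsilon$ one checks $(\lambda/(2\rho))^{1/\tau}<|\varpi_2|$, so $\beta$ close to $\varpi_2$ gives $2\rho\beta+\lambda|\beta|^{1-\tau}<0$), and then no single family of the type you describe suffices: the paper must run the iterative scheme in this regime as well, starting from the exponent $-(2\rho/\lambda)^{-1/\tau}$ (obtained by applying the direct construction with $\beta$ replaced by $-(2\rho/\lambda)^{-1/\tau}$) and increasing it arithmetically in steps of $\varepsilon$ up to $-\beta$ — an increasing sequence, structurally different from the decreasing sequence of Proposition~\ref{lad}, and not the Lemma~\ref{lema44}-type initialization you invoke (whose hypotheses $0<\widetilde\rho<\lambda^{1/(\tau+1)}$ would force $\rho<0$ after the Kelvin transform and so do not apply). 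As it stands the proposal is a plausible outline whose critical steps are either missing or, where specified, incorrect.
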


\begin{proof}
By our assumptions, we have  
\begin{equation} \label{mubb} \lambda>2\rho\quad \mbox{when } \tau=0.
\end{equation}

In view of Theorem~\ref{pro1} and the modified Kelvin transform, it suffices to prove that if the positive solution $u$ of \eqref{e1} in $\mathbb R^N\setminus \overline{B_R(0)}$ satisfies \eqref{zein}, then 
\begin{equation}
\label{univ} 
\mbox{for every } \eta>0,\quad \lim_{|x|\to \infty} |x|^{\beta+\eta} u(x)=\infty. 	
\end{equation}

(I) In addition, if $\tau\not=0$,  
then we suppose  that 
\begin{equation} \label{giob} \lambda |\beta|^{-\tau}-2\rho \geq 0. 
\end{equation} 

(When $\tau=0$, we have strict inequality in \eqref{giob} because of \eqref{mubb}.)  
Then, we can directly prove \eqref{univ} by means of the comparison principle with a suitable family of positive radial sub-solutions $\{v_\delta\}_{\delta>0}$ of \eqref{e1} in $\mathbb R^N\setminus \overline{B_R(0)}$. 
For every $\varepsilon>0$ small, we have  
\begin{equation} \label{onon} 
\mathfrak E_\varepsilon:=
2\rho \left(\beta+\varepsilon\right)+ \lambda \left|\beta+\varepsilon\right|^{1-\tau} >0. 
\end{equation}
Let $\varepsilon\in (0,|\beta|)$ and $\alpha=\alpha(\varepsilon)>0$ be fixed as small as needed. In particular, let $\alpha>0$ satisfy
\begin{equation} \label{mup1} 0<  \alpha<
\min \left\{ \frac{\mathfrak E_\varepsilon}{\left(|\beta|-\varepsilon\right)}, (|\beta|-\varepsilon) (q-1)
\right\}. 
\end{equation}
We define $c_\varepsilon>0$ as follows
\begin{equation} \label{mup2} c_\varepsilon:=\left[ \mathfrak E_\varepsilon- \left(|\beta|-\varepsilon\right) \alpha\right]^\frac{1}{q-1}.  
\end{equation} 
For arbitrary $\delta>0$ and $c_0\in (0,c_\varepsilon]$, we define 
$$ v_\delta(r)=c_0 \left(\delta+r^{-\alpha} \right)^{\frac{\beta+\varepsilon}{\alpha}} \quad \mbox{for every } r>0. 
$$
For every $\delta>0$, we show that $v_\delta$ is a positive radial sub-solution of \eqref{e1} in $\mathbb R^N\setminus \overline{B_1(0)}$, that is,
\begin{equation} \label{muzi} \mathbb L_{\rho,\lambda,\tau}[v_\delta(|x|)]\geq |x|^\theta [v_\delta(|x|)]^q\quad \mbox{for every } |x|>1. 
\end{equation}
 
Indeed, let $|x|=r>1$ be arbitrary. Using \eqref{mup1} and \eqref{mup2}, we see that
$$ \begin{aligned} 
\mathbb L_{\rho,\lambda,\tau} [v_\delta(r)]& =\frac{v_\delta(r)}{r^2} \left[
(\beta+\varepsilon)^2 \left(\frac{r^{-\alpha}}{\delta+r^{-\alpha}}\right)^2+\mathfrak E_\varepsilon \frac{r^{-\alpha}}{\delta+r^{-\alpha}} +\frac{\left(\beta+\varepsilon\right)\alpha \,\delta \,r^{-\alpha}}{(\delta+r^{-\alpha})^2}
\right] \\
& \geq \left[
\mathfrak E_\varepsilon -\left(|\beta|-\varepsilon\right) \alpha 
\right]
 \frac{v_\delta(r)}{r^2} \frac{r^{-\alpha}}{\delta+r^{-\alpha}}\geq r^\theta v_\delta^q(r),
 \end{aligned}
$$
which proves the claim of \eqref{muzi}. 

\vspace{0.2cm}
{\bf Proof of \eqref{univ} concluded.} 
We diminish $c_0>0$ to ensure that 
$ c_0 R^{|\beta|-\varepsilon}\leq \min_{|x|=R} u(x)  $. 
Hence, for any $\delta>0$, we get $v_\delta(|x|)\leq 
u(x)$ for all $|x|=R$. By our assumption \eqref{zein}, we have
$$ \lim_{|x|\to \infty} \frac{u(x)}{v_\delta(|x|)}=\infty. 
$$
Clearly, $v_\delta'(r)>0$ for every $r>0$. 
Hence, by the comparison principle, we infer that 
$$ u(x)\geq v_\delta(|x|)\quad \mbox{for all } |x|\geq R\ \mbox{and } \delta>0.  
$$
By letting $\delta\to 0^+$, we find that 
$\liminf_{|x|\to \infty} |x|^{\beta+\varepsilon} u(x)\geq c_0>0$. For $\eta>0$ arbitrary, by letting $\varepsilon\in (0,\eta)$ be small enough, we conclude \eqref{univ}. \qed

\vspace{0.2cm}
(II) We assume that \eqref{giob} is not satisfied, that is,   
\begin{equation} \label{neby} 2\rho \beta+\lambda |\beta|^{1-\tau}<0.  
\end{equation}
It remains to prove \eqref{univ} in Case $[N_0]$ (b), as well as when $\beta\in (\varpi_2,0)$ in Case $[N_2]$.  
Since $\beta<0$ and $\lambda>0$, from \eqref{neby} 
we have $\rho>0$ and 
$$ -\beta>\left(2\rho/\lambda\right)^{-\frac{1}{\tau}}.  
$$

As in Part (I) in which we replace 
$\beta$ by $-[2\rho/\lambda]^{-1/\tau}$, we find that  
\begin{equation}
\label{muio1}
\mbox{for every } \eta>0, 
\quad \lim_{|x|\to \infty} |x|^{\eta-
\left(\frac{2\rho}{\lambda}\right)^{-1/\tau}} u(x)=\infty. \end{equation}

To reach \eqref{univ} from \eqref{muio1}, we need to apply an iterative process of 
comparing the solution $u$ with suitable families of sub-solutions. 

 \vspace{0.2cm}
Our assumption gives the positivity of $\widetilde f_\rho(\beta)$, namely, 
$$ \widetilde f_\rho(\beta)=-\beta-2\rho  +\lambda |\beta|^{-\tau}>0.$$ 

If $\beta\in (\varpi_2,0)$ in Case $[N_2]$, then since $t\longmapsto \widetilde f_\rho(t)$ is increasing on $[\varpi_2,0)$ and $\varpi_2<\beta<0$, we have  
$$ \widetilde f_{\rho}(t)\geq \widetilde f_\rho(\beta)\quad \mbox{for every } \beta\leq t<0. 
$$

If Case $[N_0]$ (b) holds, then we always have  
$$ \inf_{t<0} \widetilde f_\rho(t)= \widetilde f_{\rho} (-(\lambda \tau)^{1/(\tau+1)})=2\left( \Upsilon-\rho\right)>0.$$
We define $c_*>0$ as follows
\begin{equation} c_*=\left\{ 
\begin{aligned}
& \widetilde f_\rho(\beta) && \mbox{if } \beta\in (\varpi_2,0)\ \mbox{in Case }[N_2], &\\
& 2\left(\Upsilon-\rho\right) && \mbox{if Case } [N_0]\ {\rm (b)}\ \mbox{holds}. & 	
\end{aligned}
\right.  \label{cst0}
\end{equation}

Fix $ \varepsilon>0$ and 
$\alpha=\alpha(\varepsilon)>0$ small enough such that 
\begin{equation}
\label{jiff}
0< \varepsilon <
\frac{(2\rho/\lambda)^{-1/\tau} }{2}
\min  \left\{ \frac{c_*}{4\rho}, 1 \right\}
\quad \mbox{and}\quad 
0<\alpha < \varepsilon \min\,\{ q-1,1 \}. 	
\end{equation}

We define an increasing sequence of positive numbers $\{m_j\}_{j\geq 1}$  with $\lim_{j\to \infty} m_j=
\infty$, where 
\begin{equation} \label{locc} m_1=\left( \frac{2\rho}{\lambda}\right)^{-\frac{1}{\tau}}-\varepsilon>
\frac{1}{2} \left( \frac{2\rho}{\lambda}\right)^{-\frac{1}{\tau}}
\quad \mbox{and}\quad 
m_{j+1}=m_1+j\varepsilon \quad \mbox{for every } j\geq 1.
\end{equation}  
 
The assumption in \eqref{neby} and the choice of $m_1$ gives that $m_1<m_2<-\beta$.  

Let $k\geq 1$ be such that $m_{1}+k\varepsilon<-\beta\leq m_{1}+\left(k+1\right)\varepsilon$, that is, 
\begin{equation} \label{hilg} k=\left[\frac{-m_1-\beta}{\varepsilon} \right] -1,
\end{equation} where $[\cdot]$ in \eqref{hilg} stands for the integer part. 
Thus, using also 
\eqref{cst0}, we have 
$$ \widetilde f_\rho(m_j)\geq c_*\quad \mbox{for every } 1\leq j\leq k. 
$$

We fix $c_0>0$ such that 
\begin{equation} \label{hone2} 0<c_0<\left( 
\frac{c_*}{4} \left( \frac{2\rho}{\lambda}\right)^{-\frac{1}{\tau}}\right)^{\frac{1}{q-1}}. 
\end{equation}

For $\delta>0$ fixed arbitrary and every $r>0$, 
we define $v_{j,\delta}(r)$ as follows
$$\begin{aligned}
 & v_{j,\delta}(r)=c_0 \,r^{m_j} \left(\delta+r^{-\alpha}\right)^{-\frac{m_{j+1}-m_j+\varepsilon}{\alpha}}\quad \mbox{for }1\leq j\leq k-1,\\
&  v_{k,\delta}(r)=c_0 \,r^{m_k} 
\left( \delta+r^{-\alpha}\right)^
{\frac{\beta+m_k}{\alpha}}.
\end{aligned} $$

{\bf Claim:}  For every $1\leq j\leq  k$, the function $v_{j,\delta}$ is a radial sub-solution of \eqref{e1} in $\mathbb R^N\setminus \overline{B_1(0)}$.

\begin{proof}[Proof of the Claim]    
Only for the purpose of unifying the calculations, we will redefine $m_{k+1}=-\beta-\varepsilon$ (if $m_{k+2}>-\beta$). Then, $v_{k,\delta}$ has the same form as $v_{j,k}$ with $j=k$. 
Observe that with this new definition of $m_{k+1}$, we have 
$$ \beta+m_{j+1}+\varepsilon<0\quad \mbox{for all }1\leq j\leq k-1
\quad \mbox{and}\quad \beta+m_{k+1}+\varepsilon=0.
$$
Hence, by our choice of $\alpha$ in \eqref{jiff}, we see that 
\begin{equation} \label{ahu} c_0^{q-1}   \frac{ v_{j,\delta}(r)}{r^2} \frac{r^{-\alpha}}{\delta+r^{-\alpha}} \geq 
r^{\theta} [v_{j,\delta}(r)]^q\quad \mbox{for every } r>1\ \mbox{and } 1\leq j\leq k.
\end{equation}

Fix $1\leq j\leq  k$. Let $r>1$ be arbitrary.
It follows that 
$$\begin{aligned} 
& 	
v'_{j,\delta}(r) =\frac{ v_{j,\delta}(r)}{r} 
\left[m_j+\frac{\left(m_{j+1}-m_j+\varepsilon
 \right) r^{-\alpha}} {\delta+
r^{-\alpha} }
\right]>0,\\
& 
\mathbb L_{\rho} [v_{j,\delta}(r)]=
\frac{ v_{j,\delta}(r)}{r^2} 
\left[ m_j^2 -2\rho m_j +
\left( 2m_j-2\rho -\frac{\alpha \delta}{\delta+r^{-\alpha}}
\right)
\frac{ 
\left(m_{j+1}-m_j +\varepsilon\right) r^{-\alpha}}{ \delta+r^{-\alpha}} 
\right.\\
& 
\qquad \qquad \qquad \qquad \quad \left. + 
\frac{ \left(m_{j+1}-m_j+\varepsilon\right) r^{-2\alpha}}{\left(\delta+r^{-\alpha}\right)^2}
\right].
\end{aligned}
$$
Under the situations of (II), we have $\lambda>0$. Hence,
$$ \lambda \,G_\tau[v_{j,\delta}(r)]=
  \lambda \frac{ v_{j,\delta}(r)}{r^2} 
  \left[m_j+\frac{\left(m_{j+1}-m_j+\varepsilon
 \right) r^{-\alpha}} {\delta+
r^{-\alpha} }
\right]^{1-\tau}\geq 
\lambda \, m_j^{1-\tau} 
\frac{ v_{j,\delta}(r)}{r^2}.
$$
Consequently, 
we arrive at 
$$ \mathbb L_{\rho,\lambda,\tau}[v_{j,\delta}(r)]\geq 
 \frac{ v_{j,\delta}(r)}{r^2} \left[ 
 m_j^2 -2\rho m_j+ \lambda \,m_j^{1-\tau} 
  +\frac{ \left( 2m_j-2\rho -\alpha \right)
\left(m_{j+1}-m_j +\varepsilon\right) r^{-\alpha}}{ \delta+r^{-\alpha}} 
 \right].
$$
Since $ m_j^2 -2\rho m_j+ \lambda \,m_j^{1-\tau}=m_j\,\widetilde f_\rho(-m_j)>0$,  
we find that
\begin{equation} \label{hone3} \mathbb L_{\rho,\lambda,\tau}[v_{j,\delta}(r)]\geq 
\mathfrak H_{j,\varepsilon,\alpha} 
 \frac{ v_{j,\delta}(r)}{r^2} \frac{r^{-\alpha}}{\delta+r^{-\alpha}},
\end{equation}
where we define $\mathfrak H_{j,\varepsilon,\alpha}$ as follows
\begin{equation} \label{cee2}  \begin{aligned}
\mathfrak H_{j,\varepsilon,\alpha} &= m_j^2 -2\rho m_j+ \lambda \,m_j^{1-\tau} +\left( 2m_j-2\rho -\alpha \right)
\left(m_{j+1}-m_j +\varepsilon\right)\\
&   \geq \left(2m_{j+1}m_j-m_j^2 -2\rho m_{j+1} +
\lambda m_j^{1-\tau}\right) +2 \left(m_j-\rho-\varepsilon\right)\varepsilon.
\end{aligned} 
\end{equation}
Thus, since $m_{j+1}>m_j>\frac{1}{2} \left( \frac{2\rho}{\lambda}\right)^{-\frac{1}{\tau}}$, we get 
\begin{equation} \label{cee1}  \begin{aligned} 
  	2m_{j+1}m_j-m_j^2 -2\rho m_{j+1} +
\lambda m_{j}^{1-\tau}& >m_j^2 -2\rho m_{j+1}+\lambda m_j
m_{j+1}^{-\tau}\\ 
&=m_j \,\widetilde f_\rho(m_j)
\geq \frac{c_*}{2} \left( \frac{2\rho}{\lambda}\right)^{-\frac{1}{\tau}}.
  \end{aligned}
 \end{equation}
Using \eqref{jiff} and \eqref{cee1} in \eqref{cee2}, jointly with \eqref{hone2}, we arrive at 
\begin{equation} \label{hone4} \mathfrak H_{j,\varepsilon,\alpha}>
\frac{c_*}{2} \left( \frac{2\rho}{\lambda}\right)^{-\frac{1}{\tau}}-2\rho \varepsilon
>
\frac{c_*}{4} \left( \frac{2\rho}{\lambda}\right)^{-\frac{1}{\tau}} > c_0^{q-1}\quad \mbox{for every } 
1\leq j\leq k.
\end{equation}

In view of \eqref{ahu}, \eqref{hone3} and \eqref{hone4}, we conclude the proof of the Claim. 
\end{proof}

{\bf Proof of \eqref{univ} concluded.}
We diminish $c_0>0$ such that 
$ c_0 R^{-\beta}\leq \min_{|x|=R} u(x)$. 
Hence,
$$ u\geq v_{j,\delta}\quad \mbox{on } \partial B_{R}(0)\quad \mbox{for every } 1\leq j\leq k\ \mbox{and } \delta>0.$$ In view of \eqref{muio1} and the definition of $m_1$ in \eqref{locc}, we get
$$ \lim_{|x|\to \infty} \frac{u(x)}{v_{1,\delta}(|x|)}=\infty.$$  
Thus, using the Claim and the comparison principle, we infer that 
$$ u(x)\geq v_{1,\delta}(|x|) \quad \mbox{for all }
|x|\geq R\ \mbox{and } \delta>0. $$
By letting $\delta\to 0^+$, we find that $\liminf_{|x|\to \infty} |x|^{-m_2-\varepsilon}
u(x)\geq c_0>0 $, leading to 
$$ \lim_{|x|\to \infty} \frac{u(x)}{v_{2,\delta}(|x|)}=\infty.$$
By induction over $j=1,\ldots, k$, we get 
$u(x)\geq v_{k,\delta}(|x|)$ for all $|x|\geq R$ and $\delta>0$ so that 
\begin{equation} \label{fam0} \liminf_{|x|\to \infty} |x|^{\beta} u(x)\geq c_0>0. 
\end{equation} 
This ends the proof of \eqref{univ} under the assumption \eqref{neby}.  \qed

\vspace{0.2cm}
The proof of Proposition~\ref{ze-ta3} is now complete. 
\end{proof}

\section{Existence; Alternative (I) in Theorem~\ref{rod1c}} \label{alt1}

\begin{theorem}[Existence, Alternative (I) in Theorem~\ref{rod1c}] \label{exol1} 
Let \eqref{e2} hold and $\beta\in (\varpi_2,0)$ in Case $[N_2]$. 
 Then, for each $b\in \mathbb R_+$, there is a unique positive solution $u_{b}$ of \eqref{e1} in $\mathbb R^N\setminus \{0\}$ such that 
	\begin{equation} \label{ahga} 
	\lim_{|x|\to 0} \frac{u(x)}{ \Phi_{\varpi_2(\rho)}(|x|)}=b\quad \mbox{and}\quad \lim_{|x|\to \infty} \frac{u(x)}{U_{\rho,\beta}(|x|)}=1. 	
	\end{equation} 
	Moreover, $u_{b}$ is radially symmetric and 
$u_{b}'(r)>0$ for every $r>0$. 
\end{theorem}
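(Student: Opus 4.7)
\emph{Uniqueness and radial symmetry.} Suppose $u_1, u_2$ are two positive solutions of \eqref{e1} in $\mathbb R^N \setminus \{0\}$ both satisfying \eqref{ahga}. I exploit that $\mathbb L_{\rho,\lambda,\tau}$ is positively homogeneous of degree one in $(u, \nabla u)$, while $q > 1$: for every $\mu > 1$, the function $\mu u_2$ is a strict super-solution of \eqref{e1}. Fix $\varepsilon > 0$; the asymptotic conditions \eqref{ahga} yield $u_1 \le (1+\varepsilon) u_2$ on $\partial B_\delta(0) \cup \partial B_R(0)$ for $\delta$ sufficiently small and $R$ sufficiently large. The comparison principle (Lemma~\ref{co1} or~\ref{co2}) extends this to the full annulus $\{\delta < |x| < R\}$; letting $\delta \to 0$, $R \to \infty$, and then $\varepsilon \to 0$ yields $u_1 \le u_2$, and the symmetric argument gives $u_1 = u_2$. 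Radial symmetry of $u_b$ is then immediate: for each $\mathcal R \in O(N)$, $u_b \circ \mathcal R$ is another positive solution satisfying \eqref{ahga}, hence equals $u_b$.

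\emph{Existence via scaling reduction and annular approximation.} By the scaling invariance $T_\sigma$ in \eqref{ssig}, which fixes the profile $U_{\rho,\beta}$ at infinity while rescaling the leading coefficient of $\Phi_{\varpi_2(\rho)}$ at zero by $\sigma^{\beta-\varpi_2(\rho)}$ (up to a lower-order logarithmic shift when $\rho = \Upsilon$, which is absorbed asymptotically), it suffices to construct a single positive solution $\widetilde u$ in $\mathbb R^N \setminus \{0\}$ with $\widetilde u \sim U_{\rho,\beta}$ at infinity and $\lim_{|x|\to 0} \widetilde u(x)/\Phi_{\varpi_2(\rho)}(|x|) = b_0$ for some $b_0 \in (0, \infty)$; then $u_b := T_\sigma[\widetilde u]$ for the appropriate $\sigma$. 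I construct $\widetilde u$ as a limit of positive radial solutions $v_{n,m}$ on expanding annuli $A_{n,m} = \{1/n < |x| < m\}$, with Dirichlet data $v_{n,m} = B\,\Phi_{\varpi_2(\rho)}(1/n)$ on $|x| = 1/n$ and $v_{n,m} = U_{\rho,\beta}(m)$ on $|x| = m$. For $\gamma \in (0, 1)$, $\gamma U_{\rho,\beta}$ is a global radial sub-solution of \eqref{e1} (thanks to $q > 1$), dominated by both Dirichlet data for large $n, m$ since $\Phi_{\varpi_2(\rho)} \gg U_{\rho,\beta}$ near zero. A radial super-solution is built piecewise: $B\,\Phi_{\varpi_2(\rho)}$ on a small ball near the origin (via Lemma~\ref{fiqa}), a sufficiently large multiple of $U_{\rho,\beta}$ on an exterior region (a super-solution since $q > 1$), and a bridge barrier on an intermediate shell constructed in the spirit of Lemmas~\ref{siww}--\ref{feed}. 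The sub-/super-solutions method produces $v_{n,m}$, which is radial by symmetry of the data; Proposition~\ref{aurr} supplies the $C^{1,\alpha}_{\mathrm{loc}}$ compactness, and passing first $n \to \infty$ and then $m \to \infty$ yields a positive radial solution $\widetilde u$ on $\mathbb R^N \setminus \{0\}$ sandwiched between $\gamma U_{\rho,\beta}$ and the global super-solution.

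\emph{Identification of asymptotics, monotonicity, and main obstacle.} Near zero, the upper bound $\widetilde u \le B\,\Phi_{\varpi_2(\rho)}$ combined with the trichotomy in $(P_0)$ of Theorem~\ref{thi} pins down the alternative \eqref{gapp}: the cases to exclude are (i) (present only when $\rho > \Upsilon$, and ruled out by the upper bound since then $|x|^{-\varpi_1} \gg \Phi_{\varpi_2(\rho)}$) and (iii) ($\widetilde u \sim U_{\rho,\beta}$ at zero), the latter requiring a matching lower barrier of order $\Phi_{\varpi_2(\rho)}$ propagated from the inner Dirichlet data $B\,\Phi_{\varpi_2(\rho)}(1/n)$; Corollary~\ref{urz} then yields $\widetilde u \sim b_0 \Phi_{\varpi_2(\rho)}$ for some $b_0 \in (0, \infty)$. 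At infinity, the lower bound $\widetilde u \ge \gamma U_{\rho,\beta}$ (unbounded) excludes the bounded-limit alternatives (II) and (III) in Theorem~\ref{rod1c}, leaving the desired asymptotic $\widetilde u \sim U_{\rho,\beta}$. The strict positivity $u_b'(r) > 0$ follows from the ODE: at any interior critical point $r_\star$, the equation gives $u_b''(r_\star) = r_\star^\theta u_b(r_\star)^q > 0$, so $r_\star$ is a strict local minimum, incompatible with $u_b$ being forced to $\infty$ both as $r \to 0^+$ and as $r \to \infty$. The main obstacle is twofold: constructing a global super-solution bridging the $\Phi_{\varpi_2(\rho)}$-profile at zero with the $U_{\rho,\beta}$-profile at infinity (complicated by the non-linearity of $\mathbb L_{\rho,\lambda,\tau}$ for $\tau \in [0,1)$, which invalidates naive superposition), and rigorously propagating the prescribed inner boundary data of order $\Phi_{\varpi_2(\rho)}(1/n)$ into a matching leading-order asymptotic of $\widetilde u$ in the limit, rather than being overtaken by the slower $U_{\rho,\beta}$-profile; the critical case $\rho = \Upsilon$, with its logarithmic corrections in $\Phi_{\varpi_2(\rho)}$, requires the refined barriers developed in Section~\ref{secla2}.
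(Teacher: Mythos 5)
Your uniqueness and radial-symmetry arguments are sound and essentially the paper's (comparison with $(1+\varepsilon)u_2$, then rotation invariance), and the scaling reduction via $T_\sigma$ is also how the paper passes from one value of $b$ to all of $\mathbb R_+$. Two small slips: in the monotonicity argument, $u_b$ tends to $0$ as $r\to 0^+$ (since $\varpi_2<0$ forces $\Phi_{\varpi_2(\rho)}(r)\to 0$), not to $\infty$; the correct contradiction is that a unique critical point would be a strict local minimum, forcing $u_b$ to be decreasing and bounded below by $u_b(r_\star)>0$ on $(0,r_\star)$, which is incompatible with $u_b\to 0$.

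The existence construction, however, has a genuine gap, and it sits exactly where you flag "the main obstacle". Your scheme prescribes inner Dirichlet data $B\,\Phi_{\varpi_2(\rho)}(1/n)$ on $|x|=1/n$, a boundary that disappears in the limit $n\to\infty$; nothing in the proposal prevents the limit solution from simply being $U_{\rho,\beta}$ itself (or from having the profile \eqref{tirv} at zero), because the sub-solution $\gamma U_{\rho,\beta}$ you retain in the limit carries no information of order $\Phi_{\varpi_2(\rho)}$. To exclude alternative (iii) of $(P_0)$ you would need either a global super-solution strictly below $U_{\rho,\beta}$ somewhere, or a lower barrier of order $\Phi_{\varpi_2(\rho)}$ near zero that survives $n\to\infty$; neither is constructed, and the "bridge barrier in the spirit of Lemmas~\ref{siww}--\ref{feed}" is not available off the shelf (those lemmas concern $\Psi_{\varpi_1(\rho)}$ at infinity, and the non-linearity of $\mathbb L_{\rho,\lambda,\tau}$ for $\tau\in[0,1)$ blocks gluing $B\,\Phi_{\varpi_2(\rho)}$ to $CU_{\rho,\beta}$ by superposition). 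The paper circumvents all of this: it works only on exterior annuli $\{a<|x|<k\}$ with data $(1/2)U_{\rho,\beta}$ on both components, so the pair $((1/2)U_{\rho,\beta},\,U_{\rho,\beta})$ of sub/super-solutions suffices; the resulting radial solution is then continued down to $r=0$ by an ODE argument (showing $\lim_{r\searrow\underline r}v_a(r)\in(0,\infty)$ and $\lim_{r\searrow\underline r}v_a'(r)\in\mathbb R$ at any putative blow-down radius $\underline r>0$), and the behaviour $b\,\Phi_{\varpi_2(\rho)}$ at zero is deduced a posteriori from Theorem~\ref{thi}, Lemma~\ref{gold1}, and the rigidity observation that $v_a\sim U_{\rho,\beta}$ at both ends would force $v_a\equiv U_{\rho,\beta}$, contradicting $v_a(a)=\tfrac12 U_{\rho,\beta}(a)$ at the \emph{fixed} radius $a$. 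This persistence of a strict inequality at a fixed radius is precisely the device your construction lacks. The ODE continuation step is also not something your compactness argument replaces; it is the substantive technical content of the existence proof.
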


\begin{rem} {\rm In Theorem~\ref{exol1}, for every $b>0$, we have $u_{b}=T_\sigma[u_1]$
with $\sigma= b^{1/(\beta-\varpi_2)}$. }	
\end{rem}

\begin{proof}
Let \eqref{e2} hold and $\beta\in (\varpi_2,0)$ in Case $[N_2]$. 
 	Fix $a>0$ arbitrary. For every integer $k>a$, we consider the boundary value problem
  \begin{equation}
  \label{ehh}
 \left\{ \begin{aligned}
 	& \mathbb L_{\rho,\lambda,\theta} [v]=|x|^{\theta} v^q\quad \mbox{in } D_{a,k}:=\{x\in \mathbb R^N:\ 
 	a<|x|<k\},\\
 	& v(x)=(1/2) \,U_{\rho,\beta}(|x|)\quad \mbox{on } 
 	\partial D_{a,k},
 \end{aligned}
	\right.
  \end{equation}
  Then, $(1/2)\, U_{\rho,\beta}$ (respectively, $U_{\rho,\beta}$) is a positive sub-solution (respectively, super-solution) of \eqref{ehh}. By the method of sub-super-solutions, we obtain the existence of the 
  {\em minimal} positive solution of \eqref{ehh}, say $v_{a,k}$, where the minimality is understood with respect to the above-mentioned pair of sub-super-solutions. By the invariance of \eqref{ehh} under rotations and the symmetry of the domain and the boundary conditions, we infer that $v_{a,k}$ is radially symmetric. Moreover, by the sub-super-solutions method, we conclude that 
  \begin{equation} \label{hype}  (1/2)\,  U_{\rho,\beta}(|x|)\leq 
  v_{a,k}(|x|)
  \leq v_{a,k+1}(|x|)\leq U_{\rho,\beta}(|x|)\quad \mbox{for all } x\in D_{a,k}. 
  \end{equation} 
  For every $r\geq a$, we define $v_{a}(r)=\lim_{k\to \infty} v_{a,k}(r)$.
  Then, by Proposition~\ref{aurr}, we obtain that
  $ v_{a,k}\to v_{a}$ in $C^{1}_{\rm loc}(\mathbb R^N\setminus \overline{B_a(0)})$ and $v_{a}$ is a positive radial solution of \eqref{e1} in  $\mathbb R^N\setminus \overline{B_a(0)}$. 
  
  Using \eqref{hype}, we see that 
  $$  
 (1/2)\, U_{\rho,\beta}(r)\leq 
  v_{a}(r)\leq 
   U_{\rho,\beta}(r)\quad \mbox{for all } r\in (a,\infty).
  $$
  This, jointly with Corollary~\ref{goos}, implies that 
  that \begin{equation} \label{tigg} \lim_{r\to \infty} \frac{v_{a}(r)}{U_{\rho,\beta}(r)}=1.
 \end{equation}

  Let $\underline r=\inf \{s>0: \ v_{a}\ \mbox{is a positive radial solution of } \eqref{e1} 
  \ \mbox{for } |x|>s \}.$
  
  \vspace{0.2cm}
  {\bf Claim:} 
We have $\underline r=0$, that is, $v_{a}$ is a positive radial solution of \eqref{e1} in $\mathbb R^N\setminus \{0\}$. 
  
  \vspace{0.2cm}
  {\bf Proof of the Claim.} 
 Assume by contradiction that $\underline r>0$. 
 As $v_{a}$ is a radial solution of \eqref{e1} 
 for $|x|>\underline r$, we have 
\begin{equation} \label{ioni} v_{a}''(r)+(1-2\rho) \frac{v_{a}'(r)}{r} +\lambda\, [v_{a}(r)]^\tau \frac{|v_{a}'(r)|^{1-\tau}}{r^{1+\tau}} =r^\theta [v_{a}(r)]^q\quad \mbox{for all } r>\underline r. 
\end{equation}

$\bullet$ If $v'_{a}(r_*)=0$ for some $r_*>\underline r$, then $r_*$ would be a local minimum point for $v_{a}$ (since $v_{a}''(r_*)>0$) and, hence, $v_{a}'(r)<0$ for every $r\in (\underline r, r_*)$ and $v_{a}'(r)>0$ for every $r>r_*$. 
 
$\bullet$ Alternatively, if $v'_{a}(r)\not =0$ for every $r>\underline r$, then   $v'_{a}(r)>0$ for all $r>\underline r$ in view of \eqref{tigg}. 
 
 In either of these cases, we have that there exists 
 $\lim_{r\searrow \underline r} v_{a}(r)\in [0,\infty]$.  
 We next show that 
 \begin{equation} \label{gifd} \lim_{r\searrow \underline r}v_{a}(r)\in (0,\infty).	
 \end{equation}

{\bf Proof of \eqref{gifd}.} 
If $\lim_{r\searrow \underline r}v_{a}(r)=0$, then from \eqref{tigg}, we get  
$\lim_{|x|\to \infty} |x|^{\varpi_2}\,v_a(x)=0$. 
Then, for arbitrary $\delta>0$, there exist 
$\underline r<R_1(\delta)<R_2(\delta)$ such that 
$$ 
v_{a}(r)\leq \delta\, r^{-\varpi_2}\quad \mbox{for every } 
r\in (\underline r,R_1(\delta)] \cup [R_2(\delta),\infty).$$  
By applying the comparison principle, we conclude that 
$v_{a}(r)\leq \delta \, r^{-\varpi_2}$ for every $r>\underline r$. By letting $\delta\to 0$, we arrive at $v_{a}\equiv 0$ on $(\underline r,\infty)$, reaching a contradiction.  

On the other hand, for every $r\in (\underline r,a]$, the graphs of 
$v_{a}(r)$ and $U_{\rho,\beta}(r)$ have no points of intersection. Otherwise, from \eqref{tigg} and the comparison principle, we would obtain that 
$v_{a}(r)= U_{\rho,\beta}(r)$ for every $r>r_1$ for some $r_1\in (\underline r,a)$. This is impossible since 
\begin{equation} \label{fuio} v_{a}(a)=(1/2)\, U_{\rho,\beta}(a)<U_{\rho,\beta}(a). 
\end{equation}	

Consequently, by the comparison principle, we have 
\begin{equation} \label{forbx} 0< v_{a}(r)\leq U_{\rho,\beta}(r)\quad \mbox{for every } r>\underline r. 
\end{equation}

This concludes the proof of \eqref{gifd}. 

\vspace{0.2cm}
We define $Y(r)=r v'_{a}(r)/v_{a}(r)$ for every $r>\underline r$. With the same argument as in the proof of 
Proposition~\ref{pzerop}, 
we derive that $Y'(r)\not =0$ for every $r>\underline r$ close enough to $\underline r$ (using that for such $r$, we have $v'_{a}(r)$ does not vanish and thus $Y(r)$ has the same sign as $v'_{a}(r)$). 

Therefore, there exists $\lim_{r\to \underline r} Y(r)$, which together with \eqref{gifd}, yields that 
there exists $\lim_{r\to \underline r} v'_{a}(r)$ (potentially infinite). We next show that  
\begin{equation} \label{fugi}
\lim_{r\to \underline r} v'_{a}(r)\in \mathbb R.
\end{equation}
Assume by contradiction that $\lim_{r\searrow \underline r} v'_{a}(r)=-\infty$ or $+\infty$. 
Recall that $\lambda>0$ in Case $[N_2]$. 
Now, 
we multiply \eqref{ioni} by $r^{1-2\rho}$ and let $r\searrow \underline r$ to obtain that 
$$ \lim_{r\searrow \underline r }  \frac{d}{dr} \left[ r^{1-2\rho} v'_{a}(r)\right]=-\infty. 
$$  
Hence, there exists $\lim_{r\searrow \underline r} r^{1-2\rho} v'_{a}(r)$, which would necessarily imply that $\lim_{r\searrow \underline r} v'_{\varepsilon,a}(r)=\infty$. 
We next divide \eqref{ioni} by $v'_{a}(r)$ and let $r\searrow \underline r$ to find that 
$$ \lim_{r\searrow \underline r} \frac{v''_{a}(r)}{v'_{a}(r)}=\frac{2\rho-1}{\underline r}.  
$$
 
 Let $L>0$ be large such that $L>(1-2\rho)/\underline r$. Then, $\lim_{r\searrow \underline r} [v''_{a}(r)+L\,v'_{a}(r)]=\infty$, which implies that 
 $r\longmapsto v'_{a}(r)+L \,v_{a}(r)$ is increasing for $r\in (\underline r,\underline r+\delta)$ if $\delta>0$ is small enough. However, this is impossible since $\lim_{r\searrow \underline r} v'_{a}(r)=\infty$.
 This finishes the proof of \eqref{fugi}.   
 
 In light of \eqref{gifd} and \eqref{fugi}, we infer that 
 $\underline r=0$. This completes the proof of the Claim. \qed

\vspace{0.2cm}
The proof of the Claim (see \eqref{forbx}) gives that 
$ 0<v_{a}(r)\leq U_{\rho,\beta}(r)$ 
for every $ r>0 $ and thus 
$$v_{a}'(r)>0\quad \mbox{ for all } r>0. $$ Moreover, by Theorem~\ref{pro1}, we have \eqref{dual}, which in our case leads to 
\begin{equation} \label{alora} \lim_{r\to 0^+} r^\beta v_{a}(r)=0. 
\end{equation}
Indeed, if we were to have $\lim_{r\to 0^+}  v_{a}(r)/U_{\rho,\beta}(r)=1$, then from \eqref{tigg} and the comparison principle,  we would get
$v_{a}(r)=U_{\rho,\beta} (r)$ for all $r>0$. This is a contradiction with \eqref{fuio}.

Hence, \eqref{alora} holds. By Theorem~\ref{thi} and Lemma~\ref{gold1}, there exists $b\in (0,\infty)$ such that 
$$ \lim_{r\to 0^+} \frac{v_{a}(r)}{\Phi_{\varpi_2(\rho)}(r)}=b.  
$$

Hence, for {\em some} $b\in (0,\infty)$, problem  \eqref{e1} in $\mathbb R^N\setminus \{0\}$, subject to  
\eqref{ahga}, 
has a positive solution, relabelled $u_{b}$. Remark that, in addition, $u_b$ is {\em radially symmetric} and satisfies $u_{b}'(r)>0$ for every $r>0$.   
If $\widetilde u_{b}$ were to be another positive solution (not necessarily radial) of \eqref{e1} in $\mathbb R^N\setminus \{0\}$, subject to \eqref{ahga},  then 
we would have $$ \lim_{|x|\to 0} \frac{u_{b}(x)}{\widetilde u_{b}(x)}=1\quad \mbox{and }\quad \lim_{|x|\to \infty} \frac{u_{b}(x)}{\widetilde u_{b}(x)}=1. $$ 
Then, by the comparison principle, we conclude that 
$u_{b}\equiv \widetilde u_{b}$ in $\mathbb R^N\setminus \{0\}$. 

Letting $\sigma>0$ arbitrary and $T_\sigma$ as in \eqref{ssig}, we obtain that $T_\sigma[u_{b}]$ is the unique positive solution of \eqref{e1} in $\mathbb R^N\setminus \{0\}$, subject to  
\begin{equation} \label{nova2} \lim_{|x|\to 0} \frac{u(x)}{ \Phi_{\varpi_2(\rho)}(|x|)}=b \,\sigma^{\beta-\varpi_2}\quad \mbox{and}\quad 
\lim_{|x|\to \infty} \frac{u(x)}{U_{\rho,\beta}(||x)}=1. \end{equation}
Since $\beta>\varpi_2$, it is clear that by varying $\sigma>0$, the  first limit in \eqref{nova2} can be any positive number. This completes the proof of Theorem~\ref{exol1}.
\end{proof}

\section{Existence; Alternatives (II) and (II$'$) in Theorem~\ref{rod1c}} \label{alt2}

In this section, in the framework of Theorem~\ref{globo}, we prove that for every $c\in \mathbb R_+$, there exists  a unique positive solution $u_{\infty,c}$ of \eqref{e1} in $\mathbb R^N\setminus \{0\}$ with the properties stated in Theorem~\ref{globo}~(i). These properties correspond to the alternative (II) of Theorem~\ref{rod1c} if $\beta\in (\varpi_2,0)$ in Case $[N_2]$. Otherwise, the properties in (II)$'$ of Theorem~\ref{rod1c} will apply for $u_{\infty,c}$.   

\begin{theorem} \label{exol2} 
Let \eqref{e2} hold, $\theta<-2$ and $f_\rho(\beta)>0$. Then, for every $c\in \mathbb R_+$, there exists a unique positive solution $u_{\infty,c}$ of \eqref{e1} in $\mathbb R^N\setminus \{0\}$ satisfying 
$ u(x)\sim U_{\rho,\beta}(x)$ as $|x|\to 0$ 
and  \eqref{fbio} as $ |x|\to \infty$. 
Moreover, $u_{\infty,c}$ is radially symmetric and  
$u_{\infty,c}'>0$ on $(0,\infty)$. 
\end{theorem}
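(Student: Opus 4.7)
Fix $c\in\mathbb R_+$ and write $\mathcal P(r)$ for the asymptotic profile in \eqref{fbio} corresponding to the case at hand, so that the goal is a positive radial solution of \eqref{e1} on $\mathbb R^N\setminus\{0\}$ asymptotic to $U_{\rho,\beta}$ at the origin and to $c\,\mathcal P$ at infinity. Following the blueprint of Theorem~\ref{exol1}, I would realise $u_{\infty,c}$ as the pointwise limit of minimal positive solutions $v_{a,k}$ to
\begin{equation*}
\begin{cases}
\mathbb L_{\rho,\lambda,\tau}[v]=|x|^\theta v^q & \text{in } A_{a,k}:=\{x\in\mathbb R^N:\ a<|x|<k\},\\
v=(1/2)U_{\rho,\beta}(|x|) & \text{on } \partial B_a(0),\\
v=(1/2)c\,\mathcal P(k) & \text{on } \partial B_k(0),
\end{cases}
\end{equation*}
sandwiched between a sub-solution built from $(1/2)U_{\rho,\beta}$ and a super-solution that interpolates between $U_{\rho,\beta}$ near the inner boundary and a constant multiple of $\mathcal P$ near the outer one. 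Radial symmetry of the $v_{a,k}$ comes from the rotational invariance of the data, minimality yields the monotonicity $v_{a,k}\le v_{a,k+1}$, and Proposition~\ref{aurr} provides the compactness needed to pass $k\to\infty$ and then---exactly as in the Claim inside the proof of Theorem~\ref{exol1}, with an ODE analysis ruling out blow-up of $v_a$ or $v_a'$ at a positive radius---$a\to 0$.

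With the radial solution $v$ on $\mathbb R^N\setminus\{0\}$ in hand, the sub/super-solution bounds combined with Theorem~\ref{pro1}(c) force $v(x)\sim U_{\rho,\beta}(x)$ as $|x|\to 0$, since the alternative $|x|^\beta v\to 0$ is excluded by the lower bound. Theorem~\ref{rod1c} applied to $v$ then identifies the behavior at infinity as $c'\mathcal P$ for some $c'>0$, the $U_{\rho,\beta}$ profile being precluded by the upper bound. Strict monotonicity $v'(r)>0$ on $(0,\infty)$ is read off the radial ODE: any interior critical point must be a strict local minimum ($v''(r_\star)>0$ there by \eqref{e1}), incompatible with the prescribed endpoint behaviors. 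Uniqueness of the solution with endpoint profiles $(U_{\rho,\beta},c'\mathcal P)$ and its radial symmetry both follow from the comparison principle (Lemmas~\ref{co1}--\ref{co2}): any two positive solutions with matching asymptotics have ratio tending to $1$ at $0$ and $\infty$, hence coincide. Finally, to pass from the single constant $c'$ produced by the construction to arbitrary $c\in\mathbb R_+$, I would invoke the scaling $T_\sigma$ in \eqref{ssig}: since $f_\rho(\beta)>0$ forces $\beta\ne 0$ (and $\beta\ne\varpi_1(\rho)$ in the non-constant cases), the exponent by which $T_\sigma$ rescales the constant at infinity is nonzero, so varying $\sigma>0$ sweeps out all of $\mathbb R_+$.

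The principal obstacle is the construction of a global super-solution that matches $U_{\rho,\beta}$ near zero and $c\,\mathcal P$ at infinity. Because $\mathbb L_{\rho,\lambda,\tau}$ is nonlinear in the gradient when $\tau\in(0,1)$, sums of super-solutions need not be super-solutions and one cannot simply add $U_{\rho,\beta}$ to a multiple of $\mathcal P$. This forces a case-by-case treatment. For the bounded cases ($\mathcal P\equiv 1$, either in Case $[N_0]$ or when $\beta\in(\varpi_2,0)$ in Case $[N_2]$) one must verify that $U_{\rho,\beta}+c$ or a mild perturbation is itself a super-solution, exploiting $f_\rho(\beta)>0$ and the concavity of $s\mapsto s^\tau$. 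For $\mathcal P(r)=r^{-\varpi_1(\rho)}$ in Case $[N_1]$ or in Case $[N_2]$ with $\rho>\Upsilon$, the function $|x|^{-\varpi_1(\rho)}$ solves $\mathbb L_{\rho,\lambda,\tau}[\cdot]=0$ and glues to $U_{\rho,\beta}$ directly. The hardest subcase is $\mathcal P=\Psi_{\varpi_1(\rho)}$ in Case $[N_2]$ with $\rho=\Upsilon$: there $\Psi_{\varpi_1(\rho)}$ itself is only a sub-solution of \eqref{e1} at infinity (Lemma~\ref{siww}), so the refined super-solution $\widetilde\Psi$ from Lemma~\ref{feed} must be used as the asymptotic model and patched to $U_{\rho,\beta}$ through a smooth cut-off, with the differential inequality verified directly across the transition region.
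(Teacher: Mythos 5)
Your overall scaffolding (minimal solutions of annular boundary-value problems, radial symmetry from rotational invariance, compactness via Proposition~\ref{aurr}, ODE continuation, identification of the limits via Theorem~\ref{pro1}(c) and Theorem~\ref{rod1c}, uniqueness by comparison, and scaling by $T_\sigma$ to sweep all $c$) matches the paper's. But the route you take through the outer boundary is different from the paper's and contains a genuine gap. The paper prescribes the datum $(1/2)\,U_{\rho,\beta}$ on \emph{both} components of $\partial D_{a,k}$, so that the pair $\bigl((1/2)U_{\rho,\beta},\,U_{\rho,\beta}\bigr)$ is a valid sub/super-solution pair with no patching whatsoever; it then sends $a=1/n\to 0$ \emph{first} (with $k$ fixed), obtains $V_k\sim U_{\rho,\beta}$ at the origin from the sandwich, continues $V_k$ outward as a solution of the radial ODE to all of $(0,\infty)$, and finally observes that $V_k(k)=(1/2)U_{\rho,\beta}(k)<U_{\rho,\beta}(k)$ rules out the profile $U_{\rho,\beta}$ at infinity, so Theorem~\ref{rod1c} forces \eqref{fbio} for \emph{some} $c$, which scaling then makes arbitrary. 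This completely sidesteps what you correctly identify as your ``principal obstacle'': a global super-solution interpolating between $U_{\rho,\beta}$ at zero and $c\,\mathcal P$ at infinity is never needed.

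The gap in your version is twofold. First, the interpolating super-solution is the linchpin of your construction and you do not build it; in the subcase $\rho=\Upsilon$ of Case $[N_2]$ you acknowledge that $\Psi_{\varpi_1(\rho)}$ is only a sub-solution at infinity and propose to patch $\widetilde\Psi$ to $U_{\rho,\beta}$ ``through a smooth cut-off, with the differential inequality verified directly across the transition region'' --- for the gradient-nonlinear operator $\mathbb L_{\rho,\lambda,\tau}$ (where sums of super-solutions are not super-solutions) this verification is precisely the hard, unresolved step. Second, your sub/super pair does not bracket your own boundary data: since $\theta<-2$ gives $\beta<0$ and in every alternative of \eqref{fbio} the profile $\mathcal P$ grows strictly slower than $U_{\rho,\beta}(r)=\Lambda r^{|\beta|}$ (indeed $|\varpi_1(\rho)|<|\beta|$ when $\beta<\varpi_1(\rho)$, and $\mathcal P$ is logarithmic or bounded otherwise), one has $(1/2)c\,\mathcal P(k)<(1/2)U_{\rho,\beta}(k)$ for all large $k$, so the prescribed outer datum lies \emph{below} the sub-solution $(1/2)U_{\rho,\beta}$ and the sub-super-solution method cannot be applied as you set it up. Either repair requires real work; adopting the paper's device of imposing $(1/2)U_{\rho,\beta}$ on the outer sphere as well, and letting the classification theorem (rather than a prescribed datum) produce the constant at infinity, removes both problems at once.
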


\begin{rem} {\rm In Theorem~\ref{exol2}, for every $c>0$, we have $u_{\infty,c}=T_\sigma[u_{\infty,1}]$
with $\sigma=c^{1/(\beta-\varpi_1)}$ if $\beta<\varpi_1$ in Case $[N_j]$ with $j=1,2$ and  
$\sigma=c^{1/\beta}$ in the remaining situations.}
\end{rem}
  
\begin{proof}
As in the proof of Theorem~\ref{exol1}, we fix 
$k>a>0$. Let $v_{a,k}$ be the minimal positive (radial) solution of \eqref{ehh} with respect to the pair $( U_{\rho,\beta}/2, U_{\rho,\beta})$ of sub-super-solutions of \eqref{ehh}. We let $a=1/n$ for every large integer 
$n\geq 1$ satisfying $1/n<k$. 
 
 By the sub-super-solutions method, we infer that 
  \begin{equation} \label{hype22}  (1/2)\, U_{\rho,\beta}(|x|)\leq 
  v_{1/n,k}(|x|)
  \leq v_{1/(n+1),k}(|x|)\leq U_{\rho,\beta}(|x|) \end{equation}
for all $x\in D_{1/n,k}=\{x\in \mathbb R^N:\ 1/n<|x|<k\}$. 
We keep $k$ fixed. For every $r\in (0,k)$, we define 
$V_{k}(r)=\lim_{n\to \infty} v_{1/n,k}(r)$. 
 As before, by Proposition~\ref{aurr}, we obtain that
  $$ v_{1/n,k}\to V_{k}\quad \mbox{in }  C^1_{\rm loc} (B_k(0)\setminus \{0\} ) \ \mbox{ as } n\to \infty$$ and $V_{k}$ is a positive radial solution of \eqref{e1} in $B_{k}(0)\setminus \{0\}$.   
  
  In view of \eqref{hype22}, we find that 
  \begin{equation} \label{mope}  (1/2)\, U_{\rho,\beta}(r)\leq 
  V_{k}(r)\leq U_{\rho,\beta}(r)
  \quad \mbox{for all } r\in (0,k). 
  \end{equation}
  
  From \eqref{mope} and Theorem~\ref{pro1} (c), 
  we conclude that 
  \begin{equation} \label{fami} \lim_{r\to 0^+} 
  \frac{ V_{k}(r)}{U_{\rho,\beta}(r)}=1.
  \end{equation}
    	
  Let $\overline r=\sup \,\{s>0: \ V_{k}\ \mbox{is a positive radial solution of } \eqref{e1} 
  \ \mbox{for } 0<|x|<s \}.$
  
  \vspace{0.2cm}
  {\bf Claim:} 
We have $\overline r=\infty$, that is, $V_{k}$ is a positive radial solution of \eqref{e1} in $\mathbb R^N\setminus \{0\}$. 

 \vspace{0.2cm}
 {\bf Proof of the Claim.} Assume by contradiction that $\overline r<\infty$. Here, since $\lim_{r\to 0^+} V_{k}(r)=0$, the only possibility is that $V_{k}'(r)>0$ for every $r\in (0,\overline r)$ (using that $V_{k}(r)$ cannot have any local minimum point on $(0,\overline r)$). Thus, there exists $\lim_{r\nearrow \overline r} V_{k}(r)\in (0,\infty)$. This limit is finite from \eqref{mope} when $k=\overline r$. If   
 $k<\overline r$, then for every $r\in (k,\overline r)$, the graphs of 
$V_{k}(r)$ and $U_{\rho,\beta}(r)$ have no points of intersection. Otherwise, from \eqref{fami} and the comparison principle, we would find that $V_{k}(r)= U_{\rho,\beta}(r)$ for every $r\in (0,k]$, which is a contradiction with $$ V_{k}(k)=(1/2)\, U_{\rho,\beta}(k)<U_{\rho,\beta}(k).$$ We must have 
$$  0< V_{k}(r)\leq U_{\rho,\beta}(r)
  \quad \mbox{for all } r\in (0,\overline r). 
$$

We define $Y(r)=r V'_{k}(r)/V_{k}(r)$ for every $r\in (0,\overline r)$. Following the reasoning in Proposition~\ref{pzerop}, 
we derive that $Y'(r)\not =0$ for every $0<r<\overline r$ close enough to $\overline r$. 

So, there exists $\lim_{r\nearrow \overline r} Y(r)$ (possibly, $\infty$). Thus, 
there exists $\lim_{r\nearrow \overline r} V'_{k}(r)\in [0,\infty]$.  
To conclude the Claim, it remains to rule out the case $\lim_{r\nearrow \overline r} V'_{k}(r)=\infty$. 

 Similar to the proof of \eqref{fugi}, 
 we assume by contradiction that $\lim_{r\nearrow \overline r} V'_{k}(r)=\infty$.

$\bullet$ If Case $[N_0]$ (b) holds or if  $\beta\in (\varpi_2,0)$ in Case $[N_2]$, then $\lambda>0$ so that we  
get  
 $$ \lim_{r\nearrow \overline r }  \frac{d}{dr} \left[ r^{1-2\rho} V'_{k}(r)\right]=-\infty. 
$$  
So, $ r\longmapsto r^{1-2\rho} V'_{k}(r)$ is decreasing for $r\in (\overline r-\delta,\overline r)$ for $\delta>0$ small, which is a contradiction with 
$\lim_{r\nearrow \overline r} r^{1-2\rho} V'_{k}(r)=\infty$. Hence, we have 
$\lim_{r\nearrow \overline r} V'_{k}(r)\in [0,\infty)$. 

$\bullet$ If Case $[N_0]$ (a) or Case $[N_1]$ (a) holds, then $\tau=0$ and $V_{k}$ is a positive solution of
\begin{equation} \label{iona} V_{k}''(r)+(1-2\rho+\lambda) \frac{V_{k}'(r)}{r}  =r^\theta [V_{k}(r)]^q\quad \mbox{for all } r>(0,\overline r). 
\end{equation}

$\diamond$ If Case $[N_0]$ (a) holds, then $\lambda\geq 2\rho$. So, \eqref{iona} yields that $ \lim_{r\nearrow \overline r} V_{k}''(r)=-\infty $. 
Thus, $ r\longmapsto V'_{k}(r)$ is decreasing on $(\overline r-\delta,\overline r)$ for $\delta>0$ small, which contradicts 
$\lim_{r\nearrow \overline r} V'_{k}(r)=\infty$. 

$\diamond$ If Case $[N_1]$ (a) holds, then $\lambda< 2\rho$. Multiplying \eqref{iona} by $r^{\lambda-2\rho}$ and taking $r\nearrow \overline r$, we find that
$\lim_{r\nearrow \overline r} \frac{d}{dr} [r^{\lambda-2\rho} V'_{k}(r)]=-\infty$. Hence, 
$ r\longmapsto r^{\lambda-2\rho} V'_{k}(r)$ is decreasing on $(\overline r-\delta,\overline r)$ for $\delta>0$ small, which is a contradiction with 
$\lim_{r\nearrow \overline r} r^{\lambda-2\rho} V'_{k}(r)=\infty$. 

$\bullet$ If Case $[N_1]$ (b) holds, then $\lambda<0$ and $\tau\in (0,1)$. It follows that  
$$ \lim_{r\nearrow \overline r} \frac{(r^{1-2\rho}\,V_{k}'(r) )'}{(r^{1-2\rho}V_{k}'(r)  )^{1-\tau}}
=-\lambda  \lim_{r\searrow \overline r }  r^{-2\rho \tau-1} V_{k}(r):=\nu
\in (0,\infty).
$$
In particular, $r\longmapsto [ r^{1-2\rho} V_{k}'(r)]^\tau -2 \tau \nu r$ is decreasing on $(\overline r-\delta,\overline r)$ for $\delta>0$ small, which a contradiction with 
$\lim_{r\nearrow \overline r} r^{1-2\rho} V'_{k}(r)=\infty$. 

\vspace{0.2cm}
We have now exhausted all the possible situations. Hence, we have $\lim_{r\nearrow \overline r} V'_{k}(r)\in [0,\infty). $
This completes the proof of the Claim. \qed

\vspace{0.2cm}
We cannot have $\lim_{r\to \infty}  
   V_{k}(r)/U_{\rho,\beta}(r)=1$ (as this would imply $V_{k}(r)= U_{\rho,\beta}(r)$ for all $r>0$, which is impossible). Hence, using Theorem~\ref{rod1c}, we find that 
 for some $c\in (0,\infty)$, problem  \eqref{e1} in $\mathbb R^N\setminus \{0\}$, subject to $ u(x)\sim U_{\rho,\beta}(x)$ as $|x|\to 0$ 
and  \eqref{fbio} as $ |x|\to \infty$, 
 has a positive solution $V_k$, relabelled $u_{\infty,c}$.  
  Moreover, $u_{\infty,c}$ is 
radially symmetric and 
$$u_{\infty,c}'(r)>0\quad \mbox{for every } r>0.$$ 

Similar to the proof of Theorem~\ref{exol1}, by the comparison principle, we conclude that $u_{\infty,c}$ is the only positive solution of \eqref{e1} in $\mathbb R^N\setminus \{0\}$
with the same properties near zero and at infinity. 

By letting $\sigma>0$ arbitrary and applying the transformation $T_\sigma$ to $u_{\infty,c}$, we can extend the conclusion to every constant $c>0$. This finishes the proof of Theorem~\ref{exol2}. 
\end{proof}


\section{Existence; Alternative (III) in Theorem~\ref{rod1c}} \label{alt3}	

\begin{theorem}[Existence, Alternative (III) in Theorem~\ref{rod1c}] 
\label{exol3} 
Let \eqref{e2} hold and $\beta\in (\varpi_2,0)$ in Case $[N_2]$.   
	For every $b,c\in \mathbb R_+$, there is a unique positive solution $u_{b,\infty,c}$ of \eqref{e1} in $\mathbb R^N\setminus \{0\}$ satisfying 
	\begin{equation} \label{bono}  \lim_{|x|\to 0} \frac{u(x)}{ \Phi_{\varpi_2(\rho)}(|x|)}=b \quad \mbox{and}
\quad  \lim_{|x|\to \infty} u(x)=c. \end{equation} 
	In addition, $u_{b,\infty,c}$ is radially symmetric and 
$u_{b,\infty,c}'(r)>0$ for all $r=|x|>0$. 
\end{theorem}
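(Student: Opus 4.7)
The strategy mirrors Theorems~\ref{exol1} and~\ref{exol2}: I will solve annular Dirichlet problems by the sub-super-solutions method, pass to the limit by monotonicity and the compactness furnished by Proposition~\ref{aurr}, identify the asymptotic profiles via Theorem~\ref{thi} at zero and Theorem~\ref{rrzi} at infinity (the latter through the modified Kelvin transform of Appendix~\ref{Kel}), and deduce uniqueness from the comparison principle (Lemmas~\ref{co1}, \ref{co2}). The natural upper barrier is $\bar U:=\min\{u_b,u_{\infty,c}\}$, with $u_b$ supplied by Theorem~\ref{exol1} and $u_{\infty,c}$ by Theorem~\ref{exol2}: as a pointwise minimum of two positive radial super-solutions of \eqref{e1}, $\bar U$ is itself a distributional super-solution. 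Moreover, $\varpi_2<\beta<0$ forces $|x|^{-\varpi_2}\ll|x|^{-\beta}$ as $|x|\to 0$, so $u_b(x)<u_{\infty,c}(x)$ near zero, while $u_b\to\infty$ and $u_{\infty,c}\to c$ at infinity; hence $\bar U(x)\sim b\,\Phi_{\varpi_2(\rho)}(|x|)$ as $|x|\to 0$ and $\bar U(x)\to c$ as $|x|\to\infty$, matching the profile required by \eqref{bono}.

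For each $0<a\ll 1\ll k$, I consider the Dirichlet problem on the annulus $D_{a,k}:=\{x\in\mathbb R^N:\,a<|x|<k\}$ with boundary data $\bar U|_{\partial D_{a,k}}$, using $\bar U$ as upper barrier. The lower barrier is constructed by splicing the scaled solution $(1-\varepsilon)u_b$ on an inner region $\{|x|\le r_\varepsilon\}$, where $r_\varepsilon$ is determined by $(1-\varepsilon)u_b(r_\varepsilon)=c$, with a $w_{\delta,\alpha}$-type sub-solution of Lemma~\ref{lem2} on a thin shell adjacent to $\partial B_a$, giving a positive sub-solution below $\bar U$ with the required boundary inequalities. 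The sub-super-solutions method then produces a positive solution $v_{a,k}$, which rotational symmetry of the data forces to be radial. Comparison arguments yield monotonicity of $v_{a,k}$ as $a\to 0^+$ and $k\to\infty$, and Proposition~\ref{aurr} provides $C^1_{\rm loc}$-compactness; passing to the limit produces a positive radial solution $u_{b,\infty,c}$ of \eqref{e1} in $\mathbb R^N\setminus\{0\}$ with $0<u_{b,\infty,c}\le\bar U$. Strict monotonicity $u_{b,\infty,c}'(r)>0$ on $(0,\infty)$ follows from the radial ODE: at any hypothetical critical point $r_\star$ the gradient-dependent terms vanish and the ODE reduces to $u_{b,\infty,c}''(r_\star)=r_\star^\theta u_{b,\infty,c}^q(r_\star)>0$, making $r_\star$ a strict local minimum, which is incompatible with the limits $u_{b,\infty,c}(0^+)=0<c=\lim_{r\to\infty}u_{b,\infty,c}(r)$.

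To identify the asymptotic at zero, I invoke the trichotomy of Theorem~\ref{thi}~$(P_0)$: the upper bound $u_{b,\infty,c}\le\bar U\sim b\,\Phi_{\varpi_2(\rho)}$ excludes option~(i) being recorded as a profile above (in the guise of $U_{\rho,\beta}$, which would contradict the upper bound since $U_{\rho,\beta}(r)/\Phi_{\varpi_2(\rho)}(r)\to\infty$ as $r\to 0$), while the lower barrier $(1-\varepsilon)u_b$ excludes the faster-decay profile $|x|^{-\varpi_1}$ and pins option~(ii) with leading constant at least $(1-\varepsilon)b$; letting $\varepsilon\to 0$ yields $u_{b,\infty,c}(x)\sim b\,\Phi_{\varpi_2(\rho)}(|x|)$ as $|x|\to 0$. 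At infinity, boundedness of $u_{b,\infty,c}$ permits Theorem~\ref{rrzi} to be applied to its modified Kelvin transform (hypothesis $(a_2)$ holds since $\lambda>0$ and $\tau\in(0,1)$ in Case~$[N_2]$), giving $\lim_{|x|\to\infty}u_{b,\infty,c}(x)=c'\in(0,c]$; a Kelvin-reflected lower-barrier argument then yields $c'=c$. Uniqueness follows from the comparison principle: any other positive solution $\tilde u$ of \eqref{e1} satisfying \eqref{bono} obeys $u_{b,\infty,c}/\tilde u\to 1$ both as $|x|\to 0$ and as $|x|\to\infty$, whence $u_{b,\infty,c}\equiv\tilde u$. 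The chief technical obstacle is precisely the construction of the lower barrier with the correct leading order $b\,\Phi_{\varpi_2(\rho)}$ at zero and a positive plateau at infinity: the naive candidate $(1/2)\bar U=\min\{(1/2)u_b,(1/2)u_{\infty,c}\}$ is a minimum of two sub-solutions (hence not automatically a sub-solution), and positive constants serve only as super-solutions; the two-region splicing outlined above, together with its Kelvin-reflected counterpart at infinity (consistent with the monotone approximations $u_{b,\infty,c}\nearrow u_{\infty,c}$ and $u_{b,\infty,c}\nearrow u_b$ described in Remark~\ref{obsr}), is what circumvents this difficulty.
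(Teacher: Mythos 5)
Your overall architecture (annular Dirichlet problems, monotone limits, Proposition~\ref{aurr} for compactness, Theorem~\ref{thi} at zero, the Kelvin transform at infinity, comparison for uniqueness) matches the paper's, and your identification of $\min\{u_b,u_{\infty,c}\}$ as the natural upper barrier is sound. But there is a genuine gap at the single point where the theorem is actually hard: forcing the limit at infinity to equal the \emph{prescribed} constant $c$, rather than some $c'\in(0,c]$. Your construction yields $u_{b,\infty,c}\le\bar U$, hence $\limsup_{|x|\to\infty}u_{b,\infty,c}\le c$, and Theorem~\ref{rrzi} then gives convergence to some $c'\in(0,c]$; but the matching lower bound is deferred to ``a Kelvin-reflected lower-barrier argument'' that is never constructed and cannot be, with the objects you have in hand. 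Every sub-solution available to you with the correct profile $b\,\Phi_{\varpi_2(\rho)}$ at zero --- $(1-\varepsilon)u_b$, or the rescaling $\sigma^{\varpi_2}u_b(\sigma\,\cdot)$ --- behaves like a multiple of $U_{\rho,\beta}$ at infinity and therefore blows up, so it cannot lie below $\bar U$ (which tends to $c$) on the outer region; conversely, any barrier that plateaus near $c$ at infinity (e.g.\ a constant, or the Kelvin image of $S_{c_1,c}$ from the proof of Theorem~\ref{rrzi}) must be compared with the solution on some intermediate sphere $|x|=R$, where you have no a priori lower bound on the solution. The same matching problem afflicts your inner splice: to use $(1-\varepsilon)u_b$ on $\{a<|x|<r_\varepsilon\}$ you need the solution to dominate it on $|x|=r_\varepsilon$, which is not known. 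In short, your argument proves only that for each $b$ there exists \emph{some} $c'\in(0,c]$ realised at infinity --- which is essentially the content of the paper's Lemma~\ref{138} --- not that every pair $(b,c)$ is realised.

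The paper closes this gap with a two-stage argument that your proposal does not replicate or replace. First (Lemma~\ref{138}), for fixed $b$ it uses the pair $(\sigma^{\varpi_2}u_b(\sigma\,\cdot),\,u_b)$ of globally ordered sub/super-solutions --- note the super-solution is $u_b$ alone, not $\bar U$ --- to produce solutions $u_{b,\infty,M_\sigma}$ whose limits $M_\sigma$ at infinity are uncontrolled except that a suitable sequence $M_{\sigma_j}=c_j$ decreases to $0$ (via the strong maximum principle applied to $\lim_j V_{k,\sigma_j}$). Rescaling by $T_\sigma$ converts this into: for each fixed $c$ there is a sequence $b_j\downarrow 0$ with $u_{b_j,\infty,c}$ available. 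Then (Lemma~\ref{139}), for $b>b_j$ the limit at infinity is pinned to exactly $c$ by sandwiching the constructed solution between $u_{b_j,\infty,c}$ and $u_{\infty,c}$, \emph{both of which already tend to $c$}; the profile $b\,\Phi_{\varpi_2(\rho)}$ at zero is enforced by the sub/super pair $(u_{b,\infty,c_0},u_b)$ with $c_0<c$. You would need to supply an argument of comparable strength to pin down $c$. Two smaller points: $\bar U$ is only Lipschitz across the unique crossing radius of $u_b$ and $u_{\infty,c}$ (Remark~\ref{obsr}), so invoking it as a distributional super-solution within the paper's $C^1$ comparison framework needs justification; and Lemma~\ref{lem2} furnishes sub-solutions modelled on $r^{-\beta}$ vanishing on the \emph{inner} sphere of their annulus, not the $\Phi_{\varpi_2(\rho)}$-profiled, outer-vanishing family your splice requires.
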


\begin{proof}
We assume that \eqref{e2} holds and $\beta\in (\varpi_2,0)$ in Case $[N_2]$. 

\begin{lemma} \label{138} For every constant $b>0$, there exists a sequence $\{c_j\}_{j\geq 1} $ decreasing to zero as $j\to \infty$ such that \eqref{e1} in $\mathbb R^N\setminus \{0\} $, subject to  $\lim_{|x|\to 0} \frac{u(x)}{ \Phi_{\varpi_2(\rho)}(|x|)}=b$ and $\lim_{|x|\to \infty} u(x)=c_j$, has a unique positive solution $u_{b,\infty,c_j}$ for every $j\geq 1$. Moreover, for every $j\geq 1$, the solution $u_{b,\infty,c_j}$ is radially symmetric and 
$u'_{b,\infty,c_j}(r)>0$ for all $r>0$. 	
\end{lemma}

\begin{proof}
 We fix $b>0$ arbitrary. By Theorem~\ref{exol1}, there exists a unique positive solution
 $u_b$ of \eqref{e1} in $\mathbb R^N\setminus \{0\}$ satisfying \eqref{ahga}.  
In addition, $u_b$ is radially symmetric and $u_b'(r)>0$ for each $r>0$. 
Thus, we have 
\begin{equation} \label{honn} \lim_{r\to 0} \frac{u_b(r)}{ \Phi_{\varpi_2(\rho)}(r)}=b \quad \mbox{and}
\quad  \lim_{r\to \infty} \frac{u_b(r)}{U_{\rho,\beta}(r)}=1. \end{equation}
Let $\sigma>1$ be arbitrary and $T_\sigma$ be the scaling transformation as in \eqref{ssig}. We now define 
$$  u_{b,\sigma} (r)= \sigma^{\varpi_2-\beta} T_\sigma[u_b](r)=\sigma^{\varpi_2} u_b(\sigma r)\quad \mbox{for every } r>0. 
$$
Since $T_\sigma[u_b]$ is a positive radial solution of \eqref{e1} in $\mathbb R^N\setminus \{0\}$ and $\varpi_2<\beta<0$, using \eqref{honn} and $\sigma>1$, we get that $u_{b,\sigma}$ is a positive radial sub-solution of \eqref{e1} in $\mathbb R^N\setminus \{0\}$ satisfying 
\begin{equation} \label{honn2} \lim_{r\to 0} \frac{u_{b,\sigma}(r)}{ \Phi_{\varpi_2(\rho)}(r)}=b \quad \mbox{and}
\quad  \lim_{r\to \infty} \frac{u_{b,\sigma}(r)}{U_{\rho,\beta}(r)}=\sigma^{\varpi_2-\beta}<1.  \end{equation} 
In light of \eqref{honn} and \eqref{honn2}, we infer from the comparison principle that 
$u_{b,\sigma}(r)\leq u_b(r)$ for all $r>0$. Moreover, from the strong maximum principle (see Lemma~\ref{adol}), we have 
$$u_{b,\sigma}(r)< u_b(r)\quad \mbox{ for all }r>0. $$

We now use a similar argument to the one in Theorem~\ref{exol2}. More precisely, let $k>0$ and $n\geq 1$ be a large integer so that $1/n>k$. We consider the boundary value problem 
 \begin{equation}
  \label{eahh}
 \left\{ \begin{aligned}
 	& \mathbb L_{\rho,\lambda,\theta} [v]=|x|^{\theta} v^q && \mbox{in } D_{1/n,k}:=\{x\in \mathbb R^N:\ 
 	1/n<|x|<k\},&\\
 	& v(x)= u_{b,\sigma}(x) &&\mbox{on } 
 	\partial D_{1/n,k}.&
 \end{aligned}
	\right.
  \end{equation}

Here, $u_{b,\sigma}$ (respectively, $u_b$) is a positive radial sub-solution (respectively, super-solution) of \eqref{eahh}. Let $v_{n,k,\sigma}$ be the minimal positive solution of \eqref{eahh}
with respect to the above pair of sub-super-solutions. Then, as in the proof of Theorem~\ref{exol1},  
$v_{n,k,\sigma}$ is radially symmetric and 
\begin{equation} \label{tige} u_{b,\sigma}(r)\leq v_{n,k,\sigma}(r)\leq v_{n+1,k,\sigma} (r)\leq u_b(r)\quad \mbox{for all } 1/n\leq r\leq k.   
\end{equation}
For every $r\in (0,k)$, we define $V_{k,\sigma}(r)=\lim_{n\to \infty} v_{n,k,\sigma}(r)$. By repeating the arguments in the proof of Theorem~\ref{exol2} (with the super-solution $U_{\rho,\beta}$ there replaced by $u_b$ here), we see that $V_{k,\sigma}$ is a positive radial solution of \eqref{e1} in
$\mathbb R^N\setminus \{0\}$ satisfying 
$$ V_{k,\sigma}(r)\leq u_b(r) \quad \mbox{for all } r\in (0,\infty).$$

We let $n\to \infty$ in \eqref{tige}, then use \eqref{honn} and \eqref{honn2}
to gain that   
\begin{equation} \label{rigo} \lim_{r\to 0} \frac{V_{k,\sigma}(r)}{ \Phi_{\varpi_2(\rho)}(r)}=b. 
\end{equation}   
Now, $\lim_{r\to \infty}  
   V_{k,\sigma}(r)/U_{\rho,\beta}(r)=1$ is not possible as the comparison principle and \eqref{honn} would lead to $V_{k,\sigma}(r)\equiv u_b(r)$ for all $r>0$, which is a contradiction with $V_{k,\sigma}(k)=u_{b,\sigma}(k)<u_b(k)$. Hence, Theorem~\ref{rod1c} implies that $$ \lim_{r\to \infty} V_{k,\sigma}(r)=M_\sigma\quad \mbox{ for some constant } M_\sigma>0.$$  
   As before, since $\lim_{r\to 0} V_{k,\sigma}(r)=0$, we infer that $V'_{k,\sigma}(r)>0$ for every
   $r>0$. 
   
   We have thus obtained that $V_{k,\sigma}$ is a positive radial solution of \eqref{e1} in $\mathbb R^N\setminus \{0\}$, subject to 
   $  \lim_{|x|\to 0} \frac{u(x)}{ \Phi_{\varpi_2(\rho)}(|x|)}=b$ and $\lim_{|x|\to \infty} u(x)=M_\sigma$. 
   By the comparison principle, we obtain that any positive solution for such a problem must coincide with $V_{k,\sigma}$. 
   
   \vspace{0.2cm} {\bf Proof of Lemma~\ref{138} concluded.} 
   We point out that we don't know the value of the positive constant $M_\sigma$. However, using \eqref{honn}, we can choose $k>0$ large such that 
   $$ (1/2) \, U_{\rho,\beta} (r) \leq  u_b(r) \leq U_{\rho,\beta} (r)\quad \mbox{for all }r\geq k.   
   $$
   This gives that 
   $$ (1/2)\, \sigma^{\varpi_2-\beta} U_{\rho,\beta} (k) \leq V_{k,\sigma}(k)=u_{b,\sigma}(k)=\sigma^{\varpi_2} u_b(\sigma k) \leq \sigma^{\varpi_2-\beta} U_{\rho,\beta} (k).  
   $$
   We now choose an increasing sequence $\{\sigma_j\}_{j\geq 1}$ with $\lim_{j\to \infty} \sigma_j=\infty$  
   such that $\sigma_1>1$ and 
   $$\sigma_{j+1} > 2^{1/(\beta-\varpi_j)} \sigma_j\quad \mbox{for every } j\geq 1.$$
   With this choice, we establish that $V_{k,\sigma_j}(k)$ decreases as $j$ increases. More precisely, we have  
   \begin{equation} \label{yeh} V_{k,\sigma_{j+1}}(k) \leq \sigma_{j+1}^{\varpi_2-\beta} U_{\rho,\beta} (k)  < \frac{1}{2} \sigma_j^{\varpi_2-\beta} U_{\rho,\beta} (k)  \leq V_{k,\sigma_j} (k)\quad \mbox{for all } j\geq 1. 
   \end{equation} 
   On the other hand, $V_{k,\sigma_{j+1}}$ and $V_{k,\sigma_j}$ are both positive radial solutions of 
   \eqref{e1} in $\mathbb R^N\setminus \{0\}$ and share the same asymptotic behaviour near zero, in view of \eqref{rigo}. Moreover, 
   $$V'_{k,\sigma_{j}}(r)>0\quad \mbox{ for every }r>0\ \mbox{ and } j\geq 1.$$
   Therefore, by the comparison principle and the strong maximum principle, we must have 
   $$ V_{k,\sigma_{j+1}}(r)<V_{k,\sigma_{j}}(r)\quad \mbox{for all } r>0. $$   
   Consequently, $M_{\sigma_{j+1}}=\lim_{r\to \infty} V_{k,\sigma_{j+1}}(r)<\lim_{r\to \infty} V_{k,\sigma_{j}}(r)=M_{\sigma_j}$ for every $j\geq 1$.  
      
   Observe from \eqref{yeh} that $\lim_{j\to \infty} V_{k,\sigma_j}(k)=0$ using that $\varpi_2<\beta$ and $\lim_{j\to \infty} \sigma_j=\infty$. 
   
   For every $r>0$, we define $V_{k,\infty}(r)=\lim_{j\to \infty} V_{k,\sigma_j} (r)$. Then, $V_{k,\infty}$ is a non-negative radial solution of \eqref{e1} in $\mathbb R^N\setminus \{0\}$, which satisfies
   $V_{k,\infty} (k)=0$. By the strong maximum principle (see Lemma~\ref{add}), it follows that 
   $$ \lim_{j\to \infty} M_{\sigma_j} =0.$$
By relabelling $M_{\sigma_j}=c_j$ and $V_{k,\sigma_j}=u_{b,\infty,c_j}$ for $j\geq 1$, we conclude the proof of Lemma~\ref{138}. 
\end{proof}

In view of the invariance of \eqref{e1} under the transformation $T_\sigma$ ($\sigma>0$) in \eqref{ssig}, Lemma~\ref{138} gives that for every $c>0$, there exists a sequence $\{b_j\}_{j\geq 1}$ decreasing to $0$ as $j\to \infty$ such that for every $j\geq 1$, equation \eqref{e1} in $\mathbb R^N\setminus \{0\}$, subject to 
\begin{equation} \label{bolo} \lim_{|x|\to 0} \frac{u(x)}{ \Phi_{\varpi_2(\rho)}(|x|)}=b_j \quad \mbox{and}
\quad  \lim_{|x|\to \infty} u(x)=c
\end{equation}
has a unique positive solution $u_{b_j,\infty,c}$. Moreover, for every $j\geq 1$, the solution
$u_{b_j,\infty,c}$ is symmetric and $u'_{b_j,\infty,c}(r)>0$ for all $r>0$.

\begin{lemma} \label{139} Let $j\geq 1$ and $c>0$ be arbitrary. 
Then, for every $b\in (b_j,\infty)$, there exists a unique positive solution $u_{b,\infty,c}$ of 
 \eqref{e1} in $\mathbb R^N\setminus \{0\}$, subject to \eqref{bono}. 
Moreover, $u_{b,\infty,c}$ is radially symmetric and $u_{b,\infty,c}'(r)>0$ for every $r>0$. 
\end{lemma}

\begin{proof} For $j\geq 1$ fixed, let $b\in (b_j,\infty)$ be arbitrary. It is enough to construct a positive radial solution $u_{b,\infty,c}$ of 
 \eqref{e1} in $\mathbb R^N\setminus \{0\}$, subject to \eqref{bono}. Then, 
 $ u_{b,\infty,c}'(r)>0$ for every $ r>0$. Hence, by the comparison principle, 
 $u_{b,\infty,c}$ will be the only positive solution for that problem. 
 
 By Lemma~\ref{138}, there exists $c_0\in (0,c)$ such that \eqref{e1} in $\mathbb R^N\setminus \{0\}$, subject to 
 	\begin{equation} \label{bobo}  \lim_{|x|\to 0} \frac{u(x)}{ \Phi_{\varpi_2(\rho)}(|x|)}=b \quad \mbox{and}
\quad  \lim_{|x|\to \infty} u(x)=c_0 \end{equation} 
 has a unique positive solution $u_{b,\infty,c_0}$. In addition, $u_{b,\infty,c_0}$ is radially symmetric and 
 $$ u'_{b,\infty,c_0}(r)>0\quad \mbox{ for all } r>0. $$
Let $u_b$ be as in Theorem~\ref{exol1}, namely, the unique positive solution of \eqref{e1} in $\mathbb R^N\setminus \{0\}$ satisfying
\begin{equation} \label{mion} \lim_{|x|\to 0} \frac{u(x)}{ \Phi_{\varpi_2(\rho)}(|x|)}=b \quad \mbox{and}
\quad \lim_{|x|\to \infty} \frac{u(x)}{U_{\rho,\beta}(x)}=1.
\end{equation}
By Theorem~\ref{exol2}, there exists a unique positive solution $u_{\infty,c}$ of \eqref{e1} in $\mathbb R^N\setminus \{0\}$ satisfying 
\begin{equation} \label{har1}
\lim_{|x|\to 0} \frac{u(x)}{U_{\rho,\beta}(x)}=1\quad \mbox{and}\quad \lim_{|x|\to \infty} u(x)=c.
\end{equation} 
Moreover, $u_b$  and $u_{\infty,c}$ are radially symmetric and $\min\{ u_b'(r), u_{\infty,c}'(r)\}>0$ for all $r>0$.  

By the comparison principle, we have 
\begin{equation} \label{hpm} \max\{ u_{b,\infty,c_0}(r), u_{b_j,\infty,c}(r)\} \leq \min\{ u_b(r),u_{\infty,c}(r)\}\quad \mbox{for every } r>0.
\end{equation} 
By the strong maximum principle (see Lemma~\ref{adol}), the above inequality is strict for every $r>0$. 
Since $b>b_j>0$ and $c_0\in (0,c)$, from \eqref{bolo} and \eqref{bobo}, we get
$$ \lim_{r\to 0^+} \frac{u_{b,\infty,c_0}(r)}{u_{b_j,\infty,c}(r)}=\frac{b}{b_j}>1\quad \mbox{and}
\quad \lim_{r\to \infty} \frac{u_{b,\infty,c_0}(r)}{u_{b_j,\infty,c}(r)}=\frac{c_0}{c}\in (0,1).
$$
Thus, there exists $r_1>0$ such that $u_{b,\infty,c_0}(r_1)=u_{b_j,\infty,c}(r_1)$.  
Moreover, by the comparison principle, jointly with the strong maximum principle, 
it follows that $$u_{b,\infty,c_0}(r)> u_{b_j,\infty,c}(r)\  \mbox{for every }r\in (0,r_1)\ 
\mbox{and }  
u_{b,\infty,c_0}(r)< u_{b_j,\infty,c}(r)\ \mbox{for all } r>r_1.$$ 

Let $n,k\geq 1$ be large integers such that $1/n<r_1<k$. 
Our aim is to construct a positive radial solution $W_{b,k}$ of \eqref{e1} in $B_{k}(0)\setminus \{0\}$
satisfying 
\begin{equation} \label{iob}
 \lim_{|x|\to 0} \frac{u(x)}{ \Phi_{\varpi_2(\rho)}(|x|)}=b \quad \mbox{and} \quad 
 u(x)=u_{b_j,\infty,c} (|x|)\ \mbox{on } |x|=k.
\end{equation}

To this end, we consider the boundary value problem
 \begin{equation}
  \label{yau}
 \left\{ \begin{aligned}
 	& \mathbb L_{\rho,\lambda,\theta} [w]=|x|^{\theta} w^q && \mbox{in } D_{1/n,k}:=\{x\in \mathbb R^N:\ 
 	1/n<|x|<k\},&\\
 	& w(x)= u_{b,\infty,c_0}(|x|) &&\mbox{on } |x|=1/n,&\\
	& w(x)=u_{b_j,\infty,c}(|x|) && \mbox{on } |x|=k.&
 \end{aligned}
	\right.
  \end{equation}
Observe that $u_{b,\infty,c_0}$ (respectively, $u_b$) is a positive radial sub-solution (respectively, super-solution) of \eqref{yau}.
Using \eqref{hpm} and the method of sub-super-solutions, we obtain that \eqref{yau} has a minimal positive solution $w_{b,n,k}$ with respect to 
the pair $(u_{b,\infty,c_0},u_b)$ of sub-super-solutions. 
Moreover, similar to \eqref{ehh}, we infer that $w_{b,n.k}$ is radially symmetric and 
\begin{equation} \label{hara} u_{b,\infty,c_0}(r)\leq w_{b,n,k}(r)\leq w_{b,n+1,k}(r)\leq u_b(r)\quad \mbox{for every } 1/n<r<k.
\end{equation}
For every $r\in (0,k)$, we define $W_{b.k}(r)=\lim_{n\to \infty} w_{b,n,k}(r)$. 
By letting $n\to \infty$ in \eqref{hara}, we get
\begin{equation}
\label{hara2}
u_{b,\infty,c_0}(r)\leq W_{b,k}(r) \leq u_b(r)\quad \mbox{for every } r\in (0,k). 
\end{equation} 
Since $u_b$ satisfies the same asymptotic condition 
near zero as $u_{b,\infty,c_0}$ (see \eqref{bobo} and \eqref{mion}), it follows that
$W_{b,k}$ is a positive radial solution of \eqref{e1} in $B_{k}(0)\setminus \{0\}$ satisfying \eqref{iob}. 
Moreover, since $\lim_{r\to 0^+} W_{b,k}=0$, we must have $W_{b,k}'(r)>0$ for all $r\in (0,k)$. Hence, 
$W_{b,k}$ is the only positive solution of \eqref{e1} in $B_{k}(0)\setminus \{0\}$ satisfying \eqref{iob}. 

\vspace{0.2cm}
By the comparison principle and  and \eqref{hpm},
we find that 
\begin{equation} \label{har2} u_{b_j,\infty,c} (r)\leq W_{b,k}(r)\leq W_{b,k+1}(r)\leq u_{\infty,c}(r)\quad \mbox{for every } r\in (0,k].
\end{equation}
For every $r>0$, we now define $u_{b,\infty,c}(r)$ by
\begin{equation} \label{opab} u_{b,\infty,c}(r)=\lim_{k\to \infty} W_{b,k}(r).
\end{equation}
Since $u_{b_j,\infty,c}(r)$ and $u_{\infty,c}(r)$ converge to $c$ as $r\to \infty$, 
by letting $k\to \infty$ in \eqref{har2}, then $r\to \infty$, we arrive at 
$\lim_{r\to \infty} u_{b,\infty,c}(r)=c.
$ Moreover, passing to the limit $k\to \infty$ in \eqref{hara2}, we find that 
$$ \lim_{r\to 0} \frac{u_{b,\infty,c}(r)}{\Phi_{\varpi_2(\rho)}(r)}=b. 
$$
Hence, $u_{b,\infty,c}$ (given by \eqref{opab}) is a positive radial solution of \eqref{e1} in $\mathbb R^N\setminus \{0\}$, 
subject to \eqref{bono}. Such a problem cannot have any other positive 
solutions (as explained at the beginning of the proof). The proof of Lemma~\ref{139} is now complete.  
\end{proof}

Since $b_j\searrow 0$ as $j\to \infty$, from Lemma~\ref{139}, we conclude
the proof of Theorem~\ref{exol3}. \end{proof}

\appendices 
\section{Auxiliary results}  \label{Kel}

\subsection{The modified Kelvin transform}

Let \eqref{e2} hold and $\rho,\lambda,\theta\in \mathbb R$. 
Assume that $u$ is a positive solution of \eqref{e1} in $B_1(0)\setminus \{0\}$.    
For $0<|x|<1$, we define $$ v(\widetilde x)=u(x)\quad \mbox{ with } \widetilde x=x/|x|^2.$$ 
Then, by letting $\widetilde \theta=-\theta-4$, we see that $v(\widetilde x)$ is a positive solution of 
\begin{equation} \label{ela1}
 \mathbb L_{-\rho,\lambda,\tau}[v(\widetilde x)]=\Delta v(\widetilde x)-\left(N-2-2\rho\right) \frac{\widetilde x\cdot \nabla v(\widetilde x)}{|\widetilde x|^2} +
 \lambda \frac{v^\tau(\widetilde x)\,|\nabla v(\widetilde x)|^{1-\tau}}{|\widetilde x|^{1+\tau}}
 =|\widetilde x|^{\widetilde \theta} \,v^q(\widetilde x)
\end{equation}
for $ \widetilde x\in \mathbb R^N \setminus \overline{B}_1(0)$. Indeed, for every $1\leq j\leq N$, we have 
 $$\frac{\partial u}{\partial x_j}
 =|\widetilde x|^2 \frac{\partial v}{\partial \widetilde x_j}-2\, \widetilde x_j \,  \widetilde x \cdot \nabla v(\widetilde x),
 $$ which implies that $x\cdot \nabla u(x)=-\widetilde x\cdot \nabla v(\widetilde x)$ and 
 $|\nabla u(x)|=|\widetilde x|^2 |\nabla v(\widetilde x)|$, as well as
 $$ \Delta u(x)+2\left(2-N\right) \frac{x\cdot \nabla u(x)}{|x|^2} =|\widetilde x|^4 \Delta v(\widetilde x).$$ 
We define $\widetilde \beta=(\widetilde \theta+2)/(q-1)$. Then, we have 
 $$ \widetilde \beta=-\beta\quad \mbox{and}\quad f_{-\rho}(\widetilde \beta)=f_\rho(\beta).$$ 
Hence, if $f_{-\rho}(\widetilde \beta)>0$, then a positive radial solution $U_{-\rho,\widetilde \beta}(\widetilde x)$ of \eqref{ela1} in $\mathbb R^N\setminus \{0\}$ is given by  
$$ U_{-\rho,\widetilde \beta}(\widetilde x)=[f_{-\rho}(\widetilde \beta)]^{\frac{1}{q-1}} |\widetilde x|^{-\widetilde \beta}=[f_{\rho} (\beta)]^{\frac{1}{q-1}}\, | x|^{-\beta}=U_{\rho,\beta}(x).  
$$

\subsection{Strong maximum principles and comparison principle}

Using \eqref{e2}, we first show in Lemma~\ref{add} that 
the strong maximum principle applies for \eqref{e1}. 

\begin{lemma}[Strong Maximum Principle, I] \label{add}
Let $N\geq 2$  and $\omega$ be a domain with $\overline{\omega}\subset \mathbb R^N\setminus \{0\} $. Let \eqref{e2} hold and 
$\rho,\lambda,\theta\in \mathbb R$ be arbitrary. 
 Assume that 
 $u$ (respectively, $v $) is any non-negative $C^1(\omega)$ (distributional) solution of 
\begin{equation} \label{hoia} \mathbb L_{\rho,\lambda,\tau}[u]\leq  |x|^\theta u^q\quad \mbox{in } \omega\quad (\mbox{respectively, }\quad  \mathbb L_{\rho,\lambda,\tau}[v]\leq  0\quad \mbox{in } \omega).
\end{equation}
Then, $u$ (respectively, $v$) is either positive in $\omega$ or identically zero in $\omega$. 
\end{lemma}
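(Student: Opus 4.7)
My plan is to reduce both assertions to a single application of a standard strong maximum principle for quasilinear elliptic operators, e.g.\ \cite{PS2007}*{Theorem~2.5.1}. The hypothesis $\overline\omega\subset\mathbb{R}^N\setminus\{0\}$ is essential: it guarantees that $|x|$ is bounded above and below by positive constants on $\overline\omega$, so the coefficients $(N-2+2\rho)x_i/|x|^2$, $|x|^{-1-\tau}$ and $|x|^\theta$ are all uniformly bounded there. In particular, $\mathbb L_{\rho,\lambda,\tau}$ acts as a genuinely quasilinear elliptic operator with bounded ingredients on $\omega$, even though each of these coefficients would be singular at the origin.

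First I would localise around a hypothetical zero. Let $w\in\{u,v\}$ and suppose $w(x_0)=0$ for some $x_0\in\omega$; fix a ball $B=B_r(x_0)\subset\subset\omega$ on which $w$ is bounded. In the $v$-case the differential inequality already reads $\mathbb L_{\rho,\lambda,\tau}[v]\leq 0$ on $B$. In the $u$-case, continuity of $u$ and $u(x_0)=0$ permit us to shrink $r$ so that $u\leq 1$ on $B$; then $q>1$ gives $u^q\leq u$, whence $|x|^\theta u^q\leq Cu$ for some constant $C>0$. Thus both cases reduce to an inequality of the form
\begin{equation*}
\Delta w+\vec b(x)\cdot\nabla w+\lambda\,\frac{w^\tau|\nabla w|^{1-\tau}}{|x|^{1+\tau}}-Cw\leq 0 \quad\text{on } B
\end{equation*}
(with $C=0$ in the $v$-case), where $\vec b(x)=-(N-2+2\rho)\,x/|x|^2$ is bounded on $B$.

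The gradient term $N(x,s,\xi)=\lambda|x|^{-1-\tau}s^\tau|\xi|^{1-\tau}$ fits the Pucci--Serrin structural framework: its exponents $\tau,\,1-\tau\in[0,1]$ (from $\tau\in[0,1)$) provide the required sub-linear growth, the term is continuous and vanishes at $(s,\xi)=(0,0)$, and the integrability condition of \cite{PS2007}*{Theorem~2.5.1} is met. The additional Lipschitz zero-order term $-Cw$ does not affect applicability. Thus the strong maximum principle yields $w\equiv 0$ on $B$. Finally, the set $\{w=0\}\cap\omega$ is relatively closed by continuity of $w$ and relatively open by the local vanishing just proved; connectedness of $\omega$ then forces $w\equiv 0$ on $\omega$, completing the proof.

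The main obstacle will be verifying the structural hypotheses of the Pucci--Serrin SMP in the case $\lambda<0$, when the gradient term lies on the unfavourable side of the inequality and cannot simply be discarded. The key point is that the sub-linear growth $w^\tau|\nabla w|^{1-\tau}$ allows absorption via Young's inequality, $w^\tau|\nabla w|^{1-\tau}\leq \varepsilon w+C_\varepsilon|\nabla w|$ for arbitrary $\varepsilon>0$, thereby reducing the problem to a linear elliptic differential inequality with bounded first-order coefficients. For such inequalities the classical Hopf--Oleinik strong maximum principle (e.g., \cite{GT1983}*{Theorem~3.5}, see also \cite{PS2004}) applies directly and yields the same conclusion.
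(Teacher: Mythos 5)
Your proposal is correct and follows essentially the same route as the paper: both arguments exploit that $\overline\omega\subset\mathbb R^N\setminus\{0\}$ makes all coefficients bounded on compacta, use the weighted Young inequality $z^\tau|\boldsymbol{\xi}|^{1-\tau}\le \tau z+(1-\tau)|\boldsymbol{\xi}|$ (together with $u^q\le u$ where $u\le 1$) to bound the lower-order terms linearly in $z$ and $|\boldsymbol{\xi}|$, and then invoke the Pucci--Serrin strong maximum principle (\cite{PS2007}*{Theorem~2.5.1}). Your explicit open--closed connectedness step at the end is a minor elaboration of what the paper leaves implicit, and your remark on the $\lambda<0$ case matches how the paper's two-sided bound on $\widetilde B_j$ handles the sign.
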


\begin{proof} For every $x\in \mathbb R^N\setminus \{0\}$, $z\geq  0$ and $\boldsymbol{\xi}\in \mathbb R^N$, we define  
$$ \widetilde B_1(x,z,\boldsymbol{\xi})=\widetilde B_2(x,z,\boldsymbol{\xi}) -|x|^\theta z^q\quad \mbox{and }
\widetilde B_2(x,z,\boldsymbol{\xi})=-\left(N-2+2\rho\right) \frac{x\cdot \boldsymbol{\xi}}{|x|^2} +\lambda\, G_\tau(x,z,\boldsymbol{\xi}). 
$$
Since $q>1$ and $\tau\in [0,1)$, by Young's inequality yields for $j=1,2$, we find that 
$$ |\widetilde B_j(x,z,\boldsymbol \xi)|\leq |N-2+2\rho| \frac{|\boldsymbol{\xi}|}{|x|}+|\lambda | \left( 
\tau \frac{z}{|x|^2} +(1-\tau) \frac{|\boldsymbol{\xi}|}{|x|}
\right) +|x|^\theta z
$$
for every $x\in \mathbb R^N\setminus \{0\}$, $0<z\leq 1$ and $|\boldsymbol{\xi}|\leq 1$. 
Hence, for every compact $K$ in $\mathbb R^N\setminus \{0\}$, there exist non-negative constants $b_1$ and $b_2$ such that
$$ \widetilde B_j(x,z,\boldsymbol{\xi})\geq -b_1 |\boldsymbol{\xi}| -b_2 z\quad \mbox{for all } x\in K,\ 0<z\leq 1,\  |\boldsymbol{\xi}|\leq 1,
 $$ where $j=1,2$. 
 From \eqref{hoia}, $u$ (respectively, $v$) is a non-negative $C^1(\omega)$ solution of 
\begin{equation} \label{hag} \Delta u+\widetilde B_1(x,u,\nabla u)\leq 0\quad \mbox{in } \omega 
\quad (\mbox{respectively, }\quad  \Delta v+\widetilde B_2(x,v,\nabla v) \leq 0\quad \mbox{in } \omega).
\end{equation}
The conditions in the strong maximum principle of Theorem~2.5.1 in \cite{PS2007}*{p. 34} are satisfied for \eqref{hag}  in every compact $K$ in $\mathbb R^N\setminus \{0\}$. This implies that if there exists $x_0\in \omega$ such that $u(x_0)=0$ (respectively, $v(x_0)=0$), then necessarily $u\equiv 0$ in $\omega$ (respectively, $v\equiv 0$ in $\omega$). 
\end{proof}

We will use (see Theorem~\ref{urz}) the following strong maximum principle.  

\begin{lemma}[Strong Maximum Principle, II] \label{adol} Let $N\geq 2$, \eqref{e2} hold, $\rho,\lambda, \theta\in \mathbb R$ and $\omega\subset \mathbb R^N$ be a domain such that $\overline \omega\subset \mathbb R^N\setminus 
\{0\}$. Assume that $u_1,u_2\in C^1(\omega)$
are positive distributional solutions of 
\begin{equation} \label{vezi}
\mathbb L_{\rho,\lambda,\tau} [u_1]\geq 0\ \  \mbox{in }\omega\quad \mbox{and}\quad
\mathbb L_{\rho,\lambda,\tau} [u_2]\leq 0 \ \  \mbox{in }\omega.
\end{equation}
(Alternatively, instead of \eqref{vezi}, we may also assume that
\begin{equation} \label{vemmi}
\mathbb L_{\rho,\lambda,\tau} [u_1]\geq |x|^\theta u_1^q\ \  \mbox{in }\omega\quad \mbox{and}\quad
\mathbb L_{\rho,\lambda,\tau} [u_2]\leq |x|^\theta u_2^q  \ \  \mbox{in }\omega.)
\end{equation}
If $u_1\leq u_2$ in $\omega$ and $|\nabla u_1|+|\nabla u_2|>0$ in $\omega$, then either $u_1\equiv u_2$ in $\omega$ or $u_1<u_2$ in $\omega$. 
\end{lemma}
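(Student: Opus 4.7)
\smallskip
\textbf{Plan of proof for Lemma \ref{adol}.} Set $w:=u_2-u_1\ge 0$ in $\omega$ and consider the contact set $Z:=\{x\in\omega:\ w(x)=0\}$. Since $u_1,u_2\in C^1(\omega)$, $Z$ is closed in $\omega$, so by connectedness it suffices to show that $Z$ is also open (then either $Z=\emptyset$ and $u_1<u_2$, or $Z=\omega$ and $u_1\equiv u_2$). Thus I fix $x_0\in Z$ and aim to prove that $w$ vanishes on a whole neighbourhood of $x_0$ by casting the inequality satisfied by $w$ as a linear elliptic inequality with locally bounded coefficients and invoking the classical strong maximum principle of Gilbarg--Trudinger (Theorem~3.5) or Pucci--Serrin (Theorem~2.5.1 in \cite{PS2007}).

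Since $u_1\le u_2$ in $\omega$ and $w(x_0)=0$, the point $x_0$ is an interior minimum of $w$, so $\nabla w(x_0)=0$, i.e., $\nabla u_1(x_0)=\nabla u_2(x_0)=:\boldsymbol{\eta}$. The hypothesis $|\nabla u_1|+|\nabla u_2|>0$ in $\omega$ then forces $|\boldsymbol{\eta}|>0$. By continuity of $\nabla u_1$ and $\nabla u_2$, I can pick a ball $U\subset\subset\omega$ around $x_0$ with $|\nabla u_j(x)-\boldsymbol{\eta}|<|\boldsymbol{\eta}|/4$ for $x\in U$ and $j=1,2$, and with $u_1$ uniformly bounded below by a positive constant. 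In particular, for every $t\in[0,1]$ and $x\in U$, the convex combination $\nabla v_t(x):=\nabla u_1(x)+t(\nabla u_2(x)-\nabla u_1(x))$ satisfies $|\nabla v_t(x)|\ge 3|\boldsymbol{\eta}|/4>0$, and $v_t(x):=u_1(x)+tw(x)$ stays bounded away from $0$.

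Next I linearise. Subtracting the two inequalities in \eqref{vezi} (the same argument applies to \eqref{vemmi}, with an additional locally bounded zeroth-order term coming from $q\int_0^1 v_t^{q-1}\,dt\cdot w$) yields, in $\mathcal{D}'(\omega)$,
\[
\Delta w-(N-2+2\rho)\,\frac{x\cdot\nabla w}{|x|^2}+\frac{\lambda}{|x|^{1+\tau}}\Bigl[u_2^\tau|\nabla u_2|^{1-\tau}-u_1^\tau|\nabla u_1|^{1-\tau}\Bigr]\le 0.
\]
Applying the fundamental theorem of calculus to $F(z,\boldsymbol{\xi})=z^\tau|\boldsymbol{\xi}|^{1-\tau}$ along $t\mapsto(v_t(x),\nabla v_t(x))$, the bracket becomes $c(x)\,w(x)+\boldsymbol{b}(x)\cdot\nabla w(x)$, where
\[
c(x)=\tau\!\int_0^1 v_t^{\tau-1}|\nabla v_t|^{1-\tau}\,dt,\qquad \boldsymbol{b}(x)=(1-\tau)\!\int_0^1 v_t^\tau|\nabla v_t|^{-1-\tau}\nabla v_t\,dt.
\]
Thanks to the uniform positivity of $v_t$ and $|\nabla v_t|$ on $U\times[0,1]$ established above, $c(x)$ and $\boldsymbol{b}(x)$ are bounded on $U$. (When $\tau=0$, $F(z,\boldsymbol{\xi})=|\boldsymbol{\xi}|$ and $c\equiv 0$, so the argument is even simpler.) Hence $w\ge0$ satisfies a uniformly elliptic linear inequality $Lw\le 0$ on $U$ of the form $\Delta w+B(x)\cdot\nabla w+C(x)w\le 0$ with $B,C\in L^\infty(U)$; by elliptic regularity $w$ is in fact $W^{2,p}_{\mathrm{loc}}(U)$, so the classical strong maximum principle applies (absorbing the positive part of $C$ into a non-positive zeroth-order coefficient via the standard trick $\Delta w+B\cdot\nabla w-C^{+}w\le-C^-w\le 0$). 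Since $w(x_0)=0$, we conclude $w\equiv 0$ on $U$, so $x_0$ lies in the interior of $Z$; this proves $Z$ is open and completes the proof.

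The main obstacle is the non-smoothness of $\boldsymbol{\xi}\mapsto|\boldsymbol{\xi}|^{1-\tau}$ at the origin when $\tau\in(0,1)$: $F_{\boldsymbol{\xi}}$ blows up as $|\boldsymbol{\xi}|\to0$, so the linearisation is not a priori well-defined. The hypothesis $|\nabla u_1|+|\nabla u_2|>0$ is precisely what rescues the argument: at any contact point $x_0$ the two gradients must coincide (since $w$ attains an interior minimum), hence their common value $\boldsymbol{\eta}$ must be non-zero, which yields a one-sided uniform lower bound $|\nabla v_t|\ge 3|\boldsymbol{\eta}|/4$ in a whole neighbourhood and eliminates the singularity.
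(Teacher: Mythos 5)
Your proof is correct in substance, and its backbone is the same as the paper's: you show the contact set $\{u_1=u_2\}$ is open by observing that at a contact point $x_0$ the function $w=u_2-u_1\ge 0$ has an interior minimum, so $\nabla u_1(x_0)=\nabla u_2(x_0)$, and the hypothesis $|\nabla u_1|+|\nabla u_2|>0$ then forces this common gradient to be nonzero — this is exactly the paper's key observation, and it is what desingularises the term $z^\tau|\boldsymbol{\xi}|^{1-\tau}$ near the contact point. Where you diverge is in how you finish: the paper replaces the nonlinearity by a globally $C^1$ function $\widehat B$ agreeing with the original on $\{z\ge m/2,\ |\boldsymbol{\xi}|\ge m/2\}$ and then cites Serrin's tangency/comparison theorem (\cite{Ser}, Theorem~1) for quasilinear operators, whereas you carry out the linearisation explicitly along the segment $t\mapsto(v_t,\nabla v_t)$ and reduce to the strong maximum principle for a linear inequality with bounded coefficients. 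Both routes are valid; yours is more self-contained (no appeal to the quasilinear comparison theorem), at the cost of having to justify the strong maximum principle for a merely $C^1$ distributional supersolution.

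On that last point, one claim in your write-up is not justified: you assert that ``by elliptic regularity $w$ is in fact $W^{2,p}_{\rm loc}(U)$.'' The hypotheses \eqref{vezi} are one-sided \emph{inequalities}, so $\Delta u_1$ and $\Delta u_2$ are only known to differ from locally bounded functions by (possibly singular) nonnegative Radon measures; there is no equation to bootstrap, and $w$ need not lie in $W^{2,p}_{\rm loc}$. This does not sink the proof, because the alternative you also cite — Pucci--Serrin \cite{PS2007}*{Theorem~2.5.1} — applies directly to nonnegative $C^1$ distributional solutions of $\Delta w+\mathcal B(x,w,\nabla w)\le 0$ under the one-sided bound $\mathcal B(x,z,\boldsymbol{\xi})\ge -b_1|\boldsymbol{\xi}|-b_2 z$, which your bounded coefficients $\boldsymbol{b},c$ (and, in the case \eqref{vemmi}, the extra bounded zeroth-order term $q|x|^\theta\int_0^1 v_t^{q-1}\,dt$) clearly satisfy on $U$. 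Drop the $W^{2,p}$ detour and rely on that version, and the argument is complete.
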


\begin{proof} We point out that we {\em cannot} conclude the proof by applying Lemma~\ref{add} to $u_2-u_1$ since 
the operator $\mathbb L_{\rho,\lambda,\tau}[\cdot]$ is nonlinear (when $\lambda\not=0$) by the assumption $\tau\in [0,1)$. 

Let $\omega'=\{x\in \omega: \ u_1(x)=u_2(x)\}$. Then, $\omega'$ is relatively closed with respect to $\omega$. 
We assume that $\omega'\not=\emptyset$ and let 
$x_0\in  \omega'$ be arbitrary. Since $\omega$ is open, there exists $\varepsilon>0$ small such that $B_\varepsilon(x_0)\subset\subset \omega$.
Recall that $u_2-u_1\geq 0$ in $\omega$ and thus
$u_2-u_1$ achieves its minimum $0$ at $x_0\in \omega$. This, jointly with the assumption that
$|\nabla u_1(x_0)|+|\nabla u_2(x_0)|>0$, 
implies that $|\nabla u_1(x_0)|=|\nabla u_2(x_0)|>0$. If necessary, we diminish $\varepsilon>0$ to ensure that for some $m>0$ small, we have
$\min_{\overline B_{\varepsilon}(x_0)}u_j\geq m$ and 
$\min_{\overline B_{\varepsilon}(x_0)} |\nabla u_j|\geq m$ for $j=1,2$.   

Let $\widehat B(x,z,\boldsymbol{\xi})$ be a scalar function that is continuously differentiable in the variables $z\in \mathbb R$ and $\boldsymbol{\xi}\in \mathbb R^N$ such that when $z\geq m/2$ and $|\boldsymbol{\xi}|\geq m/2$, we have
$$ \widehat B(x,z,\boldsymbol{\xi})=(N-2+2\rho) \frac{x\cdot \boldsymbol{\xi}}{|x|^2}-\lambda z^\tau \frac{|\boldsymbol{\xi}|^{1-\tau}}{|x|^{1+\tau}}.
$$
(If \eqref{vemmi} is assumed instead of \eqref{vezi}, then we also include $-|x|^\theta z^q$ in the definition of $\widehat B(x,z,\boldsymbol{\xi})$.)
Hence, using \eqref{vezi}, we see that $u_1$ and $u_2$ are positive $C^1(B_\varepsilon(x_0))$ distributional solutions of 
$$ \begin{aligned}  
& \Delta u_1-\widehat B(x,u_1,\nabla u_1)\geq 0\quad \mbox{in } B_\varepsilon(x_0),\\
& \Delta u_2-\widehat B(x,u_2,\nabla u_2)\leq 0\quad \mbox{in } B_\varepsilon(x_0).
\end{aligned}
$$
Since $u_1\leq u_2$ in  $B_\varepsilon(x_0)$ and $u_1(x_0)=u_2(x_0)$, by \cite{Ser}*{Theorem 1}, we get $u_1\equiv u_2$ in $B_\varepsilon(x_0)$. 
This shows that $\omega'$ is relatively open in $\omega$. So, $\omega'=\omega$, which yields that $u_1\equiv u_2$ in $\omega$. 
\end{proof}

We next recall two comparison principles that are used frequently in this paper. 

\begin{lemma}[Comparison principle, I] \label{co1}
	Let $D$ be a bounded domain in $\mathbb{R}^N$  with $N\geq 2$. Let
	$\widehat{B}(x,z,\boldsymbol{\xi}): D\times \mathbb{R}\times \mathbb{R}^N\to \mathbb{R}$ be continuous in $D\times \mathbb{R}\times \mathbb{R}^N$ and
	continuously differentiable with respect to $\boldsymbol{\xi}$ for $|\boldsymbol{\xi}|>0$ in $\mathbb{R}^N$. Assume that
	$\widehat{B}(x,z,\boldsymbol{\xi})$ is non-decreasing in $z$ for fixed $(x,\boldsymbol{\xi}) \in D\times  \mathbb{R}^N$. Let $u_1$ and $u_2$ be
	non-negative $C^1(D)\cap C(\overline{D})$ (distributional) solutions of
	\begin{equation}\label{asd}
	\left\{\begin{aligned}
	& \Delta u_1-\widehat{B}(x,u_1,\nabla u_1)\geq 0 && \text{in } D,&\\
	&  \Delta u_2-\widehat{B}(x,u_2,\nabla u_2) \leq 0 && \text{in } D.&
	\end{aligned}
	\right.
	\end{equation}
	Suppose that
	$|\nabla u_1|+|\nabla u_2|>0$ in $D$ and either $\sup_{D} |\nabla u_1|<\infty$ or $\sup_{D} |\nabla u_2|<\infty$. If $u_1\leq u_2$ on $\partial D$, then $u_1\leq u_2$ in $D$.
\end{lemma}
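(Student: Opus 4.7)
The plan is to argue by contradiction and reduce the comparison to the classical strong maximum principle for a linearised elliptic operator obtained by differencing $u_1$ and $u_2$. Set $w:=u_1-u_2\in C^1(D)\cap C(\overline{D})$ and $M:=\max_{\overline{D}}w$; since $\overline{D}$ is compact, $M$ is attained. Suppose, for contradiction, that $M>0$. The hypothesis $u_1\leq u_2$ on $\partial D$ means $w\leq 0$ on $\partial D$, so $M$ must be achieved at some interior point $x_0\in D$. Then $\nabla w(x_0)=0$, whence $\nabla u_1(x_0)=\nabla u_2(x_0)=:\boldsymbol{\xi}_0$; the hypothesis $|\nabla u_1|+|\nabla u_2|>0$ in $D$ then forces $\boldsymbol{\xi}_0\neq 0$.

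The second step is local linearisation around $x_0$. By continuity of $w$, $\nabla u_1$ and $\nabla u_2$ on $D$, one can choose a ball $B:=B_r(x_0)\subset\subset D$ so small that $w>0$ on $B$ and the segment $\boldsymbol{\xi}_s(x):=(1-s)\nabla u_2(x)+s\nabla u_1(x)$ satisfies $|\boldsymbol{\xi}_s(x)|\geq |\boldsymbol{\xi}_0|/2$ for every $x\in B$ and $s\in[0,1]$. Since $u_1>u_2$ on $B$ and $\widehat{B}$ is non-decreasing in $z$, combining \eqref{asd} with the fundamental theorem of calculus applied to $\widehat{B}(x,u_2(x),\cdot)$ along this segment yields
\begin{equation*}
\Delta w\;\geq\;\widehat{B}(x,u_1,\nabla u_1)-\widehat{B}(x,u_2,\nabla u_2)\;\geq\;\widehat{B}(x,u_2,\nabla u_1)-\widehat{B}(x,u_2,\nabla u_2)\;=\;\mathbf{b}(x)\cdot\nabla w
\end{equation*}
on $B$, with $\mathbf{b}(x):=\int_0^1\nabla_{\boldsymbol{\xi}}\widehat{B}(x,u_2(x),\boldsymbol{\xi}_s(x))\,ds$ continuous, and hence bounded, on $\overline{B}$. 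Thus $w$ is a $C^1$ (distributional) subsolution of the uniformly elliptic linear equation $\Delta w-\mathbf{b}(x)\cdot\nabla w\geq 0$ with bounded coefficients on $B$, attaining its global maximum $M$ at the interior point $x_0$. Hopf's strong maximum principle therefore forces $w\equiv M$ on $B$.

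Next I would propagate this constancy to the whole of $D$. Let $E:=\{x\in D:w(x)=M\}$; then $x_0\in E$, and $E$ is closed in $D$ by continuity of $w$. At any $x_1\in E$ one has $\nabla w(x_1)=0$, so $\nabla u_1(x_1)=\nabla u_2(x_1)$, and the hypothesis $|\nabla u_1|+|\nabla u_2|>0$ guarantees this common value is nonzero. Repeating the local linearisation argument above around $x_1$ shows that $w\equiv M$ on a neighbourhood of $x_1$; hence $E$ is also open in $D$. Connectedness of $D$ then forces $E=D$, so $w\equiv M>0$ on $D$ and, by continuity on $\overline{D}$, also on $\partial D$---contradicting $u_1\leq u_2$ on $\partial D$.

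The main technical obstacle is keeping the straight-line path $\boldsymbol{\xi}_s(x)$ away from the singular locus $\{\boldsymbol{\xi}=0\}$ where the $C^1$-regularity of $\widehat{B}$ in $\boldsymbol{\xi}$ fails: this is precisely where the hypothesis $|\nabla u_1|+|\nabla u_2|>0$ in $D$ is decisive, since it guarantees that the common value $\boldsymbol{\xi}_0$ at every interior critical point of $w$ is nonzero and hence that the segment avoids the origin on a suitable ball. The global boundedness of $\sup_{D}|\nabla u_1|$ or $\sup_{D}|\nabla u_2|$ serves to keep the coefficient $\mathbf{b}$ under uniform control as one moves from ball to ball in the propagation step, so that the bounded-coefficient form of Hopf's principle applies at each stage without any extra regularity hypothesis on $\widehat{B}$ at infinity in $\boldsymbol{\xi}$.
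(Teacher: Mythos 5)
Your argument is correct, but it is a genuinely different route from the paper's: the paper does not prove this lemma at all, it simply invokes Theorem~10.1 of Pucci--Serrin (reference \cite{PS2004}) with $\widehat A(x,u,\nabla u)=\nabla u$. What you have written is essentially a self-contained reconstruction of the classical Serrin-type comparison argument behind that citation: contradiction at an interior maximum of $w=u_1-u_2$, monotonicity in $z$ to discard the zeroth-order difference, linearisation of $\widehat B$ in $\boldsymbol{\xi}$ along the gradient segment (kept away from the singular locus $\{\boldsymbol{\xi}=0\}$ by the hypothesis $|\nabla u_1|+|\nabla u_2|>0$), Hopf on a small ball, and an open--closed propagation over the connected set $D$. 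Two remarks. First, since $u_1,u_2$ are only $C^1$ distributional solutions, $w$ is a $C^1\subset W^{1,2}_{\rm loc}$ weak subsolution of $\Delta w-\mathbf b\cdot\nabla w\geq 0$ with $\mathbf b$ bounded on each ball; you should therefore invoke the strong maximum principle for weak subsolutions (e.g.\ Gilbarg--Trudinger, Theorem~8.19) rather than the classical $C^2$ Hopf statement --- the conclusion is the same. Second, your closing explanation of the role of the hypothesis $\sup_D|\nabla u_1|<\infty$ or $\sup_D|\nabla u_2|<\infty$ is not accurate: in your argument every linearisation takes place on a ball compactly contained in $D$, where continuity of $\nabla u_1,\nabla u_2$ already bounds $\mathbf b$, so that hypothesis is never used. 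It is needed in the Pucci--Serrin global integral proof (testing against a truncation of $(u_1-u_2)^+$ up to $\partial D$, where the gradients need not stay bounded), and your local argument in fact yields the lemma without it. Neither point is a gap in the logic; the proof stands.
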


\begin{proof} The conclusion follows from 
\cite{PS2004}*{Theorem 10.1} by taking $\widehat{A}(x,u,\nabla u)=\nabla u$. 
\end{proof}

\begin{lemma}[Comparison principle, II] \label{co2}
	Let $D$ be a bounded domain in $\mathbb{R}^N$  with $N\geq 2$. Assume that
	$\widehat{B}(x,z,\boldsymbol{\xi}): D\times \mathbb{R}\times \mathbb{R}^N\to \mathbb{R}$ is locally Lipschitz continuous with respect to
	$\boldsymbol{\xi}$ in $D\times \mathbb{R}\times \mathbb{R}^N$ and is non-decreasing in $z$ for fixed $(x,\boldsymbol{\xi}) \in D\times  \mathbb{R}^N$.
	
	Let $u_1$ and $u_2$ be (distribution) solutions in $C^1(D)$ of \eqref{asd}. If 
	$$u_1\leq u_2+M\quad \mbox{on }\partial D,$$ where $M$ is a positive constant, then $u_1\leq u_2+M$ in $D$.
\end{lemma}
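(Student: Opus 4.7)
The plan is to argue by contradiction: I would assume $s := \sup_D (u_1 - u_2) > M$ and produce an absurdity by invoking a linear elliptic weak maximum principle on a carefully chosen super-level set. Write $w := u_1 - u_2 \in C^1(D)$; the hypothesis ``$u_1 \leq u_2 + M$ on $\partial D$'' I would interpret as $\limsup_{x \to y,\ x\in D} w(x) \leq M$ for every $y \in \partial D$. The key observation is that because $M > 0$ is a \emph{strict} positive gap, for any $\eta \in (M, s)$ the super-level set $\Omega_\eta := \{x \in D : w(x) > \eta\}$ is a non-empty open subset of $D$ whose closure lies compactly inside $D$, that is, $\overline{\Omega_\eta} \subset\subset D$. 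Consequently, on the compact set $\overline{\Omega_\eta}$ the values of $(u_1, u_2, \nabla u_1, \nabla u_2)$ range over a fixed compact subset of $\mathbb R^2 \times \mathbb R^{2N}$, and $w = \eta$ on $\partial \Omega_\eta$ by continuity of $w$.

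The second step is to convert the two inequalities in \eqref{asd} into a single linear elliptic inequality for $w$ on $\Omega_\eta$. Since $u_1 > u_2$ throughout $\Omega_\eta$, the monotonicity of $\widehat B$ in $z$ gives $\widehat B(x, u_2, \nabla u_2) \leq \widehat B(x, u_1, \nabla u_2)$, so subtracting the two inequalities of \eqref{asd} and inserting $\widehat B(x, u_1, \nabla u_2)$ as an intermediate term yields
\[
\Delta w \ \geq\ \widehat B(x, u_1, \nabla u_1) - \widehat B(x, u_1, \nabla u_2) \quad \text{in } \Omega_\eta
\]
in the distributional sense. The local Lipschitz continuity of $\widehat B$ in $\boldsymbol \xi$, combined with the compactness noted above, then produces a constant $L = L(\eta) > 0$ with
\[
|\widehat B(x, u_1, \nabla u_1) - \widehat B(x, u_1, \nabla u_2)| \ \leq\  L\, |\nabla w| \quad \text{on } \Omega_\eta.
\]
Hence $\Delta w + b(x) \cdot \nabla w \geq 0$ on $\Omega_\eta$ in the weak sense, for some bounded measurable vector field $b$ with $|b| \leq L$ (take $b = L \,\nabla w / |\nabla w|$ where $\nabla w \neq 0$ and $b = 0$ otherwise). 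The classical weak maximum principle for linear elliptic inequalities with bounded first-order coefficients on a bounded open set (see, e.g., \cite{GT1983}) then gives $\sup_{\Omega_\eta} w \leq \sup_{\partial \Omega_\eta} w = \eta$, which contradicts $\sup_{\Omega_\eta} w = s > \eta$.

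The main obstacle is a mildly technical one: the set $\Omega_\eta$ is merely open and its boundary need not be smooth, so I would apply the weak maximum principle component-by-component, exhausting each connected component of $\Omega_\eta$ by smooth interior subdomains (for example the sets $\{w > \eta + \varepsilon\}$ with small $\varepsilon > 0$, further approximated by smooth domains if needed) and letting $\varepsilon \to 0^+$. The essential role played by the assumption $M > 0$ is clarified by this argument: it is exactly what guarantees $\overline{\Omega_\eta} \subset\subset D$ for $\eta$ close to $M$, thereby making the solutions and their gradients automatically bounded on the region of interest and bypassing the non-degeneracy hypothesis $|\nabla u_1| + |\nabla u_2| > 0$ required in Lemma~\ref{co1}.
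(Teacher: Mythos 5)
Your proof is correct. The paper itself gives no argument here: it simply invokes Corollary~3.5.2 to Theorem~3.5.1 of Pucci--Serrin \cite{PS2007}, so what you have written is a self-contained reconstruction of the standard mechanism behind that citation. The chain of steps is sound: the contradiction hypothesis $s:=\sup_D(u_1-u_2)>M$ together with the boundary bound forces $\overline{\Omega_\eta}\subset\subset D$ for $\eta\in(M,s)$ (if $x_n\in\Omega_\eta$ converged to a point of $\partial D$, the limsup condition would be violated), the monotonicity in $z$ lets you replace $\widehat B(x,u_2,\nabla u_2)$ by $\widehat B(x,u_1,\nabla u_2)$ on the set where $u_1>u_2$, the Lipschitz condition in $\boldsymbol{\xi}$ on the compact set $\overline{\Omega_\eta}$ linearises the remaining difference into $b\cdot\nabla w$ with $b$ bounded measurable, and the weak maximum principle for $\Delta+b\cdot\nabla$ on $\Omega_\eta$ (where $w\equiv\eta$ on $\partial\Omega_\eta$) yields the contradiction; the roughness of $\partial\Omega_\eta$ is handled either by your exhaustion or, more directly, by testing the weak inequality with $(w-\eta')^+\in W^{1,2}_0(\Omega_\eta)$ as in the proof of Theorem~8.1 of \cite{GT1983}. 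Two small remarks. First, your closing sentence slightly misattributes the role of $M>0$: the compact containment of $\Omega_\eta$ comes from choosing $\eta$ strictly between the boundary bound $M$ and the interior supremum $s$, which is available whenever $s>M$ regardless of the sign of $M$; what actually lets you dispense with the non-degeneracy hypothesis of Lemma~\ref{co1} is that the principal part is the (non-singular) Laplacian and $\widehat B$ is Lipschitz in $\boldsymbol{\xi}$ up to $\boldsymbol{\xi}=0$, rather than merely differentiable for $\boldsymbol{\xi}\neq 0$. Second, you implicitly use continuity of $\widehat B$ in all variables so that $x\mapsto\widehat B(x,u_i(x),\nabla u_i(x))$ is locally integrable and the pointwise comparison of the two right-hand sides is legitimate; this is an ambient assumption carried over from Lemma~\ref{co1} and worth stating. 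Neither point affects the validity of the argument.
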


\begin{proof} We apply Corollary~3.5.2 to Theorem~3.5.1 in \cite{PS2007}. 
\end{proof}

\begin{rem} {\rm Theorem~3.5.1 in \cite{PS2007}, like Theorem~10.1 in \cite{PS2004}, is essentially 
Theorem 10.7(i) in 
\cite{GT1983} except that the functions $\widehat{A}$ and $\widehat{B}$ in \cites{PS2004,PS2007} are allowed to be singular at $\boldsymbol{\xi}=0$. This is compensated 
in \cite{PS2004}*{Theorem 10.1} by the additional condition that $|\nabla u_1|+|\nabla u_2|>0$ in $D$. }
\end{rem}

\begin{rem} \label{reka6}
{\rm Let $\rho,\lambda,\theta\in \mathbb R$, $\tau\in [0,1)$ and $q>1$. 
Let $D$ be a bounded domain in $\mathbb R^N$ ($N\geq 2$) such that 
$\overline D\subset \mathbb R^N\setminus \{0\}$. 
Assume that $u_1,u_2\in C^1(D)\cap C(\overline D)$ are positive distributional solutions of 
$\mathbb L_{\rho,\lambda,\tau} [u_1]\geq |x|^\theta u_{1}^{q} $ in $D$ and 
$\mathbb L_{\rho,\lambda,\tau} [u_2]\leq |x|^\theta u_{2}^{q} $ in $D$ 
such that $u_1\leq u_2$ on $\partial D$. 
In addition, when $\tau\in (0,1)$ and $\lambda\not=0$, we also assume that
$|\nabla u_1|+|\nabla u_2|>0$ in $D$ and either $\sup_D|\nabla u_1|<\infty$ or $\sup_D|\nabla u_2|<\infty$.   
Then, we have $u_1\leq u_2$ in $D$. This follows from
Lemma~\ref{co2} if $\tau=0$ or if $\lambda=0$. Otherwise, we apply Lemma~\ref{co1} to $u_1$ and $u_2$ if $\lambda<0$, respectively to 
 $w_{1}=\log \,(Cu_1)$ and $w_{2}=\log \,(Cu_2)$ if $\lambda>0$, where $C>0$ is a large constant so that $Cu_j>1$ on $\overline{D}$ for $j=1,2$. Indeed, if $\lambda>0$, then the conditions in Lemma~\ref{co1} will be satisfied by $w_{1}$ and $w_{2}$ since 
 $|\nabla w_1|+|\nabla w_2|>0$ in $D$, $w_1\leq w_2$ on $\partial D$ and 
 $$ \begin{aligned} 
 & 	
\Delta w_1+ |\nabla w_1|^2-(N-2+2\rho) \frac{x\cdot \nabla w_1}{|x|^2}+\lambda 
 \frac{|\nabla w_1|^{1-\tau}}{|x|^{1+\tau}}\geq C^{1-q} |x|^\theta e^{(q-1) w_1}\quad \mbox{in } D,\\
 &  \Delta w_2+ |\nabla w_2|^2-(N-2+2\rho) \frac{x\cdot \nabla  w_2}{|x|^2}+\lambda 
 \frac{|\nabla w_2|^{1-\tau}}{|x|^{1+\tau}}\leq C^{1-q} |x|^\theta e^{(q-1) w_2}\quad \mbox{in } D.
  \end{aligned}
 $$  
 Thus, we get $w_1\leq w_2$ in $D$, which yields that $u_1\leq u_2 $ in $D$.

}	
\end{rem}

\section{A priori estimates and consequences} \label{sectiune2}

Assuming \eqref{e2} and letting $\lambda^+$ be the positive part of $\lambda$, we define 
$C_0=C_0(N,q,\theta,\rho,\lambda,\tau)$ by
\begin{equation} \label{c0} C_0:=2^{\frac{|\theta|}{q-1}}\left[ \frac{16}{q-1} \left( \frac{2\left(q+1\right)}{q-1} +N+|N-2+2\rho|+\left(1-\tau\right) \lambda^+\right)+4\tau \lambda^+ \right]^{\frac{1}{q-1}}.
 \end{equation}

\begin{lemma}[{{\em A priori} estimates}] \label{api} Let \eqref{e2} hold and $\rho,\lambda,\theta\in \mathbb R$. Let $r_0>0$ satisfy $B_{2r_0}(0)\subset\subset \Omega$. Then,  letting $C_0$ be as in \eqref{c0}, 
every positive sub-solution $u$ of \eqref{e1} satisfies
\begin{equation} 
\label{robo} u(x)\leq C_0 \,|x|^{-\beta} \quad \ \ \ \mbox{for all } 0<|x|\leq r_0.
\end{equation}
\end{lemma}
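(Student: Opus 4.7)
The plan is to adapt the classical Keller--Osserman blow-up super-solution construction to the scaling invariance $T_\sigma$ of \eqref{e1}. Fix an arbitrary $x_0$ with $0 < |x_0| \leq r_0$ and set $R = |x_0|/2$, so that $\overline{B_R(x_0)} \subset B_{2r_0}(0)\setminus \{0\} \subset \Omega \setminus \{0\}$ and $|x|$ is comparable to $R$ on this ball ($|x| \in [R, 3R]$); in particular $|x|^\theta \geq 3^{-|\theta|} R^\theta$. The strategy is to build an explicit super-solution $W$ on $B_R(x_0)$ that blows up on $\partial B_R(x_0)$; comparison will force $u \leq W$ inside, and evaluating at $x_0$ will yield a bound of order $R^{-\beta}$, hence of order $|x_0|^{-\beta}$.

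I would take $W$ of the form
$$W(x) = A \bigl(R^2 - |x-x_0|^2\bigr)^{-2/(q-1)}, \qquad A>0 \text{ to be chosen.}$$
The Keller--Osserman exponent $2/(q-1)$ is singled out because $2q/(q-1) = 2/(q-1)+2$, which makes the top-order contribution in $\Delta W$ scale near $\partial B_R(x_0)$ exactly like the right-hand side $|x|^\theta W^q$. A direct differentiation and a few applications of Young's inequality show that all three terms on the left-hand side of the super-solution inequality $\mathbb L_{\rho,\lambda,\tau}[W] \leq |x|^\theta W^q$ — the Laplacian, the drift $(N-2+2\rho)x\cdot \nabla W/|x|^2$, and the nonlinear gradient term $\lambda W^\tau|\nabla W|^{1-\tau}/|x|^{1+\tau}$ — are controlled from above by a constant multiple of $A\,R^{2}\,(R^2-|x-x_0|^2)^{-2/(q-1)-2}$, where the constant depends only on $N, q, \theta, \rho, \lambda, \tau$; here the nonlinear gradient term is estimated using $|x-x_0|\leq R$, $|x|\geq R$, and the split $W^\tau|\nabla W|^{1-\tau}$ via Young. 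The super-solution inequality then collapses to $A^{q-1} \geq c'\,R^{2-\theta}$, so choosing $A = c'^{\,1/(q-1)}\,R^{(2-\theta)/(q-1)}$ makes $W$ a super-solution of \eqref{e1} on $B_R(x_0)$.

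Once $W$ is a super-solution, I would apply the comparison principle (Remark~\ref{reka6}) on annular regions $B_{R'}(x_0)\setminus \overline{B_\varepsilon(x_0)}$ with $R'\nearrow R$ and $\varepsilon \searrow 0$: $u$ is continuous and hence bounded on the compact set $\overline{B_R(x_0)}$, while $W\to \infty$ uniformly on $\partial B_{R'}(x_0)$ as $R'\nearrow R$, so $u \leq W$ on $\partial B_{R'}(x_0)$ for $R'$ sufficiently close to $R$; passing first $\varepsilon \to 0$ and then $R'\to R$ yields $u \leq W$ throughout $B_R(x_0)$. Evaluating at the centre gives $u(x_0) \leq W(x_0) = A\,R^{-4/(q-1)} = c'^{1/(q-1)}\,R^{-\beta}$, and substituting $R = |x_0|/2$ produces the bound $u(x_0)\leq C_0\,|x_0|^{-\beta}$ once the numerical prefactors are tracked.

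The main obstacle is the sharp bookkeeping needed to hit the explicit constant $C_0$ in \eqref{c0}: one must carefully track how the factor $2^{|\theta|/(q-1)}$ comes from comparing $R$ with $|x|$, and how each of the coefficients $N$, $|N-2+2\rho|$, $(1-\tau)\lambda^+$, and $4\tau\lambda^+$ surfaces in $c'$ from the Young-type estimates of the three contributions to $\mathbb L_{\rho,\lambda,\tau}[W]$. A secondary technical point is applying the comparison principle at the critical point $x_0$ of $W$, where $|\nabla W(x_0)|=0$ conflicts with the hypothesis $|\nabla u_1|+|\nabla u_2|>0$ required in Remark~\ref{reka6} when $\tau\in(0,1)$ and $\lambda\neq 0$; this is exactly why one works first on $B_{R'}(x_0)\setminus \overline{B_\varepsilon(x_0)}$ and passes to the limit $\varepsilon\to 0$, using the positivity and continuity of $u$ on $\overline{B_R(x_0)}$.
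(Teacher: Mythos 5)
Your super-solution is, after unwinding the normalisation, exactly the paper's barrier: with $R=|x_0|/2$ one has $\mathcal P(x)=C_0|x_0|^{-\beta}\bigl[1-(|x-x_0|/R)^2\bigr]^{-2/(q-1)}=A\,(R^2-|x-x_0|^2)^{-2/(q-1)}$ for $A=C_0|x_0|^{-\beta}R^{4/(q-1)}$, and your exponent bookkeeping ($A^{q-1}\sim R^{2-\theta}$, hence $W(x_0)\sim R^{-\beta}$) is correct. The verification of the super-solution inequality via Young's inequality is also the paper's computation. So the construction is fine; the problem lies entirely in how you run the comparison.

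Your device for the degenerate point $x_0$ — comparing on annuli $B_{R'}(x_0)\setminus\overline{B_\varepsilon(x_0)}$ and letting $\varepsilon\to0$ — does not work, because the comparison principle requires $u\leq W$ on the \emph{entire} boundary of the domain, and the annulus has an inner boundary component $\partial B_\varepsilon(x_0)$ on which you have no information whatsoever about the ordering of $u$ and $W$. Indeed, $u\leq W$ near $x_0$ is essentially the conclusion you are trying to reach, so assuming it on $\partial B_\varepsilon(x_0)$ is circular; and even if the annulus comparison were valid it would say nothing about the excised centre. (A variant such as comparing on the open set $\{u>W\}$ fails for the same underlying reason: if that set contains $x_0$, the hypothesis $|\nabla u|+|\nabla W|>0$ of Lemma~\ref{co1} may fail there.) The paper's resolution is different: it first replaces the non-Lipschitz gradient term by its Young upper bound $\lambda^+\bigl(\tau v/|x|^2+(1-\tau)|\nabla v|/|x|\bigr)$, obtaining a modified equation \eqref{me} whose dependence on $\nabla v$ is Lipschitz, of which $u$ is still a sub-solution and $\mathcal P$ a super-solution; then Lemma~\ref{co2} applies on the \emph{whole} ball $B_{|x_0|/2}(x_0)$ with no non-degeneracy condition on the gradients. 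A second point your proposal misses: when $\lambda>0$ and $\tau\in(0,1)$ the zeroth-order part of the modified equation, $-\lambda^+\tau z/|x|^2+|x|^\theta z^q$, is not monotone in $z$, so Lemma~\ref{co2} still does not apply directly; the paper restores monotonicity by passing to $w=\log(C_*u)$ and $\log(C_*\mathcal P)$. You need both of these steps (or some substitute) to close the argument.
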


\begin{proof} 
We adapt the ideas in \cite{Cmem}*{Lemma 4.1} (see also \cite{MF1}*{Lemma 4.2}), where $\rho=(2-N)/2$ and $\tau=1$. 
Fix $x_0\in \mathbb{R}^N$ with $0<|x_0|\leq r_0$. 
	For every $x\in B_{|x_0|/2}(x_0)$, we define 
	\begin{equation}\label{pp}
	\mathcal{P}(x):=C_0\,|x_0|^{-\beta} \left [\zeta(x)\right]^{-\frac{2}{q-1}},\quad\mbox{where } 
	\zeta(x):=1-\left (\frac{2\,|x-x_0|}{|x_0|}\right )^2. 
	\end{equation}
Let $u$ be any positive sub-solution of \eqref{e1}. We would like to show that 	
	\begin{equation} \label{he}
	u(x)\leq \mathcal{P}(x)\quad\mbox{for every } x\in B_{|x_0|/2}(x_0).
	\end{equation}	
Then, since $C_0$ is independent of $x_0$, we obtain \eqref{robo} by taking $x=x_0$ in \eqref{he}.

We remark that in proving \eqref{he}, we need to make some adjustments compared with \cites{Cmem,MF1}. For the equations studied in the latter, $\mathcal P$ was a super-solution in $B_{|x_0|/2} (x_0)$
for a suitable constant $C_0$ and then \eqref{he} was derived from 
 the comparison principle in Lemma~\ref{co2} (using that $\mathcal P= \infty$ on $\partial B_{|x_0|/2}(x_0)$). 
However, because of the introduction of 
$G_\tau(x,u,\nabla u)$, we would not be able to apply Lemma~\ref{co2} directly to \eqref{e1} when $0<\tau<1$. Instead, we would need to apply Lemma~\ref{co1}. This is not feasible since 
$|\nabla \mathcal P(x_0)|=0$ and we have no information on the gradient of $u$ at $x_0$. Hence, we cannot verify the condition $|\nabla \mathcal P(x)|+|\nabla u(x)|>0$ at $x=x_0$. 

We overcome the above issue by applying Lemma~\ref{co2} to the modified equation 
\begin{equation} \label{me} 
\mathbb L_\rho [v]+\lambda^+ \left(\tau\frac{v}{|x|^2} + \left(1-\tau\right) \frac{|\nabla v|}{|x|} \right)= |x|^\theta v^q \quad \mbox{in } B_{|x_0|/2}(x_0).
\end{equation}
We observe that $u$ is a sub-solution of \eqref{me}, using Young's inequality when $\lambda>0$. 

We next show that 
$\mathcal P$ is a super-solution of \eqref{me}, namely, 
 \begin{equation} \label{xz2}
	\mathbb L_\rho [\mathcal P] +\lambda^+ \left(\tau\frac{\mathcal P(x)}{|x|^2} + (1-\tau) \frac{|\nabla \mathcal P(x)|}{|x|} \right)\leq |x|^\theta \mathcal P^q(x)
	\quad \mbox{in } B_{|x_0|/2}(x_0). 
	\end{equation}
Indeed, for every $x\in B_{|x_0|/2}(x_0)$, we have
	\begin{equation} \label{bv} 
	\begin{aligned}
	&\tau\frac{\mathcal P}{|x|^2} + (1-\tau) \frac{|\nabla \mathcal P|}{|x|} \leq 	
	C_0 \left( 4\tau+ (1-\tau)\frac{16}{q-1}\right) \frac{[\zeta(x)]^{-\frac{2q}{q-1}}}{|x_0|^{\beta+2} },\\
	& \Delta \mathcal P(x)=\frac{16\,C_0}{q-1} \left[
	\frac{8(q+1)}{q-1}\,\frac{|x-x_0|^2}{|x_0|^2} +N\zeta(x)\right]\frac{[\zeta(x)]^{-\frac{2q}{q-1}}}{|x_0|^{\beta+2}},
	\end{aligned}
	\end{equation}
	which yields that the left-hand side of \eqref{xz2} is bounded from above by 
	\begin{equation} \label{b1}
  C_0\left[  \frac{16}{q-1} \left(
	\frac{2(q+1)}{q-1}+N+|N-2+2\rho| +(1-\tau) \lambda^+\right) +4\tau \lambda^+\right]\frac{[\zeta(x)]^{-\frac{2q}{q-1}}}{|x_0|^{\beta+2}}.  
	\end{equation}
On the other hand, since $1/2\leq |x|/|x_0|<2$ for each $x\in B_{|x_0|/2}(x_0)$, we obtain that 	
	\begin{equation} \label{b2}
	|x|^\theta \mathcal P^q(x)=
	 (C_0)^q \left(\frac{|x|}{|x_0|}\right)^\theta \frac{[\zeta(x)]^{-\frac{2q}{q-1}}}{|x_0|^{\beta+2} }\geq 2^{-|\theta|} 
	 (C_0)^q \frac{[\zeta(x)]^{-\frac{2q}{q-1}}}{|x_0|^{\beta+2} }.
	\end{equation}	
	 Using the definition of $C_0$ in \eqref{c0}, we see that the quantity in \eqref{b1} is equal to the right-hand side of the inequality in \eqref{b2}. 
	 Hence, we conclude \eqref{xz2}. 
	 	 
		 \vspace{0.2cm}
		 {\em Proof of \eqref{he}.} 
		 If $\lambda\leq 0$, then we immediately get \eqref{he} by applying Lemma~\ref{co2} in relation to $u$ and $\mathcal P$ being, respectively, a positive sub-solution and super-solution of \eqref{me} such that 
		 $\mathcal P(x)\to \infty $ as $|x-x_0|\nearrow |x_0|/2 $.
		 However, when $\lambda>0$ we need a small trick to apply the comparison principle in Lemma~\ref{co2} given that the function 
		 $$ t\longmapsto |x|^\theta t^q- \lambda \tau\frac{t}{|x|^2}$$
		fails to be non-decreasing in $t$ on $[0,\infty)$ for fixed $x\in B_{|x_0|/2}(x_0)$.
		 We let $C_*>0$ be a positive large constant, depending on $u$ and $x_0$, such that
		 we have $C_* u>1$ and $C_* \mathcal P>1$ in $B_{|x_0|/2}(x_0)$. Then, $\log \,(C_* u)$ and $\log \,(C_* \mathcal P)$ are, respectively, a positive sub-solution and super-solution of  
		 $$ \mathbb L_\rho [w] +|\nabla w|^2+\lambda \frac{\tau}{|x|^2} +\lambda \left(1-\tau\right) \frac{|\nabla w|}{|x|}=(C_*)^{1-q} |x|^\theta e^{\left(q-1\right)w} \quad \mbox{in }   B_{|x_0|/2}(x_0).
		 $$
		 The assumptions in Lemma~\ref{co2} are now satisfied so that $\log (C_* u)\leq \log (C_*\mathcal P)$ in $B_{|x_0|/2}(x_0)$. This yields \eqref{he} also when $\lambda>0$,  
completing the proof of Lemma~\ref{api}. 
	\end{proof}
	
	Since the constant $C_0$ in \eqref{c0} is independent of the domain, 
	from Lemma~\ref{api} we obtain global {\em a priori} estimates for all positive solutions of \eqref{e1}. 

\begin{cor}[Global {\em a priori} estimates] \label{lem1} Let $\Omega=\mathbb R^N$ and \eqref{e2} hold. Then, given $C_0$ as in \eqref{c0}, every positive sub-solution $u$ of \eqref{e1} satisfies
	\begin{equation}\label{in123}
	u(x)\leq C_0\, |x|^{-\beta} 
	\quad\mbox{for all }x\in \mathbb R^N\setminus \{0\}. 
	\end{equation}
\end{cor}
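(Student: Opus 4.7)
The plan is to deduce the global bound directly from Lemma~\ref{api} by exploiting the fact that the constant $C_0$ in \eqref{c0} depends only on the parameters $N,q,\theta,\rho,\lambda,\tau$ and is independent of the radius $r_0$ and of the ambient domain $\Omega$. Since the whole space $\Omega=\mathbb R^N$ is available, for any point we wish to estimate we are free to pick the reference radius $r_0$ large enough to include it.

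Concretely, I would fix an arbitrary $x_0\in \mathbb R^N\setminus\{0\}$ and choose $r_0:=|x_0|$. Because $\Omega=\mathbb R^N$, the inclusion $B_{2r_0}(0)\subset\subset \Omega$ holds trivially, so the hypotheses of Lemma~\ref{api} are satisfied for this choice of $r_0$. Applying Lemma~\ref{api} to the positive sub-solution $u$ then yields
\[
u(x)\leq C_0\, |x|^{-\beta}\qquad\text{for every } 0<|x|\leq r_0=|x_0|,
\]
and in particular $u(x_0)\leq C_0\,|x_0|^{-\beta}$. Since $x_0\in\mathbb R^N\setminus\{0\}$ was arbitrary, this establishes \eqref{in123}.

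There is essentially no obstacle here: the whole content is packaged in Lemma~\ref{api}, and the only thing to check is that $C_0$ in \eqref{c0} is genuinely independent of $r_0$, which is immediate from its explicit form. I would therefore keep the argument to a couple of lines, emphasising this independence as the structural reason why the local estimate globalises once $\Omega=\mathbb R^N$.
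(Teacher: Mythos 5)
Your argument is correct and is exactly the one the paper intends: the paper derives Corollary~\ref{lem1} from Lemma~\ref{api} with the single observation that $C_0$ in \eqref{c0} depends only on $N,q,\theta,\rho,\lambda,\tau$ and not on $r_0$ or $\Omega$, so one may take $r_0=|x_0|$ for each $x_0$ when $\Omega=\mathbb R^N$. Nothing is missing.
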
 

\begin{proposition} \label{aurr} 
Let \eqref{e2} hold and $\rho,\lambda,\theta\in \mathbb R$. 
 Then, there exist constants $d_j>0$ with $j\in \{0,1,2,3\}$ and $\alpha\in (0,1)$, all of which depending only on $N,q,\theta,\rho,\lambda$ and $\tau$, 
such that for every positive solution $u$ of \eqref{e1} with $B_{4r_0}(0)\subset\subset \Omega$ and every 
$x,x'\in \mathbb R^N$ with $0<|x|\leq |x'|<r_0$, 
\begin{eqnarray} 
& \displaystyle |\nabla u(x)|\leq d_0\,|x|^{-\beta-1},  \label{apbb}\\
& \displaystyle \max_{|x|=r} u(x)\leq d_1 \min_{|x|=r} u(x)\quad \mbox{for all } r\in (0,r_0) \quad \mbox{[spherical Harnack inequality]} \label{omas}\\
& \displaystyle |\nabla u(x)|\leq d_2\,\frac{u(x)}{|x|},  \label{noi2}\\
&	\displaystyle|\nabla u(x)-\nabla u(x')|\leq d_3\, \frac{u(x)+u(x')}{|x|^{1+\alpha}}|x-x'|^\alpha.   \label{noi3} 
\end{eqnarray}

\end{proposition}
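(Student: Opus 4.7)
The strategy is the classical scaling reduction together with interior regularity for quasilinear elliptic equations, in the spirit of \cites{FV,CD2010,CC2015,Cmem}, suitably modified to accommodate the drift term $-(N-2+2\rho)(x\cdot\nabla u)/|x|^2$ and the gradient-dependent lower order term $\lambda u^\tau|\nabla u|^{1-\tau}/|x|^{1+\tau}$. For $x_0$ with $0<|x_0|\le r_0$, set $\sigma=|x_0|$ and introduce the rescaled solution
\[
v(y):=u_\sigma(y)=\sigma^{\beta}u(\sigma y)\quad\text{for } 1/2<|y|<2.
\]
The scale invariance $T_\sigma$ of \eqref{e1} (see \eqref{ssig}) ensures that $v$ solves \eqref{e1} with the same parameters $(\rho,\lambda,\theta,\tau,q)$ on $\{1/2<|y|<2\}$. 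Applying Lemma~\ref{api} at scale $\sigma$ yields $v(y)\le C_0|y|^{-\beta}$, so $v$ is bounded on the closed annulus $A:=\{2/3\le|y|\le 3/2\}$ by a constant depending only on the parameters.

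\smallskip

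Rewrite the equation for $v$ on $A$ as $\Delta v=B(y,v,\nabla v)$ with
\[
B(y,v,\nabla v)=(N-2+2\rho)\frac{y\cdot\nabla v}{|y|^2}-\lambda\frac{v^\tau|\nabla v|^{1-\tau}}{|y|^{1+\tau}}+|y|^\theta v^q.
\]
On $A$ the coefficients $1/|y|^2$, $1/|y|^{1+\tau}$, $|y|^\theta$ are bounded and $v$ is uniformly bounded; since the gradient enters with the subcritical exponent $1-\tau<1$, Young's inequality gives $|B(y,v,\nabla v)|\le \varepsilon|\nabla v|^2+C_\varepsilon$ for any $\varepsilon>0$. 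Consequently, quasilinear interior gradient estimates of Ladyzhenskaya--Ural'tseva / DiBenedetto type yield a uniform bound $|\nabla v(y)|\le C$ on $\{3/4\le|y|\le 3/2\}$. With $v$ and $\nabla v$ uniformly bounded, $B(y,v,\nabla v)$ is bounded, so $v$ weakly solves $\Delta v=F$ with $F\in L^\infty$. Standard linear Schauder / $C^{1,\alpha}$ theory then gives $\|v\|_{C^{1,\alpha}(\{3/4\le|y|\le 3/2\})}\le C$ for some $\alpha\in(0,1)$, while the Harnack inequality for uniformly elliptic operators with bounded coefficients supplies $\sup_{|y|=1}v\le d_1\inf_{|y|=1}v$, with $d_1,\alpha,C$ depending only on $N,q,\theta,\rho,\lambda,\tau$.

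\smallskip

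Rescaling back via $u(x)=\sigma^{-\beta}v(x/\sigma)$ and $\nabla u(x)=\sigma^{-\beta-1}\nabla v(x/\sigma)$, evaluated at $y=x_0/\sigma\in\mathbb S^{N-1}$, yields the pointwise gradient bound \eqref{apbb} and the spherical Harnack inequality \eqref{omas}. Combining \eqref{apbb} with \eqref{omas} gives \eqref{noi2}: if $|x|=r$, then by \eqref{apbb} and \eqref{omas}, $|\nabla u(x)|\le d_0r^{-\beta-1}\le d_0 C_0 r^{-1}\cdot r^{-\beta}\le d_0C_0d_1 u(x)/|x|$ after using that the spherical minimum of $u$ on $|x|=r$ is comparable to $u(x)$. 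For \eqref{noi3}, if $|x|\le|x'|<r_0$, applying the $C^{1,\alpha}$ estimate on $v$ associated with scale $\sigma=|x|$ at the points $y=x/\sigma$, $y'=x'/\sigma$ and rescaling yields $|\nabla u(x)-\nabla u(x')|\le C\sigma^{-\beta-1-\alpha}|x-x'|^\alpha$; using $v(y)\ge c\min_{|\cdot|=1}v$ on $A$ and the Harnack inequality converts $\sigma^{-\beta}$ into a constant multiple of $u(x)+u(x')$, producing \eqref{noi3}. When $|x'|/|x|$ is large one iterates the estimate across a chain of overlapping annuli at successively doubled scales between $|x|$ and $|x'|$.

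\smallskip

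\textbf{Main obstacle.} The only substantive point is that the equation for $v$ on $A$ is genuinely quasilinear because of $\lambda v^\tau|\nabla v|^{1-\tau}/|y|^{1+\tau}$, so linear Schauder theory cannot be invoked directly. The key observation that makes the bootstrap work is the strict subcriticality $1-\tau<1$: this lets one first obtain uniform $L^\infty$ control on $\nabla v$ by quasilinear gradient estimates, after which the equation becomes a linear Poisson equation with bounded right-hand side and $C^{1,\alpha}$ theory applies. The adjustments to the arguments in \cites{FV,CD2010,CC2015,Cmem} are routine because on $A$ all additional coefficients are smooth and bounded.
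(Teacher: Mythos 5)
Your reduction to a rescaled solution on a fixed annulus, the use of the a priori bound of Lemma~\ref{api} to get a uniform $L^\infty$ bound, the natural-growth structure of the lower order terms, and the appeal to interior $C^{1,\alpha}$ regularity and Harnack for \eqref{apbb} and \eqref{omas} all match the paper's argument (the paper invokes Tolksdorf's $C^{1,\alpha}$ theorem in one step where you bootstrap via a gradient bound plus linear theory, but that is an equivalent route).

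There is, however, a genuine gap in your derivations of \eqref{noi2} and \eqref{noi3}. You write
$|\nabla u(x)|\le d_0 r^{-\beta-1}\le d_0C_0 r^{-1}\cdot r^{-\beta}\le d_0C_0d_1\,u(x)/|x|$,
and similarly you convert $\sigma^{-\beta-1-\alpha}|x-x'|^\alpha$ into $(u(x)+u(x'))|x|^{-1-\alpha}|x-x'|^\alpha$ "using the Harnack inequality". Both conversions require the \emph{lower} bound $r^{-\beta}\le C\,u(x)$, which is the reverse of Lemma~\ref{api} and is false in general: the paper shows (see \eqref{dual} and Corollary~\ref{zerop}) that many positive solutions satisfy $\lim_{|x|\to 0}|x|^\beta u(x)=0$, so $u(x)=o(|x|^{-\beta})$ and the claimed inequality fails for small $|x|$. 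The spherical Harnack inequality only tells you that $u(x)$ is comparable to $\min_{|y|=|x|}u(y)$; it gives no comparison between $u(x)$ and $|x|^{-\beta}$.

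The missing idea is to change the normalization of the blow-up: to prove estimates proportional to $u$ itself one must rescale by the value of the solution rather than by the scaling power. Concretely, set $v_{x_0}(y)=u(x_0+|x_0|y/2)/u(x_0)$ (or, for \eqref{noi3}, $\Psi_\mu(\boldsymbol{\xi})=u(\mu\boldsymbol{\xi})/\max_{\partial B_\mu(0)}u$ on an annulus). The Harnack inequality on annuli, which you already have from \eqref{omas} via a covering argument, shows this normalized function is bounded above and below by universal constants; the coefficient of the zeroth-order term in the rescaled equation becomes $|x_0|^{\theta+2}u(x_0)^{q-1}$, which is bounded by $C_0^{q-1}$ thanks to Lemma~\ref{api} — here the a priori bound is used in the correct direction. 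Applying the same $C^{1,\alpha}$ regularity to this normalized function yields \eqref{noi2} and \eqref{noi3} directly. (For far-apart points in \eqref{noi3} you do not need a chain of annuli: if $|x'|\ge 5|x|/4$ then $|x-x'|\ge|x|/4$ and \eqref{noi2} plus the triangle inequality suffices.)
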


\begin{proof} Let $r_0>0$ be such that $B_{4r_0}(0)\subset\subset \Omega$. 
We prove that 
there exists a constant $d_0>0$, depending only on $N,q,\theta,\rho,\lambda$ and $\tau$, such that every positive solution $u$ of \eqref{e1} 
satisfies \eqref{apbb}.

We follow the argument in \cite{CC2015}*{Lemma 3.8}, taking $g(x)=C_0\,|x|^{-\beta}$. 

Fix $x_0\in \mathbb R^N$ such that $0<|x_0|< r_0<1/2$. 
We define $v_{x_0}:B_1(0)\to (0,\infty)$ by 
\begin{equation} \label{Fioo}
v_{x_0}(y):=|x_0|^{\beta} u(x_0+ |x_0|\, y/2)  \quad \mbox{for every } y\in B_1(0).
\end{equation}
It follows that $v_{x_0}$ satisfies the following equation 
\begin{equation} \label{foxi} -\Delta v(y)+\widetilde B(y,v(y),\nabla v(y))=0\quad \mbox{for every } y\in B_1(0),
\end{equation} where $\widetilde B(y,z,\boldsymbol{\xi} )$ is defined for $y\in B_1(0)$, $z\geq 0$ and $\boldsymbol{\xi}\in \mathbb R^N$ by 
\begin{equation} \label{romy} \begin{aligned}
 \widetilde B(y,z,\boldsymbol{\xi})= &\frac{ (N-2+2\rho)\,|x_0| \left(x_0+\frac{1}{2} |x_0|\, y\right)\cdot \boldsymbol{\xi}}{ 2\left| x_0+\frac{1}{2} |x_0| \,y\right|^2}-
 \frac{ \lambda\, z^\tau |\boldsymbol{\xi}|^{1-\tau}\,|x_0|^{\tau+1} }{2^{\tau+1}\left|x_0+\frac{1}{2} |x_0|\, y\right|^{\tau+1}} \\
 &+\frac{ \left|x_0+\frac{1}{2} |x_0| \,y\right|^{\theta} z^q }{4\,|x_0|^\theta}.
\end{aligned}
\end{equation}
Since $|x_0|/2\leq |x_0+|x_0|\,y/2|\leq 3|x_0|/2$ for all $y\in B_1(0)$, 
from Lemma~\ref{api} and \eqref{Fioo}, we get
\begin{equation} \label{nia}
 v_{x_0}(y)\leq C_0 \left(\frac{ |x_0|}{ |x_0+\frac{1}{2} |x_0|\, y|}\right)^{\beta}\leq 2^{|\beta|}\,C_0.
\end{equation} 
For all $y\in B_1(0)$ and $\boldsymbol{\xi}\in \mathbb R^N$, we have
\begin{equation} \label{nia2} 
|\widetilde B(y,v_{x_0}(y),\boldsymbol{\xi})|\leq  |N-2+2\rho| |\boldsymbol{\xi}|+|\lambda| |\boldsymbol{\xi}|^{1-\tau} 
(v_{x_0}(y))^\tau 
 + 2^{|\theta|-2}  (v_{x_0}(y))^{q}.
 \end{equation}
Hence, from \eqref{nia} and \eqref{nia2}, 
there exists a positive constant $A_1$, depending only on $N,q,\theta,\rho,\lambda$ and $\tau$
such that 
\begin{equation} \label{foii}
\left |\widetilde B(y,v_{x_0}(y),\boldsymbol{\xi}) \right|\leq A_1(1+|\boldsymbol{\xi}|)^2\quad \mbox{for all } y\in B_1(0)\  \mbox{and }  \boldsymbol{\xi}\in \mathbb R^N.
\end{equation} Using \eqref{nia} and applying Theorem 1 in Tolksdorf \cite{T1984}, 
we obtain constants $\alpha\in (0,1)$ and $C>0$, both depending only on  $N,q,\theta,\rho,\lambda$ and $\tau$ 
such that 
$\|\nabla v_{x_0}\|_{C^{0,\alpha}(\overline {B_{1/2}(0)})}\leq C$. In  particular, we have $|\nabla v_{x_{0}}(0)|\leq C$, which proves the inequality in \eqref{apbb} with $d_0=2C$.

\vspace{0.2cm} {\em Proof of \eqref{omas}.} 
By Harnack's inequality (see \cite{Trudinger}*{Theorem 1.1}) applied to $v_{x_0}$, we obtain a positive constant $c_1$, depending only on 
$N,q,\theta,\rho,\lambda$ and $\tau$ such that 
$$ \sup_{B_{1/3}(0)} v_{x_0}\leq c_1 \inf_{B_{1/3}(0)} v_{x_0}\quad \mbox{or, equivalently, }  \sup_{B_{|x_0|/6}(x_0)} u\leq c_1 \inf_{B_{|x_0|/6}(x_0)} u.
$$
By a standard covering argument (see \cite{FV}), we conclude \eqref{omas} with $d_1=c_1^{10}$. 

Moreover, using \eqref{omas} and reasoning exactly as in \cite{CC2015}*{Corollary~3.10}, we infer that for every $0<a<b\leq 3/2$,
there exists a constant $C_{a,b}>0$, depending only on $N,q,\theta,\rho,\lambda$ and $\tau$ such that 
\begin{equation} \label{ha1jj} \max_{ar\leq |x|\leq br} u(x)\leq C_{a,b} \min_{ar\leq |x|\leq br} u(x) \quad \mbox{for every } r\in (0,r_0). 
\end{equation}
We emphasise that $C_{a,b}$ here is independent of $r_0$. 

\vspace{0.2cm}
{\em Proof of \eqref{noi2}.} We adapt the definition of $v_{x_0}$ in \eqref{Fioo} replacing $|x_0|^\beta$ by $1/u(x_0)$, i.e.,
\begin{equation} \label{vibb} v_{x_0}(y)=\frac{u(x_0+|x_0|\,y/2 )}{u(x_0)}\quad \mbox{for every } y\in B_1(0).
\end{equation}
In light of \eqref{ha1jj} with $a=1/2$, $b=3/2$ and $r=|x_0|$, we obtain a constant $A_0>0$, depending only on $N,q,\theta,\rho,\lambda$ and $\tau$ such that
$$ \| v_{x_0}\|_{L^\infty(B_1(0))}\leq A_0.
$$
Observe that $v_{x_0}$ in \eqref{vibb} satisfies \eqref{foxi} with the definition of $\widetilde B(y,z,\boldsymbol{\xi})$ slightly adjusted. More precisely,
the last term in the right-hand side of \eqref{romy} must be here multiplied by $ |x_0|^{2+\theta}u^{q-1}(x_0)$. 
Because of Lemma~\ref{api}, we have $ |x_0|^{2+\theta}u^{q-1}(x_0)\leq C_0^{q-1}$ and hence, we recover \eqref{foii}, where $A_1>0$ is a constant
depending only on $N,q,\theta,\rho,\lambda$ and $\tau$. 
With the same argument as for \eqref{apbb}, we conclude that $|\nabla v_{x_0}(0)|\leq A_2$ for a constant $A_2>0$, depending only on $N,q,\theta,\rho,\lambda$ and $\tau$. This completes the proof of \eqref{noi2}. 

\vspace{0.2cm}
{\em Proof of \eqref{noi3}.} We can now modify a standard argument in \cite{FV}*{Lemma 1.1} (see also \cite{CD2010}). 
For every fixed $\mu \in (0,r_0/3)$, we define 
$$ \Psi_{\mu}(\boldsymbol{\xi}):=\frac{u(\mu\, \boldsymbol{\xi})}{\max_{\partial B_\mu(0)} u}\quad \mbox{for all } \boldsymbol \xi\in \Gamma:=\{y\in \mathbb R^N:\ 1<|y|<4\}. 
$$
Then, $\Psi_{\mu}\in L^\infty(\Gamma)\cap H^1(\Gamma)$ is a weak solution of 
$$ -\Delta \Psi_{\mu}+ B_{\mu}=0\quad \mbox{in } \Gamma,
$$ where $B_{\mu}(\boldsymbol{\xi})$ is defined for $\boldsymbol{\xi}\in \Gamma$ by
$$ B_{\mu}(\boldsymbol{\xi})=\mu^{\theta+2} (\max_{\partial B_\mu(0)} u)^{q-1} |\boldsymbol{\xi}|^\theta \Psi_{\mu}^q(\boldsymbol{\xi}) 
+(N-2+2\rho) \frac{\boldsymbol{\xi} \cdot \nabla \Psi_{\mu}(\boldsymbol{\xi})}{|\boldsymbol{\xi}|^2}-\lambda \,\frac{ \Psi^\tau_{\mu}(\boldsymbol{\xi})
|\nabla \Psi_{\mu}(\boldsymbol \xi)|^{1-\tau}}{|\boldsymbol{\xi}|^{1+\tau}}.
$$
In view of \eqref{noi2} and \eqref{ha1jj}, it is easy to see that 
\begin{equation} \label{kio} \|\Psi_{\mu}\|_{L^\infty(\Gamma)}\leq C_1\quad \mbox{and}\quad  \|B_{\mu}\|_{L^\infty(\Gamma)}\leq C_2,\end{equation} where 
$C_1$ and $C_2$ are positive constants that depends only on $N,q,\theta,\rho,\lambda$ and $\tau$. 

Hence, using \eqref{noi2} and \eqref{kio},  by the $C^{1,\alpha}$-regularity result of Tolksdorf \cite{T1984}, there exist constants $\alpha\in (0,1)$ and $\widetilde C>0$, both 
depending only on $N,q,\theta,\rho,\lambda$ and $\tau$,
such that 
$$ \|\nabla \Psi_{\mu}\|_{C^{0,\alpha}(\Gamma^*)}\leq \widetilde C,\quad \mbox{where } \Gamma^*=\{y\in \mathbb R^N: \ 2<|y|<3.75\}. 
$$

For $x\in B_{r_0}(0)\setminus\{0\}$, there exists $\mu\in (0,r_0/3)$ such that $2\mu \leq |x|\leq 3\mu$.

Let $x,x'\in B_{r_0}(0)\setminus\{0\}$ be such that $|x|\leq |x'|$. 

If $|x|\leq |x'|< 5|x|/4$, then for $\boldsymbol{\xi}=x/\mu$ and 
$\boldsymbol{\xi}'=x'/\mu$, by using \eqref{ha1jj}, 
we find that
$$ |\nabla u(x)-\nabla u(x')|=\frac{\max_{\partial B_\mu(0)} u}{\mu} |\nabla \Psi_\mu(\boldsymbol{\xi})-\nabla \Psi_\mu(\boldsymbol{\xi}')|\leq \widetilde C
\frac{\max_{\partial B_\mu(0)} u}{\mu^{\alpha+1}}\,|x-x'|^\alpha\leq  \frac{C_3\,u(x)}{|x|^{\alpha+1}} |x-x'|^\alpha
$$ for some constant $C_3>0$, depending only on $N,q,\theta,\rho,\lambda,\tau,\zeta,\eta$ and $d_1$. 

If $|x'|\geq 5|x|/4$, then $|x-x'|^\alpha\geq (|x'|-|x|)^\alpha\geq (|x|/4)^\alpha$ so that from \eqref{noi2}, we get
$$  |\nabla u(x)-\nabla u(x')|\leq d_2\left(\frac{u(x)}{|x|}+\frac{u(x')}{|x'|}\right)\leq 
4 d_2\, \frac{u(x)+u(x')}{|x|^{\alpha+1}} |x-x'|^\alpha.
$$
This concludes the proof of \eqref{noi3} and of Proposition~\ref{aurr}.  
\end{proof}

If $\beta<0$, then Proposition~\ref{aurr} implies that the origin is a removable singularity for every positive solution $u$ of \eqref{e1}. More precisely, we have. 

\begin{cor}[Removable singularities] \label{contt}
Let \eqref{e2} hold, $\rho,\lambda\in \mathbb R$ 
and $\theta<-2$. Then, every positive solution $u$ of \eqref{e1} satisfies
$\lim_{|x|\to 0} u(x)=0$, $u\in H^1_{\rm loc}(\Omega)$ and 
\begin{equation} \label{e1aa}
 \mathbb L_{\rho,\lambda,\tau}[u]=
\Delta u-\left(N-2+2\rho\right) \frac{x\cdot \nabla u}{|x|^2} +\lambda \,G_\tau(x,u,\nabla u)=
|x|^\theta \,u^q\quad \mbox{in  }\mathcal D'(\Omega).
\end{equation}
\end{cor}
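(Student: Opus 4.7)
The plan is to prove the three assertions in the order they are stated, exploiting in each case the a priori pointwise bounds $u(x)\le C_0|x|^{-\beta}$ from Lemma~\ref{api} and $|\nabla u(x)|\le d_0|x|^{-\beta-1}$ from Proposition~\ref{aurr}, together with the crucial sign information $\beta=(\theta+2)/(q-1)<0$ coming from $\theta<-2$ and $q>1$. First, because $-\beta>0$, the a priori estimate gives $u(x)\le C_0|x|^{-\beta}\to 0$ as $|x|\to 0$, establishing that the origin is a removable singularity in the continuous sense.

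Next, to prove $u\in H^1_{\rm loc}(\Omega)$, the point is that both $u^2$ and $|\nabla u|^2$ are dominated near $0$ by $|x|^{-2\beta}$ and $|x|^{-2\beta-2}$ respectively; since $N\ge 2$ and $\beta<0$, the exponent $N-2\beta-2\ge N-2-2\beta>0$, so these functions are integrable on any ball $B_r(0)\subset\subset\Omega$. Away from $0$, $u$ and $\nabla u$ are continuous by assumption, so the local $H^1$ property follows.

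The main technical step is the distributional identity on all of $\Omega$ (the identity on $\Omega\setminus\{0\}$ is given). I would fix $\varphi\in C_c^1(\Omega)$ and a smooth cutoff $\eta_\varepsilon(|x|)$ which equals $0$ on $\{|x|\le\varepsilon\}$, equals $1$ on $\{|x|\ge 2\varepsilon\}$ and satisfies $|\nabla\eta_\varepsilon|\le C/\varepsilon$; then test the identity on $\Omega\setminus\{0\}$ against $\eta_\varepsilon\varphi\in C_c^1(\Omega\setminus\{0\})$. For each of the four bulk integrals one checks local integrability of the integrand over $\Omega$ by means of the pointwise bounds:
\begin{itemize}
\item $|x\cdot\nabla u|/|x|^2\le d_0|x|^{-\beta-2}$;
\item $G_\tau(x,u,\nabla u)\le C|x|^{-\tau\beta-(1-\tau)(\beta+1)-\tau-1}=C|x|^{-\beta-2}$ for $\tau\in(0,1)$, and the analogous bound for $\tau=0$;
\item $|x|^{\theta}u^q\le C|x|^{\theta-q\beta}=C|x|^{-\beta-2}$.
\end{itemize}
Since $N-\beta-2>N-2\ge 0$ strictly (as $\beta<0$ and $N\ge 2$), all three exponents give locally integrable singularities, so dominated convergence yields the desired limits of the four bulk integrals as $\varepsilon\to 0^+$.

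The one step I expect to be the real obstacle is controlling the extra boundary term $R_\varepsilon:=\int_\Omega(\nabla u\cdot\nabla\eta_\varepsilon)\varphi\,dx$, which arises from differentiating the product $\eta_\varepsilon\varphi$ and has no counterpart on the left-hand side. The plan is to estimate it by Cauchy--Schwarz on the annulus $A_\varepsilon:=\{\varepsilon<|x|<2\varepsilon\}$:
\[
|R_\varepsilon|\le \|\varphi\|_\infty\,\|\nabla u\|_{L^2(A_\varepsilon)}\,\|\nabla\eta_\varepsilon\|_{L^2(A_\varepsilon)}.
\]
Using $|\nabla\eta_\varepsilon|\le C/\varepsilon$ and $|\nabla u|\le d_0|x|^{-\beta-1}$ yields $\|\nabla\eta_\varepsilon\|_{L^2(A_\varepsilon)}^2\le C\varepsilon^{N-2}$ and $\|\nabla u\|_{L^2(A_\varepsilon)}^2\le C\varepsilon^{N-2\beta-2}$, whence $|R_\varepsilon|\le C\|\varphi\|_\infty\,\varepsilon^{N-\beta-2}\to 0$ since $N-\beta-2>0$. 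Passing to the limit $\varepsilon\to 0^+$ in the tested identity then gives \eqref{e1aa}, completing the proof.
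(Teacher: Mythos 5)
Your proposal is correct and follows essentially the same route as the paper: both use the a priori bounds $u\le C_0|x|^{-\beta}$ and $|\nabla u|\le d_0|x|^{-\beta-1}$ together with $\beta<0$ to get vanishing at the origin, local square-integrability of the gradient, and local integrability of all four bulk terms (each dominated by $|x|^{-\beta-2}$), and then test against a cutoff supported away from the origin. The only cosmetic difference is in the commutator term: the paper chooses a monotone radial cutoff and integrates by parts in the radial variable, whereas you use $|\nabla\eta_\varepsilon|\le C/\varepsilon$ and Cauchy--Schwarz; both yield the same bound $O(\varepsilon^{N-\beta-2})\to 0$.
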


\begin{proof} If $u$ is a positive solution of \eqref{e1}, then by \eqref{robo}, we get 
$\lim_{|x|\to 0} u(x)=0$ since $\beta<0$. Hence, by  defining $u(0)=0$, we have $u\in C(\Omega)$. Now, $u\in C^1(\Omega\setminus \{0\})$ and by Proposition~\ref{aurr}, we have $|\nabla u(x)|^2\leq C_1^2 |x|^{-2\beta-2}\in L^1_{\rm loc}(\Omega)$ using that 
$-2\beta-2>-2\geq -N$. Consequently, $u\in H^1_{\rm loc}(\Omega)$. 
It remains to show \eqref{e1aa}. 
Let $\varphi \in C^1_c(\Omega)$ be arbitrary. We prove that \begin{equation}\label{GOALLLLL}
\int_\Omega \nabla u\cdot \nabla \varphi\,dx +(N-2+2\rho) \int_\Omega \frac{x\cdot \nabla u}{|x|^2}\varphi\,dx
-\lambda\int_\Omega G_\tau(x,u,\nabla u)\,\varphi\,dx+\int_\Omega |x|^{\theta}u^q\varphi\,dx=0.  
\end{equation}

{\em Proof of \eqref{GOALLLLL}.} 
    For every $\varepsilon > 0$ small, let $w_\varepsilon$ be a non-decreasing and smooth function on $[0,\infty)$ such that $w_\varepsilon (r)\in (0,1)$ for $r \in (\varepsilon, 2\varepsilon)$, $ w_\varepsilon(r)=1 $ for  $r \geq 2\varepsilon$ and $w_\varepsilon(r)=0$ for $r \in [ 0,\varepsilon]$. 

Since $\varphi w_\varepsilon\in C^1_c(\Omega\setminus \{0\}$, by using 
$\varphi w_\varepsilon$ as a test function for \eqref{e1}, we get 
\begin{equation}\label{check1}
\begin{aligned}
\int_\Omega \nabla u\cdot \nabla(\varphi w_\varepsilon)\,dx &+(N-2+2\rho) \int_\Omega \frac{x\cdot \nabla u}{|x|^2}\varphi w_\varepsilon\,dx\\
& -\lambda\int_\Omega G_\tau(x,u,\nabla u)\,\varphi w_\varepsilon\,dx+\int_\Omega |x|^{\theta}u^q\varphi w_\varepsilon\,dx=0.
\end{aligned} \end{equation}

As 
$\theta-\beta q=-\beta-2>-N $, we have $|x|^{-\beta-2}\in L^1_{\rm loc}(\mathbb R^N)$. 
So, by Proposition~\ref{aurr} and 
$\varphi\in C^1_c(\Omega)$, we infer that $\left(x\cdot \nabla u\right)\varphi/|x|^2$, $G_\tau(x,u,\nabla u) \varphi$, $|x|^{\theta}u^q\varphi $ and $\nabla u\cdot \nabla \varphi$ are in $L^1(\Omega)$.

Since $w_\varepsilon$ is radially symmetric ($w(x)=w(r)$ with $r=|x|$), we see that
\begin{equation} \label{c11}
    \int_\Omega \nabla u\cdot \nabla(\varphi w_\varepsilon)\,dx = 
    \int_\Omega w_\varepsilon \nabla u\cdot \nabla\varphi \,dx + J_\varepsilon,\quad J_\varepsilon:=
     \int_\Omega \varphi(x)\, w_\varepsilon'(|x|)  \frac{\nabla u(x)\cdot x}{|x|}  \,dx.
\end{equation}

We next show that $\lim_{\varepsilon\to 0} J_\varepsilon=0$. Indeed, since $w_\varepsilon'(r)=0$ for every $r\in (0,\varepsilon]\cup [2\varepsilon,\infty)$, by Proposition~\ref{aurr}, there exist positive constants $C_1,C_2$ (independent of $\varepsilon$) such that
\begin{equation} \label{hy1} \begin{aligned}
    \left| J_\varepsilon \right| &\leq C_1\int_{\{\varepsilon<|x|<2\varepsilon\}} \,|x|^{-\beta-1} w_\varepsilon'(|x|) \,dx
    = C_1\int_{\varepsilon}^{2\varepsilon} \int_{\partial B_\eta(0)}  |x|^{-\beta-1} w_\varepsilon'(|x|) \,dS(x)\,d\eta  \\
    &\leq C_2 \int_\varepsilon^{2\varepsilon} \eta^{N-\beta-2} w_\varepsilon'(\eta)\,d\eta \leq C_2\eta^{N-\beta-2} w_\varepsilon(\eta)\Big|^{2\varepsilon}_\varepsilon
   =C_2(2\varepsilon)^{-\beta + N -2}\to 0\ \mbox{as } \varepsilon\to 0.
\end{aligned} \end{equation}
Thus, in view of \eqref{c11} and \eqref{hy1}, by letting $\varepsilon\to 0$ in \eqref{check1}, 
we arrive at \eqref{GOALLLLL}.  
\end{proof}

We next give consequences of the spherical Harnack-type inequality in Proposition~\ref{aurr}. 

\begin{cor}  \label{peree} Let \eqref{e2} hold and $\rho,\lambda,\theta\in \mathbb R$. Let $u$ be any positive solution of \eqref{e1}.  
\begin{itemize}
\item[(a)] If $\limsup_{|x|\to 0} u(x)>0$, then $\liminf_{|x|\to 0} u(x)>0$. 
\item[(b)] If $\limsup_{|x|\to 0} u(x)=\infty$, then 
$\lim_{|x|\to 0} u(x)=\infty$. 
\item[(c)] Let $w$ be a positive radial sub-solution of \eqref{e1} in $B_{r_0}(0)\setminus \{0\}$ for some small $r_0>0$. When $\tau\not=0$, we also assume that $|\nabla u(x)|+|w'(|x|)|>0$ for all $0<|x|<r_0$. Then, 
$$ \limsup_{|x|\to 0} \frac{u(x)}{w(|x|)}>0\quad \mbox{implies that } \liminf_{|x|\to 0} \frac{u(x)}{w(|x|)}>0. 
$$
\end{itemize}	
\end{cor}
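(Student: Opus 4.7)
My plan is to prove (c) first by a comparison argument, and then deduce (a) and (b) from the spherical Harnack inequality~\eqref{omas} together with the auxiliary Lemma~\ref{mar11}.

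For (c), I would set $L:=\limsup_{|x|\to 0}u(x)/w(|x|)\in(0,\infty]$ and choose a sequence $x_n\to 0$ with $u(x_n)/w(|x_n|)\to L$. Writing $r_n:=|x_n|$ and exploiting the radiality of $w$, the spherical Harnack inequality~\eqref{omas} gives
\[
u(x)\;\geq\;\frac{1}{d_1}\max_{|y|=r_n}u(y)\;\geq\;\frac{u(x_n)}{d_1\,w(r_n)}\,w(|x|)\qquad\text{on }\{|x|=r_n\}.
\]
Fixing $c\in(0,1]$ with $c<L/(2d_1)$ then ensures $u\geq cw$ on the sphere $\{|x|=r_n\}$ for all $n$ large. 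Next, I would exploit the fact that $\mathbb L_{\rho,\lambda,\tau}$ is positively $1$-homogeneous in $u$ (the gradient term has degree $\tau+(1-\tau)=1$), which yields
\[
\mathbb L_{\rho,\lambda,\tau}[cw]\;=\;c\,\mathbb L_{\rho,\lambda,\tau}[w]\;\geq\;c\,|x|^{\theta}w^{q}\;=\;c^{1-q}|x|^{\theta}(cw)^{q}\;\geq\;|x|^{\theta}(cw)^{q},
\]
using $c\leq 1$ and $q>1$ in the last inequality; hence $cw$ is a positive radial sub-solution of~\eqref{e1} in $B_{r_0}(0)\setminus\{0\}$. Then on each annulus $A_{n,m}:=\{r_m<|x|<r_n\}$ with $m>n$, the boundary inequality $u\geq cw$ holds on both spheres, and the hypothesis $|\nabla u|+|w'(|x|)|>0$ (when $\tau\neq 0$, $\lambda\neq 0$) is precisely what is required to invoke the comparison principle as summarised in Remark~\ref{reka6}, yielding $u\geq cw$ on $\overline{A_{n,m}}$. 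Letting $m\to\infty$ (so $r_m\to 0$) gives $u\geq cw$ on $\{0<|x|\leq r_n\}$, whence $\liminf_{|x|\to 0}u(x)/w(|x|)\geq c>0$.

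For (a) and (b), I would set $M(r):=\max_{|x|=r}u(x)$, which is continuous on $(0,r_0)$ and satisfies $\limsup_{r\to 0^+}M(r)=\limsup_{|x|\to 0}u(x)\in(0,\infty]$ under the respective hypotheses. Lemma~\ref{mar11} then upgrades this limsup to a genuine limit: $\lim_{r\to 0^+}M(r)=L\in(0,\infty]$. Combining with the spherical Harnack inequality~\eqref{omas}, which gives $\min_{|x|=r}u(x)\geq M(r)/d_1$, I obtain
\[
\liminf_{|x|\to 0}u(x)\;=\;\liminf_{r\to 0^+}\min_{|x|=r}u(x)\;\geq\;L/d_1,
\]
which is strictly positive (yielding (a)) and equal to $+\infty$ when $L=+\infty$ (yielding (b)).

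The main obstacle will be the comparison step in (c) when $\tau\in(0,1)$ and $\lambda\neq 0$: the operator fails to be Lipschitz in $\nabla u$ at critical points, which is why the hypothesis $|\nabla u|+|w'|>0$ is imposed and why one must route through the precise form of the comparison principle summarised in Remark~\ref{reka6}. A related subtlety is that $w\equiv\text{const}$ is a super-solution rather than a sub-solution, so (a) and (b) cannot be deduced from (c) simply by taking $w$ constant; this is the reason for invoking Lemma~\ref{mar11} separately to handle those parts.
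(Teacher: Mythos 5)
Your proof of part (c) is essentially the paper's own argument: the same use of the spherical Harnack inequality \eqref{omas} along a maximising sequence, the same observation that the positive $1$-homogeneity of $\mathbb L_{\rho,\lambda,\tau}$ together with $q>1$ makes $cw$ a sub-solution for $c\in(0,1]$, and the same comparison on nested annuli followed by a passage to the limit. That part is correct.

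Parts (a) and (b), however, contain a genuine gap. You route them through Lemma~\ref{mar11} with $S\equiv 1$, but that lemma (and likewise Corollary~\ref{pere}) requires hypothesis \eqref{mar12}, whose second condition is $|S'(|x|)|+|\nabla u(x)|>0$ on the punctured ball. With $S$ constant this degenerates to $|\nabla u(x)|>0$ for all $0<|x|<r_0$, which is neither assumed in Corollary~\ref{peree}(a),(b) nor known a priori for a general positive solution; it is exactly the condition needed to invoke the comparison principle of Lemma~\ref{co1} against a constant barrier when $\tau\in(0,1)$ and $\lambda\neq 0$ (see Remark~\ref{rek44}(a): the condition is dispensable only when $\tau=0$). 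In addition, Lemma~\ref{mar11} is stated only for $\mu\in(0,\infty)$, so it does not cover part (b), where the limsup is infinite; the relevant statement would be Corollary~\ref{pere}(b), which carries the same \eqref{mar12} obstruction. The paper avoids comparison entirely for (a) and (b): arguing by contradiction, the Harnack inequality forces $\max_{\partial D_n}u$ to tend to $d_1\liminf_{|x|\to 0}u$ along the annuli $D_n=\{|x_{n+1}|<|x|<|x_n|\}$, so under the contradiction hypothesis the maximum of $u$ over $\overline{D_n}$ must be attained at an interior point $x_*$ for $n$ large; there $\Delta u(x_*)\leq 0$ and $\nabla u(x_*)=0$, so the left-hand side of \eqref{e1} is nonpositive while the right-hand side equals $|x_*|^\theta u^q(x_*)>0$, a contradiction. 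You should replace your argument for (a) and (b) by this interior-maximum argument (or otherwise justify $|\nabla u|>0$ near the origin, which you cannot in this generality).
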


\begin{proof} 
	(a) Assume that $\limsup_{|x|\to 0} u(x)>0$. 	 
	Let $\{x_n\}_{n\geq 1}$ be a sequence in $\mathbb R^N\setminus \{0\}$ with $\{|x_n|\}_{n\geq 1}$ decreasing to $0$ as $n\to \infty$ and $\lim_{n\to \infty} u(x_n)=\liminf_{|x|\to 0} u(x) $. 
	 
	 We reason by contradiction and suppose that $\liminf_{|x|\to 0} u(x)=0$. 
	 Let $r_0>0$ be small such that $B_{4 r_0}(0)\subset\subset \Omega$. Let  $n_0\geq 1$ be large enough so that 
	 $0<|x_{n+1}|<|x_n|<r_0$ for every $n\geq n_0$. Then, by the spherical Harnack-type inequality in Proposition~\ref{aurr}, we have   
\begin{equation} \label{jip} u(x_n)\geq \min_{|x|=|x_n|} u(x) \geq \frac{1}{d_1} \max_{|x|=|x_n|} u(x)\quad \mbox{for all } n\geq n_0.
\end{equation}
For each $n\geq 1$, we define $D_n=\{x\in \mathbb R^N:\ |x_{n+1}|<|x|<|x_n| \}$. Since $\lim_{n\to \infty} \max_{\partial D_n} u(x)= 0$ and $\limsup_{|x|\to 0} u(x)>0$, we infer that 
for large enough $n$, the maximum of 
$u$ over $\overline{D_n}$ is reached at some point $x_*(n)\in D_n$. It follows that $\Delta u(x_*(n))\leq 0$ and $\nabla u(x_*(n))=0$, implying that $0\geq \mathbb L_{\rho,\lambda,\tau}[u(x_*(n))]=|x_*(n)|^\theta u(x_*(n))>0$, which is a contradiction.

	(b) Assume that $\limsup_{|x|\to 0} u(x)=\infty$. Suppose by contradiction that $\liminf_{|x|\to 0} u(x)<\infty$. We take $\{x_n\}_{n\geq 1}$ and $D_n$ as in part (a). From \eqref{jip}, we get that 
	$\limsup_{n\to \infty} \max_{\partial D_n} u(x)<\infty$. Consequently, for large enough $n$, there exists 
	$x_*(n)\in D_n$ such that 
	$\max_{\overline D_n} u(x)=u(x_*(n))$, leading to a contradiction as in (a). 
	
	(c) Assume that $\limsup_{|x|\to 0} u(x)/w(|x|)=L>0$. We diminish $r_0>0$ to ensure that $B_{4 r_0}(0)\subset \subset \Omega$. 
	We choose a sequence  
$\{x_n\}_{n\geq 1}$ in $\mathbb R^N$ such that $\{|x_n|\}_{n\geq 1}$ decreases to $0$ with $0<|x_n|\leq r_0$ for all 
$n\geq 1$ and $\lim_{n\to \infty} u(x_n)/w(|x_n|)=L$.  Then, for every $\delta\in (0,L)$, 
there exists $n_1=n_1(\delta)\geq 1$ such that 
$$ u(x_n)\geq  \delta \,w(|x_n|)\quad \mbox{for all } n\geq n_1.$$
By \eqref{omas} in Proposition~\ref{aurr}, we find that 
$$ \min_{|x|=|x_n|} u(x)\geq (1/d_1) \max_{|x|=|x_n|} u(x)\geq (1/d_1)\,
u(x_n)\geq (\delta/d_1)\, w(|x_n|) \quad \mbox{for all } n\geq n_1. 
$$
Fix $\delta\in (0, \min\{L, d_1\})$. Observe that for every $\mu\in (0,1)$, we have that $\mu \,w$ is also a positive radial sub-solution of 
\eqref{e1} in $B_{r_0}(0)\setminus \{0\}$. Then, 
for each $n>n_1$, 
by the comparison principle, we get 
$u\geq ( \delta/d_1)\, w$ on $\mathcal D_n:=\{x\in \mathbb R^N:\ |x_n|<x<|x_{n_1}|\}$. By letting $n\to \infty$, we see that
\begin{equation} \label{eza1} u(x)\geq ( \delta/d_1)\, w(|x|)\quad \mbox{for every } 0<x<|x_{n_1}|.
\end{equation}
This proves that $\liminf_{|x|\to 0} u(x)/w(|x|)>0$.
\end{proof}

\begin{cor} \label{pere} Let \eqref{e2} hold and $\rho,\lambda,\theta\in \mathbb R$. Let $u$ be any positive solution of \eqref{e1}. 
Assume that $S$ is a positive $C^1$-function on $(0,r_0)$ for some $r_0>0$ small such that 
\begin{equation}  
 \mathbb L_{\rho,\lambda,\tau} [S(|x|)]\leq 0  \quad 
\mbox{and}\quad 
 |S'(|x|)|+|\nabla u(x)|>0\quad \mbox{ for all }|x|\in (0,r_0).
 \label{mar12}
\end{equation} 
We have the following: 
\begin{itemize}
\item[(a)] If $\liminf_{|x|\to 0}  u(x)/S(|x|)=0$, then $\lim_{|x|\to 0} u(x)/S(|x|)=0$; 
\item[(b)] If $\limsup_{|x|\to 0} u(x)/S(|x|)=\infty$, then $\lim_{|x|\to 0} u(x)/S(|x|)=\infty$. 
\end{itemize} 
\end{cor}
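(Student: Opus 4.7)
The cornerstone of the argument is the fact that, although $\mathbb L_{\rho,\lambda,\tau}$ is nonlinear when $\tau\in(0,1)$ and $\lambda\neq 0$, it is positively $1$-homogeneous: for every $C>0$ one has $G_\tau(x,Cz,C\boldsymbol\xi)=C\,G_\tau(x,z,\boldsymbol\xi)$, whence $\mathbb L_{\rho,\lambda,\tau}[C\,S]=C\,\mathbb L_{\rho,\lambda,\tau}[S]\leq 0\leq |x|^{\theta}(CS)^q$. Thus, for \emph{every} $C>0$, the function $x\mapsto CS(|x|)$ is a positive radial super-solution of \eqref{e1} on $B_{r_0}(0)\setminus\{0\}$, and the hypothesis $|S'(|x|)|+|\nabla u(x)|>0$ supplies the non-vanishing gradient condition required by the comparison principle (Remark~\ref{reka6}). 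The strategy then is to choose $C$ so that the inequality $u\leq CS$ can be imposed on two spheres that trap an annulus shrinking to the origin, and to combine this with the spherical Harnack inequality \eqref{omas} of Proposition~\ref{aurr} to identify such spheres along a suitable sequence.

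For part (a), assume $\liminf_{|x|\to 0} u(x)/S(|x|)=0$ and pick a sequence $\{\widetilde x_n\}$ with $|\widetilde x_n|\searrow 0$ and $u(\widetilde x_n)/S(|\widetilde x_n|)\to 0$. By \eqref{omas} and the radial symmetry of $S$, we get
\[
\varepsilon_n:=\max_{|x|=|\widetilde x_n|}\frac{u(x)}{S(|x|)}\leq d_1\,\frac{u(\widetilde x_n)}{S(|\widetilde x_n|)}\longrightarrow 0.
\]
After passing to a subsequence, relabelled $\{x_n\}$, the radii $|x_n|$ are strictly decreasing. Given $\delta>0$ arbitrary, choose $n_0$ with $\varepsilon_n<\delta$ for all $n\geq n_0$, and apply the comparison principle (together with the super-solution $\delta S$ of \eqref{e1}) on each annulus $\{|x_{n_0+k}|<|x|<|x_{n_0}|\}$, on whose boundary $u\leq\delta S$ by construction. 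This yields $u\leq\delta S$ throughout the annulus; letting $k\to\infty$, we obtain $u\leq\delta S$ on $\{0<|x|<|x_{n_0}|\}$, hence $\limsup_{|x|\to 0} u(x)/S(|x|)\leq\delta$. Since $\delta>0$ was arbitrary, the conclusion of (a) follows.

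For part (b), I argue by contradiction: suppose $\liminf_{|x|\to 0} u(x)/S(|x|)=L<\infty$. Pick $\{x_n\}$ with $|x_n|\searrow 0$ and $u(x_n)/S(|x_n|)\to L$; \eqref{omas} again gives $\max_{|x|=|x_n|} u(x)/S(|x|)\leq d_1\, u(x_n)/S(|x_n|)$, which is eventually less than $C:=d_1 L+1$. Thus, for all sufficiently large $n$, $u\leq CS$ on $\partial B_{|x_n|}$, and the comparison principle applied on each annulus $\{|x_m|<|x|<|x_{n_1}|\}$ (for fixed large $n_1$ and $m>n_1$) delivers $u\leq CS$ throughout. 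Passing to the limit $m\to\infty$ gives $u\leq CS$ on $\{0<|x|<|x_{n_1}|\}$, contradicting $\limsup_{|x|\to 0} u(x)/S(|x|)=\infty$. The only delicate point in executing the scheme is verifying that Remark~\ref{reka6} actually applies on each compact annulus used: the gradient boundedness of $u$ needed there is supplied by Proposition~\ref{aurr}, that of $S$ by the $C^1$ assumption, and the positivity $|S'|+|\nabla u|>0$ is a direct hypothesis, so no genuine obstacle arises.
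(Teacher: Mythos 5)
Your proposal is correct and follows essentially the same route as the paper's proof: the spherical Harnack inequality \eqref{omas} to control $u$ on whole spheres from its value at one point, the positive $1$-homogeneity of $\mathbb L_{\rho,\lambda,\tau}$ to make $CS$ a super-solution for every $C>0$, and the comparison principle on shrinking annuli. The only differences are cosmetic (you absorb the constant $d_1$ before fixing the threshold and phrase (b) as a contradiction, whereas the paper proves directly that a finite $\liminf$ forces a finite $\limsup$).
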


\begin{proof}
Without loss of generality, by diminishing $r_0>0$, 
we can assume that $B_{4 r_0}(0)\subset \subset \Omega$. Let $u$ be any positive solution of \eqref{e1}. 
We prove the assertions of (a) and (b) by adapting the method in \cite{Cmem}*{Corollary 4.5}. 
Let $\liminf_{|x|\to 0} u(x)/S(|x|):=\ell$. We choose a sequence  
$\{x_n\}_{n\geq 1}$ in $\mathbb R^N$ such that $\{|x_n|\}_{n\geq 1}$ decreases to $0$ with $0<|x_n|\leq r_0$ for all 
$n\geq 1$ and $$\lim_{n\to \infty} u(x_n)/S(|x_n|)=\ell.$$ 
Assume that $\ell\not=\infty$. Then, for every $\delta>\ell$, 
there exists $n_1=n_1(\delta)\geq 1$ such that 
\begin{equation} \label{niv1} u(x_n)\leq \delta S(|x_n|)\quad \mbox{for all } n\geq n_1.\end{equation}
Let $d_1>0$ be given by Proposition~\ref{aurr}. Then, we obtain that 
$$ \max_{|x|=|x_n|} u(x)\leq d_1 u(x_n)\leq d_1 \delta S(|x_n|)\quad \mbox{for all } n\geq n_1. 
$$ 
By our assumption of $S$, we find that $\mathbb L_{\rho,\lambda,\tau}  [d_1 \delta S]=d_1 \delta \,\mathbb L_{\rho,\lambda,\tau}  [ S]\leq 0$ in 
$B_{r_0}(0)\setminus \{0\}$. Hence, for each $n>n_1$, 
by the comparison principle on $\mathcal D_n:=\{x\in \mathbb R^N:\ |x_n|<x<|x_{n_1}|\}$, we get $u\leq d_1 \delta S$ on $\mathcal D_n$. As $|x_n|\to 0$ as $n\to \infty$, we arrive at 
\begin{equation} \label{erz1} u(x)\leq d_1 \delta S(|x|)\quad \mbox{for every } 0<x<|x_{n_1}|.
\end{equation}
Hence, if $0\leq \ell<\infty$, then $\limsup_{|x|\to 0} u(x)/S(|x|)\leq d_1\delta$ for every $\delta\in (\ell,\infty)$.

(a) If $\ell=0$, then by taking arbitrary $\delta>0$, we get $\lim_{|x|\to 0} u(x)/S(|x|)=0$.

(b) The above argument shows that $\ell<\infty$ implies that $\limsup_{|x|\to 0} u(x)/S(|x|)<\infty$. 

This completes the proof of Corollary~\ref{pere}. 
 \end{proof}

\begin{lemma} \label{mar11}
Let \eqref{e2} hold and $\rho,\lambda,\theta\in \mathbb R$. Let $u$ be any positive solution of \eqref{e1}. 
Assume that for $r_0>0$ small enough, $S$ is a positive $C^1$-function on $(0,r_0)$ such that \eqref{mar12} holds. 
   Then, $$\limsup_{|x|\to 0} \frac{u(x)}{S(|x|)}=\mu \in (0,\infty)
   \ \mbox{ implies that }\ 
\lim_{r\to 0^+} \sup_{|x|=r} \frac{u(x)}{S(|x|)}=
\mu.$$
\end{lemma}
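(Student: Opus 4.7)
\textbf{Proof plan for Lemma~\ref{mar11}.} For $r\in(0,r_0)$ define $M(r):=\sup_{|x|=r} u(x)/S(r)$, which is well-defined and continuous by $C^1$-regularity of $u$ and positivity of $S$. Since $\limsup_{|x|\to 0} u(x)/S(|x|) = \limsup_{r\to 0^+} M(r) = \mu$, it suffices to show $\liminf_{r\to 0^+} M(r) \geq \mu$; this gives $\lim_{r\to 0^+} M(r)=\mu$ as required.

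Suppose by contradiction that $\ell:=\liminf_{r\to 0^+} M(r) < \mu$ and fix $\delta\in(\ell,\mu)$. Choose a sequence $r_n \searrow 0$ (with $r_n<r_0$ for all $n\geq n_1$) such that $M(r_n)<\delta$, so that
\begin{equation*}
u(x)\leq \delta\, S(|x|) \quad\mbox{for every } x\in\partial B_{r_n}(0),\ n\geq n_1.
\end{equation*}
The key observation is the positive $1$-homogeneity of $\mathbb L_{\rho,\lambda,\tau}[\cdot]$: the first two terms are linear, while $G_\tau(x,cz,c\boldsymbol\xi)=c\,G_\tau(x,z,\boldsymbol\xi)$ for every $c>0$ by the definition of $G_\tau$. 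Hence $\mathbb L_{\rho,\lambda,\tau}[\delta S]=\delta\,\mathbb L_{\rho,\lambda,\tau}[S]\leq 0\leq |x|^\theta(\delta S)^q$ on $(0,r_0)$, so $\delta S$ is a positive radial super-solution of \eqref{e1} in $B_{r_0}(0)\setminus\{0\}$.

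Next, for each $n\geq n_1$ apply the comparison principle (Remark~\ref{reka6}) to $u$ and $\delta S$ on the annular domain $A_n=\{r_{n+1}<|x|<r_n\}$. The required gradient non-degeneracy $|\nabla u|+|\nabla(\delta S)|>0$ on $A_n$ follows from the hypothesis $|\nabla u(x)|+|S'(|x|)|>0$ in \eqref{mar12}, and the boundedness of one of the gradients on $\overline{A_n}$ holds by $C^1$-regularity. We conclude that $u(x)\leq \delta\,S(|x|)$ on every $A_n$, and taking the union over $n\geq n_1$ yields $u(x)\leq \delta\,S(|x|)$ for all $0<|x|<r_{n_1}$. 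Consequently, $\limsup_{|x|\to 0} u(x)/S(|x|)\leq \delta<\mu$, contradicting the hypothesis that this $\limsup$ equals $\mu$.

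The main (mild) technical point is the verification that $\delta S$ remains a super-solution of \eqref{e1}, which rests on the positive $1$-homogeneity of the operator $\mathbb L_{\rho,\lambda,\tau}$ even when $\tau\in(0,1)$ and $\lambda\neq 0$; everything else is a direct application of the comparison machinery, whose hypotheses are ensured precisely by assumption \eqref{mar12}.
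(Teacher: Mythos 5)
Your proof is correct and follows essentially the same route as the paper's: argue by contradiction, use the positive $1$-homogeneity of $\mathbb L_{\rho,\lambda,\tau}$ to see that $\delta S$ is a super-solution, and apply the comparison principle on annuli whose boundary spheres carry the bound $u\leq \delta S$ (the paper compares on $\{r_n<|x|<r_{n_0}\}$ rather than on consecutive annuli, but this is an inessential variation). The hypotheses of Remark~\ref{reka6} are verified exactly as you indicate, via \eqref{mar12} and the $C^1$-regularity of $S$ on compact sets away from the origin.
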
 

\begin{proof}
Assume that $\limsup_{|x|\to 0} u(x)/S(|x|)=\mu\in (0,\infty)$. 
Let $r_0>0$ be small such that $B_{4 r_0}(0)\subset \subset \Omega$. 
Define 
$v(r)=\sup_{|x|=r} u(x)/S(|x|)$ for every $r\in (0,r_0)$. We show that $\lim_{r\to 0^+}v(r)=\mu$. 
Since $\limsup_{r\to 0^+} v(r)=\mu$, it remains to show that 
\begin{equation} \label{mar13} \liminf_{r\to 0^+} v(r)=\mu.\end{equation} We reason by contradiction and 
	assume that there exists a sequence $\{r_n\}_{n\geq 1}$ of positive numbers decreasing to $0$ such that $\lim_{n\to \infty} 
	v(r_n)= \mu_0\in [0,\mu)$. Fix $\varepsilon>0$ small such that $\mu_0+\varepsilon<\mu$. 
	
	Let $n_0\geq 1$ be large such that 
	\begin{equation} \label{mipp} r_n< r_0 \quad \mbox{and}\quad v(r_n)\leq \mu_0+\varepsilon\quad \mbox{for all } n\geq n_0.\end{equation}
	For every $n>n_0$, we set $D_n=\{x\in \mathbb R^N:\ r_n<|x|<r_{n_0}\}$. Since $\limsup_{r\to 0^+} v(r)=\mu>\mu_0+\varepsilon$,
	we infer that there exist $r_*\in (0,r_{n_0})$ and $x_*\in \mathbb R^N$ with $|x_*|=r_*$ such that 
	\begin{equation} \label{allo} v(r_*)=u(x_*)/S(|x_*|) >\mu_0+\varepsilon. \end{equation}
	Let $n_*>n_0$ be large such that $x_*\in D_{n_*}$. From the definition of $v$ and \eqref{mipp}, we get 
	$$ u|_{\partial B_{r_n}(0)} \leq (\mu_0+\varepsilon) \,S(r_n)\quad \mbox{for all } n\geq n_0. 
	$$ Observe that $\left(\mu_0+\varepsilon\right) S$ is a positive radial super-solution of \eqref{e1} in $B_{r_0}(0)\setminus \{0\}$ since 
	$$ \mathbb L_{\rho,\lambda,\tau}[\left(\mu_0+\varepsilon\right) S(|x|)]=\left(\mu_0+\varepsilon\right) \mathbb L_{\rho,\lambda,\tau}[S(|x|)]\leq 0\leq 
	|x|^\theta S^q(|x|)\quad \mbox{for all } 0<|x|<r_0. $$
	Moreover, in view of \eqref{mar12}, 
	we can apply the comparison principle and find that 
	$$ u(x)\leq \left(\mu_0+\varepsilon\right) S(|x|)\quad \mbox{ for every } x\in D_{n_*}.$$ 
	When $x=x_*\in D_{n_*}$, we obtain a contradiction with \eqref{allo}. This proves \eqref{mar13}. 
\end{proof} 	

\begin{rem} \label{rek44}
{\rm 
(a) When $\tau=0$ in Corollary~\ref{pere} and Lemma~\ref{mar11}, we don't need the second inequality in \eqref{mar12} since we can apply the comparison principle in Lemma~\ref{co2}. 

(b) We will apply Lemma~\ref{mar11} for an arbitrary positive solution $u$ of \eqref{e1} by taking a suitable positive $C^1$-function $S$ on $(0,r_0)$ such that the first inequality in \eqref{mar12} holds and, in addition, $|S'(r)|>0$ for every $r\in (0,r_0)$ (the latter inequality is needed when $\tau\in (0,1)$).}	
\end{rem}

\section{Summary of our classification results} \label{sec-sum}
In Table~\ref{tabel}, we present the findings of Theorem~\ref{thi} for each Case $[N_j]$ with $j=0,1,2$.

\begin{table}[h!]
\small
	\begin{tabular}{|l|l|}
		\hline

		 \textbf{Assume Case $[N_0]$}, that is, $ \left\{ 
		 \begin{aligned}
		 & \mbox{Case } [N_0] \ {\rm (a)}\ {\rm let}\  \tau=0,\ \lambda\geq 2\rho \\

		 & \mbox{or Case } [N_0]\ {\rm (b)}\ \mbox{let } \lambda>0, \ \tau\in (0,1),\ \rho<\Upsilon.   	
		 \end{aligned}
		 \right. $ \\

	$\bullet$ If $\beta<0$, then $u(x)\sim U_{\rho,\beta}(x)$ as $|x|\to 0$.
		 
		 \\ \hline
		 \hline

			 \textbf{Assume Case $[N_1]$}, that is, $ \left\{ 
		 \begin{aligned}
		 & [N_1]\ {\rm (a)}\ \mbox{let } \tau=0,\ \lambda< 2\rho \\
		 & {\rm or}\ [N_1]\, {\rm (b)}\ \mbox{let } \lambda<0, \ \tau\in (0,1),\ \rho\in \mathbb R.   	
		 \end{aligned}
		 \right. $\\
		 
$\bullet$ If $\beta\in (-\infty,\varpi_1(\rho))$, then $u(x)\sim U_{\rho,\beta}(x)$ as $|x|\to 0$.\\
		 
$\bullet$ If $\beta=\varpi_1(\rho)$, then $\displaystyle u(x)\sim \left(
\frac{ |\varpi_1(\rho)| \left(1-\lambda \tau |\varpi_1(\rho)|^{-\tau-1} \right)}{q-1}\right)^{1/(q-1)} 
\frac{|x|^{-\varpi_1(\rho)} }{|\log |x||^{1/(q-1)}}$ as $|x|\to 0$. \\

$\bullet$ If $\beta\in (\varpi_1(\rho),0)$, then $u(x)\sim a\,|x|^{-\varpi_1(\rho)}$ as $|x|\to 0$ for some $a\in \mathbb R_+$.		                                                              
	         
	    \\ \hline \hline

		 \textbf{Assume Case $[N_2]$}, that is, $\lambda>0$, $\tau\in (0,1)$ and $\rho\geq \Upsilon$.\\
		 
		$\bullet$ If $\beta\in (-\infty,\varpi_1(\rho))$, then $u(x)\sim U_{\rho,\beta}(x)$ as $|x|\to 0$.\\
		$\bullet$ If $\beta=\varpi_1(\rho)$ and $\rho\not=\Upsilon$, then \\
\ \ \ \ $\displaystyle u(x)\sim \left(
\frac{ |\varpi_1(\rho)| \left(1-\lambda \tau |\varpi_1(\rho)|^{-\tau-1} \right)}{q-1}\right)^{1/(q-1)} \frac{|x|^{-\varpi_1(\rho)} }{|\log |x||^{1/(q-1)}}$ as $|x|\to 0$. \\
$\bullet$ If $\beta=\varpi_1(\rho)$ and $\rho=\Upsilon$, then 
$\displaystyle u(x)\sim  \left(
\frac{2\left(q+\tau\right)}{(q-1)^2}\right)^{1/(q-1)} \frac{|x|^{-\varpi_1(\rho)}}{ |\log |x||^{2/(q-1)}}$ as $|x|\to 0$.\\
	$\bullet$  If  $\beta\in (\varpi_1(\rho),\varpi_2(\rho)]$ (for $\rho\not=\Upsilon$), then $u(x)\sim a\,|x|^{-\varpi_1(\rho)}$ 
	as $|x|\to 0$ for some $a\in \mathbb R_+$.    \\
	$\bullet$  If $\beta\in (\varpi_2(\rho),0)$, then as $|x|\to 0$, we have the following trichotomy:\\
	     (i)\ Either $u(x)\sim a\,|x|^{-\varpi_1(\rho)}$ for some $a\in \mathbb R_+$;\\
	     (ii) Or $u(x)\sim b\, \Phi_{\varpi_2(\rho)} (|x|)=
	     \left\{ \begin{aligned}
 	& b \,|x|^{-\varpi_2(\rho)} && \mbox{if } \rho\not=\Upsilon&\\
 	& b \,|x|^{-\varpi_2(\rho)} |\log |x||^{2/(1+\tau)} && \mbox{if } \rho=\Upsilon &
 \end{aligned}
	     \right.$ for some $b\in \mathbb R_+$;\\
                               (iii) Or $u(x)\sim U_{\rho,\beta}(|x|)$. 
		\\
		\hline \hline
		
	\end{tabular}
	\vspace{0.2cm}
	\caption{Complete classification of the behaviour near zero for an arbitrary positive solution $u$ of \eqref{e1}, where $\rho,\lambda\in \mathbb R$, $\tau\in [0,1)$, $\theta<-2$ and $q>1$}
		\label{tabel}
\end{table}

The contents of Theorem~\ref{globo} and Corollary \ref{glob22} are captured in Table~\ref{table11}.

\vspace{0.1cm}
\begin{table}[h!]
\small
	\begin{tabular}{|l|l|}
		\hline
		
		Eq. \eqref{e1} in $ \mathbb R^N\setminus \{0\}$ has positive solutions if and only if $ f_\rho(\beta)>0$.
		In this case,\\ every positive solution $ u$ of 
		\eqref{e1} in $ \mathbb R^N\setminus \{0\}$ 
		is radial. 
		\\ \hline \hline

{\tt Let $\beta<0$ in Case $[N_0]$.} Then, 
for every $ c\in \mathbb R_+$, there exists a unique positive\\ radial
solution $ u_{\infty,c}$ of \eqref{e1} in $ \mathbb R^N\setminus \{0\}$
such that $u(x)\sim U_{\rho,\beta}(x)$ as $|x|\to 0 $ \\
and $u(x)\sim \left\{  
\begin{aligned} 
& c\log |x| && \mbox{if } \lambda=2\rho \ \mbox{and } \tau=0 &\\
& c && \mbox{otherwise } &
\end{aligned} 
\right.
\ \mbox{as } |x|\to \infty.$                 		                        
\\  \hline \hline

{\tt Let $\beta<\varpi_1(\rho) $ in Case $[N_1]$.}
Then, for every $c\in \mathbb R_+$, 
there is a unique positive \\
radial solution $u_{\infty,c}$
of \eqref{e1} in $ \mathbb R^N\setminus \{0\}$
such that 
$ u(x)\sim U_{\rho,\beta}(x)\ \mbox{as }|x|\to 0$ \\
and $u(x)\sim c |x|^{-\varpi_1(\rho)} \ \mbox{as } |x|\to \infty $.\\         
		  \hline \hline                                                                                                                  		
		     		
{\tt Let $\beta<\varpi_1(\rho) $ in Case $[N_2]$.} 
Then, for every $ c\in \mathbb R_+$, there exists a unique positive \\
radial solution $u_{\infty,c}$ of \eqref{e1} in $ \mathbb R^N\setminus \{0\}$
such that $u(x)\sim U_{\rho,\beta}(x)$ as $|x|\to 0$ \\
and $u(x)\sim c \Psi_{\varpi_1(\rho)}(x)$  as  
		$|x|\to \infty $. 
 \\ \hline \hline

{\tt Let $ \beta\in (\varpi_2(\rho), 0)$
	in Case $[N_2]$.} \\
	
	$\bullet$ For every $c\in \mathbb R_+$, there exists a unique positive  radial solution 
	$u_{\infty,c}$ of \eqref{e1} \\
	in $ \mathbb R^N\setminus \{0\}$ such that 
$u(x)\sim U_{\rho,\beta}(x)$ as $|x|\to 0$ and 
$u(x)\sim c$ as $|x|\to \infty$. \\

$\bullet$ For every $b\in \mathbb R_+$, there exists a unique positive radial solution $u_{b}$ of \eqref{e1} in \\ 
$ \mathbb R^N\setminus \{0\}$
such that $ u(x)\sim b\,\Phi_{\varpi_2(\rho)}(|x|)$ as $|x|\to 0$ and 
$u(x) \sim U_{\rho,\beta}(x)$ as $|x|\to \infty$. \\

$\bullet$ For every $b,c\in \mathbb R_+$, there exists a unique positive
radial solution $u_{b,\infty,c}$ of \eqref{e1}  \\
in $ \mathbb R^N\setminus \{0\}$
such that $u(x)\sim b\,\Phi_{\varpi_2(\rho)}(x)$ as 
$|x|\to 0$ and $u(x) \sim c$ as $|x|\to \infty$. 
	\\ \hline \hline

{\tt Let $\beta>0$ in Case $[N_0](-\rho)$.} 
Then, for every $ c\in \mathbb R_+$, there exists a unique positive\\ 
radial solution $ u_{c,0}$ of \eqref{e1} in $ \mathbb R^N\setminus \{0\}$
such that $u(x)\sim U_{\rho,\beta}(x)$ as $|x|\to \infty $ \\
and $u(x)\sim \left\{  
\begin{aligned} 
& c\log\,(1/ |x|) && \mbox{if } \lambda=-2\rho \ \mbox{and } \tau=0 &\\
& c && \mbox{otherwise } &
\end{aligned} \right. 
\ \mbox{as } |x|\to 0 $.                
 \\   \hline \hline

{\tt Let $\beta>-\varpi_1(-\rho)$  in
Case $[N_1](-\rho)$.} 
Then, for every $c\in \mathbb R_+$, 
there is a unique positive \\
radial solution $u_{c,0}$
of \eqref{e1} in $ \mathbb R^N\setminus \{0\}$
such that 
$ u(x)\sim U_{\rho,\beta}(x)\ \mbox{as }|x|\to \infty$ \\
and $u(x)\sim c |x|^{\varpi_1(-\rho)} \ \mbox{as } |x|\to 0 $.
 \\ \hline\hline

{\tt Let $\beta>-\varpi_1(-\rho)$ in 
Case $[N_2](-\rho)$.}  Then, for every 
$c\in \mathbb R_+$, there exists a unique \\
positive radial solution $u_{c,0}$ of 
\eqref{e1} in $ \mathbb R^N\setminus \{0\}$
such that 
$u(x)\sim U_{\rho,\beta}(x)$
as $|x|\to \infty$ \\
and  $u(x)\sim c\, \Psi_{\varpi_1(-\rho)}(1/|x|)$ as  
$|x|\to 0  $. 
 \\ \hline \hline
	  
{\tt Let $\beta\in (0,-\varpi_2(-\rho))$
in Case $ [N_2](-\rho)$.} \\

$\bullet$ For every $c\in \mathbb R_+$, there exists a unique positive
radial solution $u_{c,0}$ if \eqref{e1} \\
in $\mathbb R^N\setminus \{0\}$ such that 
$ u(x)\sim U_{\rho,\beta}(x)$ as $|x|\to \infty$ and 
$u(x)\sim c$ as $|x|\to 0$. \\

$\bullet$ For every $b\in \mathbb R_+$, 
there exists a unique positive  solution $ u_{b,\infty}$ of \eqref{e1} in 
$\mathbb R^N\setminus \{0\}$ \\
such that $u(x)\sim b\,\Phi_{\varpi_2(-\rho)} (1/|x|)$  as $|x|\to \infty$ and 
$u(x) \sim U_{\rho,\beta}(x)$ as $|x|\to 0$. \\

$\bullet$ For every $b, c\in \mathbb R_+$, 
there exists a unique positive radial solution $ u_{c,0,b}$ of \eqref{e1} \\
in $\mathbb R^N\setminus \{0\}$ such that 
$u(x)\sim b\,\Phi_{\varpi_2(-\rho)}(1/|x|) $ as 
$|x|\to \infty$  and $u(x) \sim c$ as $ |x|\to 0$. 
\\ \hline \hline

	Otherwise, there are {\bf no} positive solutions of \eqref{e1} in $\mathbb R^N\setminus \{0\}$ (since $f_\rho(\beta)\leq 0$).
		
		\\ \hline
		
	\end{tabular}
	\vspace{0.2cm}
	\caption{Complete classification and existence of all positive solutions $u$ of \eqref{e1} in $\mathbb R^N\setminus \{0\}$ other than $U_{\rho,\beta}$, where $\rho,\lambda, \theta\in \mathbb R$, $\tau\in [0,1)$ and $q>1$}
		\label{table11}
\end{table}

\end{document}